\theoremstyle{definition}
\newtheorem{theorem}{Theorem}[section]
\newtheorem{definition}[theorem]{Definition}
\newtheorem{remark}[theorem]{Remark}
\newtheorem{example}[theorem]{Example}
\newtheorem{lemma}[theorem]{Lemma}
\newtheorem{proposition}[theorem]{Proposition}
\newtheorem{corollary}[theorem]{Corollary}
\newtheorem{conjecture}[theorem]{Conjecture}
\newenvironment{provedcorollary}
{\pushQED{\qed}\begin{corollary}}
	{\popQED\end{corollary}}
\newenvironment{customthm}[1]
{\innercustomthm}
{\endinnercustomthm}
\newtheorem*{mainthm}{Main Theorem}
\newcommand{\N}{\mathbb{N}}
\newcommand{\Z}{\mathbb{Z}}
\newcommand{\R}{\mathbb{R}}
\newcommand{\C}{\mathbb{C}}
\newcommand{\M}{\mathcal{M}}
\newcommand{\F}{\mathcal{F}}
\newcommand{\A}{\mathcal{A}}
\newcommand{\E}{\mathcal{E}}
\newcommand{\CC}{\mathcal{C}}
\DeclareMathOperator{\codim}{codim}
\DeclareMathOperator{\rk}{rk}
\newcommand{\<}{\langle}
\renewcommand{\>}{\rangle}
\DeclareMathOperator{\Mov}{\textsc{Mov}}
\DeclareMathOperator{\Fix}{\textsc{Fix}}
\DeclareMathOperator{\Min}{\textsc{Min}}
\DeclareMathOperator{\Span}{\textsc{Span}}
\DeclareMathOperator{\Dir}{\textsc{Dir}}
\DeclareMathOperator{\inv}{inv}
\DeclareMathOperator{\Isom}{\textsc{Isom}}
\newcommand{\p}{p}
\renewcommand{\a}{a}
\renewcommand{\b}{b}
\newcommand{\h}{{\text{hor}}}
\renewcommand{\v}{{\text{ver}}}
\m@th\displaystyle{##}$\hfil}{\hfil$\m@th\displaystyle{##}$}{\lbrace}{.}
\title[Proof of the $K(\pi, 1)$ conjecture for affine Artin groups]{Proof of the $K(\pi, 1)$ conjecture\\ for affine Artin groups}
\author[Giovanni Paolini]{Giovanni Paolini}
\address{\textnormal{Giovanni Paolini: Department of Mathematics, University of Fribourg, Switzerland (currently at California Institute of Technology, Pasadena, California, USA).
}}
\author[Mario Salvetti]{Mario Salvetti}
\address{\textnormal{Mario Salvetti: Department of Mathematics, University of Pisa, Italy.
	}}
\email{paolini@caltech.edu\textnormal{,} salvetti@dm.unipi.it}
\thanks{\vskip0.01cm We are grateful to Pierre Deligne for his remarks and suggestions on the first version of this paper.
We are also grateful to Emanuele Delucchi and Alessandro Iraci for the useful discussions, and to the anonymous referee for the helpful comments.
A preliminary version of \Cref{sec:dual-salvetti-complex,sec:classifying-spaces,sec:finite-classifying-spaces} is part of the first author's Ph.D.\ thesis at Scuola Normale Superiore \cite{paolini2019thesis}, written under the supervision of the second author.
This work was also supported by the Swiss National Science
Foundation Professorship grant PP00P2\_179110/1, by Ministero dell'Istruzione, dell'Universit\`a e della Ricerca, Prog.\ PRIN 2017YRA3LK\_005, \emph{Moduli and Lie Theory}
and by the University of Pisa, Prog.\ PRA\_2018\_22, \emph{Geometria e topologia delle variet\`a}.
\vskip0.3cm
The final publication in \emph{Inventiones mathematicae} is available at \url{https://doi.org/10.1007/s00222-020-01016-y}.}
\begin{document}

\begin{abstract}
	We prove the $K(\pi,1)$ conjecture for affine Artin groups: the complexified complement of an affine reflection arrangement is a classifying space.
	This is a long-standing problem, due to Arnol'd, Pham, and Thom.
	Our proof is based on recent advancements in the theory of dual Coxeter and Artin groups, as well as on several new results and constructions.
	In particular: we show that all affine noncrossing partition posets are EL-shellable;
	we use these posets to construct finite classifying spaces for dual affine Artin groups;
	we introduce new CW models for the orbit configuration spaces associated with arbitrary Coxeter groups; we construct finite classifying spaces for the braided crystallographic groups introduced by McCammond and Sulway.
	
\end{abstract}

\maketitle


\section{Introduction}
\label{sec:introduction}

The long-standing $K(\pi,1)$ conjecture for Artin groups states that the orbit configuration space $Y_W$ associated with a Coxeter group $W$ is always a $K(G_W,1)$ space.
Here $G_W$ is the fundamental group of $Y_W$ and is known as the Artin group associated with $W$.
The conjecture was proved for spherical Artin groups (i.e.\ if $W$ is finite) by Deligne \cite{deligne1972immeubles}, after being proved by Fox and Neuwirth in the case $A_n$ \cite{fox1962braid} and by Brieskorn in the cases $C_n$, $D_n$, $G_2$, $F_4$, and $I_2(p)$ \cite{brieskorn1973groupes}.

In this paper, we prove the conjecture for the next important family of Artin groups, namely for all affine Artin groups.
Together with Deligne's result, this covers all the cases where $W$ is a Euclidean reflection group.

\begin{mainthm}[\Cref{thm:conjecture}]
	The $K(\pi,1)$ conjecture holds for all affine Artin groups.
\end{mainthm}

The $K(\pi,1)$ conjecture goes back to the pioneering work of Arnol'd, Brieskorn, Pham, and Thom in the '60s (see \cite{brieskorn1973groupes, van1983homotopy, godelle2012basic, paris2012k}).
After the fundamental contribution of Deligne, the conjecture was proved for the affine Artin groups of type $\tilde{A}_n$, $\tilde{C}_n$ \cite{okonek1979dask}, $\tilde{B}_n$ \cite{callegaro2010k}, and $\tilde G_2$ \cite{charney1995k}.
So our paper completes the list of affine Artin groups with the case $\tilde{D}_n$ and with all the remaining exceptional cases.
Unlike the proofs for previously known cases, our approach is essentially ``case-free,'' although some partial results use the classification of reflection groups.
In particular, it also applies to all previously known affine cases.

Besides Euclidean cases, the conjecture was proved for Artin groups of dimension $\leq 2$ and for those of FC type \cite{hendriks1985hyperplane, charney1995k}.
It was also extended to the configuration spaces of finite complex reflection groups and proved in full generality by Bessis \cite{bessis2015finite}.

Our results were made possible by recent advances in the theory by McCammond and Sulway \cite{mccammond2015dual, mccammond2017artin}, which rely on \emph{dual} Coxeter and Artin groups and on Garside structures (see \Cref{sec:background}).
In \cite{mccammond2017artin}, finite-dimensional classifying spaces for affine Artin groups were constructed, but with an infinite number of cells.
Our proof of the $K(\pi,1)$ conjecture leads to a significant improvement of their construction: we obtain finite classifying spaces for $G_W$ which are homotopy equivalent to the orbit configuration space $Y_W$.
In doing that, we derive many new geometric and combinatorial side results for affine Coxeter and Artin groups, which we think are interesting by themselves.

The following are some consequences of the $K(\pi,1)$ conjecture:
affine Artin groups are torsion-free (this already follows from the construction of McCammond and Sulway \cite{mccammond2017artin});
they have a classifying space with a finite number of cells (see \cite{salvetti1994homotopy});
the well studied homology and cohomology of $Y_W$ coincides with the homology and cohomology of the corresponding affine Artin group $G_W$ (see \cite{callegaro2008cohomology, callegaro2008cohomology2, callegaro2010k, salvetti2013combinatorial, paolini2018weighted, paolini2019local});
the natural map between the classifying space of an affine Artin monoid and the classifying space of the corresponding Artin group is a homotopy equivalence (see \cite{dobrinskaya2002arnold, dobrinskaya2006configuration, ozornova2017discrete, paolini2017classifying}).

\subsection{Outline of the proof and future research directions}

For every Coxeter group $W$ and Coxeter element $w$, there is an associated \emph{dual Artin group} $W_w$.
It is known to be naturally isomorphic to the standard Artin group $G_W$, if $W$ is finite \cite{bessis2003dual} or affine \cite{mccammond2017artin}.
When the noncrossing partition poset $[1,w]$ is a lattice, the dual Artin group is a Garside group, and it admits a standard construction of a classifying space $K_W \simeq K(W_w, 1)$ \cite{charney2004bestvina}.
The poset $[1,w]$ is indeed a lattice if $W$ is finite \cite{brady2001partial, brady2002k, bessis2003dual, brady2008non}, but this is not always the case if $W$ is affine \cite{digne2006presentations, digne2012garside, mccammond2015dual}.

In our proof of the $K(\pi,1)$ conjecture, one of the key points is to show that $K_W$ is a classifying space for $W_w$, for every affine Coxeter group $W$, even when $[1,w]$ is not a lattice (\Cref{thm:KW-classifying}).
This can come as a surprise since the standard argument to show that $K_W$ is a classifying space heavily relies on the lattice property.

Then we show that $K_W$ is homotopy equivalent to the orbit configuration space $Y_W$.
For this, we introduce a new family of CW models $X_W' \simeq Y_W$, which are subcomplexes of $K_W$ (\Cref{def:dual-salvetti}).
Differently from the usual models (such as the Salvetti complex \cite{salvetti1994homotopy}), the structure of $X_W'$ depends on the dual Artin relations in $W_w$ rather than on the standard Artin relations in $G_W$.
Using discrete Morse theory, we prove that $K_W$ deformation retracts onto $X_W'$.
This completes the proof of the $K(\pi,1)$ conjecture, and at the same time, it gives a new proof that the dual Artin group $W_w$ is naturally isomorphic to the Artin group $G_W$ (in the affine case).

The outlined program passes through several intermediate geometric, combinatorial, and topological results.
For example, an important step in the proof of the deformation retraction $K_W \simeq X_W'$ is to construct an EL-labeling of the affine noncrossing partition poset $[1,w]$.
This and other contributions are summarized in \Cref{sec:contributions} below.

Of course, one can hope to use an analog strategy to solve the $K(\pi,1)$ conjecture in the general case.
However, in order to carry out such program, it seems that a general geometric theory of dual Coxeter groups is required, as well as a combinatorial theory of noncrossing partition posets associated with arbitrary Coxeter groups (for example: are these posets EL-shellable? how can they fail in being lattices?), and perhaps also new developments in Garside theory (can the lattice condition be relaxed?).
These are interesting and potentially promising directions for future research.

\subsection{Summary of additional contributions}
\label{sec:contributions}


As mentioned above, in this paper we prove several results in addition to the $K(\pi,1)$ conjecture.
Here we list the ones we consider to be the most important and of independent interest.

In \Cref{sec:affine-coxeter-elements}, we expand the geometric theory of Coxeter elements in affine Coxeter groups, continuing the work started by McCammond and Sulway \cite{mccammond2015dual, mccammond2017artin}.
The following is one of our many results. Its analog for finite Coxeter groups was proved by Bessis \cite{bessis2003dual}.

\begin{customthm}{A}[\Cref{thm:coxeter-elements}]
	\label{thm:A}
	Every element $u$ in an affine noncrossing partition poset $[1,w]$ is a Coxeter element of the Coxeter subgroup generated by the subposet $[1,u]$.
\end{customthm}

The next result sheds some light on the combinatorial structure of affine noncrossing partition posets and is the focus of \Cref{sec:shellability}.
Its analog for finite noncrossing partition lattices was proved by Athanasiadis, Brady, and Watt \cite{athanasiadis2007shellability}.

\begin{customthm}{B}[\Cref{thm:shellability}]
	All affine noncrossing partition posets are EL-shellable.
\end{customthm}

We should emphasize that the affine setting differs substantially from the finite setting.
For example:
an affine noncrossing partition poset $[1,w]$ is infinite and not necessarily a lattice;
not all elements of $[1,w]$ are \emph{parabolic} Coxeter elements;
not all reflections belong to $[1,w]$.
For this reason, the theory requires significant novelties in addition to the well-established results for finite Coxeter groups.

In \Cref{sec:dual-salvetti-complex} we introduce new CW models $X_W'$ for the orbit configuration space $Y_W$, by gluing together classifying spaces $K_{W_T} \simeq K(G_{W_T}, 1)$ of spherical parabolic subgroups.
This is done in full generality, for an arbitrary Coxeter group $W$.

\begin{customthm}{C}[\Cref{thm:dual-salvetti}]
	For every Coxeter group $W$, the subcomplexes $X_W' \subseteq K_W$ are naturally homotopy equivalent to the orbit configuration space $Y_W$.
\end{customthm}

In \Cref{sec:classifying-spaces} we show that $K_W$ is a classifying space, even when $[1,w]$ is not a lattice.
Our proof makes use of the construction of \emph{braided crystallographic groups} by McCammond and Sulway \cite{mccammond2017artin}, a ``Garside completion'' of dual affine Artin groups.

\begin{customthm}{D}[\Cref{thm:KW-classifying}]
	For every affine Coxeter group $W$ and Coxeter element $w \in W$, the complex $K_W$ is a classifying space for the dual Artin group $W_w$.
\end{customthm}

In \Cref{sec:finite-classifying-spaces} we show that $K_W$ deformation retracts onto a subcomplex with a finite number of cells.
Without additional effort, this argument also applies to the classifying space of a braided crystallographic group.
We obtain the following consequence.

\begin{customthm}{E}[\Cref{thm:finite-crystallographic}]
	Every braided crystallographic group has a classifying space with a finite number of cells.
\end{customthm}

It is possible that the techniques of \Cref{sec:conjecture} can be adjusted to braided crystallographic groups, to obtain a smaller classifying space with some interesting geometric interpretation (and perhaps prove a crystallographic version of the $K(\pi,1)$ conjecture).
This might be part of some bigger picture, where every (dual) Artin group has a crystallographic Garside completion, and their classifying spaces are geometrically related.

\subsection{Structure of this paper}

In \Cref{sec:background} we recall the most important background definitions and results that are needed in the rest of the paper.
In \Cref{sec:affine-coxeter-elements} we prove several geometric results about Coxeter elements in affine Coxeter groups, expanding the theory of \cite{mccammond2015dual, mccammond2017artin}.
This section goes hand in hand with Appendix \ref{appendix}, where we carry out explicit computations for the four infinite families of irreducible affine Coxeter groups.
\Cref{sec:shellability,sec:dual-salvetti-complex,sec:classifying-spaces,sec:finite-classifying-spaces} are mostly independent from each other.
They cover separate intermediate steps of our proof of the $K(\pi,1)$ conjecture, as described earlier.
Finally, in \Cref{sec:conjecture}, everything is put together to prove the $K(\pi,1)$ conjecture.


\section{Background}
\label{sec:background}

\subsection{Coxeter groups and Artin groups}
\label{sec:coxeter-artin}

Let $W$ be a Coxeter group, i.e.\ a group with a presentation of the following form:
\begin{equation}
	\label{eq:coxeter-presentation}
 	W = \< S \mid (st)^{m(s,t)} = 1 \;\; \forall\, s,t \in S \text{ such that $m(s,t) \neq \infty$} \>,
\end{equation}
where $S$ is a finite set, $m(s,s) = 1$ for all $s \in S$, and $m(s,t) = m(t,s) \in \{2,3,4,\dotsc\} \cup \{\infty\}$ for all $s \neq t$ in $S$.
This presentation can be encoded into a \emph{Coxeter graph}: the vertices are indexed by $S$, and there is an edge connecting $s$ and $t$ whenever $m(s,t) \geq 3$; this edge is labeled by $m(s,t)$ whenever $m(s,t) \geq 4$.
A Coxeter group is \emph{irreducible} if its Coxeter graph is connected.
Any conjugate of an element of $S$ is called a \emph{reflection}.
Denote by $R \subseteq W$ the set of reflections.
Any conjugate of the set $S$ is called a \emph{set of simple reflections} of $W$, and can be used in place of $S$ to give a presentation of $W$ of the same form as \eqref{eq:coxeter-presentation}, with an isomorphic Coxeter graph.

If $S \subseteq R$ is any set of simple reflections of $W$ (not necessarily the one used to define $W$), the product of the elements of $S$ in any order is called a \emph{Coxeter element} of $W$.
Also, for any subset $T \subseteq S$, the subgroup $W_T$ generated by $T$ is a \emph{parabolic subgroup} of $W$ (it is a Coxeter group, and $T$ is a set of simple reflections of $W_T$).
A Coxeter element of a parabolic subgroup of $W$ is called a \emph{parabolic Coxeter element} of $W$.
When a set of simple reflections $S$ is fixed, the parabolic subgroups $W_T$ with $T \subseteq S$ are called \emph{standard parabolic subgroups}.
The \emph{rank} (or \emph{dimension}) of $W$ is the largest cardinality of a subset $T \subseteq S$ such that the parabolic subgroup $W_T$ is finite.
We refer to \cite{bourbaki1968elements, humphreys1992reflection, bjorner2006combinatorics} for more background information on Coxeter groups.

We are mostly interested in the case where $W$ is a finite or affine Coxeter group, or equivalently, a (finite or affine) real reflection group.
In this case, $W$ acts faithfully by Euclidean isometries on some affine space $E = \R^n$, and the elements of $R$ act as orthogonal reflections with respect to some hyperplanes of $E$.
These hyperplanes form a locally finite hyperplane arrangement in $E$, which we denote by $\A$.
The connected components of the complement of $\A$ in $E$ are called \emph{chambers}.
Given a chamber $C$, its \emph{walls} are the hyperplanes $H \in \A$ such that $H \cap \bar C$ is a $(n-1)$-dimensional polyhedron.
The collection of all the chambers forms the \emph{Coxeter complex} of $W$.
The smallest possible dimension $n$ which is needed to construct such a representation is equal to the rank of $W$.
If $n$ is equal to the rank of $W$, the resulting representation is \emph{essential}.
If $W$ is an irreducible finite Coxeter group, then all its elements must fix a point of $E$ (so $W$ may as well be realized as a group of linear isometries), and all chambers in an essential representation are unbounded simplicial cones.
If $W$ is an irreducible affine Coxeter group, then all chambers in an essential representation are bounded simplices.
We refer to \cite{humphreys1992reflection} for the definition of root systems, positive systems, simple systems, crystallographic systems, and crystallographic Coxeter groups.

Irreducible affine Coxeter groups are classified into four infinite families ($\tilde A_n$, $\tilde B_n$, $\tilde C_n$, $\tilde D_n$) and five exceptional cases ($\tilde E_6$, $\tilde E_7$, $\tilde E_8$, $\tilde F_4$, $\tilde G_2$).
Their Coxeter graphs are shown in \Cref{fig:affine-coxeter-groups}.
These groups can all be constructed from the corresponding irreducible crystallographic root systems, as follows (see \cite[Chapter 4]{humphreys1992reflection}).
For each $\alpha$ in a crystallographic root system $\Phi \subseteq \R^n$, and for each integer $k \in \Z$, consider the affine hyperplane $H_{\alpha, k} = \{ x \in \R^n \mid \< x, \alpha \> = k \}$.
Then take the group generated by the orthogonal reflections with respect to the hyperplanes $H_{\alpha,k}$.
The corresponding reflection arrangement $\A$ is the collection of all these hyperplanes $H_{\alpha,k}$.

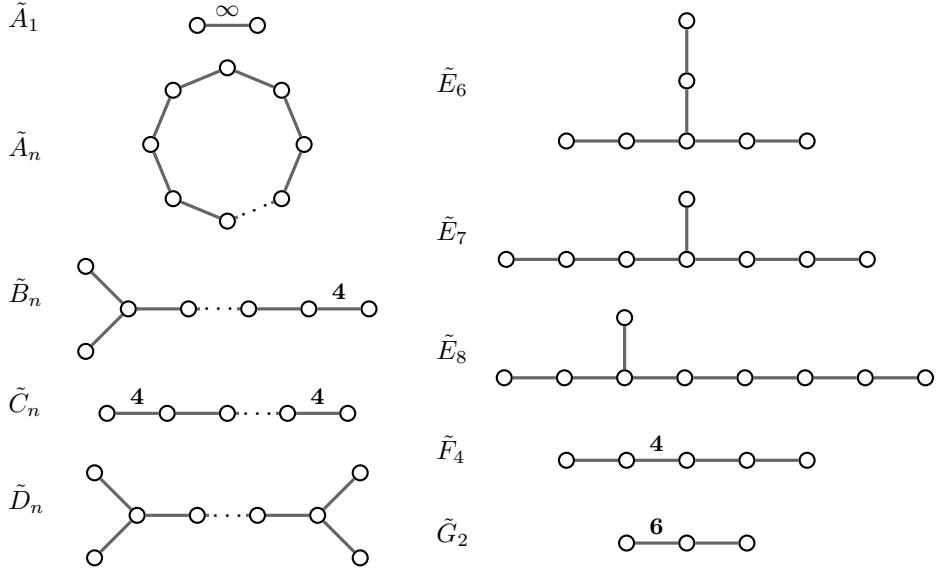
\begin{figure}[t]
	\begin{minipage}{0.45\textwidth}
		\begin{tabular}{>{\centering}l >{\centering\arraybackslash}m{0.73\textwidth}}
			$\tilde A_1$ & \begin{tikzpicture}
\begin{scope}[every node/.style={circle,thick,draw,inner sep=2}, every label/.style={rectangle,draw=none}]
  \node (1) at (0.0,0) {};
  \node (2) at (0.8,0) {};
\end{scope}
\begin{scope}[every edge/.style={draw=black!60,line width=1.2}]
  \path (1) edge node {} node[inner sep=3, above] {\textbf{\small $\infty$}} (2);
\end{scope}
\end{tikzpicture} \\
			$\tilde A_n$ & \begin{tikzpicture}
\begin{scope}[every node/.style={circle,thick,draw,inner sep=2}, every label/.style={rectangle,draw=none}]
  \node (0) at (0.00:1.02) [label={[label distance=1.5]0.00:$ $}] {};
  \node (1) at (45.00:1.02) [label={[label distance=1.5]45.00:$ $}] {};
  \node (2) at (90.00:1.02) [label={[label distance=1.5]90.00:$ $}] {};
  \node (3) at (135.00:1.02) [label={[label distance=1.5]135.00:$ $}] {};
  \node (4) at (180.00:1.02) [label={[label distance=1.5]180.00:$ $}] {};
  \node (5) at (225.00:1.02) [label={[label distance=1.5]225.00:$ $}] {};
  \node (6) at (270.00:1.02) [label={[label distance=1.5]270.00:$ $}] {};
  \node (7) at (315.00:1.02) [label={[label distance=1.5]315.00:$ $}] {};
\end{scope}
\begin{scope}[every edge/.style={draw=black!60,line width=1.2}]
  \path (0) edge node {} (1);
  \path (1) edge node {} (2);
  \path (2) edge node {} (3);
  \path (3) edge node {} (4);
  \path (4) edge node {} (5);
  \path (5) edge node {} (6);
  \path (6) edge node [fill=white, rectangle, inner sep=1.0, rotate=382.50, minimum height = 0.5cm] {$\ldots$} (7);
  \path (7) edge node {} (0);
\end{scope}
\begin{scope}[every edge/.style={draw=black, line width=3}]
\end{scope}
\begin{scope}[every node/.style={draw,inner sep=12.5,yshift=-6}, every label/.style={rectangle,draw=none,inner sep=7.0}, every fit/.append style=text badly centered]
\end{scope}
\end{tikzpicture} \\[-0.43cm]
			$\tilde B_n$ & \begin{tikzpicture}
\begin{scope}[every node/.style={circle,thick,draw,inner sep=2}, every label/.style={rectangle,draw=none}]
  \node (0) at (135.0:0.8) [label={above,minimum height=13}:$ $] {};
  \node (1) at (225.0:0.8) [label={above,minimum height=13}:$ $] {};
  \node (2) at (0.0,0) [label={above,minimum height=13}:$ $] {};
  \node (3) at (0.8,0) [label={above,minimum height=13}:$ $] {};
  \node (4) at (1.6,0) [label={above,minimum height=13}:$ $] {};
  \node (5) at (2.4,0) [label={above,minimum height=13}:$ $] {};
  \node (6) at (3.2,0) [label={above,minimum height=13}:$ $] {};
\end{scope}
\begin{scope}[every edge/.style={draw=black!60,line width=1.2}]
  \path (0) edge node {} (2);
  \path (1) edge node {} (2);
  \path (2) edge node {} (3);
  \path (3) edge node [fill=white, rectangle, inner sep=1.0, minimum height = 0.5cm] {$\ldots$} (4);
  \path (4) edge node {} (5);
  \path (5) edge node {} node[inner sep=3, above] {\textbf{\small 4}} (6);
\end{scope}
\begin{scope}[every edge/.style={draw=black, line width=3}]
\end{scope}
\begin{scope}[every node/.style={draw,inner sep=12.5,yshift=-6}, every label/.style={rectangle,draw=none,inner sep=7.0}, every fit/.append style=text badly centered]
\end{scope}
\end{tikzpicture} \\
			$\tilde C_n$ & \begin{tikzpicture}
\begin{scope}[every node/.style={circle,thick,draw,inner sep=2}, every label/.style={rectangle,draw=none}]
  \node (0) at (0.0,0) [label={above,minimum height=13}:$ $] {};
  \node (1) at (0.8,0) [label={above,minimum height=13}:$ $] {};
  \node (2) at (1.6,0) [label={above,minimum height=13}:$ $] {};
  \node (3) at (2.4,0) [label={above,minimum height=13}:$ $] {};
  \node (4) at (3.2,0) [label={above,minimum height=13}:$ $] {};
\end{scope}
\begin{scope}[every edge/.style={draw=black!60,line width=1.2}]
  \path (0) edge node {} node[inner sep=3, above] {\textbf{\small 4}} (1);
  \path (1) edge node {} (2);
  \path (2) edge node [fill=white, rectangle, inner sep=1.0, minimum height = 0.5cm] {$\ldots$} (3);
  \path (3) edge node {} node[inner sep=3, above] {\textbf{\small 4}} (4);
\end{scope}
\begin{scope}[every edge/.style={draw=black, line width=3}]
\end{scope}
\begin{scope}[every node/.style={draw,inner sep=12.5,yshift=-6}, every label/.style={rectangle,draw=none,inner sep=7.0}, every fit/.append style=text badly centered]
\end{scope}
\end{tikzpicture} \\[-0.17cm]
			$\tilde D_n$ & \begin{tikzpicture}
\begin{scope}[every node/.style={circle,thick,draw,inner sep=2}, every label/.style={rectangle,draw=none}]
  \node (0) at (135.0:0.8) [label={above,minimum height=13}:$ $] {};
  \node (1) at (225.0:0.8) [label={above,minimum height=13}:$ $] {};
  \node (2) at (0.0,0) [label={above,minimum height=13}:$ $] {};
  \node (3) at (0.8,0) [label={above,minimum height=13}:$ $] {};
  \node (4) at (1.6,0) [label={above,minimum height=13}:$ $] {};
  \node (5) at (2.4,0) [label={above,minimum height=13}:$ $] {};
  \node (6) at ($(2.4,0) + (45.0:0.8)$) [label={above,minimum height=13}:$ $] {};
  \node (7) at ($(2.4,0) + (-45.0:0.8)$) [label={above,minimum height=13}:$ $] {};
\end{scope}
\begin{scope}[every edge/.style={draw=black!60,line width=1.2}]
  \path (0) edge node {} (2);
  \path (1) edge node {} (2);
  \path (2) edge node {} (3);
  \path (3) edge node [fill=white, rectangle, inner sep=1.0, minimum height = 0.5cm] {$\ldots$} (4);
  \path (4) edge node {} (5);
  \path (5) edge node {} (6);
  \path (5) edge node {} (7);
\end{scope}
\begin{scope}[every edge/.style={draw=black, line width=3}]
\end{scope}
\begin{scope}[every node/.style={draw,inner sep=12.5,yshift=-6}, every label/.style={rectangle,draw=none,inner sep=7.0}, every fit/.append style=text badly centered]
\end{scope}
\end{tikzpicture} \\
		\end{tabular}
	\end{minipage}%
	\begin{minipage}{0.55\textwidth}
		\begin{tabular}{>{\centering}l >{\centering\arraybackslash}m{0.73\textwidth}}
			$\tilde E_6$ & \begin{tikzpicture}
\def\x{0.8}
\begin{scope}[every node/.style={circle,thick,draw,inner sep=2}, every label/.style={rectangle,draw=none}]
  \node (1) at (0,0) {};
  \node (2) at (1*\x,0) {};
  \node (3) at (2*\x,0) {};
  \node (4) at (3*\x,0) {};
  \node (5) at (4*\x,0) {};
  \node (6) at (2*\x,1*\x) {};
  \node (7) at (2*\x,2*\x) {};
\end{scope}
\begin{scope}[every edge/.style={draw=black!60,line width=1.2}]
  \path (1) edge (2);
  \path (2) edge (3);
  \path (3) edge (4);
  \path (4) edge (5);
  \path (3) edge (6);
  \path (6) edge (7);
\end{scope}
\end{tikzpicture} \\
			\\
			$\tilde E_7$ & \begin{tikzpicture}
\def\x{0.8}
\begin{scope}[every node/.style={circle,thick,draw,inner sep=2}, every label/.style={rectangle,draw=none}]
  \node (0) at (-1*\x,0) {};
  \node (1) at (0,0) {};
  \node (2) at (1*\x,0) {};
  \node (3) at (2*\x,0) {};
  \node (4) at (3*\x,0) {};
  \node (5) at (4*\x,0) {};
  \node (6) at (5*\x,0) {};
  \node (7) at (2*\x,1*\x) {};
\end{scope}
\begin{scope}[every edge/.style={draw=black!60,line width=1.2}]
  \path (0) edge (1);
  \path (1) edge (2);
  \path (2) edge (3);
  \path (3) edge (4);
  \path (4) edge (5);
  \path (5) edge (6);
  \path (3) edge (7);
\end{scope}
\end{tikzpicture} \\
			\\
			$\tilde E_8$ & \begin{tikzpicture}
\def\x{0.8}
\begin{scope}[every node/.style={circle,thick,draw,inner sep=2}, every label/.style={rectangle,draw=none}]
  \node (1) at (0,0) {};
  \node (2) at (1*\x,0) {};
  \node (3) at (2*\x,0) {};
  \node (4) at (3*\x,0) {};
  \node (5) at (4*\x,0) {};
  \node (6) at (5*\x,0) {};
  \node (7) at (6*\x,0) {};
  \node (8) at (2*\x,1*\x) {};
  \node (9) at (7*\x,0) {};
\end{scope}
\begin{scope}[every edge/.style={draw=black!60,line width=1.2}]
  \path (1) edge (2);
  \path (2) edge (3);
  \path (3) edge (4);
  \path (4) edge (5);
  \path (5) edge (6);
  \path (6) edge (7);
  \path (3) edge (8);
  \path (7) edge (9);
\end{scope}
\end{tikzpicture} \\
			\\
			$\tilde F_4$ & \begin{tikzpicture}
\def\x{0.8}
\begin{scope}[every node/.style={circle,thick,draw,inner sep=2}, every label/.style={rectangle,draw=none}]
  \node (1) at (0,0) {};
  \node (2) at (1*\x,0) {};
  \node (3) at (2*\x,0) {};
  \node (4) at (3*\x,0) {};
  \node (5) at (4*\x,0) {};
\end{scope}
\begin{scope}[every edge/.style={draw=black!60,line width=1.2}]
  \path (1) edge (2);
  \path (2) edge node {} node[inner sep=3, above] {\textbf{\small 4}} (3);
  \path (3) edge (4);
  \path (4) edge (5);
\end{scope}
\end{tikzpicture} \\
			\\
			$\tilde G_2$ & \begin{tikzpicture}
\def\x{0.8}
\begin{scope}[every node/.style={circle,thick,draw,inner sep=2}, every label/.style={rectangle,draw=none}]
  \node (1) at (0,0) {};
  \node (2) at (1*\x,0) {};
  \node (3) at (2*\x,0) {};
\end{scope}
\begin{scope}[every edge/.style={draw=black!60,line width=1.2}]
  \path (1) edge node {} node[inner sep=3, above] {\textbf{\small 6}} (2);
  \path (2) edge (3);
\end{scope}
\end{tikzpicture} \\
		\end{tabular}
	\end{minipage}
	
	\caption{Irreducible affine Coxeter graphs.}
	\label{fig:affine-coxeter-groups}
\end{figure}

If $W$ is a finite or affine Coxeter group, acting on $E = \R^n$, define the \emph{configuration space} $Y$ associated with $W$ as
\[ Y = \C^n \setminus \bigcup_{H \in \A} H \otimes_\R\C. \]
In other words, this is the complement of the complexification of the hyperplane arrangement $\A$.
Then $W$ naturally acts on $Y$, and its quotient $Y_W = Y / W$ is the \emph{orbit configuration space} associated with $W$.
Up to homotopy equivalence, $Y$ and $Y_W$ do not depend on the chosen representation of $W$ as a subgroup of the isometry group $\Isom(E)$.
The construction of $Y$ and $Y_W$ can be extended to arbitrary Coxeter groups by considering the Tits cone, see \cite{bourbaki1968elements, vinberg1971discrete, van1983homotopy, salvetti1994homotopy, moroni2012some, godelle2012basic, paris2012k}.

If $S$ is a set of simple reflections of $W$, the \emph{Artin group} associated with the Coxeter group $W$ is
\[ G_W = \< S \mid \!\!\underbrace{stst\dotsm}_{m(s,t) \text{ terms}} \! = \! \underbrace{tsts\dotsm}_{m(s,t) \text{ terms}} \forall\, s,t \in S \text{ such that $m(s,t) \neq \infty$} \>, \]
see \cite{tits1966normalisateurs, tits1969probleme, brieskorn1972artin, deligne1972immeubles, brieskorn1973groupes}.
It is isomorphic to the fundamental group of the orbit configuration space $Y_W$ \cite{van1983homotopy, salvetti1994homotopy}.

\begin{conjecture}[$K(\pi,1)$ conjecture]
	The orbit configuration space $Y_W$ is a classifying space for the Artin group $G_W$.
\end{conjecture}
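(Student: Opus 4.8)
The plan is to prove the conjecture for every affine Coxeter group $W$ (which is \Cref{thm:conjecture}) by exhibiting, for each choice of Coxeter element $w \in W$, a chain of homotopy equivalences
\[
  Y_W \;\simeq\; X_W' \;\simeq\; K_W \;\simeq\; K(W_w,1) \;=\; K(G_W,1),
\]
where $W_w$ is the dual Artin group associated with $w$. Each link is supplied by one of the results stated above, with the single exception of the middle one, which is the genuinely new content of this step. The first link, $Y_W \simeq X_W'$, is \Cref{thm:dual-salvetti}, valid for an arbitrary Coxeter group; it already reduces the conjecture to proving that the subcomplex $X_W' \subseteq K_W$ is a classifying space for $G_W$. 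The third link is \Cref{thm:KW-classifying}, which identifies $K_W$ as a classifying space for $W_w$ even when $[1,w]$ fails to be a lattice; and the last equality is the natural isomorphism $W_w \cong G_W$, valid in the affine case \cite{mccammond2017artin} (and also recovered as a byproduct of the argument below). So everything reduces to the middle equivalence $X_W' \simeq K_W$.

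To establish $X_W' \simeq K_W$, I would promote the inclusion $X_W' \hookrightarrow K_W$ to a $W_w$-equivariant deformation retraction by means of discrete Morse theory. The cells of $K_W$ are indexed by chains in the interval $[1,w]$, so an EL-labeling of $[1,w]$ is precisely the sort of data from which a discrete Morse function on $K_W$ can be manufactured, by selecting along each maximal flag the cells that fail to be ``descending.'' Feeding in the EL-labeling produced by \Cref{thm:shellability}, together with the structural description of the subintervals $[1,u] \subseteq [1,w]$ from \Cref{thm:coxeter-elements} (each such $u$ being a Coxeter element of the parabolic subgroup it generates), which is exactly what pins down the spherical parabolic pieces out of which $X_W'$ is assembled, I would build an acyclic matching on the cells of $K_W$ whose critical cells are precisely the cells of $X_W'$. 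Discrete Morse theory then yields the deformation retraction $K_W \searrow X_W'$; and since $X_W'$ is glued together from classifying spaces of spherical parabolic subgroups, tracking fundamental groups along the retraction simultaneously recovers the isomorphism $W_w \cong G_W$. Combined with the standard identification $\pi_1(Y_W) \cong G_W$, the chain of equivalences above then shows that $Y_W$ is a $K(G_W,1)$.

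The substantive inputs are \Cref{thm:coxeter-elements,thm:shellability,thm:dual-salvetti,thm:KW-classifying}, each established separately in the preceding sections; granting them, the only remaining task is to build the acyclic matching and verify that its critical cells form exactly $X_W'$. I expect that to be the main obstacle. Unlike in classical shellability, where one argues interval by interval, here the matching must be acyclic on all of $K_W$ at once and, simultaneously, compatible with the $W_w$-action so that the retraction descends correctly; moreover one must check that the critical cells are neither too few (or else the retract is a proper subcomplex of $X_W'$) nor too many (or else the retraction is not onto all of $K_W$). This global, equivariant bookkeeping with the EL-labeling --- rather than any new conceptual ingredient --- is where the real effort of the proof lies.
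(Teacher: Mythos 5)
Your chain $Y_W \simeq X_W' \simeq K_W \simeq K(W_w,1)$ is exactly the one in the paper, and you correctly isolate the new content: a discrete Morse argument, powered by the EL-labeling of \Cref{thm:shellability}, that retracts $K_W$ onto $X_W'$. But your description of that argument contains a misconception that would derail an attempt to execute it. You say the matching must be ``$W_w$-equivariant so that the retraction descends correctly.'' Nothing descends: $K_W$ is already the quotient, not a cover. Its cells $[x_1|\dotsb|x_d]$ are indexed by chains in $[1,w]$, it has a single vertex, and it carries no natural $W_w$-action; the matching is built entirely downstairs on $K_W$ itself. (Perhaps you are thinking of the universal cover $\tilde K_W$, but that is not where the paper works.) The symmetry that \emph{is} used is the $\Z$-action by $w$-conjugation $\varphi(u)=w^{-1}uw$ on $[1,w]$, repackaged as the \emph{fiber component} decomposition of the face poset: the fibers of a poset map $\eta$ to $\N$ split into $\Z$-indexed strips with shift maps $\lambda,\rho$, and this structure --- not equivariance --- is what organizes the whole matching.

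A second omission is more serious. $K_W$ has infinitely many cells, so an acyclic matching on it must be \emph{proper} for discrete Morse theory to apply, and you never address how to guarantee this. The paper handles it in two stages: first collapse $K_W$ onto a \emph{finite} nice subcomplex $K_W'$ (\Cref{thm:nice-subcomplex}, \Cref{lemma:canonical-nice}), retaining only a finite window in each infinite fiber component, which is possible because every fiber component meets $\F(X_W')$ (\Cref{lemma:elliptic-product-vertex}); only then is the delicate matching built on the finite $K_W'$, with $\F(X_W')$ as critical cells, via the depth function (\Cref{def:depth}) and the matching function $\mu$ (\Cref{def:matching}). The acyclicity certificate is a bespoke total order $\trianglelefteq$ on reflection factorizations of $w$, not the EL-labeling directly; the EL-labeling enters through the uniqueness of increasing factorizations in every subinterval, which is what makes $\mu$ an involution and lets one compare factorizations along alternating paths. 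So ``EL-labeling $\Rightarrow$ acyclic matching $\Rightarrow$ retraction'' is the right slogan, but the fiber-component structure, the finite nice subcomplex, and the depth and matching functions are the actual machinery --- and none of it involves $W_w$-equivariance.
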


Artin groups associated with finite (resp.\ affine) Coxeter groups are called \emph{spherical} (resp.\ affine).
Prior to this work, the $K(\pi,1)$ conjecture was proved for spherical Artin groups \cite{fox1962braid, deligne1972immeubles, brieskorn1973groupes}, for affine Artin groups of type $\tilde A_n$, $\tilde C_n$ \cite{okonek1979dask}, and $\tilde B_n$ \cite{callegaro2010k}, for Artin groups of dimension $\leq 2$ (this includes the affine Artin group of type $\tilde G_2$) and of FC type \cite{hendriks1985hyperplane, charney1995k}.

As shown in \cite{salvetti1987topology, salvetti1994homotopy}, the orbit configuration space $Y_W$ has the homotopy type of a CW complex $X_W$, with one $|T|$-cell $c_T$ for every $T$ in
\[ \Delta_W = \{ T \subseteq S \mid \text{the standard parabolic subgroup $W_T$ is finite} \}. \]
In particular, the $1$-cells of $X_W$ are indexed by $S$, and the $2$-cells are indexed by the unordered pairs $\{s,t\} \subseteq S$ with $m(s,t) \neq \infty$.
The $1$-cells are oriented in such a way that $c_{\{s\}}$ corresponds to the generator $s$ of the fundamental group $G_W$, and a $2$-cell $c_{\{s,t\}}$ corresponds to the relation $stst\dotsm = tsts\dotsm$.
In the literature, the CW complex $X_W$ is usually called the \emph{Salvetti complex} of $W$.
For $T \subseteq S$, there is a natural inclusion of complexes $X_{W_T} \subseteq X_W$, induced by the inclusion $\Delta_{W_T} \subseteq \Delta_W$.

\subsection{Posets}
\label{sec:posets}

We now recall some basic terminology about partially ordered sets (\emph{posets}).
See \cite{stanley2012enumerative} for a more detailed exposition.

Let $(P, \leq)$ be a poset.
If $p < q$ in $P$ and there is no element $r\in P$ with $p<r<q$, then we say that $q$ \emph{covers} $p$, and write $p \lessdot q$.
Given an element $q \in P$, define $P_{\leq q}= \{p \in P \mid p \leq q\}$.
We say that $P$ is \emph{bounded} if it contains a unique minimal element and a unique maximal element.
A (finite) \emph{chain} in $P$ is a totally ordered sequence $p_0 < p_1 < \dotsb < p_n$ of elements of $P$.
A chain of $n+1$ elements is conventionally said to have \emph{length} $n$.

If $p \leq q$, the \emph{interval} $[p,q]$ in $P$ is the set of all elements $r \in P$ such that $p \leq r \leq q$.
We say that $P$ is \emph{graded} if, for every $p \leq q$, all the maximal chains in $[p,q]$ have the same (finite) length.
Then there exists a \emph{rank function} $\rk\colon P \to \Z$ such that $\rk(q) - \rk(p)$ is the length of any maximal chain in $[p,q]$.
The rank of $P$ is defined as the maximal length of a chain of $P$.

A poset $P$ is said to be a \emph{lattice} if every pair of elements $p,q \in P$ has a unique maximal common lower bound and a unique minimal common upper bound.

The \emph{Hasse diagram} of a poset $P$ is the graph with vertex set $P$ and having an edge $(p, q)$ for every covering relation $p \lessdot q$.
We indicate by $\E(P) = \{ (p,q) \in P \times P \mid p \lessdot q\}$ the set of edges of the Hasse diagram of $P$.

\subsection{Lexicographic shellability}
\label{sec:lexicographic-shellability}

In this section, we recall the definition of EL-shellability
\cite{bjorner1983lexicographically,bjorner1996shellable}.

Let $P$ be a bounded poset.
An \emph{edge labeling} of $P$ is a map $\lambda\colon \E(P) \to \Lambda$, where $\Lambda$ is some poset.
Given an edge labeling $\lambda$, each maximal chain $c = (x \lessdot z_1 \lessdot \dotsb \lessdot z_t \lessdot y)$ between any two elements $x \leq y$ has an associated word
\[ \lambda(c) = \lambda(x, z_1) \, \lambda(z_1, z_2) \,\dotsm\, \lambda(z_t, y). \]
We say that the chain $c$ is \emph{increasing} if the associated word $\lambda(c)$ is strictly increasing.
Maximal chains in a fixed interval $[x,y]\subseteq P$ can be compared lexicographically (i.e.\ by using the lexicographic ordering on the corresponding words).

\begin{definition}
	Let $P$ be a bounded poset.
	An \emph{edge-lexicographic labeling} (or simply \emph{EL-labeling}) of $P$ is an edge labeling such that in each closed interval $[x,y] \subseteq P$ there is a unique increasing maximal chain, and this chain lexicographically precedes all other maximal chains of $[x,y]$.
	\label{def:el-labeling}
\end{definition}

A bounded poset that admits an EL-labeling is said to be \emph{EL-shellable}.
If $P$ is an EL-shellable poset, then the order complex of $P \setminus \{ \min(P), \max(P) \}$ is homotopy equivalent to a wedge of spheres.

Let $P_1$ and $P_2$ be bounded posets that admit EL-labelings $\lambda_1\colon \E(P_1)\to \Lambda_1$ and $\lambda_2\colon \E(P_2) \to \Lambda_2$, respectively. Assume that $\Lambda_1$ and $\Lambda_2$ are disjoint and totally ordered.
Let $\lambda\colon \E(P_1 \times P_2) \to \Lambda_1 \cup \Lambda_2$ be the edge labeling of $P_1\times P_2$ defined as follows:
\begin{IEEEeqnarray*}{lCl}
	\lambda((x,y), \, (z,y)) &=& \lambda_1(x,z) \\
	\lambda((x,y), \, (x,t)) &=& \lambda_2(y,t).
\end{IEEEeqnarray*}

\begin{theorem}[{\cite[Proposition 10.15]{bjorner1997shellable}}]
	\label{thm:product-el-labelings}
	Fix any shuffle of the total orderings on $\Lambda_1$ and $\Lambda_2$, to get a total ordering on $\Lambda_1 \cup \Lambda_2$.
	Then the product edge labeling $\lambda$ defined above is an EL-labeling of $P_1 \times P_2$.
\end{theorem}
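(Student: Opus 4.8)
\emph{Proof plan.} The plan is to verify, interval by interval, the two conditions of \Cref{def:el-labeling} for $P_1\times P_2$ (which is bounded because $P_1$ and $P_2$ are). The key structural fact is that every closed interval of $P_1\times P_2$ has the form $[(x,y),(z,t)]=[x,z]\times[y,t]$, and that a maximal chain $c$ of such an interval is exactly a shuffle of a maximal chain $c_1$ of $[x,z]$ (the steps that increase the first coordinate) with a maximal chain $c_2$ of $[y,t]$ (the steps that increase the second coordinate); by the definition of $\lambda$, the word $\lambda(c)$ is the corresponding shuffle of the words $\lambda_1(c_1)$ and $\lambda_2(c_2)$. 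So it suffices to show, for each such interval $I$, that $I$ has a unique increasing maximal chain $\bar c$ and that $\bar c$ lexicographically precedes every other maximal chain of $I$.

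For uniqueness: if $\lambda(c)$ is strictly increasing then so is each of its subwords, and since $\Lambda_1\cap\Lambda_2=\emptyset$ the $\Lambda_1$-letters of $\lambda(c)$ are precisely $\lambda_1(c_1)$ and its $\Lambda_2$-letters precisely $\lambda_2(c_2)$; hence $c$ is increasing only if both $c_1$ and $c_2$ are. By the EL-property of the factors this forces $c_1=\bar c_1$ and $c_2=\bar c_2$, their unique increasing maximal chains. Finally, the letters of $\bar c_1$ and of $\bar c_2$ are pairwise distinct (increasing within each factor, disjoint across factors) and $\Lambda_1\cup\Lambda_2$ is totally ordered, so there is exactly one shuffle of $\bar c_1$ and $\bar c_2$ giving a strictly increasing word, namely the sorted merge, and it is a legitimate shuffle because $\lambda_1(\bar c_1)$ and $\lambda_2(\bar c_2)$ are already sorted. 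This is $\bar c$.

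For lexicographic minimality I would induct on the number of edges of $\bar c$; if $\bar c$ is trivial then $I$ is a point and there is nothing to prove. The crux is an auxiliary statement about an arbitrary EL-labeled interval $[x,z]$, with increasing (hence lexicographically minimal) maximal chain $\bar c_1$ whose first edge is $x\lessdot x_1$ with label $a$: \emph{(i)} every maximal chain of $[x,z]$ has first label $\geq a$; and \emph{(ii)} every maximal chain of $[x,z]$ whose first label equals $a$ has $x\lessdot x_1$ as its first edge. For (i), otherwise, following that chain's first edge and then the increasing chain of the remaining interval produces a maximal chain of $[x,z]$ with first letter below $a$, contradicting the minimality of $\bar c_1$. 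For (ii), if the first edge were $x\lessdot x_1'$ with $x_1'\neq x_1$ and label $a$, and $\bar e$ is the increasing chain of $[x_1',z]$ with first label $e$, then the maximal chain of $[x,z]$ obtained by following $x\lessdot x_1'$ and then $\bar e$ yields a contradiction either way: if $a<e$ it is itself strictly increasing (since $\bar e$ is) and, having a different first edge, is a second increasing maximal chain of $[x,z]$; if $a\geq e$ then its second letter $e$ satisfies $e\leq a<\bar a_2$, the second letter of $\bar c_1$ (which exists because $x_1'\neq x_1$ forces $[x,z]$ to have length at least $2$), so this chain precedes $\bar c_1$ lexicographically. Granting (i)--(ii), let $\ell_0$ be the smallest letter of $\bar c$, i.e.\ the minimum of the first letters $\bar a_1$ of $\bar c_1$ and $\bar b_1$ of $\bar c_2$; after possibly interchanging the two factors, assume $\ell_0=\bar a_1\in\Lambda_1$, so $\bar a_1<\bar b_1$ and the first edge of $\bar c$ is the first edge $(x,y)\lessdot(x_1,y)$ of $\bar c_1$. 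Let $c\neq\bar c$ be any maximal chain of $I$, a shuffle of $c_1$ and $c_2$. If $c$ starts with a horizontal step then its first letter is the first label of $c_1$, which is $\geq\bar a_1$ by (i) for $[x,z]$; if $c$ starts with a vertical step then its first letter is $\geq\bar b_1>\bar a_1$ by (i) for $[y,t]$; so the first letter of $\lambda(c)$ is $\geq\ell_0$, and if it is strictly larger we are done. If it equals $\ell_0=\bar a_1\in\Lambda_1$ then $c$ begins with a horizontal step, and (ii) for $[x,z]$ forces that step to be $(x,y)\lessdot(x_1,y)$, the first step of $\bar c$. Then the tails of $c$ and of $\bar c$ are maximal chains of $I'=[x_1,z]\times[y,t]$, and the tail of $\bar c$ is the sorted merge of $\bar c_2$ with the increasing chain of $[x_1,z]$, hence the unique increasing chain of $I'$ by the uniqueness argument above; by the inductive hypothesis it lexicographically precedes the tail of $c$, strictly since $c\neq\bar c$, and prepending the common letter $\ell_0$ completes the induction.

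The step I expect to be the main obstacle is precisely the lemma (i)--(ii) above: for a general edge labeling two maximal chains with the same first label need not have the same first edge, and it is the lexicographic-minimality clause of the EL-property, not merely the uniqueness of the increasing chain, that forces it here. Beyond that point the argument is bookkeeping with shuffles; the disjointness of $\Lambda_1$ and $\Lambda_2$ enters exactly to make the strictly increasing shuffle of two sorted words unique and to let one read off from a label which coordinate the corresponding step changed.
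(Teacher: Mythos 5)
Your proof is correct. The paper cites this result from Björner and Wachs without proving it; your argument follows the standard route --- decompose maximal chains of $[x,z]\times[y,t]$ as shuffles, observe that the unique increasing chain is the sorted merge of the two increasing chains, and establish lex-minimality by induction on chain length --- with your auxiliary facts (i)--(ii) being the well-known "unique minimal atom" property of EL-labeled intervals.
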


\subsection{Discrete Morse theory}
\label{sec:dmt}

In this section we recall Forman's discrete Morse theory \cite{forman1998morse,forman2002user}.
We follow the point of view of Chari \cite{chari2000discrete}, using acyclic matchings instead of discrete Morse functions, and we make use of the generality of \cite[Section 3]{batzies2002discrete} for the case of infinite CW complexes.

Let $P$ be a graded poset, and denote by $H$ the Hasse diagram of $P$.
Given a subset $\M$ of $\E(P)$, we can orient all edges of $H$ in the following way: an edge $(p,q) \in \E(P)$ (i.e.\ with $p \lessdot q$) is oriented from $p$ to $q$ if it is in $\M$, otherwise in the opposite direction.
Denote this oriented graph by $H_{\M}$.

A \emph{matching} on $P$ is a subset $\M \subseteq \E(P)$ such that every element of $P$ appears in at most one edge of $\M$.
A matching $\M$ is \emph{acyclic} if the graph $H_{\M}$ has no directed cycles.
Given a matching $\M$ on $P$, an \emph{alternating path} is a directed path in $H_{\M}$ such that two consecutive edges of the path do not belong both to $\E(P) \setminus \M$.
In the graph $H_\M$, the edges in $\M$ increase the rank by $1$, and the edges in $\E(P) \setminus \M$ decrease the rank by $1$.
Therefore, a matching $\M$ is acyclic if and only if it has no closed alternating paths (which are called \emph{alternating cycles}).
The elements of $P$ that do not appear in any edge of $\M$ are called \emph{critical} (with respect to the matching $\M$).
An acyclic matching $\M$ is \emph{proper} if, for every $p \in P$, the set of vertices of $H_\M$ reachable from $p$ (with a directed path) is finite.

Let $X$ be a CW complex.
The \emph{face poset} $\F(X)$ of $X$ is the set of its (open) cells together with the partial order defined by $\tau \leq \sigma$ if $\bar\tau \subseteq \bar\sigma$.
For all CW complexes $X$ considered in this paper, the face poset $\F(X)$ is a graded poset with rank function $\rk(\sigma) = \dim(\sigma)$.
Recall that each cell of $X$ has a characteristic map $\Phi\colon D^n\to X$, where $D^n = \{x \in \R^n \mid \|x\| \leq 1 \}$.

Let $\sigma$ and $\tau$ be cells of $X$.
If $\tau \lessdot \sigma$ we say that $\tau$ is a \emph{face} of $\sigma$.
We say that $\tau$ is a \emph{regular face} of $\sigma$ if, in addition, the following two conditions hold (set $n=\dim(\tau)$ and let $\Phi$ be the characteristic map of $\sigma$):
\begin{enumerate}[(i)]
	\itemsep0.1cm
	\item $\Phi|_{\Phi^{-1}(\tau)}\colon \Phi^{-1}(\tau) \to \tau$ is a homeomorphism;
	\item $\smash{\overline{\Phi^{-1}(\tau)}}$ is a closed $n$-ball in $D^{n+1}$.
\end{enumerate}

The following is a particular case of the main theorem of discrete Morse theory and follows from \cite[Theorem 3.2.14 and Remark 3.2.17]{batzies2002discrete}.

\begin{theorem}[\cite{forman1998morse,chari2000discrete,batzies2002discrete}]
	\label{thm:discrete-morse-theory}
	Let $X$ be a CW complex, and let $Y \subseteq X$ be a subcomplex.
	Suppose that there exists a proper acyclic matching $\M$ on the face poset $\F(X)$ such that: $\F(Y)$ is the set of critical cells; for every $(\tau, \sigma) \in \M$, we have that $\tau$ is a regular face of $\sigma$.
	Then $X$ deformation retracts onto $Y$.
	In particular, the inclusion $Y \hookrightarrow X$ is a homotopy equivalence.
\end{theorem}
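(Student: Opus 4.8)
The plan is to build the deformation retraction of $X$ onto $Y$ by performing one \emph{elementary collapse} for each pair of $\M$, sequenced so that every individual collapse is legitimate and the (possibly infinite) composition converges. The local ingredient is this: suppose $(\tau,\sigma)\in\M$ and, at some stage, $\tau$ is a \emph{free} face of the current subcomplex, i.e.\ $\tau\lessdot\sigma$ and $\tau$ is a proper face of no cell other than $\sigma$. Then conditions (i) and (ii) in the definition of a regular face are exactly what is needed: (i) identifies $\Phi^{-1}(\tau)$ with $\tau$, and (ii) says $\overline{\Phi^{-1}(\tau)}$ is a closed $n$-ball in $\partial D^{n+1}$, so its complement in the $n$-sphere $\partial D^{n+1}$ is again an $n$-ball. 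One can then push the characteristic map of $\sigma$ across $\tau$ to deformation retract $\bar\sigma$ onto the union of its faces other than $\tau$, keeping that union pointwise fixed; extending by the identity gives a deformation retraction of $X$ onto the subcomplex $X\setminus(\tau\cup\sigma)$.

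Next I would use acyclicity to sequence these collapses. A matched pair is almost never free in $X$ itself, but acyclicity of $\M$ --- the absence of alternating cycles --- yields a well-founded ordering of the pairs of $\M$ (equivalently, a discrete Morse function compatible with $\M$) along which the pairs can be collapsed one at a time, each $\tau$ becoming free precisely when its pair is treated. Carrying out all of these elementary collapses removes exactly the non-critical cells of $X$; what remains is, by hypothesis, the set of critical cells, namely the given subcomplex $Y$. (Here it is essential that the critical cells are assumed to form a subcomplex --- in general they need not.)

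The genuinely nontrivial point, when $X$ has infinitely many cells, is convergence: an infinite composition of deformation retractions need not converge to a continuous map. This is where properness of $\M$ is used. Since for each cell $p$ only finitely many cells are reachable from $p$ in $H_\M$, each cell is affected by only finitely many of the elementary collapses; the collapses can therefore be organized into a locally finite family, whose composition is a well-defined, continuous deformation retraction of $X$ onto $Y$. This is precisely the content of \cite[Theorem 3.2.14 and Remark 3.2.17]{batzies2002discrete}, which I would simply invoke for the details; the statement that $Y\hookrightarrow X$ is a homotopy equivalence is then immediate. I expect this last step --- making the infinite composition converge, which is what forces the properness hypothesis --- to be the main obstacle; by contrast the finite case is a straightforward induction on $|\M|$ once the collapse lemma and the acyclic ordering are available.
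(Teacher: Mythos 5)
The paper does not actually prove this theorem: it states it as a known result and refers to \cite[Theorem 3.2.14 and Remark 3.2.17]{batzies2002discrete} for the proof. Your sketch of the elementary-collapse strategy, the role of regularity, the acyclicity-induced ordering, and the use of properness to tame the infinite composition is an accurate high-level account of what that reference establishes, and you explicitly defer to the same source for the details — so your approach is essentially identical to the paper's.

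One small imprecision worth flagging: you write that acyclicity alone \emph{yields a well-founded ordering of the pairs of $\M$ (equivalently, a discrete Morse function compatible with $\M$)}. That equivalence holds for finite complexes, but for infinite ones acyclicity does not by itself give well-foundedness or a compatible Morse function (an infinite descending chain has no cycle yet is not well-founded); it is precisely properness that supplies the needed finiteness. You do invoke properness in the next paragraph for convergence, so the logic is ultimately sound, but the parenthetical equivalence is only accurate once properness has already been assumed.
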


We conclude by recalling a standard tool to construct acyclic matchings.

\begin{theorem}[Patchwork theorem {\cite[Theorem 11.10]{kozlov2007combinatorial}}]\label{thm:patchwork}
	Let $\eta \colon P \to Q$ be a poset map.
	For all $q \in Q$, assume to have an acyclic matching $\M_q \subseteq \E(P)$ that involves only elements of the fiber $\eta^{-1}(q) \subseteq P$.
	Then the union of these matchings is itself an acyclic matching on $P$.
\end{theorem}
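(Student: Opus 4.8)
The plan is to verify directly that $\M := \bigcup_{q \in Q} \M_q$ satisfies the two requirements of an acyclic matching, using only that the fibers $\eta^{-1}(q)$ partition $P$ and that $\eta$ is order-preserving.

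First I would check that $\M$ is a matching. Fix $p \in P$ and put $q = \eta(p)$. If $q' \neq q$, then every edge of $\M_{q'}$ involves only elements of $\eta^{-1}(q')$, and $p \notin \eta^{-1}(q')$; hence $p$ can appear only in edges of $\M_q$, and since $\M_q$ is a matching it appears in at most one of them. So $p$ lies in at most one edge of $\M$.

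For acyclicity, I would first recall the standard observation that any directed cycle $C$ in $H_\M$ must be alternating: a matched edge raises the rank by $1$ and an unmatched edge lowers it by $1$, so $C$ contains the same number $k$ of each; and two matched edges cannot be consecutive in $C$, since that would place a vertex in two edges of the matching $\M$. Thus the $k$ matched edges split $C$ into $k$ nonempty arcs of unmatched edges, forcing each such arc to be a single edge. Now suppose for contradiction that $H_\M$ contains such a cycle $C = (v_0 \to v_1 \to \dotsb \to v_m = v_0)$. Each matched step has $\eta(v_\ell) = \eta(v_{\ell+1})$, because the edge lies in some $\M_q$, which is supported on $\eta^{-1}(q)$; each unmatched step goes from the larger element to the smaller one, so $\eta(v_{\ell+1}) \leq \eta(v_\ell)$ by monotonicity. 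Going around the cycle yields $\eta(v_0) \geq \eta(v_1) \geq \dotsb \geq \eta(v_m) = \eta(v_0)$, so $\eta$ is constant on $C$, with common value some $q \in Q$.

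It then remains to observe that $C$ is a directed cycle in $H_{\M_q}$, which contradicts the acyclicity of $\M_q$. Indeed, every matched edge of $C$ lies in $\M$, hence in some $\M_{q'}$, and $q'$ is the common $\eta$-value of its two endpoints, namely $q$; so this edge is matched in $\M_q$ as well. Every unmatched edge of $C$ lies in $\E(P) \setminus \M$, hence in $\E(P) \setminus \M_q$ since $\M_q \subseteq \M$; so it is unmatched in $\M_q$. Thus $C$ survives verbatim as a directed cycle in $H_{\M_q}$, and we are done. The only point that requires care is the combinatorial bookkeeping in the previous paragraph — that a directed cycle of $H_\M$ is automatically alternating and automatically confined to a single fiber — but this is immediate from the matching condition together with the monotonicity of $\eta$, so I do not anticipate a genuine obstacle.
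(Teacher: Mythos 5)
Your proof is correct. The paper states this result with a citation to Kozlov and does not give a proof of its own, so there is nothing in-text to compare against; your argument is the standard one: matched edges preserve $\eta$, unmatched edges do not increase it, hence any directed cycle in $H_\M$ is confined to a single fiber $\eta^{-1}(q)$, where it persists verbatim as a directed cycle in $H_{\M_q}$ (matched edges of $C$ already lie in $\M_q$, and unmatched edges lie outside $\M \supseteq \M_q$, so all orientations are preserved), contradicting the acyclicity of $\M_q$. One small remark: your preliminary paragraph establishing that every directed cycle in $H_\M$ is strictly alternating is correct as far as it goes (though it implicitly uses $k \geq 1$, which holds because a cycle with no matched edges would strictly decrease rank and could not close up), but it is never actually used afterward. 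The monotonicity chain $\eta(v_0) \geq \eta(v_1) \geq \dotsb \geq \eta(v_m) = \eta(v_0)$ works for an arbitrary directed cycle, alternating or not, so that paragraph could be removed without loss.
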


\subsection{Interval groups and Garside structures}
\label{sec:interval-groups}

We now recall the construction of interval groups, and how they give rise to Garside structures.
Our exposition mostly follows \cite[Section 2]{mccammond2017artin} and \cite[Section 1]{mccammond2005introduction}.
See \cite{dehornoy1999gaussian, bessis2003dual, charney2004bestvina, digne2006presentations, digne2012garside, dehornoy2013garside, dehornoy2015foundations, mccammond2015dual, mccammond2017artin} for a complete reference on Garside structures.

Let $G$ be a group, with a (possibly infinite) generating set $R \subseteq G$ such that $R = R^{-1}$.
Suppose that the elements of $R$ are assigned positive weights bounded away from $0$ that form a discrete subset of the positive real numbers.
Assume that $r$ and $r^{-1}$ have the same weight, for every $r \in R$.
For every $x \in G$, denote by $l(x)$ the minimum sum of the weights of some generators $r_1,r_2, \dotsc, r_k \in R$ such that $r_1r_2\dotsm r_k = x$.
In other words, $l(x)$ is the distance between $1$ and $x$ in the weighted right Cayley graph of $G$ (with respect to the weighted generating set $R$).

The group $G$ becomes a poset if we set $x \leq y$ whenever $l(x) + l(x^{-1}y) = l(y)$, i.e.\ if there is a minimal length factorization of $y$ that starts with a minimal length factorization of $x$.
Given an element $g \in G$, denote by $[1,g]^G \subseteq G$ the interval between $1$ and $g$ (with respect to the partial order $\leq$ in $G$).
The Hasse diagram of $[1,g]^G$ embeds into the Cayley graph of $G$.
Every edge of the Hasse diagram is of the form $(x, xr)$ for some $r \in R$, and we label it by $r$.

\begin{definition}[Interval group {\cite[Definition 2.6]{mccammond2017artin}}]
	The \emph{interval group} $G_g$ is the group presented as follows.
	Let $R_0$ be the subset of $R$ consisting of the labels of edges in $[1,g]^G$.
	The group $G_g$ has $R_0$ as its generating set, and relations given by all the closed loops inside the Hasse diagram of $[1,g]^G$.
	\label{def:interval-group}
\end{definition}

The interval $[1,g]^G$ is \emph{balanced} if the following condition is satisfied: for every $x \in G$, we have $l(x) + l(x^{-1}g) = l(g)$ if and only if $l(gx^{-1}) + l(x) = l(g)$.
This condition is automatically satisfied if the generating set $R$ is closed under conjugation and the weight of a generator is equal to the weight of all its conjugates.

\begin{theorem}[{\cite[Theorem 0.5.2]{bessis2003dual}, \cite[Proposition 2.11]{mccammond2017artin}}]
	If the interval $[1,g]^G$ is a balanced lattice, then the group $G_g$ is a Garside group.
	\label{thm:garside-structure}
\end{theorem}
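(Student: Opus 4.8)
The plan is to realize $[1,g]^G$ as a \emph{Garside germ} and feed it into the standard machinery that turns such a germ into a Garside monoid, whose group of fractions is $G_g$. Let $M_g$ be the monoid with the same presentation as in \Cref{def:interval-group} (generators $R_0$, relations the closed loops in the Hasse diagram of $[1,g]^G$), and let $\varphi\colon M_g \to G$ be the canonical monoid homomorphism. Then $G_g$ is obtained from $M_g$ by formally inverting the generators, and it suffices to show that $M_g$ is a Garside monoid with Garside element $\Delta$, the image in $M_g$ of a minimal-length factorization of $g$: a group of fractions of a Garside monoid is a Garside group.

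First I would dispatch the structural preliminaries. Since the weights of the generators are bounded away from $0$ and form a discrete set, every defining relation preserves total weight, so $M_g$ carries an additive length function; hence $M_g$ is atomic (noetherian), its set of atoms is exactly $R_0$, and there are no infinite descending chains of divisors. The real work is to prove that $\varphi$ restricts to an isomorphism of posets between the left-divisibility interval $[1,\Delta]^{M_g}$ and the order interval $[1,g]^G$. One direction is immediate from the presentation. For the other, one must show that the rewriting system attached to the presentation is confluent: whenever two atoms $r,r'\in R_0$ can both begin a minimal factorization of some $u\le g$, the meet and join operations in the lattice $[1,g]^G$ provide the required common refinement, which is precisely a cube-condition / word-reversing completeness statement; atomicity then upgrades local confluence to global confluence by induction on length. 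Confluence yields at once that $M_g$ is left cancellative, that $\varphi$ is injective on $[1,\Delta]^{M_g}$, and that $[1,\Delta]^{M_g}$ is a lattice carried isomorphically onto $[1,g]^G$.

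It remains to check that $\Delta$ is a Garside element and to transfer everything to the right. Here the \emph{balanced} hypothesis is exactly what is needed: the equivalence $l(x)+l(x^{-1}g)=l(g)\iff l(gx^{-1})+l(x)=l(g)$ says that an element of $G$ left-divides $g$ if and only if it right-divides $g$, so the set of left divisors of $\Delta$ in $M_g$ coincides with the set of right divisors, and both equal the copy of $[1,g]^G$; in particular conjugation by $\Delta$ permutes $R_0$. Running the confluence argument on the right (using right meets and joins in the lattice) gives right cancellativity and the existence of right gcds and lcms. By the standard criterion for Garside monoids (Dehornoy's criterion, or the germ criterion used by Bessis in \cite{bessis2003dual}), $M_g$ is then a Garside monoid, $\varphi$ is injective, its group of fractions is $G_g$, and therefore $G_g$ is a Garside group.

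I expect the main obstacle to be the confluence/cancellativity step: propagating the lattice structure from the single finite germ $[1,g]^G$ to global left- and right-divisibility lattices on the infinite monoid $M_g$, while simultaneously excluding spurious relations. Atomicity is what makes the local-to-global induction on length go through, and the balanced condition is what keeps the left-handed and right-handed forms of the argument compatible, so that the single element $\Delta$ serves as a Garside element on both sides.
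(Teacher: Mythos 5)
The paper cites this theorem to Bessis and McCammond--Sulway without giving its own proof, and your proposal correctly reconstructs the germ/interval-monoid argument used in those references: atomicity from the additive length function, the lattice property of $[1,g]^G$ supplying the cube condition for word reversing, balance making $\Delta$ a two-sided Garside element (left divisors $=$ right divisors, conjugation by $\Delta$ permuting $R_0$), and Dehornoy's completeness/germ criterion delivering cancellativity, gcds, and lcms. The one loose phrase --- that confluence ``yields at once'' left cancellativity --- is really an appeal to the word-reversing completeness theorem (confluence alone does not give cancellativity; one needs Noetherianity and the sharp cube condition), but you invoke the standard criterion explicitly at the end, so this is a matter of how the logic is ordered rather than a missing idea.
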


See \cite{dehornoy1999gaussian, digne2006presentations, digne2012garside} for the definition of Garside groups.
As in \cite{mccammond2017artin}, we use the term ``Garside group'' in the sense of Digne \cite{digne2006presentations, digne2012garside} (so that the generating set $R$ need not be finite).

The classifying space of a Garside group can be constructed explicitly, as shown in \cite{dehornoy2003homology, charney2004bestvina}.
Here we generalize this construction to the case of arbitrary interval groups arising from balanced intervals, without the lattice assumption.
In the case of Garside groups, it is equivalent to the definitions given in \cite[Section 3]{charney2004bestvina} (see in particular \cite[Definition 3.5 and Theorem 3.6]{charney2004bestvina}) and in \cite[Definition 1.6]{mccammond2005introduction}.

\begin{definition}[Interval complex]
	Realize the standard $d$-simplex $\Delta^d$ as the set of points $(a_1,a_2,\dotsc,a_d) \in \R^d$ such that $1 \geq a_1 \geq a_2 \geq \dotsb \geq a_d \geq 0$.
	The \emph{interval complex} associated with a balanced interval $[1,g]^G$ is a $\Delta$-complex (in the sense of \cite{hatcher}) having a $d$-simplex $[x_1|x_2|\dotsb|x_d]$ for every $x_1,x_2,\dotsc,x_d \in [1,g]^G$ such that:
	\begin{enumerate}[(i)]
		\item $x_i \neq 1$ for all $i$;
		\item $x_1x_2\dotsm x_d \in [1,g]^G$;
		\item $l(x_1x_2\dotsm x_d) = l(x_1) + l(x_2) + \dotsb + l(x_d)$.
	\end{enumerate}
	The faces of a simplex $[x_1|\dotsb|x_d]$ are as follows.
	\begin{itemize}
		\item The face $\{1=a_1\geq a_2\geq \dotsb \geq a_d \geq 0 \}$ of $[x_1|\dotsb|x_d]$ is glued to the $(d-1)$-simplex $[x_2|\dotsb| x_d]$ by sending $(1,a_2,\dotsc,a_d)$ to $(a_2,\dotsc,a_d) \in \Delta^{d-1}$.

		\item For $1 \leq i \leq d-1$, the face $\{1 \geq a_1 \geq \dotsb \geq a_i = a_{i+1} \geq \dotsb \geq a_d \geq 0\}$ of $[x_1|\dotsb| x_d]$ is glued to the $(d-1)$-simplex $[x_1|\dotsb|x_ix_{i+1}|\dotsb|x_d]$ by sending $(a_1,\dotsc,a_i,a_i,a_{i+2},\dotsc,a_d)$ to $(a_1,\dotsc,a_i,a_{i+2},\dotsc,a_d) \in \Delta^{d-1}$.
		
		\item Finally, the face $\{1\geq a_1 \geq \dotsb \geq a_d = 0\}$ of $[x_1|\dotsb|x_d]$ is glued to the $(d-1)$-simplex $[x_1|\dotsb|x_{d-1}]$ by sending $(a_1,\dotsc,a_{d-1},0)$ to $(a_1,\dotsc,a_{d-1}) \in \Delta^{d-1}$.
	\end{itemize}
	Notice that there is a unique vertex, which is indicated by $[\,]$.
	The $1$-simplices are oriented going from $0$ to $1$ in $\Delta^1 = [0,1]$.
	See \Cref{fig:simplex} for an example.
	The fact that $[1,g]^G$ is balanced ensures that the faces of a simplex also belong to the interval complex.
	\label{def:interval-complex}
\end{definition}

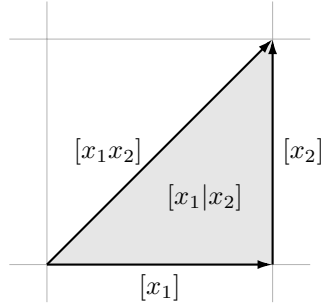
\begin{figure}[tbp]
	\begin{center}
		\begin{tikzpicture}
		\draw[step=3.0,black!30,very thin] (2.5,2.5) grid (6.5,6.5);
		
		\begin{scope}[every path/.style={thick, -{Latex[length=2mm,width=1.2mm]}}]
			\draw (3,3) -- (6,3) node[midway,below] {$[x_1]$};
			\draw (6,3) -- (6,6) node[midway,right] {$[x_2]$};
			\draw (3,3) -- (6,6) node[midway,left] {$[x_1x_2]$\;};
		\end{scope}
		\path[fill=black,opacity=0.1] (3,3) -- (6,3) -- (6,6) -- cycle;
		\node (A) at (5.1,3.9) {$[x_1|x_2]$};
		\end{tikzpicture}
	\end{center}
	\caption{Realization in $\R^2$ of a $2$-simplex $[x_1|x_2]$.}
	\label{fig:simplex}
\end{figure}

The fundamental group of the interval complex associated with $[1,g]^G$ is $G_g$.
This can be easily checked by looking at the $2$-skeleton.

\begin{theorem}[{\cite[Theorem 3.1]{charney2004bestvina}}]
	\label{thm:garside-classifying-space}
	If $[1,g]^G$ is a balanced lattice, then $G_g$ is a Garside group and the interval complex associated with $[1,g]^G$ is a classifying space for $G_g$.
\end{theorem}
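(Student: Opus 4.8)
The first assertion is immediate: since $[1,g]^G$ is a balanced lattice, \Cref{thm:garside-structure} already gives that $G_g$ is a Garside group, so the plan is to concentrate on the asphericity of the interval complex $X$ of \Cref{def:interval-complex}. Because $X$ has a single vertex $[\,]$ and $\pi_1(X) \cong G_g$, I would first reduce the problem to showing that the universal cover $\widetilde{X}$ is contractible. This cover has vertex set $G_g$, and for each $d$-simplex $[x_1|x_2|\dotsb|x_d]$ of $X$ and each $h \in G_g$ it carries a $d$-simplex $h\cdot[x_1|x_2|\dotsb|x_d]$ with ordered vertices $h, hx_1, hx_1x_2, \dotsc, hx_1x_2\dotsm x_d$, glued according to the face relations of \Cref{def:interval-complex}. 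Since the $x_i$ are nontrivial and their lengths add, these $d+1$ vertices are distinct and form a chain for the left-divisibility order on $G_g$; in particular every face appearing in \Cref{def:interval-complex} is a regular face, so that \Cref{thm:discrete-morse-theory} becomes available. It then suffices to produce a proper acyclic matching on $\F(\widetilde{X})$ whose only critical cell is the base vertex.

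To build such a matching I would exploit the Garside normal form supplied by the lattice structure. Because $[1,g]^G$ is a balanced lattice, every element of $G_g$ has a well-defined greatest simple prefix, and iterating it yields the unique left-greedy normal form of an arbitrary element as a power of $g$ times a product of simple elements. For a non-critical simplex $\sigma = h\cdot[x_1|x_2|\dotsb|x_d]$ one can then canonically either split off or absorb a first ``greedy piece'' on the basepoint side: one passes to a face when $x_1$ is already greedy with respect to $x_1x_2\dotsm x_d$, and to a refinement (a coface) otherwise. To turn this rule into an acyclic matching I would invoke the Patchwork theorem (\Cref{thm:patchwork}): choose a poset map $\eta$ on $\F(\widetilde{X})$ recording the relevant Garside data of $\sigma$ (for instance the infimum and canonical length of its smallest vertex, together with lexicographic data on its greedy factors), and check that inside each fiber the rule reduces to the standard acyclic ``prepend/delete the first generator'' matching of a bar-type complex; properness is then a matter of choosing a complexity statistic that the flow strictly decreases and that is bounded below. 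An alternative that bypasses \Cref{thm:discrete-morse-theory} is to write down, directly from the greedy normal form, a $\Z G_g$-linear contracting homotopy $s$ of the cellular chain complex $C_*(\widetilde{X})$ with $\partial s + s\partial = \id$ away from the base vertex, and then conclude via the Hurewicz and Whitehead theorems that the simply connected complex $\widetilde{X}$ is contractible; this is essentially the same computation.

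The hard part will be verifying that this normal-form pairing (equivalently, the contracting homotopy) is everywhere defined, is a genuine matching, and is acyclic, i.e.\ that the greedy decomposition interacts correctly with all three types of face map in \Cref{def:interval-complex}: deleting the first factor, deleting the last factor, and multiplying two consecutive factors. This is precisely the step where the lattice hypothesis is indispensable: without meets and joins the ``greatest simple prefix'' is not unique, the pairing becomes ambiguous, and alternating cycles appear. Once the basic laws governing simple elements and greedy normal forms are in place, what remains is bookkeeping; this is the content of \cite[Theorem 3.1]{charney2004bestvina}, which builds on Bestvina's normal-form techniques.
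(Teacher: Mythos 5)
The paper does not prove this theorem; it is stated verbatim as a citation to \cite[Theorem 3.1]{charney2004bestvina} and used as a black box throughout. So there is no ``paper's own proof'' to compare against. With that said, your sketch is a faithful reconstruction of the shape of the Charney--Meier--Whittlesey argument: pass to the universal cover $\widetilde{X}$ (whose vertices are indexed by $G_g$ and whose simplices are chains for the Garside left-divisibility order), observe that the vertices of each lifted simplex are distinct because the lengths of the $x_i$ add, hence every face is regular, and then contract $\widetilde{X}$ by a pairing built from greedy normal forms; the lattice hypothesis is exactly what makes the greatest simple prefix unique and hence the pairing well-defined and acyclic. You correctly locate the role of the lattice condition, and you correctly reduce the first assertion to \Cref{thm:garside-structure}.

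The one caveat worth flagging is that your proposal is a program rather than a proof: the construction of the poset map $\eta$ for the Patchwork theorem, the verification that the ``prepend/delete a greedy piece'' rule is an involution compatible with all three face maps of \Cref{def:interval-complex} (deleting the first factor, deleting the last factor, and merging two consecutive factors), the acyclicity of the resulting matching, and the choice of a complexity statistic for properness are all deferred. You acknowledge this explicitly, and since the paper itself treats the result as external, this is appropriate; but had the paper actually reproduced the proof, these are precisely the steps that would carry the weight, and they are not routine -- in particular, acyclicity requires a careful interaction between normal forms and the simplex face maps that is the technical heart of \cite{charney2004bestvina}. As it stands, your account is a correct high-level summary of the cited result and its mechanism, not an independent proof.
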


\subsection{Intervals in the group of Euclidean isometries}
\label{sec:euclidean-isometries}

In this section, we recall the main result of \cite{brady2015factoring}.
Let $V\cong \R^n$ be a $n$-dimensional Euclidean vector space, and let $E$ be the associated affine space (where the origin has been forgotten).
Given an affine subspace $B \subseteq E$, denote by $\Dir(B) \subseteq V$ the space of directions of $B$.
Given a subset $U \subseteq V$, denote by $\Span(U)$ the linear subspace generated by $U$.

Let $L = \Isom(E)$ be the group of Euclidean isometries of $E$.
For every isometry $u \in L$, define its \emph{move-set} $\Mov(u) = \{ u(\a) - \a \mid \a \in E \} \subseteq V$.
This is an affine subspace of $V$, and it has a unique vector $\mu$ of minimal norm.
Define the \emph{min-set} of $u$ as $\Min(u) = \{ \a \in E \mid u(\a) = \a + \mu \} \subseteq E$. This is an affine subspace of $E$.

An isometry $u \in L$ is called \emph{elliptic} if it fixes at least one point, and \emph{hyperbolic} otherwise.
If $u$ is elliptic, then $\Mov(u)$ is a linear subspace, $\mu = 0$, and $\Min(u)$ coincides with the set of fixed points of $u$, which we denote by $\Fix(u)$.
For every isometry $u \in L$, there is an orthogonal decomposition $V = \Dir(\Mov(u)) \oplus \Dir(\Min(u))$ \cite[Lemma 3.6]{brady2015factoring}.

The group $L$ is generated by the set $R$ of all orthogonal reflections (where every reflection is assigned a weight of 1).
The length $l(u)$ computed using the generating set $R$ is called the \emph{reflection length} of $u$.
If $u$ is elliptic, then $l(u) = \codim\Fix(u)$; if $u$ is hyperbolic, then $l(u) = \dim\Mov(u)+2$ \cite[Theorem 5.7]{brady2015factoring}.

\begin{definition}[{Global poset \cite[Definition 7.1]{brady2015factoring}}]
	Define the \emph{global poset} $(P, \leq)$ as the set containing an element $e^B$ for every affine subspace $B \subseteq E$, and an element $h^M$ for every non-linear affine subspace $M \subseteq V$.
	The order relations in $P$ are as follows: $h^M \leq h^{M'}$ if $M \subseteq M'$; $e^B \leq e^{B'}$ if $B \supseteq B'$; $e^B < h^M$ if $\Span(M)^\perp \subseteq \Dir(B)$.
	Define also an \emph{invariant map} $\inv\colon L \to P$ that sends $u$ to $e^{\Fix(u)}$ if $u$ is elliptic, and to $h^{\Mov(u)}$ if $u$ is hyperbolic.
	\label{def:global-poset}
\end{definition}

\begin{theorem}[{\cite[Theorem 8.7]{brady2015factoring}}]
	For every isometry $u \in L$, the restriction of the invariant map is a poset isomorphism between the interval $[1,u]^L$ and the \emph{model poset} $P(u) = P_{\leq \,\inv(u)}$.
	\label{thm:model-poset}
\end{theorem}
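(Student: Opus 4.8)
The plan is to show that $\inv$ restricts to a poset isomorphism $[1,u]^L \xrightarrow{\ \sim\ } P(u)$ by constructing an explicit inverse and checking that both maps are order-preserving. First I would recall the basic dictionary between isometries and their invariants: an elliptic isometry $v$ is determined by the pair $(\Fix(v), \text{rotational part})$, but for the purpose of the interval we only need that $l(v) = \codim \Fix(v)$, and a hyperbolic isometry $v$ has $l(v) = \dim \Mov(v) + 2$ with the orthogonal splitting $V = \Dir(\Mov(v)) \oplus \Dir(\Min(v))$. The key first step is the characterization of the order relation $x \le y$ in $L$: by definition $x \le y$ iff $l(x) + l(x^{-1}y) = l(y)$, i.e.\ $y$ has a minimal-length reflection factorization beginning with one for $x$. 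Using the reflection-length formula, this translates into geometric containments among the fixed sets and move-sets. Concretely, I would verify: (a) if $x \le y$ and both are elliptic, then $\Fix(y) \subseteq \Fix(x)$; (b) if $x$ is elliptic and $y$ is hyperbolic with $x \le y$, then $\Span(\Mov(y))^\perp \subseteq \Dir(\Fix(x))$; (c) if both are hyperbolic and $x \le y$, then $\Mov(x) \subseteq \Mov(y)$; and (d) a hyperbolic element can never be $\le$ an elliptic one (since factoring off reflections can only decrease the dimension of the fixed set, never create one). These four facts show that $\inv$ is order-preserving from $[1,u]^L$ into $P$, and since every element of $[1,u]^L$ divides $u$, its image lands in $P_{\le \inv(u)} = P(u)$.

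Next I would establish surjectivity and the reverse implication, i.e.\ that every element of $P(u)$ is $\inv(v)$ for a unique $v \in [1,u]^L$, and that the containments in $P$ force $x \le y$ in $L$. For surjectivity one builds, for each affine subspace $B$ with $\Span(B^\perp) \subseteq \ldots$ appropriate to $\inv(u)$, the unique elliptic element of $[1,u]^L$ fixing exactly $B$ — this uses the fact that $u$ itself factors compatibly and that one can "restrict" $u$ to produce partial products with prescribed fixed space (this is where the structure of $\Mov(u)$ and $\Min(u)$, and the orthogonal decomposition, does the work); similarly for hyperbolic elements one produces the partial product with prescribed move-set. For uniqueness, I would use that an elliptic isometry below $u$ is pinned down by its fixed set together with the constraint of being a "parabolic-type" factor of $u$, invoking the rigidity already built into how $u$ decomposes. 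Injectivity of $\inv$ on $[1,u]^L$ then follows. The reverse implication — that a containment $\inv(x) \le \inv(y)$ in $P$ implies $x \le y$ in $L$ — is proved by reconstructing a minimal factorization of $y$ through $x$ using these canonical factors, i.e.\ showing the lengths add up, which again reduces to the codimension/dimension bookkeeping from the reflection-length formula.

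The main obstacle I anticipate is the hyperbolic--elliptic interface and the bookkeeping of the "$+2$" in the reflection length of hyperbolic isometries. The subtlety is that the poset $P$ records hyperbolic elements by their full affine move-set $M = \Mov(u)$ (a non-linear affine subspace of $V$), not by a subspace of $E$, and the covering relations mixing $e^B$ and $h^M$ encode exactly when adding or removing a single reflection changes an elliptic factor into a hyperbolic one or vice versa. Getting the condition $e^B < h^M \iff \Span(M)^\perp \subseteq \Dir(B)$ to match the divisibility condition in $L$ requires carefully tracking how the translation part of a hyperbolic isometry interacts with the fixed space of an elliptic factor, and verifying that the reflection-length additivity $l(x) + l(x^{-1}y) = l(y)$ holds precisely in these configurations. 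Once this interface is handled, the remaining verifications (pure elliptic--elliptic and pure hyperbolic--hyperbolic) are comparatively routine consequences of $l(v) = \codim\Fix(v)$ and $l(v) = \dim\Mov(v) + 2$ together with the orthogonal decomposition $V = \Dir(\Mov(v)) \oplus \Dir(\Min(v))$, and the proof concludes by assembling the order-preserving bijection and its order-preserving inverse.
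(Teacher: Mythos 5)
The paper does not prove Theorem~\ref{thm:model-poset}; it imports it verbatim as background, with the citation \cite[Theorem 8.7]{brady2015factoring}. So there is no ``paper's own proof'' to compare against: in the present article this result is a black box. Your sketch is therefore being measured against the original argument of Brady and McCammond, not against anything in the source above.

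On the merits of the sketch: the overall architecture — show $\inv$ is order-preserving from $[1,u]^L$ into $P(u)$, then produce an order-preserving inverse by constructing, for each invariant below $\inv(u)$, a canonical prefix of $u$ — is indeed the right strategy. But some steps are stated as wishes rather than arguments, and one justification is wrong as written. In claim (d) you assert that a hyperbolic element cannot lie below an elliptic one ``since factoring off reflections can only decrease the dimension of the fixed set, never create one.'' That reasoning is false: multiplying by a reflection can easily create fixed points (a translation composed with a suitable reflection is again a reflection). The correct reason is that in a minimal reflection factorization of an elliptic $y$, the reflecting hyperplanes intersect in $\Fix(y)$, so every partial product already fixes $\Fix(y) \neq \emptyset$ and is elliptic. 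More importantly, the surjectivity-and-uniqueness step — ``one builds the unique elliptic element of $[1,u]^L$ fixing exactly $B$'' — is where essentially all of the content of the theorem lives, and you gesture at it rather than supply it; you would need the orthogonal decomposition $V = \Dir(\Mov(u)) \oplus \Dir(\Min(u))$ and the associated factoring lemmas to actually exhibit a prefix with prescribed fixed set or move-set and then pin down its uniqueness. You correctly flag the elliptic--hyperbolic interface as the delicate point, so the map of the terrain is right, but the part your last paragraph calls ``comparatively routine'' is in fact the bulk of the proof and is currently missing.
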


\subsection{Dual Artin groups}
\label{sec:dual-artin}

In this section we recall the definition and some properties of dual Artin groups associated with a Coxeter group $W$, focusing on the cases where $W$ is finite or affine.
See \cite{brady2002k, bessis2003dual, brady2008non} for the finite case, and \cite{mccammond2015dual, mccammond2017artin} for the affine case.

Let $W$ be a Coxeter group, $R$ its set of reflections, and $S \subseteq R$ a set of simple reflections.
Assign a weight of $1$ to every reflection $r \in R$.
Let $w$ be a Coxeter element, obtained as a product of the elements of $S$.
The \emph{dual Artin group with respect to $w$} is the interval group $W_w$ constructed using $R$ as the generating set of $W$.

The properties of a dual Artin group are strictly related to the combinatorics of its defining interval $[1,w]^W$, which in turn depends on the geometry of the Coxeter element $w$.
The interval $[1,w]^W$ is a graded poset of rank $|S|$.
As explained in \Cref{sec:interval-groups}, the edges of the Hasse diagram of $[1,w]^W$ are naturally labeled by a subset $R_0 \subseteq R$.
Maximal chains in $[1,w]^W$ correspond to minimal length factorizations of $w$ as a product of reflections.

Since the set $R$ of reflections is closed under conjugation, it is possible to rewrite factorizations as follows (this is a consequence of the so called \emph{Hurwitz action}).
\begin{lemma}[{\cite[Lemma 3.7]{mccammond2015dual}}]
	\label{lemma:hurwitz-action}
	Let $u = r_1r_2\dotsm r_m$ be a reflection factorization in a Coxeter group $W$.
	For any selection $1 \leq i_1 < i_2 < \dotsb < i_j \leq m$ of positions, there is a length $m$ reflection factorization of $u$ whose first $j$ reflections are $r_{i_1}r_{i_2}\dotsm r_{i_j}$, and another length $m$ reflection factorization of $u$ where these are the last $j$ reflections.
\end{lemma}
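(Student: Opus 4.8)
The plan is to realize the rewriting via the \emph{Hurwitz action} of the braid group $B_m$ on the set of length-$m$ reflection factorizations of $u$. Recall that the standard generator $\sigma_i \in B_m$ acts on a factorization $(r_1,\dots,r_m)$ by replacing the consecutive pair $(r_i,r_{i+1})$ with $(r_i r_{i+1} r_i^{-1},\, r_i)$, while $\sigma_i^{-1}$ replaces it with $(r_{i+1},\, r_{i+1}^{-1} r_i r_{i+1})$; all other entries are untouched. Two observations make this relevant: first, each such move leaves the product $r_1 r_2 \cdots r_m = u$ unchanged; second, since the set $R$ of reflections is closed under conjugation, each move sends a length-$m$ reflection factorization to another length-$m$ reflection factorization. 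Hence it suffices to exhibit an element of $B_m$ carrying the given factorization to one of the two desired shapes.

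For the first assertion I would move the selected reflections to the front one at a time, in the order $r_{i_1}, r_{i_2}, \dots, r_{i_j}$, by repeatedly ``bubbling'' each one leftward. Concretely, to bring $r_{i_1}$ from position $i_1$ to position $1$, apply $\sigma_{i_1-1}^{-1}\sigma_{i_1-2}^{-1}\cdots\sigma_1^{-1}$: each $\sigma_k^{-1}$ carries the entry at position $k+1$ to position $k$ unchanged, pushing the old entry of position $k$ (suitably conjugated) to position $k+1$. After this step, position $1$ contains exactly $r_{i_1}$, positions $2,\dots,i_1$ contain conjugates of $r_1,\dots,r_{i_1-1}$, and every entry at a position $>i_1$ — in particular $r_{i_2},\dots,r_{i_j}$ — is unchanged. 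Repeating with $r_{i_2}$ (still at position $i_2$), the braid $\sigma_{i_2-1}^{-1}\cdots\sigma_2^{-1}$ brings it to position $2$ while fixing position $1$, and so on. Because $i_1<i_2<\dots<i_j$, no already-placed reflection and no not-yet-placed selected reflection is ever disturbed, so after $j$ steps positions $1,\dots,j$ hold $r_{i_1},r_{i_2},\dots,r_{i_j}$, yielding a length-$m$ reflection factorization of $u$ with the prescribed prefix.

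The second assertion follows by the mirror procedure, bubbling the selected reflections rightward in the order $r_{i_j},r_{i_{j-1}},\dots,r_{i_1}$ using the generators $\sigma_k$ (which carry an entry from position $k$ to position $k+1$ unchanged); alternatively, one may apply the first assertion to the reversed factorization $u^{-1}=r_m r_{m-1}\cdots r_1$ with the correspondingly reversed selection and then reverse back. Either way the selected reflections end up, in order, as the last $j$ entries.

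There is no substantive obstacle here: the statement is a direct consequence of the elementary transitivity properties of Hurwitz moves. The only point requiring care is the bookkeeping — picking $\sigma^{\pm1}$ with the correct orientation so that the selected reflection itself, rather than a conjugate of it, lands in the target position, and verifying that the monotonicity of the chosen positions prevents the moves for different selected reflections from interfering.
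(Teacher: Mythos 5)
Your proof is correct and uses precisely the Hurwitz-action argument that the paper itself invokes: the lemma is stated as a citation to McCammond--Sulway with no proof given here, but the surrounding text ("this is a consequence of the so called \emph{Hurwitz action}") indicates exactly the bubbling-by-braid-moves mechanism you carry out. The bookkeeping (choosing $\sigma_k^{-1}$ to slide a selected reflection left unchanged, $\sigma_k$ to slide it right unchanged, and exploiting $i_1<\dotsb<i_j$ so that later selected entries are never disturbed) is handled correctly.
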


If the Coxeter graph of $W$ is a tree, then all its Coxeter elements are \emph{geometrically equivalent},
and give rise to isomorphic intervals $[1,w]^W$ \cite[Proposition 7.5]{mccammond2015dual}.
This is the case for all irreducible finite and affine Coxeter groups except $\tilde A_n$.
In the case $\tilde A_n$, there are $\lfloor \frac{n+1}{2} \rfloor$ equivalence classes of Coxeter elements: a choice of representatives is given by \emph{$(p,q)$-bigon Coxeter elements}, where $(p,q)$ is a pair or positive integers such that $p \geq q$ and $p+q = n+1$ \cite[Definition 7.7]{mccammond2015dual}.

The generating set $R_0 \subseteq R$ of a dual Artin group contains $S$ (for a general Coxeter group, this follows from \Cref{lemma:coxeter-reflection-length}).
Then there is a natural group homomorphism from the usual Artin group $G_W$ to the dual Artin group $W_w$.

\begin{theorem}[\cite{brady2002k, bessis2003dual, mccammond2017artin}]
	\label{thm:dual-artin-group}
	If $W$ is a finite or affine Coxeter group, the natural homomorphism $G_W \to W_w$ is an isomorphism.
\end{theorem}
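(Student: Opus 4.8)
The plan is to produce the natural homomorphism $\phi\colon G_W \to W_w$ explicitly and then prove it is bijective, with injectivity as the crux and a case split according to whether $W$ is finite or affine. Since $S \subseteq R_0$ (\Cref{lemma:coxeter-reflection-length}), the standard generators of $G_W$ are among the generators of $W_w$, so to define $\phi$ it is enough to check that each braid relation $\underbrace{stst\dotsm}_{m(s,t)\text{ terms}} = \underbrace{tsts\dotsm}_{m(s,t)\text{ terms}}$ holds in $W_w$ between the (images of the) generators $s$ and $t$. I would verify this by restricting to the rank-two standard parabolic $W_{\{s,t\}}$: by \Cref{lemma:hurwitz-action}, some reduced reflection factorization of $w$ begins with $st$, so $st \in [1,w]^W$ with $l(st) = 2$; the sub-interval $[1,st]^W$ coincides with the interval of $st$ inside the dihedral group $W_{\{s,t\}}$, and the closed loops of this sub-interval — which are among the defining relations of $W_w$ — are exactly the relations of the dual presentation of the rank-two dihedral Artin group, which is Tietze-equivalent to its standard presentation. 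Surjectivity of $\phi$ is the easy half: using the Hurwitz action one rewrites, for each $r \in R_0$, a reduced reflection factorization of $w$ so as to express $r$ through the simple reflections, so $R_0$ lies in the image of $\phi$.

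It remains to prove injectivity, and here the two cases diverge. Note first that $[1,w]^W$ is always \emph{balanced}, since $R$ is closed under conjugation. If $W$ is finite, then $[1,w]^W$ is moreover a \emph{lattice} — this is classical \cite{brady2001partial,brady2002k,bessis2003dual,brady2008non} — so by \Cref{thm:garside-structure} the group $W_w$ is Garside, and its positive monoid, the dual braid monoid $M_w$, has $W_w$ as its group of fractions. Following Bessis \cite{bessis2003dual}, I would show that $M_w$ also sits inside $G_W$ as a submonoid with $G_W$ as its group of fractions — this step is where the lattice property enters, through the comparison of the dual braid relations with the classical Artin relations and the rank-two computation above — so that $G_W$ and $W_w$ are identified, compatibly with $\phi$; hence $\phi$ is an isomorphism.

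I expect the affine case to be the main obstacle, because there $[1,w]^W$ need not be a lattice \cite{digne2006presentations,digne2012garside,mccammond2015dual}, so \Cref{thm:garside-structure} does not apply to it and the argument of the previous paragraph collapses. To circumvent this I would follow McCammond and Sulway \cite{mccammond2017artin}: enlarge the generating set of $W$ by adjoining suitably chosen \emph{factored translations} — certain factors of the lattice translations along horizontal root directions — to obtain a crystallographic group $C$ whose defining interval $[1,w]^C$ \emph{is} a balanced lattice, so that the interval group $C_w$ is Garside by \Cref{thm:garside-structure}. I would then show that the subgroup of $C_w$ generated by the original reflections is precisely $W_w$ (so $W_w$ embeds into a Garside group), and use the resulting Garside normal form together with the description of intervals in $\Isom(E)$ from \Cref{sec:euclidean-isometries} (\Cref{thm:model-poset}) to recover the classical Artin presentation on this subgroup. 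The delicate points will be choosing the factored translations so that $[1,w]^C$ becomes a lattice without changing $W_w$, and extracting the Artin relations from the Garside normal form of $C_w$.
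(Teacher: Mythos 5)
Your proposal reconstructs the proofs in the cited references — Bessis's dual braid monoid argument for finite $W$, and McCammond--Sulway's crystallographic Garside completion for affine $W$ — whereas the paper proves the affine case by an entirely different, topological route. Once \Cref{thm:dual-salvetti,thm:deformation-retraction} establish the homotopy equivalences $X_W \simeq X_W' \simeq K_W$ (via discrete Morse theory, EL-shellability of $[1,w]^W$, and the Mayer--Vietoris argument showing $K_W$ is aspherical), the composite sends the oriented $1$-cell $c_{\{s\}}$ of the Salvetti complex to the $1$-cell $[s]$ of $K_W$, so the induced map on fundamental groups \emph{is} the natural homomorphism $G_W \to W_w$ and is an isomorphism outright; surjectivity and injectivity never need to be separated. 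Your route has the virtue of being self-contained at the level of \Cref{sec:background}, but it cannot be ``new'' since it is the cited argument; the paper's route requires all of Sections 3--7 but gives a genuinely new and conceptually transparent proof of \cite[Theorem~C]{mccammond2017artin}. (For finite $W$ the paper does not reprove the theorem: \Cref{rmk:finite-dual-salvetti-complex} simply cites it, and the topological argument would be circular there because \Cref{thm:dual-salvetti} already uses the finite case as an input.)

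Within your sketch, well-definedness and surjectivity of $\phi$ are sound — the rank-two reduction rests on \Cref{lemma:interval-in-parabolic-subgroup}, and the Hurwitz moves you invoke appear as length-two rhombi in the Hasse diagram of $[1,w]^W$, hence as relations of $W_w$. The genuine weak point is the closing step of your affine injectivity argument. Embedding $W_w$ into the Garside group $C_w$ and having a normal form on $C_w$ does not, by itself, let you ``recover the classical Artin presentation'': a Garside normal form on an ambient group does not present a subgroup, and \Cref{thm:model-poset} describes the combinatorics of $[1,w]^L$, not a mechanism for comparing presentations. The passage from $W_w \hookrightarrow C_w$ to $G_W \cong W_w$ in \cite{mccammond2017artin} is a substantive argument in its own right; as written you are treating it as a black box. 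You should either unpack that step or state explicitly that the core of the injectivity proof is being deferred to \cite[Theorem~C]{mccammond2017artin}.
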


It is not known in general whether a dual Artin group is isomorphic to the corresponding Artin group, or whether the isomorphism type of a dual Artin group depends on the chosen Coxeter element $w$.

One important motivation to introduce dual Artin groups is that sometimes they are Garside groups.
For example, this happens if $W$ is finite.

\begin{theorem}[\cite{bessis2003dual, brady2008non}]
	If $W$ is a finite Coxeter group, the interval $[1,w]^W$ is a lattice for every Coxeter element $w$.
	Therefore the dual Artin group $W_w$ is a Garside group.
	\label{thm:finite-lattice}
\end{theorem}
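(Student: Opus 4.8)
The statement has two parts, and the second is a formality given the first. The interval $[1,w]^W$ is \emph{balanced}, because the generating set $R$ of reflections is closed under conjugation and all reflections have weight $1$ (the remark following \Cref{thm:garside-structure}); so once $[1,w]^W$ is known to be a lattice, \Cref{thm:garside-structure} gives that $W_w$ is a Garside group. The whole content is therefore the lattice property, which I would establish geometrically, following Bessis \cite{bessis2003dual} and Brady--Watt \cite{brady2008non}.

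\emph{Reduction.} If $W=W_1\times\dotsb\times W_k$ with irreducible factors, then a Coxeter element is $w=w_1\dotsm w_k$ with $w_i$ a Coxeter element of $W_i$, and $[1,w]^W\cong[1,w_1]^{W_1}\times\dotsb\times[1,w_k]^{W_k}$; since a product of lattices is a lattice, we may assume $W$ irreducible, and (replacing $V$ by the orthogonal complement of $\Fix(W)$) that the action is essential, so $\Fix(w)=\{0\}$ and $l(w)=n=\rk(W)$. As $W$ is finite, every $u\in W$ is elliptic, hence $l(u)=\codim\Fix(u)$; applying \Cref{thm:model-poset} with $L=\Isom(E)$, the invariant map identifies $[1,w]^W$ with a subposet of the model poset $P(w)$, which here consists of the linear subspaces of $V$ ordered by reverse inclusion. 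Concretely this is the map $u\mapsto\Fix(u)$, and its salient consequence (Brady--Watt) is: for $u,v\in[1,w]^W$, one has $u\le v$ if and only if $\Fix(v)\subseteq\Fix(u)$.

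\emph{Self-duality and the meet.} The assignment $u\mapsto u^{-1}w$ is an order-reversing bijection of $[1,w]^W$ onto itself (reflection length is symmetric and $l(u)+l(u^{-1}w)=l(w)$ for $u\le w$), so it suffices to prove that any two elements $u,v\in[1,w]^W$ have a meet; joins then follow by the duality. A common lower bound of $u$ and $v$ is an $x\le w$ with $\Fix(x)\supseteq\Fix(u)+\Fix(v)$, so the claim is that there is one with $\Fix(x)$ smallest. I would construct it as follows: using that for elliptic $u$ one has $\Mov(u)=\Fix(u)^\perp$, by the orthogonal decomposition $V=\Dir(\Mov(u))\oplus\Dir(\Min(u))$, factor $u$ canonically into the part acting inside a prescribed subspace $F\supseteq\Fix(u)$ and the complementary part; do the same for $v$; then use the Hurwitz action (\Cref{lemma:hurwitz-action}) to arrange the reduced factorizations witnessing $u\le w$ and $v\le w$ so that they share a common initial block. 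That block defines an element of $[1,w]^W$ lying below both $u$ and $v$, and a length count shows it is the largest such. This is exactly the geometric meet construction of \cite{brady2008non}, whose details I would import.

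\emph{Main obstacle.} The crux is this last step: one must verify that the candidate built from the ``$\Fix(u)$-part of $u$'' and the ``$\Fix(v)$-part of $v$'' is a single well-defined element of $W$ that lies simultaneously below $u$, below $v$, and below $w$ --- in other words, that the two local orthogonal factorizations are mutually compatible and compatible with a reduced factorization of $w$. Finiteness enters precisely here: ellipticity of every element guarantees the clean orthogonal decomposition $V=\Dir(\Mov(u))\oplus\Dir(\Min(u))$ and excludes the hyperbolic behaviour that makes $[1,w]^W$ fail the lattice property in the affine case. Granting the Brady--Watt construction, $[1,w]^W$ is a balanced lattice, and \Cref{thm:garside-structure} yields the Garside conclusion.
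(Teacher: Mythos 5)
The paper offers no proof of this statement: \Cref{thm:finite-lattice} is quoted with citations to \cite{bessis2003dual, brady2008non}, so there is no internal argument to match. Judged on its own merits, your scaffolding is sound up to a point: the reduction to the irreducible essential case, the observation that balancedness is automatic because $R$ is closed under conjugation with constant weight, the $\Fix$-embedding into the model poset, and the self-duality $u \mapsto u^{-1}w$ reducing joins to meets are all correctly assembled. But the decisive step, existence of meets, is not established. The Brady--Watt embedding $u \mapsto \Fix(u)$ only realizes $[1,w]^W$ as a \emph{subposet} of the lattice of subspaces of $V$ under reverse inclusion, and subposets of lattices need not be lattices, so nothing yet forces a largest common lower bound to exist inside $[1,w]^W$.

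Your proposed construction of the meet --- ``use the Hurwitz action to arrange the reduced factorizations witnessing $u\le w$ and $v\le w$ so that they share a common initial block'' --- does not follow from \Cref{lemma:hurwitz-action}: that lemma permutes the factors within a \emph{single} reduced factorization of a fixed element, and offers no mechanism for synchronizing two independent factorizations of $w$ refining $u$ and $v$ respectively. Note also that the Hurwitz action is equally transitive in the affine case, where (this paper's whole point) $[1,w]^W$ is typically \emph{not} a lattice, so finiteness must enter in a way your sketch does not pin down. What \cite{brady2008non} actually does is construct, from the pair $(\Fix(u),\Fix(v))$, an explicit candidate isometry and then verify by an orthogonality analysis of the relevant root subsystems that it lies in $[1,w]^W$ and below both $u$ and $v$; \cite{bessis2003dual} treats the classical types uniformly and the exceptional ones by inspection. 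Neither matches the Hurwitz-synchronization heuristic you describe, so the proposal does not, as written, close the gap it correctly identifies as the crux.
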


The intervals $[1,w]^W$ that arise from finite Coxeter groups $W$ are well-studied, and are called \emph{(generalized) noncrossing partition lattices} (see \cite{armstrong2009generalized}).
They are known to be EL-shellable \cite{athanasiadis2007shellability}.
By analogy with the finite case, for any Coxeter group $W$ and Coxeter element $w$, we call the interval $[1,w]^W$ a \emph{noncrossing partition poset} associated with $W$.

Suppose now that $W$ is an irreducible affine Coxeter group, acting as a reflection group on $E = \R^n$, where $n$ is the rank of $W$.
The Coxeter element $w$ is a hyperbolic isometry of reflection length $n+1$, and its min-set is a line $\ell$ called the \emph{Coxeter axis} \cite[Proposition 7.2]{mccammond2015dual}.
We gain some insight on the structure of the interval $[1,w]^W$ by comparing it with the interval $[1,w]^L$ in the group of all Euclidean isometries of $E$ (see \Cref{sec:euclidean-isometries}).

\begin{lemma}[cf.\ \cite{mccammond2015dual}]
	\label{lemma:model-poset}
	Let $W$ be an irreducible affine Coxeter group, and $w$ one of its Coxeter elements.
	Then the inclusion $[1,w]^W \hookrightarrow [1,w]^L$ is order-preserving and rank-preserving.
	In particular, $u\leq v$ in $[1,w]^W$ implies $\inv(u) \leq \inv(v)$ in the model poset $P(w)$.
\end{lemma}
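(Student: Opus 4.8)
The plan is to play the reflection-length function of $W$ off against that of $L$. Write $l_W$ for the reflection length on $W$ relative to the reflection set $R$, and $l_L$ for the reflection length on $L=\Isom(E)$ relative to the set of all orthogonal reflections of $E$. Since every reflection of $W$ is an orthogonal reflection of $E$, a reflection factorization in $W$ is in particular a reflection factorization in $L$, so $l_L(u)\le l_W(u)$ for every $u\in W$. I will also use the elementary fact that, for the prefix order $x\le y\iff l(x)+l(x^{-1}y)=l(y)$ attached to any symmetric weighted generating set, each interval $[1,g]$ is graded with rank function $l$: a covering relation $x\lessdot y$ forces $l(x^{-1}y)=1$, because a minimal factorization of $x^{-1}y$ of length $\ge 2$ would produce an element strictly between $x$ and $y$. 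Granting these, ``rank-preserving'' will reduce to the identity $l_L=l_W$ on $[1,w]^W$, and ``order-preserving'' to a single application of the triangle inequality.

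The first and main step is to show that $l_L(u)=l_W(u)$ for every $u\in[1,w]^W$, which at the same time yields $[1,w]^W\subseteq[1,w]^L$ (so that the inclusion makes sense). The geometric input is that $w$ is a hyperbolic isometry whose min-set is a line \cite[Proposition 7.2]{mccammond2015dual}, so $\dim\Mov(w)=n-1$ and hence $l_L(w)=\dim\Mov(w)+2=n+1$; on the other hand $[1,w]^W$ is graded of rank $|S|=n+1$, so $l_W(w)=n+1$ as well. Now for $u\in[1,w]^W$ we have $l_W(u)+l_W(u^{-1}w)=l_W(w)$, and the chain
\[ n+1 = l_L(w) \le l_L(u)+l_L(u^{-1}w) \le l_W(u)+l_W(u^{-1}w) = n+1 \]
(triangle inequality in $L$, then $l_L\le l_W$) forces every inequality to be an equality. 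In particular $l_L(u)=l_W(u)$, and $l_L(u)+l_L(u^{-1}w)=l_L(w)$, i.e.\ $u\le w$ in $L$. Since $[1,w]^W$ and $[1,w]^L$ are graded by $l_W$ and $l_L$ respectively, the equality $l_L=l_W$ on $[1,w]^W$ says precisely that the inclusion is rank-preserving.

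For order-preservation, take $u\le v$ in $[1,w]^W$, so $l_W(u)+l_W(u^{-1}v)=l_W(v)$; both $u$ and $v$ lie in $[1,w]^W$, so the previous step gives $l_L(u)=l_W(u)$ and $l_L(v)=l_W(v)$. Then
\[ l_L(u)+l_L(u^{-1}v) \le l_W(u)+l_W(u^{-1}v) = l_W(v) = l_L(v) \le l_L(u)+l_L(u^{-1}v), \]
the last step being the triangle inequality in $L$, whence $l_L(u)+l_L(u^{-1}v)=l_L(v)$, i.e.\ $u\le v$ in $[1,w]^L$. The ``in particular'' clause is then immediate: by \Cref{thm:model-poset} the invariant map restricts to a poset isomorphism $[1,w]^L\to P(w)$, so $u\le v$ in $[1,w]^W$ implies $u\le v$ in $[1,w]^L$ and hence $\inv(u)\le\inv(v)$ in $P(w)$.

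The only genuinely delicate point is the identity $l_W(w)=l_L(w)=n+1$ together with the gradedness of $[1,w]^W$ by $l_W$: these encode the facts, recalled above, that $w$ is hyperbolic with one-dimensional min-set and that the noncrossing partition poset has rank $|S|=n+1$. Everything else is formal — the triangle inequality and the observation that enlarging the set of reflections cannot increase length — so I expect no further obstacle.
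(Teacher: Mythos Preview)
Your proof is correct and follows essentially the same approach as the paper: both establish $l_L=l_W$ on the interval from the equality at $w$ (via the triangle inequality and $l_L\le l_W$), and then deduce order-preservation. The only cosmetic difference is that the paper handles order-preservation by noting $u^{-1}v\in[1,w]^W$ (so $l_L(u^{-1}v)=l_W(u^{-1}v)$ directly), whereas you use $l_L(u^{-1}v)\le l_W(u^{-1}v)$ and squeeze with the triangle inequality; both work.
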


\begin{proof}
	The length functions of $W$ and $L$ agree on the Coxeter element $w$, so they agree on the entire interval $[1,w]^W$.
	As a consequence, $[1,w]^W \subseteq [1,w]^L$.
	In addition, if we have $u \leq v$ in $[1,w]^W$, then $u^{-1}v \in [1,w]^W$ and $l(u^{-1}v) = l(v)-l(u)$.
	Since the length functions agree, we have $u \leq v$ also in $[1,w]^L$.
	The last part of the statement follows from \Cref{thm:model-poset}.
\end{proof}

Notice that $[1,w]^W$ is not a subposet of $[1,w]^L$ in general: it is possible to have $u, v \in [1,w]^W$ with $u \not\leq v$ in $[1,w]^W$, but $u \leq v$ in $[1,w]^L$ (see \Cref{ex:horizontal-A2}).
However, if $W'$ is a finite Coxeter group (acting as a reflection group on the vector space $V = \R^n$) and $w'$ is one of its Coxeter elements, then $[1,w']^{W'}$ is known to be a subposet of $[1,w']^{\Isom(V)}$ \cite[Sections 2 and 3]{brady2008non}.
Then, in the affine case, we can show that the condition $\inv(u) \leq \inv(v)$ implies $u \leq v$ in $[1,w]^W$ whenever $u$ and $v$ are elliptic.
In this case, the condition $\inv(u) \leq \inv(v)$ means $\Fix(u) \supseteq \Fix(v)$.

\begin{lemma}
	\label{lemma:elliptic-subposet}
	Let $W$ be an irreducible affine Coxeter group, and $w$ one of its Coxeter elements.
	If $u,v \in [1,w]^W$ are elliptic elements with $\Fix(u) \supseteq \Fix(v)$, then $u \leq v$ in $[1,w]^W$.
\end{lemma}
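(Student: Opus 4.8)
The plan is to reduce the statement about the affine interval $[1,w]^W$ to a statement about finite parabolic subgroups, where \Cref{thm:finite-lattice} and the known subposet property apply. Given elliptic $u, v \in [1,w]^W$ with $\Fix(u) \supseteq \Fix(v)$, I would first use the structural fact (from the results of \Cref{sec:affine-coxeter-elements}, e.g.\ \Cref{thm:A} and the surrounding theory of parabolic Coxeter elements) that an elliptic element of $[1,w]^W$ is a parabolic Coxeter element: there is a set of simple reflections $S'$ of $W$ and a subset $T \subseteq S'$ with $W_T$ finite such that $v$ is a Coxeter element of $W_T$. The key point is that $W_T$ acts as a reflection group on a linear subspace complementary to $\Fix(v)$, so $\Fix(v)$ determines $W_T$ and $l(v) = \codim \Fix(v) = |T|$.

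**Locating $u$ inside $W_T$.** Since $\Fix(u) \supseteq \Fix(v)$, the move-set of $u$ lies in $\Dir(\Fix(v))^\perp$, which is exactly the space on which $W_T$ acts essentially; because $u \in [1,w]^W$ is elliptic of length $\codim \Fix(u)$, I would argue that $u \in W_T$ and in fact $u \in [1,v']^{W_T}$ where $v'$ is the chosen Coxeter element of $W_T$ — using \Cref{lemma:hurwitz-action} to rewrite a minimal-length reflection factorization of $w$ through $v$ and then observing that the reflections occurring below $v$ all fix $\Fix(v)$ pointwise, hence lie in $W_T$. This places both $u$ and $v$ in the interval $[1,v]^{W_T}$ of the \emph{finite} Coxeter group $W_T$.

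**Applying the finite case.** Now \Cref{lemma:elliptic-subposet} for $W$ follows from the finite analog: by \Cref{thm:finite-lattice} the interval $[1,v]^{W_T}$ is a lattice, and (by \cite{brady2008non}, as recalled in the excerpt) it is a subposet of $[1,v]^{\Isom(V_T)}$, where $\inv$ is a poset isomorphism onto the model poset by \Cref{thm:model-poset}. Since $\Fix(u) \supseteq \Fix(v)$, i.e.\ $\inv(u) \le \inv(v)$ in the model poset for $\Isom(V_T)$, the subposet property gives $u \le v$ already in $[1,v]^{W_T}$. Finally I would transfer this back: a covering chain from $u$ to $v$ inside $[1,v]^{W_T}$ uses reflections of $W_T \subseteq W$, and since the length functions of $W$ and $L$ agree on $[1,w]^W$ (as in the proof of \Cref{lemma:model-poset}), such a chain realizes $u \le v$ in $[1,w]^W$ as well.

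**The main obstacle.** The delicate step is the second one: showing that $u$ actually lies in the parabolic subgroup $W_T$ associated with $\Fix(v)$, rather than merely having $\inv(u) \le \inv(v)$ in the ambient model poset $P(w)$. The remark immediately preceding the lemma warns that $[1,w]^W$ is \emph{not} a subposet of $[1,w]^L$, so one cannot simply read off $u \le v$ from the geometry; the containment $u \in W_T$ must be extracted from the combinatorics of reflection factorizations of $w$ together with the affine geometry of Coxeter elements developed earlier in the paper (in particular, that the reflections appearing in factorizations of $w$ below an elliptic element $v$ are precisely those whose fixed hyperplanes contain $\Fix(v)$). I expect this is where the affine-specific results of \Cref{sec:affine-coxeter-elements} — especially the characterization of which reflections and parabolic elements lie in $[1,w]^W$ — do the essential work.
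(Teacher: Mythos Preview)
Your strategy of reducing to a finite parabolic subgroup and invoking the finite subposet property can be made to work, but it is not the paper's route, and your handling of the key step is confused in two ways.

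First, the paper does \emph{not} invoke any of the affine-specific results of \Cref{sec:affine-coxeter-elements}; the lemma sits in the background section and is proved before \Cref{thm:coxeter-elements}. The argument is induction on $l(u)$: set $W' = \langle R \cap ([1,u]^W \cup [1,v]^W) \rangle$. Every generating reflection fixes $\Fix(v)$ by \Cref{lemma:model-poset}, so $W'$ is a finite reflection group in which $v$ has trivial fixed set in the essential representation. The cited fact \cite[Lemma~1.2.1]{bessis2003dual} then says every reflection of $W'$ lies in $[1,v]^{W'}$, hence in $[1,v]^W$; in particular $R \cap [1,u]^W \subseteq R \cap [1,v]^W$. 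Pick any $r \in [1,u]^W$, write $u = ru'$ and $v = rv'$, check via \Cref{thm:model-poset} that $\Fix(u') \supseteq \Fix(v')$, and apply induction. No knowledge that $v$ is a parabolic Coxeter element is needed.

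Second, your argument for placing $u$ inside $W_T$ is backwards. You speak of ``rewriting a factorization of $w$ through $v$'' and ``reflections occurring below $v$'', but that says nothing about $u$. The correct move is that the reflections in a minimal factorization of \emph{$u$} fix $\Fix(u) \supseteq \Fix(v)$ and hence lie in the pointwise stabilizer of $\Fix(v)$, which is the parabolic $W_T$. Once $u, u^{-1}v \in W_T$, Carter's formula $l_{W_T}(x) = \codim \Fix(x) = l_L(x)$ gives $u \le v$ in $W_T$ directly from $\Fix(u) \supseteq \Fix(v)$, and this transfers to $[1,w]^W$. With this repair your route works, but it still uses \Cref{thm:coxeter-elements} as a forward reference---harmless here, since that theorem does not rely on \Cref{lemma:elliptic-subposet}, but contrary to your final paragraph it is not where the paper does the work.
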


\begin{proof}
	We proceed by induction on $l(u)$, the case $l(u)=0$ being trivial.
	Suppose from now on that $l(u) \geq 1$.
	Let $W'$ be the subgroup of $W$ generated by the reflections in $[1,u]^W \cup [1,v]^W$.
	By \cite[Theorem 8.2]{humphreys1992reflection} this is a Coxeter group, and its set of reflections contains $R \cap ([1,u]^W \cup [1,v]^W)$.
	Every reflection in a minimal length factorization of $u$ (resp.\ $v$) in $W$ belongs to $[1,u]^W$ (resp.\ $[1,v]^W$), and so it belongs to $W'$.
	This means that the minimal length factorizations of $u$ and $v$ are the same in $W$ and $W'$.
	
	For every reflection $r$ in $[1,u]^W \cup [1,v]^W$, we have $\Fix(r) \supseteq \Fix(v)$ by \Cref{lemma:model-poset}.
	Therefore every element of $W'$ fixes $\Fix(v)$, and so $W'$ is finite.
	By \cite[Lemma 1.2.1]{bessis2003dual} (see also \cite{brady2002k, athanasiadis2007shellability, brady2008non}), every reflection $r \in W'$ is part of a minimal length factorization of $v$ in $W'$, and so also in $W$, thus $r \in [1,v]^W$.
	This proves that $R \cap [1,u]^W \subseteq R \cap [1,v]^W$.
	
	Let $r$ be any reflection in $[1,u]^W$ (there is at least one reflection because $l(u) \geq 1$).
	We have $\Fix(r) \supseteq \Fix(u) \supseteq \Fix(v)$ by \Cref{lemma:model-poset}, and therefore $r \leq u \leq v$ in $[1,w]^L$ by \Cref{thm:model-poset}.
	If we write $u = ru'$ and $v = rv'$, we have $u' \leq v'$ in $[1,w]^L$, and so $\Fix(u') \supseteq \Fix(v')$.
	Since $r \in [1,u]^W$, we have $u' \in [1,u]^W \subseteq [1,w]^W$ and $l(u') = l(u)-1$.
	In addition, since $r \in [1,v]^W$, we have $v' \in [1,v]^W \subseteq [1,w]^W$.
	By induction, $u' \leq v'$ in $[1,w]^W$.
	This implies that $u \leq v$ in $[1,w]^W$.
\end{proof}

The direction of the Coxeter axis $\ell$ is declared to be \emph{vertical}, and the orthogonal directions are \emph{horizontal}.
An elliptic isometry is \emph{horizontal} if it moves every point in a horizontal direction, and it is \emph{vertical} otherwise \cite[Definition 5.3]{mccammond2017artin}.
Given $u \in [1,w]^W$, the \emph{right complement} of $u$ is the unique $v\in [1,w]^W$ such that $uv=w$. Define the \emph{left complement} similarly.

\Cref{lemma:model-poset} and some geometric considerations in \cite{brady2015factoring} allow to coarsely describe the combinatorial structure of the interval $[1,w]^W$.

\begin{proposition}[cf.\ {\cite[Definitions 5.4 and 5.5]{mccammond2017artin}}]
    The elements $u \in [1,w]^W$ are split into 3 rows according to the following cases (where $v$ is the right complement of $u$):
    \begin{itemize}
    	\item (bottom row) $u$ is horizontal elliptic and $v$ is hyperbolic;
    	\item (middle row) both $u$ and $v$ are vertical elliptic;
    	\item (top row) $u$ is hyperbolic and $v$ is horizontal elliptic.
    \end{itemize}
    The bottom and the top rows contain a finite number of elements, whereas the middle row contains infinitely many elements.
\end{proposition}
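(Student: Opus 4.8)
The plan is to put $v := u^{-1}w$, so that $uv = w$ and $l(u) + l(v) = l(w) = n+1$, and to regard $u$, $v$, $w$ as Euclidean isometries of $E$. By \Cref{lemma:model-poset} the $W$-reflection length of an element of $[1,w]^W$ agrees with its length as a Euclidean isometry, so the formulas of \Cref{sec:euclidean-isometries} apply; in particular $\dim \Dir(\Mov(g))$ equals $l(g)$ if $g$ is elliptic and $l(g) - 2$ if $g$ is hyperbolic. Write $\ell$ for the Coxeter axis and split $V = \Dir(\ell) \oplus \Dir(\ell)^\perp$ orthogonally into the vertical line and the horizontal subspace; since $w$ is hyperbolic with $\Min(w) = \ell$, we have $\Mov(w) = \mu + \Dir(\ell)^\perp$ with $\mu \in \Dir(\ell) \setminus \{0\}$ its minimal vector. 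Everything then rests on two elementary observations. \textbf{(A)} From $w(a) - a = \big( u(v(a)) - v(a) \big) + \big( v(a) - a \big)$ one gets $\Mov(w) \subseteq \Mov(u) + \Mov(v)$ (Minkowski sum), and passing to directions yields $\Dir(\Mov(u)) + \Dir(\Mov(v)) \supseteq \Dir(\ell)^\perp$, a subspace of dimension $n-1$. \textbf{(B)} Orthogonal projection of $E$ onto $\ell$ assigns to each point a real coordinate $z$; as the vertical component of every vector in $\Mov(w) = \mu + \Dir(\ell)^\perp$ equals $\mu$, the isometry $w$ shifts $z$ by the fixed nonzero constant determined by $\mu$, whereas a horizontal elliptic isometry $g$ — and, since $\Mov(g^{-1}) = \Mov(g)$, also $g^{-1}$ — leaves $z$ unchanged.

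With these at hand I would first establish the partition into rows. By \textbf{(A)}, $u$ and $v$ cannot both be hyperbolic, since this would give $(l(u) - 2) + (l(v) - 2) \ge n - 1$, i.e.\ $n + 1 \ge n + 3$. If $u$ is hyperbolic then $v$ is elliptic, and \textbf{(A)} becomes an equality $(l(u) - 2) + l(v) = n - 1 = \dim \Dir(\ell)^\perp$, which forces $\Mov(v) = \Dir(\Mov(v)) \subseteq \Dir(\ell)^\perp$, i.e.\ $v$ is horizontal elliptic; the symmetric argument (exchanging the roles of $u$ and $v$) shows that $v$ hyperbolic implies $u$ horizontal elliptic. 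In the opposite direction, \textbf{(B)} shows that if $u$ is horizontal elliptic then $v = u^{-1}w$ shifts $z$ nontrivially, hence has no fixed point and is hyperbolic; symmetrically, $v$ horizontal elliptic implies $u = wv^{-1}$ hyperbolic. Assembling these implications gives: $u$ horizontal elliptic $\iff$ $v$ hyperbolic; $u$ hyperbolic $\iff$ $v$ horizontal elliptic; and in the only remaining case $u$ is vertical elliptic, where $v$ can be neither hyperbolic (else $u$ would be horizontal elliptic) nor horizontal elliptic (else $u$ would be hyperbolic), hence $v$ is vertical elliptic as well. Since every element of $[1,w]^W$ is exactly one of hyperbolic, horizontal elliptic, or vertical elliptic, this is precisely the asserted partition into top, bottom, and middle rows.

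For the cardinalities, the plan is to note that every reflection appearing in a minimal factorization of a horizontal elliptic $u \in [1,w]^W$ is itself horizontal (such a reflection $r$ has $\Mov(r) \subseteq \Mov(u) \subseteq \Dir(\ell)^\perp$), so all bottom-row elements lie below the horizontal part of $w$; by the structure theory of the horizontal subgroup developed in \Cref{sec:affine-coxeter-elements} (cf.\ \cite{mccammond2017artin}) this horizontal subgroup is finite, so the bottom row embeds into a finite noncrossing partition lattice and is therefore finite (\Cref{thm:finite-lattice}). The bijection $u \mapsto u^{-1}w$ maps the bottom row onto the top row, so the top row is finite too; the middle row, being the complement in the infinite poset $[1,w]^W$ of the two finite rows, is infinite. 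I expect the ``both elliptic'' case of the partition to be the crux: the dimension count \textbf{(A)} does not close it, and one genuinely needs \textbf{(B)} together with the fact that $\mu \neq 0$ (equivalently, that the Coxeter element is hyperbolic). The finiteness assertions are comparatively routine, but they do rely on first setting up the horizontal/vertical decomposition of the Coxeter element.
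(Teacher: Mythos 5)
Your partition argument is correct and is a clean, self-contained version of what the proposition requires. The two ingredients -- the Minkowski-sum containment $\Mov(w) \subseteq \Mov(u) + \Mov(v)$ together with the reflection-length formulas, and the observation that projection onto the Coxeter axis is shifted by $w$ but preserved by horizontal elliptics -- do pin down all three rows. In particular the dimension count in \textbf{(A)} forces equality $\Dir(\Mov(u)) + \Dir(\Mov(v)) = \Dir(\ell)^\perp$ whenever one of $u, v$ is hyperbolic, and \textbf{(B)} supplies the converse implication needed to close the ``both elliptic'' case. This is essentially the argument in McCammond--Sulway (which the paper cites in lieu of a proof), phrased in terms of move-sets; it is a good elementary route.

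The cardinality paragraph, however, contains a genuine error. You write that ``this horizontal subgroup is finite'' and that ``the bottom row embeds into a finite noncrossing partition lattice,'' citing \Cref{thm:finite-lattice}. Neither claim is correct. The group generated by the horizontal reflections of $[1,w]^W$ is $H = W_1 \times \dotsb \times W_k$, a direct product of \emph{infinite} affine Coxeter groups of type $\tilde A_{n_i}$; it is emphatically not finite. Nor do all horizontal elliptic elements of $[1,w]^W$ lie in a single interval $[1,h]$ for one parabolic Coxeter element $h$: by \Cref{prop:horizontal-axial}, the horizontal part $[1,w]^W \cap W_i$ has $n_i+1$ distinct maximal elements $\varphi^p(h_i)$, and the bottom row is a union of products of such intervals, not a single noncrossing partition lattice. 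So \Cref{thm:finite-lattice} proves nothing here. The correct finiteness argument is available from your own first observation: every bottom-row $u$ is a product of at most $n+1$ reflections drawn from $R_\h := R \cap [1,w]^W \cap H$, and this set of horizontal reflections is \emph{finite} (two per horizontal root, \Cref{cor:two-horizontal-reflections}); a bounded-length product over a finite alphabet yields finitely many elements. Alternatively one can invoke the explicit finite decomposition \eqref{eq:horizontal-decomposition}. Your bijection $u \mapsto u^{-1}w$ between the bottom and top rows, and the deduction that the middle row is then infinite since $[1,w]^W$ is (it contains all vertical reflections), are both fine once the finiteness of the bottom row is established correctly.
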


This coarse structure has the following implications, given elements $u \leq v$ in $[1,w]^W$: if $v$ is elliptic, then $u$ is elliptic; if $v$ is horizontal elliptic, then $u$ is horizontal elliptic; if $u$ is vertical, then $v$ is vertical; if $u$ is hyperbolic, then $v$ is hyperbolic.


The roots corresponding to horizontal reflections form a root system $\Phi_\h \subseteq \Phi$, called the \emph{horizontal root system} associated with the Coxeter element $w \in W$.
It decomposes as a disjoint union of orthogonal irreducible root systems of type $A$, as described in \Cref{table:horizontal-root-systems}.
The number $k$ of irreducible components varies from $1$ to $3$.
See \cite[Section 11]{mccammond2015dual} and \Cref{appendix}.

\begin{table}[t]
	\captionsetup{position=below}
	{\def\arraystretch{1.3}
	\begin{tabular}{|c|l|}
		\hline
		Type & Horizontal root system \\
		\hline
		$\tilde A_n$ & $\Phi_{A_{p-1}} \sqcup \Phi_{A_{q-1}}$ \\
		$\tilde C_n$ & $\Phi_{A_{n-1}}$ \\
		$\tilde B_n$ & $\Phi_{A_1} \sqcup \Phi_{A_{n-2}}$ \\
		$\tilde D_n$ & $\Phi_{A_1} \sqcup \Phi_{A_1} \sqcup \Phi_{A_{n-3}}$ \\
		\hline
		$\tilde G_2$ & $\Phi_{A_{1}}$ \\
		$\tilde F_4$ & $\Phi_{A_{1}} \sqcup \Phi_{A_2}$ \\
		$\tilde E_6$ & $\Phi_{A_{1}} \sqcup \Phi_{A_2} \sqcup \Phi_{A_2}$ \\
		$\tilde E_7$ & $\Phi_{A_{1}} \sqcup \Phi_{A_2} \sqcup \Phi_{A_3}$ \\
		$\tilde E_8$ & $\Phi_{A_{1}} \sqcup \Phi_{A_2} \sqcup \Phi_{A_4}$ \\
		\hline
	\end{tabular}}

	\caption{Horizontal root systems \cite[Table 1]{mccammond2017artin}.
	In the case $\tilde A_n$, we show the horizontal root system associated with a $(p,q)$-bigon Coxeter element.}
	\label{table:horizontal-root-systems}
\end{table}

\begin{theorem}[\cite{digne2006presentations, digne2012garside, mccammond2015dual}]
	Let $W$ be an irreducible affine Coxeter group, and $w$ one of its Coxeter elements.
	The interval $[1,w]^W$ is a lattice (and thus $W_w$ is a Garside group) if and only if the horizontal root system associated with $w$ is irreducible.
	This happens in the cases $\tilde C_n$, $\tilde G_2$, and $\tilde A_n$ if $w$ is a $(n,1)$-bigon Coxeter element.
\end{theorem}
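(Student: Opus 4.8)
The plan is to treat the two assertions of the statement separately. The explicit list of types is a direct inspection of \Cref{table:horizontal-root-systems}: the horizontal root system $\Phi_\h \subseteq \Phi$ is irreducible precisely when it consists of a single (possibly empty) irreducible factor, and the table shows this happens exactly for $\tilde C_n$ (where $\Phi_\h = \Phi_{A_{n-1}}$), for $\tilde G_2$ (where $\Phi_\h = \Phi_{A_1}$), and for $\tilde A_n$ with a $(p,q)$-bigon Coxeter element when $q = 1$, since then $\Phi_{A_{q-1}} = \Phi_{A_0}$ is empty; in every other type, and for every other bigon of $\tilde A_n$, the table exhibits at least two nonempty components. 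The ``Garside'' clause is then automatic: the generating set $R$ of reflections is closed under conjugation and a reflection has the same weight as each of its conjugates, so $[1,w]^W$ is balanced and \Cref{thm:garside-structure} upgrades the lattice property to a Garside structure on $W_w$. So the content is the equivalence ``$[1,w]^W$ is a lattice $\iff$ $\Phi_\h$ is irreducible,'' which I would prove in both directions using the three-row decomposition of $[1,w]^W$.

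\emph{Reducible $\Rightarrow$ not a lattice.} Let $W_\h = W_1 \times \dotsb \times W_k$ be the finite reflection subgroup of $W$ generated by the horizontal reflections appearing in $[1,w]^W$, with $k \geq 2$ irreducible factors, each of type $A$. The bottom row of $[1,w]^W$ is the interval $[1,c_\h]^{W} = [1,c_\h]^{W_\h} \cong \prod_i [1,c_i]^{W_i}$, where $c_\h = c_1 \dotsm c_k$ is a Coxeter element of $W_\h$ (by Theorem A), and this is a lattice by \Cref{thm:finite-lattice}; by the right-complement symmetry the same holds for the top row. So any failure of the lattice property must involve the Coxeter axis. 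Concretely, I would exhibit two elliptic elements $x, y \in [1,w]^W$, built from horizontal reflections lying in \emph{different} factors $W_i$ (one of them also involving a single vertical reflection), whose common upper bounds in $[1,w]^W$ contain two incomparable hyperbolic elements. These two bounds reflect the two inequivalent ways of interleaving the one-dimensional axis direction among the $k \geq 2$ orthogonal horizontal components — exactly the extra freedom visible in McCammond's annular noncrossing-partition model for $\tilde A_n$ and its analogues for $\tilde B_n$, $\tilde D_n$, and the exceptional types — which collapses when $k = 1$. Hence $\{x,y\}$ has no join, and $[1,w]^W$ is not a lattice.

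\emph{Irreducible $\Rightarrow$ lattice.} Since $[1,w]^W$ is bounded, it is a lattice as soon as it is a meet-semilattice; moreover the right-complement map $u \mapsto u^{-1}w$ is an order-reversing bijection of $[1,w]^W$ (all intervals $[1,h]^W$ being balanced since $R$ is conjugation-closed), so it suffices to exhibit all meets. Using the row implications, one sees that the only meets not already computed inside the finite bottom lattice $[1,c_\h]^{W_\h}$ are governed by two facts: that any two \emph{elliptic} elements of $[1,w]^W$ have a meet, and that any hyperbolic element has a greatest horizontal-elliptic element below it. For the first, a lower bound of an elliptic element is elliptic, so by \Cref{lemma:model-poset} and \Cref{lemma:elliptic-subposet} the lower bounds of elliptic $u_1, u_2 \in [1,w]^W$ are exactly the elliptic $x \in [1,w]^W$ with $\Fix(x) \supseteq \Fix(u_1) \cup \Fix(u_2)$; hence $u_1 \wedge u_2$ exists provided $[1,w]^W$ contains an element fixing exactly the affine hull of $\Fix(u_1) \cup \Fix(u_2)$, and the second fact is of the same nature. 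Irreducibility of $\Phi_\h$ enters precisely here, to guarantee this saturation: when $W_\h$ is a single irreducible type-$A$ group, the poset of affine subspaces realized as $\Fix(x)$ for $x \in [1,w]^W$ is closed under affine hulls — for $\tilde A_n$ with a $(n,1)$-bigon this is the disk-with-one-puncture specialization of Digne's annular noncrossing-partition model, for $\tilde C_n$ it is Digne's symmetric type-$B$ model, and $\tilde G_2$ is a finite computation — after which \Cref{lemma:elliptic-subposet} yields all the required meets.

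I expect the genuine obstacle to be this saturation step in the irreducible case: proving, ideally uniformly, that the poset of fixed subspaces occurring inside $[1,w]^W$ is closed under affine hulls. A case-free argument would seem to require a precise structural description of the middle row (the vertical-elliptic elements) and of how the Coxeter axis sits relative to each element, refining \Cref{lemma:model-poset} and Theorem A; absent that, one falls back on the explicit combinatorial models — annular noncrossing partitions in type $\tilde A$, their type-$B$ counterpart, and a finite check for $\tilde G_2$ — which is the route I would take to keep the proof complete.
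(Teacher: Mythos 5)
The paper itself does not prove this theorem: it is stated as background and cited directly from \cite{digne2006presentations, digne2012garside, mccammond2015dual}, so there is no in-paper argument for you to be compared against. What you have written is a plausible roadmap, in broad outline aligned with how those references actually establish the result (Digne's explicit combinatorial models for $\tilde A_n$ with an $(n,1)$-bigon and for $\tilde C_n$, a finite check for $\tilde G_2$, and concrete bounded-pair-without-join counterexamples in the reducible cases). Your reading of \Cref{table:horizontal-root-systems} for the ``which types'' clause, and the observation that the Garside conclusion follows from balancedness plus \Cref{thm:garside-structure}, are both correct and essentially content-free.

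That said, both directions of your sketch have genuine unfinished work, and you are candid about one of them. In the reducible direction you say you ``would exhibit'' two elements with incomparable common upper bounds, but you never construct them; the claim that the two bounds ``reflect the two inequivalent ways of interleaving the axis direction'' is the right heuristic but is not an argument until the elements are written down and shown to lie in $[1,w]^W$ with no element above both of them. In the irreducible direction the deeper problem is not only the saturation step you flag (closure of the realized fixed subspaces under affine hulls), but also the reduction that precedes it: your facts (a) and (b) --- meets of elliptic pairs exist, and hyperbolics have a greatest horizontal-elliptic lower bound --- do not cover all pair types. A meet of a hyperbolic and a \emph{vertical} elliptic element, or of two hyperbolics whose common lower bounds are vertical elliptic, is not handled by either fact, and passing to right complements does not change the pair type. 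So even granting the saturation step, the case analysis would need to be completed, most plausibly via the hyperbolic--horizontal decomposition (\Cref{lemma:hyperbolic-decomposition}) to reduce hyperbolic meets to products of an affine factor and a finite horizontal factor. Since you already concede that the honest route is to invoke the explicit models of Digne for $\tilde A$ and $\tilde C$ (and a direct computation for $\tilde G_2$), I would simply present it that way rather than suggest a uniform argument you do not carry out.
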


Since the interval $[1,w]^W$ is not a lattice in general, in \cite{mccammond2017artin} a new group of isometries $C \supseteq W$ is constructed, with the property that $[1,w]^C$ is a balanced lattice and $[1,w]^W \subseteq [1,w]^C$.
The corresponding interval group $C_w$ (called \emph{braided crystallographic group}) is a Garside group, and there is a natural inclusion $W_w \subseteq C_w$.
By \Cref{thm:garside-classifying-space}, the interval complex $K_C$ associated with $[1,w]^C$ is a (finite-dimensional) classifying space for $C_w$.
The cover of $K_C$ corresponding to the subgroup $W_w$ is a classifying space for the (dual) affine Artin group $W_w$.
Therefore affine Artin groups admit a finite-dimensional classifying space.
We come back to braided crystallographic groups in \Cref{sec:classifying-spaces}, where we show that the interval complex $K_W$ associated with $[1,w]^W$ is a classifying space for $W_w$ (this complex is much simpler than the aforementioned cover of $K_C$).

In the subsequent sections, we sometimes suppress the superscript $W$ when writing intervals $[1,u]^W$ in a Coxeter group $W$, and simply write $[1,u]$.

\section{Affine Coxeter elements}
\label{sec:affine-coxeter-elements}

This section is devoted to proving some results on the geometry of Coxeter elements of affine Coxeter groups, expanding the theory of \cite{mccammond2015dual, mccammond2017artin}.
We start by recalling, in \Cref{sec:bipartite-coxeter}, the results of \cite[Sections 8 and 9]{mccammond2015dual} on bipartite Coxeter elements.
In \Cref{sec:A-coxeter} we develop a parallel theory for the case $\tilde A_n$.
In \Cref{sec:isometries-below} we prove a few structural results for the elements of the interval $[1,w]$.
Finally, in \Cref{sec:horizontal-components} we make a digression on the geometry of the irreducible horizontal components.

This section goes hand in hand with the appendix, where we carry out explicit computations for the four infinite families of irreducible affine Coxeter groups.
A few results of \Cref{sec:A-coxeter} and one technical lemma of \Cref{sec:isometries-below} are checked by hand in the appendix.
The appendix can be also used as a source of additional examples.

Let $W$ be an irreducible affine Coxeter group, acting faithfully by Euclidean isometries on $E = \R^n$ where $n$ is the rank of $W$, as described in \Cref{sec:coxeter-artin}.
Let $R \subseteq W$ be the set of reflections, $w$ a Coxeter element of $W$, and $\ell = \Min(w) \subseteq E$ the Coxeter axis of $w$.
Denote by $\A$ the reflection arrangement associated with the action of $W$ on $E$.

The shortest vector $\mu$ in $\Mov(w)$ gives an orientation to the Coxeter axis $\ell$: we say that a point $\a \in \ell$ is \emph{above} a point $\b \in \ell$ (or, equivalently, $\b$ is \emph{below} $\a$) if $\a-\b$ is a positive multiple of $\mu$.
We also say that $\mu$ points towards the \emph{positive} direction of $\ell$, whereas $-\mu$ points towards the \emph{negative} direction.

\subsection{Bipartite Coxeter elements}
\label{sec:bipartite-coxeter}

\begin{definition}
	A \emph{bipartite Coxeter element} is a Coxeter element $w \in W$ for which there exist a set of simple reflections $S \subseteq R$ and a bipartition $S = S_0 \sqcup S_1$ of the Coxeter graph (i.e.\ the reflections in $S_0$ pairwise commute, and so do the reflections in $S_1$) such that $w = w_1w_0$, where $w_i$ is the product of the elements of $S_i$ in some order.
	\label{def:bipartite-coxeter-element}
\end{definition}

If the Coxeter graph of $W$ is a tree, then every Coxeter element $w \in W$ is a bipartite Coxeter element \cite[Corollary 7.6]{mccammond2015dual}.
In particular, this happens for all irreducible affine Coxeter groups except $\tilde A_n$.

Let $w$ be a bipartite Coxeter element, as in \Cref{def:bipartite-coxeter-element}.
Let $C_0$ be the (open) chamber of the Coxeter complex corresponding to the set $S$ of simple reflections so that the elements of $S = S_0 \sqcup S_1$ are the reflections with respect to the walls of $C_0$.
Let $F_i$ be the face of $C_0$ determined by the intersection of the hyperplanes of the reflections in $S_i$, and let $B_i$ be the affine span of $F_i$.
There is a unique pair of points $\p_i \in B_i$ that realize the minimum distance between $B_0$ and $B_1$.
Each $\p_i$ lies in the relative interior of the corresponding face $F_i$ \cite[Lemma 8.5]{mccammond2015dual}.
The line determined by $\p_0$ and $\p_1$ is exactly the Coxeter axis $\ell$ \cite[Proposition 8.8]{mccammond2015dual}, and it intersects $C_0$ \cite[Lemma 8.5]{mccammond2015dual}.

Each $w_i$ is an involution, and it restricts to a reflection on the Coxeter axis $\ell$ that fixes only $\p_i$ \cite[Lemma 8.7]{mccammond2015dual}.
Then $w_0$ and $w_1$ generate an infinite dihedral group that acts on $\ell$.
Using this action, we can extend the definitions of $F_i$, $B_i$, and $\p_i$ to arbitrary subscripts $i \in \Z$: let $F_{-i}$ (resp.\ $B_{-i}$, or $\p_{-i}$) be the image of $F_i$ (resp.\ $B_i$, or $\p_i$) under $w_0$, and let $F_{2-i}$ (resp.\ $B_{2-i}$, or $\p_{2-i}$) be the image of $F_i$ (resp.\ $B_i$, or $\p_i$) under $w_1$.
We obtain a sequence of equally spaced points $\p_i$ along the line $\ell$, with $p_{i+1} - p_i = \frac12 \mu$ (where $\mu$ is the shortest vector in $\Mov(w)$).
The \emph{axial chambers} are given by all possible images of the chamber $C_0$ under this dihedral group action.
Their vertices are called \emph{axial vertices}.

\begin{remark}
	\label{rmk:axial-orbits-bipartite}
	If $b$ is an axial vertex in the face $F_i$ (for some $i \in \Z$), then $w^j(b)$ is an axial vertex in $F_{i+2j}$.
	Therefore every axial chamber has exactly one vertex in the orbit $\{ w^j(b) \mid j \in \Z\}$.
\end{remark}

\begin{theorem}[{\cite[Theorem 8.10]{mccammond2015dual}}]
	Let $W$ be an irreducible affine Coxeter group not of type $\tilde A_n$, and $w$ one of its Coxeter elements.
	For every axial chamber $C$ there is a bipartite factorization $w = w_+w_-$, where $w_+$ (resp.\ $w_-$) is the product of the reflections with respect to the walls of $C$ that intersect $\ell$ above (resp.\ below) $C$.
	\label{thm:bipartite-coxeter}
\end{theorem}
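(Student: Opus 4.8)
The plan is to prove the statement first for the fundamental axial chamber $C_0$ by a direct geometric argument, and then transport it to an arbitrary axial chamber using equivariance under the infinite dihedral group $D = \langle w_0, w_1 \rangle$, which preserves $\ell$.

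First I would treat the base case $C = C_0$. As recalled above (\cite[Lemma 8.5, Proposition 8.8]{mccammond2015dual}), the axis $\ell$ meets $\bar C_0$ in the segment $[\p_0,\p_1]$ with $\p_i$ in the relative interior of $F_i$, and meets the open chamber $C_0$. Write $H_s$ for the reflecting hyperplane of $s \in S$; the $n+1$ walls of the simplex $C_0$ are precisely the distinct hyperplanes $\{H_s\}_{s\in S}$. For $s \in S_0$ we have $\p_0 \in F_0 \subseteq H_s$, and since $\ell$ meets the open chamber no wall can contain $\ell$, so $H_s \cap \ell = \{\p_0\}$; likewise $H_s \cap \ell = \{\p_1\}$ for $s \in S_1$. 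Thus, as $S = S_0 \sqcup S_1$ and $\p_1 - \p_0 = \tfrac12\mu$ points in the positive direction of $\ell$, the walls of $C_0$ met by $\ell$ above $C_0$ are exactly $\{H_s\}_{s\in S_1}$ and those met below are exactly $\{H_s\}_{s\in S_0}$. Since the reflections within each $S_i$ commute, the two products are well defined and equal $w_1$ and $w_0$, so $w_+ w_- = w_1 w_0 = w$, and moreover the walls of $C_0$ with this split form a bipartite factorization in the sense of \Cref{def:bipartite-coxeter-element}.

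Next I would propagate this by equivariance. Every axial chamber is $\phi(C_0)$ for a unique $\phi \in D$, and $D = \langle w \rangle \sqcup \langle w \rangle w_1$ with $w = w_1 w_0$; the rotations $\langle w \rangle$ act on $\ell$ as translations by multiples of $\mu$ and preserve its orientation, while the reflections in $\langle w \rangle w_1$ act on $\ell$ as point reflections, reverse its orientation, and hence conjugate $w$ to $w^{-1}$ (also $w_0 w_1 = w^{-1}$). Since $\phi(\ell) = \ell$, the isometry $\phi$ sends the walls of $C_0$ bijectively onto those of $C = \phi(C_0)$, preserves commutation relations, and sends $F_0, F_1$ to the faces of $C$ whose relative interiors contain the two endpoints of $C \cap \ell$. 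If $\phi$ preserves the orientation of $\ell$, the walls of $C$ met above are $\phi(\{H_s\}_{s\in S_1})$ and those met below are $\phi(\{H_s\}_{s\in S_0})$; conjugation distributes over a product of commuting reflections, so $w_+ w_- = \phi w_1 \phi^{-1} \cdot \phi w_0 \phi^{-1} = \phi w \phi^{-1} = w$, the last step because $\phi$ commutes with $w$. If $\phi$ reverses the orientation of $\ell$, the two blocks swap, so $w_+ w_- = \phi w_0 \phi^{-1} \cdot \phi w_1 \phi^{-1} = \phi w_0 w_1 \phi^{-1} = \phi w^{-1} \phi^{-1} = (\phi w \phi^{-1})^{-1} = w$, using that $\phi$ inverts $w$. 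Either way the walls of $C$ split into a commuting ``upper'' block and a commuting ``lower'' block whose product, upper times lower, is $w$, which is the asserted bipartite factorization.

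I expect the main obstacle to be exactly the geometric input for the base case: the fact that $\ell$ enters and exits $\bar C_0$ precisely through the complementary faces $F_0$ and $F_1$, so that the two ends of $C_0$ along $\ell$ exhaust its walls and reproduce the bipartition $S = S_0 \sqcup S_1$. This is \cite[Lemma 8.5]{mccammond2015dual} (together with \cite[Proposition 8.8]{mccammond2015dual}), which has already been recalled; granting it, the remainder is bookkeeping. The only other subtlety is the orientation convention, namely that the chosen factorization $w = w_1 w_0$ forces $S_1$ (rather than $S_0$) to label the walls met above $C_0$: this holds because $w$ restricted to $\ell$ is the composite of the point reflections in $\p_0$ and then $\p_1$, i.e.\ the translation by $2(\p_1 - \p_0) = \mu$.
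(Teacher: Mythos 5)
The paper does not prove this theorem; it states it with a citation to McCammond and Sulway \cite[Theorem~8.10]{mccammond2015dual}, so there is no internal proof to compare against. Your argument is correct and complete given the facts the paper recalls from Section~8 of that reference: in the base case, each wall $H_s$ of $C_0$ contains $p_0$ (if $s \in S_0$) or $p_1$ (if $s \in S_1$) but cannot contain $\ell$ since $\ell$ crosses the open chamber, so every wall meets $\ell$ in exactly one point, the bipartition $S = S_0 \sqcup S_1$ coincides with the below/above split, and commutativity within each $S_i$ gives $w_+w_- = w_1 w_0 = w$. The equivariance step is also handled correctly: for orientation-preserving $\phi \in \langle w\rangle$ one gets $w_+w_- = \phi w_1 w_0 \phi^{-1} = w$ because $\phi$ centralizes $w$, while for orientation-reversing $\phi \in \langle w\rangle w_0$ the blocks swap and $w_+w_- = \phi w_0 w_1 \phi^{-1} = \phi w^{-1}\phi^{-1} = w$, using $w_0 w w_0 = w_0 w_1 = w^{-1}$. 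A small byproduct of your base case worth noting is that no wall of an axial chamber is horizontal when $W$ is not of type $\tilde A_n$; this is precisely the feature that fails in type $\tilde A_n$ and forces the separate statement in \Cref{thm:coxeter-A}.
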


If a hyperplane $H$ of a reflection of $W$ crosses the Coxeter axis $\ell$, then there is an index $i$ such that $H$ contains all of $F_i$, all but one vertex of $F_{i-1}$ and all but one vertex of $F_{i+1}$ \cite[Corollary 8.11]{mccammond2015dual}.

\begin{lemma}[{\cite[Lemma 9.3]{mccammond2015dual}}]
	Let $W$ be an irreducible affine Coxeter group not of type $\tilde A_n$, and $w$ one of its Coxeter elements.
	Let $H$ be the hyperplane of a vertical reflection $r$ in $W$ that intersects the Coxeter axis $\ell$ at the point $\p_i$.
	If $\b$ and $\b'$ are the unique vertices of $F_{i-1}$ and $F_{i+1}$ not contained in $H$, then $w$ sends $\b$ to $\b'$, $r$ swaps $\b$ and $\b'$,
	$rw$ fixes $\b$, and $wr$ fixes $\b'$.
	Moreover, the elliptic isometry $rw$ (resp.\ $wr$) is a Coxeter element for the finite parabolic subgroup of $W$ that fixes $\b$ (resp.\ $\b'$).
	\label{lemma:elliptic-coxeter-bipartite}
\end{lemma}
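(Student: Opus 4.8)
The plan is to deduce every assertion from a single application of the bipartite factorization theorem (\Cref{thm:bipartite-coxeter}), after first pinning down exactly where $H$ sits among the axial chambers. By \cite[Corollary 8.11]{mccammond2015dual}, recalled above, $H$ contains all of $F_i$, omits exactly one vertex of $F_{i-1}$ — which is $\b$ — and omits exactly one vertex of $F_{i+1}$ — which is $\b'$. Let $C$ be the axial chamber whose two axial faces are $F_{i-1}$ and $F_i$. Since $F_{i-1}$ and $F_i$ are opposite faces of the simplex $C$, the vertices of $C$ are the disjoint union of the vertices of $F_{i-1}$ and of $F_i$; hence exactly one vertex of $C$, namely $\b$, lies off $H$. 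The remaining $n$ vertices of $C$ are affinely independent and contained in the hyperplane $H$, so they span it. Therefore $H$ is precisely the wall of $C$ opposite the vertex $\b$, and $r$ is the corresponding simple reflection of the simplex $C$. (The same computation also identifies $H$ with the wall opposite $\b'$ of the axial chamber whose axial faces are $F_i$ and $F_{i+1}$.)

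Next I would apply \Cref{thm:bipartite-coxeter} to $C$. Orienting $\ell$ by $\mu$, the point $\p_i$ lies above $\p_{i-1}$, and the walls of $C$ meeting $\ell$ at $\p_i$ are exactly those containing $F_i$ (each wall of $C$ contains one of the two opposite axial faces, and $\p_i$ is in the relative interior of $F_i$ by \cite[Lemma 8.5]{mccammond2015dual}). These are the walls from which the upper bipartite factor is built, and since $H \supseteq F_i$ is a wall of $C$, the reflection $r$ is one of them. So $w = w_+ w_-$, where $w_+$ (resp.\ $w_-$) is the product of the pairwise-commuting reflections in the walls of $C$ through $\p_i$ (resp.\ through $\p_{i-1}$), and $r$ occurs as a factor of $w_+$. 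Cancelling $r$, which commutes with the remaining factors of the involution $w_+$, gives $rw = (rw_+)\,w_-$: a product in which every factor is the reflection in a wall of $C$ other than $H$, hence in a wall of $C$ through $\b$, hence fixes $\b$. So $rw$ fixes $\b$. Reading off which walls occur, $(rw_+)w_-$ is the product — in some order, each exactly once — of the reflections in all $n$ walls of $C$ through $\b$; these form a set of simple reflections of the finite parabolic subgroup $W_\b = \{g \in W \mid g(\b) = \b\}$ fixing $\b$. Hence $rw$ is a Coxeter element of $W_\b$.

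The remaining statements then follow formally. From $rw(\b) = \b$ we get $w(\b) = r(\b)$. On one hand $w$ permutes the axial faces by $F_j \mapsto F_{j+2}$ (since $w_0 \colon F_j \mapsto F_{-j}$ and $w_1 \colon F_j \mapsto F_{2-j}$), so $w(\b)$ is a vertex of $F_{i+1}$; on the other hand $\b \notin H$ forces $r(\b) \notin H$. The unique vertex of $F_{i+1}$ off $H$ is $\b'$, so $w(\b) = \b' = r(\b)$, and since $r$ is an involution it interchanges $\b$ and $\b'$. Then $rw$ fixes $\b$ and $wr = r(rw)r$ fixes $r(\b) = \b'$; moreover conjugation by $r$ carries $W_\b$ isomorphically onto $W_{\b'}$, taking the set of simple reflections of $W_\b$ used above (the reflections in the walls of $C$ through $\b$) to the reflections in the walls of $r(C)$ through $\b'$, a set of simple reflections of $W_{\b'}$. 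Hence $wr = r(rw)r$ is a Coxeter element of $W_{\b'}$. (Alternatively, one can rerun the previous paragraph with the axial chamber with axial faces $F_i, F_{i+1}$ and the vertex $\b'$ in place of $C$ and $\b$.)

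The only genuinely geometric step here is the identification of $H$ as the wall of $C$ opposite $\b$, together with the above/below bookkeeping that places $r$ in the upper bipartite factor; after that, \Cref{thm:bipartite-coxeter} does all the work and the assertions about Coxeter elements amount to counting reflections. I therefore expect that step — and the surrounding index bookkeeping — to be the only real content, and the write-up to be short.
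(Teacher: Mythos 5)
Your proof is correct. Note, however, that the paper does not supply its own argument here: this lemma is stated as a direct citation of \cite[Lemma 9.3]{mccammond2015dual}, so there is nothing in the present paper to compare against.

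That said, your reconstruction is clean and hits the right points. Identifying $H$ as the wall of $C$ opposite $\b$ via \cite[Corollary 8.11]{mccammond2015dual} and the affine span argument, placing $r$ in the factor $w_+$, cancelling it against the commuting factors, and recognizing the remaining $n$ reflections as the walls of $C$ through $\b$ (hence a simple system for the vertex stabilizer $W_\b$) is precisely the mechanism that makes $rw$ a Coxeter element of $W_\b$. The bookkeeping step you flag at the end --- knowing that $w(F_j) = F_{j+2}$ and that $r(\b)$ is off $H$, then using uniqueness of the off-$H$ vertex in $F_{i+1}$ --- is indeed the only place one has to be careful, and you handled it correctly. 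One remark worth keeping in mind: the equalities $w(\b)=\b'$ and $r(\b)=\b'$ are logically downstream of $rw(\b)=\b$ in your write-up, so the ordering of the five claims in your proof is the reverse of the order they appear in the statement; this is fine, but if you were writing it for publication you might note explicitly that you are proving the last assertion first and deducing the others from it.
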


\begin{theorem}[{\cite[Propositions 9.4 and 9.5, Theorem 9.6]{mccammond2015dual}}]
	\label{thm:reflections-bipartite}
	Let $W$ be an irreducible affine Coxeter group not of type $\tilde A_n$, and $w$ one of its Coxeter elements.
	Every vertical reflection $r \in W$ is in $[1,w]$, and fixes many axial vertices.
	A horizontal reflection $r \in W$ is in $[1,w]$ if and only if it fixes at least one axial vertex.
\end{theorem}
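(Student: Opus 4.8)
The plan is to prove the three assertions separately, in increasing order of difficulty. Write $n=\rk(W)$ and recall that $l(w)=n+1$ and that the Coxeter axis $\ell$ is a line. The vertical case is essentially a bookkeeping consequence of \Cref{lemma:elliptic-coxeter-bipartite}. If $r$ is a vertical reflection, then its move-set is not contained in the horizontal subspace, so the normal to $\Fix(r)$ is not horizontal; hence $\Fix(r)$ does not contain the direction of $\ell$ and meets $\ell$ transversally, in a single point. By \cite[Corollary~8.11]{mccammond2015dual} there is an index $i$ with $F_i\subseteq\Fix(r)$, and since $\p_i$ lies both on $\ell$ and in the affine span of $F_i$ we must have $\Fix(r)\cap\ell=\{\p_i\}$. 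Now \Cref{lemma:elliptic-coxeter-bipartite} applies: writing $\b$ for the vertex of $F_{i-1}$ not lying on $\Fix(r)$, the isometry $rw$ fixes the axial vertex $\b$ and is a Coxeter element of the finite parabolic subgroup of $W$ fixing $\b$, hence $l(rw)\le n$. Since $l(r)=1$ and $l(w)=n+1$, subadditivity of reflection length forces $l(r)+l(rw)=l(w)$, i.e.\ $r\in[1,w]$; and the claim that $r$ fixes many axial vertices is exactly \cite[Corollary~8.11]{mccammond2015dual} ($r$ fixes all axial vertices of $F_i$ and all but one of each of $F_{i-1}$ and $F_{i+1}$).

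For the ``if'' part of the horizontal statement, suppose the horizontal reflection $r$ fixes an axial vertex $\b$; since $\b$ is an axial vertex, choose an axial chamber $C$ with $\b$ as a vertex. By \Cref{thm:bipartite-coxeter} the element $w$ is the product of the $n+1$ wall-reflections of $C$ in a suitable order, and since $l(w)=n+1$ this is a reduced reflection factorization. The vertex $\b$ lies on exactly $n$ of the $n+1$ walls of $C$, and (by the standard description of face stabilizers in a Coxeter complex) the corresponding $n$ reflections form a set of simple reflections of the finite parabolic subgroup $P$ of $W$ fixing $\b$, which has rank $n$. Using the Hurwitz action (\Cref{lemma:hurwitz-action}) I would reorder the factorization of $w$ so that these $n$ reflections come first: their product $c_P$ is then a Coxeter element of $P$, with $l(c_P)=n$, and $w=c_P\, h$ is reduced, where $h$ is the remaining wall-reflection. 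As $P$ is finite, every reflection of $P$ --- in particular $r$ --- lies below $c_P$ in $[1,c_P]^P$ by \cite[Lemma~1.2.1]{bessis2003dual}; writing $c_P=r\, c''$ reduced in $P$ gives $l(c'')=n-1$ in $W$ as well, so $w=r\, c''\, h$ is a reduced factorization of $w$ beginning with $r$, whence $r\in[1,w]$.

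The ``only if'' part is the crux, and I expect it to be the main obstacle. Let $r$ be a horizontal reflection with $r\in[1,w]$. The strategy is to manufacture an axial vertex fixed by $r$ out of a vertical reflection accompanying $r$ in a reduced factorization. First, $w$ is not a product of horizontal reflections: such a product moves every point by a horizontal vector, so its move-set cannot contain the (vertical, nonzero) shortest vector $\mu$ of $\Mov(w)$, whereas $\mu\in\Mov(w)$. Hence every reduced factorization of $w$ contains a vertical reflection; choosing one of the form $w=r\, u_2\dotsm u_{n+1}$, some $u_j$ with $j\ge 2$ is vertical. Applying \Cref{lemma:hurwitz-action} to the reduced factorization $u_2\dotsm u_{n+1}$ of $rw$, I would move $u_j$ to the last slot, obtaining $w=r\, v_2\dotsm v_n\, r'$ with $r'=u_j$ vertical; set $c=wr'=r\, v_2\dotsm v_n$, so that $r\le c$ in $[1,w]$ and $l(c)=n$. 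Now \Cref{lemma:elliptic-coxeter-bipartite} applied to $r'$ shows that $c=wr'$ fixes an axial vertex $\b'$ and is a Coxeter element of the finite parabolic subgroup of $W$ fixing $\b'$; since $l(c)=n$, this subgroup has rank $n$, hence is essential, so $c$ is elliptic with $\Fix(c)=\{\b'\}$. Finally, reflection length in $W$ agrees with reflection length in $L=\Isom(E)$ throughout $[1,w]$, hence throughout $[1,c]\subseteq[1,w]$ (this is the argument in the proof of \Cref{lemma:model-poset}); so $r\le c$ holds also in $[1,c]^L$, and \Cref{thm:model-poset} then gives $\inv(r)\le\inv(c)$, which --- both elements being elliptic --- reads $\Fix(r)\supseteq\Fix(c)=\{\b'\}$. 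Thus $r$ fixes the axial vertex $\b'$, as required. The delicate points are exactly the three just indicated: verifying that $w$ is not a product of horizontal reflections, carrying out the Hurwitz manipulation so that $r$ remains at the front while a vertical reflection is extracted at the back, and identifying $\Fix(c)=\{\b'\}$ from the finite-group geometry supplied by \Cref{lemma:elliptic-coxeter-bipartite}.
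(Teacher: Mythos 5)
The paper does not actually prove this theorem---it is imported wholesale from \cite[Propositions 9.4, 9.5 and Theorem 9.6]{mccammond2015dual}, so there is no internal proof to compare against. The closest internal analogs are the proofs of \Cref{thm:reflections-A} (the $\tilde A_n$ version) and of \Cref{lemma:elliptic-coxeter-element}, and your argument is correct and lines up closely with those.

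In detail: your ``only if'' direction (factor $w$ starting with $r$, Hurwitz a vertical reflection $r'$ to the end, apply \Cref{lemma:elliptic-coxeter-bipartite} to $c=wr'$ to identify $\Fix(c)$ as a single axial vertex, then pass through \Cref{lemma:model-poset} to conclude $\Fix(r)\supseteq\Fix(c)$) is essentially the same as the paper's proof of the corresponding direction of \Cref{thm:reflections-A}. Your ``if'' direction (any axial chamber $C$ having $\b$ as a vertex; \Cref{thm:bipartite-coxeter} plus \Cref{lemma:hurwitz-action} to extract the product $c_P$ of the $n$ wall-reflections through $\b$ as an element of $[1,w]$ which is a Coxeter element of the stabilizer of $\b$; Bessis' lemma to get $r\le c_P$) is precisely the construction the paper uses for \Cref{lemma:elliptic-coxeter-element}; by contrast, for \Cref{thm:reflections-A} in the $\tilde A_n$ case the paper instead routes the ``if'' direction through \Cref{lemma:vertical-opposite-wall} and \Cref{lemma:elliptic-coxeter-A}, which requires a more careful choice of chamber---your route avoids that extra input and works uniformly, which is why it also works in the bipartite case here. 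Your vertical argument differs mildly from the paper's (you bound $l(rw)\le n$ via \Cref{lemma:elliptic-coxeter-bipartite} and then use subadditivity of reflection length, rather than reading $r\in[1,w]$ directly off the chamber factorization of \Cref{thm:bipartite-coxeter}); both are fine. One cosmetic point: after the Hurwitz move in the ``if'' step, the final factor is a reflection conjugate of the removed wall-reflection, not literally ``the remaining wall-reflection''---this does not affect the argument since you only use that it has reflection length $1$.
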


\begin{figure}[t]
	\newcommand*\rows{10}
	\begin{tikzpicture}[scale=1.25,
	extended line/.style={shorten >=-#1,shorten <=-#1}, extended line/.default=35cm,
	every node/.style={inner sep=1.8pt, circle, draw}]
	
	\clip (0.2, 1.2) rectangle (8.4, 7.8);
	
	\begin{scope}[every path/.style={black!30}]
	\foreach \i in {-3, ..., 3} {
		\fill ($({3*sqrt(3)}, 3+2*\i)$) -- ($({5/2*sqrt(3)}, 3.5+2*\i)$) -- ($({7/3*sqrt(3)}, 3+2*\i)$) -- ($({5/2*sqrt(3)}, 2.5+2*\i)$) -- cycle;
		
		\fill ($({2*sqrt(3)}, 4+2*\i)$) -- ($({5/2*sqrt(3)}, 4.5+2*\i)$) -- ($({8/3*sqrt(3)}, 4+2*\i)$) -- ($({5/2*sqrt(3)}, 3.5+2*\i)$) -- cycle;
	}
	\end{scope}
	\fill[black!60] ($({2*sqrt(3)}, 4)$) -- ($({5/2*sqrt(3)}, 4.5)$) -- ($({8/3*sqrt(3)}, 4)$) -- cycle;
	
	\begin{scope}[black!80]
	\foreach \row in {-\rows, ...,\rows} {
		\draw [extended line] ($\row*(0, 1)$) -- ($(1, 0)+\row*(0, 1)$);
		\draw [extended line] ($\row*({sqrt(3)},0)$) -- ($(0, 1)+\row*({sqrt(3)},0)$);
		\draw [extended line] ($\row*(0, 2)$) -- ($({sqrt(3)}, 3)+\row*(0, 2)$);
		\draw [extended line] ($\row*(0, 2)$) -- ($({sqrt(3)}, 1)+\row*(0, 2)$);
		\draw [extended line] ($\row*(0, 2)$) -- ($({sqrt(3)}, -3)+\row*(0, 2)$);
		\draw [extended line] ($\row*(0, 2)$) -- ($({sqrt(3)}, -1)+\row*(0, 2)$);
	}
	\end{scope}
	
	\draw [extended line, dashed, thick] ($({5/2*sqrt(3)}, 0)$) -- ($({5/2*sqrt(3)}, 1)$);
	
	\foreach \i in {-6, ..., 6} {
		\node[fill=blue!60] at ($({3*sqrt(3)}, 3+2*\i)$) {};
		\node[fill=blue!60] at ($({2*sqrt(3)}, 4+2*\i)$) {};
		\node[fill=black!30!green] at ($({7/3*sqrt(3)}, 3+2*\i)$) {};
		\node[fill=black!30!green] at ($({8/3*sqrt(3)}, 4+2*\i)$) {};
		\node[fill=red!60] at ($({5/2*sqrt(3)}, 3.5+\i)$) {};
	}
	\end{tikzpicture}
	\caption{Coxeter complex of type $\tilde G_2$ \cite[Figure 11]{mccammond2017artin}.}
	\label{fig:G2}
\end{figure}

\begin{example}[Case $\tilde G_2$]
	\Cref{fig:G2} shows the Coxeter complex of a Coxeter group of type $\tilde G_2$.
	Every Coxeter element $w$ is a glide reflection, whose glide axis is the Coxeter axis $\ell = \Min(w)$ (the dashed line).
	The axial chambers are shaded, and the axial vertices are colored, with one color for each $w$-orbit (see \Cref{rmk:axial-orbits-bipartite}).
	We use the notation of \cite[Definition 5.10]{mccammond2017artin} to indicate the reflections in $R_0 = R \cap [1,w]$: let $C_0$ be the darkly shaded chamber in \Cref{fig:G2}; let $p_0 \in \ell$ be below $C_0$, and $p_1 \in \ell$ above $C_0$; denote by $a_i$ (for $i \equiv 1$ mod 4) the reflection with respect to the line of slope $-\sqrt3$ passing through the point $p_i \in \ell$;
	similarly, denote by $b_j, c_k, d_i, e_j$ the vertical reflections with slopes $-\frac{1}{\sqrt3}, 0, \frac{1}{\sqrt3}, \sqrt3$, respectively (they are defined for $i \equiv 1$ mod 4, $j \equiv 3$ mod 4, and $k \equiv 0$ mod 2);
	finally, let $f$ and $f'$ be the two horizontal reflections of $[1,w]$.
	The walls of $C_0$ are the fixed lines of $a_1, d_1, c_0$.
	A bipartite factorization of $w$ is $w=a_1d_1c_0$.
	\label{ex:G2}
\end{example}

\subsection{Coxeter elements of type \texorpdfstring{$\tilde A_n$}{tilde An}}
\label{sec:A-coxeter}

If $W$ is a Coxeter group of type $\tilde A_n$, most of its Coxeter elements are not bipartite, and thus the theory of \Cref{sec:bipartite-coxeter} does not apply.
In this section, we derive a parallel theory and highlight the most important differences with the bipartite case.

As shown in \cite[Section 7]{mccammond2015dual}, every Coxeter element is geometrically equivalent to a \emph{$(p,q)$-bigon Coxeter element}, for a unique pair $(p,q)$ of positive integers such that $p \geq q$ and $p+q = n+1$.
Therefore there are exactly $\lfloor \frac{n+1}{2} \rfloor$ distinct equivalence classes of Coxeter elements.
For the explicit construction of $(p,q)$-bigon Coxeter elements, see Section \ref{sec:appendix-A} in the appendix.
The first four results in this section (\Cref{lemma:axis-A}, \Cref{thm:coxeter-A}, \Cref{prop:axial-point-A,prop:axial-orbits-A}) are verified in the appendix by explicit computation.

\begin{lemma}
	\label{lemma:axis-A}
	Let $W$ be a Coxeter group of type $\tilde A_n$, and $w$ one of its $(p,q)$-bigon Coxeter elements.
	The Coxeter axis $\ell$ is not contained in any reflection hyperplane of $W$, and it intersects the vertical hyperplanes in an infinite sequence of equally spaced points $\{p_i\}_{i \in \Z}$.
	More precisely we have $p_{i+1} - p_i = \frac{\gcd(p,q)}{p+q} \mu$, where $\mu$ is the shortest vector of $\Mov(w)$.
	In particular, $w(p_i) = p_j$ with $j = i + \frac{p+q}{\gcd(p,q)}$.
\end{lemma}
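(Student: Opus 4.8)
The plan is to derive \Cref{lemma:axis-A} from the explicit coordinate model of $(p,q)$-bigon Coxeter elements set up in Section~\ref{sec:appendix-A}, so that the proof in the body of the paper reduces to a pointer to that computation; what follows is the shape of the computation. Realize $W$ of type $\tilde A_n$ on $V = \{x\in\R^{n+1}\mid\sum_i x_i = 0\}$, with reflection hyperplanes $H_{e_i-e_j,\,k} = \{x\mid x_i - x_j = k\}$, and (using \Cref{table:horizontal-root-systems} and \cite{mccammond2015dual}) choose coordinates so that the horizontal root system $\Phi_{A_{p-1}}\sqcup\Phi_{A_{q-1}}$ sits on the two coordinate blocks $B_1 = \{1,\dots,p\}$ and $B_2 = \{p+1,\dots,p+q\}$. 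The first step is to write down the bigon Coxeter element $w$ explicitly as an affine isometry, recall from \cite{mccammond2015dual, mccammond2017artin} that $w$ is hyperbolic of reflection length $n+1 = p+q$ with $\Min(w)$ a line, and compute $\Mov(w)$ together with its shortest vector $\mu$. A short calculation shows $\mu$ is, up to a positive scalar $c$, the vector $q\sum_{i\in B_1}e_i - p\sum_{j\in B_2}e_j$; consequently every horizontal root is orthogonal to $\mu$, while every vertical root $e_i - e_j$ (with one index in each block) satisfies $\langle e_i - e_j,\mu\rangle = \pm c(p+q)$ — the \emph{same} constant up to sign for all of them. This uniformity is what makes the type-$\tilde A_n$ axis tractable by hand.

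Next I would compute $\Min(w) = \ell$ explicitly as a parametrized line $\{a_0 + t\mu\mid t\in\R\}$, on which $w$ acts as the translation $t\mapsto t+1$ (since $w(a) = a+\mu$ for $a\in\Min(w)$). For the non-containment assertion it suffices to treat horizontal hyperplanes, since $\ell$ crosses every vertical hyperplane transversally; and $\ell\subseteq H_{\alpha,k}$ with $\alpha$ horizontal would force $\langle a_0,\alpha\rangle = k\in\Z$, which one rules out by checking from the explicit $a_0$ that $\langle a_0,\, e_i - e_j\rangle\notin\Z$ whenever $i,j$ lie in the same block. For the crossings with vertical hyperplanes, $\ell$ meets $H_{e_i-e_j,\,k}$ (with $i\in B_1$, $j\in B_2$) in the single point at parameter $t = \big(k - \langle a_0, e_i - e_j\rangle\big)/\langle\mu, e_i - e_j\rangle$; since all denominators equal $\pm c(p+q)$, the set of crossing parameters is $\big\{\,(k - ((a_0)_i - (a_0)_j))/(c(p+q)) : i\in B_1,\ j\in B_2,\ k\in\Z\,\big\}$, and the heart of the proof is an elementary number-theoretic computation showing this set is an arithmetic progression with common difference $\tfrac{\gcd(p,q)}{p+q}$ in the parameter $t$; in space this says precisely that the ordered crossing points $p_i$ satisfy $p_{i+1} - p_i = \tfrac{\gcd(p,q)}{p+q}\mu$. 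Finally, $w(p_i) = p_j$ with $j = i + \tfrac{p+q}{\gcd(p,q)}$ is immediate: $w$ permutes the vertical hyperplanes and translates $\ell$ by $\mu = \tfrac{p+q}{\gcd(p,q)}\big(\tfrac{\gcd(p,q)}{p+q}\mu\big) = \tfrac{p+q}{\gcd(p,q)}(p_{i+1}-p_i)$, and $\gcd(p,q)\mid p+q$ makes this shift an integer.

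The step I expect to be the main obstacle is controlling the residues $(a_0)_i - (a_0)_j \bmod \Z$ for $i\in B_1$, $j\in B_2$ and running the $\gcd$ bookkeeping on the crossing parameters — this is exactly where $\gcd(p,q)$ enters the statement, and it is delicate enough that the natural place for it (and for the three companion results of this subsection) is the appendix. Everything else is routine once the coordinates of Section~\ref{sec:appendix-A} are fixed: the explicit form of $w$, the computation of $\Mov(w)$, $\mu$, and $\ell$, the transversality of $\ell$ to vertical hyperplanes, and the same-block non-integrality check.
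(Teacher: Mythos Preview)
Your proposal is correct and follows essentially the same route as the paper's appendix proof: work in the explicit coordinate model with the two blocks $B_1,B_2$, write down the parametrized Coxeter axis with direction $\mu$, observe that all vertical roots pair with $\mu$ to the same value $\pm(p+q)/pq$, and then reduce the crossing parameters to the arithmetic progression $\tfrac{\gcd(p,q)}{p+q}\Z$ via the identity $\{qi+pj : i,j\in\Z\}=\gcd(p,q)\Z$. The paper's version is terser (it jumps straight to the intersection equation $\big(\tfrac1p+\tfrac1q\big)\theta = k-1+\tfrac{i}{p}+\tfrac{j-1}{q}$ and reads off the conclusion), but the content is the same.
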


As in the bipartite case, the chambers that intersect the Coxeter axis $\ell$ are called \emph{axial chambers}, and the vertices of the axial chambers are called \emph{axial vertices}.
The following theorem is the analog of \Cref{thm:bipartite-coxeter}, and describes how axial chambers yield a factorization of $w$.

\begin{theorem}
	\label{thm:coxeter-A}
    Let $W$ be a Coxeter group of type $\tilde A_n$, and $w$ one of its $(p,q)$-bigon Coxeter elements with $p \geq q$.
    Fix an axial chamber $C$, and let $S_C \subseteq R$ be the set of the $n+1$ reflections with respect to the walls of $C$.
    Write $S_C = S^+ \sqcup S^- \sqcup S^\h$, where $S^+$ (resp.\ $S^-$) consists of the reflections that intersect the Coxeter axis $\ell$ above (resp.\ below) $C \cap \ell$, and $S^\h$ consists of the horizontal reflections.
    Then:
    \begin{enumerate}[(i)]
        \item $|S^+| = |S^-| = q$, and $|S^\h| = p-q$;
        \item the reflections in $S^+$ (resp.\ $S^-$) pairwise commute;
        \item $w$ can be written as a product of the reflections in $S_C$, where the reflections in $S^+$ come first, and the reflections in $S^-$ come last.
    \end{enumerate}
\end{theorem}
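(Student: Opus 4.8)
The plan is to reduce to a single, explicitly described Coxeter element and then verify the three claims by direct computation, the bulk of which is carried out in \Cref{appendix}. All $(p,q)$-bigon Coxeter elements (for fixed $p,q$) are geometrically equivalent, so the reflection arrangement together with the Coxeter axis and the action of $w$ is carried by an isometry to the same picture; since the notions appearing in the statement (axial chamber, walls of $C$, the rows ``above'', ``below'', and ``horizontal'', and ordered factorizations of $w$) all transfer under such an isometry, it suffices to prove the theorem for the standard $(p,q)$-bigon Coxeter element set up in coordinates in Section~\ref{sec:appendix-A}, acting on $E \subseteq \R^{n+1}$ with reflection hyperplanes $x_i - x_j = k$.

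Next I would describe the axial chambers. By \Cref{lemma:axis-A} the Coxeter axis $\ell$ is contained in no reflection hyperplane and meets the vertical hyperplanes in an equally spaced sequence $\{p_i\}_{i\in\Z}$ with $w(p_i) = p_{i + (n+1)/\gcd(p,q)}$. Every horizontal reflection has a root orthogonal to the direction $\mu$ of $\ell$, hence a hyperplane parallel to $\ell$; as $\ell$ lies in no such hyperplane, it is disjoint from all horizontal hyperplanes. Therefore the open segment of $\ell$ between consecutive points $p_i,p_{i+1}$ meets no hyperplane at all and lies in a single chamber, which sets up a bijection between axial chambers and the intervals $(p_i,p_{i+1})$, with $w$ acting by the index shift $i \mapsto i + (n+1)/\gcd(p,q)$.

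Conjugation by $w$ sends $S_C$ to $S_{w(C)} = w S_C w^{-1}$ and carries an ordered factorization of $w$ through $C$ to one through $w(C)$ of the same shape, so it is enough to verify (i)--(iii) for one axial chamber in each $\langle w\rangle$-orbit, i.e.\ for the $(n+1)/\gcd(p,q)$ chambers meeting $(p_0,p_1),(p_1,p_2),\dots$ (the diagram symmetry of $\tilde A_n$ and of the bigon may be used to cut the number of cases down further). For each such chamber $C$ I would list its $n+1$ walls explicitly and, for each wall, either compute the point where its hyperplane crosses $\ell$ --- placing the wall in $S^+$ or $S^-$ according to whether that point lies above or below $C\cap\ell$ --- or observe that its root is horizontal, placing it in $S^\h$. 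Claim (i) is then the count that there are $p-q$ horizontal walls and $q$ walls on each side; claim (ii) follows by checking that the roots within $S^+$ (and within $S^-$) have pairwise disjoint index supports, so that the corresponding reflections commute; claim (iii) follows by multiplying the reflections of $S_C$ in the order ``$S^+$, then $S^\h$ in a suitable order, then $S^-$'' and checking that the result equals $w$.

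I expect the main obstacle to be the explicit combinatorial anatomy of the $(p,q)$-bigon axial chambers --- identifying which hyperplanes bound each of them and how each simplex straddles $\ell$ --- together with the resulting verification of the ordered factorization (iii). Unlike the bipartite case of \Cref{thm:bipartite-coxeter}, there is no bipartition of the walls of $C$ and the configuration near the axis is less symmetric; the horizontal walls $S^\h$, absent in the bipartite setting, are exactly what makes the bookkeeping heavier, and pinning down a valid relative order of the reflections inside $S^\h$ (with all of $S^+$ forced first and all of $S^-$ forced last) is the delicate point. A cleaner conceptual argument would presumably require a better structural understanding of bigon Coxeter elements than is currently available, which is why we settle for the case-by-case computation in \Cref{appendix}.
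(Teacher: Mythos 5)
Your plan matches the paper's in its broad outline: reduce to the explicit coordinate model of a $(p,q)$-bigon Coxeter element, describe the axial chambers via the sequence $\{p_i\}$ from \Cref{lemma:axis-A}, classify each wall of $C$ as positive/negative/horizontal, and verify (i)--(iii) by direct computation. The observation behind (ii) --- that reflections in $S^+$ (or $S^-$) have pairwise disjoint coordinate supports because each coordinate $x_i$ or $y_j$ appears in exactly two walls of $C$, one crossing $\ell$ above and one below --- is exactly what the appendix proves.

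The one place your proposal is thinner than it needs to be is the verification step itself. You describe it as ``case-by-case computation'' over $\langle w\rangle$-orbit representatives, but there are $(n+1)/\gcd(p,q)$ such orbits, growing with $n$; a literal case check does not terminate. What the paper actually does (\Cref{lemma:product-A}) is a single \emph{parametrized} argument valid for \emph{any} point $a$ satisfying the cyclic inequalities $x_p^a < x_{p-1}^a < \dotsb < x_1^a < x_p^a+1$ and $y_1^a < \dotsb < y_q^a < y_1^a+1$ (in particular every axial point), with no orbit reduction needed at all. The key bookkeeping device is to normalize $x_p^a = 0$, form the merged ordered set $Z$ of the $x$-coordinates and suitably shifted $y$-coordinates, and read the walls of $C_a$ off the total order on $Z$; the explicit admissible order on $S^{\mathrm{h}}$ (which you rightly flag as the delicate point) is then the one given in \eqref{eq:horizontal-hyperplanes-A}. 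Finally, the identity $w_+ w_{\mathrm{h}} w_- = w$ is not verified by brute-force expansion but by the standard trick for affine isometries: check equality of linear parts on the basis vectors $e_{x_i}, e_{y_j}$ and then equality at a single well-chosen vertex $\b$ of $C_a$. If you replace ``settle for case-by-case computation'' with this uniform $Z$-parametrized argument, your proposal becomes the paper's proof.
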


The next result describes how the vertical hyperplanes of the arrangement $\A$ intersect the Coxeter axis $\ell$.
Unlike the bipartite case, the vertical walls of an axial chamber $C$ can intersect $\ell$ outside of its closure $\bar C$.

\begin{proposition}
	\label{prop:axial-point-A}
	Let $W$ be a Coxeter group of type $\tilde A_n$, and $w$ one of its $(p,q)$-bigon Coxeter elements with $p \geq q$.
	\begin{enumerate}[(i)]
		\item Let $\a \in \ell$ be a point which is fixed by at least one vertical reflection of $W$.
		There are exactly $\gcd(p,q)$ vertical reflections of $W$ that fix $\a$, and they pairwise commute.
		
		\item Let $\{p_i\}_{i \in \Z}$ be the sequence of points of \Cref{lemma:axis-A}, and let $C$ be an axial chamber that intersects $\ell$ between $p_i$ and $p_{i+1}$.
		A vertical hyperplane of $\A$ is a wall of $C$ if and only if it intersects $\ell$ in one of the $2m$ consecutive points $p_{i-m+1}, p_{i-m+2}, \dotsc, p_{i+m}$, where $m = \frac{q}{\gcd(p,q)}$.
	\end{enumerate}
	
\end{proposition}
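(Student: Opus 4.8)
The plan is to prove both parts by explicit computation in the coordinate model of $\tilde A_n$ developed in Section~\ref{sec:appendix-A}, deducing part (ii) from part (i) together with \Cref{thm:coxeter-A}. In that model $W$ is realized as a group of affine permutations, the hyperplanes of $\A$ are the sets of the form $\{x_j - x_{j'} = k\}$ for $j \neq j'$ and $k \in \Z$, and the $(p,q)$-bigon Coxeter element $w$ is given explicitly. Set $g = \gcd(p,q)$ and $m = \frac{q}{g}$. By \Cref{lemma:axis-A} the Coxeter axis $\ell$ is contained in no reflection hyperplane, so a hyperplane of $\A$ meets $\ell$ either in a single point (the vertical ones) or not at all (the horizontal ones); the crossing points $p_i \in \ell$ are equally spaced with $p_{i+1} - p_i = \frac{g}{p+q}\mu$, and $w$ raises the index of a crossing point by $\frac{p+q}{g} = m + \frac{p}{g}$.

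For part (i), let $\a = p_i$ be a crossing point and read off its coordinates $\a_j$ from the explicit form of $w$. A vertical reflection fixes $\a$ iff its hyperplane passes through $\a$, i.e.\ iff the corresponding triple $(j,j',k)$ satisfies $\a_j - \a_{j'} = k$; so part (i) amounts to counting such triples. The feature of the $(p,q)$-bigon to extract from the computation is that the coordinates $\a_j$ are constant along the residue classes of the strand indices modulo $g$; this forces the solutions of $\a_j - \a_{j'} = k$ to organize into exactly $g$ distinct reflections, whose index pairs $\{j,j'\}$ are moreover pairwise disjoint. Two reflections of $\tilde A_n$ with disjoint index pairs have orthogonal roots $e_j - e_{j'}$, hence perpendicular hyperplanes, and so commute; this is the commutativity assertion in part (i).

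For part (ii), fix an axial chamber $C$ with $C \cap \ell$ lying between $p_i$ and $p_{i+1}$, and use \Cref{thm:coxeter-A} to split its $n+1$ walls as $S^+ \sqcup S^- \sqcup S^\h$, where $|S^+| = |S^-| = q = mg$ (the reflections of $S^+$ crossing $\ell$ above $C\cap\ell$, those of $S^-$ crossing below) and $|S^\h| = p-q$ (horizontal, hence disjoint from $\ell$). By part (i) each crossing point carries exactly $g$ vertical reflections, so it suffices to show that $S^+$ is contained in the set of vertical reflections through $p_{i+1},\dots,p_{i+m}$ and $S^-$ in the set through $p_{i-m+1},\dots,p_i$; equality then follows by comparing cardinalities ($mg$ on each side), and the points listed in the statement are exactly those at which the vertical walls of $C$ meet $\ell$. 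These containments I would establish by computing, from the explicit form of $w$ and of the walls of an axial chamber in Section~\ref{sec:appendix-A}, the crossing point at which each wall of $C$ meets $\ell$.

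The main obstacle is the bookkeeping behind part (ii). In the bipartite case treated by McCammond--Sulway in \cite{mccammond2015dual} the walls of an axial chamber cross $\ell$ inside $\bar C$, so their crossing points are visible from the local picture; in type $\tilde A_n$ a vertical wall of $C$ may cross $\ell$ far outside $\bar C$, and one must instead follow it through the global description of $w$. Pinning down exactly how the two-cycle structure of the $(p,q)$-bigon distributes the $2q$ vertical walls into the $2m$ commuting $g$-tuples attached to $p_{i-m+1},\dots,p_{i+m}$ --- that is, the precise way $g$ and $m$ enter --- is the delicate point, and is what the explicit verification in the appendix is designed to settle.
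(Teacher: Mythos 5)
Your proposal follows essentially the same route as the paper's appendix proof: explicit computation in the coordinate model of $\tilde A_n$, then a cardinality count to close part (ii). For part (i) your structural conclusion (the $g$ vertical reflections through a crossing point have pairwise disjoint index pairs, hence commute) is exactly what the paper establishes, but the intermediate claim that the coordinates of $\a$ are ``constant along the residue classes of the strand indices modulo $g$'' is not correct: from \eqref{eq:axial-point} one has $x_i^\a - x_{i'}^\a = \frac{i'-i}{p}$, so the $x$-coordinates are all distinct modulo $\Z$, and likewise for the $y$-coordinates. The actual structure the paper exploits is that, given one vertical hyperplane $\{x_i - y_j = k\}$ through $\a$, the other pairs $(i',j')$ yielding an integer difference $x_{i'}^\a - y_{j'}^\a$ are precisely those with $i' = i + u\cdot\frac{p}{d}$, $j' = j + v\cdot\frac{q}{d}$ and $d \mid u+v$ (where $d = g$); there are $d$ of these in range, and no two share the same $i'$ or the same $j'$, which delivers both the count and the orthogonality of roots.

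For part (ii) you have correctly identified the delicate step --- locating the crossing point $\b$ of each vertical wall of $C$ among $p_{i-m+1}, \dotsc, p_{i+m}$ --- but have not supplied it, deferring to ``explicit verification.'' The missing ingredient is the bound $|x_i^\a - y_j^\a - k| < \frac1p$ for every vertical wall $\{x_i - y_j = k\}$ of $C$ and any $\a \in C \cap \ell$, which the paper reads off from the ordering description of the walls in \Cref{lemma:product-A} (the walls of $C_\a$ are exactly the adjacent-coordinate hyperplanes $\{z_l = z_{l+1}\}$ together with $\{z_{n+1} = z_1 + 1\}$, and the $x$-spacing $\frac1p$ is the finer of the two since $p \geq q$). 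Translating through the $\theta$-parametrization \eqref{eq:axial-point} turns this into $|\theta_a - \theta_b| < \frac{q}{p+q}$, i.e.\ $\b$ is within $m$ steps of $\a$ along $\ell$, above or below. From there your cardinality argument ($2q = 2mg$ vertical walls against $2m$ candidate points each carrying $g$ reflections by part (i)) is the same as the paper's closing step. So the plan is sound, but the decisive inequality is the whole content and it rests specifically on the chamber-wall combinatorics of \Cref{lemma:product-A}, not merely on the cycle structure of $w$; that derivation is what you would still need to write out.
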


The following result gives some insight into the geometry of axial vertices.
The first part is the $\tilde A_n$ analog of \Cref{rmk:axial-orbits-bipartite}.

\begin{proposition}
	\label{prop:axial-orbits-A}
	Let $W$ be a Coxeter group of type $\tilde A_n$, and $w$ one of its $(p,q)$-bigon Coxeter elements.
	Let $b$ be an axial vertex.
	\begin{enumerate}[(i)]
		\item Every axial chamber has exactly one vertex in the set $\{ w^j(b) \mid j \in \Z \}$.
		\item There are exactly $\frac{p+q}{\gcd(p,q)}$ axial chambers having $b$ as one of their vertices, and they are consecutive (i.e.\ the union of their closures intersects the Coxeter axis $\ell$ in a connected set).
	\end{enumerate}
\end{proposition}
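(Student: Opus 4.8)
The plan is to reduce everything to explicit coordinates for a $(p,q)$-bigon Coxeter element, as set up in Section \ref{sec:appendix-A}, and then to argue combinatorially. For part (i), I would first recall from \Cref{lemma:axis-A} that $w$ acts on the Coxeter axis $\ell$ by translation composed with the glide, sending $p_i$ to $p_{i+N}$ where $N = \frac{p+q}{\gcd(p,q)}$. An axial vertex $b$ lies on some vertical hyperplanes meeting $\ell$; by \Cref{prop:axial-point-A}(ii), the set of axial chambers is naturally parametrized by the "gaps" $(p_i, p_{i+1})$ together with a bounded amount of transverse data, and the key observation is that the $w$-orbit $\{w^j(b)\}$ meets each such parametrizing strip exactly once. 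Concretely: the vertical coordinate (the projection to $\ell$) of $w^j(b)$ increases strictly and by a fixed increment $N \cdot \frac{\gcd(p,q)}{p+q}\,\mu = \mu$ as $j$ increases by $1$, so each point of the orbit lands in a distinct fundamental region for the translation action, and within one period there is exactly one axial chamber containing a vertex at a given transverse position. I would make this precise by choosing the coordinates of the appendix, writing $b$ explicitly, and checking that the map $j \mapsto$ (the axial chamber whose vertex set contains $w^j(b)$ and which sits in the relevant strip) is a bijection onto the set of axial chambers — mirroring the short argument behind \Cref{rmk:axial-orbits-bipartite}, but now with the period $N$ in place of the period $2$ coming from the dihedral action in the bipartite case.

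For part (ii), I would count directly. By \Cref{prop:axial-point-A}(i), the vertex $b$ lies on exactly $\gcd(p,q)$ vertical hyperplanes, and these pairwise commute; together with the horizontal hyperplanes through $b$ they cut out a finite parabolic subgroup $W_b = \Fix_W(b)$, whose chambers (within the Coxeter complex of $W$) are the chambers of $W$ having $b$ as a vertex. The axial chambers among these are exactly the ones that also meet $\ell$. I would show that, as one travels along $\ell$ from the first point $p_{i_0}$ lying on a hyperplane through $b$ to the last such point $p_{i_1}$, the chambers of $W_b$ crossed form a connected "fan," and the number of axial chambers with $b$ as a vertex equals the number of gaps $(p_i, p_{i+1})$ plus the contribution from the extreme strips — a count that, using \Cref{prop:axial-point-A}(ii) and the spacing $p_{i+1}-p_i = \frac{\gcd(p,q)}{p+q}\mu$ from \Cref{lemma:axis-A}, works out to exactly $\frac{p+q}{\gcd(p,q)}$. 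Connectedness ("consecutive") is then immediate because the axial chambers through $b$ are precisely those whose closures contain a subsegment of $\ell$ adjacent to $b$'s projection, and these subsegments tile an interval of $\ell$ without gaps.

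The main obstacle I expect is part (ii): making rigorous the claim that the axial chambers containing a fixed axial vertex are *consecutive* and that there are *exactly* $N$ of them, rather than some other number depending on $p$ and $q$ separately. In the bipartite case this is transparent because $w_0, w_1$ generate a dihedral group acting on $\ell$ with a single fundamental chamber, so the period is manifestly $2$; in the $\tilde A_n$ case the "first-$q$-then-$q$" factorization of \Cref{thm:coxeter-A} is less symmetric, and one must keep careful track of which of the $n+1$ walls of an axial chamber are vertical versus horizontal and how the horizontal walls (there are $p-q$ of them, by \Cref{thm:coxeter-A}(i)) behave as one moves between consecutive axial chambers. I anticipate that the cleanest route is to perform the bookkeeping once, in the coordinate model of the appendix — identifying axial chambers with cyclic/linear arrangements of the indices $1,\dots,n+1$ and tracking the position of $b$ inside such an arrangement — and to verify both parts by that explicit computation, as already announced in the sentence preceding the statement ("verified in the appendix by explicit computation").
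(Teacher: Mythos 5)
Your instinct to work in the explicit coordinate model of \Cref{sec:appendix-A} is exactly what the paper does, and your outline for part (ii) is essentially the paper's argument in slightly different clothing: the paper parametrizes $\ell$ by $\theta$ via \eqref{eq:axial-point}, writes $b$ in the normal form (a) or (b), and solves the resulting inequalities to find that $\theta$ ranges over an interval of length $1$, which by \Cref{lemma:axis-A} spans exactly $\frac{p+q}{\gcd(p,q)}$ axial chambers (connectedness is automatic since the solution set is an interval). Your route through $\Fix_W(b)$ and ``gaps'' $(p_i,p_{i+1})$ would amount to the same computation, so that part is fine.

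Part (i) is where the proposal has a real gap. The fundamental-region argument you sketch --- ``each point of the orbit lands in a distinct fundamental region, and within one period there is exactly one axial chamber containing a vertex at a given transverse position'' --- is not self-contained. Knowing that the $\ell$-projection of $w^j(b)$ advances by exactly $\mu$ per step only bounds the orbit to at most one representative per period; it does not tell you that every axial chamber sees some $w^j(b)$, nor does it rule out a given chamber seeing two (since a chamber has $n+1$ vertices scattered off the axis, and the chamber's $\ell$-segment has length only $|\mu|/N$ with $N=\frac{p+q}{\gcd(p,q)}$, which is much shorter than the period $|\mu|$). To make your argument close up, you would first need part (ii) --- that the axial chambers meeting $b$ form a block of exactly $N$ consecutive chambers, so the blocks for $b, w(b), w^2(b),\ldots$ tile $\ell$ exactly --- which inverts the logical order relative to how you present it. The paper proves (i) independently and more directly: it shows every axial vertex lies in the explicit set $A$ of points of the form \eqref{eq:axial-vertex}, defines the invariant $\sigma(b)=$ sum of coordinates of the standard form, observes $\sigma(w(b))\equiv\sigma(b)\pmod{p+q}$, checks that the $n+1 = p+q$ vertices of a single axial chamber take all $p+q$ residues, and exhibits exactly $p+q$ orbit representatives (forms (a) and (b)). That at once yields (i) and the fact that every point of $A$ is an axial vertex. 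I would recommend replacing your period argument for (i) by this residue-class count, or at least proving (ii) first and then deducing (i) from the tiling, stating that dependency explicitly.
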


\begin{remark}[Bipartite case]
	If $n$ is odd and $p=q=\frac{n+1}{2}$, then $w$ is a bipartite Coxeter element, and we recover the results of \Cref{sec:bipartite-coxeter}.
	In particular, part (iii) of \Cref{thm:coxeter-A} reduces to the bipartite factorization $w = w_+w_-$ of \Cref{thm:bipartite-coxeter}.
\end{remark}

\begin{figure}
	\begin{tikzpicture}[scale=1.4,
	extended line/.style={shorten >=-#1,shorten <=-#1}, extended line/.default=35cm,
	every node/.style={inner sep=1.8pt, circle, draw}]
	
	\clip (-3.23, -3.6) rectangle (4.09, 2.3);
	
	\begin{scope}[every path/.style={black!30}]
	\fill (0, -10) -- (0, 10) -- ($({0.5*sqrt(3)}, 10)$) -- ($({0.5*sqrt(3)}, -10)$) -- cycle;
	\end{scope}
	
	\fill[black!60] ($(0, 0)$) -- ($(0, -1)$) -- ($({0.5*sqrt(3)}, -0.5)$) -- cycle;
	
	\begin{scope}[black!80]
	\newcommand{\rows}{5}
	\foreach \row in {-\rows, ...,\rows} {
		\draw [extended line] ($\row*({0.5*sqrt(3)}, 0.5)$) -- ($(0,\rows)+\row*({0.5*sqrt(3)}, -0.5)$);
		\draw [extended line] ($\row*(0, 1)$) -- ($({\rows/2*sqrt(3)}, \rows/2)+\row*({-0.5*sqrt(3)}, 0.5)$);
		\draw [extended line] ($\row*(0, 1)$) -- ($({-\rows/2*sqrt(3)}, \rows/2)+\row*({0.5*sqrt(3)}, 0.5)$);
	}
	\end{scope}
	
	\draw [extended line, dashed, thick] ($({0.25*sqrt(3)}, 0)$) -- ($({0.25*sqrt(3)}, 1)$);
	
	\foreach \i in {-6, ..., 6} {
		\node[fill=blue!60] at ($(0, 3*\i)$) {};
		\node[fill=blue!60] at ($({0.5*sqrt(3)}, 1.5+3*\i)$) {};
		\node[fill=black!30!green] at ($(0, 1+3*\i)$) {};
		\node[fill=black!30!green] at ($({0.5*sqrt(3)}, -0.5+3*\i)$) {};
		\node[fill=red!60] at ($(0, -1+3*\i)$) {};
		\node[fill=red!60] at ($({0.5*sqrt(3)}, 0.5+3*\i)$) {};
	}
	\end{tikzpicture}
	\caption{Coxeter complex of type $\tilde A_2$.}
	\label{fig:A2}
\end{figure}

\begin{example}[Case $\tilde A_2$]
	\Cref{fig:A2} shows the Coxeter complex of a Coxeter group of type $\tilde A_2$.
	Every $(2,1)$-bigon Coxeter element $w$ is a glide reflection, whose glide axis is the Coxeter axis $\ell = \Min(w)$ (the dashed line).
	As in \Cref{fig:G2}, the axial chambers are shaded, and the axial vertices are colored, with one color for each $w$-orbit (see part (i) of \Cref{prop:axial-orbits-A}).
	We use a notation similar to \Cref{ex:G2}: let $C_0$ be the darkly shaded chamber; let $p_0 \in \ell$ be the point immediately below $C_0$, and $p_1 \in \ell$ the point immediately above $C_0$;
	denote by $a_i$ the vertical reflection that fixes $p_i$ for $i \equiv 1$ mod 2; denote by $c_j$ the vertical reflection that fixes $p_j$ for $j \equiv 0$ mod 2; finally, let $b$ and $b'$ be the two horizontal reflections adjacent to the Coxeter axis $\ell$ (by \Cref{thm:reflections-A} below, these are precisely the horizontal reflections of $[1,w]$).
	The walls of $C_0$ are the fixed lines of $a_1, b, c_0$, and $w = a_1bc_0$.
	\label{ex:A2}
\end{example}

If $H$ is the fixed hyperplane of some vertical reflection of $W$, there is a well-defined axial chamber which is \emph{immediately above} $H$, and one which is \emph{immediately below} $H$.
These are the only two chambers that intersect a small neighborhood of $H \cap \ell$ in $\ell$.
Denote them by $C_H^+$ and $C_H^-$, respectively.
By part (i) of \Cref{prop:axial-point-A}, all the vertical reflections fixing $H \cap \ell$ pairwise commute.
Therefore they are all walls of $C_H^+$ and $C_H^-$.
This proves the following analog of \cite[Corollary 8.11]{mccammond2015dual}.

\begin{provedcorollary}
	Let $W$ be a Coxeter group of type $\tilde A_n$, and $w$ a Coxeter element of $W$.
	If the fixed hyperplane $H$ of a reflection in $W$ crosses the axis $\ell$, then $H$ is the affine span of a facet of an axial chamber.
\end{provedcorollary}


The following is the $\tilde A_n$ analog of \Cref{lemma:elliptic-coxeter-bipartite}.

\begin{lemma}
	\label{lemma:elliptic-coxeter-A}
	Let $W$ be a Coxeter group of type $\tilde A_n$, and $w$ one of its Coxeter elements.
	Let $r$ be a vertical reflection in $W$, $H = \Fix(r)$, and $\b^-$ (resp.\ $\b^+$) the unique vertex of $C_H^-$ (resp.\ $C_H^+$) which is not in $H$.
	Then the elliptic isometry $rw$ (resp.\ $wr$) is a Coxeter element for the finite parabolic subgroup of $W$ that fixes $\b^-$ (resp.\ $\b^+$).
\end{lemma}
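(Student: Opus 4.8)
The plan is to deduce the statement directly from the factorization of $w$ furnished by \Cref{thm:coxeter-A}, applied once to $C_H^-$ and once to $C_H^+$. We may assume $w$ is a $(p,q)$-bigon Coxeter element, since every Coxeter element of $\tilde A_n$ is geometrically equivalent to one of these and the assertion is invariant under geometric equivalence. First I would collect the relevant geometric facts about $H=\Fix(r)$. Since $r$ is vertical, $H$ crosses the Coxeter axis $\ell$; set $p=H\cap\ell$. As recalled above, all vertical reflections fixing $p$ pairwise commute (\Cref{prop:axial-point-A}(i)), and consequently each of them — in particular $r$ — is a wall of both $C_H^-$ and $C_H^+$. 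Because $C_H^-$ lies immediately below $H$, the point $p$ is above $C_H^-\cap\ell$ along $\ell$, so in the notation of \Cref{thm:coxeter-A} applied to $C=C_H^-$ the reflection $r$ belongs to $S^+$; symmetrically, applied to $C=C_H^+$, it belongs to $S^-$. Moreover $\b^-$ is by definition the unique vertex of the simplex $C_H^-$ not lying on $H$, so $H$ is exactly the wall of $C_H^-$ opposite $\b^-$; hence the remaining $n$ walls, i.e.\ the reflections in $S_{C_H^-}\setminus\{r\}$, all fix $\b^-$ and form a set of simple reflections for the finite parabolic subgroup $W_{\b^-}$ of $W$ fixing $\b^-$, whose Coxeter graph is the type-$A_n$ path obtained from the $\tilde A_n$ cycle by deleting one node. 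The symmetric statement holds for $C_H^+$, $\b^+$ and $W_{\b^+}$.

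Now I would run the factorization argument. By \Cref{thm:coxeter-A}(iii) applied to $C_H^-$, there is a word expressing $w$ as a product of the $n+1$ reflections of $S_{C_H^-}$, each used exactly once (a word of length $|S_{C_H^-}|=n+1=l(w)$ is automatically reduced), in which the reflections of $S^+$ — which pairwise commute by \Cref{thm:coxeter-A}(ii) — come first. Since $r\in S^+$, after reordering that initial block we may take $r$ to be the leftmost factor, so $w=r\,X$ with $X$ a product of the $n$ reflections of $S_{C_H^-}\setminus\{r\}$, each once; hence $rw=X$ is a product of all the simple reflections of $W_{\b^-}$, each used exactly once, that is, a Coxeter element of $W_{\b^-}$. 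The case of $wr$ is symmetric: applying \Cref{thm:coxeter-A}(iii) to $C_H^+$, where the (pairwise commuting) reflections of $S^-$ come last, and using $r\in S^-$, we may take $r$ to be the rightmost factor, so $w=Y\,r$ with $Y$ a product of the $n$ reflections of $S_{C_H^+}\setminus\{r\}$, each once; then $wr=Y$ is a Coxeter element of $W_{\b^+}$.

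The only delicate part is the bookkeeping of the first paragraph: verifying that $r$ lands in $S^+$ for $C_H^-$ and in $S^-$ for $C_H^+$ (which rests on $r$ being a wall of both chambers, a statement about the local structure of $\ell$ near $p$), and that removing from $S_{C_H^-}$ the wall opposite $\b^-$ — namely $r$ — leaves precisely a simple system for the vertex group $W_{\b^-}$. Beyond this I expect no genuine obstacle; one should only be careful that the word produced by \Cref{thm:coxeter-A}(iii) uses each wall of $S_C$ exactly once, so that $X$ and $Y$ are bona fide products of the $n$ simple reflections of the relevant vertex group, each appearing once, and hence are Coxeter elements.
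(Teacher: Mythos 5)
Your proof is correct and takes essentially the same approach as the paper's: apply \Cref{thm:coxeter-A} to $C_H^-$ (resp.\ $C_H^+$), use the commutativity of $S^+$ (resp.\ $S^-$) to bring $r$ to the front (resp.\ end), and observe that the remaining $n$ walls are a simple system for the vertex group of $\b^-$ (resp.\ $\b^+$). The bookkeeping you flag in your last paragraph (that $r$ is a wall of both $C_H^\pm$ and lies in $S^+$ for $C_H^-$, $S^-$ for $C_H^+$) is already established in the paper in the paragraph immediately preceding the lemma, so it is legitimate to take for granted.
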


\begin{proof}
	Apply \Cref{thm:coxeter-A} to the axial chamber $C = C_H^-$.
	Since $r \in S^+$, the Coxeter element $w$ can be written as a product $rr_2\dotsm r_{n+1}$, where $r_2, \dotsc, r_{n+1}$ are the reflections with respect to the other walls $H_2, \dotsc, H_{n+1}$ of $C_H^-$.
	The walls $H_2, \dotsc, H_{n+1}$ bound a chamber of the finite parabolic subgroup of $W$ that fixes $\b^-$, so $rw = r_2\dotsm r_{n+1}$ is a Coxeter element for this subgroup.
	Similarly, $wr$ is a Coxeter element for the finite parabolic subgroup that fixes $\b^+$.
\end{proof}

\begin{remark}
	\label{rmk:elliptic-coxeter-A-any-chamber}
	In the proof of \Cref{lemma:elliptic-coxeter-A}, the chamber $C_H^-$ (resp.\ $C_H^+$) can be replaced with any axial chamber $C$ that intersects $\ell$ below (resp.\ above) $H \cap \ell$ and such that $H$ is a wall of $C$.
	However, the statement of \Cref{lemma:elliptic-coxeter-A} makes it clear that at least one such chamber $C$ exists, so $rw$ and $wr$ are indeed parabolic Coxeter elements.
\end{remark}

\begin{lemma}
	\label{lemma:vertical-opposite-wall}
	Let $W$ be a Coxeter group of type $\tilde A_n$, and $w$ one of its Coxeter elements.
	For every axial vertex $\b$, there exists an axial chamber $C$ such that $\b$ is a vertex of $C$ and the wall of $C$ opposite to $\b$ is vertical.
\end{lemma}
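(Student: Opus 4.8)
Proof plan. The plan is to show that $C$ can always be taken to be a \emph{lowest} axial chamber containing $\b$. By \Cref{prop:axial-orbits-A}(ii) there are only finitely many axial chambers having $\b$ as a vertex; among them pick one, $C$, for which the segment $\ell\cap\bar C$ reaches as low as possible along $\ell$, and let $a\in\ell$ be its bottom endpoint, so that $a\in\partial C$. Recall that $C$ is a bounded simplex, so $\b$ lies on exactly $n$ of the $n+1$ walls of $C$, the remaining one being the wall $G$ opposite $\b$. I claim that $a\in G$. Granting the claim we are done: $G$ is a reflection hyperplane meeting $\ell$ (at $a$), and since $\ell$ is not contained in any reflection hyperplane (\Cref{lemma:axis-A}) the hyperplane $G$ is transverse to $\ell$, hence does not contain $\Dir(\ell)$, i.e.\ $G$ is not horizontal, so $G$ is vertical; and $C$ is an axial chamber with $\b$ as a vertex, so $C$ satisfies the lemma.

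First I would record that $\b\notin\ell$. Suppose $\b\in\ell$. Then every reflection hyperplane through $\b$ meets $\ell$, so by \Cref{lemma:axis-A} the vertex $\b$ equals one of the points $p_i$; but by \Cref{prop:axial-point-A}(i) only $\gcd(p,q)$ hyperplanes of $\A$ pass through $p_i$, whereas a vertex of the Coxeter complex lies on at least $n$ (linearly independent) hyperplanes, and $\gcd(p,q)\le\tfrac{n+1}{2}<n$ for $n\ge 2$ — a contradiction. (For $n=1$ the lemma is immediate.) Now let $F\subseteq\bar C$ be the face of $\bar C$ whose relative interior contains $a$; it is a proper face. If $\b\notin F$, then $F$ is contained in the wall of $C$ opposite $\b$, namely $G$, so $a\in G$ and the claim holds. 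It remains to exclude the case $\b\in F$. In that case $\b$ is a vertex of $F$. The part of $\ell$ immediately below $a$ avoids $\bar C$ (as $a$ is the bottom endpoint of $\ell\cap\bar C$ and $\bar C$ is convex) and avoids every reflection hyperplane (the points $\{p_i\}$ being discrete, by \Cref{lemma:axis-A}), so it lies in the interior of a single chamber $C_0\neq C$, which is therefore axial and meets $\ell$ strictly below $C$. Since $a\in\bar C_0$ and $a$ lies in the relative interior of the Coxeter-complex face $F$, the face $F$ is a face of $\bar C_0$ as well, so $\b$ is a vertex of $C_0$. Thus $C_0$ is an axial chamber containing $\b$ that lies lower along $\ell$ than $C$, contradicting the choice of $C$. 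Hence $\b\notin F$, which proves the claim.

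I expect the delicate point to be the case $\gcd(p,q)>1$. Then, by \Cref{prop:axial-point-A}(i), the Coxeter axis passes through faces of positive codimension, so it meets $\partial C$ in a face $F$ that need not be a single wall, and one cannot simply assert that ``$\ell$ exits $C$ through the wall opposite $\b$''. The minimality of $C$ along $\ell$ is exactly what rules out the bad configuration $\b\in F$; a secondary (routine) point is to make ``lowest axial chamber containing $\b$'' well defined, which rests on the finiteness in \Cref{prop:axial-orbits-A}.
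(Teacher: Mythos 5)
Your proof is correct, and it shares the paper's key idea — take $C$ to be the lowest axial chamber containing $\b$ (well defined by \Cref{prop:axial-orbits-A}) and contradict minimality — but the mechanism at the crucial step is genuinely different. The paper's argument is algebraic: assuming the wall opposite $\b$ is horizontal, all vertical walls of $C$ contain $\b$; the product of the pairwise-commuting vertical reflections fixing the lowest point $\a$ of $\bar C\cap\ell$ (\Cref{prop:axial-point-A}(i)) then fixes $\b$ and carries $C$ to the axial chamber $C_\a^-$ immediately below, a contradiction. Your argument is purely combinatorial: you look at the cell $F$ of the Coxeter complex whose relative interior contains $a$; if $\b\notin F$ then $F$ lies in the wall $G$ opposite $\b$, so $a\in G$ and $G$ must be vertical by \Cref{lemma:axis-A}; and if $\b\in F$ you invoke the standard simplicial-complex fact that $F$ is a common face of $\bar C$ and of the chamber $C_0$ just below $a$, so $\b$ is also a vertex of $C_0$, contradicting minimality. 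Your route avoids the commuting-reflections statement at the decisive step (you cite \Cref{prop:axial-point-A}(i) only for the auxiliary observation $\b\notin\ell$, which your Case 2 in fact handles without it) and leans only on \Cref{lemma:axis-A} and the general incidence structure of the Coxeter complex — in this sense it is a bit more elementary and would transfer more readily to other chamber systems. What it gives up is the explicit group element witnessing the descent, which the paper's version exhibits and which is of the same flavor as other arguments in that section.
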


\begin{proof}
	Let $C$ be the lowest axial chamber such that $\b$ is a vertex of $C$.
	Suppose by contradiction that the wall of $C$ opposite to $\b$ is horizontal.
	Then $\b$ is fixed by all the reflections with respect to the vertical walls of $C$.
	Let $\a$ be the lowest point of $\bar C \cap \ell$, and let $S_\a$ be the set of the (vertical) reflections of $W$ that fix $\a$.
	By part (i) of \Cref{prop:axial-point-A}, the reflections in $S_\a$ pairwise commute.
	Then their product fixes $\b$ and sends $C$ to an axial chamber $C'$ which is below $C$.
	This is a contradiction.
\end{proof}

We are now ready to prove the $\tilde A_n$ analog of \Cref{thm:reflections-bipartite}.

\begin{theorem}
	\label{thm:reflections-A}
	Let $W$ be a Coxeter group of type $\tilde A_n$, and $w$ one of its Coxeter elements.
	Every vertical reflection $r \in W$ is in $[1,w]$, and fixes many axial vertices.
	A horizontal reflection $r \in W$ is in $[1,w]$ if and only if it fixes at least one axial vertex.
\end{theorem}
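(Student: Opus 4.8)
The statement splits into two parts: (a) every vertical reflection $r \in W$ lies in $[1,w]$, and (b) a horizontal reflection $r$ lies in $[1,w]$ if and only if it fixes at least one axial vertex. I would treat these in order, exploiting the structural results just established for type $\tilde A_n$ (the analog of the bipartite toolkit).

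For part (a), the plan is to mimic the bipartite argument of \cite[Propositions 9.4 and 9.5]{mccammond2015dual}, but routing through \Cref{lemma:elliptic-coxeter-A} instead of \Cref{lemma:elliptic-coxeter-bipartite}. Let $r$ be a vertical reflection with hyperplane $H = \Fix(r)$. Then $H$ crosses the Coxeter axis $\ell$, so by the corollary following \Cref{ex:A2}, $H$ is a wall of some axial chamber; in fact $H$ is a wall of both $C_H^-$ and $C_H^+$. Applying \Cref{thm:coxeter-A} to the axial chamber $C = C_H^-$, we get a reflection factorization $w = r\, r_2 \cdots r_{n+1}$ with $r$ in the first (``above'') position $S^+$. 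This exhibits $r$ as the first letter of a length-$(n+1)$ reflection factorization of $w$, hence $r \in [1,w]$. That $r$ fixes many axial vertices follows because $H$ contains all but one vertex of each nearby axial chamber (this is part of what the corollary after \Cref{ex:A2} and \Cref{lemma:vertical-opposite-wall} give us): indeed $H$ is the affine span of a facet of each of several consecutive axial chambers, and each such facet is a full complement of axial vertices fixed by $r$.

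For part (b), one direction is immediate from \Cref{lemma:model-poset}: if $r \in [1,w]$ is horizontal, then for any axial vertex... wait, that's not quite the argument — rather, the ``only if'' direction goes as follows. Suppose the horizontal reflection $r$ lies in $[1,w]$. By \Cref{lemma:hurwitz-action}, $r$ can be taken to be the \emph{first} reflection in a minimal length factorization $w = r\, r_2\cdots r_{n+1}$, so that $rw = r_2\cdots r_{n+1} \in [1,w]$ with $l(rw) = n$. Now $rw$ is elliptic of reflection length $n$, hence $\Fix(rw)$ is a point; moreover $rw \in [1,w]^L$, so by the structure of the interval in the isometry group and the ``rows'' proposition, $rw$ is a parabolic Coxeter element fixing a point $b$, and one argues (as in the bipartite case, \cite[Theorem 9.6]{mccammond2015dual}) that $b$ must be an axial vertex — essentially because $rw$ fixes $b$ and $r$ fixes $b$ (since $\Fix(rw) \subseteq \Fix(r)$, as $r$ and $rw$ have fixed spaces that must be compatible... one checks $r(b) = b$), so $b \in H = \Fix(r)$, and the fact that $rw$ is a parabolic Coxeter element of rank $n$ forces $b$ to be a vertex of an axial chamber via \Cref{lemma:vertical-opposite-wall} applied to a suitable neighboring configuration. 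Conversely, if the horizontal reflection $r$ fixes an axial vertex $b$, pick (using \Cref{lemma:vertical-opposite-wall}) an axial chamber $C$ with vertex $b$ whose opposite wall is vertical; then $r$ is one of the $p-q$ horizontal walls in $S^\h$ of \Cref{thm:coxeter-A} applied to $C$ (after possibly using \Cref{prop:axial-orbits-A} to move to an axial chamber whose $S^\h$ contains the specific $r$), and part (iii) of \Cref{thm:coxeter-A} then writes $w$ as a reflection factorization containing $r$, so $r \in [1,w]$.

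The main obstacle I anticipate is the ``only if'' direction of part (b): showing that a horizontal reflection in $[1,w]$ must fix an axial vertex. In the bipartite case this is where \Cref{lemma:elliptic-coxeter-bipartite} does the heavy lifting, pinning down that the elliptic complement $rw$ fixes one of the distinguished vertices $\b$ or $\b'$ adjacent to the axis. In type $\tilde A_n$ the vertical walls of an axial chamber need not meet $\ell$ inside $\bar C$ (as warned before \Cref{prop:axial-point-A}), so the combinatorial bookkeeping of ``which vertex does $rw$ fix'' is genuinely more delicate, and I expect the careful argument to require \Cref{prop:axial-point-A}, \Cref{prop:axial-orbits-A}, and \Cref{lemma:vertical-opposite-wall} in combination — possibly with an appeal to the explicit $(p,q)$-bigon computations in the appendix, since the theorem says the first several results of this subsection are verified there by hand. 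The ``fixes many axial vertices'' clause for vertical $r$ should be routine once one knows $H$ is the affine span of a facet of consecutive axial chambers.
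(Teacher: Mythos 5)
Part (a) of your plan is correct and matches the paper: apply \Cref{thm:coxeter-A} to $C_H^+$ or $C_H^-$ where $H = \Fix(r)$, and $r$ appears in $S^+$ or $S^-$, giving a reduced factorization of $w$ through $r$. The ``fixes many axial vertices'' clause is then immediate from the corollary after \Cref{ex:A2}.

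For the ``only if'' direction of part (b), there is a genuine error. You write $w = r\, r_2 \cdots r_{n+1}$ with $r$ moved to the \emph{front} and then assert that $rw = r_2 \cdots r_{n+1}$ is elliptic of reflection length $n$, hence fixes a point. But $rw$ is the right complement of the horizontal reflection $r$, and by the three-row proposition (\Cref{sec:dual-artin}), the right complement of a horizontal elliptic element is \emph{hyperbolic}, not elliptic. (For $n \geq 2$, $rw$ has reflection length $n$ but $\dim \Mov(rw) = n-2$; e.g.\ if $r$ is a horizontal reflection in $\tilde A_2$ then $rw$ is a translation.) The whole subsequent chain --- ``$\Fix(rw)$ is a point, $rw$ is a parabolic Coxeter element, $b$ is an axial vertex'' --- therefore collapses. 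The correct move (which you almost anticipate, since you observe the bipartite case pins this down with \Cref{lemma:elliptic-coxeter-bipartite}) is to put a \emph{vertical} reflection $r_1$ at the front instead: since $w$ is vertical, some $r_i$ in a factorization ending in $r$ must be vertical, and by Hurwitz you may take $r_1$ vertical. Then $w' = r_1 w$ is elliptic and fixes an axial vertex by \Cref{lemma:elliptic-coxeter-A}, and $r \leq w'$ since $r$ is the last factor of a reduced factorization of $w'$, so $\Fix(w') \subseteq \Fix(r)$ by \Cref{lemma:model-poset} and $r$ fixes that axial vertex.

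For the ``if'' direction, there is a second gap: you assert that $r$ (a horizontal reflection fixing axial vertex $b$) is one of the $p-q$ horizontal walls $S^\h$ of some axial chamber, ``after possibly using \Cref{prop:axial-orbits-A} to move to an axial chamber whose $S^\h$ contains the specific $r$.'' But there is no guarantee such a chamber exists: the horizontal reflections through $b$ form a finite reflection subgroup, and only a fundamental set of them appear as walls of any given chamber --- the same reason that in $A_2$ only two of the three reflections through a point are walls of a fixed chamber. The paper closes this by observing that $r'w$ or $wr'$ (where $r'$ is the vertical reflection through the wall opposite $b$) is a Coxeter element for the finite parabolic fixing $b$, and then invoking \cite[Lemma 1.3.3]{bessis2003dual}: every reflection of a finite Coxeter group lies below every Coxeter element. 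That lemma, not the wall structure of any particular chamber, is what puts $r$ in $[1,w]$.
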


\begin{proof}
	For the first part, it is enough to apply \Cref{thm:coxeter-A} to $C_H^+$ or $C_H^-$, where $H = \Fix(r)$.
	
	For the second part, suppose that $r$ is a horizontal reflection in $[1,w]$, so there is a factorization $w = r_1r_2\dotsm r_n r$ that ends with $r$.
	Since $w$ is vertical, at least one of the $r_i$ is vertical.
	By \Cref{lemma:hurwitz-action} we can move this reflection to the beginning, and thus assume that $r_1$ is vertical.
	Then $r \leq w' = r_1w$, and $w'$ is an elliptic isometry that fixes an axial vertex $\b^-$ by \Cref{lemma:elliptic-coxeter-A}.
	Since $\Fix(w') \subseteq \Fix(r)$ (\Cref{lemma:model-poset}), we have that $r$ fixes $\b^-$.
	
	On the other hand, suppose that $r \in W$ is a (horizontal) reflection that fixes some axial vertex $\b$.
	By \Cref{lemma:vertical-opposite-wall}, we have that $\b$ is a vertex of some axial chamber $C$ such that the wall $H'$ of $C$ opposite to $\b$ is vertical.
	Let $r'$ be the reflection with respect to $H'$.
	By \Cref{lemma:elliptic-coxeter-A} and \Cref{rmk:elliptic-coxeter-A-any-chamber}, one of $r'w$ and $wr'$ is a Coxeter element for the finite parabolic subgroup of $W$ that fixes $\b$.
	Recall that every reflection in a finite Coxeter group occurs in some minimal length factorization of any of its Coxeter elements \cite[Lemma 1.3.3]{bessis2003dual}.
	Therefore $r \leq r'w$ or $r \leq wr'$, and so $r \leq w$.
\end{proof}

\subsection{Isometries below an affine Coxeter element}
\label{sec:isometries-below}

Now that the case $\tilde A_n$ is well understood, we turn to the general case of an irreducible affine Coxeter group and prove a few more results about the elements of the interval $[1,w]$.

\begin{lemma}
	\label{lemma:elliptic-coxeter-element}
	Let $W$ be an irreducible affine Coxeter group, and $w$ one of its Coxeter elements.
	For every axial vertex $\b$, there exists a unique element $w_\b \in [1,w]$ which is a Coxeter element for the finite parabolic subgroup of $W$ that fixes $\b$.
\end{lemma}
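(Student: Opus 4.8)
The plan is to prove existence and uniqueness separately; uniqueness will be the short part. For \textbf{existence}, I would first produce an axial chamber $C$ having $\b$ as a vertex whose wall $H$ opposite to $\b$ is vertical, i.e.\ crosses $\ell$. In type $\tilde A_n$ this is exactly \Cref{lemma:vertical-opposite-wall}; in the remaining (bipartite) types it is automatic, since by \Cref{thm:bipartite-coxeter} the $n+1$ walls of any axial chamber are precisely the factors of a factorization $w = w_+w_-$, hence all cross $\ell$. Let $r = r_H$ be the reflection in $H$. Since $r$ is vertical, $r \in [1,w]$ by \Cref{thm:reflections-A} (resp.\ \Cref{thm:reflections-bipartite}). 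Now apply \Cref{thm:coxeter-A} (resp.\ \Cref{thm:bipartite-coxeter}) to $C$: in the splitting $S_C = S^+ \sqcup S^- \sqcup S^\h$ of the walls of $C$, the reflection $r$ lies in $S^+$ or in $S^-$, not in $S^\h$, because $H$ is vertical; say $r \in S^+$, the case $r \in S^-$ being symmetric with $wr$ in place of $rw$. Since the reflections of $S^+$ pairwise commute, we may reorder the factorization so that it begins with $r$, obtaining $w = r\,u$, where $u$ is a product of the remaining $n$ reflections of $S_C$, namely the reflections in the $n$ walls of $C$ through $\b$. Those $n$ hyperplanes meet only at $\b$, so they are the walls of a chamber of the finite parabolic subgroup $W_\b$ fixing $\b$; hence $u$, a product of each of these simple reflections exactly once, is a Coxeter element of $W_\b$. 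Finally $u = rw$, and from $r \le w$ we get $l(rw) = l(w) - 1 = n$, while $(rw)^{-1}w = w^{-1}rw$ is a reflection, so $l(rw) + l\big((rw)^{-1}w\big) = n+1 = l(w)$ and therefore $rw \in [1,w]$. Thus $w_\b := u$ works. Equivalently, one can invoke \Cref{lemma:elliptic-coxeter-A} together with \Cref{rmk:elliptic-coxeter-A-any-chamber}, resp.\ \Cref{lemma:elliptic-coxeter-bipartite}, directly for this step.

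For \textbf{uniqueness}, suppose $w_\b$ and $w_\b'$ both lie in $[1,w]$ and are Coxeter elements of $W_\b$. Since $W_\b$ is a finite reflection group whose only fixed point is $\b$, every Coxeter element of $W_\b$ fixes only $\b$; in particular $w_\b$ and $w_\b'$ are elliptic with $\Fix(w_\b) = \Fix(w_\b') = \{\b\}$. Applying \Cref{lemma:elliptic-subposet} in both directions (using $\Fix(w_\b) \supseteq \Fix(w_\b')$ and $\Fix(w_\b') \supseteq \Fix(w_\b)$) gives $w_\b \le w_\b'$ and $w_\b' \le w_\b$ in $[1,w]$, so $w_\b = w_\b'$ by antisymmetry of the order.

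I expect the existence step to be the main obstacle — not logically deep, but resting on the geometric facts that every axial vertex $\b$ sits opposite a vertical wall of some axial chamber through it (this is where \Cref{lemma:vertical-opposite-wall} and the bipartite structure of \Cref{sec:bipartite-coxeter} are used) and that the $n$ walls of such a chamber through $\b$ cut out a chamber of $W_\b$, so that reading off the corresponding reflections in a suitable order yields a Coxeter element of $W_\b$. Once \Cref{lemma:elliptic-subposet} is available, uniqueness is immediate, as it forces the fixed-point set to determine the element among elliptic elements of $[1,w]$.
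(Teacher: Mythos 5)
Your proof is correct, but both halves take a route that is genuinely heavier than the one the paper uses.

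For \textbf{existence}, the paper's argument is simpler: it takes \emph{any} axial chamber $C$ with $\b$ as a vertex, lets $r$ be the reflection in the wall of $C$ opposite $\b$ (with no care about whether $H$ is vertical or horizontal), writes $w$ as a product of the reflections in the walls of $C$ via \Cref{thm:coxeter-A} or \Cref{thm:bipartite-coxeter}, deletes $r$ from this factorization keeping the relative order, and invokes \Cref{lemma:hurwitz-action} to conclude that the resulting length-$n$ product $w_\b$ is still in $[1,w]$. The Hurwitz-action step replaces the entire chain of reasoning you use: finding a chamber whose opposite wall is vertical (\Cref{lemma:vertical-opposite-wall} in type $\tilde A_n$, and the observation that all walls of an axial chamber are vertical in the bipartite types), noting $r\in R_0$ so $r\le w$, and using the commutativity of $S^+$ (resp.\ $S^-$) to move $r$ to the front. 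Your version identifies $w_\b$ as either $rw$ or $wr$, which is a useful piece of extra information but not needed here; the paper's $w_\b$ is a product of the other $n$ wall reflections in whatever interleaved order Theorem~\ref{thm:coxeter-A}/\ref{thm:bipartite-coxeter} gives, which is equally good for concluding it is a Coxeter element of the stabilizer of $\b$. So yours costs an extra geometric lemma and the commutativity observation; the paper's costs one application of Hurwitz.

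For \textbf{uniqueness}, you apply \Cref{lemma:elliptic-subposet} in both directions to get $w_\b\le w_\b'$ and $w_\b'\le w_\b$. The paper instead observes that any Coxeter element of the stabilizer of $\b$ that lies in $[1,w]$ has $\Fix$-set exactly $\{\b\}$, so uniqueness follows from the injectivity of the invariant map on $[1,w]^L$ (\Cref{lemma:model-poset} together with \Cref{thm:model-poset}). Both arguments are valid; yours has the slight virtue of staying entirely inside $[1,w]^W$ and not passing through the ambient interval $[1,w]^L$, at the cost of invoking the more delicate \Cref{lemma:elliptic-subposet} where the paper only needs injectivity of $\inv$. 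In both halves the content is sound; the paper is simply more economical.
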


\begin{proof}
	Let $C$ be an axial chamber such that $\b$ is a vertex of $C$.
	Denote by $r, r_1,\dotsc,r_n$ the reflections with respect to the walls of $C$, where $r$ is the reflection that does not fix $\b$.
	By \Cref{thm:coxeter-A} (for the case $\tilde A_n$) and \Cref{thm:bipartite-coxeter} (for the other cases), the Coxeter element $w$ can be written as a product of the reflections $r, r_1,\dotsc,r_n$ in some order.
	Remove $r$ from this factorization, and let $w_\b$ be the product of the remaining reflections in the same relative order.
    Then $w_\b \in [1, w]$ by \Cref{lemma:hurwitz-action}.
	By construction, $w_\b$ is a Coxeter element for the finite parabolic subgroup of $W$ that fixes $\b$.
	Every such element $w_\b' \in [1,w]$ has a fix-set equal to $\{\b\}$, so uniqueness follows from \Cref{lemma:model-poset}.
\end{proof}

\begin{lemma}
	Let $W$ be an irreducible affine Coxeter group, and $w$ one of its Coxeter elements.
	For every elliptic element $u \in [1,w]$, there exists an axial vertex $\b$ such that $u \leq w_\b$, where $w_\b$ is the unique element of $[1,w]$ which is a Coxeter element for the finite parabolic subgroup that fixes $\b$ (see \Cref{lemma:elliptic-coxeter-element}).
	In particular, $u$ fixes at least one axial vertex $\b$.
	\label{lemma:fixed-axial-vertex}
\end{lemma}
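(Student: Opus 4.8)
\emph{Reduction.} The plan is first to reduce the statement to a purely geometric assertion: for every elliptic $u \in [1,w]$, the affine subspace $\Fix(u)$ contains an axial vertex. Indeed, if $\b \in \Fix(u)$ is an axial vertex, then the element $w_\b$ furnished by \Cref{lemma:elliptic-coxeter-element} is elliptic with $\Fix(w_\b) = \{\b\} \subseteq \Fix(u)$, so \Cref{lemma:elliptic-subposet} gives $u \le w_\b$ at once, and the final ``in particular'' clause is then immediate from \Cref{lemma:model-poset}. So from now on it suffices to exhibit an axial vertex inside $\Fix(u)$.

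\emph{Base case of a downward induction.} I would prove this claim by downward induction on $k := l(u)$. For $k = n$: a horizontal elliptic isometry moves every point inside the $(n-1)$-dimensional horizontal subspace of $V$, hence has reflection length at most $n-1$; so $u$ must be vertical elliptic. By the three-row description of $[1,w]$ recalled above, the right complement $v$ of $u$ (with $uv = w$) is then vertical elliptic as well, and being of length $1$ it is a vertical reflection. Writing $u = wv$ (since $v$ is an involution) and applying \Cref{lemma:elliptic-coxeter-bipartite} when $W$ is not of type $\tilde A_n$, or \Cref{lemma:elliptic-coxeter-A} when it is, we obtain that $u$ is a Coxeter element of the finite parabolic subgroup of $W$ fixing an axial vertex $\b$; by the uniqueness part of \Cref{lemma:elliptic-coxeter-element} this forces $u = w_\b$, whose fixed set $\{\b\}$ is the required axial vertex.

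\emph{Inductive step and the main obstacle.} For $k = l(u) < n$, the strategy is to find a reflection $r$ with $ur \in [1,w]$, $l(ur) = k+1$, and $ur$ still elliptic; then $u \le ur$ in $[1,w]$ forces $\Fix(ur) \subseteq \Fix(u)$ (via \Cref{lemma:model-poset}), and the inductive hypothesis applied to $ur$ (now of length closer to $n$) produces an axial vertex inside $\Fix(ur) \subseteq \Fix(u)$. Any reflection $r \le v := u^{-1}w$ automatically satisfies $ur \in [1,w]$ and $l(ur) = k+1$; the real content is to choose $r$ so that $ur$ is elliptic rather than hyperbolic, i.e.\ so that $\Fix(r)$ meets $\Fix(u)$. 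This is the step I expect to be the main obstacle, since it amounts to showing that no elliptic element of length $<n$ is maximal among the elliptic elements of $[1,w]$, which is essentially as strong as the whole statement. The earlier structural results are what make it work: the three-row description forces $v$ to be either vertical elliptic or hyperbolic, hence not a product of horizontal reflections, so by \Cref{lemma:hurwitz-action} some reduced factorization of $v$ begins with a vertical reflection; one must then use the explicit wall-factorizations of $w$ coming from axial chambers (\Cref{thm:coxeter-A}, \Cref{thm:bipartite-coxeter}), together with the comparison of \Cref{lemma:model-poset} and the descriptions of the axial chambers developed earlier in this section (and verified in the appendix), to arrange that such a reflection $r$ has $\Fix(r) \cap \Fix(u) \neq \emptyset$. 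By contrast, the reduction in the first paragraph and the base case are routine, so the proof's weight sits entirely in producing this elliptic cover.
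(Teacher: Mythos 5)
There is a genuine gap, and it is the one you flagged yourself: the inductive step is never completed. You correctly observe that the hard part of a step-by-step argument is choosing a reflection $r \le v = u^{-1}w$ so that $ur$ is still elliptic, and you correctly diagnose that this is nearly as strong as the full statement — but you then gesture at the structural results without actually producing $r$. So as written, the proposal reduces the lemma to a claim it does not prove.

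The paper sidesteps this difficulty entirely with a cleaner move that you come close to but miss. Your base case already contains the key idea: when $l(u) = n$, you write $v$ as a single vertical reflection and apply \Cref{lemma:elliptic-coxeter-A} or \Cref{lemma:elliptic-coxeter-bipartite} to $wv$ directly. The paper notices that this \emph{same} argument works for general $u$. Write $v = r_1 \dotsm r_m$ as a minimal length factorization; since $u$ is elliptic, $v$ is vertical, so some $r_i$ is a vertical reflection, and by \Cref{lemma:hurwitz-action} it can be moved to the last slot, so assume $r_m$ is vertical. Then $w r_m = u r_1 \dotsm r_{m-1}$ is, by \Cref{lemma:elliptic-coxeter-A}/\Cref{lemma:elliptic-coxeter-bipartite}, a Coxeter element of a finite parabolic subgroup fixing an axial vertex $\b$ — that is, it equals $w_\b$ — and $u \le w_\b$ is immediate from the displayed factorization. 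In other words, instead of climbing from $u$ one rank at a time (which forces you to control $\Fix(r) \cap \Fix(u)$ at every step), the paper jumps directly to the rank-$n$ parabolic Coxeter element $w r_m$ lying above $u$. No induction is needed, and the ``hard obstacle'' you identified never arises. Your proposal also spends effort on the reduction to ``$\Fix(u)$ contains an axial vertex,'' which is harmless but unnecessary once one sees the direct argument.
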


\begin{proof}
	Let $v$ be the right complement of $u$, so that $uv = w$.
	Let $v = r_1\dotsm r_m$ be a minimal length factorization of $v$ as a product of reflections.
	Since $u$ is elliptic, $v$ is vertical and therefore at least one $r_i$ is a vertical reflection.
	By \Cref{lemma:hurwitz-action} we can move this reflection to the end, and thus assume that $r_m$ is vertical.
	By \Cref{lemma:elliptic-coxeter-A} (for the case $\tilde A_n$) and \Cref{lemma:elliptic-coxeter-bipartite} (for the other cases), $wr_m = ur_1\dotsm r_{m-1}$ is a  Coxeter element for the finite parabolic subgroup of $W$ that fixes an axial vertex $\b$.
	By construction, $u \leq w r_m$.
\end{proof}

Given an element $u \in [1,w]$, denote by $W_u$ the subgroup of $W$ generated by the reflections in $[1,u]$ (or, equivalently, by all the elements of $[1,u]$).
Then $W_u$ is a Coxeter group having $R \cap W_u$ as its set of reflections, by \cite[Theorem 8.2]{humphreys1992reflection} (see also \cite{deodhar1989note, dyer1990reflection}).
Denote by $\A_u = \{ \Fix(r) \mid r \in R \cap W_u \} \subseteq \A$ the reflection arrangement associated with the Coxeter group $W_u$.

\begin{lemma}[Hyperbolic-horizontal decomposition]
	\label{lemma:hyperbolic-decomposition}
	Let $W$ be an irreducible affine Coxeter group, $w$ one of its Coxeter elements, and $u \in [1,w]$ a hyperbolic element.
	There exists a unique decomposition $u=u'h$ such that:
	\begin{enumerate}
		\item $u', h \in [1,u]$, $u'$ is hyperbolic, $h$ is horizontal elliptic, and $l(u) = l(u') + l(h)$;
		\item $W_u$ is the direct product of the Coxeter subgroups $W_{u'}$ and $W_{h}$;
		\item $W_{u'}$ is an irreducible affine Coxeter subgroup;
		\item $W_h$ is a finite horizontal Coxeter subgroup;
		\item $[1,u] = [1,u'] \times [1,h]$.
	\end{enumerate}
\end{lemma}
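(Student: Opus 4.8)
The plan is to read the decomposition off the internal structure of the Coxeter subgroup $W_u$, and to pin down which factor is affine by a one-line computation with shortest move-vectors.

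\emph{Geometric input.} Since $u$ is hyperbolic, \Cref{lemma:model-poset} gives $u\le w$ in $[1,w]^L$, hence $\inv(u)\le\inv(w)$ in the model poset; as both invariants are hyperbolic, \Cref{def:global-poset} turns this into $\Mov(u)\subseteq\Mov(w)$. Because $w$ is a Coxeter element of an essential irreducible affine group, $\Mov(w)=\mu+\mu^{\perp}$ is the affine hyperplane of $V$ orthogonal to the Coxeter axis $\ell$ at ``height'' $|\mu|$ (with $\mu$ the shortest vector of $\Mov(w)$, pointing along $\ell$). Therefore $\Dir(\Mov(u))\subseteq\mu^{\perp}$, i.e.\ $\Dir(\Mov(u))$ is horizontal, and $\langle x,\mu\rangle=|\mu|^{2}$ for every $x\in\Mov(u)$; the same applies to any element of $[1,u]^W$ in place of $u$.

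\emph{Construction of the decomposition.} Being a discrete group of Euclidean isometries generated by reflections, $W_u$ is a Coxeter group (\cite[Theorem 8.2]{humphreys1992reflection}) and decomposes as an internal direct product $W_u=\prod_i W_i$ of its irreducible components, each finite or affine and acting essentially on a linear subspace $V_i$, the $V_i$ pairwise orthogonal. Write $u=\prod_i u_i$ accordingly. A Hurwitz rearrangement (\Cref{lemma:hurwitz-action}) of a reduced reflection factorization of $u$ shows that the reflections below $u$ lying in $W_i$ are exactly those below $u_i$; hence $W_{u_i}=W_i$, each $u_i\le u$, and reflection length is additive along the factorization. If $W_i$ is infinite then $u_i$ is not elliptic (otherwise every reflection below $u_i$ fixes $\Fix(u_i)\neq\emptyset$ by \Cref{thm:model-poset}, forcing the discrete group $W_{u_i}=W_i$ to be finite), so $u_i$ is hyperbolic; since $u_i\le u\le w$ in $[1,w]^L$, the first paragraph gives $\Mov(u_i)\subseteq\Mov(w)$, so $\langle\mu_{u_i},\mu\rangle=|\mu|^{2}$. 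As the $\Mov(u_i)$ lie in pairwise orthogonal subspaces, $\mu_u=\sum_i\mu_{u_i}$, with $\mu_{u_i}=0$ for the elliptic (hence finite) components; pairing with $\mu$ and using $\langle\mu_u,\mu\rangle=|\mu|^{2}$ gives $\#\{\text{infinite components}\}\cdot|\mu|^{2}=|\mu|^{2}$, so (as $\mu\neq0$) there is exactly one infinite component. Call it $W_{u'}$, let $W_h$ be the product of the remaining finite components, and let $u=u'h$ be the corresponding factorization, so $u'\in W_{u'}$ is hyperbolic, $h\in W_h$, and $u'h=hu'$.

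\emph{Verification.} Property (2) is the definition, and (3) follows since $W_{u'}$ is irreducible and contains the hyperbolic $u'$. For (5), the splitting of the reflection set of $W_u$ makes a reduced reflection factorization of $u=u'h$ precisely a shuffle of reduced factorizations of $u'$ and of $h$, so $[1,u]^W=[1,u]^{W_u}\cong[1,u']^{W_{u'}}\times[1,h]^{W_h}=[1,u']\times[1,h]$; in particular $l(u)=l(u')+l(h)$ and $u',h\in[1,u]$. As $u'$ and $h$ move points in orthogonal subspaces, $\Mov(u)=\Mov(u')\oplus\Mov(h)$ (cf.\ \cite{brady2015factoring}); since $h$ is elliptic, $\mu_{u'}=\mu_u\neq0$, confirming $u'$ hyperbolic, and $\Mov(h)\subseteq\Dir(\Mov(u))$ is horizontal while $\Fix(h)\neq\emptyset$, so $h$ is horizontal elliptic, completing (1). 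For (4): $W_h$ is discrete and fixes $\Fix(h)\neq\emptyset$ (every reflection below $h$ fixes $\Fix(h)$ by \Cref{thm:model-poset}), hence is finite; each such reflection $r$ has $\Mov(r)\subseteq\Mov(h)$ horizontal, so $r$ is horizontal, and a group generated by horizontal reflections has all of its reflections horizontal (conjugating a horizontal reflection by one fixes the direction of $\ell$). For uniqueness, if $u=\tilde u'\tilde h$ also satisfies (1)--(5), then $W_{\tilde u'}$ and $W_{\tilde h}$ are complementary direct factors of $W_u=\prod_iW_i$, hence products of subsets of the $W_i$; irreducibility of $W_{\tilde u'}$ and finiteness of $W_{\tilde h}$ force $W_{\tilde u'}=W_{u'}$ and $W_{\tilde h}=W_h$, so $u'^{-1}\tilde u'=h\tilde h^{-1}\in W_{u'}\cap W_h=\{1\}$, giving $\tilde u'=u'$ and $\tilde h=h$.

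\emph{Expected difficulty.} The conceptual crux is the ``exactly one infinite component'' step, which becomes the short orthogonality computation above once $\Mov(u)\subseteq\Mov(w)$ is available. The remaining delicate point is purely bookkeeping: using \Cref{lemma:hurwitz-action} to show the irreducible decomposition of $W_u$ is compatible with the interval $[1,u]$ (so that $W_{u_i}=W_i$, reflection length is additive, and the product-of-intervals formula holds); this requires care but introduces no new ideas.
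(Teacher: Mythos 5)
Your proof is correct, and the crux of the argument — showing that exactly one irreducible component of $W_u$ is affine — follows a genuinely different route from the paper's. The paper uses the Hurwitz action to pull a vertical reflection to the front of a reduced reflection factorization of $u$, then invokes the linear-independence criterion for ellipticity (\cite[Lemma~6.4]{brady2015factoring}) to argue by contradiction that the ``head'' $u_1$ cannot also be elliptic; this forces $u_1$ hyperbolic and the tail horizontal elliptic, after which $W_1$ is singled out as the unique factor containing a vertical reflection. You instead run a move-set computation: from $\Mov(u)\subseteq\Mov(w)=\mu+\mu^{\perp}$ you get $\langle\mu_{u_i},\mu\rangle=|\mu|^{2}$ for each hyperbolic component and $\mu_{u_i}=0$ for each elliptic one, and pairing $\mu_u=\sum_i\mu_{u_i}$ with $\mu$ immediately gives that exactly one component is hyperbolic. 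This replaces the paper's combinatorial/root-theoretic step with a one-line orthogonality calculation and makes no use of the Hurwitz action or of the right complement of $u$; the rest (compatibility of the irreducible decomposition with the interval, additivity of length, finiteness and horizontality of $W_h$, uniqueness) is bookkeeping shared by both proofs, and you handle it correctly, including the point the paper leaves implicit that $W_h$ fixes the nonempty $\Fix(h)$ and is therefore finite. One minor imprecision: the sentence ``the same applies to any element of $[1,u]^W$ in place of $u$'' at the end of your first paragraph is only true for hyperbolic elements of the interval (an elliptic $v$ has $0\in\Mov(v)$, so $\langle x,\mu\rangle=|\mu|^2$ fails), but you only ever apply it to the hyperbolic $u_i$, so the argument is unaffected.
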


\begin{proof}
	Decompose the Coxeter group $W_u$ as a direct product of irreducible subgroups: $W_u = W_1 \times \dotsb \times W_t$.
	Then $u \in W_u$ can be written uniquely as $u = u_1\dotsm u_t$ with $u_i \in W_i$.
	
	Since $u$ is hyperbolic, its right complement $v$ is horizontal elliptic.
	Let $u = r_1\dotsm r_m$ be a minimal length factorization of $u$ as a product of reflections.
	Since $u$ is vertical, at least one $r_i$ is a vertical reflection.
	By \Cref{lemma:hurwitz-action} we can move this reflection to the beginning, and thus assume that $r_1$ is vertical.
	Therefore its right complement $r_2\dotsm r_m v$ is vertical elliptic, and in particular $r_{2}\dotsm r_m$ is elliptic.
	Each reflection $r_i$ belongs to one of the irreducible components $W_j$.
	Without loss of generality, assume that $r_1, \dotsc, r_k \in W_1$ and $r_{k+1}, \dotsc, r_m \in W_2 \times \dotsb \times W_t$, for some $k \in \{1, \dotsc, m\}$.
	By uniqueness of the decomposition $u=u_1\dotsm u_t$, we have that $u_1 = r_1\dotsm r_k$ and $u_2\dotsm u_t = r_{k+1}\dotsm r_m$.
	In particular, both $u_1$ and $u_2\dotsm u_t$ belong to the interval $[1,u]$, and $l(u) = l(u_1) + l(u_2\dotsm u_t) = l(u_1) + l(u_2) + \dotsb + l(u_t)$.
	
	Denote by $\alpha_1, \dotsc, \alpha_m$ the roots corresponding to $r_1, \dotsc, r_m$.
	Since $u_2\dotsm u_t = r_{k+1} \dotsm r_m$ is elliptic, the roots $\alpha_{k+1}, \dotsc, \alpha_m$ are linearly independent by \cite[Lemma 6.4]{brady2015factoring}.
	Suppose by contradiction that $u_1 = r_1\dotsm r_k$ is also elliptic. Then for the same reason the roots $\alpha_1,\dotsc, \alpha_k$ are linearly independent.
	Therefore the roots $\alpha_1,\dotsc, \alpha_m$ are linearly independent, so $u$ is elliptic by \cite[Lemma 6.4]{brady2015factoring}, and this is a contradiction.
	We deduce that $u_1$ is hyperbolic.
	This implies that its right complement $r_{k+1}\dotsm r_m v$ is horizontal elliptic, so the reflections $r_{k+1}, \dotsc, r_m$ are horizontal.
	Then $u_2\dotsm u_t = r_{k+1}\dotsm r_m$ is horizontal elliptic.
	
	Recall that $W_u$ is generated by the reflections in $[1,u]$.
	Each reflection $r \in [1,u]$ belongs to exactly one irreducible factor $W_i$, and so is part of a minimal length factorization of $u_i$, which implies that $r \in [1,u_i]$.
	Therefore $W_i$ is equal to the subgroup $W_{u_i} \subseteq W$ generated by the reflections in $[1,u_i]$.
	For all $i \geq 2$ we have that $u_i$ is horizontal elliptic, so all the elements of $W_{u_i}$ are horizontal.
	Then the irreducible factor $W_1 = W_{u_1}$ is uniquely determined as the only factor which contains at least one vertical reflection.
	
	Define $u' = u_1$ and $h = u_2\dotsm u_t$.
	Notice that $W_h$ is the group generated by the reflections in $[1,h]$, i.e.\ the reflections in $[1,u_2] \cup \dotsb \cup [1,u_t]$, so $W_h = W_{u_2} \times \dotsb \times W_{u_t} = W_2 \times \dotsb \times W_t$.
	Therefore $W_u = W_{u'} \times W_h$.
	This, together with the fact that $W_h$ is horizontal and $W_{u'}$ is irreducible, is enough to ensure that the decomposition $u = u'h$ is unique.
	Since $W_u = W_{u'} \times W_h$, we have $[1,u] = [1,u'] \times [1,h]$.
	
	Recall that $u' = r_1\dotsm r_k$ is hyperbolic, so $\Fix(r_1) \cap \dotsb \cap \Fix(r_k) = \emptyset$.
	Therefore the Coxeter subgroup $W_{u'}$ is infinite.
	By construction, $W_{u'}$ is also irreducible, so it must be an irreducible affine Coxeter group.
\end{proof}

We will refer to the decomposition $u = u'h$ of \Cref{lemma:hyperbolic-decomposition} as the \emph{hyperbolic-horizontal decomposition} of $u$.

The following technical lemma is proved in the appendix for the four infinite families and was checked by computer for the exceptional cases (see \cite{program}).

\begin{lemma}
	\label{lemma:hyperbolic-coxeter}
	Let $W$ be an irreducible affine Coxeter group, $w$ one of its Coxeter elements, and $u \in [1,w]$ a hyperbolic element such that $W_u$ is irreducible.
	Let $\a$ be a point of $\ell$ that does not lie on any hyperplane of $\A_u$, and let $C$ be the chamber of $\A_u$ containing $\a$.
	Then $C$ has exactly $l(u)$ walls, and $u$ can be written as the product of the reflections with respect to the walls of $C$ in the following order:
	\begin{itemize}
		\item first there are the vertical reflections that fix a point of $\ell$ above $\a$, and $r$ comes before $r'$ if $\Fix(r) \cap \ell$ is below $\Fix(r') \cap \ell$;
		\item then there are the horizontal reflections, in some order;
		\item finally there are the vertical reflections that fix a point of $\ell$ below $\a$, and again $r$ comes before $r'$ if $\Fix(r) \cap \ell$ is below $\Fix(r') \cap \ell$.
	\end{itemize}
\end{lemma}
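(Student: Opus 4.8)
The plan is to derive the statement from the factorization theorems for Coxeter elements of irreducible affine Coxeter groups already at hand --- \Cref{thm:coxeter-A} and \Cref{thm:bipartite-coxeter} --- applied to the pair $(W_u,u)$ in place of $(W,w)$. First I would pin down the structure of $W_u$. Since $u$ is hyperbolic and $W_u$ is irreducible, \Cref{lemma:hyperbolic-decomposition}, applied with $u$ itself playing the role of the hyperbolic element, shows that its horizontal factor is trivial and that $W_u$ is an irreducible affine Coxeter group; let $m$ be its rank and $V_u$ the span of its roots. Reflection length is the same whether computed in $W$, in $W_u$, or in $L$, because a minimal reflection factorization of $u$ uses only reflections of $[1,u]\subseteq W_u$ and the length functions of $W$ and $L$ agree on $[1,w]$ by \Cref{lemma:model-poset}; hence $l(u)=\dim\Mov(u)+2$. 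The reflections of $W_u$ fix pointwise all directions orthogonal to $V_u$, so $W_u$ acts on $E$ as a cylinder over its essential action on $\R^m$; in particular $\A_u$ is a cylinder over the simplicial Coxeter arrangement of $W_u$ on $\R^m$, so every chamber of $\A_u$ has exactly $m+1$ walls, and $u$, being hyperbolic on $E$, is hyperbolic for the essential action, so $\dim\Mov(u)\le m-1$ and $l(u)\le m+1$. Conversely, every reflection $r\le u$ is elliptic with $\inv(r)<\inv(u)$ in the model poset (\Cref{lemma:model-poset}), which by \Cref{def:global-poset} forces the root of $r$ to lie in $\Span(\Mov(u))$; this subspace has dimension $l(u)-1$ (the affine subspace $\Mov(u)$ has dimension $l(u)-2$ and misses the origin), and since $W_u$ is generated by such reflections and each of them preserves $\Span(\Mov(u))$, all roots of $W_u$ lie in it, so $m\le l(u)-1$. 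Therefore $l(u)=m+1$, which is the first assertion; moreover $u$ is a Coxeter element of $W_u$ by \Cref{thm:A}, with Coxeter axis the line $\ell_u=\Min(u)$.

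For the ordered factorization I would apply \Cref{thm:coxeter-A} if $W_u$ has type $\tilde A$, and \Cref{thm:bipartite-coxeter} otherwise (so that $u$ is automatically a bipartite Coxeter element of $W_u$), to the pair $(W_u,u)$ and the chamber $C$. The point is that the notions of \emph{axial}, \emph{vertical}/\emph{horizontal} and \emph{above}/\emph{below} used in the lemma --- which are measured against $\ell=\Min(w)$ --- agree, on reflections of $W_u$, with the corresponding notions measured against $\ell_u$. This reduces to the relation $\langle\mu_u,\mu\rangle=|\mu|^2>0$ between the minimal vectors $\mu_u$ of $\Mov(u)$ and $\mu$ of $\Mov(w)$: indeed $\Mov(u)\subseteq\Mov(w)$ and $\mu$ is the foot of the perpendicular from the origin to $\Mov(w)$, so every point of $\Mov(w)$ has inner product $|\mu|^2$ with $\mu$; since moreover $\mu_u\in V_u$ (as $\Mov(u)\subseteq V_u$) and $\mu\perp\Dir\Mov(u)$, the orthogonal projection of $\mu$ onto $V_u$ is a \emph{positive} multiple of $\mu_u$. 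Hence a reflection of $W_u$ is horizontal for $\ell$ exactly when it is horizontal for $\ell_u$, and --- using also that $\ell\subseteq\Min(u)$, so that $C$ is an axial chamber of $W_u$; see \cite{brady2015factoring,mccammond2015dual} --- the vertical walls of $C$ meet $\ell$ in the same order, relative to $\a$, as they meet $\ell_u$ relative to $C\cap\ell_u$. With these identifications, \Cref{thm:coxeter-A} and \Cref{thm:bipartite-coxeter} applied to $(W_u,u)$ yield precisely the asserted factorization; the internal order within each block of vertical reflections is free in those theorems, since those reflections pairwise commute, so ordering them by position along $\ell$ is one admissible choice.

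The step I glossed over is in fact the main obstacle: controlling, uniformly in the type of $W_u$, how the Coxeter axis $\ell$ of $w$ threads through the sub-arrangement $\A_u$ --- which walls of $C$ it crosses, in which vertical order, and the fact that $C$ is an axial chamber of $W_u$. This is exactly where a case-free argument becomes cumbersome, so I would instead (as the paper does) verify the lemma type by type: for $\tilde A_n$, $\tilde B_n$, $\tilde C_n$, $\tilde D_n$ one uses the explicit coordinates for $(p,q)$-bigon and bipartite Coxeter elements and the description of $[1,w]$ developed in \Cref{appendix} to enumerate the hyperbolic elements $u$ with $W_u$ irreducible, to write down $\A_u$ and the chamber $C$ meeting $\ell$, and to read off both the wall count and the factorization directly; for the exceptional types this is a finite computation carried out by computer (\cite{program}). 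The structural facts of the first two paragraphs then serve as a guide to, and an independent check on, what each case must yield.
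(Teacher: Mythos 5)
The reduction in your second paragraph rests on a claim that is false in general, namely $\ell = \Min(w) \subseteq \Min(u)$. The paper flags this explicitly in the remark immediately after stating the lemma: ``in general $\Min(w) \nsubseteq \Min(u)$, so a point $\a \in \ell = \Min(w)$ is usually not on the Coxeter axis of $u$ (otherwise \Cref{lemma:hyperbolic-coxeter} would follow easily from \Cref{thm:bipartite-coxeter,thm:coxeter-A} applied to $W_u$).'' You may be conflating this with the correct containment $\Dir(\Min(w)) \subseteq \Dir(\Min(u))$ (used in the proof of \Cref{lemma:common-horizontal}), which follows from $\Mov(u)\subseteq\Mov(w)$ and the orthogonal decomposition of \cite[Lemma~3.6]{brady2015factoring}; but that only makes $\ell$ \emph{parallel} to $\Min(u)$ in the $V_u$-directions, not incident to it. For a concrete failure, take $\tilde C_3$ and $u=wr$ with $r$ the reflection in $\{x_1=x_2\}$: from the formula \eqref{eq:coxeter-C} one gets $u(\b)=(x_3^\b-1,\,x_2^\b,\,x_1^\b)$, hence $\Min(u)=\{x_3-x_1=\tfrac12\}$, whereas every point of $\ell$ satisfies $x_3-x_1=\tfrac23$; so $\ell\cap\Min(u)=\emptyset$. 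Consequently the chamber $C$ of $\A_u$ containing $\a\in\ell$ need not be an axial chamber for $(W_u,u)$, and \Cref{thm:bipartite-coxeter,thm:coxeter-A} cannot be invoked with $C$ as you propose.

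Your third paragraph correctly identifies this as the obstruction and retreats to what the paper actually does: a type-by-type verification in \Cref{appendix} for the infinite families, plus a computer check for the exceptional types. What makes the appendix work is that its key lemmas (\Cref{lemma:product-A}, \Cref{lemma:product-C}, \Cref{lemma:product-B}, \Cref{lemma:product-D}) are stated for \emph{any} point $\a$ satisfying explicit coordinate inequalities, with no requirement that $\a$ lie on the Coxeter axis of the ambient group; the residual work is to verify, for each type and each hyperbolic $u\le w$ with $W_u$ irreducible, that a point of $\ell$, viewed in the coordinates relevant to $W_u$, satisfies those inequalities. That is precisely the ``threading'' you flagged as unresolved. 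Your first paragraph's structural observations --- $l(u)=m+1$ where $m$ is the rank of $W_u$, the cylinder structure of $\A_u$, and the wall count --- are correct and useful as sanity checks but do not replace the coordinate verification; and the positivity observation $\langle\mu_u,\mu\rangle=|\mu|^2>0$, while sound, only shows that the horizontal/vertical dichotomy agrees between $\ell$ and the axis of $u$, not the \emph{order} in which the vertical walls of $C$ cross $\ell$, which is the delicate content of the lemma.
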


The conclusion of \Cref{lemma:hyperbolic-coxeter} seems to hold for all hyperbolic elements $u \in [1,w]$, without the irreducibility hypothesis.
In addition, for the case $\tilde G_2$ and for the four infinite families ($\tilde A_n$, $\tilde B_n$, $\tilde C_n$, and $\tilde D_n$), the vertical walls of $C$ that fix a point of $\ell$ above $\a$ (resp.\ below $\a$) pairwise commute.
However, a computer check shows that this is not true for the exceptional cases $\tilde F_4$, $\tilde E_6$, $\tilde E_7$, and $\tilde E_8$ (see \cite{program}).
Notice also that in general $\Min(w) \nsubseteq \Min(u)$, so a point $\a \in \ell = \Min(w)$ is usually not on the Coxeter axis of $u$ (otherwise \Cref{lemma:hyperbolic-coxeter} would follow easily from \Cref{thm:bipartite-coxeter,thm:coxeter-A} applied to $W_u$).

We can now prove the affine analog of \cite[Lemma 1.4.3]{bessis2003dual}.

\begin{theorem}
	\label{thm:coxeter-elements}
	Let $W$ be an irreducible affine Coxeter group, and $w$ one of its Coxeter elements.
	Every element $u \in [1,w]$ is a Coxeter element of $W_u$.
	In addition:
	\begin{enumerate}[(i)]
		\item if $u$ is elliptic, then $u$ is a parabolic Coxeter element;
		\item if $u$ is hyperbolic and $u=u'h$ is its hyperbolic-horizontal decomposition, then $u'$ is a Coxeter element of $W_{u'}$ and $h$ is a Coxeter element of the parabolic subgroup $W_h$ of $W_u$.
	\end{enumerate}
\end{theorem}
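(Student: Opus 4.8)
The plan is to follow the dichotomy in the statement: treat elliptic $u$ by importing the finite noncrossing partition theory, and treat hyperbolic $u$ by means of the hyperbolic--horizontal decomposition of Lemma~\ref{lemma:hyperbolic-decomposition} together with the structural Lemma~\ref{lemma:hyperbolic-coxeter}. Once those inputs are granted, the theorem is essentially an assembly.

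\textbf{Elliptic $u$.} I would start from Lemma~\ref{lemma:fixed-axial-vertex}, which produces an axial vertex $\b$ with $u\leq w_\b$, where $w_\b$ is a Coxeter element of the finite parabolic subgroup $W'\subseteq W$ fixing $\b$. The first thing to verify is that the set of reflections below $u$ is the same whether the interval is taken in $W$ or in $W'$, so that both $W_u$ and the relation $u\leq w_\b$ may be read off inside the finite group $W'$. This is short: any $x\leq w_\b$ in $[1,w]^W$ also satisfies $x\leq w_\b$ in $[1,w]^L$ by Lemma~\ref{lemma:model-poset}, so $\b\in\Fix(w_\b)\subseteq\Fix(x)$ by Theorem~\ref{thm:model-poset}, whence $x\in W'$; and since the reflection length of an elliptic isometry is the codimension of its fixed space, the length functions of $W$, $W'$ and $L$ all agree on $W'$ (this is the same mechanism as in the proof of Lemma~\ref{lemma:elliptic-subposet}). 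Granting this, $u\leq w_\b$ holds in the noncrossing partition lattice of the finite group $W'$, and $W_u$ equals the subgroup of $W'$ generated by the reflections of $W'$ below $u$. Then \cite[Lemma~1.4.3]{bessis2003dual} applies verbatim and shows that $u$ is a Coxeter element of $W_u$ and that $W_u$ is a parabolic subgroup of $W'$, hence of $W$. This settles the main assertion and part~(i) for elliptic $u$.

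\textbf{Hyperbolic $u$.} Here I would invoke the hyperbolic--horizontal decomposition $u=u'h$ of Lemma~\ref{lemma:hyperbolic-decomposition}: $W_u=W_{u'}\times W_h$ with $W_{u'}$ irreducible affine and $W_h$ finite horizontal, and $l(u)=l(u')+l(h)$. For the factor $h$: it is elliptic and lies in $[1,u]\subseteq[1,w]$, so the elliptic case already gives that $h$ is a Coxeter element of $W_h$; and $W_h$, being a direct factor of $W_u$, is a parabolic subgroup of $W_u$ (the Coxeter graph of $W_u$ splits as the disjoint union of those of $W_{u'}$ and $W_h$). For the factor $u'$: I would apply Lemma~\ref{lemma:hyperbolic-coxeter} with a point $\a\in\ell$ chosen off the arrangement $\A_{u'}$. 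It tells us that the chamber $C$ of $\A_{u'}$ containing $\a$ has exactly $l(u')$ walls and that $u'$ is the product, in a prescribed order, of the reflections through these walls. Since $W_{u'}$ is an irreducible affine Coxeter group, the reflections through the walls of a chamber of its Coxeter complex form a set of simple reflections of $W_{u'}$, so $u'$ is a Coxeter element of $W_{u'}$ (the prescribed order is irrelevant, since a Coxeter element is the product of a simple system in \emph{any} order). This proves part~(ii). Finally, the union of the two simple systems just produced is a set of simple reflections of $W_u=W_{u'}\times W_h$, and since $u'$ and $h$ commute and $u=u'h$, the element $u$ is a product of this simple system in some order, i.e.\ a Coxeter element of $W_u$.

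The substantive ingredients are the already-available Lemmas~\ref{lemma:fixed-axial-vertex} and~\ref{lemma:hyperbolic-coxeter}; once those are in hand the argument is bookkeeping. The one point internal to this proof that requires genuine care is the identification, in the elliptic case, of the interval below $u$ and of the subgroup $W_u$ upon passing from $W$ to the finite subgroup $W'$: this is exactly what lets the finite theory of \cite{bessis2003dual} be used without modification, and it is where one must check that ``$W_u$'' computed in $W$ and in $W'$ really coincide. For the hyperbolic case the only subtlety is that Lemma~\ref{lemma:hyperbolic-coxeter} is applied to $W_{u'}$, whose essential action may live on a proper affine subspace of $E$; this causes no trouble because the relevant chamber, having exactly $l(u')$ walls, still sees a full simple system of $W_{u'}$.
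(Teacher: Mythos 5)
Your proof follows essentially the same route as the paper's: Lemma~\ref{lemma:fixed-axial-vertex} plus \cite[Lemma~1.4.3]{bessis2003dual} for the elliptic case, and the hyperbolic--horizontal decomposition of Lemma~\ref{lemma:hyperbolic-decomposition} combined with Lemma~\ref{lemma:hyperbolic-coxeter} for the hyperbolic case. The only genuine difference is that you spell out the intermediate verification (via Lemma~\ref{lemma:model-poset} and length comparison) that $[1,u]$ and hence $W_u$ agree when computed in $W$ versus in the finite parabolic $W'$ fixing $\b$ -- a point the paper leaves implicit when it says ``applied to the finite parabolic subgroup that fixes $\b$'' -- so your write-up is a slightly more detailed version of the same argument rather than a different one.
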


\begin{proof}
	Suppose that $u$ is elliptic.
	By \Cref{lemma:fixed-axial-vertex} there is an axial vertex $\b$ such that $u \leq w_\b$, where $w_\b$ is the parabolic Coxeter element of \Cref{lemma:elliptic-coxeter-element}.
	By \cite[Lemma 1.4.3]{bessis2003dual} applied to the finite parabolic subgroup that fixes $\b$, we have that $u$ is a Coxeter element of $W_u$, and $W_u$ is a parabolic subgroup of $W$.

	Suppose now that $u$ is hyperbolic, and let $u=u'h$ be its hyperbolic-horizontal decomposition.
	Since $W_{u'}$ is irreducible, by \Cref{lemma:hyperbolic-coxeter} we immediately have that $u'$ is a Coxeter element of $W_{u'}$.
	Also, since $h$ is horizontal, we have already proved that $h$ is a Coxeter element of the parabolic subgroup $W_h$ (notice that $W_h$ is a parabolic subgroup of both $W$ and $W_u$).
	Then $u'h$ is a Coxeter element of $W_u = W_{u'} \times W_h$.
\end{proof}

In the case $\tilde C_n$, the first part of \Cref{thm:coxeter-elements} was already noted by Digne \cite[Remark 7.2]{digne2012garside}.

\begin{remark}
	If $W$ is an irreducible affine Coxeter group, then all its proper parabolic subgroups are finite.
	Therefore, if $u \neq w$ is a hyperbolic element, then $W_u$ is not a parabolic subgroup (and $u$ is not a parabolic Coxeter element).
\end{remark}

\begin{example}[Hyperbolic elements in the case $\tilde C_3$]
	If $W$ is a Coxeter group of type $\tilde C_3$, the interval $[1,w]$ has $3$ hyperbolic elements of length $2$ (the translations), $6$ of length $3$, and $1$ of length $4$ ($w$ itself).
	They are the complements of the horizontal elements, which are explicitly described in \Cref{ex:horizontal-A2} below.
	Among the $6$ hyperbolic elements $u \in [1,w]$ of length $3$, in $3$ cases $W_u \cong W_{\tilde A_1} \times W_{A_1}$ (so the hyperbolic-horizontal decomposition of $u$ has a non-trivial horizontal factor), whereas in the remaining three cases $W_u \cong W_{\tilde C_2}$.
	See Section \ref{sec:appendix-C} for an explicit computation of the hyperbolic elements in the case $\tilde C_n$.
	\label{ex:C3}
\end{example}

\subsection{Horizontal components}
\label{sec:horizontal-components}

We now describe the geometry of the irreducible horizontal components of an affine Coxeter group.
The ideas for this section are mostly already present in \cite{mccammond2017artin}, but we find it convenient to write them down more explicitly.

As in \Cref{sec:isometries-below}, let $W$ be an irreducible affine Coxeter group acting on $E = \R^n$ where $n$ is the rank of $W$, and fix a Coxeter element $w$.
Let $\Phi$ be the root system of $W$, and let $\Phi_\h \subseteq \Phi$ be the horizontal root system.

As shown in \cite[Definition 6.1]{mccammond2017artin}, there exists at least one \emph{horizontal factorization} $w = th$ where $t \in [1,w]$ is a translation, $h \in [1,w]$ is a horizontal isometry of reflection length $n-1$, and every horizontal reflection of $W$ is parallel to some reflection of the parabolic Coxeter subgroup $W_h$.
In other words, the root system of $W_h$ coincides with the horizontal root system $\Phi_\h$ of $W$.

The horizontal root system $\Phi_\h$ decomposes as a disjoint union of orthogonal irreducible root systems of type $A$ \cite[Section 6]{mccammond2017artin}: $\Phi_\h = \Phi_{1} \sqcup \Phi_{2} \sqcup \dotsb \sqcup \Phi_{k}$, where $\Phi_i$ is a root system of type $A_{n_i}$, and $n_1+n_2+\dotsb+n_k = n-1$ (see \Cref{table:horizontal-root-systems}).
Accordingly, the horizontal isometry $h$ decomposes as a product $h = h_1h_2\dotsm h_k$ where $h_i$ belongs to the $i$-th irreducible horizontal component and has reflection length $n_i$.
Then $\Phi_i$ is the root system of the irreducible parabolic Coxeter subgroup $W_{h_i}$, and $h_i$ is a Coxeter element of $W_{h_i}$ by \Cref{thm:coxeter-elements}.

We now focus on a single irreducible horizontal component $\Phi_i$ and describe the geometry of the associated horizontal reflections.
Let $m = n_i$ be the rank of $\Phi_i$.
Since $\Phi_i$ is a root system of type $A_{m}$, the reflections of $W$ in the directions of $\Phi_i$ generate a Coxeter subgroup $W_i \subseteq W$ of type $\tilde A_m$.
Denote by $\A_i$ the hyperplane arrangement associated with $W_i$ (this is a subarrangement of the hyperplane arrangement $\A$ associated with $W$).

The Coxeter axis $\ell$ does not intersect any horizontal hyperplane, and so it is contained in one chamber $C_i$ of the arrangement $\A_i$.
We call $C_i$ the $i$-th \emph{horizontal prism}.
Like every chamber of $\A_i$, it has $m+1$ faces of minimal dimension $n-m$.
We call them the \emph{minimal faces} of $C_i$.

Since the Coxeter axis $\ell$ is contained in $C_i$, every axial chamber of $\A$ is also contained in $C_i$.
As a consequence, all axial vertices are contained in $\bar C_i$.
The horizontal isometry $h_i$ has reflection length $m$, so $\dim \Fix(h_i) = n-m$, and $\Fix(h_i)$ is the intersection of $m$ hyperplanes of $\A_i$.
Since $h_i \in [1,w]$, by \Cref{lemma:fixed-axial-vertex} we have that $\Fix(h_i)$ contains at least one axial vertex.
Therefore $\Fix(h_i)$ is one of the minimal faces of $C_i$.

\begin{lemma}
	\label{lemma:cycle}
	Let $W$ be an irreducible affine Coxeter group, and $w$ one of its Coxeter elements.
	Every horizontal prism $C_i$ is fixed by $w$ (as a set).
	In addition, $w$ acts transitively on the set of walls of $C_i$ and on the set of minimal faces of $C_i$.
\end{lemma}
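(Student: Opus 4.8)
The plan is to exploit the horizontal factorization $w = th$ with $h = h_1 h_2 \dotsm h_k$ and the fact that each $h_i$ is a Coxeter element of the parabolic subgroup $W_{h_i}$, whose root system is the irreducible component $\Phi_i$ of type $A_m$. The first key observation is that conjugation by $w$ permutes the horizontal root system: since $\Phi_\h$ is intrinsically defined from $w$ (it is the set of roots of horizontal reflections in $[1,w]$, equivalently the directions in which $w$ acts on the complement of $\Min(w)$), and since $w$ normalizes the set of horizontal reflections, $w$ induces a permutation of $\Phi_\h$. Because the irreducible components $\Phi_1, \dotsc, \Phi_k$ are the connected components of the Coxeter graph of $W_h$, this permutation must permute them as blocks; but the reflection arrangement $\A_i$ is determined by $\Phi_i$, so $w(\A_i)$ is one of the arrangements $\A_j$. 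I expect that one can pin down $j = i$ directly — for instance because $w$ fixes the Coxeter axis $\ell$ setwise (indeed $w(\ell) = \ell$ since $\ell = \Min(w)$), and $\ell$ lies in $C_i$, so $w(C_i)$ is the chamber of $w(\A_i)$ containing $w(\ell) = \ell$; if $w(\A_i) = \A_i$ this forces $w(C_i) = C_i$. So the first real step is: \emph{$w$ normalizes $W_i$ and hence stabilizes $\A_i$}. This should follow because $w = th$, the translation $t$ normalizes every reflection subgroup in the obvious way (conjugating a reflection by a translation gives a parallel reflection, hence one in the same parallelism class, hence — since $\Phi_i$ contains \emph{all} reflections of $W$ in its directions, by the definition of horizontal prism — again in $W_i$), and $h = h_1 \dotsm h_k$ where each $h_j$ with $j \neq i$ commutes with everything in $W_i$ (orthogonal root systems) while $h_i \in W_i$. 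Therefore $w W_i w^{-1} = W_i$.

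Given that $w$ stabilizes $\A_i$, it permutes the chambers of $\A_i$; and since $\ell \subseteq C_i$ and $w(\ell) = \ell$, we get $w(C_i) = C_i$, proving the first assertion. For transitivity on the walls and minimal faces, the idea is to transport the question into $W_i \cong W_{\tilde A_m}$ and use that $w$ acts on $C_i$ as an element that plays the role of ``a Coxeter-like element.'' More precisely: $w$ restricted to its action on the quotient that kills the directions orthogonal to $\Span(\Phi_i)$ behaves like $h_i$ (the factors $t$ and $h_j$, $j\neq i$, act trivially there or by translation in orthogonal directions), and $h_i$ is a Coxeter element of $W_{h_i}$, which is the ``essential'' $A_m$-part of $W_i$. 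A Coxeter element of a finite Coxeter group of type $A_m$ acts on the corresponding affine chamber structure as an $(m{+}1)$-cycle on walls — this is the classical fact that a Coxeter element of $A_m = S_{m+1}$ is an $(m{+}1)$-cycle — and correspondingly it cyclically permutes the $m+1$ walls of a fundamental chamber of $\tilde A_m$ and the $m+1$ minimal faces. I would make this precise by invoking \Cref{thm:coxeter-A} / \Cref{lemma:axis-A} applied to the Coxeter group $W_i$ of type $\tilde A_m$ with the Coxeter axis $\ell$ lying in $C_i$: those results already describe explicitly how $w$ (acting on $\A_i$) cycles through the hyperplanes crossing $\ell$, and combined with the transitivity of the Hurwitz action on reflection factorizations this yields that the $m+1$ walls of $C_i$ form a single $w$-orbit, and likewise the $m+1$ minimal faces (each minimal face being the intersection of $m$ of the $m+1$ walls, so $w$ acting as an $(m{+}1)$-cycle on walls acts as an $(m{+}1)$-cycle on minimal faces too).

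The main obstacle I anticipate is the second step: cleanly identifying ``$w$ acting on $\A_i$'' with a genuine Coxeter element of the affine group $W_i \cong W_{\tilde A_m}$, so that \Cref{thm:coxeter-A} and \Cref{prop:axial-orbits-A} apply verbatim. The subtlety is that $w$ itself is not an element of $W_i$, and its restriction/projection to the relevant $(m{+}1)$-dimensional affine subspace must be checked to be a Coxeter element of $W_i$ — i.e., a product of $m+1$ reflections across the walls of $C_i$, one from each parallel class. The translation part $t$ could a priori spoil this (a Coxeter element of $\tilde A_m$ is hyperbolic with a specific translation length), so one needs $t$ to contribute exactly the right ``vertical'' translation along $\ell$. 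This is plausible because $t$ is precisely the translation by (a multiple of) $\mu$, the minimal vector of $\Mov(w)$, which is vertical — i.e., parallel to $\ell$ — and $\ell$ is the Coxeter axis of $W_i$'s relevant element. So the cleanest route is probably: show $w|$ (suitably interpreted) equals a Coxeter element of $W_i$ whose Coxeter axis is $\ell$, then quote \Cref{thm:coxeter-A}(iii) and \Cref{prop:axial-orbits-A} directly. If that identification proves awkward, the fallback is a direct argument: take the factorization of $w$ from \Cref{thm:bipartite-coxeter}/\Cref{thm:coxeter-A} as a product of reflections across walls of an axial chamber $C \subseteq C_i$, observe that exactly $m+1$ of these reflections lie in $W_i$ (namely those whose roots lie in $\Phi_i$, one per wall of $C_i$), and use the Hurwitz action to conjugate the remaining reflections past them, showing the $W_i$-part of $w$ is their product — an $(m{+}1)$-cycle on the walls of $C_i$ — and then conclude transitivity on walls and on minimal faces.
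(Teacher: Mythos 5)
Your argument for the first assertion is correct and matches the paper's: the factorization $w = t h_1 \dotsm h_k$ shows that $w$ stabilizes $\A_i$ (you correctly spell out why --- $t$ conjugates reflections of $W_i$ to parallel reflections, which remain in $W_i$ since $\Phi_i$ accounts for all $W$-reflections in those directions; the $h_j$ with $j \neq i$ commute with $W_i$; and $h_i \in W_i$), and since $w(\ell) = \ell$ and $\ell \subseteq C_i$, it follows that $w(C_i) = C_i$.

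For transitivity on walls and minimal faces, you have the right intuition (reduce to a Coxeter element of the $A_m$-part acting as an $(m{+}1)$-cycle), but neither of your two routes is complete. The obstacle you anticipate with your main route is genuine, and the paper's proof \emph{sidesteps} it rather than overcoming it: no attempt is made to realize the action of $w$ on $\A_i$ as a Coxeter element of the affine group $W_i \cong W_{\tilde A_m}$, so \Cref{thm:coxeter-A}, \Cref{lemma:axis-A}, and \Cref{prop:axial-orbits-A} play no role here. Instead, since $w$ already fixes $C_i$, it permutes the $m{+}1$ walls $H_1, \dotsc, H_{m+1}$, and the induced permutation of the roots $\alpha_1, \dotsc, \alpha_{m+1}$ (each $\alpha_j$ chosen to point from $H_j$ into $C_i$, so they are distinct even when walls are parallel) is governed entirely by the \emph{linear} part of $w$, which equals the linear part of $h_i$ --- a Coxeter element of the \emph{finite} group $W_{h_i}$ of type $A_m$. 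The classical fact that in a finite Coxeter group every root orbit under a Coxeter element has cardinality equal to the Coxeter number ($m+1$ for $A_m$) then forces $\{\alpha_1, \dotsc, \alpha_{m+1}\}$ to be a single orbit, and transitivity on walls (and hence on minimal faces, each being opposite to a wall) follows. The whole argument lives in the finite linear picture, so the ``awkward identification'' you worried about never arises. Your fallback, by contrast, has a concrete gap: it is \emph{not} true that exactly $m{+}1$ of the reflections in the factorization of $w$ by walls of an axial chamber $C \subseteq C_i$ lie in $W_i$. You are conflating walls of $C$ (a chamber of $\A$) with walls of $C_i$ (a chamber of the much coarser $\A_i$); a wall of $C_i$ need not be a wall of $C$. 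In type $\tilde G_2$ (\Cref{ex:G2}), the bipartite factorization $w = a_1 d_1 c_0$ consists entirely of vertical reflections, so none of its factors lie in the horizontal component $W_i$, yet $m+1 = 2$.
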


\begin{proof}
	From the factorization $w=th_1h_2 \dotsm h_k$ we see that $w$ sends hyperplanes of $\A_i$ to hyperplanes of $\A_i$.
	Also, it fixes the Coxeter axis $\ell$ (as a set).
	Therefore it fixes $C_i$ (as a set), it permutes the walls of $C_i$, and it permutes the minimal faces of $C_i$.
	The action of $w$ on the walls of $C_i$ is the same as the action of $th_i$, because $h_j$ fixes every hyperplane of $\A_i$ for all $j \neq i$.
	
	Let $H_1, \dotsc, H_{m+1}$ be the walls of $C_i$, and for every $j \in \{1, \dotsc, m+1\}$ let $\alpha_j$ be the root of $H_j$ that points from $H_j$ towards the half-space containing $C_i$.
	The linear part of $th_i$ (which coincides with the linear part of $h_i$) permutes $\alpha_1, \dotsc, \alpha_{m+1}$.
	Recall that $h_i$ is a Coxeter element of $W_{h_i}$, which is a Coxeter group of type $A_m$ with root system $\Phi_i$.
	In a finite Coxeter group, the orbits of roots under the action of a Coxeter element are known to have cardinality equal to the Coxeter number, which is $m+1$ for a Coxeter group of type $A_m$.
	Therefore $h_i$ transitively permutes the roots $\alpha_1, \dotsc, \alpha_{m+1}$.
	Then $w$ transitively permutes the walls of $C_i$.
	Every minimal face of $C_i$ is opposite to exactly one wall of $C_i$, so the same conclusion holds for the minimal faces.
\end{proof}

Let $\varphi\colon [1,w] \to [1,w]$ be the conjugation by the Coxeter element $w$: $\varphi(u) = w^{-1}uw$.

\begin{proposition}
	\label{prop:horizontal-axial}
	Let $W$ be an irreducible affine Coxeter group, and $w$ one of its Coxeter elements.
	Every minimal face of the $i$-th horizontal prism $C_i$ is the fixed set of $\varphi^p(h_i)$ for some $p \in \{0, \dotsc, m\}$, and it contains at least one axial vertex.
	In addition, the elements $\varphi^p(h_i)$ for $p \in \{0, \dotsc, m\}$ are the maximal elements of the subposet $[1,w] \cap W_i \subseteq [1,w]$, and they all have the same linear part.
\end{proposition}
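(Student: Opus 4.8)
The plan is to derive everything from \Cref{lemma:cycle} together with two facts recorded just before the statement: $\Fix(h_i)$ is one of the $m+1$ minimal faces of the prism $C_i$, and every axial vertex lies in $\bar C_i$. Since $w$ maps $\A_i$ to itself (as seen in the proof of \Cref{lemma:cycle}), conjugation by $w$ normalizes $W_i$; and because $\varphi(u) = v^{-1}uv = v^{-1}w$, where $v$ is the right complement of $u$, the map $\varphi$ is a poset automorphism of $[1,w]$, hence also of $[1,w]\cap W_i$. In particular each $\varphi^p(h_i)$ lies in $[1,w]\cap W_i$, and $\Fix(\varphi^p(h_i)) = \Fix(w^{-p}h_iw^p) = w^{-p}(\Fix(h_i))$.

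First I would settle the description of the minimal faces. Since $w$ permutes the $m+1$ minimal faces of $C_i$ transitively (\Cref{lemma:cycle}), the orbit $\{w^{-p}(\Fix(h_i))\}_{p=0}^{m}$ consists of all of them, and by the identity above it equals $\{\Fix(\varphi^p(h_i))\}_{p=0}^{m}$, all distinct. Each of these contains the axial vertex $w^{-p}(\b)$, where $\b\in\Fix(h_i)$ is an axial vertex (which exists by \Cref{lemma:fixed-axial-vertex}) and $w$ carries axial vertices to axial vertices. For the linear parts, write $w = th_1\dotsm h_k$ with $t$ a translation and each $\bar h_j$ supported on $\Span(\Phi_j)$ (these subspaces being pairwise orthogonal); then $\bar w = \bar h_1\dotsm\bar h_k$ commutes with $\bar h_i$ (each factor does), so $\overline{\varphi(h_i)} = \bar w^{-1}\bar h_i\bar w = \bar h_i$, and inductively every $\varphi^p(h_i)$ has linear part $\bar h_i$.

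It remains to show that the $\varphi^p(h_i)$ are exactly the maximal elements of $[1,w]\cap W_i$. I would first check that every $u\in[1,w]\cap W_i$ is elliptic: every element of $W_i$ satisfies $\Mov(u)\subseteq\Span(\Phi_i)\subseteq\Span(\Phi_\h)$, so a hyperbolic $u$ would have $\Mov(u)$ a nonempty affine subspace of $\Span(\Phi_\h)$ missing the origin, which cannot be contained in $\Mov(w)$ — an affine hyperplane parallel to $\Span(\Phi_\h)$ whose shortest vector is a nonzero vertical vector — contradicting $\Mov(u)\subseteq\Mov(w)$ from \Cref{lemma:model-poset} and the model poset. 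Being elliptic, $u$ has $\Fix(u)$ cut out by $l(u)$ hyperplanes of $\A_i$, so $\dim\Fix(u)\geq n-m$ and $l(u)\leq m = l(h_i)$; since each $\varphi^p(h_i)$ has length exactly $m$, no element of $[1,w]\cap W_i$ lies strictly above it, so it is maximal.

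To rule out other maximal elements, I would show every $u\in[1,w]\cap W_i$ lies below some $\varphi^p(h_i)$. By \Cref{lemma:fixed-axial-vertex}, $u$ fixes an axial vertex $\b\in\bar C_i$. Let $F$ be the face of $C_i$ whose relative interior contains $\b$: any hyperplane of $\A_i$ through $\b$ must contain $F$ (otherwise it would cross the relative interior of $F$ and hence meet the open chamber $C_i$), so $\Fix(u)\supseteq F$; as $C_i$ is a product of a simplex and a linear space, $F$ contains one of the minimal faces of $C_i$, i.e.\ $\Fix(\varphi^p(h_i))$ for some $p$. Then $\Fix(u)\supseteq\Fix(\varphi^p(h_i))$, so $u\leq\varphi^p(h_i)$ by \Cref{lemma:elliptic-subposet}, and this relation holds in $[1,w]\cap W_i$ as well. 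The main obstacle is precisely this last step — converting ``$u$ fixes an axial vertex'' into ``$\Fix(u)$ contains a minimal face of $C_i$'' — which is where the facts that axial vertices lie in $\bar C_i$, \Cref{lemma:fixed-axial-vertex}, and \Cref{lemma:elliptic-subposet} are combined.
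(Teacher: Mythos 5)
Your treatment of the first claim (the minimal faces of $C_i$ are exactly the $\Fix(\varphi^p(h_i))$, each containing an axial vertex) and of the linear parts is essentially the paper's; the interesting divergence is in the third claim, that the $\varphi^p(h_i)$ are precisely the maximal elements of $[1,w]\cap W_i$.

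The paper attacks this algebraically: given $u\in[1,w]\cap W_i$, it takes the left complement $v$ of $u$ (so $vu=w$, $v$ hyperbolic), applies the hyperbolic-horizontal decomposition $v=v'h'$, peels off a translation $t'\in[1,v']$ to write $w=t'v''h'u$, observes that the right complement $\bar h=v''h'u$ of $t'$ is horizontal of length $n-1$, decomposes $\bar h=\bar h_1\dotsm\bar h_k$ with each $\bar h_j$ of maximal length $n_j$, and concludes $u\le\bar h_i=\varphi^p(h_i)$. Your route is geometric and local: you first establish that every $u\in[1,w]\cap W_i$ is elliptic by a move-set argument ($\Mov(u)\subseteq\Span(\Phi_i)$ is a linear subspace of $\Span(\Phi_\h)$, while $\Mov(w)=\mu+\Span(\Phi_\h)$ misses $\Span(\Phi_\h)$, so hyperbolicity would contradict \Cref{lemma:model-poset}); then, combining \Cref{lemma:fixed-axial-vertex} with the prism structure of $C_i$, you show $\Fix(u)$ contains the closed face of $C_i$ through the axial vertex it fixes, hence contains some minimal face $\Fix(\varphi^p(h_i))$, and you finish with \Cref{lemma:elliptic-subposet}. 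Both approaches are correct. Yours buys a more elementary argument (no appeal to \Cref{lemma:hyperbolic-decomposition} or to the existence of translations below $v'$, both of which sit deeper in the theory), at the cost of an extra step establishing ellipticity — a step the paper gets for free since its factorization directly exhibits $u$ as below a horizontal element. It is worth noting that the face-of-prism device you use here is, in essence, the same device the paper deploys later in the proof of \Cref{lemma:elliptic-intervals} (which cites the present proposition), so your argument in effect relocates that geometric reasoning earlier in the development; this causes no circularity since you do not invoke any result that depends on \Cref{prop:horizontal-axial}. Two small wording points: it is the closed face $\bar F$ (not the relatively open $F$) that contains a minimal face, and the observation that a hyperplane of $\A_i$ through $\b$ contains the face whose relative interior holds $\b$ should be said in terms of the sign vector of that face being $0$ on that hyperplane; neither affects the correctness.
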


\begin{proof}
	We have that $\Fix(h_i)$ is a minimal face of $C_i$.
	Then the first part follows from \Cref{lemma:cycle}.
	Since $\varphi^p(h_i) \in [1,w]$, its fixed set contains at least one axial vertex by \Cref{lemma:fixed-axial-vertex}.
	
	Every element of $[1,w] \cap W_i$ of reflection length $m$ must have a fixed set equal to a minimal face of $C_i$.
	By \Cref{lemma:model-poset}, there can be at most one such element for every minimal face of $C_i$.
	Therefore, the elements $\varphi^p(h_i)$ for $p \in \{0, \dotsc, m\}$ are the only elements of reflection length $m$ in $[1,w] \cap W_i$.
	Every other element $u \in [1,w] \cap W_i$ has a reflection length strictly smaller than $m$.

	We want to show that, for every $u \in [1,w] \cap W_i$, we have $u \leq \varphi^p(h_i)$ for some $p$.
	Let $v$ be the left complement of $u$, so that $vu = w$.
	Then $v$ is hyperbolic.
	Let $v = v'h'$ be the hyperbolic-horizontal decomposition of $v$ (see \Cref{lemma:hyperbolic-decomposition}).
	Recall that the Coxeter subgroup $W_{v'}$ generated by $[1,v']$ is an irreducible affine Coxeter group, and $v'$ is one of its Coxeter elements by \Cref{thm:coxeter-elements}.
	Then the interval $[1,v']$ contains at least one translation $t'$ (this follows for example from the existence of a horizontal factorization of $v'$ in $W_{v'}$).
	Therefore we can write $v' = t'v''$, with $l(v') = l(t') + l(v'')$.
	Putting everything together, we get the factorization $w = t'v''h'u$, with $l(t') + l(v'') + l(h') + l(u) = l(w) = n+1$.
	Since $t'$ is a translation, and so has length $l(t') = 2$, its right complement $\bar h = v''h'u$ is a horizontal element of length $l(\bar h) = n-1$.
	In addition, $u \leq \bar h$ because $u$ is part of a minimal length factorization of $\bar h$.
	Write $\bar h = \bar h_1 \bar h_2 \dotsm \bar h_k$, with $\bar h_j \in  [1,w] \cap W_j$.
	We have $l(\bar h_j) \leq n_j$ for all $j$, and $n_1 + \dotsb + n_k = n-1$, so $l(\bar h_j) = n_j$.
	In particular $l(\bar h_i) = n_i = m$, so $\bar h_i = \varphi^p(h_i)$ for some $p \in \{0, \dotsc, m\}$.
	Since $u \leq \bar h$ and $u \in [1,w] \cap W_i$, we have $u \leq \bar h_i = \varphi^p(h_i)$.
	
	Finally, we want to show that the linear part of $\varphi^p(h_i)$ is equal to the linear part of $h_i$.
	This follows from the factorization $w = th_1h_2\dotsm h_k$, together with the fact that $h_j$ commutes with $h_i$ for all $j \neq i$, and that the linear part of the translation $t$ is the identity.
\end{proof}

\begin{corollary}
	\label{cor:two-horizontal-reflections}
	Let $W$ be an irreducible affine Coxeter group, and $w$ one of its Coxeter elements.
	For every horizontal root $\alpha \in \Phi_\h$, the interval $[1,w]$ contains exactly two reflections in the direction of $\alpha$, namely those determined by adjacent hyperplanes which contain the Coxeter axis $\ell$ between them.
\end{corollary}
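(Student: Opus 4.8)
The plan is to reduce the statement to the single irreducible component $\Phi_i$ of the horizontal root system $\Phi_\h$ containing $\alpha$, and to feed in the description of $[1,w]\cap W_i$ supplied by \Cref{prop:horizontal-axial}. Put $m=n_i$, so $W_i$ has type $\tilde A_m$; let $C_i$ be the $i$-th horizontal prism, and set $F_p:=\Fix(\varphi^p(h_i))$ for $p\in\{0,\dots,m\}$ — by \Cref{prop:horizontal-axial} these are exactly the $m+1$ minimal faces of $C_i$. The hyperplanes of $\A_i$ in the direction of $\alpha$ form an infinite family of equally spaced parallel hyperplanes, and $C_i$ (hence the Coxeter axis $\ell$) lies in the open strip between two consecutive ones, which I call $H^+$ and $H^-$; write $s^+$ and $s^-$ for the corresponding reflections. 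The claim is that $s^+,s^-$ are precisely the two reflections in the direction of $\alpha$ lying in $[1,w]$.

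First I would set up the bookkeeping. A reflection $r$ in the direction of $\alpha$ belongs to $W_i$, so $r\in[1,w]$ if and only if $r\le\varphi^p(h_i)$ for some $p$ (by \Cref{prop:horizontal-axial} and its proof: the $\varphi^p(h_i)$ are the maximal elements of $[1,w]\cap W_i$, and every element of that subposet lies below one of them). By \Cref{thm:coxeter-elements} each $\varphi^p(h_i)$ is a Coxeter element of the finite parabolic subgroup $W_{\varphi^p(h_i)}$, which by \Cref{prop:horizontal-axial} has the linear part of $h_i$; since $h_i$ is a Coxeter element of $W_{h_i}$ and the latter has the type-$A_m$ root system $\Phi_i$, the group $W_{\varphi^p(h_i)}$ is a rank-$m$ parabolic subgroup of $W$ contained in $W_i$, hence of type $A_m$ with root system realizing every direction of $\Phi_i$. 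Thus $W_{\varphi^p(h_i)}$ contains a unique reflection $t_p$ in the direction of $\alpha$, and — since every reflection of a finite Coxeter group lies below each of its Coxeter elements \cite[Lemma 1.3.3]{bessis2003dual} — for $r$ in the direction of $\alpha$ we have $r\le\varphi^p(h_i)$ if and only if $r=t_p$. Hence the reflections in the direction of $\alpha$ lying in $[1,w]$ are exactly $t_0,\dots,t_m$.

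Next I would locate the hyperplanes $\Fix(t_p)$. By \Cref{lemma:model-poset} every reflection below $\varphi^p(h_i)$ fixes $F_p=\Fix(\varphi^p(h_i))$ pointwise, so the whole group $W_{\varphi^p(h_i)}$ does, and therefore $F_p\subseteq\Fix(t_p)$. Since $F_p\subseteq\bar C_i$ and the closed strip containing $\bar C_i$ meets only $H^+$ and $H^-$ among the $\alpha$-direction hyperplanes of $\A_i$, we get $\Fix(t_p)\in\{H^+,H^-\}$, i.e.\ $t_p\in\{s^+,s^-\}$. This already gives $\{t_0,\dots,t_m\}\subseteq\{s^+,s^-\}$. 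For the reverse inclusion, note that each $F_p\subseteq\Fix(t_p)\in\{H^+,H^-\}$, while the minimal faces $F_0,\dots,F_m$ do not all lie in one affine hyperplane (their convex hull is the full-dimensional set $\bar C_i$). Hence $F_p\subseteq H^+$ for some $p$ and $F_{p'}\subseteq H^-$ for some $p'$, so $s^+=t_p\le\varphi^p(h_i)\le w$ and $s^-=t_{p'}\le\varphi^{p'}(h_i)\le w$. As $s^+\ne s^-$, exactly the two reflections $s^+,s^-$ lie in $[1,w]$, and by construction $H^+$ and $H^-$ are adjacent hyperplanes of $\A$ with $\ell$ between them.

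The step I expect to demand the most care is the identification of $W_{\varphi^p(h_i)}$ as a type-$A_m$ parabolic whose root system realizes every direction of $\Phi_i$ — equivalently, that each minimal face of $C_i$ lies on a hyperplane of $\A_i$ in each direction of $\Phi_i$. This is special to type $A$: the only rank-$m$ parabolic subgroup of the Weyl group of $A_m$ is the whole group (equivalently, all alcove vertices in type $\tilde A_m$ are special vertices), and I would either cite this or verify it by a one-line computation in the coordinate model $\{x_1>\dots>x_{m+1}>x_1-1\}$ of an alcove. Everything else in the argument is routine bookkeeping with \Cref{prop:horizontal-axial,thm:coxeter-elements,lemma:model-poset}.
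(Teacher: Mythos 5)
Your argument is correct. It differs from the paper's proof in the lemma used to detect which horizontal reflections lie in $[1,w]$: the paper invokes \Cref{thm:reflections-bipartite} and \Cref{thm:reflections-A} (a horizontal reflection belongs to $[1,w]$ if and only if its hyperplane contains an axial vertex) and then identifies axial vertices on a hyperplane of $\A_i$ with minimal faces via \Cref{prop:horizontal-axial}, whereas you bypass the axial-vertex characterization entirely and use only the poset-theoretic half of \Cref{prop:horizontal-axial} — that the $\varphi^p(h_i)$ are the maximal elements of $[1,w]\cap W_i$ — together with the observation that each parabolic $W_{\varphi^p(h_i)}$ is of type $A_m$ and realizes every direction of $\Phi_i$. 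The geometric endgame is the same: the paper's single sentence "there are exactly two hyperplanes in the direction of $\alpha$ that contain at least one minimal face of $C_i$" is precisely the conjunction of your two claims that $\Fix(t_p)\in\{H^+,H^-\}$ and that the minimal faces do not all lie on one hyperplane. Your route keeps the whole argument inside the subposet $[1,w]\cap W_i$, at the cost of having to make explicit the special-vertex property of type $A$ (each minimal face of the prism is incident to a hyperplane of $\A_i$ in each direction of $\Phi_i$), which the paper's phrasing quietly absorbs. You are right that this is the one delicate step, and your suggested verification — citing that in type $\tilde A_m$ every alcove vertex is special, or checking it in the coordinate model of \Cref{rmk:horizontal-coordinates} — closes it.
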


\begin{proof}
	Let $\Phi_i$ be the irreducible component of $\Phi_\h$ containing $\alpha$.
	A hyperplane of $\A_i$ yields a reflection in $[1,w]$ if and only if it contains an axial vertex, which happens if and only if it contains a minimal face of $C_i$ (by \Cref{prop:horizontal-axial}).
	In the arrangement $\A_i$, which is of type $\tilde A_m$, there are exactly two hyperplanes in the direction of $\alpha$ that contain at least one minimal face of $C_i$.
	These two hyperplanes are adjacent, and they contain $C_i$ (and therefore also the Coxeter axis $\ell$) between them.
\end{proof}

\begin{lemma}
	\label{lemma:translation}
	Let $W$ be an irreducible affine Coxeter group, and $w$ one of its Coxeter elements.
	There exists an integer $p>0$ such that $w^p$ is a translation in the positive direction of the Coxeter axis $\ell$.
\end{lemma}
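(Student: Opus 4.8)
The plan is to exploit the splitting of an affine isometry into its linear part and a translation, and to reduce the statement to the finiteness of the point group. Write $\bar w \in O(V)$ for the linear part of $w$, so that, after fixing an origin, $w(x) = \bar w(x) + \tau$ for some $\tau \in V$. Since $W$ acts on $E$ as a Euclidean reflection group, every reflection of $W$ has linear part a reflection of the associated finite reflection group $W_0$, so the linear-part homomorphism maps $W$ into the finite group $W_0$ (see \cite[Chapter~4]{humphreys1992reflection}); hence $\bar w$ has finite order, and we let $p>0$ be this order.

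Then $w^p$ has linear part $\bar w^{\,p} = \id$, so $w^p$ is a translation of $E$, by some vector $t \in V$. To identify $t$ I would evaluate $w^p$ on a point of the Coxeter axis: by the definition of the min-set recalled in \Cref{sec:euclidean-isometries}, every $\a \in \ell = \Min(w)$ satisfies $w(\a) = \a + \mu$, where $\mu$ is the shortest vector of $\Mov(w)$, so $w^p(\a) = \a + p\mu$. Comparing with $w^p(\a) = \a + t$ gives $t = p\mu$.

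Finally I would check that $t = p\mu$ is a translation in the positive direction of $\ell$. It is nonzero because $w$ is hyperbolic (so $0 \notin \Mov(w)$ and $\mu \neq 0$); it is parallel to $\ell$ because $\mu \perp \Dir(\Mov(w))$ while $V = \Dir(\Mov(w)) \oplus \Dir(\Min(w))$, whence $\mu \in \Dir(\Min(w)) = \Dir(\ell)$; and it points positively because $\mu$ is, by definition, the vector orienting $\ell$ in the positive direction. Thus $w^p$ is a nontrivial translation in the positive direction of $\ell$, as required.

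I do not expect a genuine obstacle here: the only nonelementary input is that the linear parts of the elements of an affine Coxeter group form a finite group, which is classical, and everything else is a one-line computation on $\ell$. One may take $p$ to be the order of $\bar w$ in $W_0$, though its precise value is irrelevant for the statement.
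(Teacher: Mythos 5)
Your proof is correct, and it takes a cleaner route than the paper's. Both arguments ultimately reduce to the same two observations: the linear part $\bar w$ has finite order, and on the axis $\ell$ one has $w^j(\a) = \a + j\mu$ because $\ell$ is $w$-invariant and $\mu$ is $\bar w$-fixed. Where they differ is in how the finite order is obtained. You invoke the classical fact that the linear-part homomorphism lands in the finite point group $W_0$, which immediately gives a valid $p$ without any further structure. The paper instead identifies $p$ explicitly from the machinery developed in \Cref{sec:horizontal-components}: $\bar w$ acts on each irreducible horizontal component $\Phi_i$ (of type $A_{n_i}$) as a Coxeter element of order $n_i + 1$, and fixes the vertical direction, so any common multiple $p$ of the $n_i + 1$ kills the linear part. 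The paper's version buys an explicit value of $p$ in terms of data that recurs throughout the paper (the horizontal ranks), while yours is shorter and uses only standard affine-Coxeter-group facts. Both are complete; the only small thing worth making explicit in your write-up (which you implicitly use when writing $w^p(\a) = \a + p\mu$) is that $\ell = \Min(w)$ is $w$-invariant, so that $w(\a) = \a + \mu$ can be iterated along $\ell$.
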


\begin{proof}
	Let $n_1, \dotsc, n_k$ be the ranks of the irreducible components of the horizontal root system $\Phi_\h$.
	If $p$ is a multiple of $n_i+1$ for every $i$, then $w^p$ acts trivially on all horizontal directions, and so it must be a translation in the direction of the Coxeter axis $\ell$.
	If $\mu$ is the shortest vector in $\Mov(w)$, then $p\mu \in \Mov(w^p)$. Therefore $w^p$ is a translation of $p\mu$, which is in the positive direction of $\ell$.
\end{proof}

\begin{lemma}
	Let $W$ be an irreducible affine Coxeter group, and $w$ one of its Coxeter elements.
	For every irreducible component $\Phi_i$ of the horizontal root system $\Phi_\h$, there exists a hyperbolic element $u \in [1,w]$ such that $W_u$ is an irreducible affine Coxeter group with horizontal root system $\Phi_i$ (with respect to the Coxeter element $u$).
	In particular, $[1,u] \cap W_i = [1,w] \cap W_i$.
	\label{lemma:common-horizontal}
\end{lemma}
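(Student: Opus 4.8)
The plan is to produce $u$ explicitly. Fix a horizontal factorization $w = th$ (\cite[Definition 6.1]{mccammond2017artin}), write $h = h_1h_2\dotsm h_k$ for its decomposition into irreducible horizontal components with $h_i$ the factor in $\Phi_i$, and set $m := n_i$. Since the $h_j$ pairwise commute, $w = u\cdot\prod_{j\ne i}h_j$ with $u := th_i$; as distinct horizontal components are orthogonal the reflection lengths add up, so $l\bigl(\prod_{j\ne i}h_j\bigr)=n-1-m$, hence $l(u)=m+2$ and $u\le w$, i.e.\ $u\in[1,w]$. Writing $t = \tau_1\tau_2$ with $\tau_1,\tau_2$ reflections (so $\Fix(\tau_1)$, $\Fix(\tau_2)$ are parallel) and $h_i = r_1\dotsm r_m$ a Coxeter factorization of $h_i$ in $W_{h_i}$ (\Cref{thm:coxeter-elements}), we obtain a reduced factorization $u = \tau_1\tau_2r_1\dotsm r_m$ whose $m+2$ roots span at most $m+1$ dimensions, because the roots of $\tau_1$ and $\tau_2$ are proportional. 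By \cite[Lemma 6.4]{brady2015factoring} this forces $u$ to be hyperbolic.

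Next I would record the geometry of $\tau_1,\tau_2$. Pick a point $\a$ on the Coxeter axis $\ell$. Because $\Fix(h_i)$ is a minimal face of the horizontal prism $C_i$ while $\ell$ lies in the open chamber $C_i$, the point $\a$ is not fixed by $h_i$, so $\delta := h_i(\a)-\a$ is a nonzero vector of $\Span(\Phi_i)$. Evaluating $w = th$ at $\a\in\ell=\Min(w)$ gives $\mu = (h(\a)-\a) + v_t$, where $\mu$ is the shortest vector of $\Mov(w)$ and $v_t$ is the translation vector of $t$; since $\mu$ is orthogonal to all horizontal directions and the $\Span(\Phi_i)$-component of $h(\a)-\a$ is exactly $\delta$, the vector $v_t$ has a nonzero vertical component and a nonzero component along $\Span(\Phi_i)$. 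As the root of $\tau_1$ is parallel to $v_t$, this says that $\tau_1$ and $\tau_2$ are vertical reflections and that the root of $\tau_1$ is not orthogonal to $\Span(\Phi_i)$.

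Now apply \Cref{lemma:hyperbolic-decomposition} to $u$, writing $u = u'\bar h$ with $W_u = W_{u'}\times W_{\bar h}$, $W_{u'}$ irreducible affine of some rank $r'$, $W_{\bar h}$ finite horizontal of some rank $s$, and (by \Cref{thm:coxeter-elements}) $r'+s=m+1$. The vertical reflections $\tau_1,\tau_2$ cannot lie in the horizontal group $W_{\bar h}$, so they lie in $W_{u'}$; the irreducible group $W_{h_i}$ lies entirely in one factor, and not in $W_{\bar h}$, since otherwise it would commute with $\tau_1\in W_{u'}$, contradicting that the root of $\tau_1$ is not orthogonal to $\Span(\Phi_i)$. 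Hence $W_{h_i}\subseteq W_{u'}$, so $u = \tau_1\tau_2r_1\dotsm r_m\in W_{u'}$, and the reflection length of $u$ in $W_{u'}$ is at least $l(u)=m+2$; this shows $r'+1\ge m+2$, and with $r'\le m+1$ we get $r'=m+1$ and $\bar h=1$. Thus $W_u=W_{u'}$ is irreducible affine of rank $m+1$ and $u$ is one of its Coxeter elements. Since the linear part of $u$ equals that of $h_i$, the Coxeter axis direction of $u$ lies in $\Span(\Phi_i)^\perp$; in particular every root of $\Phi_i$ is horizontal for $u$, and since the horizontal root system of $(W_u,u)$ has rank $m$ and is a disjoint union of type-$A$ root systems, it must equal $\Phi_i$. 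One also checks that $t$ is a translation in $W_u$, so $u = th_i$ is itself a horizontal factorization of $u$ inside $W_u$.

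For the final assertion, $[1,u]\cap W_i\subseteq[1,w]\cap W_i$ is clear since $u\le w$. For the reverse, \Cref{prop:horizontal-axial} shows that every element of $[1,w]\cap W_i$ lies below $\varphi^p(h_i)$ for some $p\in\{0,\dots,m\}$, so it suffices to prove $\varphi^p(h_i)\in[1,u]$. Putting $g := \prod_{j\ne i}h_j$, so $w = ug$, and using that $g$ has roots orthogonal to $\Phi_i$ and hence commutes with the $\tilde A_m$ reflection subgroup of $W_u$ generated by its horizontal reflections (which $u$ normalizes), a short induction gives $\varphi^p(h_i) = u^{-p}h_iu^p$ for all $p$. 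Applying \Cref{prop:horizontal-axial} to $(W_u,u)$ — whose unique horizontal component is $\Phi_i$ and whose horizontal factorization is $u = th_i$ — the elements $u^{-p}h_iu^p$ are precisely the maximal elements of $[1,u]$ inside that subgroup, hence lie in $[1,u]$. Therefore $[1,w]\cap W_i = \bigcup_p[1,\varphi^p(h_i)]\subseteq[1,u]$, as required. I expect the main obstacle to be the collapse of the direct-product decomposition in the third paragraph: it rests entirely on the geometry of the horizontal prism — namely that $\ell$ avoids $\Fix(h_i)$, so the translation direction of $t$ is genuinely not orthogonal to $\Span(\Phi_i)$ — whereas everything else is bookkeeping with reflection lengths, complements, and the results of \Cref{sec:horizontal-components}.
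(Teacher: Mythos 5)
Your argument starts from the same element $u = th_i$ as the paper and follows the same overall plan: show that the hyperbolic-horizontal decomposition $u = u'\bar h$ is trivial and then identify the horizontal root system of $W_u$. The intermediate steps are genuinely different in a few places. Where the paper gets $\Phi_i$ inside the root system of $W_{u'}$ by noting $t\leq u'$ and recalling that $t$ does not commute with $h_i$ (via $t^{-1}h_jt=\varphi(h_j)$), you observe that one of the two reflections factoring $t$ has a root with nonzero $\Span(\Phi_i)$-component, deduced from $v_t=\mu-(h(\a)-\a)$ and the fact that $\ell\subseteq C_i$ avoids $\Fix(h_i)$; that is a different (and pleasantly direct) mechanism. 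Where the paper derives $\bar h=1$ from the inclusion $\Dir(\Min(w))\subseteq\Dir(\Min(u'))$ plus a rank count on the horizontal root system of $W_{u'}$, you aim for the shortcut $u\in W_{u'}$. And your closing argument via $\varphi^p(h_i)=u^{-p}h_iu^p$ and \Cref{prop:horizontal-axial} applied to $(W_u,u)$ spells out what the paper compresses into a citation of \Cref{cor:two-horizontal-reflections}.

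There is one genuine gap: ``the reflection length of $u$ in $W_{u'}$ is at least $l(u)=m+2$; this shows $r'+1\ge m+2$.'' That implication is not justified. Reflection length in an irreducible affine Coxeter group of rank $r'$ is not bounded by $r'+1$ in general, and nothing in your argument places $u$ below a Coxeter element of $W_{u'}$. Fortunately, the conclusion you need follows immediately from what you have already proved without any reflection-length estimate: you have shown $u\in W_{u'}$, and $u=u'\bar h$ with $u'\in W_{u'}$, $\bar h\in W_{\bar h}$ is the unique factorization in the internal direct product $W_u=W_{u'}\times W_{\bar h}$, so $\bar h=(u')^{-1}u\in W_{u'}\cap W_{\bar h}=\{1\}$. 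With that one sentence substituted for the reflection-length reasoning (after which $r'=m+1$ follows, rather than precedes, the collapse), the proof is sound.
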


\begin{proof}
	Let $\Phi_\h = \Phi_1 \sqcup \dotsb \sqcup \Phi_k$.
	If $k=1$, then we can simply take $u = w$.
	Suppose from now on that $k \geq 2$, and assume without loss of generality that $i=1$.
	Let $w=th_1\dotsm h_k$ be a horizontal factorization, with $h_j \in W_j$.
	Notice that $t$ does not commute with any $h_j$, because $t^{-1}h_j t = \varphi(h_j) \neq h_j$ (by \Cref{prop:horizontal-axial}).
	Let $u = th_1 \leq w$.
	
	Since $h_2\dotsm h_k$ is horizontal, its left complement $u$ is hyperbolic.
	Let $u = u'h'$ be the hyperbolic-horizontal decomposition of $u$ (see \Cref{lemma:hyperbolic-decomposition}).
	The irreducible root system $\Phi_1$ is entirely contained in the root system of $W_{u'}$ or of $W_{h'}$, because $W_u = W_{u'} \times W_{h'}$.
	Since $t$ is a (vertical) translation and $[1,u] = [1,u'] \times [1,h']$, we have $t \leq u'$.
	Thus $\Phi_1$ is contained in the root system of $W_{u'}$, because otherwise $h_1$ would commute with $t$.
	
	By \Cref{lemma:model-poset} and \cite[Lemma 3.6]{brady2015factoring} we have $\Dir(\Min(w)) \subseteq \Dir(\Min(u'))$, so every root which is horizontal with respect to $w$ is horizontal also with respect to $u' \leq w$.
	Therefore the horizontal root system $\Phi_\h'$ of $W_{u'}$ (associated with the Coxeter element $u'$) contains $\Phi_1$.
	On the other hand, $l(u') \leq l(u) = l(t) + l(h_1) = n_1 + 2$, so the rank of $\Phi_\h'$ is at most $n_1$.
	We conclude that $\Phi_\h' = \Phi_1$ and $u' = u$.
	
	Finally we have $[1,u] \cap W_1 \subseteq [1,w] \cap W_1$, and every reflection in $[1,w] \cap W_1$ is also contained in $[1,u]$ by \Cref{cor:two-horizontal-reflections}, so actually $[1,u] \cap W_1 = [1,w] \cap W_1$.
\end{proof}

\begin{remark}
	By \Cref{lemma:common-horizontal}, in order to study the geometry of horizontal components it is enough to look at affine Coxeter groups with a unique horizontal component ($\tilde A_n$ with a $(n,1)$-bigon Coxeter element, $\tilde C_n$, and $\tilde G_2$).
	The group $\tilde G_2$ has a horizontal component of rank $1$.
	The groups $\tilde A_n$ and $\tilde C_n$ (see Sections \ref{sec:appendix-A} and \ref{sec:appendix-C} in the appendix) have a horizontal component of rank $m = n-1$.
	In all cases, choosing suitable coordinates we have that: the arrangement $\A_i$ consists of the hyperplanes $\{ x_j - x_{j'} = q \}$ for $1 \leq j < j' \leq m$ and $q \in \Z$; the horizontal prism $C_i$ is described by the inequalities $x_1 < x_2 < \dotsb < x_m < x_1 + 1$; the linear part of the Coxeter element $w$ sends $(x_1, \dotsc, x_m)$ to $(x_m, x_1, \dotsc, x_{m-1})$.
	In particular, the isomorphism type of $[1,w] \cap W_i$ (as a labeled poset) depends only on the rank $m$, and not on the ambient group $W$.
	Using ideas from the proofs of \cite[Propositions 4.7 and 7.6]{mccammond2017artin}, one can show that $[1,w] \cap W_i$ is isomorphic to the subposet of the noncrossing partition lattice of type $B_{m+1}$ consisting of the partitions without a zero block (this terminology is defined for example in \cite[Section 4.5]{armstrong2009generalized}).
	\label{rmk:horizontal-coordinates}
\end{remark}

\begin{figure}
	\begin{tikzpicture}[scale=1.8,
	extended line/.style={shorten >=-#1,shorten <=-#1}, extended line/.default=35cm,
	]
	
	\clip (-2.23, -2.6) rectangle (3.09, 1.35);
	
	\fill[black!30] ($(0, 0)$) -- ($(0, -1)$) -- ($({0.5*sqrt(3)}, -0.5)$) -- cycle;
	
	\begin{scope}[black!30]
	\newcommand{\rows}{5}
	\foreach \row in {-\rows, ...,\rows} {
		\draw [extended line] ($\row*({0.5*sqrt(3)}, 0.5)$) -- ($(0,\rows)+\row*({0.5*sqrt(3)}, -0.5)$);
		\draw [extended line] ($\row*(0, 1)$) -- ($({\rows/2*sqrt(3)}, \rows/2)+\row*({-0.5*sqrt(3)}, 0.5)$);
		\draw [extended line] ($\row*(0, 1)$) -- ($({-\rows/2*sqrt(3)}, \rows/2)+\row*({0.5*sqrt(3)}, 0.5)$);
	}
	\end{scope}
	
	\begin{scope}[very thick]
	\draw [extended line] ($(0, 0.5)$) -- ($(0,1)$);
	\draw [extended line] ($({0.5*sqrt(3)}, 0.5)$) -- ($({0.5*sqrt(3)},1)$);
	
	\draw [extended line] ($(0,0)$) -- ($({0.5*sqrt(3)}, 0.5)$);
	\draw [extended line] ($(0,-1)$) -- ($({0.5*sqrt(3)}, -0.5)$);
	
	\draw [extended line] ($(0, 0)$) -- ($({-0.5*sqrt(3)}, 0.5)$);
	\draw [extended line] ($(0, -1)$) -- ($({-0.5*sqrt(3)}, -0.5)$);
	\end{scope}
	
	\node at (0.1,-2.3) {$b$};
	\node at (1,-2.3) {$b'$};
	\node at (2.2,-1.15) {$a$};
	\node at (2.2,-2.1) {$a'$};
	\node at (2.2,1.15) {$c$};
	\node at (2.2,0.15) {$c'$};
	
	\begin{scope}[every node/.style={inner sep=1.8pt, circle, draw, fill=white}]
	\node at ($(0, 0)$) {};
	\node at ($(0, -1)$) {};
	\node at ($({0.5*sqrt(3)}, -0.5)$) {};
	\end{scope}
	
	\node[inner sep=1pt, circle, draw, fill=black] at ($({0.5/sqrt(3)}, -0.5)$) {};
	\node at ($({0.5/sqrt(3)+0.13}, -0.5)$) {$\ell$};
	\end{tikzpicture}
	\caption{A horizontal component of rank 2.
		For example, this is a section of the Coxeter complex of type $\tilde C_3$ with a plane orthogonal to the Coxeter axis $\ell$.}
	\label{fig:horizontal-A2}
\end{figure}


\begin{example}[Horizontal component of rank 2]
	\Cref{fig:horizontal-A2} shows the arrangement $\A_i$ of a horizontal component of rank 2.
	The horizontal prism $C_i$ is shaded, and its minimal faces are the three white vertices.
	The reflections in $[1,w] \cap W_i$ are denoted by $a,a',b,b',c,c'$, and they correspond to the $6$ thick lines.
	The Coxeter axis $\ell$ is contained in $C_i$ and is equidistant from the minimal faces.
	The Coxeter element $w$ acts on $\A_i$ as a $2\pi/3$ rotation around $\ell$, say counterclockwise.
	Then the $3$ maximal elements of $[1,w] \cap W_i$ are $ab=bc=ca$, $a'b = bc' = c'a'$, and $ab' = b'c' = c'a$.
	They are $2\pi/3$ rotations around the minimal faces of $C_i$, in counterclockwise direction.
	Since $a'b \in [1,w]$ and $a'b' \not\in [1,w]$, the right complement $u$ of $a'$ is such that $b \leq u$ and $b' \not\leq u$ in $[1,w]$.
	This is an example where the converse of \Cref{lemma:model-poset} does not hold: we have both $b \leq u$ and $b' \leq u$ in $[1,w]^L$, but $b' \not\leq u$ in $[1,w] = [1,w]^W$.
	\label{ex:horizontal-A2}
\end{example}

\section{Shellability of affine noncrossing partition posets}
\label{sec:shellability}

In this section, we construct an EL-labeling for the noncrossing partition poset $[1,w]$, where $w$ is any Coxeter element of an affine Coxeter group $W$.
Therefore the poset $[1,w]$ is EL-shellable.
This extends the analog result of Athanasiadis, Brady, and Watt for noncrossing partition lattices associated with finite Coxeter groups \cite{athanasiadis2007shellability}.

The EL-labelings of finite noncrossing partition lattices play a fundamental role in our construction and are recalled in \Cref{sec:shellability-finite}.
However, the need for a global labeling and the presence of hyperbolic intervals make the affine case substantially different from the finite case.

The EL-labeling of $[1,w]$ is going to be used in \Cref{sec:conjecture}, to complete the proof of the $K(\pi,1)$ conjecture.

\subsection{Reflection orderings and shellability of finite noncrossing partition lattices}
\label{sec:shellability-finite}

We start by describing the EL-la\-be\-lings of \cite{athanasiadis2007shellability} for the noncrossing partition lattices associated with finite crystallographic Coxeter groups.

Let $W$ be a finite Coxeter group acting on $V = \R^n$ by linear isometries, and let $R$ be its set of reflections.
Denote by $\Phi \subseteq V$ the root system of $W$, and let $\Phi^+ \subseteq \Phi$ be a positive root system.
For $\alpha \in \Phi^+$, denote by $r_\alpha \in R$ the orthogonal reflection with respect to $\alpha$.

\begin{definition}[{\cite{bourbaki1968elements, dyer1993hecke, bjorner2006combinatorics}}]
	A total ordering $\prec$ of $R$ is called a \emph{reflection ordering} for $W$ if, whenever $\alpha, \alpha_1, \alpha_2 \in \Phi^+$ are distinct positive roots and $\alpha$ is a positive linear combination of $\alpha_1$ and $\alpha_2$, we have either
	\[ r_{\alpha_1} \prec r_\alpha \prec r_{\alpha_2} \quad \text{or} \quad r_{\alpha_2} \prec r_\alpha \prec r_{\alpha_1}. \]
\end{definition}

\begin{definition}[{\cite[Definition 3.1]{athanasiadis2007shellability}}]
	Let $w$ be a Coxeter element of $W$.
	A reflection ordering $\prec$ of $R$ is \emph{compatible with $w$} if for every irreducible rank $2$ induced subsystem $\Phi' \subseteq \Phi$ the following holds: if $\alpha$ and $\beta$ are the simple roots of $\Phi'$ with respect to the positive system $\Phi' \cap \Phi^+$ and $r_\alpha r_\beta \in [1,w]$, then $r_\alpha \prec r_\beta$.
\end{definition}

Let $w$ be a Coxeter element of $W$.
Recall that the edges of the Hasse diagram of $[1,w]$ are naturally labeled by reflections: $\lambda(u, ur) = r$.
We call $\lambda\colon \E([1,w]) \to R$ the \emph{natural edge labeling} of $[1,w]$.

\begin{theorem}[{\cite[Theorem 3.5]{athanasiadis2007shellability}}]
	\label{thm:finite-EL}
	Let $W$ be a finite crystallographic Coxeter group, $R$ its set of reflections, and $w$ one of its Coxeter elements.
	If $R$ is totally ordered by a reflection ordering which is compatible with $w$, then the natural edge labeling of $[1,w]$ is an EL-labeling.
\end{theorem}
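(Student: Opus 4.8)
The plan is to reconstruct the proof of Athanasiadis, Brady and Watt. Write $N=\rk[1,w]=|S|$, and recall from \Cref{thm:finite-lattice} that $[1,w]$ is a graded lattice whose rank function is reflection length; the argument is an induction on $N$. The first move is to \emph{localize} to parabolics: every closed interval $[x,y]\subseteq[1,w]$ is isomorphic, as an \emph{edge-labeled} poset, to the noncrossing partition lattice $[1,x^{-1}y]^{W'}$ of the parabolic subgroup $W'$ generated by the reflections $\leq x^{-1}y$. Indeed, the bijection $u\mapsto x^{-1}u$ lands in $[1,x^{-1}y]^{W'}$ (using that $x^{-1}y\in[1,w]$, which follows from \Cref{lemma:hurwitz-action} applied to a reflection factorization of $w$ passing through $x$ and $y$), it is order preserving both ways by the length identities defining $\leq$, and it carries an edge $(u,ur)$ to $(x^{-1}u,x^{-1}ur)$ with the \emph{same} label $r$. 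Moreover $x^{-1}y$ is a parabolic Coxeter element of $W'$ by \cite[Lemma 1.4.3]{bessis2003dual}; the restriction of a reflection ordering to the root subsystem of $W'$ is again a reflection ordering \cite{dyer1993hecke, bjorner2006combinatorics}; and this restriction is compatible with $x^{-1}y$ in the sense of \cite[Definition 3.1]{athanasiadis2007shellability}, because every irreducible rank-$2$ subsystem of the root system of $W'$ is also one for $W$ with the same simple roots relative to $\Phi^+$, and $r_\alpha r_\beta\leq x^{-1}y$ forces $r_\alpha r_\beta\leq w$ by transitivity, so compatibility of $\prec$ with $w$ yields $r_\alpha\prec r_\beta$. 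Hence it suffices to prove, by induction on $N$: \emph{for every finite crystallographic $W$, Coxeter element $w$, and compatible reflection ordering $\prec$, the lexicographically first maximal chain of $[1,w]$ is increasing, and it is the unique increasing maximal chain.}

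The cases $N\leq 1$ are trivial, and the case $N=2$ is the conceptual heart of the proof. Here $W_w:=\langle r\in R:r\leq w\rangle$ is a rank-$2$ parabolic subgroup, hence — because $W$ is crystallographic — of type $A_1\times A_1$, $A_2$, $B_2$ or $G_2$. In the reducible case $[1,w]$ is a diamond and the claim holds for any reflection ordering (and compatibility imposes nothing, there being no irreducible rank-$2$ subsystem). In the dihedral cases $[1,w]$ has $m\in\{3,4,6\}$ atoms $t_1,\dots,t_m$, namely all reflections of $W_w$ listed in the angular order of their roots, and the reflection-ordering axiom forces $\prec$ to restrict either to $t_1\prec\cdots\prec t_m$ or to its reverse. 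Since $w$ is a Coxeter element of $W_w\cong I_2(m)$ it equals $t_1t_m$ or $t_mt_1$ (the simple reflections of $W_w$ relative to $\Phi^+$ being the two extreme $t_1,t_m$), and compatibility with $w$ applied to $\Phi':=$ the root system of $W_w$ forces the restricted ordering to be the one in which the first factor of this expression precedes the second. Using $w=t_jt_{j-1}$ (resp.\ $w=t_jt_{j+1}$), the unique maximal chain through the atom $t_j$ carries the label word $(t_j,t_{j-1})$ (resp.\ $(t_j,t_{j+1})$), and with the ordering just identified a direct check shows that exactly one of these words is increasing — the one through the $\prec$-minimal atom — and that it is lexicographically first. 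I expect this rank-$2$ analysis to be the main obstacle: it is the only place where ``compatible with $w$'' is genuinely used, and the reversed ordering really would make $m-1$ of the chains increasing.

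For the inductive step, assume $N\geq 3$ and that the boxed statement holds for all smaller ranks. Let $c_0$ be the lexicographically first maximal chain of $[1,w]$. Its bottom atom is $r_1:=\min_\prec\{r\in R:r\leq w\}$, and its tail is a maximal chain of $[r_1,w]\cong[1,r_1^{-1}w]^{W'}$, a rank-$(N-1)$ noncrossing partition lattice with an inherited compatible ordering; being the lexicographically first chain there, it is by induction also the unique increasing one, with bottom label $r_2:=\min_\prec\{r:r\leq r_1^{-1}w\}$. Now $r_1\not\leq r_1^{-1}w$ — otherwise, under the isomorphism $[r_1,w]\cong[1,r_1^{-1}w]$, the element $r_1$ would correspond to $r_1\cdot r_1=1\notin[r_1,w]$ — while $\{r:r\leq r_1^{-1}w\}\subseteq\{r:r\leq w\}$ by transitivity, so $r_1\prec r_2$ and $c_0$ is increasing. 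For uniqueness, let $c$ be any increasing maximal chain. If $c$ and $c_0$ agree on a nonempty initial segment up to some $x\neq 1$ but differ above $x$, then restricting both to $[x,w]$ (of rank $<N$) and invoking the inductive uniqueness gives a contradiction; so we may assume $c$ and $c_0$ differ already at the bottom atom, say $c=(\,1\lessdot r'\lessdot x_2\lessdot\cdots)$ with $r'\neq r_1$, hence $r'\succ r_1$. Put $z:=r_1\vee r'$, which has rank $2$ (the join of two atoms of a noncrossing partition lattice). In the rank-$2$ interval $[1,z]$ the unique increasing chain passes through the minimal atom $r_1$, so $1\lessdot r'\lessdot z$ is not increasing and $\lambda(r',z)\prec r'=\lambda(1,r')\prec\lambda(r',x_2)$; but $z$ is an atom of $[r',w]$, so the restriction of $c$ to $[r',w]$ is an increasing maximal chain whose bottom label $\lambda(r',x_2)$ is not minimal among the atoms of $[r',w]$, contradicting the inductive statement that the unique increasing chain of $[r',w]$, being lexicographically first, starts at the minimal atom. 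This closes the induction.

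Beyond the rank-$2$ base case — where the one genuinely delicate point is reading off correctly from \cite[Definition 3.1]{athanasiadis2007shellability} which of the two dihedral reflection orderings is induced — the remaining ingredients are routine but must all be in place for the induction to run: the label-preserving isomorphism $[x,y]\cong[1,x^{-1}y]^{W'}$ together with the descent of compatibility to parabolics, the small lattice-theoretic facts $x^{-1}y\in[1,w]$, $r_1\not\leq r_1^{-1}w$ and $\rk(r_1\vee r')=2$, and the stability of reflection orderings under restriction to parabolic subsystems.
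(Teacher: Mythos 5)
The paper does not prove this theorem; it cites it from Athanasiadis--Brady--Watt. So I am evaluating your reconstruction on its own merits. The localization to parabolics, the rank-$2$ analysis, and the argument that the lex-first chain is increasing ($r_1\prec r_2$ because $r_1\not\leq r_1^{-1}w$ while $\{r:r\leq r_1^{-1}w\}\subseteq\{r:r\leq w\}$) are all sound. However, your uniqueness argument contains a genuine gap.

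The claim ``$z:=r_1\vee r'$, which has rank $2$ (the join of two atoms of a noncrossing partition lattice)'' is false. Noncrossing partition lattices are \emph{not} atomic--graded in this sense: the join of two atoms can have arbitrarily high rank. Concretely, take $W=A_3\cong S_4$ with Coxeter element $w=(1\,2\,3\,4)$, so $[1,w]$ is the lattice $NC(4)$ of noncrossing partitions of $\{1,2,3,4\}$. The transpositions $(1\,3)$ and $(2\,4)$ are both atoms of $[1,w]$, but the only noncrossing partition joining both the block $\{1,3\}$ and the block $\{2,4\}$ is the full partition, because the chords $1$--$3$ and $2$--$4$ cross. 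Hence $(1\,3)\vee(2\,4)=w$, which has rank $3$. Your subsequent step---applying the rank-$2$ analysis to $[1,z]$ to conclude that the chain $1\lessdot r'\lessdot z$ exists and is not increasing, hence $\lambda(r',z)\prec r'$---collapses once $[1,z]$ is not a rank-$2$ interval: there need not even be a cover $r'\lessdot z$.

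What your argument actually requires is strictly weaker: that $r_1\vee r'$ has rank $2$ \emph{when $r_1$ is the $\prec$-minimal reflection in $[1,w]$ for a compatible reflection ordering}. In the $A_3$ example this does hold (every compatible ordering is forced, by the reflection-ordering axiom applied to the $A_2$-subsystems containing $e_1-e_2$, to put $(1\,2)$ first, and $(1\,2)$, being a boundary edge of the $4$-gon, is noncrossing with every other chord). But this restricted claim is exactly the content that needs proof, not a freestanding fact about lattices; as written, you have replaced it with a general statement that is simply wrong. Equivalently, your argument needs that the $\prec$-smallest reflection $s^*$ below the right complement $r'w$ satisfies $s^*\prec r'$ whenever $r'\succ r_1$; this is a nontrivial assertion about how a compatible ordering interacts with the complement operation, and it is where the ``compatible with $w$'' hypothesis must be used beyond the rank-$2$ base case. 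You should locate and prove this lemma (or adopt the actual mechanism from \cite{athanasiadis2007shellability}) before the induction closes.

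One smaller point worth flagging: the statement ``in the reducible case $\ldots$ compatibility imposes nothing, there being no irreducible rank-$2$ subsystem'' is correct, but make sure you also observe that when $W_w$ is irreducible dihedral of type $B_2$ or $G_2$ and the roots come in two lengths, the phrase ``simple roots of $\Phi'$ with respect to $\Phi'\cap\Phi^+$'' in Definition 3.1 of \cite{athanasiadis2007shellability} picks out specific roots; your sentence identifying $w$ with $t_1t_m$ or $t_mt_1$ and reading off the forced orientation from that expression is the right idea, and is correct, but it is worth spelling out that the two ``extreme'' reflections are precisely $r_\alpha$ and $r_\beta$ for those simple roots.
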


Notice that every finite Coxeter subgroup of an irreducible affine Coxeter group is crystallographic.
We are going to use \Cref{thm:finite-EL} through the following geometric construction of reflection orderings, which is similar to a construction already considered in \cite[Section 2]{dyer1993hecke}.

Let $\A$ be the reflection arrangement associated with $W$, and let $C_0$ be the chamber of the Coxeter complex of $W$ corresponding to the choice of the positive system $\Phi^+$.
Fix a point $\a \in C_0$ and a non-zero vector $\mu \in V$. Consider the affine line $\ell' = \{ \a + \theta\mu \mid \theta \in \R \} \subseteq V$, with basepoint $a$.
Assume that $\ell'$ is \emph{generic} with respect to $\A$: it intersects every hyperplane of $\A$ in exactly one point (equivalently, it is not parallel to any hyperplane of $\A$), and $H \cap \ell' \neq H' \cap \ell'$ for all hyperplanes $H \neq H'$ in $\A$.
The vector $\mu$ gives an orientation to $\ell'$. In accordance with the notation of \Cref{sec:affine-coxeter-elements}, we say that a point $\b \in \ell'$ is \emph{above} a point $\b' \in \ell'$ (or, equivalently, $\b'$ is \emph{below} $\b$) if $\b-\b'$ is a positive multiple of $\mu$.
Define a total ordering $\prec_{\ell'}$ on $R$ as follows:
\begin{itemize}
	\item first, there are the reflections that fix a point of $\ell'$ above $\a$, and $r$ comes before $r'$ if $\Fix(r) \cap \ell'$ is below $\Fix(r') \cap \ell'$;
	\item then there are the reflections that fix a point of $\ell'$ below $\a$, and again $r$ comes before $r'$ if $\Fix(r) \cap \ell'$ is below $\Fix(r') \cap \ell'$.
\end{itemize}
Notice the similarity with \Cref{lemma:hyperbolic-coxeter}.

\begin{proposition}
	\label{prop:line-ordering}
	Let $W$ be a finite Coxeter group.
	For every generic line $\ell'$ as above, the total ordering $\prec_{\ell'}$ of $R$ is a reflection ordering for $W$.
\end{proposition}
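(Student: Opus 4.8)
The plan is to verify the reflection-ordering condition directly from the geometry of the generic line $\ell'$. Suppose $\alpha, \alpha_1, \alpha_2 \in \Phi^+$ are distinct positive roots with $\alpha = c_1\alpha_1 + c_2\alpha_2$ for some $c_1, c_2 > 0$. The span of $\alpha_1$ and $\alpha_2$ is a $2$-plane $\Pi \subseteq V$, and all three hyperplanes $H_\alpha = \Fix(r_\alpha)$, $H_{\alpha_1}$, $H_{\alpha_2}$ contain the codimension-$2$ subspace $\Pi^\perp$. Because $\ell'$ is generic, it meets each of these three hyperplanes in a single point, and the three points are distinct; what we must show is that the point $H_\alpha \cap \ell'$ lies strictly between $H_{\alpha_1} \cap \ell'$ and $H_{\alpha_2} \cap \ell'$ in the total order $\prec_{\ell'}$ (which is, away from the basepoint, just the linear order of intersection points along $\ell'$, except for the ``wrap'' at $\a$).

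The key step is a linear-functional computation. Each root $\beta$ defines an affine function $\theta \mapsto \langle \a + \theta\mu, \beta\rangle$ on $\ell'$, and $H_\beta \cap \ell'$ is the unique zero $\theta_\beta = -\langle \a, \beta\rangle / \langle \mu, \beta\rangle$ of this function (the denominator is nonzero by genericity). From $\alpha = c_1\alpha_1 + c_2\alpha_2$ we get $\langle \a,\alpha\rangle = c_1\langle\a,\alpha_1\rangle + c_2\langle\a,\alpha_2\rangle$ and likewise for $\mu$. Since $\a \in C_0$ and all of $\alpha, \alpha_1, \alpha_2$ are positive roots, all three numbers $\langle\a,\beta\rangle$ are strictly positive; write $p = \langle\a,\alpha\rangle$, $p_i = \langle\a,\alpha_i\rangle$, $q = \langle\mu,\alpha\rangle$, $q_i = \langle\mu,\alpha_i\rangle$, so $p = c_1p_1 + c_2p_2$ and $q = c_1q_1 + c_2q_2$ with $p, p_1, p_2 > 0$. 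A short manipulation shows
\[
  \theta_\alpha - \theta_{\alpha_1} = \frac{c_2 p_2}{q}\left(\theta_{\alpha_2} - \theta_{\alpha_1}\right) \cdot \frac{q_2}{?}
\]
— more cleanly, one checks that $\theta_\alpha$ is a weighted average of $\theta_{\alpha_1}$ and $\theta_{\alpha_2}$: writing $\theta_\beta = -p_\beta/q_\beta$, the identity $p = c_1p_1+c_2p_2$, $q=c_1q_1+c_2q_2$ gives $\theta_\alpha = \lambda\theta_{\alpha_1} + (1-\lambda)\theta_{\alpha_2}$ with $\lambda = c_1 q_1/q \in \R$, and the weights $c_1q_1/q$, $c_2q_2/q$ sum to $1$. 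The point is to show both weights are \emph{positive}, equivalently that $q_1$ and $q_2$ have the same sign, equivalently that $q_1/q$ and $q_2/q$ are positive — and then $\theta_\alpha$ lies strictly between $\theta_{\alpha_1}$ and $\theta_{\alpha_2}$, so $r_\alpha$ lies between $r_{\alpha_1}$ and $r_{\alpha_2}$ in $\prec_{\ell'}$, as required.

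The main obstacle is precisely handling the sign bookkeeping together with the basepoint ``wrap'': the order $\prec_{\ell'}$ is not simply the real-number order on the $\theta_\beta$, because reflections fixing points above $\a$ (i.e.\ $\theta_\beta > 0$) all precede those fixing points below $\a$ (i.e.\ $\theta_\beta < 0$). So I would split into cases according to whether $0$ (the basepoint parameter) lies between $\theta_{\alpha_1}$ and $\theta_{\alpha_2}$ or not. If $0$ is \emph{not} between them, then ``between in $\prec_{\ell'}$'' agrees with ``between as real numbers'' and the weighted-average computation finishes it directly. If $0$ \emph{is} strictly between $\theta_{\alpha_1}$ and $\theta_{\alpha_2}$, I claim this case cannot arise: it would force $\theta_{\alpha_1}$ and $\theta_{\alpha_2}$ to have opposite signs, hence $q_1, q_2$ would force $\theta_\alpha$ to lie between them and thus also have a definite position, but more to the point one uses that $\a \in C_0$ lies on the positive side of all three hyperplanes $H_\alpha, H_{\alpha_1}, H_{\alpha_2}$ while the segment of $\ell'$ through $\a$ of parameters between $0$ and $\theta_{\alpha_i}$ would have to cross from the positive side of $H_{\alpha_1}$ to the negative side, meeting $H_\alpha$ in the process — a geometric contradiction with $c_1, c_2 > 0$ (the three hyperplanes, restricted to the $2$-plane transverse to $\Pi^\perp$, are three lines through a common point, and the positivity of the $c_i$ pins down their cyclic arrangement so that $H_\alpha$ separates $H_{\alpha_1}$ from $H_{\alpha_2}$ on the side containing $C_0$). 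Carefully formulating and verifying this last geometric claim — that in the rank-$2$ picture $\a$ and the two intersection points on the same side of $\a$ are arranged so $H_\alpha$ is in the middle — is where the real work lies; everything else is linear algebra. Once done, both cases give $r_{\alpha_1} \prec_{\ell'} r_\alpha \prec_{\ell'} r_{\alpha_2}$ or the reverse, which is exactly the reflection-ordering axiom.
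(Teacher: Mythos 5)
Your computation of $\theta_\beta = -\langle\a,\beta\rangle/\langle\mu,\beta\rangle$ and the weighted-average identity $\theta_\alpha = \lambda\theta_{\alpha_1} + (1-\lambda)\theta_{\alpha_2}$ with $\lambda = c_1\langle\mu,\alpha_1\rangle/\langle\mu,\alpha\rangle$ are correct, and the no-wrap case goes through: when $\theta_{\alpha_1}$ and $\theta_{\alpha_2}$ have the same sign, both weights are automatically positive (this is equivalent to $\langle\mu,\alpha_1\rangle$ and $\langle\mu,\alpha_2\rangle$ having the same sign, which in turn is equivalent to $\theta_{\alpha_1}, \theta_{\alpha_2}$ having the same sign). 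The gap is in the wrap case. You assert that the basepoint $0$ cannot lie strictly between $\theta_{\alpha_1}$ and $\theta_{\alpha_2}$, and this is false: take $W$ of type $A_2$, $\alpha_1, \alpha_2$ simple, $\alpha = \alpha_1+\alpha_2$, $\a$ interior to $C_0$, and $\mu$ generic; then $\ell'$ exits $C_0$ through $H_{\alpha_1}$ on one side of $\a$ and through $H_{\alpha_2}$ on the other. In this situation one of your two weights is negative, so $\theta_\alpha$ is \emph{not} between $\theta_{\alpha_1}$ and $\theta_{\alpha_2}$ as real numbers; nevertheless $r_\alpha$ is still in the middle in $\prec_{\ell'}$, because $\theta_\alpha$ lands strictly beyond one of the two endpoints, on the far side from $\a$. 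Your parenthetical geometric remark is actually the right fact for that (in the rank-$2$ picture $H_\alpha$ is not crossed between $\a$ and either simple wall, but only beyond the first wall crossed), but you misread it as a contradiction that rules the case out, rather than as the verification that the ordering still comes out correctly. The stray ``?'' in your displayed formula also signals that the algebra was not fully under control.

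The paper avoids the case split entirely with one normalization. Since $\a\in C_0$, one has $\langle\a,\beta\rangle>0$ for every $\beta\in\Phi^+$; rescale each positive root so that $\langle\a,\beta\rangle = 1$. Then $\theta_\beta = -\langle\mu,\beta\rangle^{-1}$, and the total order $\prec_{\ell'}$ (positives by increasing $\theta$, then negatives by increasing $\theta$) is precisely decreasing order of $\theta_\beta^{-1}$, i.e.\ increasing order of $\langle\mu,\beta\rangle$. The wrap at the basepoint disappears. Moreover, under the normalization $c_1 + c_2 = 1$, so $\alpha = c\alpha_1 + (1-c)\alpha_2$ with $0<c<1$ and $\langle\mu,\alpha\rangle$ is an honest convex combination of $\langle\mu,\alpha_1\rangle$ and $\langle\mu,\alpha_2\rangle$, hence strictly between them. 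Your argument can be repaired by supplying the missing geometric step in the wrap case, but the reciprocal trick is considerably cleaner.
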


\begin{proof}
	Denote by $\<\cdot,\cdot\>$ the scalar product of $V = \R^n$.
	By definition of the chamber $C_0$, we have that $\< \a, \alpha\> > 0$ for every positive root $\alpha \in \Phi^+$.
	Until the end of this proof, renormalize all positive roots $\alpha \in \Phi^+$ so that $\<\a, \alpha\> = 1$.
	
	Let $\alpha \in \Phi^+$.
	The intersection point $\a + \theta_\alpha \mu$ of $\Fix(r_\alpha)$ with $\ell'$ is determined by the relation $\< \a + \theta_\alpha \mu, \alpha \> = 0$.
	Since $\<\a, \alpha\> = 1$, we get $\theta_\alpha = - \< \mu, \alpha \>^{-1}$.
	By definition of $\prec_{\ell'}$, we have that $r_\alpha \prec_{\ell'} r_\beta$ if and only if $\theta_\alpha^{-1} > \theta_\beta^{-1}$, which happens if and only if $\<\mu, \alpha\> < \< \mu, \beta \>$.
	
	Now suppose that $\alpha\in \Phi^+$ is a positive linear combination of $\alpha_1, \alpha_2 \in \Phi^+$.
	Since $\<\a, \alpha\> =  \<\a, \alpha_1\> = \<\a, \alpha_2 \> = 1$, we have $\alpha = c\alpha_1 + (1-c) \alpha_2$ with $0 < c < 1$.
	Then $\< \mu, \alpha \> = c \< \mu, \alpha_1 \> + (1-c) \< \mu, \alpha_2 \>$, which is between $\< \mu, \alpha_1 \>$ and $\< \mu, \alpha_2 \>$.
	Therefore $r_\alpha$ is between $r_{\alpha_1}$ and $r_{\alpha_2}$ in the total ordering $\prec_{\ell'}$.
\end{proof}

\subsection{Orderings of horizontal reflections}
\label{sec:horizontal-intervals}

Let $W$ be an irreducible affine Coxeter group, acting on an affine space $E = \R^n$ by Euclidean isometries, where $n$ is the rank of $W$.
As usual, denote by $R$ its set of reflections, and let $w$ be one of its Coxeter elements.
Let $\Phi$ be the root system associated with $W$, and $\Phi_\h \subseteq \Phi$ the horizontal root system.

Recall from \Cref{sec:affine-coxeter-elements} that the reflections occurring as labels of the interval $[1,w]$ are those that fix at least one axial vertex.
This set $R_0 \subseteq R$ includes the set $R_\v$ of all vertical reflections, and a finite set $R_\h$ of horizontal reflections, with two consecutive horizontal reflections for each pair of opposite roots.
We are going to construct a total order $\prec$ of $R_0$ which makes the natural edge labeling $\lambda\colon \E([1,w]) \to R_0$ an EL-labeling of $[1,w]$.
To do this, we start by defining a total ordering $\prec_\h$ on the subset $R_\h \subseteq R_0$ of the horizontal reflections.

We use the notation of \Cref{sec:horizontal-components}.
Let $\Phi_\h = \Phi_1 \sqcup \dotsb \sqcup \Phi_k$, where $\Phi_i$ is an irreducible root system of type $A_{n_i}$.
Fix a horizontal factorization $w = th_1\dotsm h_k$, so that $W_{h_i} \subseteq W$ is a finite parabolic subgroup with root system $\Phi_i$, and $h_i$ is a Coxeter element of $W_{h_i}$.
For now, we focus on a single horizontal component $i$.
Let $m = n_i$ be the rank of $\Phi_i$, and let $W_i \subseteq W$ be the Coxeter subgroup of type $\tilde A_m$ generated by the reflections with respect to roots in $\Phi_i$.
Denote by $\A_i$ the corresponding hyperplane arrangement, and let $C_i$ be the $i$-th horizontal prism.

\begin{lemma}
	\label{lemma:horizontal-line-ordering}
	Let $u = \varphi^p(h_i)$ be any maximal element of $[1,w] \cap W_i$, as in \Cref{prop:horizontal-axial}.
	Fix any point $a$ of the Coxeter axis $\ell$.
	There exists a line $\ell'$, with basepoint $a$ and direction in $\Span(\Phi_i)$, such that the reflection ordering $\prec_{\ell'}$ for $W_{u}$ (defined in \Cref{sec:shellability-finite}) is compatible with $u$.
\end{lemma}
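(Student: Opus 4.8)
The plan is to reduce the statement to the finite reflection group $W_u$ and then to build the direction of $\ell'$ explicitly. First I would record the setup: since $u=\varphi^p(h_i)$ is horizontal elliptic and, by \Cref{thm:coxeter-elements}, a Coxeter element of $W_u$, we have $\Fix(u)=\Fix(W_u)$, and $W_u$ is a finite Coxeter group of type $A_m$ all of whose reflection hyperplanes are horizontal. Because the Coxeter axis $\ell$ meets no horizontal hyperplane, the basepoint $a$ lies in a well-defined chamber $C_0$ of $\A_u$; this chamber fixes the positive system of $W_u$ that is used both in the definition of $\prec_{\ell'}$ and in the notion of compatibility with $u$. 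By \Cref{prop:line-ordering}, every direction $\mu\in\Span(\Phi_i)=\Span(\Phi_u)$ that is generic with respect to $\A_u$ already yields a reflection ordering $\prec_{\ell'}$ for $W_u$, so the only thing left to arrange is compatibility.

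Next I would make compatibility explicit. Repeating the computation from the proof of \Cref{prop:line-ordering} (with $\Fix(u)$ in place of the origin) gives $r_\alpha\prec_{\ell'}r_\beta$ if and only if $\langle\mu,\alpha\rangle/\langle\tilde{a},\alpha\rangle<\langle\mu,\beta\rangle/\langle\tilde{a},\beta\rangle$, where $\tilde{a}$ is $a$ measured from $\Fix(u)$ and $\alpha,\beta$ range over positive roots of $W_u$. Since $\Phi_u$ has type $A_m$, its irreducible rank-$2$ induced subsystems are all of type $A_2$, one for each triple among $m+1$ ``slots''; so compatibility with $u$ amounts to a finite list of strict inequalities of this shape, one per triple.

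The heart of the matter is to solve that list. Using \Cref{rmk:horizontal-coordinates} I would identify $W_u$ with $S_{m+1}$ acting on $\R^{m+1}/\langle\mathbf{1}\rangle$ so that the hyperplanes of $\A_u$ are $\{x_j=x_{j'}\}$ and the chamber $C_0$ becomes standard; here one uses that all $\varphi^p(h_i)$ share the linear part of $w$ (\Cref{prop:horizontal-axial}), which is a cyclic shift and therefore commutes with the cyclic relabelling that standardizes $C_0$, so that $u$ becomes a full cycle, which we may take to be $(1\,2\,\cdots\,m+1)$. In these coordinates the entries of $\tilde{a}$ are strictly monotone, the product $r_\alpha r_\beta$ attached to the subsystem on slots $t<d<s$ is the $3$-cycle $(t\,d\,s)$, which always lies in $[1,u]$, and the corresponding compatibility inequality becomes
\[ \frac{\mu_d-\mu_t}{\tilde{a}_d-\tilde{a}_t}\;<\;\frac{\mu_s-\mu_d}{\tilde{a}_s-\tilde{a}_d}. \]
This is exactly the requirement that $k\mapsto\mu_k$ be a strictly convex function of $\tilde{a}_k$ (the orientation of the inequality, hence the relevant sign, is governed by whether $u$ is the cycle or its inverse), and it is met for all triples at once by the choice $\mu_k:=\varepsilon\,\tilde{a}_k^{\,2}$ for the suitable sign $\varepsilon=\pm1$. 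Finally, replacing $\mu$ by a small generic perturbation keeps all of these open inequalities valid while also making $\ell'=a+\R\mu$ generic with respect to $\A_u$, which completes the construction.

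I expect the main work to be the coordinate reduction: checking that the relabelling standardizing $C_0$ is a power of the cyclic shift (hence does not disturb the linear part of $u$), that $u$ is the claimed full cycle, and that the $3$-cycle description of $r_\alpha r_\beta$ and the displayed inequality come out precisely as stated. Once that is in place, the conceptual content — that the $A_2$ conditions attached to a correctly oriented Coxeter element of type $A$ organize themselves into a single convexity requirement — is what lets the explicit quadratic do the job; this is, in the present setting, the finite-group counterpart of \Cref{lemma:hyperbolic-coxeter}.
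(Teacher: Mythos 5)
Your proposal is correct, and it takes a genuinely different route from the paper's. The paper, after choosing coordinates as in \Cref{rmk:horizontal-coordinates}, simply picks the direction $(1,\epsilon,\epsilon^2,\dotsc,\epsilon^{m-1})$ for small $\epsilon>0$, observes that the resulting line meets the hyperplanes $\{x_j=x_{j'}\}$ in lexicographic order of $(j,j')$, and cites Example~3.3 of Athanasiadis--Brady--Watt to conclude that this ordering is compatible with $u$; the direction is then projected to $\Span(\Phi_i)$. You instead unpack what compatibility means in these coordinates: you re-derive the criterion $r_\alpha\prec_{\ell'}r_\beta\iff\langle\mu,\alpha\rangle/\langle\tilde a,\alpha\rangle<\langle\mu,\beta\rangle/\langle\tilde a,\beta\rangle$ from the proof of \Cref{prop:line-ordering}, observe that in type $A$ the rank-$2$ conditions amount to the secant-slope inequalities $(\mu_d-\mu_t)/(\tilde a_d-\tilde a_t)<(\mu_s-\mu_d)/(\tilde a_s-\tilde a_d)$, identify this as strict convexity of $k\mapsto\mu_k$ as a function of $\tilde a_k$, and satisfy it at a stroke via $\mu_k=\varepsilon\tilde a_k^{\,2}$. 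Both constructions produce valid directions; yours is self-contained (it does not outsource the compatibility verification to the reference), while the paper's is shorter because it does. The price you pay is the book-keeping you flag yourself: one must verify that the permutation standardizing the chamber of $\A_u$ containing $a$ is a power of the cyclic shift and therefore commutes with the linear part of $u$ (this is where \Cref{prop:horizontal-axial} is used, and the verification does go through because passing from $a$ to $\tilde a=a-v$ cyclically shifts the ordering of the coordinates), that $u$ then becomes the forward or backward full cycle so that the noncrossing $3$-cycle on each triple really is $(t\,d\,s)$ or its inverse (this pins down $\varepsilon$), that the resulting $\mu$ must still be projected onto $\Span(\Phi_i)$, and that a small perturbation restores genericity without disturbing the finitely many strict inequalities. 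None of these is a gap — they are exactly the items you list as ``the main work'' — but a written-out version would be noticeably longer than the paper's citation-based argument.
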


\begin{proof}
	Choosing coordinates as in \Cref{rmk:horizontal-coordinates}, we can assume that: the arrangement $\A_i$ consists of the hyperplanes $\{ x_j - x_{j'} = q \}$ for $1 \leq j < j' \leq m$ and $q \in \Z$; the horizontal prism $C_i$ is described by the inequalities $x_1 < x_2 < \dotsb < x_m < x_1 + 1$; the element $u$ sends $(x_1, \dotsc, x_m)$ to $(x_m, x_1, \dotsc, x_{m-1})$, and the minimal face fixed by $u$ is given by $\{x_1 = x_2 = \dotsb = x_m\}$.
	For a small enough $\epsilon > 0$, the line $\ell' = \{ a + \theta (1, \epsilon, \epsilon^2, \dotsc, \epsilon^{m-1}) \mid \theta \in \R \}$ intersects the hyperplanes $\{ x_j - x_{j'} = 0\}$ (with $j < j'$) in the lexicographic order of the pairs $(j,j')$, and always for $\theta > 0$.
	This is the reflection ordering described in \cite[Example 3.3]{athanasiadis2007shellability}, and it is compatible with the Coxeter element $u$.
	In order to get a line with direction in $\Span(\Phi_i)$, project $\ell'$ to the affine subspace parallel to $\Span(\Phi_i)$ and containing $a$.
\end{proof}

By the previous lemma, there exists a reflection ordering $\prec_i'$ for $W_{h_i}$ which is compatible with the Coxeter element $h_i$.
Extend $\prec_i'$ to a total ordering $\prec_i$ of the set $R_\h \cap W_i$, in the following way: whenever $r_1 \prec_i' r_2$ in $R_\h \cap W_{h_i}$, then every parallel translate of $r_1$ comes before every parallel translate of $r_2$.
Since there are no minimal factorizations of $w$ that use two parallel horizontal reflections, the relative order of parallel horizontal reflections is not important and can be chosen arbitrarily.

\begin{remark}
	\label{rmk:horizontal-loop}
	The total ordering $\prec_i$ does not restrict to an ordering of \Cref{lemma:horizontal-line-ordering} for $u \neq h_i$.
	Rather, it is obtained by ``translating'' a chosen ordering for $W_{h_i}$ (given by \Cref{lemma:horizontal-line-ordering}) to the other subgroups $W_{u}$.
	There exists no total ordering of $R_\h \cap W_i$ which restricts to an ordering of \Cref{lemma:horizontal-line-ordering} for every maximal element $u$ since the reflections with respect to the walls of $C_i$ would necessarily form a loop.
\end{remark}

\begin{lemma}
	\label{lemma:EL-horizontal-factor}
	For every horizontal element $u \in [1,w] \cap W_i$, the total ordering $\prec_i$ makes the natural edge-labeling $\lambda \colon \E([1,u]) \to R_\h \cap W_i$ an EL-labeling of $[1,u]$.
\end{lemma}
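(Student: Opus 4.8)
The plan is to deduce the statement from the Athanasiadis--Brady--Watt theorem (\Cref{thm:finite-EL}), applied not to $W$ but to the finite Coxeter group $W_u$. Since $u$ is horizontal it is elliptic, so by \Cref{thm:coxeter-elements} it is a parabolic Coxeter element of $W$: the group $W_u$ is a finite Coxeter group (hence crystallographic, as every finite reflection subgroup of an affine Coxeter group is), $u$ is one of its Coxeter elements, and — the reflection length of an elliptic isometry being the codimension of its fixed space independently of the ambient group (see \Cref{sec:euclidean-isometries}) — the interval $[1,u]$ is the same whether computed in $W$ or in $W_u$. By \Cref{thm:finite-EL} it therefore suffices to check that $\prec_i$ restricts to a reflection ordering of $R \cap W_u$ that is compatible with the Coxeter element $u$ of $W_u$.

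First I would observe that this restriction is governed entirely by root directions. As $W_u$ is finite, its reflection hyperplanes are pairwise non-parallel, so $R \cap W_u$ contains at most one reflection in each direction; consequently the arbitrary tie-breaking among parallel reflections in the definition of $\prec_i$ plays no role here. Writing $\Phi_u \subseteq \Span(\Phi_i)$ for the root system of $W_u$, with the positive system $\Phi_u^+ = \Phi_u \cap \Phi_i^+$ induced from the positive system $\Phi_i^+$ of $\Phi_i$ underlying $\prec_i'$, the root map identifies $(R \cap W_u, \prec_i)$ with $(\Phi_u^+, \prec_i')$. That $\prec_i'$ restricts to a reflection ordering of $\Phi_u^+$ is then immediate from the definition, whose hypothesis only involves triples of positive roots lying in the subsystem.

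The heart of the proof is compatibility with $u$. Let $\Psi \subseteq \Phi_u$ be an irreducible rank-$2$ induced subsystem, with simple roots $\alpha, \beta$, and suppose $r_\alpha r_\beta \in [1,u]$; I must show $\alpha \prec_i' \beta$. Since $\Phi_i$ has type $A$, the subsystem $\Psi$ is of type $A_2$ and the plane it spans meets $\Phi_i$ in exactly $\Psi$, so $\Psi$ is also an induced rank-$2$ subsystem of $\Phi_i$ with the same simple roots. By \Cref{prop:horizontal-axial} we have $u \leq u_p := \varphi^p(h_i)$ for some $p$; since $W_u \leq W_{u_p}$ and $W_{u_p}$ has at most one reflection per direction, $r_\alpha$ and $r_\beta$ are the reflections of $W_{u_p}$ in directions $\alpha$ and $\beta$, and $r_\alpha r_\beta \in [1,u] \subseteq [1,u_p]$. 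Now conjugation by $w^p$ is an isomorphism from $W_{u_p}$ onto $W_{h_i}$ carrying $u_p$ to $h_i$ and $[1,u_p]$ onto $[1,h_i]$; moreover the linear parts of $w$ and of $h_i$ agree on $\Span(\Phi_i)$ (from the horizontal factorization $w = th_1 \cdots h_k$, cf.\ \Cref{lemma:cycle}), so this conjugation sends a reflection of $W_{u_p}$ of direction $\gamma$ to the reflection of $W_{h_i}$ of direction $h_i^p(\gamma)$. Hence ``$r_\alpha r_\beta \in [1,u_p]$'' is equivalent to ``the product of the reflections of $W_{h_i}$ in directions $h_i^p(\alpha)$ and $h_i^p(\beta)$ lies in $[1,h_i]$''. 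Finally, $[1,h_i]$ is invariant under conjugation by $h_i$, and conjugating a reflection of $W_{h_i}$ by $h_i$ applies $h_i$ to its direction, so iterating this $p$ times reduces the condition to $r_\alpha r_\beta \in [1,h_i]$. Compatibility of $\prec_i'$ with $h_i$, which holds by construction (\Cref{lemma:horizontal-line-ordering} with $p = 0$), then gives $\alpha \prec_i' \beta$.

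I expect the delicate point to be precisely the reduction in the previous paragraph: a priori $u$ lies below one of the maximal elements $\varphi^p(h_i)$ of $[1,w] \cap W_i$ with $p \neq 0$, whereas $\prec_i$ was engineered to be compatible only with $h_i$, and — as \Cref{rmk:horizontal-loop} stresses — no single ordering of $R_\h \cap W_i$ can be simultaneously of the form $\prec_{\ell'}$ for all the $\varphi^p(h_i)$. What saves the day is that being \emph{compatible} with $\varphi^p(h_i)$ is far weaker than being line-induced for it, and that, since all the $\varphi^p(h_i)$ share the linear part of $w$, the condition ``$r_\gamma r_\delta$ lies below $\varphi^p(h_i)$ in $W_{\varphi^p(h_i)}$'' depends only on the pair of directions $(\gamma, \delta)$ and not on $p$ — which is exactly what the conjugation computation above formalizes. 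If a more concrete verification is wanted, one can run the whole argument in the coordinates of \Cref{rmk:horizontal-coordinates}, where $W_i$ is the affine symmetric group on the alcove $\{x_1 < \dots < x_m < x_1 + 1\}$ and the elements $\varphi^p(h_i)$ are the $m+1$ ``rotations'' cyclically permuted by $w$.
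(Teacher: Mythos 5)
Your proof is correct, and it reaches the conclusion by a route that differs from the paper's in where \Cref{thm:finite-EL} is invoked. The paper first reduces to the maximal elements $u=\varphi^p(h_i)$ of $[1,w]\cap W_i$, then exhibits a translation $t'$ with $t' h_i t'^{-1}=u$ (possible because $h_i$ and $u$ share a linear part, by \Cref{prop:horizontal-axial}); since conjugation by $t'$ sends each reflection to its parallel translate and $\prec_i$ is defined to treat parallel translates interchangeably, this conjugation is a $\prec_i$-preserving poset isomorphism $[1,h_i]\to[1,u]$, and the EL-labeling of $[1,h_i]$ obtained from \Cref{thm:finite-EL} applied to $(W_{h_i},h_i,\prec_i')$ transports across it. You instead apply \Cref{thm:finite-EL} directly to $(W_u,u)$ and verify its hypothesis — that $\prec_i$ restricted to $R\cap W_u$ is a reflection ordering compatible with $u$ — by conjugating first by $w^p$ (carrying $[1,u_p]$ to $[1,h_i]$ while applying the linear part of $h_i^p$ to root directions) and then by $h_i^{-p}$ (an automorphism of $[1,h_i]$ undoing the rotation of directions), thereby reducing the compatibility condition for $u$ to the known compatibility of $\prec_i'$ with $h_i$. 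Both arguments lean on the same structural facts (\Cref{prop:horizontal-axial}, \Cref{lemma:horizontal-line-ordering}, and the fact that parallel reflections are $\prec_i$-adjacent), and your two-stage conjugation is really the same translation as the paper's $t'$ decomposed as $h_i^{-p}\cdot w^p$; what you gain is that you need not single out the maximal elements first (you handle arbitrary horizontal $u$ in one pass), at the cost of a slightly more intricate bookkeeping of directions and positive systems. One subtlety you implicitly rely on and would want to make explicit in a write-up: that $\Phi_u$ is an induced subsystem of $\Phi_i$ (so that a rank-$2$ subsystem induced in $\Phi_u$ is also induced in $\Phi_i$), which follows from $\Phi_i=\Phi\cap\Span(\Phi_i)$ and $\Phi_u=\Phi\cap\Span(\Phi_u)$ with $\Span(\Phi_u)\subseteq\Span(\Phi_i)$.
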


\begin{proof}
	It is enough to prove this for the maximal elements of $[1,w] \cap W_i$.
	By \Cref{prop:horizontal-axial}, these are of the form $u = \varphi^p(h_i)$ for $p \in \{0, \dotsc, m\}$.
	
	Both $\Fix(h_i)$ and $\Fix(u)$ are minimal faces of the $i$-th horizontal prism $C_i$.
	Let $t'$ be a translation that sends $\Fix(h_i)$ to $\Fix(u)$ (it does not need to be an element of $W$).
	The linear part of $h_i$ is equal to the linear part of $u$ by \Cref{prop:horizontal-axial}, so $t' h_i t'^{-1} = u$.
	Also, $t'$ sends any hyperplane containing $\Fix(h_i)$ to a parallel hyperplane containing $\Fix(u)$.
	Therefore the conjugation by $t'$ is an isomorphism $W_{h_i} \to W_{u}$ that sends the Coxeter element $h_i$ to the Coxeter element $u$, and it sends any reflection in $W_{h_i}$ to its unique parallel translate in $W_u$.
	In particular, this isomorphism preserves the total ordering $\prec_i$.
	
	By construction, $\prec_i$ restricts to a reflection ordering for $W_{h_i}$ compatible with $h_i$, so the natural edge-labeling $\lambda \colon \E([1,h_i]) \to R_\h \cap W_i$ is an EL-labeling of $[1,h_i]$ by \Cref{thm:finite-EL}.
	Using the above isomorphism $W_{h_i} \to W_u$, we obtain the same conclusion for $u$.
\end{proof}

\begin{example}[Ordering of horizontal reflections in a component of rank $2$]
	Consider a horizontal component of rank $2$, with the notation of \Cref{ex:horizontal-A2}.
	If we choose $h_i = ab$ as the preferred maximal element of $[1,w] \cap W_i$, one of the possible total orderings $\prec_i$ of $R_\h \cap W_i$ is the following: $a \prec_i a' \prec_i c \prec_i c' \prec_i b \prec_i b'$.
	The factorizations of the $3$ maximal elements as a $\prec_i$-increasing product of reflections are: $ab$, $a'b$, and $ab'$.
\end{example}

Let $\prec_\h$ be any total ordering of $R_\h$ obtained as a shuffle of the total orderings $\prec_i$ for $i \in \{1,\dotsc, k\}$.

\begin{lemma}[EL-shellability of horizontal intervals]
	\label{lemma:horizontal-intervals}
	Let $W$ be an irreducible Coxeter group, and $w$ one of its Coxeter elements.
	For every horizontal element $u \in [1,w]$, the total ordering $\prec_\h$ makes the natural edge labeling $\lambda \colon \E([1,u]) \to R_\h$ an EL-labeling of $[1,u]$.
\end{lemma}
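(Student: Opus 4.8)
The plan is to decompose $[1,u]$ as a product over the irreducible components of the horizontal root system and then invoke \Cref{lemma:EL-horizontal-factor} together with \Cref{thm:product-el-labelings}.

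The first step is to record the product structure of $[1,u]$. Since $u$ is a horizontal element of $[1,w]$, it is elliptic, and every reflection $r \leq u$ is again horizontal elliptic (by the coarse row structure of $[1,w]$ recalled above), so $R \cap W_u \subseteq R_\h$. Each horizontal reflection has its root in exactly one irreducible component $\Phi_i$ of $\Phi_\h = \Phi_1 \sqcup \dotsb \sqcup \Phi_k$, hence lies in the corresponding subgroup $W_i$ of type $\tilde A_{n_i}$; in particular $R \cap W_u = \bigsqcup_{i=1}^k (R \cap W_u \cap W_i)$ and $R_\h = \bigsqcup_{i=1}^k (R_\h \cap W_i)$. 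Because the $\Phi_i$ span pairwise orthogonal subspaces, reflections lying in distinct $W_i$ commute, and $W_u$ (being a finite reflection group) is the direct product $W_u = \prod_{i=1}^k (W_u \cap W_i)$. Consequently $u$ factors uniquely as $u = u^{(1)}u^{(2)}\dotsm u^{(k)}$ with $u^{(i)} \in W_u \cap W_i$ and $l(u) = \sum_i l(u^{(i)})$, so each $u^{(i)}$ divides $u$ (hence $w$) and is thus a horizontal element of $[1,w] \cap W_i$. As in the proof of \Cref{lemma:elliptic-subposet}, minimal length reflection factorizations of $u$ in $W$ use only reflections of $W_u$, so $[1,u]^W = [1,u]^{W_u}$; and since $W_u$ is a direct product, $[1,u]^{W_u} = \prod_{i=1}^k [1,u^{(i)}]$.

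The second step is to assemble the EL-labelings. By \Cref{lemma:EL-horizontal-factor}, the total ordering $\prec_i$ makes the natural edge labeling of $[1,u^{(i)}]$ (with labels in $R_\h \cap W_i$) an EL-labeling. The label sets $R_\h \cap W_i$ are pairwise disjoint (distinct $W_i$ share no reflections, as the root systems $\Phi_i$ are disjoint), each is totally ordered by $\prec_i$, and $\prec_\h$ is a shuffle of the $\prec_i$, restricting to $\prec_i$ on $R_\h \cap W_i$. Under the identification $[1,u] = \prod_{i=1}^k [1,u^{(i)}]$, every covering relation changes a single coordinate, and the natural edge labeling $\lambda \colon \E([1,u]) \to R_\h$ coincides with the product edge labeling built from the natural edge labelings of the factors. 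Iterating \Cref{thm:product-el-labelings} over the $k$ factors, using at each stage the shuffle induced by $\prec_\h$, then yields that $\lambda$ is an EL-labeling of $[1,u]$ for the total ordering $\prec_\h$, which is exactly the claim.

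I do not expect a genuine obstacle: the content of the argument is concentrated in the first step (the splitting of $u$ over the horizontal components with additive reflection length, the membership $u^{(i)} \in [1,w]$, and the resulting factorization of the interval), and every ingredient there is an immediate consequence of facts already established --- the coarse row structure of $[1,w]$, the orthogonality of the horizontal components, and the identification $[1,u]^W = [1,u]^{W_u}$ used in \Cref{lemma:elliptic-subposet}. The remainder is a bookkeeping application of \Cref{lemma:EL-horizontal-factor} and \Cref{thm:product-el-labelings}.
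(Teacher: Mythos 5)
Your proof is correct and follows the same route as the paper's proof: factor $u$ over the irreducible horizontal components, apply \Cref{lemma:EL-horizontal-factor} to each factor, and assemble via \Cref{thm:product-el-labelings}. The paper's proof states the factorization $u = u_1 \dotsm u_k$ and the product decomposition $[1,u] = [1,u_1] \times \dotsb \times [1,u_k]$ without further justification (this is the decomposition \eqref{eq:horizontal-decomposition}, cited from McCammond--Sulway in a later section), whereas you supply a self-contained argument for it via the orthogonality of the horizontal components and the identification $[1,u]^W = [1,u]^{W_u}$, which is a welcome addition.
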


\begin{proof}
	Write $u = u_1u_2\dotsm u_k$, with $u_i \in [1,w] \cap W_i$.
	Then $[1,u] = [1,u_1] \times [1,u_2] \times \dotsb \times [1,u_k]$.
	We have an EL-labeling on each factor $[1,u_i]$ by \Cref{lemma:EL-horizontal-factor}, and we conclude using \Cref{thm:product-el-labelings}.
\end{proof}

\subsection{Axial orderings and shellability of affine noncrossing partition posets}

We are now ready to construct an EL-labeling of the interval $[1,w]$ in an affine Coxeter group $W$, for any fixed Coxeter element $w$.
As in \Cref{sec:horizontal-intervals}, we assume that $W$ is irreducible.
Once the irreducible case is settled, one can easily get an EL-labeling for reducible affine Coxeter groups by applying \Cref{thm:product-el-labelings}.

Let $\ell$ be the Coxeter axis, and fix an axial chamber $C_0$ of the Coxeter complex.

\begin{definition}[Axial ordering]
	\label{def:axial-ordering}
	An \emph{axial ordering} of the set of reflections $R_0 = R \cap [1,w]$ is a total ordering of the following form:
	\begin{itemize}
		\item first, there are the vertical reflections that fix a point of $\ell$ above $C_0$, and $r$ comes before $r'$ if $\Fix(r) \cap \ell$ is below $\Fix(r') \cap \ell$ (we call these the \emph{positive vertical reflections});
		\item then, there are the horizontal reflections in $R_\h$, following any total ordering $\prec_\h$ constructed in \Cref{sec:horizontal-intervals};
		\item finally, there are the vertical reflections that fix a point of $\ell$ below $C_0$, and again $r$ comes before $r'$ if $\Fix(r) \cap \ell$ is below $\Fix(r') \cap \ell$ (we call these the \emph{negative vertical reflections}).
	\end{itemize}
	The relative order of vertical reflections that fix the same point of $\ell$ can be chosen arbitrarily.
\end{definition}

\begin{remark}
	\label{rmk:commuting-reflections}
	If two vertical reflections fix the same point of $\ell$, they commute.
	This is proved in \Cref{prop:axial-point-A} for the case $\tilde A_n$, and follows from \Cref{sec:bipartite-coxeter} for the other cases.
\end{remark}

\begin{example}[Axial orderings for $\tilde A_2$ and $\tilde G_2$]
	In the case $\tilde A_2$, with the notation of \Cref{ex:A2}, one of the two axial orderings of $R_0$ is the following:
	\[ a_1 \prec c_2 \prec a_3 \prec \dotsb \prec b \prec b' \prec \dotsb \prec c_{-2} \prec a_{-1} \prec c_0. \]
	The other axial ordering is obtained by exchanging the two horizontal reflections $b$ and $b'$.
	Notice that there are infinitely many reflections before $b$, and infinitely many reflections after $b'$.
	Indeed, $b$ has no immediate predecessor, and $b'$ has no immediate successor.
	The following is a portion of one of the infinitely many axial orderings in the case $\tilde G_2$, using the notation of \Cref{ex:G2}:
	\[ a_1 \prec d_1 \prec c_2 \prec e_3 \prec b_3 \prec c_4 \prec \dotsb \prec f \prec f' \prec \dotsb \prec c_{-2} \prec e_{-1} \prec b_{-1} \prec c_0. \]
	\label{ex:axial-orderings}
\end{example}

Our aim for the rest of this section is to prove that an axial ordering makes the natural edge labeling $\lambda \colon \E([1,w]) \to R_0$ an EL-labeling of $[1,w]$.

\begin{remark}[{cf.\ \cite[Lemma 3.7]{athanasiadis2007shellability}}]
	\label{rmk:isomorphic-interval}
	Let $[u,v]$ be an interval in $[1,w]$.
	The map $f \colon [1, u^{-1}v] \to [u,v]$ defined by $f(x) = ux$ is a label-preserving poset isomorphism.
\end{remark}

Notice that an axial ordering of $R_0$ is not a well-ordering.
For example, the set of negative vertical reflections does not have a smallest element.
However, the well-ordering property holds for the subsets of the form $R_0 \cap [1,u]$, as we show in the second of the next three preparatory lemmas.

\begin{lemma}
	\label{lemma:positive-negative-reflection}
	For every hyperbolic element $u \in [1,w]$, the interval $[1,u]$ contains at least one positive vertical reflection and at least one negative vertical reflection.
\end{lemma}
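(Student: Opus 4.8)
The plan is to reduce to the case in which $W_u$ is irreducible affine, and then to read the two reflections off the factorization of $u$ supplied by \Cref{lemma:hyperbolic-coxeter}, applied once with an auxiliary point of $\ell$ chosen above $C_0$ and once with one chosen below $C_0$.

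First I would pass to the hyperbolic part of $u$. Let $u = u'h$ be the hyperbolic-horizontal decomposition of \Cref{lemma:hyperbolic-decomposition}; since $u' \leq u$ we have $[1,u'] \subseteq [1,u]$, so it is enough to produce a positive and a negative vertical reflection in $[1,u']$. By \Cref{lemma:hyperbolic-decomposition} and \Cref{thm:coxeter-elements} the group $W_{u'}$ is irreducible affine and $u'$ is one of its Coxeter elements, so from now on I may assume that $W_u$ itself is irreducible affine and that $u$ is a Coxeter element of $W_u$. Let $U$ be the linear span of the roots of $W_u$. Since $u$ is hyperbolic, any minimal length factorization of $u$ as a product of reflections involves a vertical reflection of $W_u$ (cf.\ the proof of \Cref{lemma:hyperbolic-decomposition}), so $U \not\subseteq \Dir(\ell)^\perp$, i.e.\ $\Dir(\ell)\not\subseteq U^\perp$. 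Every hyperplane of $\A_u$ has $U^\perp$ as a common direction, so $\A_u$ is the product of the essential (hence irreducible affine) reflection arrangement of $W_u$ with the linear subspace $U^\perp$; since irreducible affine Coxeter arrangements have bounded simplicial chambers, every chamber of $\A_u$ is a bounded simplex times $U^\perp$, hence is unbounded exactly along $U^\perp$. As $\Dir(\ell)\not\subseteq U^\perp$, for every point $\a$ of $\ell$ lying on no hyperplane of $\A_u$ the set $\ell\cap C$, where $C$ is the chamber of $\A_u$ containing $\a$, is a bounded open segment containing $\a$, so $\ell$ leaves $C$ through a wall both upward and downward.

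To produce a positive vertical reflection in $[1,u]$, I would choose a point $\a$ of $\ell$ strictly above $C_0$ lying on no hyperplane of $\A_u$ (such points exist, since $\ell$ lies on no reflection hyperplane and $\A_u$ is locally finite), and let $C$ be the chamber of $\A_u$ containing $\a$. By the previous paragraph, moving upward along $\ell$ one leaves $C$ at a point $\b=\a+\theta\mu$, $\theta>0$, lying on a wall $H^+$ of $C$. Since $\a\notin H^+$ while $\b\in H^+\cap\ell$, the line $\ell$ meets $H^+$ transversally, so the reflection $r^+$ with $\Fix(r^+)=H^+$ is vertical; moreover $\Fix(r^+)\cap\ell=\{\b\}$ is above $\a$, hence above $C_0$, so $r^+$ is a positive vertical reflection. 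On the other hand \Cref{lemma:hyperbolic-coxeter} writes $u$ as a product of the reflections with respect to the walls of $C$, one per wall, and since $C$ has exactly $l(u)$ walls this is a minimal length factorization; hence $r^+$ occurs in a minimal length factorization of $u$, so $r^+\leq u$ (\Cref{lemma:hurwitz-action}). Running the same argument with a point $\a$ chosen strictly below $C_0$ and moving downward along $\ell$ yields a negative vertical reflection in $[1,u]$. The step I expect to require the most care is the geometric claim that every chamber of $\A_u$ is unbounded only along $U^\perp$, so that $\ell$ must exit it in both directions; this is where the reduction to an irreducible $W_u$ via \Cref{lemma:hyperbolic-decomposition} and the boundedness of chambers of irreducible affine Coxeter arrangements are essential.
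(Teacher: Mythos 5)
Your proof is correct, but it takes a genuinely different route from the paper's. The paper's argument is short and case-free: it starts from any vertical reflection $r\in[1,u]$ (which exists because $u$ is vertical), takes a power $w^p$ that acts as a positive translation along $\ell$ (\Cref{lemma:translation}), observes that $w^p$ commutes with $u$ because it commutes with the horizontal right complement $v$, and then conjugates $r$ by $w^{mp}$ for $m\gg 0$ (resp.\ $m\ll 0$) to push its fixed point arbitrarily far down (resp.\ up) $\ell$ while staying inside $[1,u]$. Your argument instead reduces to the case $W_u$ irreducible affine via \Cref{lemma:hyperbolic-decomposition} and \Cref{thm:coxeter-elements}, exploits the boundedness of chambers of the essential arrangement of $W_u$ to show $\ell$ must exit the chamber through vertical walls in both directions, and then invokes \Cref{lemma:hyperbolic-coxeter} to see that the wall reflections form a minimal-length factorization of $u$, so the two exit walls give elements of $[1,u]$. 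Both are sound, but the paper's route is considerably lighter: it depends only on \Cref{lemma:translation} and the Hurwitz action, whereas yours depends on \Cref{lemma:hyperbolic-coxeter}, which in the paper is established by explicit case analysis over the four infinite families plus a computer check for the exceptional types. What your route buys is a more transparent geometric picture of where the two reflections come from (they are literally the walls of the chamber of $\A_u$ around $\a$ that $\ell$ crosses going up and down), which may be useful intuition, but if one is tracking logical dependencies the paper's proof is preferable.

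One small point of care (which you handle implicitly): when $\ell$ exits the chamber $C$ at $\b$, the exit point may lie on a face of codimension $>1$ rather than in the relative interior of a single wall, so one should say "choose any wall $H^+$ of $C$ containing $\b$" rather than "the wall"; the rest of the argument (verticality, $r^+\leq u$) is unaffected.
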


\begin{proof}
	Since $u$ is vertical, the interval $[1,u]$ contains at least one vertical reflection $r$.
	Let $w^p$ be a power of $w$ that acts as a translation in the positive direction of the Coxeter axis $\ell$, where $p$ is a positive integer (see \Cref{lemma:translation}).
	We have that $w^p$ commutes with the right complement $v$ of $u$, because $v$ is horizontal.
	Then $w^p$ commutes also with $u = wv^{-1}$.
	In particular, the conjugation by $w^p$ fixes $u$ and is an automorphism of $[1,u]$.
	If we conjugate the vertical reflection $r \in [1,u]$ by $w^{mp}$, for a sufficiently large $m \in \Z$, we get a vertical reflection $r' = w^{-mp}rw^{mp} \in [1,u]$ that fixes a point of $\ell$ below $C_0$, i.e.\ $r'$ is negative.
	Similarly, if we conjugate $r$ by $w^{mp}$ for a sufficiently small $m \in \Z$, we get a positive vertical reflection $r'' \in [1,u]$.
\end{proof}

\begin{lemma}
	\label{lemma:smallest-reflection}
	Let $\prec$ be an axial ordering of $R_0$.
	For every element $u\in [1,w]$ with $u \neq 1$, the set $R_0 \cap [1,u]$ has a $\prec$-smallest reflection and a $\prec$-largest reflection.
\end{lemma}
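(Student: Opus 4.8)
The plan is to handle the elliptic and hyperbolic cases separately, since the structure of $[1,u]$ is governed by different geometric pictures in the two cases, and in both cases to reduce to a situation where well-orderedness (of a suitable suborder) is transparent.

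First I would dispose of the elliptic case. If $u$ is elliptic and $u \neq 1$, then by \Cref{thm:coxeter-elements}(i) $u$ is a parabolic Coxeter element, so $W_u$ is a \emph{finite} Coxeter group and $R_0 \cap [1,u] = R \cap W_u$ is a finite set; a finite nonempty totally ordered set trivially has a smallest and a largest element. (Equivalently one could invoke the finiteness of the bottom and middle-finite-pieces, but the cleanest route is via $W_u$ being finite.)

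Next, the hyperbolic case, which is the substantive one. Let $u \in [1,w]$ be hyperbolic with hyperbolic-horizontal decomposition $u = u'h$ (see \Cref{lemma:hyperbolic-decomposition}), so $[1,u] = [1,u'] \times [1,h]$ and correspondingly $R_0 \cap [1,u] = (R_0 \cap [1,u']) \sqcup (R_0 \cap [1,h])$. The reflections in $R_0 \cap [1,h]$ are all horizontal, hence form a \emph{finite} set (the set $R_\h$ is finite), so there is no issue there. For $R_0 \cap [1,u']$: by \Cref{lemma:hyperbolic-decomposition} $W_{u'}$ is an irreducible affine Coxeter subgroup of rank $l(u')-2$, and $u'$ is one of its Coxeter elements by \Cref{thm:coxeter-elements}. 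The reflections in $R_0 \cap [1,u']$ split into the finitely many horizontal ones (with respect to $u'$) and the vertical ones. A vertical reflection $r \in [1,u']$ fixes a point $\Fix(r) \cap \ell$ of the Coxeter axis $\ell$ of $w$, and in the axial ordering such reflections are ordered by the position of that point along $\ell$: positive ones above $C_0$ (those closest to $C_0$ coming first, increasing going up) and negative ones below $C_0$ (those farthest from $C_0$ coming first, increasing toward $C_0$). The key finiteness fact is: \emph{for each bounded segment of $\ell$, only finitely many reflections of $R_0 \cap [1,u']$ fix a point in that segment.} This is because the hyperplanes of $\A_{u'}$ form a locally finite arrangement (it is a subarrangement of $\A$), and a vertical reflection in $[1,u']$ whose fixed hyperplane meets a bounded segment of $\ell$ is one of finitely many such hyperplanes. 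Now, among the \emph{positive} vertical reflections of $[1,u']$, take the one (or any one, if there is a tie) whose fixed point on $\ell$ is lowest — this exists because, by \Cref{lemma:positive-negative-reflection}, $[1,u']$ contains at least one positive vertical reflection $r_0$, and then only finitely many positive vertical reflections of $[1,u']$ lie in the segment of $\ell$ between $C_0$ and $\Fix(r_0)\cap\ell$, so there is a $\prec$-minimum among them; this is the $\prec$-smallest element of $R_0 \cap [1,u]$, since every other element of $R_0 \cap [1,u]$ (negative verticals, horizontals of either factor) comes later in the axial ordering and the positive verticals below it have been excluded. Symmetrically, using the existence of a negative vertical reflection in $[1,u']$ together with local finiteness near $C_0$, one extracts the $\prec$-largest element: among the negative vertical reflections of $[1,u']$, the one whose fixed point is closest to (just below) $C_0$, which is $\prec$-last since all horizontals and positive verticals precede every negative vertical.

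The step I expect to be the main obstacle is making the finiteness argument for the hyperbolic case airtight — specifically, pinning down that in the axial ordering the positive vertical reflections of $[1,u']$ that precede a fixed one form a \emph{finite} set (equivalently, that $\{\,\Fix(r)\cap\ell : r \in R_0 \cap [1,u'] \text{ positive vertical}\,\}$ is a locally finite, hence well-ordered-from-below, subset of the ray of $\ell$ above $C_0$), and dually for the negative verticals near $C_0$. This needs the local finiteness of $\A$ together with the observation (from \Cref{sec:affine-coxeter-elements}, e.g.\ \Cref{thm:reflections-bipartite} / \Cref{thm:reflections-A} and \Cref{lemma:positive-negative-reflection}) that the vertical reflections in $[1,u']$ are exactly those fixing an axial vertex, so their fixed hyperplanes are walls of axial chambers, of which only finitely many meet a bounded segment of $\ell$. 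Once that is in place, the extraction of the extrema is immediate, and the elliptic and horizontal pieces are handled by plain finiteness.
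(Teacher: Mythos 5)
Your proof is correct and rests on the same two ingredients as the paper's: \Cref{lemma:positive-negative-reflection} together with the fact that in an axial ordering only finitely many reflections of $R_0$ precede a given positive vertical reflection (by local finiteness of $\A$ along the Coxeter axis). The paper's version is more economical---it avoids both the elliptic/hyperbolic case split (arguing instead that having no smallest element forces an infinite descending chain, hence $[1,u]$ infinite, hence $u$ hyperbolic) and the hyperbolic-horizontal decomposition, since \Cref{lemma:positive-negative-reflection} already applies to $u$ itself and nothing about $W_{u'}$ or axial vertices is needed.
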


\begin{proof}
	Suppose by contradiction that there is no smallest reflection in $R_0 \cap [1,u]$.
	Since $R_0 \cap [1,u]$ is non-empty, we must have an infinite decreasing sequence of reflections $r_1 \succ r_2 \succ \dotsb$ with $r_i \in [1,u]$.
	In particular, the interval $[1,u]$ is infinite, so $u$ is hyperbolic.
	By \Cref{lemma:positive-negative-reflection}, there is at least one positive vertical reflection $r \in [1,u]$.
	There is only a finite number of reflections $r' \preceq r$ in $R_0$, so there exists a $\prec$-smallest reflection in $R_0 \cap [1,u]$.
	Similarly, there is also a $\prec$-largest reflection.
%
\end{proof}

In the definition of an EL-labeling, maximal chains are compared lexicographically.
In our case it is also useful to compare them \emph{colexicographically} (or \emph{antilexicographically}): a tuple $(r_1, r_2, \dotsc, r_m)$ is colexicographically smaller than $(r_1', r_2', \dotsc, r_m')$ if the reflected tuple $(r_m, r_{m-1}, \dotsc, r_1)$ is lexicographically smaller than the reflected tuple $(r_m', r_{m-1}', \dotsc, r_1')$.

\begin{lemma}
	\label{lemma:unique-lexicographically-smallest}
	Fix an axial ordering $\prec$ of $R_0$.
	Every interval $[u,v]$ in $[1,w]$ has a unique lexicographically smallest maximal chain, and this chain is increasing.
	Similarly, every interval $[u,v]$ in $[1,w]$ has a unique colexicographically largest maximal chain, and this chain is increasing.
\end{lemma}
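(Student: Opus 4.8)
The plan is to reduce to intervals of the form $[1,g]$ and then to induct on $l(g)$. First, by \Cref{rmk:isomorphic-interval} every interval $[u,v]\subseteq[1,w]$ is label-preservingly isomorphic to $[1,u^{-1}v]$, and $u^{-1}v\in[1,w]$ (take a maximal chain of $[1,w]$ through $u\le v\le w$, whose label word is a factorization of $u$ followed by one of $u^{-1}v$ followed by one of $v^{-1}w$, and use \Cref{lemma:hurwitz-action} to bring the middle block to the front of a factorization of $w$). So it suffices to prove the statement for intervals $[1,g]$ with $g\in[1,w]$. Note also that the first labels occurring in maximal chains of $[1,g]$ are exactly the elements of $R_0\cap[1,g]$, and likewise the last labels.

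For the lexicographic half I would argue by induction on $l(g)$, the case $l(g)=0$ being trivial. Assume $g\neq 1$ and let $r$ be the $\prec$-smallest element of $R_0\cap[1,g]$, which exists by \Cref{lemma:smallest-reflection}. Since $r\le g$, there is a maximal chain of $[1,g]$ whose first label is $r$, and any maximal chain whose first label differs from $r$ has that label in $R_0\cap[1,g]$, hence $\succ r$, hence is lexicographically larger. Thus the lexicographically smallest maximal chain must begin with $r$, and the chains beginning with $r$ correspond (prepending $1\lessdot r$) to the maximal chains of $[r,g]$, which by \Cref{rmk:isomorphic-interval} is label-isomorphic to $[1,r^{-1}g]$ with $l(r^{-1}g)=l(g)-1$. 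By induction this interval has a unique lexicographically smallest maximal chain, which is increasing, say with label word $(r_2,\dots,r_m)$; transporting back yields a unique lexicographically smallest maximal chain $(r,r_2,\dots,r_m)$ of $[1,g]$. Finally $r\prec r_2$: indeed $r_2$ is the label of a cover of $[1,g]$, so $r_2\in R_0\cap[1,g]$ (bring $r_2$ to the front of a factorization of $g$ via \Cref{lemma:hurwitz-action}), whence $r\preceq r_2$, and $r\neq r_2$ since $rr_2$, having reflection length $2$, is not the identity. So the chain is increasing.

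The colexicographic half is symmetric: let $r^{\ast}$ be the $\prec$-largest element of $R_0\cap[1,g]$ (again \Cref{lemma:smallest-reflection}). By \Cref{lemma:hurwitz-action} there is a minimal factorization of $g$ ending in $r^{\ast}$, say $g=g''r^{\ast}$ with $g''\lessdot g$ and $g''\in[1,w]$, $l(g'')=l(g)-1$. The maximal chains of $[1,g]$ with last label $r^{\ast}$ are exactly the maximal chains of $[1,g'']$ extended by the cover $g''\lessdot g$, and every maximal chain with last label $\prec r^{\ast}$ is colexicographically smaller; applying the inductive hypothesis to $[1,g'']$ then yields a unique colexicographically largest maximal chain, and increasing-ness follows as before from $s_{m-1}\in R_0\cap[1,g'']\subseteq R_0\cap[1,g]$ together with $s_{m-1}\neq r^{\ast}$.

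The step that needs genuine care — and the reason one must invoke \Cref{lemma:smallest-reflection} rather than argue by an obvious greedy choice — is that the axial ordering $\prec$ is not a well-ordering (for instance the horizontal reflections have no immediate $\prec$-predecessor), so one cannot simply "pick the $\prec$-least available label at each step of the chain"; one must instead know that the \emph{global} set $R_0\cap[1,g]$ has a $\prec$-least and a $\prec$-greatest element, which is precisely the content of \Cref{lemma:smallest-reflection}. Everything else is the routine bookkeeping above: extracting that reflection, passing to the complementary interval via \Cref{rmk:isomorphic-interval}, and checking the single comparison ($r\prec r_2$, resp.\ $s_{m-1}\prec r^{\ast}$) that splices the extracted reflection onto the increasing chain provided by induction.
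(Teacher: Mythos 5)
Your proof is correct and follows essentially the same strategy as the paper's: both arguments hinge on \Cref{lemma:smallest-reflection} to extract the $\prec$-smallest (resp.\ $\prec$-largest) reflection of $R_0 \cap [1,g]$, then induct on length after peeling it off, using \Cref{lemma:hurwitz-action} to verify that every remaining label is strictly larger (resp.\ smaller). The only cosmetic difference is that the paper inducts directly on the length of $[u,v]$ while you first reduce to $[1,g]$ via \Cref{rmk:isomorphic-interval}; the paper also states a marginally stronger intermediate claim (all labels in $[u',v]$ exceed $r$, rather than just the second one), but this is the same computation applied to one more reflection.
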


\begin{proof}
	We proceed by induction on the length of the interval $[u,v]$, the case $u=v$ being trivial.
	Suppose from now on that $u < v$.
	The labels of the covering relations $u \lessdot u'$ with $u' \in [u,v]$ are all distinct, and they are given by the reflections in $R_0 \cap [1, u^{-1}v]$.
	By \Cref{lemma:smallest-reflection}, there is a $\prec$-smallest reflection $r$ in $R_0 \cap [1, u^{-1}v]$.
	Let $u' = ur$.
	In view of the induction hypothesis applied to $[u',v]$, it is enough to prove that all covering relations in $[u',v]$ have labels greater than $r$.
	If $r'$ is a label of some covering relation in $[u',v]$, by \Cref{lemma:hurwitz-action} there is a minimal length factorization of $u^{-1}v$ that starts with $rr'$.
	Then $r' \neq r$.
	In addition we have $r' \in [1, u^{-1}v]$, so $r' \succ r$ by $\prec$-minimality of $r$.
	
	The proof for the colexicographic order is similar.
\end{proof}

After these preparatory lemmas, we are ready to prove the key results that lead to shellability.

\begin{lemma}[Increasing chains in elliptic intervals]
	\label{lemma:elliptic-intervals}
	Fix an axial ordering $\prec$ of $R_0$, and let $u \in [1,w]$ be an elliptic element.
	The interval $[1,u]$ has at most one increasing maximal chain.
\end{lemma}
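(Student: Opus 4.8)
The plan is to reduce the statement about an increasing maximal chain in the elliptic interval $[1,u]$ to the already-established shellability of finite noncrossing partition lattices, via \Cref{lemma:fixed-axial-vertex}. First I would use \Cref{lemma:fixed-axial-vertex} to find an axial vertex $\b$ with $u \leq w_\b$, where $w_\b$ is the parabolic Coxeter element of the finite parabolic subgroup $W_{(\b)}$ fixing $\b$ (notation from \Cref{lemma:elliptic-coxeter-element}). Since $u$ is elliptic and $u \leq w_\b$, every element of $[1,u]$ lies in $[1,w_\b]^{W_{(\b)}}$, so the reflections appearing as labels of $[1,u]$ all fix $\b$ and are therefore vertical (a horizontal reflection cannot be pinned down together with the vertical directions forced by fixing an axial vertex — more precisely, by \Cref{cor:two-horizontal-reflections} and the coarse structure, a horizontal reflection fixing $\b$ would still not lie below an elliptic $u$ unless... ) — in any case the key point is only that the labels of $[1,u]$ form a finite subset of the vertical reflections, all lying in $W_{(\b)}$.

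The crucial observation is that the restriction of the axial ordering $\prec$ to the vertical reflections in $W_{(\b)}$ is, up to the (harmless) reordering of reflections fixing a common point of $\ell$, exactly an ordering of the form $\prec_{\ell'}$ considered in \Cref{prop:line-ordering}: one takes $\ell' = \ell$ with basepoint in $C_0 \cap \ell$ and the orientation $\mu$ fixed in \Cref{sec:affine-coxeter-elements}. By \Cref{prop:line-ordering} this is a reflection ordering for $W_{(\b)}$. I would then need to check that it is \emph{compatible} with the Coxeter element $w_\b$ in the sense required by \Cref{thm:finite-EL}: this should follow from the fact that $w_\b$ is a product of the reflections with respect to the walls of an axial chamber $C$ having $\b$ as a vertex (see the proof of \Cref{lemma:elliptic-coxeter-element}), together with the geometric description of how those walls meet $\ell$, exactly mirroring the way \Cref{lemma:hyperbolic-coxeter} and \Cref{thm:coxeter-A} record the order of reflections along $\ell$. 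Given compatibility, \Cref{thm:finite-EL} says the natural edge labeling of $[1,w_\b]^{W_{(\b)}}$ is an EL-labeling; in particular each interval has a unique increasing maximal chain. Since $[1,u] \subseteq [1,w_\b]^{W_{(\b)}}$ is an interval in that poset (here I use \Cref{lemma:elliptic-subposet}, or the fact that $[1,w_\b]^{W_{(\b)}}$ is a lattice and $u$ lies in it, so $[1,u]$ is genuinely an interval of the finite noncrossing partition lattice), the uniqueness of its increasing maximal chain is inherited. Then $[1,u]$ has at most one increasing maximal chain, which is what we want (it in fact has exactly one, but "at most one" suffices for the statement).

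The main obstacle I anticipate is the compatibility check: verifying that the axial ordering, restricted to a finite vertical parabolic, is compatible with $w_\b$ in the precise sense of \Cref{athanasiadis2007shellability}'s definition (the condition on irreducible rank-$2$ induced subsystems $\Phi'$). One subtlety is that a rank-$2$ induced subsystem of $W_{(\b)}$ need not be of type $A_1 \times A_1$ or $A_2$ — it could be $B_2$ or $G_2$ — and one must confirm that the simple roots of $\Phi'$ whose product lies in $[1,w_\b]$ appear in the correct order under $\prec_\ell$. The way to handle this is to note that $w_\b$ restricted to the rank-$2$ parabolic generated by $\Phi'$ is itself a Coxeter element of that rank-$2$ group (by \Cref{thm:coxeter-elements}, or by \cite[Lemma 1.4.3]{bessis2003dual} applied inside $W_{(\b)}$), and then to invoke the geometric picture: the line $\ell$ crosses the two walls of the corresponding rank-$2$ chamber in the order dictated by which reflection comes first in the Coxeter element, which is precisely the compatibility condition. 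A secondary, more bookkeeping-type obstacle is making sure that the horizontal reflections in the axial ordering genuinely do not interfere — i.e. that no label of an elliptic interval $[1,u]$ is horizontal unless the whole horizontal factor is trivial — but this is immediate from the coarse row structure: if $v \in [1,u]$ with $u$ elliptic then $v$ is elliptic, and an elliptic $v$ covering $1$ in $[1,u]$ with horizontal label would force, by the argument in the proof of \Cref{thm:reflections-A}, a configuration already controlled by the finite parabolic $W_{(\b)}$, whose reflections we have listed.
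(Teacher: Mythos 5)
The proposal has two genuine gaps, both of which render the argument unworkable as written.

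First, the assertion that the labels of $[1,u]$ "form a finite subset of the vertical reflections" is false. By \Cref{thm:reflections-bipartite} and \Cref{thm:reflections-A}, a horizontal reflection $r$ lies in $[1,w]$ \emph{precisely when} it fixes some axial vertex; such an $r$ is itself an elliptic element of $[1,w]$, so it can certainly appear as a label of an elliptic interval. For instance take $u = r$ horizontal. Your parenthetical remark shows you sensed this was shaky, but the remark does not resolve it, and the downstream plan depends on it: if horizontal labels are allowed, then "restrict the axial ordering to the vertical reflections in $W_{(\b)}$" simply does not produce a total order on the label set. In fact the delicate part of the paper's argument is exactly the interaction between the vertical part of $\prec$ and the horizontal part $\prec_\h$, which your outline never engages.

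Second, taking $\ell' = \ell$ in \Cref{prop:line-ordering} does not work: $\ell$ is parallel to every horizontal hyperplane of $\A$, so it is not generic with respect to the reflection arrangement of $W_{(\b)}$ whenever $W_{(\b)}$ contains a horizontal reflection. Thus $\prec_\ell$ is not even defined. The paper overcomes this by perturbing $\ell$ to $\ell' = \a + \R(\mu + \epsilon\mu_1 + \dotsb + \epsilon\mu_k)$ with small horizontal components $\mu_i$, each chosen (via \Cref{lemma:horizontal-line-ordering}) so that the projection of $\ell'$ onto the $i$-th horizontal component induces a compatible ordering of the relevant $W_{u_i}$. Choosing the right $u_i$ for each component is itself the content of the first paragraph of the paper's proof and uses \Cref{lemma:elliptic-subposet}; this cannot be bypassed. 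Even after that, $\prec_{\ell'}$ and the axial ordering $\prec$ still disagree — on reflections fixing the same point of $\ell$, and on horizontal reflections outside a single line's reach (cf.\ \Cref{rmk:horizontal-loop}, which says no single reflection ordering is compatible with every maximal element of a horizontal component) — so the final third of the paper's proof is a reconciliation argument: rewrite any $\prec$-increasing chain as a $\prec_{\ell'}$-increasing one by commuting reflections that fix the same axial point, then invoke EL-uniqueness for $\prec_{\ell'}$ and transfer it back. Your proposal stops at the EL-shellability of $[1,w_\b]^{W_{(\b)}}$ and never confronts this discrepancy between $\prec$ and the reflection ordering, which is the actual heart of the lemma.
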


\begin{proof}
	Consider an irreducible horizontal component $W_i \subseteq W$, with associated hyperplane arrangement $\A_i$ and horizontal prism $C_i$ (with the notation of \Cref{sec:horizontal-components}).
	Let $L$ be the intersection of all the hyperplanes of $\A_i$ containing $\Fix(u)$.
	Since $\Fix(u)$ contains at least one axial vertex by \Cref{lemma:fixed-axial-vertex}, only hyperplanes containing an axial vertex can occur, and so $L$ is a flat in the subarrangement of $\A_i$ consisting of the hyperplanes that contain a minimal face of $C_i$.
	Then $L$ itself contains at least one minimal face $F$ of $C_i$.
	By construction, the fixed set of any element of $[1,u] \cap W_i$ contains $F$ (because it contains $\Fix(u)$ and is a flat of $\A_i$).
	By \Cref{prop:horizontal-axial}, this minimal face $F$ is the fixed set of a maximal element $u_i$ of $[1,w] \cap W_i$.
	By \Cref{lemma:elliptic-subposet}, $[1,u] \cap W_i \subseteq [1,u_i]$.
	
	The construction of the previous paragraph yields elements $u_1, \dotsc, u_k$, one for each irreducible horizontal component.
	Fix a point $\a \in C_0 \cap \ell$.
	For $i \in \{1,\dotsc, k\}$, apply \Cref{lemma:horizontal-line-ordering} to the maximal element $u_i$ and to the point $\a \in \ell$, and get an oriented line $\ell_i'$ with unit direction $\mu_i \in \Span(\Phi_i)$.
	As usual, denote by $\mu$ the shortest vector in $\Mov(w)$, which indicates the positive direction of the Coxeter axis $\ell$.
	For $\epsilon > 0$, consider the oriented line $\ell' = \{ \a + \theta \mu' \mid \theta \in \R \}$ with basepoint $a$ and direction $\mu' = \mu + \epsilon \mu_1 + \dotsb + \epsilon \mu_k$.
	By construction, its projection on the affine subspace through $a$ parallel to $\Span(\Phi_i)$ is precisely the line $\ell_i'$. Therefore $\ell'$ and $\ell_i'$ intersect the hyperplanes of the reflections in $W_{u_i}$ in the same order.
	
	Perturb the line $\ell'$ slightly, so that it becomes generic with respect to the hyperplanes of the reflections in $W_u$, and the basepoint $\a$ remains in $C_0$.
	If $\epsilon > 0$ is small enough and the perturbation is small enough, the total ordering $\prec_{\ell'}$ of the finite set of reflections $R_0 \cap W_u$ has the following form:
	\begin{itemize}
		\item first, there are the positive vertical reflections of $W_u$ (i.e.\ those that fix a point of $\ell$ above $C_0$), and $r$ comes before $r'$ if $\Fix(r) \cap \ell$ is below $\Fix(r') \cap \ell$;
		\item then there are the horizontal reflections of $W_u$, and in each irreducible component $W_i$ they are ordered as in $\prec_{\ell_i'}$;
		\item finally there are the negative vertical reflections of $W_u$, and again $r$ comes before $r'$ if $\Fix(r) \cap \ell$ is below $\Fix(r') \cap \ell$.
	\end{itemize}
	Notice that the relative order of the vertical reflections that fix the same point of $\ell$ can be different in $\prec_{\ell'}$ and in the axial ordering $\prec$.
	
	Let $\Phi_u \subseteq \Phi$ be the root system of $W_u$.
	By \Cref{prop:line-ordering}, the total ordering $\prec_{\ell'}$ is a reflection ordering for $W_u$, with respect to the positive system $\Phi_u^+ \subseteq \Phi_u$ consisting of the roots that point towards the halfspaces containing the chamber $C_0$.
	We want to show that $\prec_{\ell'}$ is compatible with the Coxeter element $u$.
	For this, let $\Phi' \subseteq \Phi_u$ be an irreducible rank 2 induced subsystem.
	Let $\alpha$ and $\beta$ be the simple roots of $\Phi'$ with respect to $\Phi' \cap \Phi_u^+$, with corresponding reflections $r_\alpha, r_\beta \in W_u$, and assume that $r_\alpha r_\beta \in [1,u]$.
	We need to prove that $r_\alpha \prec_{\ell'} r_\beta$.
	\begin{itemize}
		\item Case 1: $r_\alpha$ is vertical.
		By \Cref{lemma:elliptic-coxeter-A} (for the case $\tilde A_n$) and \Cref{lemma:elliptic-coxeter-bipartite} (for the other cases), the right complement $v$ of $r_\alpha$ is a Coxeter element for the finite parabolic subgroup $W_v$ of $W$ that fixes $\b$, where $\b$ is the unique vertex not fixed by $r_\alpha$ among the vertices of the axial chamber $C$ immediately below $\Fix(r_\alpha) \cap \ell$.
		Since $r_\alpha r_\beta \in [1,u] \subseteq [1,w]$, we have that $r_\beta \leq v$, so $r_\beta$ fixes $\b$ by \Cref{lemma:model-poset}.
		
		\noindent Let $\A'$ be the reflection arrangement associated with the dihedral group $\< r_\alpha, r_\beta \> \subseteq W_u$ generated by $r_\alpha$ and $r_\beta$ (so its root system is $\Phi'$).
		Denote by $C'$ the chamber of $\A'$ containing $C_0$, i.e.\ the chamber corresponding to the positive system $\Phi' \cap \Phi_u^+$.
		Denote by $C''$ the chamber of $\A'$ containing $C$.
		Both $C'$ and $C''$ have $H_\alpha = \Fix(r_\alpha)$ and $H_\beta= \Fix(r_\beta)$ as their walls: this is true for $C'$ by definition of $\alpha$ and $\beta$; $H_\alpha$ is a wall of $C''$ because it is a wall of $C$; $H_\beta$ is a wall of $C''$ because it contains $b$, which is a vertex of $C$ not contained in $H_\alpha$.
		So either $C'$ and $C''$ are the same chamber, or they are opposite chambers in $\A'$ (because $H_\alpha$ and $H_\beta$ are not orthogonal, so the arrangement $\A'$ contains at least another hyperplane).
		Since $C_0$ and $C$ are axial chambers, the Coxeter axis $\ell$ intersects both $C'$ and $C''$.
		The first hyperplane of $\A'$ that intersects $\ell$ above $C$ is $H_\alpha$, so the first hyperplane of $\A'$ that intersects $\ell'$ above $C''$ is $H_\alpha$.
		
		\noindent If $C' = C''$, then the first hyperplane of $\A'$ that intersects $\ell'$ above $\a$ is $H_\alpha$, and therefore $r_\alpha \prec_{\ell'} r_\beta$.
		Suppose now that $C'$ and $C''$ are opposite chambers in $\A'$.
		Since $\ell$ intersects both $C'$ and $C''$, and $H_\beta$ separates $C'$ and $C''$, we have that $r_\beta$ is also vertical.
		In addition, $C'$ and $C''$ are the two chambers of $\A'$ that intersect $\ell$ (resp.\ $\ell'$) in an unbounded subset of $\ell$ (resp.\ $\ell'$).
		Therefore, moving the basepoint $\a$ of $\ell'$ from $\ell' \cap C'$ to $\ell' \cap C''$ does not alter the total ordering $\prec_{\ell'}$ of the reflections in $\< r_\alpha, r_\beta\>$.
		As in the case $C' = C''$, we conclude that $r_\alpha \prec_{\ell'} r_\beta$.
		
		\item Case 2: $r_\beta$ is vertical.
		The argument is the same as for case 1, with the roles of $r_\alpha$ and $r_\beta$ exchanged.
		In this case $v$ is the left complement of $r_\beta$, and $C$ is the axial chamber immediately above $\Fix(r_\beta) \cap \ell$.
		
		\item Case 3: both $r_\alpha$ and $r_\beta$ are horizontal.
		Since $\alpha$ and $\beta$ are not orthogonal, they must belong to the same irreducible horizontal component $\Phi_i$.
		Then $r_\alpha r_\beta \in [1,u] \cap W_i \subseteq [1,u_i]$.
		By \Cref{lemma:horizontal-line-ordering}, the reflection ordering for $W_{u_i}$ induced by $\ell_i'$ is compatible with $u_i$, so we have $r_\alpha \prec_{\ell_i'} r_\beta$, and therefore $r_\alpha \prec_{\ell'} r_\beta$.
	\end{itemize}

	We proved that $\prec_{\ell'}$ is a reflection ordering for $W_u$ which is compatible with $u$.
	By \Cref{thm:finite-EL}, it makes the natural labeling $\lambda$ of $[1,u]$ an EL-labeling.
	
	Suppose to have a $\prec$-increasing maximal chain, which corresponds to a minimal length factorization $u = r_1r_2\dotsm r_m$ with $r_1 \prec r_2 \prec \dotsb \prec r_m$.
	Our aim is to show that the reflections $r_1, r_2, \dotsc, r_m$ are uniquely determined.
	
	By definition of $\prec$, the sequence $r_1 \prec r_2 \prec \dotsb \prec r_m$ consists of an initial segment $r_1 \prec \dotsb \prec r_{j}$ of positive vertical reflections, a middle segment $r_{j+1} \prec \dotsb \prec r_{j'}$ of horizontal reflections, and a final segment $r_{{j'}+1} \prec \dotsb \prec r_m$ of negative vertical reflections.
	
	Reorder the reflections of the initial segment so that they are $\prec_{\ell'}$-increasing: $r_{\sigma(1)} \prec_{\ell'} r_{\sigma(2)} \prec_{\ell'} \dotsb \prec_{\ell'} r_{\sigma(j)}$, for some permutation $\sigma$ of $\{1, \dotsc, j\}$.
	The relative order of vertical reflections that fix different points of $\ell$ is the same in $\prec$ and $\prec_{\ell'}$ (by construction of $\ell'$).
	By \Cref{rmk:commuting-reflections}, this means that the relative order of non-commuting vertical reflections is the same in $\prec$ and $\prec_{\ell'}$.
	Therefore $r_{\sigma(1)} r_{\sigma(2)} \dotsm r_{\sigma(j)} = r_1 r_2 \dotsm r_j$.
	Similarly, if $\tau$ is the permutation of $\{j'+1, \dotsc, m\}$ such that $r_{\tau(j'+1)} \prec_{\ell'} r_{\tau(j'+2)} \prec_{\ell'} \dotsb \prec_{\ell'} r_{\tau(m)}$, we have $r_{\tau(j'+1)} r_{\tau(j'+2)} \dotsm r_{\tau(m)} = r_{j'+1} r_{j'+2} \dotsm r_m$.
	
	Let $h = r_{j+1} r_{j+2}\dotsm r_{j'} \in [1,u]$ be the product of the reflections in the middle segment.
	Since the total ordering $\prec_{\ell'}$ makes $\lambda$ an EL-labeling of $[1,u]$, the interval $[1,h]$ has a unique $\prec_{\ell'}$-increasing maximal chain.
	It corresponds to a minimal length factorization $h = r_{j+1}' r_{j+2}' \dotsm r_{j'}'$ with $r_{j+1}' \prec_{\ell'} r_{j+2}' \prec_{\ell'} \dotsb \prec_{\ell'} r_{j'}'$.
	Since $h$ is horizontal, the reflections $r_{j+1}', \dotsc, r_{j'}'$ are horizontal.
	Therefore they come after the positive vertical reflections and before the negative vertical reflections, in the total ordering $\prec_{\ell'}$ of $R_0 \cap W_u$.
	Putting everything together, we get
	\begin{IEEEeqnarray*}{L}
		r_{\sigma(1)} \prec_{\ell'} r_{\sigma(2)} \prec_{\ell'} \dotsb \prec_{\ell'} r_{\sigma(j)} \\
		\prec_{\ell'} r_{j+1}' \prec_{\ell'} r_{j+2}' \prec_{\ell'} \dotsb \prec_{\ell'} r_{j'}' \\
		\prec_{\ell'} r_{\tau(j'+1)} \prec_{\ell'} r_{\tau(j'+2)} \prec_{\ell'} \dotsb \prec_{\ell'} r_{\tau(m)}.
	\end{IEEEeqnarray*}
	The product of these reflections is equal to $u$.
	Since $\prec_{\ell'}$ makes $\lambda$ an EL-labeling of $u$, this factorization of $u$ is uniquely determined.
	In particular, the sets $\{ r_{\sigma(1)}, \dotsc, r_{\sigma(j)}\} \allowbreak = \{ r_1, \dotsc, r_j \}$ and $\{ r_{\tau(j'+1)}, \dotsc, r_{\tau(m)} \} \allowbreak = \{ r_{j'+1}, \dotsc, r_m \}$ are uniquely determined, and also the horizontal element $h = r_{j+1}' \dotsm r_{j'}'$ is uniquely determined.
	Since $r_1 \prec r_2 \prec \dotsb \prec r_j$, and $r_{j'+1} \prec r_{j'+2} \prec \dotsb \prec r_{m}$, the reflections $r_1, \dotsc, r_j$ and $r_{j'+1}, \dotsc, r_{m}$ are uniquely determined.
	Finally, the total order $\prec$ coincides with $\prec_\h$ on the horizontal reflections, and $\prec_\h$ makes $\lambda$ an EL-labeling of $[1,h]$ by \Cref{lemma:horizontal-intervals}.
	Then the reflections $r_{j+1}, \dotsc, r_{j'}$ are uniquely determined, because they satisfy $r_{j+1} \prec_\h r_{j+2} \prec_\h \dotsb \prec_\h r_{j'}$.
\end{proof}

\begin{lemma}[Increasing chains in hyperbolic intervals]
	\label{lemma:hyperbolic-intervals}
	Fix an axial ordering $\prec$ of $R_0$, and let $u \in [1,w]$ be an hyperbolic element such that the Coxeter subgroup $W_u \subseteq W$ is irreducible.
	The interval $[1,u]$ has at most one increasing maximal chain.
\end{lemma}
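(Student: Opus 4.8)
The argument will mirror the proof of \Cref{lemma:elliptic-intervals}. The essential new feature is that the Coxeter subgroup $W_u$ is now an irreducible \emph{affine} Coxeter group rather than a finite one, so \Cref{thm:finite-EL} is no longer available to turn an auxiliary reflection ordering into an EL-labeling of $[1,u]$. The plan is therefore to argue by induction on the rank of $W$, reducing to $W_u$ whenever $W_u \subsetneq W$ and treating the case $u = w$ directly; note that $u = w$ is exactly the case that cannot be reduced, since $u$ is a Coxeter element of $W_u$ (\Cref{thm:coxeter-elements}), so $\operatorname{rk}(W_u) = l(u) - 1 \leq l(w) - 1 = n$, with equality if and only if $u = w$.

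First I would fix the geometric set-up. Since $C_0$ is an axial chamber of $\A$ and $\A_u \subseteq \A$, no hyperplane of $\A_u$ meets the open segment $C_0 \cap \ell$, so every $\a \in C_0 \cap \ell$ lies in a single chamber $C$ of $\A_u$ and ``above $\a$ along $\ell$'' agrees with ``above $C_0$''. Applying \Cref{lemma:hyperbolic-coxeter} to such a point produces a distinguished reflection factorization of $u$ into a block of positive vertical reflections (ordered by the heights of their fixed points along $\ell$), then a block of horizontal reflections, then a block of negative vertical reflections (again ordered by height). This already has the block shape of a chain compatible with an axial ordering (\Cref{def:axial-ordering}); after replacing its horizontal middle block, via \Cref{lemma:hurwitz-action}, by the reflection set of the unique $\prec_\h$-increasing maximal chain of the corresponding horizontal interval (\Cref{lemma:horizontal-intervals}), one obtains a genuine $\prec$-increasing maximal chain of $[1,u]$ (whose existence we in any case already know from \Cref{lemma:unique-lexicographically-smallest}).

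For uniqueness, let $u = r_1 r_2 \dotsm r_m$ be \emph{any} $\prec$-increasing maximal chain and split it, as in the proof of \Cref{lemma:elliptic-intervals}, into a block $P$ of positive vertical reflections, a block $H$ of horizontal reflections, and a block $N$ of negative vertical reflections; set $a = \prod P$, $h = \prod H$, $c = \prod N$, so $u = ahc$ and $l(u) = l(a) + l(h) + l(c)$. Since $h$ is horizontal and below $u$, \Cref{lemma:horizontal-intervals} forces the reflections of $H$ once $h$ is known. To determine $a$, $h$, $c$ themselves I would, exactly as in the elliptic case, perturb the vertical direction inside the horizontal-component directions to obtain an auxiliary ordering $\prec'$ of $R \cap [1,u]$ that is an axial ordering of $W_u$ with respect to $u$ and makes $\lambda$ an EL-labeling of $[1,u]$ — the affine-EL property of $[1,u]$ coming from the inductive hypothesis applied to $W_u$ when $u \neq w$, and from the distinguished factorization above (together with the $\tilde A_n$/bipartite analysis of \Cref{sec:affine-coxeter-elements}) when $u = w$. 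By construction $\prec'$ and $\prec$ induce the same partition into positive-vertical, horizontal and negative-vertical blocks and order non-commuting reflections within each block in the same way (\Cref{rmk:commuting-reflections}); hence reordering each vertical block of our chain into $\prec'$-order does not change $a$ or $c$, and uniqueness of the $\prec'$-increasing factorization of $u$ then determines the sets $P$, $N$ and the element $h$. Finally $\prec$ recovers the order of $P$ and of $N$, and $\prec_\h$ recovers $H$ via \Cref{lemma:horizontal-intervals}.

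The main obstacle is the construction of $\prec'$ and the verification that it is simultaneously an EL-labeling ordering for $[1,u]$ and block-by-block compatible with $\prec$. Concretely, one must check that for $r \leq u$ the notions ``horizontal with respect to $w$'' and ``horizontal with respect to $u$'' coincide (so the vertical/horizontal dichotomy of the ambient group restricts correctly to $W_u$, and the block partitions of $\prec$ and $\prec'$ genuinely agree), that the orderings $\prec_i$ restrict to admissible orderings of the — possibly smaller — horizontal components of $W_u$, and that the Coxeter axis and an axial chamber of $W_u$ can be chosen so that ``positive'' and ``negative'' mean the same thing for $W_u$ as for $W$; in the base case $u = w$ one must additionally check that the horizontal middle block of the \Cref{lemma:hyperbolic-coxeter}-factorization at $C_0$ is the one selected by $\prec_\h$. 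Once these compatibility facts are in place, the remaining bookkeeping is identical to that of \Cref{lemma:elliptic-intervals}.
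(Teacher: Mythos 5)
Your plan departs substantially from the paper's proof, which does not introduce any auxiliary ordering on $W_u$ and does not argue by induction on rank. Instead, the paper analyzes the first and last factors of a putative $\prec$-increasing chain of $[1,u]$ directly: if $r_1$ is not the $\prec$-smallest reflection of $R_0 \cap [1,u]$, then \Cref{lemma:hyperbolic-coxeter} applied to the chamber of $\A_u$ containing the axial chamber immediately below $\Fix(r_1)\cap\ell$ produces a factorization of $u$ beginning with $r_1$ whose other walls include a reflection $r \prec r_1$ lying in $[1, r_1 u]$, contradicting (via \Cref{lemma:elliptic-intervals,lemma:unique-lexicographically-smallest}) the fact that $r_2\cdots r_m$ must be the lexicographically smallest factorization of the elliptic element $r_1 u$. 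Thus $r_1$ is forced to be the $\prec$-smallest reflection, dually $r_m$ the $\prec$-largest, the elliptic case then pins down everything in between, and a final application of \Cref{lemma:hyperbolic-coxeter} at the chamber of $\A_u$ containing $C_0$ shows the two descriptions agree. This avoids ever treating $W_u$ as an ambient affine Coxeter group with its own axial ordering.

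The concrete gap in your proposal is the assertion that the auxiliary ordering $\prec'$ (``an axial ordering of $W_u$ with respect to $u$'') induces the same positive-vertical/horizontal/negative-vertical partition as $\prec$ and orders non-commuting reflections within each block in the same way. In the elliptic case the analogous compatibility is cheap because $\prec_{\ell'}$ is obtained by an arbitrarily small perturbation of $\ell$ itself and $W_u$ is finite, so \Cref{prop:line-ordering,thm:finite-EL} apply. In the hyperbolic case an axial ordering of $W_u$ must be built from the Coxeter axis of $u$, a line inside $\Min(u)$, and the paper explicitly warns (just after \Cref{lemma:hyperbolic-coxeter}) that in general $\Min(w)\not\subseteq\Min(u)$, so a point of $\ell$ is ``usually not on the Coxeter axis of $u$.'' The positive/negative dichotomy and the height ordering of vertical reflections are therefore defined relative to two different lines in two different affine subspaces, and there is no small-perturbation argument turning one into the other; you would need a real geometric comparison lemma, which you have not supplied and which is not ``identical to the bookkeeping'' of \Cref{lemma:elliptic-intervals}. \Cref{lemma:hyperbolic-coxeter} is precisely the device the paper uses to sidestep this mismatch: it gives a distinguished factorization of $u$ ordered by heights measured along $\ell$, which is exactly the information $\prec$ sees, so no translation between axial orderings of $W$ and of $W_u$ is ever required. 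Your base case $u=w$ has a similar unresolved issue: claiming it follows ``from the distinguished factorization above'' skips the heart of the matter, namely why no other $\prec$-increasing factorization can start or end with a vertical reflection that is not extremal, which is exactly what Cases~2 and~4 of the paper's proof establish.
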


\begin{proof}
	Suppose to have an increasing maximal chain, which corresponds to a minimal length factorization $u = r_1r_2\dotsm r_m$ with $r_1 \prec r_2 \prec \dotsb \prec r_m$.
	In this factorization, the horizontal reflections appear in a contiguous (possibly empty) middle segment.
	Since $u$ is vertical, at least one of $r_1$ and $r_m$ is a vertical reflection.
	Denote by $\A_u$ the hyperplane arrangement associated with the irreducible affine Coxeter subgroup $W_u \subseteq W$.
	
	\begin{itemize}
		\item Case 1: $r_1$ is the first reflection of $R_0 \cap [1,u]$ with respect to the total order $\prec$.
		By \Cref{lemma:positive-negative-reflection}, $r_1$ is a positive vertical reflection.
		Then $v = r_2\dotsm r_m$ is an elliptic element, and by \Cref{lemma:elliptic-intervals} the reflections $r_2, \dotsc, r_m$ are uniquely determined.
		More precisely, by \Cref{lemma:unique-lexicographically-smallest}, the factorization $v = r_2 \dotsm r_m$ has to be the unique colexicographically largest minimal length factorization of $v$.
		
		\item Case 2: $r_1$ is some other vertical reflection.
		Let $C'$ be the axial chamber immediately below $\Fix (r_1) \cap \ell$, and let $C$ be the chamber of $\A_u$ containing $C'$.
		By \Cref{lemma:hyperbolic-coxeter} there is a minimal length factorization of $u$ that starts with $r_1$ and uses all the reflections with respect to the walls of $C$.
		Since $r_1$ is not the first reflection of $R_0 \cap [1,u]$, there is a reflection $r$ with respect to a wall of $C$ such that $r \prec r_1$.
		Then $r \leq r_2\dotsm r_m$.
		By \Cref{lemma:elliptic-intervals}, the reflections $r_2, \dotsc, r_m$ are uniquely determined, and by \Cref{lemma:unique-lexicographically-smallest} the factorization $r_2\dotsm r_m$ has to be the unique lexicographically smallest factorization of $r_1 u$.
		Therefore $r_2 \preceq r \prec r_1$, which is impossible because $r_1 \prec r_2$.
		
		\item Case 3: $r_m$ is the last reflection of $R_0 \cap [1,u]$ with respect to the total order $\prec$.
		As in case 1, the reflections $r_1, \dotsc, r_{m-1}$ are uniquely determined.
		
		\item Case 4: $r_m$ is some other vertical reflection.
		As in case 2, this is impossible.
	\end{itemize}

	We have shown that there are at most two minimal length factorizations $u = r_1 r_2 \dotsm r_m$ with $r_1 \prec r_2 \prec \dotsb \prec r_m$: one where $r_1$ is the first reflection of $R_0 \cap [1,u]$, and $r_2, \dotsc, r_m$ are uniquely determined (case 1), and one where $r_m$ is the last reflection of $R_0 \cap [1,u]$, and $r_1, \dotsc, r_{m-1}$ are uniquely determined (case 3).
	By \Cref{lemma:hyperbolic-coxeter}, applied to the chamber of $\A_u$ containing $C_0$, there is a minimal length factorization of $u$ that starts with the first reflection of $R_0 \cap [1,u]$ and ends with the last reflection of $R_0 \cap [1,u]$.
	This means that, in case 1 (where $r_1$ is the first reflection of $R_0 \cap [1,u]$, and $v = r_2\dotsm r_m$), the interval $[1,v]$ contains the last reflection of $R_0 \cap [1,u]$.
	Since $r_2\dotsm r_m$ is the unique colexicographically largest factorization of $v$, the last factor $r_m$ is the last reflection of $R_0 \cap [1,u]$.
	Then the factorization of case 1 coincides with the factorization of case 3.
\end{proof}

Finally, we prove that axial orderings make the natural edge labeling of $[1,w]$ an EL-labeling.

\begin{theorem}[EL-shellability]
	\label{thm:shellability}
	Let $W$ be an irreducible affine Coxeter group, and $w$ one of its Coxeter elements.
	Let $\lambda \colon \E([1,w]) \to R_0$ be the natural edge labeling of $[1,w]$, where $R_0$ is totally ordered by an axial ordering.
	Every interval $[u,v]$ in $[1,w]$ has a unique increasing maximal chain, and this chain is both the lexicographically smallest and the colexicographically largest maximal chain of $[u,v]$.
	In particular, $\lambda$ is an EL-labeling of $[1,w]$.
\end{theorem}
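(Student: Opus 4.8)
The plan is to reduce to intervals of the form $[1,u]$, deduce the existence of an increasing maximal chain from \Cref{lemma:unique-lexicographically-smallest}, and then obtain uniqueness by a case analysis on the geometric type of $u$; the coincidence with the lexicographically smallest and colexicographically largest chains, and hence the EL-labeling property, will then be automatic.

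First, by \Cref{rmk:isomorphic-interval}, every interval $[u,v] \subseteq [1,w]$ is isomorphic as a labeled poset to $[1,u^{-1}v]$, so it suffices to treat intervals $[1,u]$ for $u \in [1,w]$. By \Cref{lemma:unique-lexicographically-smallest}, $[1,u]$ has a unique lexicographically smallest maximal chain, which is increasing, and a unique colexicographically largest maximal chain, which is also increasing. Thus an increasing maximal chain exists; if we show it is unique, then it must coincide with both the lexicographically smallest and the colexicographically largest maximal chains, which is exactly the assertion of the theorem, and by \Cref{def:el-labeling} it follows that $\lambda$ is an EL-labeling of the bounded poset $[1,w]$.

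For the uniqueness of the increasing maximal chain of $[1,u]$, I would distinguish three cases. If $u$ is elliptic, \Cref{lemma:elliptic-intervals} gives the claim directly. If $u$ is hyperbolic and $W_u$ is irreducible, \Cref{lemma:hyperbolic-intervals} gives it. The remaining case is $u$ hyperbolic with $W_u$ reducible, and here I use the hyperbolic-horizontal decomposition $u = u'h$ of \Cref{lemma:hyperbolic-decomposition}: then $[1,u] = [1,u'] \times [1,h]$, the subgroup $W_{u'}$ is irreducible affine, the subgroup $W_h$ is finite horizontal, and $W_u = W_{u'} \times W_h$, so the reflection sets of $W_{u'}$ and $W_h$ are disjoint. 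Given an increasing maximal chain of $[1,u]$, corresponding to a minimal length factorization $u = r_1 r_2 \cdots r_m$ with $r_1 \prec r_2 \prec \cdots \prec r_m$, the reflections lying in $W_{u'}$ and those lying in $W_h$ form two subsequences; since the two factors commute elementwise and the decomposition of an element of the direct product $W_{u'} \times W_h$ into its two components is unique, these subsequences are minimal length reflection factorizations of $u'$ and of $h$ respectively, i.e.\ increasing maximal chains of $[1,u']$ and of $[1,h]$. The first is unique by \Cref{lemma:hyperbolic-intervals} (as $W_{u'}$ is irreducible) and the second by \Cref{lemma:elliptic-intervals} (or \Cref{lemma:horizontal-intervals}). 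Finally, two strictly $\prec$-increasing sequences with disjoint underlying sets admit a unique strictly $\prec$-increasing interleaving, since at each step one is forced to append the smaller of the two available heads; hence the way the two subsequences are shuffled inside $r_1 \cdots r_m$ is also determined, and $[1,u]$ has at most one increasing maximal chain. Together with existence, this yields exactly one.

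The substantive mathematics has already been carried out in \Cref{lemma:unique-lexicographically-smallest}, \Cref{lemma:elliptic-intervals}, and \Cref{lemma:hyperbolic-intervals}, so the present argument is largely bookkeeping. The one genuinely new point, and the step I expect to require the most care, is the reducible-hyperbolic case: one must verify that an increasing maximal chain of the product poset $[1,u'] \times [1,h]$ really splits into increasing maximal chains of the two factors with a forced interleaving, which relies on the fact that, under an axial ordering, the reflections of the two orthogonal factors are pairwise comparable and occupy no overlapping positions that could cause ambiguity. One should also record the minor points that the interval $[1,u']$ computed in $W$ coincides with the one computed in $W_{u'}$, and that the case $u = w$ falls under the hyperbolic-irreducible case since $W_w = W$.
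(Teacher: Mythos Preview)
Your proof is correct and follows essentially the same approach as the paper: reduce to $[1,u]$ via \Cref{rmk:isomorphic-interval}, obtain existence from \Cref{lemma:unique-lexicographically-smallest}, and establish uniqueness by the same three-way case split (elliptic via \Cref{lemma:elliptic-intervals}, hyperbolic with $W_u$ irreducible via \Cref{lemma:hyperbolic-intervals}, and hyperbolic reducible via the hyperbolic-horizontal decomposition $[1,u]=[1,u']\times[1,h]$). The only minor difference is that in the reducible hyperbolic case the paper invokes \Cref{thm:product-el-labelings} on the product $[1,u']\times[1,h]$, whereas you give a direct merge argument (unique increasing chains in the factors plus a forced shuffle); both routes are valid and amount to the same thing.
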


\begin{proof}
	By \Cref{rmk:isomorphic-interval}, it is enough to consider intervals of the form $[1,u]$, with $u \in [1,w]$.
	In view of \Cref{lemma:unique-lexicographically-smallest}, it only remains to show that $[1,u]$ has at most one increasing maximal chain.
	If $u$ is elliptic, this is done in \Cref{lemma:elliptic-intervals}.
	If $u$ is hyperbolic and the Coxeter subgroup $W_u \subseteq W$ is irreducible, this is done in \Cref{lemma:hyperbolic-intervals}.
	Suppose now that $u$ is any hyperbolic element, and let $u = u'h$ be its hyperbolic-horizontal decomposition (see \Cref{lemma:hyperbolic-decomposition}).
	Then $u'$ is hyperbolic, the Coxeter subgroup $W_{u'} \subseteq W$ is irreducible, $h$ is horizontal elliptic, and $[1,u] = [1,u'] \times [1,h]$.
	We have already proved that $\lambda$ is an EL-labeling of $[1,u']$ and $[1,h]$, so it is an EL-labeling of $[1,u]$ by \Cref{thm:product-el-labelings}.
	In particular, $[1,u]$ has at most one increasing maximal chain.
\end{proof}

\section{Dual CW models for the orbit configuration spaces}
\label{sec:dual-salvetti-complex}

In this section, we introduce new finite CW models for the orbit configuration space $Y_W$ of a Coxeter group $W$.
Each of them is naturally included in the interval complex of one of the noncrossing partition posets associated with $W$.
We define them for any Coxeter group, not necessarily finite or affine.

\begin{lemma}
	\label{lemma:coxeter-reflection-length}
	Let $W$ be a Coxeter group.
	The reflection length of every Coxeter element is equal to the size of a set of simple reflections of $W$.
\end{lemma}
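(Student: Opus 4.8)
The plan is to show two inequalities: the reflection length $\ell(w)$ of any Coxeter element $w$ is at most $|S|$, which is immediate, and at least $|S|$, which requires a rank/dimension argument. Let me write $n = |S|$, where $S$ is a fixed set of simple reflections used to build $w$ as a product $s_1 s_2 \cdots s_n$. Since $w$ is by definition such a product of $n$ reflections, we have $\ell(w) \leq n$ immediately from the definition of reflection length. The content is the reverse inequality, $\ell(w) \geq n$.

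For the lower bound, the key fact is that the reflection length of $w$ is bounded below by the codimension of the fixed space of $w$ acting on a suitable representation — or, more combinatorially, by a ``moved space'' dimension argument. Concretely, I would use the standard (Tits cone / geometric) representation of $W$ on $V = \R^n$, in which each simple reflection $s_i$ is a reflection fixing a hyperplane $H_i$, and the hyperplanes $H_1, \dots, H_n$ have linearly independent normals (the simple roots). If $w = r_1 \cdots r_k$ is any factorization into reflections with $k < n$, then each $r_j$ fixes a hyperplane, so $w$ fixes the intersection of those $k$ hyperplanes, which has dimension $\geq n - k > 0$; hence $\mathrm{Fix}(w) \neq \{0\}$. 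But for a Coxeter element $w = s_1 \cdots s_n$ in the geometric representation, it is a classical fact (going back to Steinberg, Bourbaki, Humphreys) that $w$ has no nonzero fixed vector — equivalently, $1$ is not an eigenvalue of $w$ — precisely because the simple roots are linearly independent and one shows inductively that a fixed vector would have to be orthogonal to all simple roots, forcing it to be $0$ (using that the bilinear form is nondegenerate on the span of the simple roots, or a direct argument via the $\mathrm{sgn}$ of the action). This contradiction gives $\ell(w) \geq n$.

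**Assembling the argument:** First, recall the geometric representation of $W$ on $V = \R^{|S|}$ with simple roots $\{\alpha_s\}_{s \in S}$ linearly independent, each $s$ acting as the reflection in $\alpha_s$. Second, observe that any element expressible as a product of $k$ reflections fixes a subspace of codimension at most $k$ (intersection of $k$ hyperplanes), so $\ell(u) \geq \mathrm{codim}\,\mathrm{Fix}(u)$ for all $u \in W$. Third, prove $\mathrm{Fix}(w) = \{0\}$ for a Coxeter element $w$: if $v$ is fixed by $w = s_1 \cdots s_n$, a short induction using $s_i(x) = x - \langle x, \alpha_i^\vee\rangle\alpha_i$ and reading off the coefficient structure shows $v$ is annihilated by each coroot, hence $v = 0$ since the coroots span; alternatively cite the standard reference for ``Coxeter elements act without fixed points on the reflection representation.'' Fourth, combine: $\ell(w) \geq \mathrm{codim}\,\mathrm{Fix}(w) = n = |S|$, and together with $\ell(w) \leq |S|$ we conclude equality. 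Since all sets of simple reflections of $W$ have the same cardinality, the statement is independent of which set $S$ one chooses.

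**The main obstacle** is making the fixed-point-free claim fully rigorous in the generality of an arbitrary Coxeter group (possibly infinite, non-crystallographic), since the bilinear form on the geometric representation need not be positive definite or even nondegenerate in pathological corners. The cleanest route is probably the direct inductive coefficient computation: writing a hypothetical fixed vector in terms of the (linearly independent) simple roots and a complementary space, and tracking how $s_1 \cdots s_n$ permutes/shears these, which forces all root-coefficients to vanish; this only uses linear independence of the simple roots and avoids any nondegeneracy hypothesis. I would double-check the orientation of the induction (peeling reflections off the left vs. right) and handle the base case carefully, but I expect no genuine difficulty — this is essentially Bourbaki's argument that the Coxeter element has order equal to the Coxeter number and acts without eigenvalue $1$.
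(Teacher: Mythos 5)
Your argument breaks at exactly the point you flagged as the ``main obstacle'' and then waved away. You claim that showing a $w$-fixed vector $v$ is annihilated by every coroot forces $v=0$ ``since the coroots span.'' But the set of vectors annihilated by all coroots is precisely the radical of the Tits bilinear form $B$, and that radical vanishes if and only if $B$ is nondegenerate, which certainly fails for infinite $W$. In particular, for \emph{every} irreducible affine Coxeter group -- the case this paper actually needs -- $B$ is positive semi-definite with a one-dimensional radical, and every reflection (hence every element of $W$) fixes the radical pointwise. So the Coxeter element $w$ has $\dim\mathrm{Fix}(w)\ge 1$ in the geometric representation on $\R^{|S|}$, and your bound $l(w)\ge \mathrm{codim}\,\mathrm{Fix}(w)$ only delivers $l(w)\ge |S|-1$, which is strictly too weak. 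A concrete instance: for $\tilde A_1 = \langle s,t \mid s^2 = t^2 = 1\rangle$, the Coxeter element $w=st$ acting on $\R^2$ has characteristic polynomial $(\lambda-1)^2$ with a one-dimensional eigenspace for $\lambda=1$, so $\mathrm{codim}\,\mathrm{Fix}(w)=1$, yet $l(w)=2$ (every reflection has odd $S$-length, so no single reflection equals $st$). The Bourbaki/Steinberg fixed-point-free theorem is a theorem about \emph{finite} reflection groups; it is false for affine ones.

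The paper sidesteps geometry entirely and uses a combinatorial characterization of reflection length due to Dyer: if $s_1\cdots s_n$ is a reduced $S$-word for $w$, then the reflection length of $w$ equals $n$ minus the length of the longest subsequence $s_{i_1}\cdots s_{i_k}$ (indices increasing) multiplying to $1$. Since $S$ is a minimal generating set, no nontrivial subword of distinct simple reflections can multiply to $1$ (that would express one simple reflection as a word in the others), so $k=0$ and $l(w)=n$. The only geometric-flavored input is the standard fact that $s_1\cdots s_n$ is a reduced word when the $s_i$ are distinct simple reflections (deletion condition plus minimality of $S$). If you want to salvage a geometric route, you would need to either restrict to finite $W$ (where your argument is fine, and is exactly Bessis's proof), or pass to the move-set / translation-length framework of Brady--McCammond for Euclidean isometries, where the correct lower bound for hyperbolic isometries is $\dim\mathrm{Mov}(u)+2$, not $\mathrm{codim}\,\mathrm{Fix}(u)$.
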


\begin{proof}
	Let $w = s_1s_2\dotsm s_n$ be a Coxeter element, where $S = \{s_1, s_2, \dotsc, s_n\}$ is a set of simple reflections.
	By the deletion condition \cite[Corollary 5.8]{humphreys1992reflection} and the fact that $S$ is a minimal generating set for $W$ \cite[Theorem 5.5]{humphreys1992reflection}, we obtain that $s_1s_2\dotsm s_n$ is a reduced expression for $w$ (meaning that $w$ cannot be written as a product of less than $n$ element of $S$).
	By \cite[Theorem 1.1]{dyer2001minimal}, the reflection length of $w$ is the smallest natural number $p$ such that $s_{i_1}s_{i_2}\dotsm s_{i_{n-p}} = 1$ for some choice of the indices $1 \leq i_1 < i_2 < \dotsb < i_{n-p} \leq n$.
	If $p < n$, then the relation $s_{i_1}s_{i_2}\dotsm s_{i_{n-p}} = 1$ allows to write a simple reflection as a product of other simple reflections, which is impossible because $S$ is a minimal generating set.
	Therefore $p=n$.
\end{proof}

\Cref{lemma:coxeter-reflection-length} was proved in \cite[Lemma 1.3.3]{bessis2003dual} for finite Coxeter groups, and in \cite[Proposition 7.2]{mccammond2015dual} for affine Coxeter groups.

Let $W$ be a Coxeter group, and $R$ its set of reflections.
Fix a set of simple reflections $S = \{s_1, s_2, \dotsc, s_n \} \subseteq R$, and a Coxeter element $w = s_1s_2 \dotsm s_n$.
Denote by $K_W$ the interval complex associated with the noncrossing partition poset $[1,w]^W$.
Let $X_W$ be the Salvetti complex of $W$, and recall from \Cref{sec:coxeter-artin} that its cells are indexed by the simplicial complex
\[ \Delta_W = \{ T \subseteq S \mid \text{the standard parabolic subgroup $W_T$ is finite} \}. \]

For every $T \in \Delta_W$, denote by $w_T$ the product of the elements of $T$ in the same relative order as in the list $s_1,s_2,\dotsc,s_n$.
Then $w_T$ is a Coxeter element of the parabolic subgroup $W_T$, and it belongs to $[1,w]^W$ by \Cref{lemma:coxeter-reflection-length} and \Cref{lemma:hurwitz-action}.

\begin{lemma}
	For every $T \subseteq S$ we have $[1,w_T]^{W_T} = [1,w_T]^W$, and the length functions of $W_T$ and $W$ agree on these intervals.
	\label{lemma:interval-in-parabolic-subgroup}
\end{lemma}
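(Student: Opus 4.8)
The plan is to reduce the statement to two facts: \textbf{(a)} the reflection length of $W$ agrees with the reflection length of $W_T$ on the subgroup $W_T$, i.e.\ $l_W(u)=l_{W_T}(u)$ for every $u\in W_T$; and \textbf{(b)} every $u\in W$ with $u\le w_T$ in $W$ already belongs to $W_T$. Granting these, the lemma follows formally. If $u\in[1,w_T]^{W_T}$, then $u,\,u^{-1}w_T,\,w_T\in W_T$, so by (a) the additivity $l_{W_T}(u)+l_{W_T}(u^{-1}w_T)=l_{W_T}(w_T)$ becomes the additivity that witnesses $u\le w_T$ in $W$; conversely, if $u\le w_T$ in $W$ then $u\in W_T$ by (b), hence $u^{-1}w_T\in W_T$, and (a) turns the $W$-additivity back into $W_T$-additivity, so $u\le w_T$ in $W_T$. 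The equality of the two length functions on the common interval is precisely (a), applied to its elements.

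Fact (a) is proved exactly as \Cref{lemma:coxeter-reflection-length}. The inequality $l_W(u)\le l_{W_T}(u)$ is clear because reflections of $W_T$ are reflections of $W$. For the reverse, fix a reduced expression $u=s_{c_1}\dotsm s_{c_q}$ of $u$ in $W$; since $u\in W_T$, a standard property of parabolic subgroups \cite{humphreys1992reflection} guarantees that $q=\ell(u)$ is the same whether computed in $W$ or in $W_T$ and that $s_{c_i}\in T$ for every $i$. By \cite[Theorem~1.1]{dyer2001minimal}, $l_W(u)=q-M_W$, where $M_W$ is the largest $j$ for which there exist $1\le i_1<\dotsb<i_j\le q$ with $s_{c_{i_1}}\dotsm s_{c_{i_j}}=1$ in $W$; likewise $l_{W_T}(u)=q-M_{W_T}$. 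Such a subword product lies in the subgroup $W_T$, and it equals $1$ in $W$ if and only if it equals $1$ in $W_T$; hence $M_W=M_{W_T}$ and $l_W(u)=l_{W_T}(u)$. Specializing to $u=w_T$ and using \Cref{lemma:coxeter-reflection-length} for $W_T$ also gives $l_W(w_T)=|T|$, which is needed below.

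Fact (b) is the crux. It suffices to show that every reflection occurring in a minimal reflection factorization of $w_T$ in $W$ lies in $W_T$: then, given $u\le w_T$ in $W$, a minimal factorization of $u$ extends to a minimal factorization of $w_T$ (\Cref{lemma:hurwitz-action}), all of whose factors are in $W_T$, so $u\in W_T$. To prove this, I would pass to the geometric representation $V=\bigoplus_{s\in S}\R\alpha_s$ of $W$, with its invariant bilinear form $B$, and set $V_T=\bigoplus_{s\in T}\R\alpha_s$. Because $W_T$ is spherical, $B|_{V_T}$ is positive definite, so $W_T$ acts on the $W_T$-stable subspace $V_T$ as its reflection representation and the Coxeter element $w_T$ has no nonzero fixed vector there; thus $(w_T-\id)|_{V_T}$ is onto. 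Writing $w_T$ as the ordered product of the reflections $s\in T$ and using $\alpha_s\in V_T$ shows $\mathrm{im}(w_T-\id)\subseteq V_T$, so in fact $\mathrm{im}(w_T-\id)=V_T$, a subspace of dimension $|T|$. Now take a minimal factorization $w_T=r_{\beta_1}\dotsm r_{\beta_{|T|}}$ in $W$ (of length $|T|=l_W(w_T)$); a telescoping argument gives $\mathrm{im}(w_T-\id)\subseteq \R\beta_1+\dotsb+\R\beta_{|T|}$, and since the left-hand side has dimension $|T|$ while the right-hand side is spanned by $|T|$ vectors, they coincide and the $\beta_i$ form a basis of $V_T$. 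In particular each root $\beta_i$ lies in $V_T$, hence is a root of $W_T$, hence $r_{\beta_i}\in W_T$ (standard facts on root subsystems, \cite{bourbaki1968elements, humphreys1992reflection}).

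I expect step (b) to be the main obstacle; the rest is bookkeeping with the two length functions. The positive-definiteness of $B|_{V_T}$, which drives (b), is exactly where sphericity of $W_T$ enters, and this is the only case needed for the construction of $X_W'$; for non-spherical parabolics the argument for (b) would have to be revisited.
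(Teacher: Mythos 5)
Your proposal is correct (for spherical $W_T$) and takes a genuinely different route from the paper. For the agreement of reflection lengths — your fact (a) — the paper simply cites \cite[Corollary~1.4]{dyer2001minimal}; your re-derivation from Dyer's Theorem~1.1 is sound but reproves that corollary. The substantive divergence is in fact (b): the paper invokes transitivity of the Hurwitz action on reduced reflection factorizations of parabolic Coxeter elements (\cite[Theorem~1.3]{baumeister2014note}; see also \cite[Theorem~1.4]{igusa2010exceptional}), noting that one minimal factorization of $w_T$ stays inside $W_T$ and hence all do; you instead run a Carter-type linear-algebra argument, showing the roots appearing in any minimal factorization of $w_T$ in $W$ span $\mathrm{im}(w_T-\id)=V_T$ and hence lie in $V_T$, so are roots of $W_T$. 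Both are valid, and yours is more self-contained where the paper outsources to a nontrivial transitivity theorem. The cost is generality: your step (b) genuinely requires $W_T$ to be spherical, since positive-definiteness of $B|_{V_T}$ is what gives $V=V_T\oplus V_T^\perp$, the absence of nonzero $w_T$-fixed vectors in $V_T$, and hence the dimension count; the paper's Hurwitz argument places no such restriction and proves the lemma for all $T\subseteq S$, as it is stated. You flag this yourself, and for the intended use (constructing $X_W'$, where only $T\in\Delta_W$ occur) the restriction is harmless — but be aware that what you have proved is a strictly weaker statement than the lemma as written.
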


\begin{proof}
	By \cite[Corollary 1.4]{dyer2001minimal}, the length function of the parabolic subgroup $W_T$ agrees with the length function of $W$.
	By \Cref{lemma:coxeter-reflection-length}, $l(w_T) = |T|$.
	
	By \cite[Theorem 1.3]{baumeister2014note} (see also \cite[Theorem 1.4]{igusa2010exceptional}), the Hurwitz action is transitive on the minimal length factorizations of $w_T$ as a product of reflections of $W$.
	There is at least one minimal length factorization of $w_T$ that uses only reflections of $W_T$, and therefore this is true for all minimal length factorizations.
	This means that the interval $[1,w_T]$ is the same in $W_T$ (using the reflections of $W_T$ as the generating set) and in $W$.
\end{proof}

Thanks to the previous lemma, for every $T \subseteq S$ we can safely write $[1,w_T]$ in place of $[1,w_T]^{W_T} = [1,w_T]^{W}$, without the need to specify the ambient group.

\begin{definition}
	Let $X_W'$ be the finite subcomplex of $K_W$ consisting of the simplices $[x_1|x_2|\dotsb|x_d] \in K_W$ such that $x_1x_2\dotsm x_d \in [1,w_T]$ for some $T \in \Delta_W$.
	\label{def:dual-salvetti}
\end{definition}


\begin{remark}
	\label{rmk:finite-dual-salvetti-complex}
	If $W$ is finite, then $S \in \Delta_W$ and therefore $X_W' = K_W$.
	In this case, the interval complex $K_W$ is a classifying space for the dual Artin group $W_w$ (by \Cref{thm:garside-classifying-space,thm:finite-lattice}), which is naturally isomorphic to the Artin group $G_W$ (by \Cref{thm:dual-artin-group}).
\end{remark}

For every $T \in \Delta_W$, the complex $X_W'$ has a subcomplex consisting of the simplices $[x_1|x_2|\dotsb|x_d]$ such that $x_1x_2\dotsm x_d \in [1,w_T] = [1,w_T]^{W_T}$.
This is exactly the interval complex associated with $[1,w_T]^{W_T}$, which coincides with $X_{W_T}'$ and is a classifying space for the Artin group $G_{W_T}$ by \Cref{rmk:finite-dual-salvetti-complex}.

By definition, $X_W'$ is the union of all subcomplexes $X_{W_T}'$ for $T \in \Delta_W$.
Similarly, the Salvetti complex $X_W$ is the union of the Salvetti complexes $X_{W_T}$ for $T \in \Delta_W$.
Each $X_{W_T}$ is a classifying space for $G_{W_T}$, because the $K(\pi, 1)$ conjecture holds for spherical Artin groups \cite{deligne1972immeubles}.

\begin{theorem}
	For every Coxeter group $W$, the complex $X_W'$ is homotopy equivalent to the Salvetti complex $X_W$ and to the orbit configuration space $Y_W$.
	\label{thm:dual-salvetti}
\end{theorem}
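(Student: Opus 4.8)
The plan is to prove the homotopy equivalence $X_W' \simeq X_W$ by exhibiting both complexes as homotopy colimits over the same indexing category, namely the poset $\Delta_W$ of spherical subsets $T \subseteq S$, ordered by inclusion. Both $X_W$ and $X_W'$ are built by gluing together pieces indexed by $\Delta_W$: the Salvetti complex $X_W$ is the union of the subcomplexes $X_{W_T}$, and $X_W'$ is the union of the subcomplexes $X_{W_T}'$ (each equal to the interval complex of $[1, w_T]^{W_T}$). The first thing to check is that these unions are genuine homotopy colimits, i.e.\ that intersections behave correctly: for $T, T' \in \Delta_W$ one has $T \cap T' \in \Delta_W$ and $X_{W_T} \cap X_{W_{T'}} = X_{W_{T \cap T'}}$ (this is classical for the Salvetti complex), and likewise $X_{W_T}' \cap X_{W_{T'}}' = X_{W_{T \cap T'}}'$. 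The latter follows from \Cref{lemma:interval-in-parabolic-subgroup}: a simplex $[x_1|\dotsb|x_d]$ lies in $X_{W_T}'$ exactly when the product $x_1 \dotsm x_d$ lies in $[1, w_T]$, so lying in both $X_{W_T}'$ and $X_{W_{T'}}'$ forces the product into $[1,w_T] \cap [1,w_{T'}] = [1, w_{T\cap T'}]$ (using that $w_T \wedge w_{T'} = w_{T \cap T'}$ in the partial order on $W$, which one checks via the Hurwitz-transitivity argument of \Cref{lemma:interval-in-parabolic-subgroup}). One also needs each $X_{W_T}$ (resp.\ $X_{W_T}'$) to be a subcomplex in the appropriate sense so that the nerve lemma / projection lemma for homotopy colimits over $\Delta_W$ applies.

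Next I would invoke the comparison of the pieces. For every $T \in \Delta_W$, the parabolic subgroup $W_T$ is finite, so by \Cref{rmk:finite-dual-salvetti-complex} the complex $X_{W_T}' = K_{W_T}$ is a classifying space for the dual Artin group $(W_T)_{w_T}$, which by \Cref{thm:dual-artin-group} is naturally isomorphic to $G_{W_T}$; on the other side, $X_{W_T}$ is a classifying space for $G_{W_T}$ by Deligne's theorem \cite{deligne1972immeubles}. Hence $X_{W_T}$ and $X_{W_T}'$ are both $K(G_{W_T},1)$'s. The key point for assembling these into a global equivalence is that the identifications are \emph{natural} in $T$: both $X_{W_T} \hookrightarrow X_{W_{T'}}$ and $X_{W_T}' \hookrightarrow X_{W_{T'}}'$ induce, on fundamental groups, the standard inclusion $G_{W_T} \hookrightarrow G_{W_{T'}}$ of (spherical) Artin groups. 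For $X_W$ this is built into the Salvetti construction; for $X_W'$ it follows because the inclusion is induced by the poset inclusion $[1,w_T] \hookrightarrow [1,w_{T'}]$, which respects the labels (reflections) and hence respects the generators $S \cap W_T \subseteq S \cap W_{T'}$ of the interval groups. Therefore the two diagrams $T \mapsto X_{W_T}$ and $T \mapsto X_{W_T}'$ over $\Delta_W$ are objectwise equivalent, via a zig-zag through the common classifying-space target, compatibly with all the structure maps.

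Finally I would conclude: since $X_W = \operatorname{hocolim}_{T \in \Delta_W} X_{W_T}$ and $X_W' = \operatorname{hocolim}_{T \in \Delta_W} X_{W_T}'$ (by the intersection computations of the first step, which reduce the colimits to homotopy colimits), and the two diagrams are connected by an objectwise equivalence of diagrams over $\Delta_W$, the induced map on homotopy colimits is an equivalence; thus $X_W' \simeq X_W$. The equivalence $X_W \simeq Y_W$ is the classical result recalled in \Cref{sec:coxeter-artin} \cite{salvetti1994homotopy}, so $X_W' \simeq Y_W$ follows. The main obstacle I anticipate is establishing \emph{naturality} of the objectwise equivalences with enough precision to glue — in particular, producing an actual map (or natural zig-zag) of diagrams $X_{W_\bullet}' \to X_{W_\bullet}$ rather than merely a levelwise abstract homotopy equivalence. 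One clean way around this is to avoid choosing maps between the pieces altogether: observe that $\pi_1 X_{W_T}' = \pi_1 X_{W_T} = G_{W_T}$ with the structure maps being the standard inclusions, note that both $X_W'$ and $X_W$ have fundamental group $\varinjlim_{\Delta_W} G_{W_T}$ (which is, by \cite{paris2012k}, the Artin group $G_W$), and show directly that both complexes are aspherical — for $X_W'$ because it is assembled from $K(\pi,1)$'s along $\pi_1$-injective subcomplexes indexed by $\Delta_W$ in a way that a standard spectral-sequence or Mayer–Vietoris / Quillen-type argument upgrades to asphericity — and then appeal to Whitehead's theorem, since a map $X_W' \to X_W$ inducing an isomorphism on $\pi_1$ between aspherical complexes is automatically a homotopy equivalence. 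Such a $\pi_1$-isomorphism can be produced at the level of $2$-skeleta (both presentation complexes present $G_W$) and extended over higher skeleta using asphericity of the target.
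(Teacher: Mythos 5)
You correctly identify both the right decomposition into the pieces $X_{W_T}$, $X_{W_T}'$ indexed by $\Delta_W$, and the central difficulty: upgrading a family of levelwise homotopy equivalences between $K(G_{W_T},1)$'s to an actual map of diagrams over $\Delta_W$. Up to that point your outline closely tracks the paper's argument. The workaround you propose, however, is circular. You suggest showing directly that $X_W$ and $X_W'$ are aspherical and then invoking Whitehead; but $X_W \simeq Y_W$ (recalled in \Cref{sec:coxeter-artin}), so asphericity of $X_W$ is exactly the $K(\pi,1)$ conjecture for $W$, which is the goal of the entire paper and cannot be assumed at this stage. (Nor is the proposed Mayer--Vietoris argument for asphericity of $X_W'$ sound on its own: gluing $K(\pi,1)$'s along $\pi_1$-injective subcomplexes does not in general produce a $K(\pi,1)$ without further hypotheses; that assembly question is precisely where the conjecture lives.)

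The paper resolves the naturality problem directly rather than sidestepping it. It builds maps $\varphi_T \colon X_{W_T} \to X_{W_T}'$ inductively on $|T|$, each restricting to the previously constructed $\varphi_Q$ for all $Q \subsetneq T$: the cases $|T| \le 1$ are forced; for $|T| = 2$ one extends over the single new $2$-cell to get a $\pi_1$-isomorphism; and for $|T| \ge 3$ one extends the already-defined map on $\bigcup_{Q \subsetneq T} X_{W_Q}$ across the top $|T|$-cell by the standard obstruction argument of \cite[Proposition 1B.9]{hatcher}, which uses only that the \emph{target piece} $X_{W_T}'$ is aspherical. That asphericity is available because $W_T$ is \emph{finite}: $X'_{W_T} = K_{W_T}$ is the Garside classifying space for the dual Artin group (\Cref{rmk:finite-dual-salvetti-complex}, \Cref{thm:garside-classifying-space}), and Deligne's theorem handles $X_{W_T}$. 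Since both pieces are $K(G_{W_T},1)$'s and $\varphi_T$ is a $\pi_1$-isomorphism, each $\varphi_T$ is a homotopy equivalence, and the gluing theorem for adjunction spaces (\cite[Theorem 7.5.7]{brown2006topology}) promotes the compatible family to a homotopy equivalence $X_W \to X_W'$. The asphericity input is only ever about the finite parabolic pieces, never about the assembled complexes $X_W$ or $X_W'$ --- which is exactly what keeps the proof from being circular.
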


\begin{proof}
	%
	%
	%
	Since $X_{W}$ is homotopy equivalent to $Y_W$, it is enough to show that $X_W'$ is homotopy equivalent to $X_W$.
	To keep our notation uncluttered, throughout this proof we indicate $X_W, X_W', X_{W_T}, X_{W_T}'$ by $X, X', X_T, X_T'$, respectively.
	
	For every $T \in \Delta_W$, both complexes $X_{T}$ and $X_{T}'$ are classifying spaces for the Artin group $G_{W_T}$.
	We are going to inductively construct homotopy equivalences $\varphi_T \colon X_{T} \to X'_{T}$ satisfying the following naturality property: for all $Q \subseteq T \in \Delta_W$, there is a commutative diagram
	\begin{center}
		\begin{tikzcd}
			X_{Q} \arrow[r, "\varphi_Q"] \arrow[d, hook]
			& X'_{Q} \arrow[d, hook] \\
			X_{T} \arrow[r, "\varphi_T"]
			& X'_{T}.
		\end{tikzcd}    
	\end{center}
	The construction of $\varphi_T$ is the following, assuming to have already constructed $\varphi_Q$ for all $Q \subsetneq T$.
	\begin{itemize}
		\item If $T = \emptyset$, then $X_{T}$ and $X'_{T}$ are single points, and there is only one map $\varphi_T$ between them.
		
		\item If $|T| = 1$, then both $X_{T}$ and $X'_{T}$ consist of one oriented $1$-cell attached to one $0$-cell.
		Define $\varphi_T$ as any orientation-preserving cellular homeomorphism.
		
		\item Let $|T| = 2$, say $T = \{s, s'\}$.
		We can extend $\varphi_{\{s\}} \cup \varphi_{\{s'\}} \colon X_{\{s\}} \cup X_{\{s'\}} \to X_T'$ to the $2$-cell of $X_T$, and obtain a map $\varphi_T\colon X_T \to X_T'$ such that the induced map $(\varphi_T)_* \colon \pi_1(X_{T}, X_{\emptyset}) \to \pi_1(X_{T}', X_{\emptyset}')$ is an isomorphism (see the proof of \cite[Proposition 1B.9]{hatcher}).
		Since $X_{T}$ and $X_T'$ are classifying spaces, we have that $\varphi_T$ is a homotopy equivalence.
		
		\item Let $|T| \geq 3$. By construction, the map $\bigcup_{Q\subsetneq T} \varphi_Q \colon \bigcup_{Q\subsetneq T} X_Q \to X_T'$ induces an isomorphism on the fundamental groups (which are both isomorphic to the Artin group $G_{W_T}$).
		Extend this map to the $|T|$-cell of $X_T$, to get a map $\varphi_T \colon X_T \to X_T'$ which also induces an isomorphism on the fundamental groups (as in the proof of \cite[Proposition 1B.9]{hatcher}).
		As before, $\varphi_T$ is a homotopy equivalence.
	\end{itemize}
	Gluing together all these maps, we obtain a map $\varphi\colon X_W \to X'_W$.
	This is a homotopy equivalence by a repeated application of the gluing theorem for adjunction spaces \cite[Theorem 7.5.7]{brown2006topology}.
\end{proof}

\begin{corollary}
	\label{cor:dual-fundamental-group}
	For every Coxeter group $W$, the fundamental group of $X_W'$ is isomorphic to the Artin group $G_W$.
\end{corollary}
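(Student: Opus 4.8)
The plan is to deduce this directly from \Cref{thm:dual-salvetti} together with the standard identification of the fundamental group of the Salvetti complex. First I would recall that, as explained in \Cref{sec:coxeter-artin}, the Salvetti complex $X_W$ is a CW model for the orbit configuration space $Y_W$, and by \cite{van1983homotopy, salvetti1994homotopy} one has $\pi_1(Y_W) \cong G_W$; hence $\pi_1(X_W) \cong G_W$ as well. Concretely, with respect to the cell structure of $X_W$ described in \Cref{sec:coxeter-artin}, the oriented $1$-cells $c_{\{s\}}$ ($s \in S$) are a generating set realizing the standard Artin presentation, the $2$-cells $c_{\{s,t\}}$ imposing exactly the braid relations.

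Then I would apply \Cref{thm:dual-salvetti}: the subcomplex $X_W' \subseteq K_W$ is homotopy equivalent to $X_W$ (the homotopy equivalence $\varphi\colon X_W \to X_W'$ is the one built in the proof of that theorem by gluing together the maps $\varphi_T$ on the finite subcomplexes $X_{W_T} \to X_{W_T}'$). A homotopy equivalence induces an isomorphism on fundamental groups, so
\[
\pi_1(X_W') \;\cong\; \pi_1(X_W) \;\cong\; G_W.
\]
This completes the argument. If one wishes to be slightly more precise about the isomorphism, note that $\varphi$ restricts to $\varphi_{\{s\}}$ on each $1$-cell $c_{\{s\}}$, which is an orientation-preserving homeomorphism onto the corresponding $1$-simplex $[s]$ of $X_W'$; hence the induced isomorphism $G_W = \pi_1(X_W) \to \pi_1(X_W')$ carries the standard Artin generator $s$ to the class of the loop $[s]$.

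There is no real obstacle here: the entire content lies in \Cref{thm:dual-salvetti}, and this corollary is simply the observation that homotopy equivalence preserves $\pi_1$ combined with the well-known computation $\pi_1(X_W) \cong G_W$. The only point worth stating explicitly is which complex plays the role of the ``known'' model with fundamental group $G_W$, namely the Salvetti complex $X_W$, to which \Cref{thm:dual-salvetti} compares $X_W'$.
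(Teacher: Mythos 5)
Your argument is correct and is exactly the intended deduction: the paper leaves this corollary proofless because it is an immediate consequence of \Cref{thm:dual-salvetti} combined with the standard identification $\pi_1(X_W) \cong \pi_1(Y_W) \cong G_W$. The extra remark tracing the generator $s$ to the loop $[s]$ via $\varphi$ is a nice touch and is consistent with how the paper later uses this identification in the proof that $G_W \to W_w$ is an isomorphism.
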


\begin{remark}
	The complex $X_W'$ depends on the Coxeter element $w$ and on the set of simple reflections $S$.
	However, since all Coxeter elements of a finite parabolic subgroup $W_T$ are geometrically equivalent, the $f$-vector of $X_W'$ depends only on $W$ (it can be computed via inclusion-exclusion in terms of the subcomplexes $K_{W_T}$).
\end{remark}

In view of \Cref{thm:dual-salvetti}, the $K(\pi,1)$ conjecture holds for an Artin group $G_W$ if and only if $X_W'$ is a classifying space.
The following is an alternative characterization of the cells of the complex $X_W'$ in the affine case.

\begin{lemma}
	\label{lemma:affine-dual-salvetti-complex}
	Let $W$ be an irreducible affine Coxeter group, with a set $S$ of simple reflections and a Coxeter element $w$ obtained as a product of the elements of $S$.
	Denote by $C_0$ the chamber of the Coxeter complex associated with $S$.
	A simplex $[x_1|x_2|\dotsb|x_d] \in K_W$ belongs to $X_W'$ if and only if $x_1x_2\dotsm x_d$ is an elliptic element that fixes at least one vertex of $C_0$.
\end{lemma}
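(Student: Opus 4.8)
The plan is to translate both directions into statements about the single element $u:=x_1x_2\dotsm x_d$, using two standard features of the set-up. First, a simplex $[x_1|x_2|\dotsb|x_d]$ lies in $K_W$ exactly when $u\in[1,w]^W$ (condition (ii) of \Cref{def:interval-complex}), so $u$ may be treated as an arbitrary element of $[1,w]^W$. Second, since $W$ is irreducible affine, every proper standard parabolic subgroup is finite (cf.\ the remark after \Cref{thm:coxeter-elements}); hence $\Delta_W$ consists of all proper subsets of $S$, and for each $s\in S$ the vertex $v_s$ of $C_0$ not lying on the wall $\Fix(s)$ has $W$-stabilizer $\mathrm{Stab}_W(v_s)=W_{S\setminus\{s\}}$, the usual description of vertex stabilizers in a Coxeter complex \cite{bourbaki1968elements,humphreys1992reflection}. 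I will also use \Cref{lemma:interval-in-parabolic-subgroup} freely: for every $T\in\Delta_W$ one has $[1,w_T]=[1,w_T]^{W_T}\subseteq W_T$, and $w_T\in[1,w]^W$.

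For the forward implication I would argue directly from the dictionary. If $[x_1|\dotsb|x_d]\in X_W'$, then by \Cref{def:dual-salvetti} there is $T\in\Delta_W$ with $u\in[1,w_T]\subseteq W_T$. Since $T\subsetneq S$, choose $s\in S\setminus T$; then $W_T\subseteq W_{S\setminus\{s\}}=\mathrm{Stab}_W(v_s)$, so $u$ fixes the vertex $v_s$ of $C_0$ and is in particular elliptic. No further input is needed here.

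The reverse implication is the one with content. Assume $u$ is elliptic and fixes some vertex $v$ of $C_0$; write $v=v_s$ and set $T=S\setminus\{s\}\in\Delta_W$, so $W_T=\mathrm{Stab}_W(v_s)$ is finite of rank $|T|=n=\dim E$. The plan is to compare $u$ with the canonical Coxeter element $w_T$ of $W_T$. Being an element of $W_T$, $w_T$ fixes $v_s$; and since its reflection length equals $|T|=n$ by \Cref{lemma:coxeter-reflection-length}, while a Coxeter element of a finite reflection group acting essentially has no nonzero fixed vectors \cite{humphreys1992reflection}, its fixed set in $E$ is the single point $v_s$. Hence $\Fix(u)\supseteq\{v_s\}=\Fix(w_T)$. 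Now $u$ and $w_T$ are both elliptic elements of $[1,w]^W$, so \Cref{lemma:elliptic-subposet} applies and gives $u\le w_T$ in $[1,w]^W$, i.e.\ $u\in[1,w_T]$. By \Cref{def:dual-salvetti} the simplex $[x_1|\dotsb|x_d]$ belongs to $X_W'$, as desired.

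The single step I expect to be essential — everything else being bookkeeping — is the appeal to \Cref{lemma:elliptic-subposet}. It is precisely what licenses passing from the purely geometric inclusion $\Fix(u)\supseteq\Fix(w_T)$ to the combinatorial relation $u\le w_T$ inside $[1,w]^W$; this is a genuinely affine phenomenon, since in general $[1,w]^W$ is not a subposet of $[1,w]^L$ (cf.\ \Cref{ex:horizontal-A2}), so the inclusion of fixed sets by itself does not force the inequality. The remaining ingredients — identifying $\mathrm{Stab}_W(v_s)$ with $W_{S\setminus\{s\}}$, observing that proper parabolics of an irreducible affine Coxeter group are finite, and noting that a Coxeter element of a finite parabolic has a single fixed point — are all classical, so I do not anticipate difficulty there.
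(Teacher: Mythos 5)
Your proof is correct and follows essentially the same route as the paper's: both directions reduce to identifying the fixed sets of the parabolic Coxeter elements $w_T$ (for $|T|=|S|-1$) with the vertices of $C_0$, and the reverse implication hinges on exactly the same appeal to \Cref{lemma:elliptic-subposet} to upgrade $\Fix(u)\supseteq\Fix(w_T)$ to $u\le w_T$ in $[1,w]^W$. The only cosmetic difference is that for the forward direction you use the stabilizer containment $W_T\subseteq\mathrm{Stab}_W(v_s)$ directly, whereas the paper invokes \Cref{lemma:model-poset} — both are immediate.
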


\begin{proof}
	For every subset $T \subseteq S$ with $|T| = |S|-1$, the fixed set of the parabolic Coxeter element $w_T$ is given by one of the vertices of $C_0$ by \Cref{lemma:model-poset}.
	Conversely, every vertex of $C_0$ is the fixed set of exactly one such parabolic Coxeter element.
	We conclude using \Cref{lemma:model-poset,lemma:elliptic-subposet}.
\end{proof}

\begin{example}[Dual complexes for $\tilde A_2$ and $\tilde G_2$]
	The $f$-vector of $X_W'$ is $(1,9,9)$ if $W$ is of type $\tilde A_2$, and $(1,11,11)$ if $W$ is of type $\tilde G_2$.
	For $\tilde A_2$, the complex $X_W'$ is explicitly described in \Cref{ex:fiber-components-A2} and \Cref{fig:fiber-components-A2}.
	Notice that the Salvetti complex $X_W$ is much smaller, as in both cases its $f$-vector is $(1,3,3)$.
\end{example}

\section{Classifying spaces for dual affine Artin groups}
\label{sec:classifying-spaces}

Let $W$ be an irreducible affine Coxeter group, and $w$ one of its Coxeter elements.
In this section, we prove that the interval complex $K_W$ associated with $[1,w]^W$ is a classifying space for the dual Artin group $W_w$.
This is somewhat surprising since the interval $[1,w]^W$ is not a lattice in general.

As usual, let $W$ act by Euclidean isometries on $\R^n$, where $n$ is the rank of $W$.
Let $R \subseteq W$ be the set of reflections, and denote by $R_\h$ and $R_\v$ the horizontal and vertical reflections of $[1,w]^W$, respectively.
To proceed, we briefly recall the construction of McCammond and Sulway that leads to braided crystallographic groups \cite{mccammond2017artin}.
For this, new groups of Euclidean isometries are introduced.

\begin{itemize}
	\item The \emph{diagonal group} $D$, generated by $R_\h$ and $T$.
	Here $T$ is the (finite) set of all translations of $[1,w]^W$.
	Translations are assigned a weight of $2$.
	\item The \emph{factorable group} $F$, generated by $R_\h$ and by a set $T_F$ of \emph{factored translations}.
	There are $k$ factored translations $t_1, \dotsc, t_k$ for each translation $t \in T$, and they satisfy $t_1\dotsm t_k = t$, where $k$ is the number of irreducible components of the horizontal root system $\Phi_\h$.
	Factored translations are assigned a weight of $\frac{2}{k}$.
	\item The \emph{crystallographic group} $C$, generated by $R_\h$, $R_\v$, and $T_F$.
\end{itemize}
The diagonal group $D$ is included in both $W$ and $F$, and all of them are included in the crystallographic group $C$.
By \cite[Lemma 7.2]{mccammond2017artin}, the associated intervals are related as follows:
\begin{align*}
[1,w]^C &= [1,w]^W \cup [1,w]^F \\
[1,w]^D &= [1,w]^W \cap [1,w]^F.
\end{align*}

The intervals $[1,w]^D$ and $[1,w]^F$ are finite, whereas $[1,w]^W$ and $[1,w]^C$ are infinite.
The factored translations are introduced so that the intervals $[1,w]^F$ and $[1,w]^C$ are balanced lattices \cite[Propositions 7.4 and 7.6, and Theorem 8.10]{mccammond2017artin}.
On the other hand, the intervals $[1,w]^D$ and $[1,w]^W$ are lattices if and only if the horizontal root system $\Phi_\h$ is irreducible (i.e.\ $k=1$), in which case $D=F$ and $W=C$.

From these new intervals, one can construct the interval groups $D_w$, $F_w$, and $C_w$. The group $C_w$ is called the \emph{braided crystallographic group}. 
The inclusions between the four intervals induce inclusions between the corresponding interval groups: $D_w \hookrightarrow W_w$, $D_w \hookrightarrow F_w$, $W_w \hookrightarrow C_w$, and $F_w \hookrightarrow C_w$ \cite[Theorem 9.6]{mccammond2017artin}.
Since the intervals $[1,w]^F$ and $[1,w]^C$ are lattices, the interval groups $F_w$ and $C_w$ are Garside groups (\Cref{thm:garside-structure}) and the corresponding interval complexes $K_F$ and $K_C$ are classifying spaces (\Cref{thm:garside-classifying-space}).
A consequence of the relations between the four intervals is that $K_C = K_W \cup K_F$ and $K_D = K_W \cap K_F$, where $K_D$ is the interval complex associated with $[1,w]^D$.
As noted in \cite[Proposition 11.1]{mccammond2017artin}, the cover of $K_C$ corresponding to the subgroup $W_w \subseteq C_w$ is a finite-dimensional classifying space for the (dual) Artin group $W_w$.
However this cover is not very explicit, and it is difficult to work with it in practice.

Let $H$ be the subgroup of $D$ generated by $R_\h$.
With the notation of \Cref{sec:horizontal-components} we have $H = W_1 \times \dotsb \times W_k$, where $W_i$ is the subgroup generated by the reflections associated with the $i$-th irreducible component $\Phi_i$ of the horizontal root system $\Phi_\h$.
Recall that $W_i$ is a Coxeter group of type $\tilde A_{n_i}$, where $n_i$ is the rank of $\Phi_i$.
By \cite[Proposition 7.6]{mccammond2017artin}, the horizontal part $[1,w]^W \cap H$ of the interval $[1,w]^W$ decomposes as
\begin{equation}
	\label{eq:horizontal-decomposition}
	[1,w]^W \cap H = \Big( [1,w]^W \cap W_1 \Big) \times \dotsb \times \Big( [1,w]^W \cap W_k \Big),
\end{equation}
and the single factors are described in \Cref{sec:horizontal-components} (a more direct proof of this decomposition can be derived in a similar way to the proof of \Cref{lemma:common-horizontal}).
Let $H_w$ be the group with generating set $R_\h$ and subject only to the relations visible in $[1,w]^W \cap H$.
It is a subgroup of $D_w$ \cite[Lemma 9.3]{mccammond2017artin}, and it decomposes as a direct product of $k$ Artin groups of types $\tilde A_{n_1}, \dotsc, \tilde A_{n_k}$, one for each irreducible component.

%
%

Let $K_H$ be the subcomplex of $K_D$ consisting of the simplices $[x_1|x_2|\dotsb|x_d]$ such that $x_1x_2\dotsm x_d \in H$.
The fundamental group of $K_H$ is naturally isomorphic to $H_w$.
Denote by $K_i$ the subcomplex of $K_H$ consisting of the simplices $[x_1|x_2|\dotsb|x_d]$ such that $x_1x_2\dotsm x_d \in [1,w]^W \cap W_i$.
The fundamental group of $K_i$ is an Artin group of type $\tilde A_{n_i}$.

\begin{lemma}
	$K_H$ is homeomorphic to $K_{1} \times \dotsb \times K_{k}$.
	\label{lemma:KH}
\end{lemma}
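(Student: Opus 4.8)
The plan is to present both sides as geometric realizations of simplicial sets of \emph{strings} of interval elements, and to use the product decomposition $H = W_1 \times \dotsb \times W_k$ to split each string into a $k$-tuple of strings.

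First I would identify $K_H$ with the interval complex of the interval $[1,w]^W \cap H$. Since $[1,w]^D = [1,w]^W \cap [1,w]^F \subseteq [1,w]^W$, we have $K_D \subseteq K_W$, so $K_H$ is the subcomplex of $K_W$ consisting of the simplices $[x_1|\dotsb|x_d]$ with $x_1 \dotsm x_d \in H$. For such a simplex each factor satisfies $x_j \le x_1 \dotsm x_d$ in $[1,w]^W$; as $x_1 \dotsm x_d$ is horizontal elliptic (its reduced reflection factorizations use only horizontal reflections, which lie in $H$), the three-row (coarse) structure of $[1,w]^W$ following \Cref{lemma:elliptic-subposet} forces every $x_j$ to be horizontal elliptic, hence to lie in $H$. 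Thus the simplices of $K_H$ are precisely the strings $[x_1|\dotsb|x_d]$ with $x_j \in ([1,w]^W \cap H) \setminus \{1\}$, $x_1 \dotsm x_d \in [1,w]^W \cap H$, and $l(x_1 \dotsm x_d) = \sum_j l(x_j)$; replacing $H$ by $W_i$ gives the analogous description of $K_i$.

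Next I would record the two facts that make strings split. By \eqref{eq:horizontal-decomposition}, every $u \in [1,w]^W \cap H$ has a unique decomposition $u = u^{(1)} \dotsm u^{(k)}$ with $u^{(i)} \in [1,w]^W \cap W_i$ (possibly trivial), and reflection length is additive along this decomposition, $l(u) = \sum_i l(u^{(i)})$: the subspaces $\Span(\Phi_i)$ are pairwise orthogonal, so concatenating reduced reflection factorizations of the $u^{(i)}$ produces a reflection factorization of $u$ whose roots are linearly independent (independent within each component because each $u^{(i)}$ is elliptic, and across components by orthogonality), hence reduced by \cite[Lemma 6.4]{brady2015factoring}. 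Now allow the entries of a string to equal the identity, so that $K_H$ and each $K_i$ become the geometric realizations of simplicial sets whose $d$-simplices are tuples $(x_1, \dotsc, x_d)$ of interval elements with product in the interval and with additive reflection length, the identity entries serving as degeneracies. By the two facts above, sending $(x_1, \dotsc, x_d)$ to the $k$-tuple of its componentwise strings $\big((x_1^{(i)}, \dotsc, x_d^{(i)})\big)_{i=1}^{k}$ is a bijection, natural in the face and degeneracy maps; hence the simplicial set for $K_H$ is the levelwise product of those for $K_1, \dotsc, K_k$.

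Finally I would invoke that geometric realization of simplicial sets preserves finite products. Every $[1,w]^W \cap W_i$ is finite — it is isomorphic to a subposet of a type $B$ noncrossing partition lattice by \Cref{rmk:horizontal-coordinates} — so all the complexes in sight are finite and no point-set hypotheses are needed; therefore $K_H \cong K_1 \times \dotsb \times K_k$. I expect the real work to be the preparatory two paragraphs: pinning down that $K_H$ is the interval complex of $[1,w]^W \cap H$, and that reflection length is additive along the horizontal components. Once these are in place the rest is the standard simplicial fact; alternatively one can write the homeomorphism down directly, sending a point of the open simplex $[x_1|\dotsb|x_d]$ with coordinates $1 > a_1 > \dotsb > a_d > 0$ to the tuple of points of the $K_i$ spanned by the nontrivial $x_j^{(i)}$ together with the corresponding subsequence of the $a_j$, but this amounts to the same bookkeeping carried out by hand.
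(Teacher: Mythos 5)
Your proof is correct, and it takes a genuinely different route from the paper's. The paper constructs the homeomorphism $\psi \colon K_1 \times \dotsb \times K_k \to K_H$ explicitly, by realizing a product of simplices $\Delta^{d_1} \times \dotsb \times \Delta^{d_k}$, sorting the combined coordinates $(a_{ij})$ into weakly decreasing order (the enumeration $\gamma$), and reading off the corresponding simplex $[x_{\gamma(1)}|\dotsb|x_{\gamma(d_1+\dotsb+d_k)}]$ of $K_H$; the input needed is exactly the product decomposition \eqref{eq:horizontal-decomposition}. You instead promote the $\Delta$-complexes to simplicial sets by allowing identity entries (degeneracies), observe that the componentwise decomposition $u \mapsto (u^{(1)},\dotsc,u^{(k)})$ together with additivity of reflection length across the orthogonal horizontal components makes the simplicial set for $K_H$ the levelwise product of those for the $K_i$, and then cite that geometric realization preserves (finite, or compactly generated) products. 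These are the same bookkeeping in two languages: the paper's sort-and-merge map $\gamma$ is precisely the classical Milnor/Eilenberg--Zilber homeomorphism $|X \times Y| \cong |X| \times |Y|$ for simplicial sets, unwound by hand. Your version is shorter and less error-prone once the simplicial-set formalism is admitted; the paper's version is elementary and self-contained, which is probably why it was chosen. Two small things worth making explicit in your write-up: (a) the paper defines $K_H$ as a subcomplex of $K_D$ rather than of $K_W$, so you should note that for horizontal elliptic elements the intervals and length functions of $D$ and $W$ agree, which is why your identification with the interval complex of $[1,w]^W \cap H$ is harmless; and (b) your appeal to \cite[Lemma 6.4]{brady2015factoring} for length additivity is essentially re-deriving what the product-poset structure in \eqref{eq:horizontal-decomposition}, i.e.\ \cite[Proposition 7.6]{mccammond2017artin}, already encodes (a rank function of a product poset is the sum of rank functions), so you could also just cite that directly.
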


\begin{proof}
	We claim that $K_H$ is a triangulation of the natural cellular structure of $K_{1} \times \dotsb \times K_{k}$.
	To show this, we explicitly construct a homeomorphism $\psi \colon K_{1} \times \dotsb \times K_{k} \to K_H$.
	Consider a cell of $K_{1} \times \dotsb \times K_{k}$, which is a product of simplices
	\[ [x_{11}| x_{12}| \dotsb | x_{1d_1}] \times \dotsb \times [x_{k1}| x_{k2}| \dotsb | x_{kd_k}]. \]
	This is realized as $\Delta^{d_1} \times \dotsb \times \Delta^{d_k} \subseteq \R^{d_1+\dotsb+d_k}$.
	Consider a point $p \in \Delta^{d_1} \times \dotsb \times \Delta^{d_k}$, with coordinates given by
	\[ (a_{11},\dotsc, a_{1d_1},\dotsc,a_{k1},\dotsc,a_{kd_k}) \in \R^{d_1+\dotsb+d_k}. \]
	Assume for now that the coordinates of $p$ are pairwise distinct.
	Then there is a unique enumeration $\gamma \colon \{1,\dotsc,d_1+\dotsb+d_k\} \to \{11, \dotsc, 1d_1, \dotsc, k1, \dotsc, kd_k \}$ of the indices such that $a_{\gamma(1)} \geq a_{\gamma(2)} \geq \dotsb \geq a_{\gamma(d_1+\dotsb+d_k)}$.
	Notice that, in this enumeration, the relative order of indices in the same horizontal component is preserved.
	Define $\psi(p)$ as the point of $[x_{\gamma(1)}| x_{\gamma(2)} |\dotsb | x_{\gamma(d_1+\dotsb+d_k)}]$ with coordinates
	\[ (a_{\gamma(1)}, a_{\gamma(2)}, \dotsc, a_{\gamma(d_1+\dotsb+d_k)}) \in \Delta^{d_1+\dotsb+d_k}. \]
	If the coordinates of $p$ are not pairwise distinct, there are multiple choices for the enumeration $\gamma$, and any choice gives the same definition of $\psi(p)$.
	See \Cref{fig:triangulation-KH} for some examples.
	Using the decomposition \eqref{eq:horizontal-decomposition}, we obtain that $\psi$ is a homeomorphism.
\end{proof}

\tikzstyle{reverseclip}=[insert path={(current page.north east) --
	(current page.south east) --
	(current page.south west) --
	(current page.north west) --
	(current page.north east)}
]

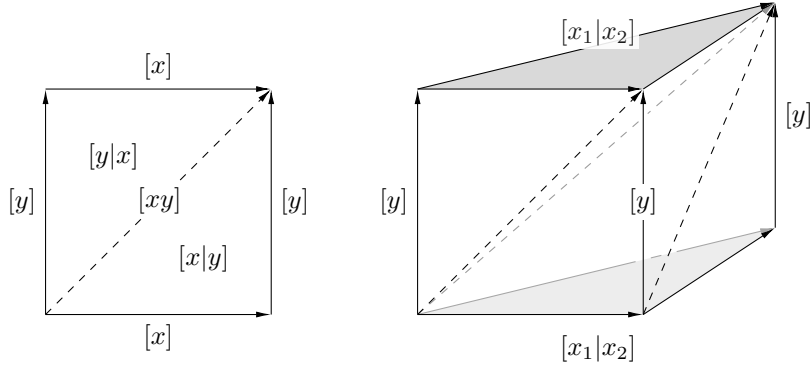
\begin{figure}
	\raisebox{0.15cm}{
	\begin{tikzpicture}[scale=3]
	\begin{scope}[every path/.style={-{Latex[length=2mm,width=0.8mm]}}]
	\draw (0,0) -- (1,0) node[midway,below] {$[x]$};
	\draw (0,0) -- (0,1) node[midway,left] {$[y]$};
	\draw (1,0) -- (1,1) node[midway,right] {$[y]$};
	\draw (0,1) -- (1,1) node[midway,above] {$[x]$};
	
	\draw[dashed] (0,0) -- (1,1);
	\end{scope}
	
	\node at (0.7,0.25) {$[x|y]$};
	\node at (0.3,0.7) {$[y|x]$};
	\node[fill=white, inner sep=1pt] at (0.505,0.505) {$[xy]$};
	\end{tikzpicture}}\qquad
	\begin{tikzpicture}[scale=3]
	\newcommand{\xcoord}{1.2}
	\draw[name path=a, white] (1,0,0) -- (1,1,0);
	\draw[name path=b, white] (0,0,0) -- (\xcoord,0,-1);
	\draw[name path=c, white] (0,0,0) -- (\xcoord,1,-1);
	\draw[name path=d, white] (1,0,0) -- (\xcoord,1,-1);
	
	\path [name intersections={of=a and b,by=P}];
	\path [name intersections={of=b and d,by=Q}];
	\path [name intersections={of=a and c,by=R}];
	
	\fill[black!15] (0,1,0) -- (1,1,0) -- (\xcoord,1,-1) -- cycle;
	\fill[black!7] (0,0,0) -- (1,0,0) -- (\xcoord,0,-1) -- cycle;

	\begin{scope}[every path/.style={-{Latex[length=2mm,width=0.8mm]}}]
	
	\begin{scope}
	\begin{pgfinterruptboundingbox}
	\clip (P) circle (0.035) [reverseclip];
	\clip (Q) circle (0.035) [reverseclip];
	\end{pgfinterruptboundingbox}
	\draw[black!35] (0,0,0) -- (\xcoord,0,-1);
	\end{scope}
	
	\draw (0,0,0) -- (0,1,0) node[midway,left] {$[y]$};
	\draw (0,1,0) -- (1,1,0);
	\draw (0,1,0) -- (\xcoord,1,-1);
	\draw (1,1,0) -- (\xcoord,1,-1);
	\draw (\xcoord,0,-1) -- (\xcoord,1,-1) node[midway,right] {$[y]$};
	
	\draw[dashed, black!40] (0,0,0) -- (\xcoord,1,-1);
	
	\node[inner sep=1.6pt, fill=white, circle] at (R) {};

	\draw (1,0,0) -- (1,1,0);
	
	\draw[dashed] (0,0,0) -- (1,1,0);
	\draw[dashed] (1,0,0) -- (\xcoord,1,-1);
	\draw (0,0,0) -- (1,0,0);
	\draw (1,0,0) -- (\xcoord,0,-1);
	\node[fill=white, inner sep=1.8pt] at (1,0.5,0) {$[y]$};
	\end{scope}
	
	\node at (0.8, -0.15, 0) {$[x_1|x_2]$};
	\node[fill=white, inner sep=1pt, opacity=0.9] at (0.8, 1.25, 0) {$[x_1|x_2]$};
	\end{tikzpicture}
	\caption{
		On the left, triangulation of a cell $[x] \times [y]$ of $K_1 \times K_2$.
		It is homeomorphic to $[x|y] \cup [y|x] \subseteq K_H$.
		On the right, triangulation of a cell $[x_1|x_2] \times [y]$, which is homeomorphic to $[x_1|x_2|y] \cup [x_1|y|x_2] \cup [y|x_1|x_2] \allowbreak\subseteq K_H$.
	}
	\label{fig:triangulation-KH}
\end{figure}

Denote by $\varphi\colon [1,w]^W \to [1,w]^W$ the conjugation by $w$: $\varphi(u) = w^{-1} u w$.

\begin{lemma}
	$K_D$ is homeomorphic to $K_H \times [0,1]\,/ {\sim}$, where the relation $\sim$ identifies $[x_1|x_2|\dotsb|x_d] \times \{1\}$ and $[\varphi(x_1) | \varphi(x_2) | \dotsb | \varphi(x_d)] \times \{0\}$ for every simplex $[x_1|x_2|\dotsb|x_d]$ of $K_H$.
	\label{lemma:KD}
\end{lemma}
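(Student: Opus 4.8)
The goal is to exhibit $K_D$ as the mapping torus of the self-homeomorphism of $K_H$ induced by $\varphi$, and the cleanest route is to build an explicit simplicial homeomorphism $\psi\colon K_H\times[0,1]/{\sim}\,\to K_D$, in the same spirit as the shuffle construction used in \Cref{lemma:KH}. The first task is to describe the poset $[1,w]^D$ precisely. Every $u\in[1,w]^D$ is either elliptic, in which case it is horizontal elliptic and lies in $H$ (these are exactly the elements of $[1,w]^W\cap H$, and the corresponding simplices of $K_D$ are exactly the simplices of $K_H$), or it is hyperbolic. A weight count shows that every minimal-weight factorization of $w$ in $D$ uses \emph{exactly one} translation from $T$ (two translations would contribute weight $4$ on top of a horizontal part of weight at most $n-1$, forcing the product of the two translations to be a translation of reflection length $2$, contradicting additivity of lengths); consequently every minimal factorization of a hyperbolic $u\le w$ also uses exactly one translation. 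Hence a simplex $[x_1|\dotsb|x_d]$ of $K_D$ whose product is hyperbolic has a unique \emph{hyperbolic letter} $x_j$, with $x_1,\dotsc,x_{j-1},x_{j+1},\dotsc,x_d$ horizontal; and the hyperbolic letter $x_j$ itself is, up to Hurwitz moves, a horizontal element followed by one translation of $T$ followed by a horizontal element.

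\textbf{Construction of $\psi$.} I would build $\psi$ cell by cell. Over a $d$-simplex $[x_1|\dotsb|x_d]$ of $K_H$ (so all $x_i\in H$), the prism $[x_1|\dotsb|x_d]\times[0,1]$ is subdivided by the standard shuffle (staircase) triangulation into $d{+}1$ top-dimensional simplices indexed by a cut position $j\in\{0,\dotsc,d\}$. I would send the piece with cut position $j$ to the simplex
\[ [\,x_1\,|\,\dotsb\,|\,x_j\,|\,\tau_j\,|\,\varphi(x_{j+1})\,|\,\dotsb\,|\,\varphi(x_d)\,] \]
of $K_D$, where $\tau_j\in T$ is the unique translation making this a valid simplex of $K_D$ (i.e.\ making the product lie in $[1,w]^D$ with additive lengths); the appearance of $\varphi$ on the letters to the right of the cut is forced by the gluing $\sim$, which encodes that passing once through the ``time'' direction is conjugation by $w$. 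Explicitly, $\varphi$-conjugation of horizontal elements is the cyclic shift on horizontal components described in \Cref{sec:horizontal-components} (in particular \Cref{prop:horizontal-axial}), so the existence and uniqueness of $\tau_j$, and the fact that the resulting list really is a simplex of $K_D$, follow from the structural results on horizontal prisms together with \Cref{cor:two-horizontal-reflections} and \Cref{lemma:common-horizontal} (which pin down exactly which translated horizontal reflections occur below $w$). One then checks, face by face, that these maps agree on shared faces of adjacent prisms and of adjacent pieces within one prism, that at $t=0$ and $t=1$ the map restricts to the identity of $K_H$ composed with $\varphi$ as dictated by $\sim$, that $\psi$ is a homeomorphism on each closed prism piece, and that every simplex of $K_D$ (horizontal ones from $K_H\times\{0\}$, hyperbolic ones from the prism interiors) is hit exactly once. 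This yields a bijective cellular map which is a homeomorphism.

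\textbf{Main obstacle.} The routine part is the prism bookkeeping; the genuine difficulty is the combinatorial correspondence in the hyperbolic range, namely showing that a hyperbolic element of $[1,w]^D$ carries a \emph{canonical} decomposition into (horizontal part)$\cdot$(translation)$\cdot$(horizontal part) compatible with all Hurwitz moves, and that the left and right horizontal parts transform under $\varphi$ exactly as the shuffle triangulation of a $\varphi$-twisted prism demands. This is where one must invoke the geometry of the horizontal components from \Cref{sec:horizontal-components}: that the translation part of a hyperbolic $u$ is its canonical (vertical) minimal move-vector hence determined by $u$, that $\varphi$ permutes the maximal horizontal elements cyclically, and that \Cref{cor:two-horizontal-reflections} limits the translated horizontal reflections that can occur. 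Once these facts are in hand, the face identifications all match and $\psi$ is well-defined and invertible; the decomposition $[1,u]=[1,u']\times[1,h]$ of \Cref{lemma:hyperbolic-decomposition} (applied inside $D$) can be used to streamline the verification that $\psi$ restricts correctly to the prism over each horizontal simplex.
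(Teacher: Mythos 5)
The overall plan---realize $K_D$ as the mapping torus of $\varphi\colon K_H\to K_H$ by an explicit shuffle/prism subdivision, in the same spirit as \Cref{lemma:KH}---is exactly what the paper does, and the structural remark that a hyperbolic simplex of $K_D$ has exactly one hyperbolic letter is correct and is used in the paper.  However, the construction of $\psi$ contains a genuine error.  You insert a letter $\tau_j\in T$, i.e.\ a \emph{translation}, at the cut position, and claim it is the unique translation making the list a simplex of $K_D$.  This is wrong in two ways.  First, the inserted letter must be the \emph{right complement} $y=(x_1\dotsm x_d)^{-1}w$ of the horizontal part; this is hyperbolic, but it is a translation only when $x_1\dotsm x_d$ has maximal horizontal length $n-1$, so for any non-top-dimensional prism (in particular the prism over the basepoint $[\,]$, whose image is the edge $[w]$) there is simply no translation that works.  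Second, even if one drops the constraint $\tau_j\in T$, your formula makes $\tau_j$ depend on the cut position $j$, whereas the clean feature of the right-complement choice is that the \emph{same} letter $y$ appears for every cut $i$: the paper sends the $i$-th piece of the prism $[x_1|\dotsb|x_d]\times[0,1]$ to the simplex $[x_{i+1}|\dotsb|x_d\,|\,y\,|\,\varphi(x_1)|\dotsb|\varphi(x_i)]$, whose product is automatically $w$.  This both guarantees validity (no case analysis is needed) and makes the face-matching between adjacent pieces immediate.

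The "main obstacle" you flag---producing a canonical (horizontal)$\cdot$(translation)$\cdot$(horizontal) decomposition of the hyperbolic letter compatible with Hurwitz moves---is a red herring introduced by the incorrect belief that the inserted letter should be a translation.  In the correct construction the hyperbolic letter $y$ is a \emph{single} letter of the simplex and is never factored; the invocations of \Cref{prop:horizontal-axial}, \Cref{cor:two-horizontal-reflections}, \Cref{lemma:common-horizontal}, and \Cref{lemma:hyperbolic-decomposition} that you propose for this purpose are all unnecessary.  What one actually needs is much lighter: that $y$ is hyperbolic and nonzero (so the inserted letter is a genuine letter), that the coordinates $(t+a_{i+1},\dotsc,t+a_d,\,t,\,t+a_1-1,\dotsc,t+a_i-1)$ define a consistent map on the pieces of the prism and on shared faces of adjacent prisms (a short direct check, especially at the $t=1$ gluing), and that every maximal simplex of $K_D$ has the stated form with exactly one hyperbolic letter.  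Your argument that a minimal-weight factorization of $w$ in $D$ uses exactly one translation is fine as far as it goes, but it addresses only the top-dimensional cells and is subsumed by the simpler observation that if $z_j$ is hyperbolic then both $z_1\dotsm z_j$ and $z_j\dotsm z_d$ are hyperbolic, forcing the other letters to be horizontal elliptic.
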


\begin{proof}
	Let $Z = K_H \times [0,1] \, / {\sim}$.
	Similarly to \Cref{lemma:KH}, we show that $K_D$ is a triangulation of the natural cell structure of $Z$, by explicitly constructing a homeomorphism $\psi\colon Z \to K_D$.
	Notice that $K_H \times \{0\} \subseteq Z$ is naturally included in $K_D$: we define $\psi|_{K_H \times \{0\}}$ as the natural homeomorphism with $K_H \subseteq K_D$.
	Consider now a cell of $K_H \times [0,1]$ of the form
	\[ [x_1|x_2|\dotsb|x_d] \times [0,1]. \]
	This cell is realized as $\Delta^d \times [0,1] \subseteq \R^{d+1}$.
	Then a point $p$ in this cell has coordinates $(a_1,a_2,\dotsc,a_d,t)$, with $1 \geq a_1 \geq \dotsb \geq a_d \geq 0$ and $t \in [0,1]$.
	Let $y$ be the right complement of $x_1x_2\dotsm x_d$, so that $x_1x_2\dotsm x_d y = w$.
	Notice that $y$ is hyperbolic, so in particular $y \neq 1$.
	Assume for now that none of the coordinates $a_1,\dotsc, a_d$ is equal to $1-t$.
	Then there is a unique index $i \in \{0,\dotsc,d\}$ such that $a_1 \geq \dotsb \geq a_i \geq 1-t \geq a_{i+1} \geq \dotsb \geq a_d$.
	Let $\sigma = [x_{i+1}|\dotsb |x_d|y|\varphi(x_1)|\dotsb|\varphi(x_{i})]$.
	Define $\psi(p)$ as the point of $\sigma$ with coordinates $(t+a_{i+1},\dotsc,t+a_d,t,t+a_1-1,\dotsc,t+a_i-1) \in \Delta^{d+1}$.
	If some of the coordinates $a_1,\dotsc, a_d$ are equal to $1-t$, choose any index $i$ as above. Different choices of $i$ give the same point of $K_D$.
	See \Cref{fig:triangulation} for some examples.
	
	The definition of $\psi$ on different cells is coherent.
	We only explicitly check that the definition on $[x_1|x_2|\dotsb|x_d] \times [0,1]$ agrees with the definition on $[x_1|x_2|\dotsb|x_d] \times \{1\}$, since this is where the non-trivial gluing occurs.
	Consider a point $p \in [x_1|x_2|\dotsb|x_d] \times \{1\}$, with coordinates $(a_1,\dotsc,a_d,1) \in \Delta^d \times [0,1]$.
	\begin{itemize}
		\item Since $[x_1|\dotsb|x_d]\times \{1\}$ is identified with $[\varphi(x_1)|\dotsb|\varphi(x_d)] \times \{0\}$, we have that $\psi(p)$ is the point of $[\varphi(x_1)|\dotsb|\varphi(x_d)]$ with coordinates $(a_1,\dotsc,a_d)$.
		
		\item As an element of $[x_1|\dotsb|x_d] \times [0,1]$, the same point $p$ is sent to the point of $[y|\varphi(x_1)|\dotsb|\varphi(x_d)]$ with coordinates $(1,a_1,\dotsc,a_d)$.
		By definition of the faces in an interval complex (\Cref{def:interval-complex}), this point is the same as the point of $[\varphi(x_1)|\dotsb|\varphi(x_d)]$ with coordinates $(a_1,\dotsc,a_d)$.
	\end{itemize}
	Therefore the two definitions of $\psi$ agree in this case.
	
	Every maximal cell of $K_D$ is of the form $[x_1|\dotsb|x_i|y|x_{i+1}'|\dotsb|x_d']$ where $y$ is hyperbolic and each $x_j$ and $x_j'$ is horizontal elliptic.
	Thus $\psi$ is a homeomorphism.
\end{proof}

\begin{figure}
	\raisebox{0.15cm}{
	\begin{tikzpicture}[scale=3]
		\begin{scope}[every path/.style={-{Latex[length=2mm,width=0.8mm]}}]
			\draw (0,0) -- (1,0) node[midway,below] {$[x]$};
			\draw (0,0) -- (0,1) node[midway,left] {$[w]$};
			\draw (1,0) -- (1,1) node[midway,right] {$[w]$};
			\draw (0,1) -- (1,1) node[midway,above] {$[\varphi(x)]$};
			
			\draw[dashed] (1,0) -- (0,1);
		\end{scope}
		
		\node at (0.3,0.25) {$[x|y]$};
		\node at (0.7,0.7) {$[y|\varphi(x)]$};
		\node[fill=white, inner sep=1pt] at (0.59,0.41) {$[y]$};
		
		\node at (-0.25,0,0) {$t=0$};
		\node at (-0.25,1,0) {$t=1$};
	\end{tikzpicture}}\qquad
	\begin{tikzpicture}[scale=3]
		\newcommand{\xcoord}{1.2}
		\draw[name path=a, white] (1,0,0) -- (1,1,0);
		\draw[name path=b, white] (0,0,0) -- (\xcoord,0,-1);
		\draw[name path=c, white] (1,0,0) -- (0,1,0);
		\draw[name path=d, white] (\xcoord,0,-1) -- (0,1,0);
		
		\path [name intersections={of=a and b,by=P}];
		\path [name intersections={of=b and c,by=Q}];
		\path [name intersections={of=a and d,by=R}];
		
		\fill[black!15] (0,1,0) -- (1,1,0) -- (\xcoord,1,-1) -- cycle;
		\fill[black!7] (0,0,0) -- (1,0,0) -- (\xcoord,0,-1) -- cycle;

		\begin{scope}[every path/.style={-{Latex[length=2mm,width=0.8mm]}}]
			\begin{scope}
				\begin{pgfinterruptboundingbox}
					\clip (P) circle (0.035) [reverseclip];
					\clip (Q) circle (0.045) [reverseclip];
				\end{pgfinterruptboundingbox}
				\draw[black!40] (0,0,0) -- (\xcoord,0,-1);
			\end{scope}
			
			\draw (0,0,0) -- (0,1,0);
			\draw (0,1,0) -- (1,1,0);
			\draw (0,1,0) -- (\xcoord,1,-1);
			\draw (1,1,0) -- (\xcoord,1,-1);
			\draw (\xcoord,0,-1) -- (\xcoord,1,-1);
						
			\draw[dashed, black!35] (\xcoord,0,-1) -- (0,1,0);
			
			\node[inner sep=1.6pt, fill=white, circle] at (R) {};

			\draw (1,0,0) -- (1,1,0);
			
			\draw[dashed] (1,0,0) -- (0,1,0);
			\draw[dashed] (\xcoord,0,-1) -- (1,1,0);
			\draw (0,0,0) -- (1,0,0);
			\draw (1,0,0) -- (\xcoord,0,-1);
		\end{scope}
		
		\node at (-0.2,0,0) {$t=0$};
		\node at (-0.2,1,0) {$t=1$};
		\node at (0.8, -0.15, 0) {$[x_1|x_2]$};
		\node[fill=white, inner sep=1pt, opacity=0.9] at (0.8, 1.3, 0) {$[\varphi(x_1)|\varphi(x_2)]$};
	\end{tikzpicture}
	\caption{
		On the left, triangulation of a cell $[x] \times [0,1]$ of $K_H \times [0,1]\,/ {\sim}$.
		It is homeomorphic to $[x|y] \cup [y|\varphi(x)] \subseteq K_D$, where $y$ is the right complement of $x$.
		On the right, triangulation of a cell $[x_1|x_2] \times [0,1]$, which is homeomorphic to $[x_1|x_2|y] \cup [x_2|y|\varphi(x_1)] \cup [y|\varphi(x_1)|\varphi(x_2)] \subseteq K_D$, where $y$ is the right complement of $x_1x_2$.
	}
	\label{fig:triangulation}
\end{figure}
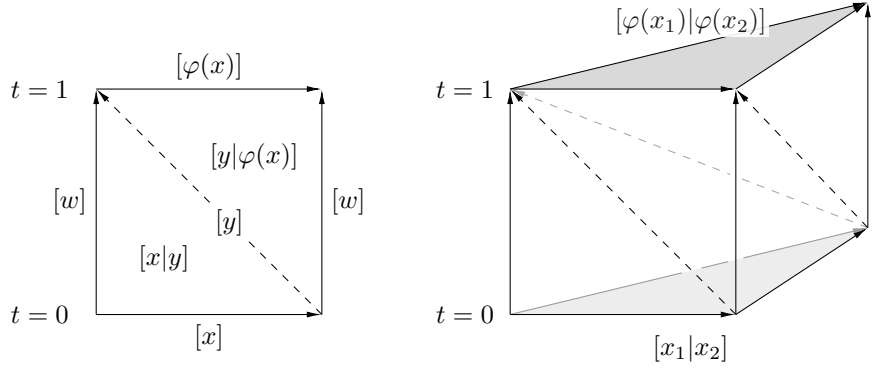

An immediate consequence of \Cref{lemma:KD} is that $D_w = \Z \ltimes H_w$, where $\Z$ is the cyclic subgroup of $D_w$ generated by $w$.

\begin{lemma}
	$K_{i}$ is a classifying space for the affine Artin group of type $\tilde A_{n_i}$.
	\label{lemma:KHi}
\end{lemma}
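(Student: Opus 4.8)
The plan is to realize $K_i$ as the fiber of a bundle over $S^1$ whose total space is a known classifying space, and then read off asphericity from the long exact homotopy sequence. The point of the detour is that $[1,w]^W\cap W_i$ is \emph{not} an interval of the form $[1,g]$ in any group (it has $n_i+1$ maximal elements, namely the $\varphi^p(h_i)$ of \Cref{prop:horizontal-axial}), so the classifying-space machinery for balanced lattices (\Cref{thm:garside-classifying-space}) cannot be applied to it directly; one must first pass to an ambient group where the interval \emph{is} a balanced lattice.

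First I would invoke \Cref{lemma:common-horizontal} to produce a hyperbolic element $u\in[1,w]$ such that $W_u$ is an irreducible affine Coxeter group whose horizontal root system with respect to $u$ is the irreducible component $\Phi_i$, and such that $[1,u]^{W_u}\cap W_i=[1,w]^W\cap W_i$ (and $u$ is a Coxeter element of $W_u$ by \Cref{thm:coxeter-elements}). Since the interval complex attached to a labelled subposet depends only on that labelled poset and since reflection length is intrinsic to the ambient Euclidean isometry group, the complex $K_i$ is literally the same whether it is viewed inside the $K_D$ for $W$ or inside the analogous complex $K_{D_u}$ for the pair $(W_u,u)$, where $D_u$ is the diagonal group of $W_u$. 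Moreover, because $W_u$ has a single horizontal component, the analog of \Cref{lemma:KH} applied to $W_u$ has only one factor, so $K_i=K_{H_u}$, the subcomplex of $K_{D_u}$ spanned by simplices whose product lies in $[1,u]^{W_u}\cap W_i$.

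Next, since $\Phi_i$ is irreducible, $W_u$ has an irreducible horizontal root system, so by the construction of \cite{mccammond2017artin} the factorable group coincides with $D_u$ and the interval $[1,u]^{D_u}$ is a balanced lattice; hence by \Cref{thm:garside-structure,thm:garside-classifying-space} the interval complex $K_{D_u}$ is a classifying space for $D_u$, in particular aspherical. Now I would apply the analog of \Cref{lemma:KD} to $(W_u,u)$: it identifies $K_{D_u}$ with the mapping torus of the self-homeomorphism $\varphi_u\colon K_{H_u}\to K_{H_u}$ induced by conjugation by $u$ (this is a simplicial automorphism, hence a homeomorphism, because conjugation by $u$ preserves $[1,u]^{W_u}\cap W_i$ together with its order and its reflection labels). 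Since $K_i=K_{H_u}$ is connected (it has a single vertex $[\,]$) and the mapping torus fibers over $S^1$ with fiber $K_i$, the long exact homotopy sequence together with $\pi_n(S^1)=0$ for $n\geq 2$ gives $\pi_n(K_i)\cong\pi_n(K_{D_u})=0$ for all $n\geq 2$. Thus $K_i$ is aspherical. Finally, $\pi_1(K_i)$ is the $i$-th factor of the decomposition $\pi_1(K_H)\cong H_w$ into a product of $k$ Artin groups of types $\tilde A_{n_1},\dotsc,\tilde A_{n_k}$ recalled before \Cref{lemma:KH}, hence isomorphic to the affine Artin group of type $\tilde A_{n_i}$; combining the two statements, $K_i$ is a classifying space for it.

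The step I expect to require the most care is the identification "$K_i$ computed inside $W$ equals $K_{H_u}$ computed inside $W_u$'', together with checking that \Cref{lemma:KH,lemma:KD} and the McCammond--Sulway lattice properties genuinely transfer to the pair $(W_u,u)$ — in particular that $u$ is a Coxeter element of $W_u$ and that, the horizontal root system being irreducible, the diagonal and factorable groups of $W_u$ coincide and produce a balanced lattice. Once these structural facts are in place, the asphericity of $K_i$ is a formal consequence of the fibration long exact sequence, and the identification of $\pi_1(K_i)$ is already recorded in the text.
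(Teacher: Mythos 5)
Your proof is correct and follows the paper's own strategy: reduce via \Cref{lemma:common-horizontal} to the case of a single irreducible horizontal component, use the resulting lattice property of $[1,w]^D = [1,w]^F$ to conclude that $K_D$ is a classifying space, and then read off asphericity of $K_H = K_i$ from the mapping-torus description of $K_D$ in \Cref{lemma:KD}. The only cosmetic difference is that you invoke the long exact homotopy sequence of the fibration $K_D \to S^1$ with fiber $K_i$, whereas the paper passes to the corresponding infinite cyclic cover $K_H \times \R \to K_D$; these are the same argument.
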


\begin{proof}
	By \Cref{lemma:common-horizontal}, we can assume until the end of this proof that $H = W_i$ is the unique irreducible horizontal component of $W$.
	Therefore $[1,w]^D = [1,w]^F$ is a lattice, $D_w$ is a Garside group, and $K_D$ is a classifying space for $D_w$.
	By \Cref{lemma:KD}, there is a covering map
	\[ \rho\colon K_H \times \R \to K_D \]
	which corresponds to the subgroup $H_w$ of $D_w$.
	Then $K_H \times \R$ is a classifying space.
	Since $K_H \times \R \simeq K_H = K_{i}$, also $K_{i}$ is a classifying space.
\end{proof}

\begin{remark}
	The complex $K_{i}$ is closely related to the complexes $X_{W_i}'$ introduced in \Cref{sec:dual-salvetti-complex}.
	Indeed, $K_{i}$ is obtained by gluing the interval complexes associated with the noncrossing partition lattices of type $A_{n_i}$ corresponding to the maximal proper standard parabolic subgroups of $W_i$.
	However, the Coxeter elements of these parabolic subgroups are not below a common Coxeter element of $\tilde A_{n_i}$ (see also \Cref{rmk:horizontal-loop}).
\end{remark}

\begin{theorem}
	$K_D$ is a classifying space for $D_w$.
	\label{thm:KD-classifying}
\end{theorem}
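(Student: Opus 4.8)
The plan is to assemble \Cref{lemma:KH,lemma:KD,lemma:KHi} with a standard piece of homotopy theory: a fiber bundle over an aspherical base with aspherical fiber has aspherical total space. First I would record that, by \Cref{lemma:KHi}, each complex $K_i$ is a classifying space and hence aspherical, so by \Cref{lemma:KH} the complex $K_H \cong K_1 \times \dotsb \times K_k$ is aspherical as well: the universal cover of a finite product of aspherical CW complexes is the product of the universal covers, which is contractible.

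Next, \Cref{lemma:KD} identifies $K_D$ (up to homeomorphism) with the mapping torus of the self-homeomorphism of $K_H$ induced by the conjugation $\varphi$, so $K_D$ is the total space of a fiber bundle $K_H \hookrightarrow K_D \to S^1$. The long exact sequence of homotopy groups of this bundle gives, for $n \geq 2$, an exact sequence $\pi_n(K_H) \to \pi_n(K_D) \to \pi_n(S^1)$ whose outer terms vanish, so $\pi_n(K_D) = 0$ for all $n \geq 2$; thus $K_D$ is aspherical. Since the fundamental group of the interval complex associated with $[1,w]^D$ is the interval group $D_w$ (see \Cref{sec:interval-groups}), this shows that $K_D$ is a classifying space for $D_w$. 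An equivalent route, avoiding the bundle formalism, is to use the covering $K_H \times \R \to K_D$ obtained by unrolling the mapping torus in \Cref{lemma:KD}, with infinite cyclic deck group $D_w/H_w$: since $K_H \times \R \simeq K_H$ is aspherical, its universal cover $\widetilde{K_H} \times \R$ is contractible, and being also the universal cover of $K_D$ it forces $K_D$ to be aspherical.

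The substantive work has already been done in the preceding lemmas — in particular the asphericity of each $K_i$, which rests on the Garside covering argument of \Cref{lemma:KHi} and ultimately on the lattice property of $[1,w]^F$ in the single–horizontal–component case. What remains for \Cref{thm:KD-classifying} itself is only the elementary homotopy-theoretic bookkeeping above, so I do not anticipate any real obstacle at this step.
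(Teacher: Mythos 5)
Your proposal is correct and follows essentially the same route as the paper: the paper likewise deduces asphericity of $K_H \cong K_1 \times \dotsb \times K_k$ from \Cref{lemma:KH,lemma:KHi} and then invokes the covering $K_H \times \R \to K_D$ from \Cref{lemma:KD} (your ``equivalent route''), with the mapping-torus/fiber-bundle phrasing being only a cosmetic repackaging of that covering argument.
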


\begin{proof}
	By \Cref{lemma:KH}, we have that $K_H \cong K_{1} \times \dotsb \times K_{k}$.
	Each factor is a classifying space by \Cref{lemma:KHi}, therefore $K_H$ is a classifying space for $H_w$.
	As discussed in the proof of \Cref{lemma:KHi}, there is a covering map $\rho \colon K_H \times \R \to K_D$ by \Cref{lemma:KD}.
	Therefore $K_D$ is a classifying space.
\end{proof}

We can finally prove that $K_W$ is a classifying space.

\begin{theorem}
	Let $W$ be an irreducible affine Coxeter group, and $w$ one of its Coxeter elements.
	The interval complex $K_W$ is a classifying space for the dual Artin group $W_w$.
	\label{thm:KW-classifying}
\end{theorem}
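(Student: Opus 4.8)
The plan is to transport the decomposition $K_C = K_W \cup K_F$, with $K_W \cap K_F = K_D$, to the universal cover of the Garside complex $K_C$ and run a Mayer--Vietoris argument there. Recall the ingredients already available: $K_F$ and $K_C$ are classifying spaces (since $F_w$ and $C_w$ are Garside groups, by \Cref{thm:garside-classifying-space}), $K_D$ is a classifying space by \Cref{thm:KD-classifying}, the inclusions $D_w \hookrightarrow F_w \hookrightarrow C_w$ and $W_w \hookrightarrow C_w$ are injective \cite{mccammond2017artin}, and $\pi_1(K_W) \cong W_w$ by the discussion following \Cref{def:interval-complex}. Since $\pi_1(K_W)$ is already identified with $W_w$, it suffices to prove that $K_W$ is aspherical.

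First I would let $p\colon \widetilde{K_C} \to K_C$ be the universal cover; as $K_C$ is aspherical, $\widetilde{K_C}$ is contractible. A connected component of the preimage under $p$ of a subcomplex $Y \subseteq K_C$ is the cover of $Y$ associated with the subgroup $\ker\!\big(\pi_1(Y) \to \pi_1(K_C)\big)$, and all such components are carried onto one another by deck transformations. Applying this with $Y = K_W$, $Y = K_F$, $Y = K_D$ and using the injectivity of $W_w$, $F_w$, $D_w$ into $C_w$, we conclude that every component of $p^{-1}(K_W)$ is a copy of the universal cover $\widetilde{K_W}$ (in particular simply connected), while every component of $p^{-1}(K_F)$ is a copy of the contractible $\widetilde{K_F}$ and every component of $p^{-1}(K_D)$ is a copy of the contractible $\widetilde{K_D}$.

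Next I would apply Mayer--Vietoris to the subcomplex decomposition $\widetilde{K_C} = p^{-1}(K_W) \cup p^{-1}(K_F)$, whose intersection is $p^{-1}(K_W \cap K_F) = p^{-1}(K_D)$. Since $\widetilde{K_C}$ is contractible and both $p^{-1}(K_F)$ and $p^{-1}(K_D)$ are disjoint unions of contractible complexes, all three have homology concentrated in degree $0$; the long exact sequence then forces $H_n\!\big(p^{-1}(K_W)\big) = 0$ for all $n \geq 1$, hence $H_n(\widetilde{K_W}) = 0$ for $n \geq 1$. As $\widetilde{K_W}$ is simply connected, an inductive application of the Hurewicz theorem gives $\pi_n(\widetilde{K_W}) = 0$ for all $n$, so $\widetilde{K_W}$ is a weakly contractible CW complex and therefore contractible by Whitehead's theorem. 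Consequently $K_W$ is aspherical, and being a $K(W_w,1)$ it is a classifying space for the dual Artin group $W_w$.

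The delicate points, rather than the formal topology, are the inputs: one genuinely needs the injectivity $W_w \hookrightarrow C_w$ (without it the components of $p^{-1}(K_W)$ could be nontrivial covers of $K_W$ and the argument would be circular), and one needs $K_F$ and especially $K_D$ to be aspherical so that $p^{-1}(K_F)$ and $p^{-1}(K_D)$ have homology only in degree $0$ --- this is exactly the content supplied by the Garside structure on $F_w$, $C_w$ and by \Cref{thm:KD-classifying}. I expect that assembling these, together with the bookkeeping of $K_C = K_W \cup K_F$ and $K_D = K_W \cap K_F$, is the part requiring care; the remainder is a standard covering-space and Mayer--Vietoris computation.
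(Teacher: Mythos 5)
Your proposal is correct and follows essentially the same route as the paper's own proof: both pass to the universal cover of $K_C$, use the decomposition $K_C = K_W \cup K_F$ with $K_D = K_W \cap K_F$, appeal to the injectivity of $D_w, W_w, F_w$ into $C_w$ to identify the components of the preimages as universal covers, run Mayer--Vietoris to kill the reduced homology of $\widetilde{K_W}$, and conclude by Hurewicz--Whitehead. The only cosmetic difference is that you spell out the covering-space step (identifying components of $p^{-1}(Y)$ via the kernel of $\pi_1(Y) \to \pi_1(K_C)$) slightly more explicitly than the paper does.
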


\begin{proof}
	Consider the universal cover $\rho \colon \tilde K_C \to K_C$ of the interval complex $K_C$.
	Recall that $K_D = K_W \cap K_F$ and $K_C = K_W \cup K_F$, and therefore $\rho^{-1}(K_D) = \rho^{-1}(K_W) \cap \rho^{-1}(K_F)$ and $\tilde K_C = \rho^{-1}(K_C) = \rho^{-1}(K_W) \cup \rho^{-1}(K_F)$.
	Then there is a Mayer-Vietoris long exact sequence
	\[ \dotsb \to H_i(\rho^{-1}(K_D)) \to H_i(\rho^{-1}(K_W)) \oplus H_i(\rho^{-1}(K_F)) \to H_i(\tilde K_C) \to \dotsb \]
	where all homology groups are with integer coefficients.
	Since $D_w = \pi_1(K_D)$ is a subgroup of $C_w = \pi_1(K_C)$, we have that $\rho^{-1}(K_D)$ is a union of (infinitely many) disjoint copies of the universal cover $\tilde K_D$ of $K_D$.
	Similarly, $\rho^{-1}(K_W)$ is a union of disjoint copies of the universal cover $\tilde K_W$ of $K_W$, and $\rho^{-1}(K_F)$ is a union of disjoint copies of the universal cover $\tilde K_F$ of $K_F$.
	Since $K_F$ and $K_C$ are classifying spaces, both $\tilde K_F$ and $\tilde K_C$ are contractible.
	By \Cref{thm:KD-classifying}, $\tilde K_D$ is also contractible.
	Then, for $i \geq 1$, the homology groups $H_i(\rho^{-1}(K_D))$, $H_i(\rho^{-1}(K_F))$, and $H_i(\tilde K_C)$ vanish.
	By the Mayer-Vietoris long exact sequence, $H_i(\rho^{-1}(K_W))$ also vanishes for $i \geq 1$.
	This means that $\tilde K_W$ has a trivial reduced homology, so it is contractible by a standard application of the Whitehead and Hurewicz theorems (see \cite[Corollary 4.33]{hatcher}).
\end{proof}

\section{Finite classifying spaces}
\label{sec:finite-classifying-spaces}

Let $W$ be an irreducible affine Coxeter group, with a fixed Coxeter element $w$.
In this section we show that the interval complexes $K_W$ and $K_C$ deformation retract onto finite subcomplexes $K_W' \subseteq K_W$ and $K_C' \subseteq K_C$.
In the case of $K_W$, this is an intermediate step to prove the $K(\pi,1)$ conjecture, whereas for $K_C$ this proves that the braided crystallographic group $C_w$ has a classifying space $K_C'$ with a finite number of cells.
The notation is the same as in \Cref{sec:classifying-spaces}.

As noted in the proof of \cite[Lemma 7.2]{mccammond2017artin}, there is no minimal length factorization of $w$ in $C$ that includes both a factored translation and a vertical reflection.
Recall that every element of $[1,w]^C \setminus [1,w]^W$ is hyperbolic.

\begin{lemma}
	Let $\sigma = [x_1|x_2|\dotsb|x_d]$ be a $d$-simplex of $K_C$, with $d \geq 1$.
	Then exactly one of the following occurs:
	\begin{enumerate}[(i)]
		\item every $x_i$ is elliptic, and at least one is vertical;
		\item every $x_i$ is horizontal elliptic or hyperbolic.
	\end{enumerate}
	\label{lemma:simplices-classification}
\end{lemma}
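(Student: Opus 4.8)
The plan is to reduce everything to the interval $[1,w]^W$, where the ``three--row'' structure of the Proposition above is available (bottom row: horizontal elliptic with hyperbolic right complement; middle row: both $u$ and its right complement vertical elliptic; top row: hyperbolic with horizontal elliptic right complement), and to dispose of the part of $K_C$ coming from the factorable group $F$ by a linear--part observation. I would first note that, by the definition of the interval complex, each $x_i$ lies in $[1,w]^C$ and hence is an isometry of $\R^n$, so each $x_i$ is horizontal elliptic, vertical elliptic, or hyperbolic; cases (i) and (ii) are manifestly disjoint, since a vertical elliptic isometry is neither horizontal elliptic nor hyperbolic. If none of the $x_i$ is vertical elliptic we are in case (ii), so it suffices to prove that if some $x_j$ is vertical elliptic then every $x_i$ is elliptic --- then (i) holds, with $x_j$ witnessing the ``at least one vertical'' clause.

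The key preliminary step is that such a $\sigma$ must be a simplex of $K_W$. The generators of $F$ --- the horizontal reflections $r_\alpha$ (whose roots $\alpha$ are orthogonal to $\Dir(\ell)$) and the factored translations (which are translations, hence have trivial linear part) --- all have linear part fixing the Coxeter--axis direction $\Dir(\ell)$ pointwise; hence so does the linear part $\bar g$ of every $g\in F$. If $g\in F$ is elliptic then $\Mov(g)=\operatorname{im}(\bar g-\id)=\ker(\bar g-\id)^{\perp}\subseteq\Dir(\ell)^{\perp}$, so $g$ is horizontal elliptic. Thus no element of $[1,w]^F$ is vertical elliptic; since $x_j$ is, $\sigma$ is not a simplex of $K_F$, and because $K_C=K_W\cup K_F$ we get $\sigma\in K_W$. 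In particular every $x_i$ and every partial product $x_1\cdots x_i$ lies in $[1,w]^W$, and there the reflection set is conjugation--closed, so the Hurwitz action and conjugation--invariance of reflection length are at our disposal.

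Now suppose for contradiction that some $x_k$ is hyperbolic (note $x_k\neq x_j$). Passing to the face of $\sigma$ obtained by deleting all factors after $x_k$ when $j<k$, or all factors before $x_k$ when $j>k$, I may assume $\sigma=[z_1|\cdots|z_e]$ is a simplex of $K_W$, with product $w''=z_1\cdots z_e\leq w$, and either (a) $z_e$ is hyperbolic and some $z_i$ with $i<e$ is vertical elliptic, or (b) $z_1$ is hyperbolic and some $z_i$ with $i>1$ is vertical elliptic. In case (a) set $u=z_1\cdots z_{e-1}\in[1,w]^W$ (the face obtained by deleting $z_e$) and $v=z_e$. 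Concatenating reduced reflection factorizations of the $z_\ell$ gives a reduced factorization of $u$ containing one of $z_i$, so by the Hurwitz action $z_i\leq u$; since $z_i$ is vertical elliptic and ``not horizontal elliptic'' propagates upward in $[1,w]^W$, the element $u$ is not horizontal elliptic. A short length computation gives $v\leq u^{-1}w$, the right complement of $u$ in $[1,w]^W$; as $v$ is hyperbolic, so is $u^{-1}w$, and the three--row structure then forces $u$ to be horizontal elliptic --- a contradiction. In case (b) set $v=z_1$ and $u=z_2\cdots z_e\in[1,w]^W$; again $u$ is not horizontal elliptic, while $v$ is a hyperbolic element of $[1,w]^W$ and hence lies in the top row, so its right complement $v^{-1}w$ is horizontal elliptic; a length computation gives $u\leq v^{-1}w$, whence $u$ is horizontal elliptic --- again a contradiction. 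Therefore no $x_k$ is hyperbolic, so all $x_i$ are elliptic and (i) holds.

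The hard part is conceptual rather than computational: the dichotomy one wants lives on $[1,w]^W$, not on $[1,w]^C$, so one must first rule out the ``factorable'' simplices (via the linear--part argument for $F$) and then choose the face reduction so that the offending hyperbolic factor sits at an extreme position of the word, where it can be compared --- through a right complement --- against the hyperbolic/horizontal--elliptic pairing supplied by the row structure. Everything else (partial products lie below the total product, reflection length is conjugation--invariant and agrees on $[1,w]^W$ with the length in $C$, and complements behave well in these balanced intervals) is routine bookkeeping.
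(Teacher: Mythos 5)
Your proof is correct and essentially follows the paper's own strategy: both arguments reduce to the case $\sigma\in K_W$ and then exploit the three-row structure of $[1,w]^W$, i.e.\ that the right (or left) complement of a hyperbolic element is horizontal elliptic and that vertical/hyperbolic propagate upward while horizontal elliptic propagates downward. Two small remarks on the details. First, to rule out the $K_F$ part you compute linear parts of the generators of $F$, relying on the assertion that factored translations are genuine translations; this is not stated in the paper, whose own route is the McCammond--Sulway observation (quoted just before the lemma) that no minimal-length factorization of $w$ in $C$ mixes a factored translation with a vertical reflection. You should double-check the assertion against McCammond--Sulway's construction; in any case, the conclusion you actually need --- that $[1,w]^F$ contains no vertical elliptic element --- also follows from facts already recorded in the paper, namely that $[1,w]^C\setminus[1,w]^W$ consists only of hyperbolic elements and that $[1,w]^D=[1,w]^W\cap[1,w]^F$ has no vertical elliptic elements since $D$ is generated by $R_\h$ and ordinary translations. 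Second, where you derive a contradiction from the coexistence of a vertical elliptic factor and a hyperbolic one, the paper argues in the same spirit but directly: a hyperbolic $x_j$ makes $x_1\cdots x_j$ hyperbolic with horizontal elliptic right complement, so every $x_i$ with $i>j$ (and by the symmetric left-complement argument every $x_i$ with $i<j$) is horizontal elliptic, which establishes (ii) outright. Your contradiction and the paper's direct argument are the same three-row computation viewed from opposite ends.
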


\begin{proof}
	We shall divide the proof into four cases.
	\begin{itemize}
		\item If at least one $x_i$ is not in $[1,w]^W$, then no minimal length factorization of any $x_i$ includes vertical reflections. In particular, no $x_i$ is vertical elliptic, so (ii) holds and (i) does not.
		In the remaining cases, assume that $x_i \in [1,w]^W$ for all $i$.
		
		\item If every $x_i$ is horizontal elliptic, then (ii) holds and (i) does not.
		
		\item Suppose that $x_j$ is hyperbolic for some index $j$. Then $x_1\dotsm x_j$ is also hyperbolic, and therefore its right complement $y$ is horizontal elliptic. Every $x_i$ for $i>j$ is below $y$ in $[1,w]^W$, and so is horizontal elliptic.
		By a similar argument, every $x_i$ for $i<j$ is horizontal elliptic.
		Then (ii) holds and (i) does not.
		
		\item If there is at least one vertical elliptic element and there are no hyperbolic elements, (i) holds and (ii) does not. \qedhere
	\end{itemize}
\end{proof}

Recall that $\F(K_C)$ denotes the face poset of $K_C$.
Consider the poset map $\eta \colon \F(K_C) \to \N$ defined by
\[ \eta([x_1|x_2|\dotsb|x_d]) =
\begin{cases}
d & \text{if $x_1 x_2 \dotsm x_d = w$} \\
d+1 & \text{otherwise}.
\end{cases}
\]
We want to describe the connected components of a fiber $\eta^{-1}(d)$ in the Hasse diagram of $\F(K_C)$.
Let $\sigma, \tau$ be two simplices in the same fiber $\eta^{-1}(d)$. We have that $\tau$ is a face of $\sigma$ if and only if $\sigma = [x_1|x_2|\dotsb|x_d]$ with $x_1 x_2 \dotsm x_d=w$ and either $\tau = [x_2|x_3|\dotsb |x_d]$ or $\tau = [x_1|x_2|\dotsb|x_{d-1}]$.
Therefore, a connected component of $\eta^{-1}(d)$ has the following form:
\newcommand{\height}{-1.5}
\begin{center}
	\begin{tikzpicture}
	\node (0) at (0,0) {$[x_1|\dotsb|x_d]$};
	\node (1) at (1.5, \height) {$[x_2|\dotsb|x_d]$};
	\node (2) at (3, 0) {$[x_2|\dotsb|x_{d+1}]$};
	\node (3) at (4.5, \height) {$[x_3|\dotsb|x_{d+1}]$};
	\node (4) at (6,0) {\phantom{$[\,]$}};
	\coordinate (3a) at ($(3)!0.5!(4)$) {};

	\node (-1) at (-1.5, \height) {$[x_1|\dotsb|x_{d-1}]$};
	\node (-2) at (-3, 0) {$[x_0|\dotsb|x_{d-1}]$};
	\node (-3) at (-4.5, \height) {$[x_0|\dotsb|x_{d-2}]$};
	\node (-4) at (-6, 0) {\phantom{$[\,]$}};
	\coordinate (-3a) at ($(-3)!0.5!(-4)$) {};
	
	\draw (0.south) -> (1.north);
	\draw (2.south) -> (1.north);
	\draw (2.south) -> (3.north);
	\draw[dashed] (3.north) -> (3a);
	
	\draw (0.south) -> (-1.north);
	\draw (-2.south) -> (-1.north);
	\draw (-2.south) -> (-3.north);
	\draw[dashed] (-3.north) -> (-3a);
	\end{tikzpicture}
\end{center}
where $x_i x_{i+1}\dotsm x_{i+d-1} = w$ for all $i$.
Define a \emph{$d$-fiber component} (or simply \emph{fiber component}) as a connected component of the fiber $\eta^{-1}(d)$.
As described above, a $d$-fiber component has an associated sequence $(x_i)_{i\in\Z}$ of elements of $[1,w]^C$ such that the product of any $d$ consecutive elements is $w$.
This sequence is well-defined up to a translation of the indices.

Let $\varphi\colon [1,w]^C \to [1,w]^C$ be the conjugation by the Coxeter element $w$: $\varphi (u) = w^{-1}uw$.
Notice that, if $(x_i)_{i\in\Z}$ is the sequence associated with a $d$-fiber component, we have $\varphi(x_i) = x_{i+d}$ for all $i\in \Z$.
Since $\varphi$ restricts to a map $[1,w]^W \to [1,w]^W$, every fiber component is either disjoint from $\F(K_W)$ or contained in $\F(K_W)$, where $\F(K_W)$ is the face poset of $K_W$.
The same is true for the subcomplexes $K_D$ and $K_F$.

\begin{lemma}
	Let $u \in [1,w]^C$. The set $\{\varphi^j(u) \mid j \in \Z\}$ is infinite if and only if $u$ is vertical elliptic.
	\label{lemma:conjugation}
\end{lemma}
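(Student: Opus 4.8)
The plan is to reduce the statement to a purely linear-algebraic condition on $u$, and then to exploit the fact that a suitable power of $w$ is a translation along the Coxeter axis. Write $\mu$ for the shortest vector of $\Mov(w)$, so that the Coxeter axis $\ell=\Min(w)$ has direction $\R\mu$ and the horizontal subspace is $\mu^{\perp}$; note $\Dir(\Mov(w))=\mu^{\perp}$, since $\Min(w)=\ell$ has direction $\R\mu$ and $V=\Dir(\Mov(w))\oplus\Dir(\Min(w))$ orthogonally by \cite[Lemma 3.6]{brady2015factoring}. For an isometry $u$ let $L_u$ denote its linear part. The key claim I would establish is that, for every $u\in[1,w]^C$,
\[ L_u(\mu)=\mu \iff u \text{ is horizontal elliptic or hyperbolic} \iff u \text{ is not vertical elliptic}. \]

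Granting this, the lemma follows as follows. By \Cref{lemma:translation} fix $p>0$ with $w^p$ a translation $\tau$ by a vector $c\mu$ with $c\neq 0$. Conjugation by a translation by a vector $v$ sends the isometry $x\mapsto L_u x+b$ to $x\mapsto L_u x+b+(L_u-I)(v)$, so $\varphi^{p}(u)=\tau^{-1}u\tau$ equals $u$ exactly when $(L_u-I)(c\mu)=0$, i.e.\ when $L_u(\mu)=\mu$. Hence if $L_u(\mu)=\mu$ then $\varphi^{p}(u)=u$ and $\{\varphi^{j}(u)\mid j\in\Z\}$ is finite. Conversely, if $\varphi^{q}(u)=u$ for some $q\geq 1$, then $u=\varphi^{pq}(u)$ is the conjugate of $u$ by $(w^{p})^{q}=\tau^{q}$, a translation by $qc\mu$, and the same formula forces $qc(L_u-I)(\mu)=0$, hence $L_u(\mu)=\mu$; so if $L_u(\mu)\neq\mu$ the map $j\mapsto\varphi^{j}(u)$ has no repetitions and the orbit is infinite. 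Combined with the displayed equivalence, this is exactly the assertion of the lemma.

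It remains to prove the equivalence. For any isometry $u$, $L_u$ restricts to the identity on $\Dir(\Min(u))$ (translate along $\Min(u)$ and compare). If $u$ is horizontal elliptic, then $\Min(u)=\Fix(u)$ and $\Mov(u)\subseteq\mu^{\perp}$, so $\mu\in\Dir(\Min(u))$ and $L_u(\mu)=\mu$. If $u$ is hyperbolic, I would use $[1,w]^{C}=[1,w]^{W}\cup[1,w]^{F}$: for $u\in[1,w]^{F}$ the generators of $F$ are horizontal reflections (whose mirror contains $\mu$) and factored translations (with trivial linear part), so $L_u$ fixes $\mu$; for $u\in[1,w]^{W}$ hyperbolic, \Cref{lemma:model-poset} applied to $u\leq w$ gives $\inv(u)\leq\inv(w)$, i.e.\ $\Mov(u)\subseteq\Mov(w)$, hence $\Dir(\Mov(u))\subseteq\mu^{\perp}$, hence $\mu\in\Dir(\Min(u))$ and $L_u(\mu)=\mu$. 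Finally, if $u$ is vertical elliptic, then $\mu\notin\Dir(\Fix(u))$, so writing $\mu=\mu_1+\mu_2$ along the orthogonal decomposition $V=\Dir(\Fix(u))\oplus\Mov(u)$ we have $\mu_2\neq0$; since $L_u$ preserves this decomposition, fixes $\Dir(\Fix(u))$ pointwise, and has no nonzero fixed vector in $\Mov(u)$ (such a vector would lie in $\Dir(\Fix(u))$ by translating along $\Fix(u)$), we get $L_u(\mu)=\mu_1+L_u(\mu_2)\neq\mu$. As every element of $[1,w]^{C}$ is elliptic or hyperbolic and every elliptic element is horizontal or vertical, this exhausts all cases.

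The main obstacle I anticipate is the hyperbolic case over the larger group $C$: one needs that \emph{every} hyperbolic element of $[1,w]^{C}$ has linear part fixing $\mu$, and here \Cref{lemma:model-poset} is only available over $W$ (over $C$ the reflection length is weighted and $[1,w]^{C}$ is not obviously a subposet of the model poset). The decomposition $[1,w]^{C}=[1,w]^{W}\cup[1,w]^{F}$ together with the explicit generating set of $F$ circumvents this; everything else is routine affine geometry.
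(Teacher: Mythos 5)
Your proof is correct, and it takes a genuinely different route from the paper's. The paper argues the two directions separately: the forward direction is a finiteness count (there are only finitely many horizontal elliptic and hyperbolic elements in $[1,w]^C$, and conjugation by $w$ preserves type, so an infinite orbit must contain --- and hence consist of --- vertical elliptic elements), while the backward direction cites \cite[Lemma 11.3]{mccammond2017artin} (if $u$ commutes with a translation then $\Fix(u)$ is invariant under it) to deduce $\Dir(\ell) \subseteq \Dir(\Fix(u))$, contradicting verticality. You instead isolate a single clean criterion --- after raising $w$ to a power $p$ making it a pure translation, the orbit is finite precisely when $L_u(\mu)=\mu$, by the elementary identity $\tau^{-1}u\tau = u + (L_u-I)(v)$ for a translation $\tau$ by $v$ --- and then prove $L_u(\mu)=\mu$ is equivalent to $u$ not being vertical elliptic by direct case analysis using $V=\Dir(\Mov(u))\oplus\Dir(\Min(u))$ and the fact that $L_u$ restricts to the identity on $\Dir(\Min(u))$. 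This has the advantage of being self-contained (no appeal to \cite[Lemma 11.3]{mccammond2017artin} or to the finiteness of the top and bottom rows of $[1,w]^C$) and treats both implications uniformly; the cost is the extra case split for hyperbolic elements via $[1,w]^C=[1,w]^W\cup[1,w]^F$, which you handle correctly (model poset over $W$, explicit generators over $F$). One small remark: your hyperbolic sub-case over $F$ actually subsumes the horizontal-elliptic case over $F$, so you could streamline slightly by first disposing of all of $[1,w]^F$ at once; but as written the argument is complete and sound.
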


\begin{proof}
	Suppose that $\{\varphi^j(u) \mid j \in \Z\}$ is infinite.
	Since $[1,w]^C$ has only a finite number of horizontal elliptic and hyperbolic elements, at least one element of $\{\varphi^j(u) \mid j \in \Z\}$ is vertical elliptic. Then $u$ is vertical elliptic.
	
	Conversely, let $u$ be a vertical elliptic element, and suppose by contradiction that $\varphi^j(u) = u$ for some $j \in \Z \setminus \{0\}$.
	Let $w^p$ be a power of $w$ that acts as a translation in the positive direction of the Coxeter axis $\ell$, where $p$ is a positive integer (see \Cref{lemma:translation}).
	We have that $\varphi^{pj}(u) = u$, so $u$ commutes with the (non-trivial) translation $w^{pj}$.
	By \cite[Lemma 11.3]{mccammond2017artin}, $\Fix(u)$ is invariant under $w^{pj}$.
	Then $\Dir(\ell) \subseteq \Dir(\Fix(u))$.
	We have that $\Mov(u) = \Dir(\Mov(u))$ because $u$ is elliptic, and that $\Dir(\Mov(u))$ is orthogonal to $\Dir(\Fix(u))$ by \cite[Lemma 3.6]{brady2015factoring}.
	Therefore $u$ is horizontal, and this is a contradiction.
\end{proof}

\begin{lemma}
	Let $\sigma \in \F(K_C)$.
	The fiber component containing $\sigma$ is infinite if and only if $\sigma$ is of type (i) in \Cref{lemma:simplices-classification}.
	In particular:
	\begin{itemize}
		\item every infinite fiber component is contained in $\F(K_W)$;
		\item $\F(K_F)$ is the union of all finite fiber components;
		\item $\F(K_D)$ is the union of all finite fiber components of $\F(K_W)$.
	\end{itemize}
	\label{lemma:component-finiteness}
\end{lemma}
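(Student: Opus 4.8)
The plan is to reduce the statement to \Cref{lemma:conjugation} and the trichotomy of \Cref{lemma:simplices-classification}, via the sequence attached to a fiber component. Recall that a $d$-fiber component is recorded by a bi-infinite sequence $(x_i)_{i\in\Z}$ in $[1,w]^C$ with $x_ix_{i+1}\cdots x_{i+d-1}=w$ and $x_{i+d}=\varphi(x_i)$, so it is determined by $x_1,\dots,x_d$ (whose product is $w$); since $\varphi$ is a bijection of $[1,w]^C$, the component is finite exactly when this sequence takes finitely many values, i.e.\ when each orbit $\{\varphi^j(x_r)\mid j\in\Z\}$, $1\le r\le d$, is finite. By \Cref{lemma:conjugation} the latter holds if and only if no $x_i$ is vertical elliptic. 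Thus the component is infinite if and only if some term of its sequence is vertical elliptic, and the real task is to identify this with ``$\sigma$ is of type (i)''.

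The key step I expect to carry out is showing that the type is constant along a fiber component (for the simplices of positive dimension; the base vertex $[\,]$ lies in the finite component $\{[\,],[w]\}$, so it can be set aside). Two facts do this. First, conjugation by $w$ preserves reflection length, and because $w$ preserves the Coxeter axis $\ell$ (acting on it as translation by $\mu$) it preserves the orthogonal splitting $V=\Dir(\ell)\oplus\Dir(\ell)^{\perp}$ into vertical and horizontal directions; hence $\varphi$ maps each of the classes ``vertical elliptic'', ``horizontal elliptic'', ``hyperbolic'' to itself. Consequently the set of indices $i$ with $x_i$ vertical elliptic is a union of residue classes modulo $d$, so it is nonempty if and only if \emph{every} window $\{x_i,\dots,x_{i+d-1}\}$ contains a vertical elliptic element; by \Cref{lemma:simplices-classification} this means all the maximal simplices $[x_i|\cdots|x_{i+d-1}]$ share the same type, which is (i) precisely when the component is infinite. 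Second, for the codimension-one simplices $[x_{i+1}|\cdots|x_{i+d-1}]$ I would invoke the coarse three-row structure of $[1,w]$: if $x_i$ is vertical elliptic it lies in the middle row, so its right complement $x_{i+1}\cdots x_{i+d-1}$ is vertical elliptic, forcing a vertical elliptic entry among $x_{i+1},\dots,x_{i+d-1}$; and if $x_i$ is not vertical elliptic, then the vertical elliptic entry of the type-(i) window $[x_i|\cdots|x_{i+d-1}]$ already lies among $x_{i+1},\dots,x_{i+d-1}$. Either way the face has the same type as the maximal simplex, which proves the main equivalence.

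For the three consequences I would use that each fiber component is entirely contained in, or entirely disjoint from, $\F(K_W)$, $\F(K_F)$, and $\F(K_D)$ (already noted before the statement), together with the identities $K_C=K_W\cup K_F$ and $K_D=K_W\cap K_F$. An infinite component is of type (i), so all its entries are elliptic and hence lie in $[1,w]^W$ (as $[1,w]^C\setminus[1,w]^W$ is entirely hyperbolic), with $W$- and $C$-lengths agreeing; its maximal simplices have product $w\in[1,w]^W$, so they belong to $K_W$, and the whole component lies in $\F(K_W)$. Since $[1,w]^F$ is finite, $\F(K_F)$ is finite, so every component meeting it is finite; conversely a finite component is of type (ii), hence all entries are horizontal elliptic or hyperbolic, hence non-vertical-elliptic, hence elements of $[1,w]^F$ --- here I would use that $[1,w]^C\setminus[1,w]^F$ is exactly the set of vertical elliptic elements, which I extract from the McCammond--Sulway construction ($[1,w]^D\subseteq[1,w]^F$ contains every horizontal elliptic element, $[1,w]^C\setminus[1,w]^W\subseteq[1,w]^F$ contains every remaining hyperbolic element, while $D$, hence $F$, contains no vertical elliptic isometry). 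Then its maximal simplices (product $w\in[1,w]^F$) lie in $K_F$, so the component lies in $\F(K_F)$; thus $\F(K_F)$ is exactly the union of the finite components. Finally $\F(K_D)=\F(K_W)\cap\F(K_F)$, so $\F(K_D)$ is the union of the finite fiber components that lie in $\F(K_W)$.

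The main obstacle I anticipate is the type-constancy of a fiber component --- in particular treating the codimension-one faces via the three-row structure of $[1,w]$ --- and, on the bookkeeping side, pinning down the characterization $[1,w]^C\setminus[1,w]^F=\{\text{vertical elliptic elements}\}$ used for the $K_F$ bullet; once these are in hand the statement follows formally from \Cref{lemma:conjugation}, \Cref{lemma:simplices-classification}, and the ``contained-in-or-disjoint-from'' alternative for components.
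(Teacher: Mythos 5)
Your proposal is correct, and it rests on the same two pillars as the paper's proof (\Cref{lemma:conjugation} and \Cref{lemma:simplices-classification}, plus the finiteness of $\F(K_F)$), but it routes the argument through an unnecessary detour. You first show ``component infinite $\iff$ some term of the bi-infinite sequence is vertical elliptic,'' and then spend the bulk of the effort on a type-constancy argument (that the type is constant across all simplices in a fiber component, including the codimension-one faces via the three-row structure) in order to pull the vertical elliptic term into $\sigma$'s own window. The paper sidesteps this entirely: it shows type (i) $\Rightarrow$ infinite \emph{directly} by applying \Cref{lemma:conjugation} to the vertical elliptic entry already present in $\sigma$ itself, and type (ii) $\Rightarrow$ finite by observing that such a $\sigma$ lies in $\F(K_F)$, which is a finite complex; since every $\sigma$ of positive dimension is exactly one of (i) or (ii), the biconditional then drops out with no need to compare types across the component. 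Your type-constancy argument is itself sound (and the observation that $\varphi$ preserves the three classes, hence makes the set of vertical elliptic indices a union of residue classes, is a clean way to see it), but it is strictly more work. You are right to flag the characterization $[1,w]^C \setminus [1,w]^F = \{\text{vertical elliptic elements}\}$ as a bookkeeping point: the paper also relies on it implicitly (to pass from ``type (ii)'' to ``$\sigma \in \F(K_F)$''), taking it as understood from the McCammond--Sulway construction. The treatment of the three bullet points via the ``contained-in-or-disjoint'' alternative, the identity $K_D = K_W \cap K_F$, and the finiteness of $\F(K_F)$ matches the paper.
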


\begin{proof}
	Let $\sigma = [x_1|x_2|\dotsb|x_d]$.
	If $\sigma$ is of type (i), at least one $x_i$ is vertical elliptic. Then the set $\{ x_{i+jd} = \varphi^j(x_i) \mid j \in \Z \}$ is infinite by \Cref{lemma:conjugation}, so the fiber component of $\sigma$ is infinite.
	The simplices of type (i) are in $\F(K_W)$, so the component is contained in $\F(K_W)$.
	
	If $\sigma$ is of type (ii), then every $x_i$ is horizontal elliptic or hyperbolic, so $\sigma \in \F(K_F)$.
	Since $\F(K_F)$ is finite, the fiber component of $\sigma$ is finite.
	Conversely, every simplex of $\F(K_F)$ is of type (ii), and so its fiber component is finite.
	
	The very last point follows from the fact that $K_D = K_W \cap K_F$.
\end{proof}

\begin{lemma}
	Let $\CC \subseteq \F(K_W)$ be a finite $d$-fiber component.
	Then there exists a simplex $[x_1|x_2|\dotsb|x_{d-1}] \in \CC$ such that $x_1x_2\dotsm x_{d-1}$ is horizontal elliptic.
	\label{lemma:horizontal-elliptic-product}
\end{lemma}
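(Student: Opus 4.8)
The plan is to work with the bi-infinite sequence $(x_i)_{i\in\Z}$ attached to the finite fiber component $\CC$, characterized by $x_i x_{i+1}\dotsm x_{i+d-1} = w$ for all $i$ and $\varphi(x_i) = x_{i+d}$. I will produce an index $i$ such that the partial product $y := x_{i-d+1}x_{i-d+2}\dotsm x_{i-1}$ — that is, the left complement of $x_i$ in $[1,w]^W$ — is horizontal elliptic. Once this is done, the $(d-1)$-simplex $[x_{i-d+1}|\dotsb|x_{i-1}]$ is a genuine codimension-one face of the $d$-simplex $[x_{i-d+1}|\dotsb|x_i]\in\CC$: reflection length adds along the window, so $l(y) = l(w) - l(x_i)$ and $y\in[1,w]^W$; and since its product $y$ is horizontal elliptic it is in particular $\neq w$, so the face has $\eta$-value $d$ and therefore lies in $\CC$. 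After a translation of the indices this is the asserted simplex.

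The first ingredient is that, because $\CC$ is finite, \Cref{lemma:component-finiteness} together with \Cref{lemma:simplices-classification} guarantee that all simplices of $\CC$ are of type (ii); in particular every $x_i$ is horizontal elliptic or hyperbolic. The second ingredient is that at least one $x_i$ is hyperbolic. To see this I would argue by contradiction: if every $x_i$ were horizontal elliptic, then $\Mov(x_i)$ would be contained in the horizontal subspace $\Dir(\ell)^{\perp}$ for all $i$, and using the elementary inclusion $\Mov(gh)\subseteq\Mov(g)+\Mov(h)$ and induction we would get $\Mov(w) = \Mov(x_0 x_1 \dotsm x_{d-1}) \subseteq \Dir(\ell)^{\perp}$. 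But $w$ is hyperbolic, so the shortest vector $\mu$ of $\Mov(w)$ is nonzero, and since $w$ translates every point of $\ell = \Min(w)$ by $\mu$ we have $\mu \in \Dir(\ell)\setminus\{0\}$ — contradicting $\Mov(w)\subseteq\Dir(\ell)^{\perp}$.

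Finally, fixing an index $i$ with $x_i$ hyperbolic, I would appeal to the coarse (``three-row'') description of $[1,w]^W$ recalled in \Cref{sec:dual-artin}: $x_i$ is the right complement of its left complement $y$, and since $x_i$ is hyperbolic the pair $(y,x_i)$ must lie in the bottom row, which says exactly that $y$ is horizontal elliptic. Combining the three pieces completes the argument.

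I do not anticipate a real obstacle: the proof is essentially bookkeeping on the fiber sequence combined with structural facts already in hand. The only points needing a touch of care are the trivial edge case $d=1$, where the claimed $(d-1)$-simplex is the empty simplex $[\,]$ with product the identity (which counts as horizontal elliptic), and making the ``$\Mov$ of a composition lies in the sum of the $\Mov$'s'' step explicit — both routine.
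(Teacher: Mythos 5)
Your proof is correct and follows the same route as the paper's: both extract a $d$-simplex of $\CC$ whose final factor is hyperbolic, pass to its $(d-1)$-face, and conclude that the product of that face is horizontal elliptic via the three-row structure of $[1,w]^W$ (after a shift of indices, the paper's ``WLOG $x_d$ is hyperbolic'' is exactly your choice of window ending at the hyperbolic $x_i$). The only substantive difference is that the paper attributes ``at least one $x_i$ is hyperbolic'' directly to \Cref{lemma:component-finiteness}, while you make the underlying reason explicit with the move-set inclusion $\Mov(gh)\subseteq\Mov(g)+\Mov(h)$ to rule out an all-horizontal-elliptic window; this is a correct and slightly more self-contained justification of the same fact, not a different approach.
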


\begin{proof}
	Consider any $d$-simplex $\sigma = [x_1|x_2|\dotsb|x_d] \in \CC$, with $x_1 x_2 \dotsm x_d = w$.
	Since $\CC$ is finite, at least one $x_i$ is hyperbolic by \Cref{lemma:component-finiteness}.
	Suppose without loss of generality that $x_d$ is hyperbolic.
	Then its left complement $x_1x_2\dotsm x_{d-1}$ is horizontal elliptic (here we are using the fact that $\sigma \in \F(K_W)$).
	This completes the proof, because $[x_1|x_2|\dotsb|x_{d-1}] \in \CC$.
\end{proof}

\begin{lemma}
	Let $\CC \subseteq \F(K_W)$ be an infinite $d$-fiber component.
	Then there exists a simplex $[x_1|x_2|\dotsb|x_{d-1}] \in \CC$ such that $x_1x_2\dotsm x_{d-1}$ is vertical elliptic.
	\label{lemma:vertical-elliptic-product}
\end{lemma}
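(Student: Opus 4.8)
The plan is to exploit the bi-infinite sequence $(x_i)_{i\in\Z}$ associated with the $d$-fiber component $\CC$, together with the coarse ``three-row'' structure of $[1,w]^W$ recalled in \Cref{sec:dual-artin}, which relates the type (horizontal elliptic, vertical elliptic, hyperbolic) of an element to that of its right and left complements.

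First I would recall the explicit description of a $d$-fiber component: $\CC$ consists of the $d$-simplices $[x_i|x_{i+1}|\dotsb|x_{i+d-1}]$, whose product equals $w$, and of their faces $[x_i|x_{i+1}|\dotsb|x_{i+d-2}]$, for $i \in \Z$, where the sequence $(x_i)_{i\in\Z}$ of elements of $[1,w]^W$ satisfies $x_i x_{i+1}\dotsm x_{i+d-1} = w$ for every $i$. Since $\CC$ is infinite, \Cref{lemma:component-finiteness} tells us that every $d$-simplex of $\CC$ is of type (i) in \Cref{lemma:simplices-classification}; applying this to any one $d$-simplex of $\CC$, at least one of its $d$ entries is vertical elliptic. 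Hence there is an index $j \in \Z$ with $x_j$ vertical elliptic.

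Next I would consider the $(d-1)$-simplex $\sigma = [x_{j-d+1}|x_{j-d+2}|\dotsb|x_{j-1}] \in \CC$, whose product is $v = x_{j-d+1}x_{j-d+2}\dotsm x_{j-1}$. From $v x_j = x_{j-d+1}\dotsm x_{j-1}x_j = w$ we see that $v$ is the left complement of $x_j$, equivalently $x_j$ is the right complement of $v$. The remaining point is to deduce that $v$ is vertical elliptic. This is where the coarse structure enters: if $v$ were hyperbolic its right complement would be horizontal elliptic, and if $v$ were horizontal elliptic its right complement would be hyperbolic; in either case $x_j$ could not be vertical elliptic. Thus $v$ lies in the ``middle row,'' i.e.\ it is vertical elliptic, and $\sigma$ is the desired simplex, after relabeling its entries as $x_1,\dotsc,x_{d-1}$.

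I do not expect a serious obstacle here: once the three-row dictionary between an element and its complements is in hand, the argument is purely combinatorial, and the only thing to be slightly careful about is the indexing of the sequence $(x_i)$ — in particular checking that the window of length $d-1$ ending immediately before $x_j$ really does index a simplex of $\CC$, which is immediate from the description of a $d$-fiber component. This lemma is the exact counterpart of \Cref{lemma:horizontal-elliptic-product} for infinite components, with ``vertical elliptic'' in place of ``horizontal elliptic,'' and the proof is parallel.
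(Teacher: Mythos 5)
Your proof is correct and is essentially the paper's own argument. The paper says (after choosing, WLOG by shifting the window, that $x_d$ is vertical elliptic) that the left complement $x_1\dotsm x_{d-1}$ of $x_d$ is then vertical elliptic; you make the same move via the sequence $(x_i)$, selecting a $j$ with $x_j$ vertical elliptic and taking the window ending just before it, and you spell out the three-row dictionary that the paper leaves implicit. There is no substantive difference in the route taken.
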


\begin{proof}
	Consider any $d$-simplex $[x_1|x_2|\dotsb|x_d] \in \CC$, with $x_1 x_2 \dotsm x_d = w$.
	Since $\CC$ is infinite, at least one $x_i$ is vertical elliptic by \Cref{lemma:component-finiteness}.
	Suppose without loss of generality that $x_d$ is vertical elliptic.
	Then its left complement $x_1x_2\dotsm x_{d-1}$ is also vertical elliptic, and $[x_1|x_2|\dotsb|x_{d-1}] \in \CC$.
\end{proof}

From now on, fix an axial chamber $C_0$ of the Coxeter complex.
If $S$ is the set of simple reflections associated with $C_0$, the Coxeter element $w$ can be written as the product of the elements of $S$ by \Cref{thm:coxeter-A} (for the case $\tilde A_n$) and \Cref{thm:bipartite-coxeter} (for the other cases).
Let $X_W' \subseteq K_W$ be the complex introduced in \Cref{def:dual-salvetti}.
By \Cref{lemma:affine-dual-salvetti-complex}, it consists of the simplices $[x_1|x_2|\dotsb|x_d]$ of $K_W$ such that $x_1x_2\dotsm x_d$ fixes a vertex of $C_0$.

\begin{lemma}
	Let $\CC \subseteq \F(K_W)$ be a $d$-fiber component.
	Then there exists a simplex $[x_1|x_2|\dotsb|x_{d-1}] \in \CC$ such that $x_1 x_2 \dotsm x_{d-1}$ is elliptic and fixes a vertex of $C_0$.
	In other words, $\CC \cap \F(X_W') \neq \emptyset$.
	\label{lemma:elliptic-product-vertex}
\end{lemma}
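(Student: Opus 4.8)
The plan is to combine the two previous lemmas (\Cref{lemma:horizontal-elliptic-product} and \Cref{lemma:vertical-elliptic-product}) with the structural results about axial vertices from \Cref{sec:affine-coxeter-elements}, in particular \Cref{lemma:fixed-axial-vertex} and \Cref{lemma:elliptic-coxeter-element}. The key observation is that \Cref{lemma:horizontal-elliptic-product} and \Cref{lemma:vertical-elliptic-product} together already produce, for \emph{any} $d$-fiber component $\CC$ (finite or infinite), a simplex $[x_1|x_2|\dotsb|x_{d-1}] \in \CC$ whose product $u = x_1 x_2 \dotsm x_{d-1}$ is elliptic: horizontal elliptic in the finite case, vertical elliptic in the infinite case. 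So the only thing left to upgrade is ``elliptic'' to ``elliptic and fixes a vertex of $C_0$.''

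First I would take such a simplex $[x_1|\dotsb|x_{d-1}] \in \CC$ with $u = x_1\dotsm x_{d-1} \in [1,w]$ elliptic. By \Cref{lemma:fixed-axial-vertex}, $u \leq w_\b$ for some axial vertex $\b$, where $w_\b$ is the parabolic Coxeter element of \Cref{lemma:elliptic-coxeter-element} fixing $\b$; in particular $u$ fixes $\b$. Now $\b$ is an axial vertex, so it is a vertex of some axial chamber $C$, and $w$ acts transitively on axial chambers via the dihedral (bipartite case) or cyclic action along $\ell$. Concretely: there is an integer $j$ such that $w^j(\b)$ is a vertex of $C_0$. I would then conjugate the whole simplex by $w^j$, using the ``shift'' structure of a fiber component described right before \Cref{lemma:conjugation}: conjugation by $w$ sends a $d$-fiber component to itself (shifting the index sequence $(x_i)$ by $d$), so $[\varphi^j(x_1)|\dotsb|\varphi^j(x_{d-1})]$ — which equals $[x_{1+jd}|\dotsb|x_{d-1+jd}]$ — is again a simplex of $\CC$, and its product is $\varphi^j(u) = w^{-j} u w^j$, which fixes $w^{-j}(\b)$. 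Choosing $j$ so that $w^{-j}(\b)$ (or $w^j(\b)$, depending on sign conventions — I would fix this using \Cref{rmk:axial-orbits-bipartite} / \Cref{prop:axial-orbits-A}, which say every axial chamber has exactly one vertex in each $w$-orbit) is a vertex of $C_0$ gives the desired simplex, and by \Cref{lemma:affine-dual-salvetti-complex} it lies in $\F(X_W')$, so $\CC \cap \F(X_W') \neq \emptyset$.

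The main obstacle I anticipate is verifying that one can always shift an axial vertex into $\bar C_0$ by a power of $w$ acting on the fiber component in the correct way — i.e.\ reconciling the two group actions (the conjugation action $\varphi$ on elements of $[1,w]$, and the geometric action of $w$ on axial vertices) so that ``$\varphi^j(u)$ fixes a vertex of $C_0$'' genuinely holds for a suitable $j$. This hinges on the fact that $w$ permutes the axial vertices transitively on each orbit and that $C_0$ is itself axial, so each $w$-orbit of axial vertices meets the vertex set of $C_0$; \Cref{rmk:axial-orbits-bipartite} and \Cref{prop:axial-orbits-A} give exactly this. A minor point to be careful about is that in the infinite case $u$ is vertical elliptic, so $\{\varphi^j(u)\}_{j}$ is infinite (\Cref{lemma:conjugation}), but this causes no trouble: we only need \emph{one} value of $j$ for which $\varphi^j(u)$ fixes a vertex of $C_0$, and the transitivity of the $w$-action on the relevant orbit of axial vertices supplies it. Everything else is a bookkeeping assembly of results already established.
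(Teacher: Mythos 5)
Your proposal is correct and follows essentially the same route as the paper's proof: combine \Cref{lemma:horizontal-elliptic-product,lemma:vertical-elliptic-product} to get a simplex with elliptic product, invoke \Cref{lemma:fixed-axial-vertex} to find a fixed axial vertex, then use \Cref{rmk:axial-orbits-bipartite,prop:axial-orbits-A} to identify that vertex as $w^j(b)$ for some vertex $b$ of $C_0$ and conjugate the simplex by $w^j$ (a shift within the fiber component) so that the product fixes $b$. The sign bookkeeping you flag works out exactly as you suspect, and the paper leaves it as implicit as you do.
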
%

\begin{proof}
	By \Cref{lemma:vertical-elliptic-product,lemma:horizontal-elliptic-product}, there exists a simplex $\sigma = [x_1|x_2|\dotsb|x_{d-1}] \in \CC$ such that $x_1 x_2\dotsm x_{d-1}$ is elliptic.
	By \Cref{lemma:fixed-axial-vertex}, $x_1 x_2\dotsm x_{d-1}$ fixes an axial vertex.
	By \Cref{prop:axial-orbits-A} (for the case $\tilde A_n$) and \Cref{rmk:axial-orbits-bipartite} (for the other cases), every axial vertex can be written uniquely as $w^j(b)$ for some vertex $b$ of $C_0$.
	Then, up to a conjugation by a power of $w$ (i.e.\ up to a translation of the indices in the sequence $(x_i)_{i\in \Z}$), we can assume that $x_1 x_2 \dotsm x_{d-1}$ fixes a vertex of $C_0$.
\end{proof}

\begin{corollary}
	\label{cor:finite-number-of-components}
	The face poset $\F(K_C)$ of the interval complex $K_C$ contains only a finite number of fiber components.
\end{corollary}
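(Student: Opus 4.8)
The plan is to reduce the statement to two separate finiteness facts, one about $\F(K_W)$ and one about $\F(K_F)$, using that every fiber component of $\F(K_C)$ lands inside one of these two subcomplexes. Since $K_C = K_W \cup K_F$, we have $\F(K_C) = \F(K_W) \cup \F(K_F)$, and by \Cref{lemma:component-finiteness} every finite fiber component is contained in $\F(K_F)$, while every infinite fiber component is contained in $\F(K_W)$. Hence it suffices to bound, separately, the number of finite fiber components and the number of fiber components contained in $\F(K_W)$.

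For the first bound, I would observe that $K_F$ has only finitely many cells. The interval $[1,w]^F$ is finite, the weights of the generators of $F$ are bounded away from $0$, and every simplex $[x_1|\dotsb|x_d]$ of an interval complex satisfies $l(x_1\dotsb x_d) = \sum_i l(x_i)$ with $l(x_1\dotsb x_d) \le l(w)$; thus $d$ is bounded, and as each $x_i$ ranges over the finite set $[1,w]^F$, there are finitely many simplices. Therefore $\F(K_F)$ is a finite poset, so it contains only finitely many fiber components.

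For the second bound, the key input is \Cref{lemma:elliptic-product-vertex}: every fiber component $\CC \subseteq \F(K_W)$ satisfies $\CC \cap \F(X_W') \neq \emptyset$. Since the fiber components partition the fibers of the poset map $\eta$, and each simplex $\sigma$ has a well-defined value $\eta(\sigma)$, two distinct fiber components are disjoint, and so meet the set $\F(X_W')$ in disjoint non-empty subsets. Choosing one simplex of $\F(X_W')$ in each fiber component of $\F(K_W)$ then defines an injection from the set of such fiber components into $\F(X_W')$. As $X_W'$ is a finite complex (\Cref{def:dual-salvetti}), $\F(X_W')$ is finite, so there are only finitely many fiber components contained in $\F(K_W)$ — in particular only finitely many infinite ones. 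Combining the two cases yields the corollary.

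There is no serious obstacle left at this point: essentially all of the work is already done in \Cref{lemma:elliptic-product-vertex} (which itself rests on \Cref{lemma:vertical-elliptic-product,lemma:horizontal-elliptic-product,lemma:fixed-axial-vertex} and on the description of axial vertices). The only points requiring a little care are the remark that distinct fiber components are disjoint — so that ``meeting $\F(X_W')$'' gives a genuine injection and not merely a surjection onto something — and the verification that $\F(K_F)$ is actually finite, both of which follow immediately from the definitions and from \Cref{lemma:component-finiteness}.
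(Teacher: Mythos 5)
Your proof is correct and follows essentially the same route as the paper: split the fiber components by finiteness (equivalently, by whether they land in $\F(K_F)$ or $\F(K_W)$ via \Cref{lemma:component-finiteness}), use finiteness of $\F(K_F)$ for one side and \Cref{lemma:elliptic-product-vertex} together with finiteness of $\F(X_W')$ for the other. The only added material is your explicit justification that $\F(K_F)$ is finite and the remark that disjointness of fiber components gives an injection into $\F(X_W')$, both of which the paper leaves implicit.
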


\begin{proof}
	Every fiber component contained in $\F(K_W)$ intersects $\F(X_W')$ by \Cref{lemma:elliptic-product-vertex}.
	Since $\F(X_W')$ is finite, $\F(K_W)$ contains only a finite number of fiber components.
	If $\CC$ is a fiber component not contained in $\F(K_W)$, by \Cref{lemma:component-finiteness} we have that $\CC$ is finite and $\CC\subseteq \F(K_F)$.
	Since $\F(K_F)$ is finite, it contains only a finite number of fiber components.
\end{proof}

We are finally able to show that $K_C$ and $K_W$ deformation retract onto finite subcomplexes $K_C' \subseteq K_C$ and $K_W' \subseteq K_W$, respectively.

\begin{definition}
	Let $K$ be either $K_C$ or $K_W$.
	A \emph{nice subcomplex} of $K$ is a subcomplex $K' \subseteq K$ such that
	\begin{enumerate}
		\item every finite fiber component $\CC \subseteq \F(K)$ is also contained in $\F(K')$;
		\item for every infinite fiber component $\CC \subseteq \F(K)$, the intersection $\CC \cap \F(K')$ is non-empty and its Hasse diagram is connected.
	\end{enumerate}
	\label{def:nice-subcomplex}
\end{definition}

\begin{theorem}
	Let $K$ be either $K_C$ or $K_W$.
	\begin{enumerate}[(a)]
		\item $K$ deformation retracts onto every nice subcomplex $K'$.
		\item Finite nice subcomplexes of $K$ exist.
	\end{enumerate}
	\label{thm:nice-subcomplex}
\end{theorem}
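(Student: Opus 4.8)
The plan is to derive part (a) from discrete Morse theory (\Cref{thm:discrete-morse-theory}) and part (b) from an explicit descending induction; the whole argument is uniform in $K \in \{K_C, K_W\}$. The two structural facts I would use throughout are that $\F(K)$ has only finitely many $\eta$-fiber components (\Cref{cor:finite-number-of-components}), and that — as recalled just before that corollary — each infinite $d$-fiber component $\CC$ is, as a subgraph of the Hasse diagram of $\F(K)$, a bi-infinite path whose vertices alternate between the $d$-simplices $[x_1|\dotsb|x_d]$ with $x_1\dotsm x_d = w$ and the $(d-1)$-simplices obtained from these by deleting the first or the last entry, while each finite fiber component is an analogous finite cycle.

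For part (a), I would fix a nice subcomplex $K'$ and build a proper acyclic matching $\M$ on $\F(K)$ whose critical cells are exactly the cells of $K'$ and all of whose matched pairs $(\tau,\sigma)$ have $\tau$ a regular face of $\sigma$; then \Cref{thm:discrete-morse-theory} gives the deformation retraction. By the Patchwork theorem (\Cref{thm:patchwork}) applied to $\eta$, it suffices to build an acyclic matching on each fiber $\eta^{-1}(d)$, hence on each fiber component $\CC$ separately. On a finite $\CC$ there is nothing to do, since $\CC \subseteq \F(K')$ by \Cref{def:nice-subcomplex}(1). On an infinite $\CC$, \Cref{def:nice-subcomplex}(2) says $\CC \cap \F(K')$ is a non-empty connected segment of the bi-infinite path; because $K'$ is a subcomplex and every $d$-simplex of $\CC$ has both of its outer $(d-1)$-faces in $\CC$, the endpoints of that segment are forced to be $(d-1)$-simplices, so $\CC \setminus \F(K')$ consists of at most two one-sided infinite rays, each starting (at the cell adjacent to $\CC \cap \F(K')$) with a $d$-simplex. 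On each such ray, listing its cells as $u_1, u_2, u_3, \dotsc$ with $u_1$ the $d$-simplex next to $\CC \cap \F(K')$, I would take the forced matching pairing $u_{2j}$ with $u_{2j-1}$ for all $j\ge 1$. A short inspection of the induced orientation of the Hasse diagram shows that inside $\CC$ every directed path runs monotonically toward $\CC \cap \F(K')$ and then halts, so there are no alternating cycles within $\CC$ and forward-reachability within $\CC$ is finite.

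Two things then remain. Regularity of the matched pairs is routine: a matched pair always has the shape $[x_2|\dotsb|x_d] \lessdot [x_1|\dotsb|x_d]$ or $[x_1|\dotsb|x_{d-1}] \lessdot [x_1|\dotsb|x_d]$, and for such an ``outer'' face the two defining conditions hold because, in the realization of \Cref{def:interval-complex}, the corresponding facet of $\Delta^d$ is carried homeomorphically onto its image and has closure a $(d-1)$-ball (one checks that no other facet of $\Delta^d$ is mapped onto that open face, since that would force $x_1 = \dotsb = x_d$, impossible when the product is $w$). Properness is the delicate point: along any directed path in $H_\M$ the value of $\eta$ is non-increasing, and each maximal $\eta$-constant stretch stays inside one fiber component; combining this with finite-dimensionality of $K$ (so $\eta$ has finite range), with \Cref{cor:finite-number-of-components} (finitely many fiber components), and with the finite within-component reachability established above, one concludes that the forward-reachable set of every cell of $\F(K)$ is finite. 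Since by construction the critical cells of $\M$ are precisely the finite fiber components together with the segments $\CC \cap \F(K')$, i.e.\ $\F(K')$, this proves (a).

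For part (b), I would build a finite nice subcomplex by descending induction on $\eta$, which makes sense since $K$ is finite-dimensional. Start by putting all cells of all finite fiber components into $\F(K')$ — finitely many by \Cref{cor:finite-number-of-components}. Having chosen the cells at all levels $>d$, the faces of those cells lying at level $d$ that are not yet present form a finite set, distributed among the infinite $d$-fiber components; for each such component $\CC$ I would add a finite connected segment of $\CC$ containing all of these (and at least one cell of $\CC$ if none are forced) whose two endpoints are $(d-1)$-simplices — possible since $\CC$ is a path and only finitely many cells are forced. Descending to level $1$ yields $\F(K')$, which is closed under faces by construction; condition (1) holds by the first step and condition (2) because each $\CC \cap \F(K')$ is a non-empty connected segment, while finiteness is clear since there are finitely many levels and finitely many components and each segment is finite. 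The main obstacle, as indicated, is the properness check in part (a): it is exactly there that the connectedness clause of \Cref{def:nice-subcomplex}(2) is indispensable, since disconnectedness of $\CC\cap\F(K')$ would leave a ``gap'' in an infinite fiber component that cannot be matched coherently toward the critical part, producing directed paths that escape to infinity.
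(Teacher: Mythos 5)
Your argument is correct and follows essentially the same route as the paper: part (a) via the Patchwork theorem applied to $\eta$ together with the unique acyclic matching on each infinite fiber component forced by the connected non-empty segment $\CC \cap \F(K')$, and part (b) via descending induction on $\eta$-level choosing finite intervals in each infinite component. You simply spell out in more detail what the paper states tersely — in particular the shape of the rays $\CC \setminus \F(K')$, the regularity of outer faces, and the properness of the global matching.
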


\begin{proof}
	For part (a), on every infinite fiber component $\CC$ consider the only acyclic matching $\M_\CC$ with critical simplices given by $\CC \cap \F(K')$. Existence and uniqueness of $\M_\CC$ follow from the fact that $\CC \cap \F(K')$ is non-empty and its Hasse diagram is connected.
	By \Cref{thm:patchwork}, the union of the matchings $\M_\CC$ is an acyclic matching with the desired set of critical simplices.
	This matching is also proper.
	We conclude using the main theorem of discrete Morse theory (\Cref{thm:discrete-morse-theory}).
	
	For part (b), recall that $\F(K)$ has only a finite number of (finite or infinite) fiber components by \Cref{cor:finite-number-of-components}.
	Then it is enough to inductively choose a finite non-empty interval in the Hasse diagram of every infinite $d$-fiber component $\CC$, starting from $d=\dim(K)$ (the highest possible value of $d$) and going down to $d=1$, so that every simplex in the boundary of a chosen simplex is also chosen.
\end{proof}

Since $K_C$ is a classifying space for $C_w$, we immediately obtain the following.

\begin{theorem}
	Let $W$ be an irreducible affine Coxeter group, and $w$ one of its Coxeter elements.
	The braided crystallographic group $C_w$ admits a classifying space with a finite number of cells.
	\label{thm:finite-crystallographic}
\end{theorem}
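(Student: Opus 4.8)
The plan is to deduce the statement immediately from two facts already established: that the interval complex $K_C$ is a classifying space for the braided crystallographic group $C_w$, and that $K_C$ deformation retracts onto a finite subcomplex. First I would recall why $K_C$ is a classifying space: in the McCammond--Sulway construction the factored translations are introduced precisely so that the interval $[1,w]^C$ is a balanced lattice; hence $C_w$ is a Garside group by \Cref{thm:garside-structure}, and the interval complex $K_C$ associated with $[1,w]^C$ is a $K(C_w,1)$ by \Cref{thm:garside-classifying-space}.

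Next I would apply \Cref{thm:nice-subcomplex} with $K = K_C$: part (b) produces a finite nice subcomplex $K_C' \subseteq K_C$, and part (a) shows that $K_C$ deformation retracts onto $K_C'$, so the inclusion $K_C' \hookrightarrow K_C$ is a homotopy equivalence. Combining this with the previous paragraph, $K_C'$ is homotopy equivalent to a classifying space of $C_w$ and is therefore itself a classifying space for $C_w$. Being a finite subcomplex of $K_C$, it has only finitely many cells, which is exactly the assertion.

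I do not expect a genuine obstacle here: all of the content is concentrated in \Cref{thm:nice-subcomplex}, which relies on $\F(K_C)$ having only finitely many fiber components (\Cref{cor:finite-number-of-components}), on assembling acyclic matchings fiber-component by fiber-component via the Patchwork theorem (\Cref{thm:patchwork}), and on the form of discrete Morse theory valid for infinite CW complexes (\Cref{thm:discrete-morse-theory}). The only delicate point is that the assembled matching must be proper; this is ensured because, after deleting a chosen finite interval from each infinite $d$-fiber component, what remains is a disjoint union of rays in the Hasse diagram along which the sets of reachable simplices are finite.
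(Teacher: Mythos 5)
Your proof is correct and takes exactly the same route as the paper: since $[1,w]^C$ is a balanced lattice, $K_C$ is a classifying space for $C_w$ by \Cref{thm:garside-classifying-space}, and \Cref{thm:nice-subcomplex} supplies a finite nice subcomplex $K_C'$ onto which $K_C$ deformation retracts, so $K_C'$ is a finite $K(C_w,1)$. The paper states this deduction in one sentence immediately before the theorem, so you have identified precisely the intended argument.

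One small caution on your aside about properness: after deleting the critical interval from an infinite $d$-fiber component one does get two rays, and along each ray the matched arrows do point inward, but a directed path in $H_\M$ can also leave the fiber component by dropping to a face $[x_1|\dotsb|x_ix_{i+1}|\dotsb|x_d]$ of lower dimension, which lies in a different fiber component. So ``reachable simplices are finite along the ray'' is not quite the whole story. Since you are invoking \Cref{thm:nice-subcomplex} rather than reproving it, this does not affect the validity of your argument; it is only the informal justification in your final paragraph that is a bit too optimistic.
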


We end this section by noticing that there is a canonical choice of a nice subcomplex $K'$ of $K$ (where $K$ is either $K_W$ or $K_C$): for every infinite fiber component, $K'$ contains all the simplices between the first and the last simplex belonging to $X_W'$; in addition, $K'$ contains all the finite fiber components of $K$.

\begin{lemma}
	Let $K$ be either $K_C$ or $K_W$.
	Then $K'$ is a nice subcomplex of $K$.
	\label{lemma:canonical-nice}
\end{lemma}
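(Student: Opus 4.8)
The plan is to verify the two defining conditions of \Cref{def:nice-subcomplex} directly against the explicit description of $K'$ given in the sentence preceding the statement. Recall that $K'$ is defined so that: (a) $K'$ contains every finite fiber component of $\F(K)$, and (b) for every infinite fiber component $\CC$, the complex $K'$ contains precisely the simplices of $\CC$ lying ``between'' the first and the last simplex of $\CC$ that belong to $\F(X_W')$. Condition (1) of \Cref{def:nice-subcomplex} is then immediate from (a). So the work is entirely in checking condition (2): for every infinite fiber component $\CC \subseteq \F(K)$, the intersection $\CC \cap \F(K')$ is non-empty and its Hasse diagram is connected.

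First I would observe that any infinite fiber component $\CC$ is contained in $\F(K_W)$ by \Cref{lemma:component-finiteness}, so it makes sense to speak of $\CC \cap \F(X_W')$. Non-emptiness of $\CC \cap \F(K')$ follows from \Cref{lemma:elliptic-product-vertex}, which guarantees $\CC \cap \F(X_W') \neq \emptyset$; since by construction $K'$ contains at least the simplices of $\CC$ in $\F(X_W')$, we get $\CC \cap \F(K') \supseteq \CC \cap \F(X_W') \neq \emptyset$. The remaining point — connectedness of the Hasse diagram of $\CC \cap \F(K')$ — is where the structure of a fiber component must be used. Recall that an infinite $d$-fiber component $\CC$ has an associated bi-infinite sequence $(x_i)_{i \in \Z}$ with $x_i x_{i+1} \dotsm x_{i+d-1} = w$ for all $i$, and $\varphi(x_i) = x_{i+d}$; the Hasse diagram of $\CC$ itself is the bi-infinite ``zigzag'' with $d$-simplices $[x_i|\dotsb|x_{i+d-1}]$ at the top and $(d-1)$-simplices $[x_i|\dotsb|x_{i+d-2}]$ at the bottom. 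Thus $\CC$ is linearly ordered by the index, and any subset of $\CC$ defined as ``everything between position $p$ and position $q$'' has connected Hasse diagram. The set $\CC \cap \F(X_W')$ need not itself be an interval in this linear order, but by definition $K'$ takes precisely the interval spanned by it — all simplices between the first and last member of $\CC \cap \F(X_W')$. The only subtlety is that $K'$ is a \emph{subcomplex}, so one must also check that including these simplices forces no simplex outside the chosen interval: but every face of a simplex $\sigma \in \CC$ lying in the interval lies again in $\CC$ at an index no farther out than $\sigma$ (faces are obtained by deleting $x_i$ from an end, which cannot increase the extremal index), so the chosen interval is closed under taking faces within $\CC$. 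Hence $\CC \cap \F(K')$ is exactly that interval, and its Hasse diagram — a finite connected segment of the zigzag — is connected.

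The main obstacle, and the only place requiring care, is making precise that ``first and last simplex belonging to $X_W'$'' is well-defined, i.e.\ that $\CC \cap \F(X_W')$, while possibly not contiguous, is nonempty and bounded on both sides within the linear order of $\CC$; this is exactly the content of \Cref{lemma:elliptic-product-vertex} combined with the finiteness of $\F(X_W')$ (\Cref{def:dual-salvetti}), since an infinite set of simplices drawn from the infinite zigzag $\CC$ cannot all lie in the finite complex $X_W'$, so $\CC \cap \F(X_W')$ is finite, hence bounded, and its convex hull (in the index order) is a finite interval. Everything else is a routine unwinding of the definitions of fiber component and of $X_W'$, together with the three bullet points of \Cref{lemma:component-finiteness} which ensure the case analysis ($K = K_W$ versus $K = K_C$) introduces nothing new: for $K = K_C$, the infinite fiber components are the same as those of $K_W$, and the finite ones are handled by condition (1).
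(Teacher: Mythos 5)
There is a genuine gap in your argument at the point where you claim that ``every face of a simplex $\sigma \in \CC$ lying in the interval lies again in $\CC$ at an index no farther out than $\sigma$ (faces are obtained by deleting $x_i$ from an end).'' This is false: only \emph{some} faces of a simplex of $\CC$ stay in $\CC$. If $\sigma = [x_i|\dotsb|x_{i+d-1}]$ is a top ($d$-dimensional) simplex of a $d$-fiber component $\CC$, its faces obtained by deleting an end factor do stay in $\CC$, but the ``middle'' faces $[x_i|\dotsb|x_jx_{j+1}|\dotsb|x_{i+d-1}]$ have product $w$ and hence lie in a $(d-1)$-fiber component, not in $\CC$. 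Worse, if $\sigma = [x_i|\dotsb|x_{i+d-2}]$ is a bottom ($(d-1)$-dimensional) simplex of $\CC$, then \emph{none} of its faces lie in $\CC$, since passing to any face strictly decreases $\eta$. So your check of closure under faces only covers the faces that remain in $\CC$ and says nothing about the faces that exit $\CC$, which is exactly the non-trivial part of the subcomplex verification.

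This is precisely what the paper's proof addresses: it notes that if $\tau$ is a face of $\sigma$ not lying in $\CC$, then $\tau$ lies between two faces of $\sigma_1$ and $\sigma_2$ (the delimiting simplices of $X_W'$ in $\CC$) inside the fiber component of $\tau$, and since $X_W'$ is itself a subcomplex, those faces of $\sigma_1,\sigma_2$ again belong to $X_W'$, so $\tau \in \F(K')$. Without an argument of this kind you have not shown that $K'$ is a subcomplex of $K$ at all, and the lemma cannot be concluded. The rest of your proof (non-emptiness of $\CC \cap \F(K')$ via \Cref{lemma:elliptic-product-vertex}, connectedness of the chosen interval within the zigzag, and condition (1) from the explicit inclusion of the finite components) agrees with the paper and is fine.
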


\begin{proof}
	First, we check that $K'$ is a subcomplex.
	The finite fiber components of $K$ form a subcomplex ($K_D$ or $K_F$) by \Cref{lemma:component-finiteness}.
	Let $\sigma$ be a simplex of $K'$ which belongs to an infinite fiber component $\CC$.
	Then, in $\CC$, the simplex $\sigma$ is between two simplices $\sigma_1,\sigma_2 \in X_W'$ (if $\sigma \in X_W'$, we have $\sigma_1= \sigma_2= \sigma$).
	Then a face $\tau$ of $\sigma$ is either in the same fiber component $\CC$, or is between two faces of $\sigma_1$ and $\sigma_2$, in the fiber component of $\tau$. Therefore $\tau \in \F(K')$.
	
	By \Cref{lemma:component-finiteness,lemma:elliptic-product-vertex}, every infinite fiber component contains at least one simplex of $X_W'$.
	Then $K'$ satisfies the conditions of \Cref{def:nice-subcomplex}.
\end{proof}

We call this subcomplex $K'$ the \emph{canonical nice subcomplex} of $K$.
By construction, it contains $X_W'$ as a subcomplex.
In general, $K'$ is not the smallest nice subcomplex of $K$.

\begin{figure}
	\renewcommand{\height}{-1.5}
	\newcommand{\length}{0.95}
	\begin{center}
		\begin{tikzpicture}
		\node[inner sep=5pt] (0) at (0,0) {$[w]$};
		\node[inner sep=5pt] (1) at (0, \height) {$[\,]$};
		\draw (0.south) -> (1.north);
		\node[fill=white, draw, circle, inner sep=1.4pt] at (0.south) {};
		\node[fill=black, draw, circle, inner sep=1.4pt] at (1.north) {};
		\end{tikzpicture}\qquad\quad
		\begin{tikzpicture}[every node/.style={inner sep=5pt}]
		\node (0) at (0,0) {$[c_2c_0|b']$};
		\node (1) at (\length, \height) {$[b']$};
		\node (2) at (2*\length, 0) {$[b'|a_1a_{-1}]$};
		\node (3) at (3*\length, \height) {$[a_1a_{-1}]$};
		\node (4) at (4*\length, 0) {$[a_1a_{-1} | b]$};
		
		\node (-1) at (-\length, \height) {$[c_2c_0]$};
		\node (-2) at (-2*\length, 0) {$[b | c_2c_0]$};
		\node (-3) at (-3*\length, \height) {$[b]$};

		\draw[name path=a, white] (0.south) -> (1.north);
		\draw[name path=b, white] (2.south) -> (1.north);
		\draw[name path=c, white] (2.south) -> (3.north);
		
		\draw[name path=d, white] (0.south) -> (-1.north);
		\draw[name path=e, white] (-2.south) -> (-1.north);
	
		\draw[name path=z] (4.south) -> (-3.north);
		\begin{scope}[every node/.style={inner sep=2pt, fill=white, circle}]
			\path [name intersections={of=a and z,by=A}];
			\node at (A) {};
			\path [name intersections={of=b and z,by=B}];
			\node at (B) {};
			\path [name intersections={of=c and z,by=C}];
			\node at (C) {};
			\path [name intersections={of=d and z,by=D}];
			\node at (D) {};
			\path [name intersections={of=e and z,by=E}];
			\node at (E) {};
		\end{scope}

		\draw (0.south) -> (1.north);
		\draw (2.south) -> (1.north);
		\draw (2.south) -> (3.north);
		\draw (4.south) -> (3.north);
		
		\draw (0.south) -> (-1.north);
		\draw (-2.south) -> (-1.north);
		\draw (-2.south) -> (-3.north);
		
		\begin{scope}[every node/.style={fill=white, draw, circle, inner sep=1.4pt}]
			\node at (0.south) {};
			\node at (2.south) {};
			\node at (4.south) {};
			\node at (-2.south) {};
			\node at (-1.north) {};
			\node at (3.north) {};
		\end{scope}
		
		\begin{scope}[every node/.style={fill=black, draw, circle, inner sep=1.4pt}]
			\node at (-3.north) {};
			\node at (1.north) {};
		\end{scope}
		\end{tikzpicture}

		\vskip0.5cm

		\begin{tikzpicture}
		\clip (-1.6*\length, 1.2*\height) rectangle (1.6*\length, 0.2);
		
		\begin{scope}[every node/.style={inner sep=5pt}]
			\node (0) at (0,0) {$[a_1|bc_0]$};
			\node (1) at (\length, \height) {$[bc_0]$};
			\node (-1) at (-\length, \height) {$[a_1]$};
		\end{scope}
		
		\node (2) at (2*\length,0) {\phantom{$[\,]$}};
		\coordinate (1a) at ($(1)!0.5!(2)$) {};
		
		\node (-2) at (-2*\length, 0) {\phantom{$[\,]$}};
		\coordinate (-1a) at ($(-1)!0.5!(-2)$) {};
		
		\draw (0.south) -> (1.north);
		\draw[dashed] (1.north) -> (1a);
		
		\draw (0.south) -> (-1.north);
		\draw[dashed] (-1.north) -> (-1a);
		
		\begin{scope}[every node/.style={fill=white, draw, circle, inner sep=1.4pt}]
		\node at (0.south) {};
		\end{scope}
		
		\begin{scope}[every node/.style={fill=black, draw, circle, inner sep=1.4pt}]
		\node at (1.north) {};
		\node at (-1.north) {};
		\end{scope}
		\end{tikzpicture}\qquad
		\begin{tikzpicture}
		\clip (-1.6*\length, 1.2*\height) rectangle (1.6*\length, 0.2);

		\begin{scope}[every node/.style={inner sep=5pt}]
			\node (0) at (0,0) {$[a_1b|c_0]$};
			\node (1) at (\length, \height) {$[c_0]$};
			\node (-1) at (-\length, \height) {$[a_1b]$};
		\end{scope}
		
		\node (2) at (2*\length,0) {\phantom{$[\,]$}};
		\coordinate (1a) at ($(1)!0.5!(2)$) {};
		
		\node (-2) at (-2*\length, 0) {\phantom{$[\,]$}};
		\coordinate (-1a) at ($(-1)!0.5!(-2)$) {};
		
		\draw (0.south) -> (1.north);
		\draw[dashed] (1.north) -> (1a);
		
		\draw (0.south) -> (-1.north);
		\draw[dashed] (-1.north) -> (-1a);
		
		\begin{scope}[every node/.style={fill=white, draw, circle, inner sep=1.4pt}]
		\node at (0.south) {};
		\end{scope}
		
		\begin{scope}[every node/.style={fill=black, draw, circle, inner sep=1.4pt}]
		\node at (1.north) {};
		\node at (-1.north) {};
		\end{scope}
		\end{tikzpicture}\qquad
		\begin{tikzpicture}
		\clip (-3.6*\length, 1.2*\height) rectangle (1.6*\length, 0.2);
		
		\begin{scope}[every node/.style={inner sep=5pt}]
			\node (0) at (0,0) {$[a_1c_0|a_{-1}]$};
			\node (1) at (\length, \height) {$[a_{-1}]$};
			\node (-1) at (-\length, \height) {$[a_1c_0]$};
			\node (-2) at (-2*\length, 0) {$[c_2|a_1c_0]$};
			\node (-3) at (-3*\length, \height) {$[c_2]$};
		\end{scope}
		
		\node (2) at (2*\length,0) {\phantom{$[\,]$}};
		\coordinate (1a) at ($(1)!0.5!(2)$) {};

		\node (-4) at (-4*\length, 0) {\phantom{$[\,]$}};
		\coordinate (-3a) at ($(-3)!0.5!(-4)$) {};
		
		\draw (0.south) -> (1.north);
		\draw[dashed] (1.north) -> (1a);
		
		\draw (0.south) -> (-1.north);
		\draw (-2.south) -> (-1.north);
		\draw (-2.south) -> (-3.north);
		\draw[dashed] (-3.north) -> (-3a);
		
		\begin{scope}[every node/.style={fill=white, draw, circle, inner sep=1.4pt}]
		\node at (0.south) {};
		\node at (-2.south) {};
		\end{scope}
		
		\begin{scope}[every node/.style={fill=black, draw, circle, inner sep=1.4pt}]
		\node at (1.north) {};
		\node at (-1.north) {};
		\node at (-3.north) {};
		\end{scope}
		\end{tikzpicture}

		\vskip0.5cm
		
		\begin{tikzpicture}
		\clip (-1.6*\length, 1.2*\height) rectangle (1.6*\length, 0.2);
		
		\begin{scope}[every node/.style={inner sep=5pt}]
			\node (0) at (0,0) {$[a_1|b|c_0]$};
			\node (1) at (\length, \height) {$[b|c_0]$};
			\node (-1) at (-\length, \height) {$[a_1|b]$};
		\end{scope}
		
		\node (2) at (2*\length,0) {\phantom{$[\,]$}};
		\coordinate (1a) at ($(1)!0.5!(2)$) {};

		\node (-2) at (-2*\length, 0) {\phantom{$[\,]$}};
		\coordinate (-1a) at ($(-1)!0.5!(-2)$) {};
		
		\draw (0.south) -> (1.north);
		\draw[dashed] (1.north) -> (1a);
		
		\draw (0.south) -> (-1.north);
		\draw[dashed] (-1.north) -> (-1a);
		
		\begin{scope}[every node/.style={fill=white, draw, circle, inner sep=1.4pt}]
		\node at (0.south) {};
		\end{scope}
		
		\begin{scope}[every node/.style={fill=black, draw, circle, inner sep=1.4pt}]
		\node at (1.north) {};
		\node at (-1.north) {};
		\end{scope}
		\end{tikzpicture}\qquad
		\begin{tikzpicture}
		\clip (-3.6*\length, 1.2*\height) rectangle (1.6*\length, 0.2);
		
		\begin{scope}[every node/.style={inner sep=5pt}]
			\node (0) at (0,0) {$[a_1|c_0|a_{-1}]$};
			\node (1) at (\length, \height) {$[c_0 | a_{-1}]$};
			\node (-1) at (-\length, \height) {$[a_1|c_0]$};
			\node (-2) at (-2*\length, 0) {$[c_2|a_1|c_0]$};
			\node (-3) at (-3*\length, \height) {$[c_2|a_1]$};
		\end{scope}
		
		\node (2) at (2*\length,0) {\phantom{$[\,]$}};
		\coordinate (1a) at ($(1)!0.5!(2)$) {};
		
		\node (-4) at (-4*\length, 0) {\phantom{$[\,]$}};
		\coordinate (-3a) at ($(-3)!0.5!(-4)$) {};
		
		\draw (0.south) -> (1.north);
		\draw[dashed] (1.north) -> (1a);
		
		\draw (0.south) -> (-1.north);
		\draw (-2.south) -> (-1.north);
		\draw (-2.south) -> (-3.north);
		\draw[dashed] (-3.north) -> (-3a);
		
		\begin{scope}[every node/.style={fill=white, draw, circle, inner sep=1.4pt}]
		\node at (0.south) {};
		\node at (-2.south) {};
		\end{scope}
		
		\begin{scope}[every node/.style={fill=black, draw, circle, inner sep=1.4pt}]
		\node at (1.north) {};
		\node at (-1.north) {};
		\node at (-3.north) {};
		\end{scope}
		\end{tikzpicture}

		\vskip0.5cm
		
		\begin{tikzpicture}
		\clip (-3.6*\length, 1.2*\height) rectangle (1.6*\length, 0.2);
		
		\begin{scope}[every node/.style={inner sep=5pt}]
			\node (0) at (0,0) {$[c_2|c_0|b']$};
			\node (1) at (\length, \height) {$[c_0 | b']$};
			\node (-1) at (-\length, \height) {$[c_2|c_0]$};
			\node (-2) at (-2*\length, 0) {$[b|c_2|c_0]$};
			\node (-3) at (-3*\length, \height) {$[b|c_2]$};
		\end{scope}
		
		\node (2) at (2*\length,0) {\phantom{$[\,]$}};
		\coordinate (1a) at ($(1)!0.5!(2)$) {};
		
		\node (-4) at (-4*\length, 0) {\phantom{$[\,]$}};
		\coordinate (-3a) at ($(-3)!0.5!(-4)$) {};
		
		\draw (0.south) -> (1.north);
		\draw[dashed] (1.north) -> (1a);
		
		\draw (0.south) -> (-1.north);
		\draw (-2.south) -> (-1.north);
		\draw (-2.south) -> (-3.north);
		\draw[dashed] (-3.north) -> (-3a);
		
		\begin{scope}[every node/.style={fill=white, draw, circle, inner sep=1.4pt}]
		\node at (0.south) {};
		\node at (-1.north) {};
		\node at (-2.south) {};
		\end{scope}
		
		\begin{scope}[every node/.style={fill=black, draw, circle, inner sep=1.4pt}]
		\node at (1.north) {};
		\node at (-3.north) {};
		\end{scope}
		\end{tikzpicture}\qquad
		\begin{tikzpicture}
		\clip (-3.6*\length, 1.2*\height) rectangle (1.6*\length, 0.2);
		
		\begin{scope}[every node/.style={inner sep=5pt}]
			\node (0) at (0,0) {$[a_1|a_{-1}|b]$};
			\node (1) at (\length, \height) {$[a_{-1} | b]$};
			\node (-1) at (-\length, \height) {$[a_1|a_{-1}]$};
			\node (-2) at (-2*\length, 0) {$[b'|a_1|a_{-1}]$};
			\node (-3) at (-3*\length, \height) {$[b'|a_1]$};
		\end{scope}
		
		\node (2) at (2*\length,0) {\phantom{$[\,]$}};
		\coordinate (1a) at ($(1)!0.5!(2)$) {};
		
		\node (-4) at (-4*\length, 0) {\phantom{$[\,]$}};
		\coordinate (-3a) at ($(-3)!0.5!(-4)$) {};
		
		\draw (0.south) -> (1.north);
		\draw[dashed] (1.north) -> (1a);
		
		\draw (0.south) -> (-1.north);
		\draw (-2.south) -> (-1.north);
		\draw (-2.south) -> (-3.north);
		\draw[dashed] (-3.north) -> (-3a);
		
		\begin{scope}[every node/.style={fill=white, draw, circle, inner sep=1.4pt}]
		\node at (0.south) {};
		\node at (-1.north) {};
		\node at (-2.south) {};
		\end{scope}
		
		\begin{scope}[every node/.style={fill=black, draw, circle, inner sep=1.4pt}]
		\node at (1.north) {};
		\node at (-3.north) {};
		\end{scope}
		\end{tikzpicture}
	\end{center}
	\caption{Fiber components of $K_W = K_C$ in the case $\tilde A_2$.
		The first two fiber components form the subcomplex $K_D = K_F$.
		The black nodes correspond to the simplices of $X_W'$.}
	\label{fig:fiber-components-A2}
\end{figure}
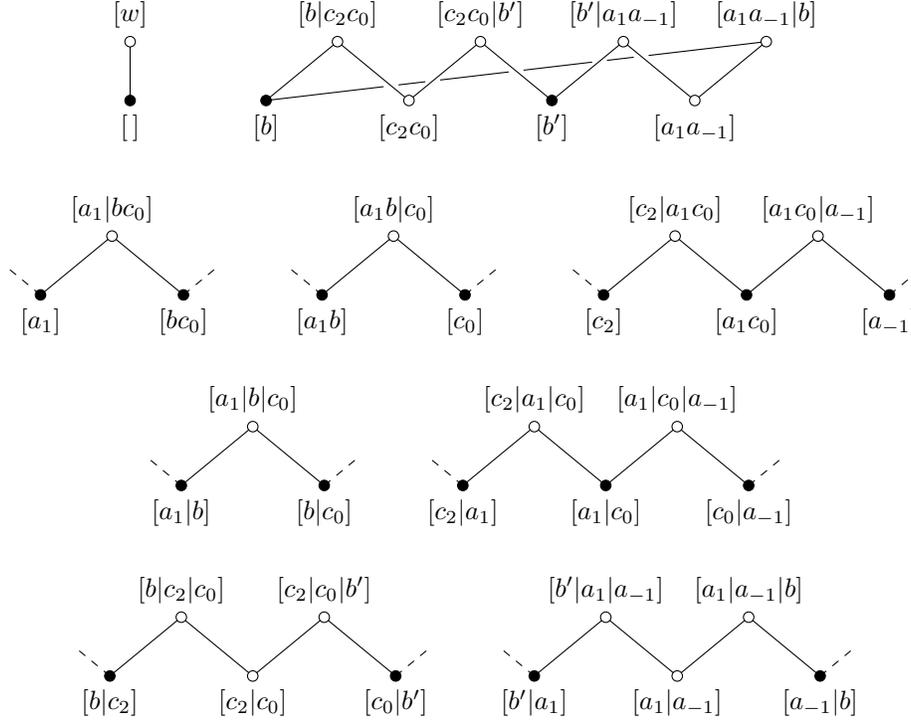

\begin{example}[Fiber components of $\tilde A_2$]
	For $W$ of type $\tilde A_2$, the fiber components of $K_W = K_C$ are shown in \Cref{fig:fiber-components-A2}, using the notation of \Cref{ex:A2}.
	The first 2 fiber components are finite, and they form the subcomplex $K_D = K_F$.
	The other 7 fiber components are infinite.
	Black nodes correspond to simplices in $\F(X_W')$, and white nodes correspond to simplices in $\F(K_W') \setminus \F(X_W')$, where $K_W'$ is the canonical subcomplex of $K_W$.
	The shown simplices are exactly those in $\F(K_W')$.
	\label{ex:fiber-components-A2}
\end{example}

\section{The \texorpdfstring{$K(\pi,1)$}{K(pi,1)} conjecture}
\label{sec:conjecture}

In this section, we prove the $K(\pi,1)$ conjecture for affine Artin groups.
It is enough to consider the irreducible case.

Let $W$ be an irreducible affine Coxeter group, and $R$ its set of reflections.
Fix a Coxeter element $w$ and an axial chamber $C_0$ of the Coxeter complex.
Let $S \subseteq R$ be the set of simple reflections associated with $C_0$.
By \Cref{thm:coxeter-A} (for the case $\tilde A_n$) and \Cref{thm:bipartite-coxeter} (for the other cases), the Coxeter element $w$ can be written as the product of the elements of $S$, say $w = s_1s_2\dotsm s_{n+1}$ (where $n$ is the rank of $W$).
Let $\{p_i\}_{i \in \Z}$ be the sequence of points of the Coxeter axis $\ell$ that are fixed by at least one vertical reflection of $W$ (see \Cref{sec:bipartite-coxeter} and \Cref{lemma:axis-A}).
Enumerate these points so that $p_0$ is below $C_0$ and $p_1$ is above $C_0$.

Let $K_W$ be the interval complex associated with the noncrossing partition poset $[1,w] = [1,w]^W$, $K_W' \subseteq K_W$ its canonical nice subcomplex (introduced at the end of \Cref{sec:finite-classifying-spaces}), and $X_W' \subseteq K_W'$ the complex introduced in \Cref{def:dual-salvetti}.
By \Cref{lemma:affine-dual-salvetti-complex}, $X_W'$ consists of the simplices $[x_1|x_2|\dotsb|x_d]$ of $K_W$ such that $x_1x_2\dotsm x_d$ fixes a vertex of $C_0$.

Recall that $K_W$ is a classifying space for the dual Artin group $W_w$ (\Cref{thm:KW-classifying}), $K_W$ deformation retracts onto $K_W'$ (\Cref{thm:nice-subcomplex}), and $X_W'$ is homotopy equivalent to the orbit configuration space $Y_W$ (\Cref{thm:dual-salvetti}).
Then, in order to prove the $K(\pi,1)$ conjecture for the Artin group $G_W = \pi_1(Y_W)$, it is enough to show that $K_W'$ deformation retracts onto $X_W'$.
This also implies that the dual Artin group $W_w$ is isomorphic to the Artin group $G_W$, thus giving a new proof of \cite[Theorem C]{mccammond2017artin}.

To show that $K_W'$ deformation retracts onto $X_W'$, we are going to use discrete Morse theory.
Specifically, we need to construct an acyclic matching on the face poset $\F(K_W')$ such that the set of critical simplices is exactly $\F(X_W')$.

Denote by $\varphi \colon [1,w] \to [1,w]$ the conjugation by $w$: $\varphi(u) = w^{-1}uw$.
Let $\prec$ be an axial ordering of the set of reflections $R_0 = R \cap [1,w]$ (see \Cref{def:axial-ordering}) that satisfies the following \emph{compatibility property}: if $r,r' \in R_0$ fix the same point of $\ell$, and $r \prec r'$, then $\varphi(r) \prec \varphi(r')$.
Although not strictly necessary, this compatibility property will make the proof of \Cref{lemma:matching2} simpler.

By \Cref{thm:shellability}, every element $u \in [1,w]$ has a unique minimal length factorization $u = r_1 r_2 \dotsm r_m$ as a product of reflections such that $r_1 \prec r_2 \prec \dotsb \prec r_m$.
We call it the \emph{increasing factorization} of $u$.
\Cref{thm:shellability} also implies that $r_1$ is the $\prec$-smallest reflection of $R_0 \cap [1,u]$, and $r_m$ is the $\prec$-largest.
As in \Cref{sec:shellability}, we say that a vertical reflection is \emph{positive} if it fixes a point of $\ell$ above $C_0$, and \emph{negative} otherwise.

Since $w$ acts on the Coxeter axis $\ell$ as a translation in the positive direction, we have that $\Fix(\varphi(r)) \cap \ell$ is below $\Fix(r) \cap \ell$ for every vertical reflection $r \in R_0$.
In addition, if $\Fix(r) \cap \ell$ is above $\Fix(r') \cap \ell$ (for some vertical reflections $r,r'$), then $\Fix(\varphi^j(r)) \cap \ell$ is above $\Fix(\varphi^j(r')) \cap \ell$ for all $j \in \Z$.

\begin{lemma}
	For every vertical reflection $r \in R_0$, there exists a unique $j \in \Z$ such that $\varphi^j(r)$ is one of the $n+1$ $\prec$-smallest reflections in $R_0$.
	In addition, $j \geq 0$ if and only if $r$ is positive.
	\label{lemma:vertical-reflection-conjugate}
\end{lemma}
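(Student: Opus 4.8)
The plan is to identify the $n+1$ $\prec$-smallest reflections of $R_0$ explicitly, and then to exploit the fact that conjugation by $w$ moves the fixed points of vertical reflections down the Coxeter axis in a completely controlled way.

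First I would record how $w$ permutes the marked points on the axis. By \Cref{lemma:axis-A} (case $\tilde A_n$) and the discussion of \Cref{sec:bipartite-coxeter} (the other cases), $w$ acts on $\ell$ as a translation in the positive direction that shifts the index by a fixed positive integer $c$, i.e.\ $w(p_i)=p_{i+c}$ for all $i$ (with $c=(p+q)/\gcd(p,q)$ in type $\tilde A_n$ and $c=2$ in the bipartite case). Since conjugation by $w$ is an automorphism of $W$ taking reflections to reflections, preserving verticality, and with $w\Fix(r)w^{-1}=w(\Fix(r))$ meeting $\ell$ in $w(p_i)=p_{i+c}$ whenever $\Fix(r)\cap\ell=p_i$, the map $r\mapsto wrw^{-1}$ is a bijection from the set of vertical reflections fixing $p_i$ onto the set of those fixing $p_{i+c}$; equivalently, $\varphi^{j}(r)$ fixes $p_{i-jc}$. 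Writing $d_i$ for the number of vertical reflections of $W$ whose hyperplane meets $\ell$ at $p_i$, we get that $d_i$ is periodic of period $c$.

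The next, and main, step is the counting identity $\sum_{i=1}^{c}d_i=n+1$. In type $\tilde A_n$ with a $(p,q)$-bigon Coxeter element this follows from \Cref{prop:axial-point-A}: every $d_i$ equals $\gcd(p,q)$, so $\sum_{i=1}^{c}d_i=c\cdot\gcd(p,q)=p+q=n+1$. In the bipartite case $c=2$, and using the description of the walls of an axial chamber from \Cref{sec:bipartite-coxeter} (every vertical reflection hyperplane meeting $\ell$ at $\p_0$, resp.\ $\p_1$, contains $F_0$, resp.\ $F_1$, and the reflections through $F_i$ are exactly those in $S_i$, which are all vertical), one gets $d_0=|S_0|$ and $d_1=|S_1|$, hence $\sum_{i=1}^{2}d_i=d_1+d_2=|S_1|+|S_0|=n+1$. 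I would then observe that, by \Cref{def:axial-ordering}, all positive vertical reflections precede every horizontal and every negative reflection, and among positive vertical reflections $r\prec r'$ exactly when $\Fix(r)\cap\ell$ lies below $\Fix(r')\cap\ell$; since there are infinitely many positive vertical reflections (each $p_i$ with $i\ge 1$ carries at least one), the $n+1$ $\prec$-smallest reflections of $R_0$ are the $n+1$ positive vertical reflections closest to $C_0$. By the identity just proved these are precisely the vertical reflections fixing a point of $\{p_1,p_2,\dots,p_c\}$ — all of them, so there is no ambiguity, every other reflection of $R_0$ being strictly $\prec$-larger.

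Finally I would conclude by an elementary division argument. Given a vertical reflection $r$ with $\Fix(r)\cap\ell=p_i$, the element $\varphi^{j}(r)$ fixes $p_{i-jc}$, and since $\{i-jc : j\in\Z\}$ is an arithmetic progression of common difference $c$, there is exactly one $j\in\Z$ with $i-jc\in\{1,\dots,c\}$; for that $j$, $\varphi^{j}(r)$ is a vertical reflection fixing a point of $\{p_1,\dots,p_c\}$, hence one of the $n+1$ $\prec$-smallest reflections of $R_0$, and no other power of $\varphi$ sends $r$ into this set. For the sign, $r$ is positive iff $p_i$ lies above $C_0$ iff $i\ge 1$; if $i\ge 1$ then $j<0$ would force $i-jc\ge i+c>c$, so $j\ge 0$, while if $i\le 0$ then $j\ge 0$ would force $i-jc\le i\le 0$, so $j<0$. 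Thus $j\ge 0$ if and only if $r$ is positive, which completes the proof. The one genuinely non-uniform point — and the place where most care is needed — is the identity $\sum_{i=1}^{c}d_i=n+1$, which rests on the fine geometry of how vertical reflection hyperplanes cross the Coxeter axis developed in \Cref{sec:bipartite-coxeter} and \Cref{sec:A-coxeter}; everything else is formal.
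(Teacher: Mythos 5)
Your proof is correct and follows essentially the same route as the paper's: identify the $n+1$ $\prec$-smallest reflections of $R_0$ as the vertical reflections fixing the first $c$ axial crossing points above $C_0$, observe that conjugation by $w^{-1}$ shifts the crossing-point index by the constant $c$, and deduce existence, uniqueness, and the sign of $j$ by an elementary arithmetic-progression argument. The one place you go further is in spelling out the counting identity $\sum_{i=1}^{c} d_i = n+1$; the paper asserts the identification of the $n+1$ smallest reflections directly (citing \Cref{prop:axial-point-A} in type $\tilde A_n$ and leaving the bipartite count, $d_1 + d_2 = |S_1| + |S_0|$, implicit), whereas you justify it explicitly via the fact that every vertical hyperplane through $p_i$ must contain $B_i$ and hence belongs to $S_i$. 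That extra step is correct and makes the argument slightly more self-contained, but it does not change the underlying structure of the proof.
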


\begin{proof}
	If $w$ is a bipartite Coxeter element (see \Cref{sec:bipartite-coxeter}), the $n+1$ $\prec$-smallest reflections are the ones that fix $p_1$ or $p_2$.
	Let $p_i$ be the point of $\ell$ which is fixed by $r$.
	By \Cref{rmk:axial-orbits-bipartite}, there is a unique $j$ such that $w^{-j}(p_i)$ is $p_1$ or $p_2$, and specifically $j = \left\lfloor\frac{i-1}{2}\right\rfloor$.
	We have $j \geq 0$ if and only if $r$ is positive.
	Then $\varphi^{j}(r)$ fixes $p_1$ or $p_2$, so it is among the $n+1$ $\prec$-smallest reflections of $R_0$.
	
	Suppose now that $w$ is a $(p,q)$-bigon Coxeter element (with $p+q=n+1$) in a Coxeter group $W$ of type $\tilde A_n$.
	By part (i) of \Cref{prop:axial-point-A}, the $n+1$ $\prec$-smallest reflections are those that fix one of $p_1, p_2, \dotsc, p_{m}$ with $m = \frac{p+q}{\gcd(p,q)}$.
	By \Cref{lemma:axis-A}, for every $i \in \Z$ there is a unique $j \in \Z$ such that $w^{-j}(p_i) \in \{p_1, p_2, \dotsc, p_m\}$.
	We conclude as in the bipartite case.
\end{proof}

\begin{lemma}
	Let $r \in R_0$ be a vertical reflection.
	The right complement of $r$ fixes a vertex of $C_0$ if and only if $r$ is among the $n+1$ $\prec$-smallest reflections of $R_0$.
	\label{lemma:smallest-reflections}
\end{lemma}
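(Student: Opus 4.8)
The plan is to rephrase the statement entirely in terms of where $\Fix(r)$ meets the Coxeter axis $\ell$, and then to argue with half-spaces. Fix a vertical reflection $r\in R_0$, write $H=\Fix(r)$, and let $p_k$ be the (unique) point of the sequence $\{p_i\}_{i\in\Z}$ at which $H$ meets $\ell$, so that $k$ is determined by $r$. First I record the meaning of the right-hand side: tracking the proof of \Cref{lemma:vertical-reflection-conjugate} — which uses \Cref{prop:axial-point-A} and \Cref{lemma:axis-A} in type $\tilde A_n$, and the results of \Cref{sec:bipartite-coxeter} otherwise — the $n+1$ $\prec$-smallest reflections of $R_0$ are exactly the vertical reflections whose fixed hyperplane meets $\ell$ in one of the points $p_1,\dots,p_M$, where $M$ is the amount by which $w$ shifts the indexing of the sequence $(p_i)$ (so $M=\tfrac{p+q}{\gcd(p,q)}$ in type $\tilde A_n$ and $M=2$ for a bipartite Coxeter element). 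So it suffices to prove that the right complement $v$ of $r$ fixes a vertex of $C_0$ if and only if $1\le k\le M$.

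Next I would pin down which point $v$ fixes. Since $r$ is a reflection it is elliptic, and its right complement is $v=rw$ with $l(v)=l(w)-l(r)=n$. By \Cref{lemma:elliptic-coxeter-A} (type $\tilde A_n$) and \Cref{lemma:elliptic-coxeter-bipartite} (other cases), $v=rw$ is a Coxeter element of the finite parabolic subgroup of $W$ fixing a vertex $\b$, where $\b$ is the unique vertex of the axial chamber $C_H^-$ lying immediately below $p_k$ that is not contained in $H$ — equivalently, since $H$ is a wall of $C_H^-$ (by the corollary following \Cref{prop:axial-orbits-A} and its bipartite counterpart), $\b$ is the vertex of $C_H^-$ opposite $H$. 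In particular $\Fix(v)=\{\b\}$, and $\b$ is an axial vertex, while $C_H^-$ is exactly the axial chamber whose intersection with $\ell$ is the interval $(p_{k-1},p_k)$. Thus ``$v$ fixes a vertex of $C_0$'' is the assertion ``$\b$ is a vertex of $C_0$'', and it remains to decide when this holds.

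This is where the half-space argument enters. Since $\b$ is a vertex of $C_H^-$ but does not lie on $H$, and $C_H^-$ lies on the side of $H$ containing the part of $\ell$ below $p_k$, the vertex $\b$ lies in the open half-space of $H$ ``below $p_k$''. Hence every axial chamber having $\b$ as a vertex lies in the closed lower half-space of $H$, so (recalling that $\ell$ meets only vertical hyperplanes, at the points $p_i$) its intersection with $\ell$ is an interval $(p_j,p_{j+1})$ with $j\le k-1$. Since $C_H^-$ itself, with $\ell$-interval $(p_{k-1},p_k)$, contains $\b$, whereas the axial chamber immediately above it along $\ell$ (namely $C_H^+$) lies on the upper side of $H$ and does not, the chamber $C_H^-$ is the topmost axial chamber containing $\b$. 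By \Cref{prop:axial-orbits-A}(ii) — and the analogous count for bipartite Coxeter elements, which can be read off from \Cref{rmk:axial-orbits-bipartite} by comparing the number of axial chambers and axial vertices in a fundamental domain for $w$ along $\ell$ — exactly $M$ consecutive axial chambers contain $\b$; by the previous sentence these have $\ell$-intervals $(p_{k-M},p_{k-M+1}),\dots,(p_{k-1},p_k)$. So $C_0$, whose $\ell$-interval is $(p_0,p_1)$, is among them precisely when $0\in\{k-M,\dots,k-1\}$, i.e. when $1\le k\le M$; together with the first paragraph this is the claimed equivalence.

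The only genuinely delicate points are: first, that $\b$ really lies strictly below $H$, which rests on $H$ being a wall of $C_H^-$ and on $\b$ being the vertex of $C_H^-$ opposite that wall; and second, the uniform treatment of the two families — one must be sure that the bipartite analogue of \Cref{prop:axial-orbits-A}(ii) (each axial vertex lies on exactly $M=2$ consecutive axial chambers) is in hand, and that the description of the $n+1$ $\prec$-smallest reflections via $p_1,\dots,p_M$ is literally the same in both cases. Once those are settled, the remainder is routine bookkeeping of intervals along $\ell$.
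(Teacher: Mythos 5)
Your argument is correct, and it rests on the same ingredients as the paper's proof (Lemmas~\ref{lemma:elliptic-coxeter-bipartite} and \ref{lemma:elliptic-coxeter-A} to locate the fixed vertex $b$, Proposition~\ref{prop:axial-orbits-A}(ii) to count chambers through $b$, and the explicit description of the $n+1$ $\prec$-smallest reflections), but the organization differs from the paper in a way worth noting. The paper first deduces from Lemma~\ref{lemma:elliptic-coxeter-element}, by taking left complements, that there are \emph{exactly} $n+1$ vertical reflections whose right complement fixes a vertex of $C_0$, and then only proves one inclusion: for bipartite $w$ it reads the result directly off Lemma~\ref{lemma:elliptic-coxeter-bipartite}, and for $\tilde A_n$ it argues by contradiction that $r$ cannot sit at $p_i$ with $i\geq m+1$ because then $C_0$ and $C_H^-$ would be too far apart to both contain $b$. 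The biconditional then follows from the count. You instead prove the biconditional directly: the half-space observation shows $C_H^-$ is the \emph{topmost} of the $M$ consecutive axial chambers containing $b$, so those chambers have $\ell$-intervals $(p_{k-M},p_{k-M+1}),\dots,(p_{k-1},p_k)$, and membership of $C_0$ in this list is visibly equivalent to $1\le k\le M$. This buys a cleaner, symmetric statement, makes the positivity of $r$ a consequence rather than a separate appeal, and removes the need for the ``exactly $n+1$'' count.

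Two small points of precision. First, the unnamed corollary after Proposition~\ref{prop:axial-orbits-A} only asserts that $H$ spans a facet of \emph{some} axial chamber; that $H$ is a wall of $C_H^-$ specifically should instead be credited, in type $\tilde A_n$, to Proposition~\ref{prop:axial-point-A}(ii) (which lists the walls of each axial chamber by their $p_i$) or to the paragraph preceding Lemma~\ref{lemma:elliptic-coxeter-A}, and in the bipartite case to the cited Corollary~8.11 of McCammond--Sulway (a vertical $H$ through $p_i$ contains $F_i$ and misses one vertex each of $F_{i\pm1}$, hence meets $\bar C_H^-$ in a facet). Second, the bipartite analog of Proposition~\ref{prop:axial-orbits-A}(ii) with $M=2$ is not a numbered statement in the paper, so you are right to flag it; it does follow either from your double-count via Remark~\ref{rmk:axial-orbits-bipartite}, or more directly from the observation that a vertex $b\in F_i$ lies on exactly the two chambers with $\ell$-intervals $(p_{i-1},p_i)$ and $(p_i,p_{i+1})$ since the $F_j$ are pairwise disjoint. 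Neither point is a gap; the argument goes through.
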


\begin{proof}
	By \Cref{lemma:elliptic-coxeter-element}, for each vertex $b$ of $C_0$ there is a unique vertical elliptic isometry $u \in [1,w]$ such that $l(u) = n$ and $u$ fixes $b$.
	By taking the left complement of these vertical elliptic isometries, we obtain that there are exactly $n+1$ vertical reflections $r$ such that the right complement of $r$ fixes a vertex of $C_0$.
	We only need to show that they are the $n+1$ $\prec$-smallest reflections of $R_0$.
	
	If $w$ is a bipartite Coxeter element, then the $n+1$ $\prec$-smallest reflections are those that fix $p_1$ or $p_2$.
	By \Cref{lemma:elliptic-coxeter-bipartite}, these are precisely the ones that have a right complement that fixes a vertex of $C_0$.
	
	Suppose now that $w$ is a $(p,q)$-bigon Coxeter element in a Coxeter group $W$ of type $\tilde A_n$.
	By part (i) of \Cref{prop:axial-point-A}, the $n+1$ $\prec$-smallest reflections of $R_0$ are those that fix one of $p_1, p_2, \dotsc, p_{m}$ with $m = \frac{p+q}{\gcd(p,q)}$.
	Let $r \in R_0$ be a vertical reflection such that its right complement fixes a vertex $b$ of $C_0$.
	Let $C$ be the axial chamber immediately below $\Fix(r) \cap \ell$.
	By \Cref{lemma:elliptic-coxeter-A}, $r$ is positive (otherwise its right complement would not fix a vertex of $C_0$) and $\b$ is a vertex of $C$.
	Suppose by contradiction that $r$ is not among the $n+1$ $\prec$-smallest reflections of $R_0$.
	Then $\Fix(r)$ intersects $\ell$ in a point $p_i$ with $i \geq m+1$, and there are at least $m+1$ axial chambers between $C_0$ and $C$ (including $C_0$ and $C$ themselves).
	By part (ii) of \Cref{prop:axial-orbits-A}, the axial point $\b$ is a vertex of exactly $m$ consecutive axial chambers.
	This is a contradiction because $\b$ is a vertex of both $C_0$ and $C$.
\end{proof}

It is convenient to introduce some additional notation.
Given a simplex $\sigma = [x_1|x_2|\dotsb|x_d] \in K_W$, let $\pi(\sigma) = x_1x_2\dotsm x_d \in [1,w]$.
Also, if $\CC$ is the $d$-fiber component containing $\sigma$, let $\lambda(\sigma)$ (resp.\ $\rho(\sigma)$) be the simplex that appears immediately to the left (resp.\ immediately to the right) of $\sigma$ in $\CC$.
More explicitly:
\begin{IEEEeqnarray*}{rCl}
	\lambda(\sigma) &=&
	\begin{cases}
		[x_1|\dotsb|x_{d-1}] & \text{if $\pi(\sigma) = w$} \\
		[y|x_1|\dotsb | x_{d}] & \text{otherwise (here $y$ is the left complement of $x_1\dotsm x_d$)}
	\end{cases} \\[0.1cm]
	\rho(\sigma) &=&
	\begin{cases}
		[x_2|\dotsb|x_{d}] & \text{if $\pi(\sigma) = w$} \\
		[x_1|\dotsb | x_{d}|y] & \text{otherwise (here $y$ is the right complement of $x_1\dotsm x_d$).}
	\end{cases}
\end{IEEEeqnarray*}
Then $\lambda$ is the inverse of $\rho$.
More generally, we say that a simplex $\tau \in K_W$ is \emph{to the left of $\sigma$} (resp.\ \emph{to the right of $\sigma$}) if $\tau = \lambda^k(\sigma)$ (resp.\ $\tau= \rho^k(\sigma)$) for some $k > 0$.
Notice that, if $\sigma$ and $\tau$ belong to a finite component $\CC$, then $\tau$ is both to the left of $\sigma$ and to the right of $\sigma$.

With this notation, the definitions of $X_W'$ and $K_W'$ can be written as follows:
\begin{itemize}
	\item $\sigma \in \F(X_W')$ if and only if $\pi(\sigma)$ fixes a vertex of $C_0$;
	\item $\sigma \in \F(K_W')$ if and only if $\lambda^k(\sigma) \in \F(X_W')$ for some $k \geq 0$ and $\rho^k(\sigma) \in \F(X_W')$ for some $k \geq 0$.
\end{itemize}

The following definition will be used in the construction of the matching on $\F(K_W')$.

\begin{definition}[Depth]
	Let $\sigma = [x_1|x_2|\dotsb|x_d] \in \F(K_W)$, with $\pi(\sigma) = w$.
	Define the \emph{depth} $\delta(\sigma)$ of $\sigma$ as the minimum $i \in \{1,2,\dotsc, d\}$ such that one of the following occurs:
	\begin{enumerate}[(i)]
		\item $l(x_i) \geq 2$;
		\item $l(x_i) = 1$, $i \leq d-1$, and $x_i \prec r$ for every reflection $r\leq x_{i+1}$ in $[1,w]$.
	\end{enumerate}
	If no such $i$ exists, let $\delta(\sigma) = \infty$.
	\label{def:depth}
\end{definition}

\begin{lemma}
	Let $\sigma \in \F(K_W')$.
	If $\pi(\sigma) = w$, and $\pi(\rho(\sigma))$ fixes a vertex of $C_0$, then $\delta(\sigma) \neq \infty$.
	\label{lemma:well-defined-depth}
\end{lemma}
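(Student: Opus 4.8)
The statement to prove is Lemma (\ref{lemma:well-defined-depth}): if $\sigma = [x_1|x_2|\dotsb|x_d] \in \F(K_W')$ with $\pi(\sigma) = w$ and $\pi(\rho(\sigma))$ fixes a vertex of $C_0$, then $\delta(\sigma) \neq \infty$. Unpacking the definition of $\rho$, since $\pi(\sigma) = w$ we have $\rho(\sigma) = [x_2|\dotsb|x_d]$, so the hypothesis says that $x_2 x_3 \dotsm x_d$ fixes a vertex of $C_0$, i.e. $\rho(\sigma) \in \F(X_W')$. The plan is to argue by contradiction: assume $\delta(\sigma) = \infty$ and derive that $w$ itself would have to fix a vertex of $C_0$, which is absurd since $w$ is hyperbolic.

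**Main steps.** First I would observe that $\delta(\sigma) = \infty$ forces a very rigid structure on the factorization $x_1 x_2 \dotsm x_d = w$. Since condition (i) never holds, every $x_i$ has $l(x_i) = 1$, i.e. each $x_i$ is a reflection $r_i \in R_0$; so $d = n+1$ and $r_1 r_2 \dotsm r_{n+1} = w$ is a reflection factorization of $w$. Since condition (ii) never holds either, for each $i \leq d-1$ it is \emph{not} the case that $r_i \prec r$ for every reflection $r \leq x_{i+1} = r_{i+1}$; but the only reflection below $r_{i+1}$ is $r_{i+1}$ itself, so the failure of (ii) means $r_i \not\prec r_{i+1}$, hence $r_{i+1} \prec r_i$ (they are distinct since consecutive factors in a minimal factorization differ). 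Thus $r_1 \succ r_2 \succ \dotsb \succ r_{n+1}$: the factorization is \emph{strictly decreasing} in the axial order $\prec$.

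**The key contradiction.** Now I would use the hypothesis that $\rho(\sigma) = [r_2|\dotsb|r_{n+1}] \in \F(X_W')$, so $v := r_2 r_3 \dotsm r_{n+1}$ is an elliptic element fixing a vertex of $C_0$, and $r_1$ is its left complement: $r_1 v = w$. Since $r_1 \succ r_2 \succ \dotsb \succ r_{n+1}$, the sequence $r_2, r_3, \dotsc, r_{n+1}$ is a strictly decreasing factorization of $v$; reversing it would be increasing, but $\prec$ is the order making $\lambda$ an EL-labeling of $[1,w]$ (and hence of $[1,v]$, via \Cref{rmk:isomorphic-interval} / the fact that $[1,v]$ is an interval), and by \Cref{lemma:unique-lexicographically-smallest} every interval has a unique colexicographically largest maximal chain, which is increasing. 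The decreasing factorization $r_2 \succ \dotsb \succ r_{n+1}$ is exactly the colexicographically \emph{smallest} — and in particular it tells us $r_2$ is the $\prec$-smallest reflection of $R_0 \cap [1,v]$ while $r_{n+1}$ is... I need to be careful here. The cleaner route: since $r_1 \succ r_2$ and $r_1 r_2 \dotsm r_{n+1} = w$ is a minimal factorization, \Cref{lemma:hurwitz-action} gives a minimal factorization of $w$ starting with $r_2 r_1$, so $r_2 \in R_0 \cap [1,w]$ is $\prec$-smaller than $r_1$, and iterating, $r_{n+1}$ is the $\prec$-smallest reflection in $R_0 \cap [1,w]$. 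By \Cref{thm:shellability}, $r_{n+1}$ is the $\prec$-largest reflection of $R_0 \cap [1,w]$ as well (since the unique increasing chain reversed is decreasing only if it has length $\leq 1$) — wait, that is the real point: a strictly $\prec$-decreasing maximal chain of $[1,w]$ is strictly $\prec$-increasing when read backwards, so $(r_{n+1}, r_n, \dotsc, r_1)$ is the unique increasing maximal chain, meaning $r_{n+1} r_n \dotsm r_1 = w$ too. But then $w$ has two minimal factorizations, $r_1 \dotsm r_{n+1}$ and $r_{n+1} \dotsm r_1$, both increasing for the reverse order; more usefully, $r_1$ is simultaneously the $\prec$-largest reflection of $R_0 \cap [1,w]$ (from $r_1 \dotsm r_{n+1}$ being increasing-reversed... ) — I would instead directly invoke \Cref{lemma:smallest-reflections}: $r_{n+1}$ being the $\prec$-smallest among all of $R_0$'s elements appearing, combined with the decreasing structure, forces the left complement of $r_{n+1}$, namely $r_1 r_2 \dotsm r_n$, together with the fact that $r_1$ is the $\prec$-largest reflection below $w$; by \Cref{lemma:smallest-reflections} applied in reverse (to the colexicographic / largest-reflection statement), the right complement of $r_1$ fixing a vertex of $C_0$ would require $r_1$ to be among the $n+1$ $\prec$-smallest, contradicting that $r_1 \succ r_2 \succ \dotsb \succ r_{n+1}$ while also $\rho(\sigma) \in X_W'$ forces $r_2 \dotsm r_{n+1}$ elliptic hence $r_1$ among the $n+1$ $\prec$-smallest by \Cref{lemma:smallest-reflections} — and $r_1$ being both the $\prec$-largest appearing reflection and among the $n+1$ smallest is impossible unless $n+1 = $ total count, i.e. only if all reflections in $[1,w]$ coincide, absurd since $R_0$ is infinite.

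**Expected main obstacle.** The delicate point is matching the combinatorial bookkeeping: condition (ii) of \Cref{def:depth} with "$x_i \prec r$ for every reflection $r \leq x_{i+1}$" must be correctly negated when $x_{i+1}$ might not be a reflection — but under $\delta(\sigma)=\infty$ every $x_i$ is forced to be a reflection, which is what makes the argument go through, and I must present that reduction first and cleanly before touching condition (ii). The second subtle point is invoking \Cref{lemma:smallest-reflections} with the correct orientation (left vs. right complement, $\prec$-smallest vs. $\prec$-largest), using \Cref{thm:shellability}'s statement that the increasing chain is both lexicographically smallest and colexicographically largest, so that a strictly decreasing factorization is the unique colex-smallest and its first reflection $r_1$ is the $\prec$-largest element of $R_0 \cap [1,w]$ — which, together with $\rho(\sigma) \in X_W'$ and \Cref{lemma:smallest-reflections}, yields the contradiction. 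I expect the write-up to be short once the "all $x_i$ are reflections, factorization strictly decreasing" reduction is in place; the risk is getting the $\prec$-direction backwards in the final appeal to \Cref{lemma:smallest-reflections}.
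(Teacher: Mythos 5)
Your reduction is correct and matches the paper's first step: under $\delta(\sigma)=\infty$, each $x_i$ is a reflection $r_i$, $d=n+1$, and $r_1\succ r_2\succ\dotsb\succ r_{n+1}$; and since $\pi(\rho(\sigma))=r_2\dotsm r_{n+1}$ fixes a vertex of $C_0$, \Cref{lemma:smallest-reflections} puts its left complement $r_1$ among the $n+1$ $\prec$-smallest reflections of $R_0$, so $r_1,\dotsc,r_{n+1}$ are exactly those $n+1$ smallest reflections. But the contradiction you then claim does not exist: $r_1$ being ``the $\prec$-largest appearing reflection'' and ``among the $n+1$ smallest'' are perfectly compatible --- $r_1$ is simply the largest of the $n+1$ smallest. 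Indeed, for a bipartite Coxeter element the factorization $w=w_+w_-$ using exactly the reflections with respect to the walls of $C_0$, listed in decreasing order, has precisely this form, so no contradiction can be extracted from the factorization of $w$ alone. Your side excursion about ``reading the chain backwards'' is also unsound: the tuple $(r_{n+1},\dotsc,r_1)$ being $\prec$-increasing does not make it the label sequence of a maximal chain, since in general $r_{n+1}r_n\dotsm r_1\neq w$; reversing the order of factors changes the product.

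The missing idea is that the hypothesis $\sigma\in\F(K_W')$ has barely been used. By definition of the canonical nice subcomplex, $\sigma\in\F(K_W')$ forces some simplex strictly to the \emph{left} of $\sigma$ to lie in $\F(X_W')$ (the one on the right, $\rho(\sigma)$, you have already accounted for; and $\sigma$ itself is not in $\F(X_W')$ because $w$ is hyperbolic). The paper's proof shows this is impossible: any $(n+1)$-simplex to the left of $\sigma$ in its fiber component is $[\varphi^j(x_i)|\dotsb]$ with $j<0$, and the right complement of $\varphi^j(x_i)$ equals $\varphi^j(u)$ where $u$ is the right complement of $x_i$. Since $x_i$ is among the $n+1$ $\prec$-smallest reflections, $u$ fixes a vertex $b$ of $C_0$ by \Cref{lemma:smallest-reflections}, so $\varphi^j(u)$ fixes $w^{-j}(b)$, which for $j<0$ is \emph{not} a vertex of $C_0$ by \Cref{prop:axial-orbits-A} / \Cref{rmk:axial-orbits-bipartite}. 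Hence every simplex to the left of $\sigma$ fails to lie in $\F(X_W')$, contradicting $\sigma\in\F(K_W')$. Without this left-side argument, the proof does not close.
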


\begin{proof}
	Let $\sigma = [x_1|x_2|\dotsb|x_d]$, and suppose by contradiction that $\delta(\sigma) = \infty$.
	Then each $x_i$ is a reflection, $d = l(w) = n+1$, and $x_1 \succ x_2 \succ \dotsb \succ x_{n+1}$.
	Since $\pi(\rho(\sigma)) = x_2\dotsm x_{n+1}$ fixes a vertex of $C_0$, by \Cref{lemma:smallest-reflections} we have that $x_1$ is among the $n+1$ $\prec$-smallest reflections of $R_0$.
	Then $x_1, x_2, \dotsc, x_{n+1}$ are the $n+1$ $\prec$-smallest reflections of $R_0$.
	
	Let $[y_1|y_2|\dotsb|y_{n+1}]$ be a $(n+1)$-simplex to the left of $\sigma$.
	Then $y_1 = \varphi^j(x_i)$ for some $i \in \{1, \dotsc, n+1 \}$ and $j < 0$.
	The right complement $y_2\dotsm y_{n+1}$ of $y_1$ is equal to $\varphi^j(u)$, where $u$ is the right complement of $x_i$.
	Since $x_i$ is among the $n+1$ $\prec$-smallest reflections, its right complement $u$ fixes a vertex $b$ of $C_0$ by \Cref{lemma:smallest-reflections}.
	Then $y_2\dotsm y_{n+1} = \varphi^j(u)$ fixes the axial vertex $w^{-j}(b)$.
	By \Cref{prop:axial-orbits-A} (for the case $\tilde A_n$) and \Cref{rmk:axial-orbits-bipartite} (for the other cases), $w^{-j}(b)$ is not a vertex of $C_0$.
	Therefore $[y_2|\dotsb|y_{n+1}] \not\in \F(X_W')$.
	This conclusion applies to every $n$-simplex $[y_2|\dotsb|y_{n+1}]$ to the left of $\sigma$.
	Then $\sigma \not\in \F(K_W')$, which is a contradiction.
\end{proof}

We are now ready to define a function $\mu$ which will ultimately give us the matching we need.

\begin{definition}[Matching function]
	Given a simplex $\sigma \in \F(K_W') \setminus \F(X_W')$, define a simplex $\mu(\sigma) \in \F(K_W)$ as follows.
	\begin{enumerate}[(1)]
		\item If $\pi(\sigma) \neq w$, let $\mu(\sigma) = \lambda(\sigma)$.
		
		\item If $\pi(\sigma) = w$, and $\pi(\rho(\sigma))$ does not fix a vertex of $C_0$, let $\mu(\sigma) = \rho(\sigma)$.
	\end{enumerate}
	Suppose now that $\pi(\sigma) = w$, and $\pi(\rho(\sigma))$ fixes a vertex of $C_0$. Let $\sigma = [x_1|x_2|\dotsb|x_d]$ and $\delta = \delta(\sigma)$. Notice that $\delta \neq \infty$ by \Cref{lemma:well-defined-depth}.
	\begin{enumerate}[(1)]
		\setcounter{enumi}{2}
		\item If $l(x_\delta) \geq 2$, define $\mu(\sigma) = [x_1|\dotsb|x_{\delta-1}|y|z|x_{\delta+1}|\dotsb|x_d]$, where $y$ is the $\prec$-smallest reflection of $R_0 \cap [1,x_\delta]$, and $yz = x_\delta$.
		
		\item If $l(x_\delta) = 1$, define $\mu(\sigma) = [x_1|\dotsb|x_{\delta-1}|x_\delta x_{\delta+1}|x_{\delta+2}|\dotsb | x_d]$.
	\end{enumerate}
	Notice that $\mu(\sigma) \gtrdot \sigma$ if $\sigma$ occurs in case (1) or (3), whereas $\mu(\sigma) \lessdot \sigma$ if $\sigma$ occurs in case (2) or (4).
	In addition, $\mu(\sigma) \not \in \F(X_W')$ by \Cref{lemma:affine-dual-salvetti-complex}.
	\label{def:matching}
\end{definition}

\begin{figure}
	{\def\arraystretch{1.2}
	\begin{tabular}{rcl}
		$[c_2c_0]$ & $\longleftrightarrow$ & $[b|c_2 c_0]$ \\
		$[a_1a_{-1}]$ & $\longleftrightarrow$ & $[b'|a_1a_{-1}]$ \\
		$[c_2 | c_0]$ & $\longleftrightarrow$ & $[b | c_2 | c_0]$ \\
		$[a_1 | a_{-1}]$ & $\longleftrightarrow$ & $[b' | a_1 | a_{-1}]$ \\
		$[w]$ & $\longleftrightarrow$ & $[a_1 | b c_0]$
	\end{tabular}\qquad
	\begin{tabular}{rcl}
		$[c_2c_0 | b']$ & $\longleftrightarrow$ & $[c_2 | c_0 | b']$ \\
		$[a_1 a_{-1} | b]$ & $\longleftrightarrow$ & $[a_1 | a_{-1} | b]$ \\
		$[c_2 | a_1 c_0]$ & $\longleftrightarrow$ & $[c_2 | a_1 | c_0]$ \\
		$[a_1 c_0 | a_{-1}]$ & $\longleftrightarrow$ & $[a_1 | c_0 | a_{-1}]$ \\
		$[a_1 b | c_0]$ & $\longleftrightarrow$ & $[a_1 | b | c_0]$
	\end{tabular}
	}
	\caption{Matching on $\F(K_W') \setminus \F(X_W')$ in the case $\tilde A_2$.}
	\label{fig:matching-A2}
\end{figure}

\begin{example}[Matching for $\tilde A_2$]
	\Cref{fig:matching-A2} shows the matching defined by $\mu$ on $\F(K_W') \setminus \F(X_W')$ in the case $\tilde A_2$, using the axial ordering of \Cref{ex:axial-orderings}.
	See also \Cref{fig:fiber-components-A2}, where the involved simplices are indicated by white nodes.
	The first 4 pairs on the left column occur in cases (1) and (2) of \Cref{def:matching}.
	The other 6 pairs occur in cases (3) and (4).
\end{example}

\begin{lemma}
	Let $\sigma \in \F(K_W') \setminus \F(X_W')$ be a simplex such that $\pi(\sigma) = w$, and $\rho(\sigma) \in \F(X_W')$.
	Then also $\rho(\mu(\sigma)) \in \F(X_W')$.
	\label{lemma:matching1}
\end{lemma}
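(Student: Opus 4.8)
The plan is to analyze which of the four cases of \Cref{def:matching} applies to $\sigma$, using the hypotheses $\pi(\sigma) = w$ and $\rho(\sigma) \in \F(X_W')$. Since $\pi(\sigma) = w$, case (1) is excluded. Since $\pi(\rho(\sigma))$ fixes a vertex of $C_0$ (because $\rho(\sigma) \in \F(X_W')$), case (2) is also excluded. So $\sigma$ falls under case (3) or (4), and we may set $\delta = \delta(\sigma) \neq \infty$ (by \Cref{lemma:well-defined-depth}) and write $\sigma = [x_1|x_2|\dotsb|x_d]$ with $x_1 x_2 \dotsm x_d = w$.

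First I would treat case (3), where $l(x_\delta) \geq 2$ and $\mu(\sigma) = [x_1|\dotsb|x_{\delta-1}|y|z|x_{\delta+1}|\dotsb|x_d]$ with $yz = x_\delta$ and $y$ the $\prec$-smallest reflection of $R_0 \cap [1,x_\delta]$. Here the key observation is that the product of the factors of $\mu(\sigma)$ is still $w$, and moreover $\rho(\mu(\sigma))$ is obtained by deleting the \emph{first} factor $x_1$ and appending the right complement of $w$, which is the same first factor as $\rho(\sigma)$ (since $\mu(\sigma)$ and $\sigma$ share the first factor $x_1$ when $\delta \geq 2$, and when $\delta = 1$ the first factor of $\mu(\sigma)$ is $y \lessdot x_1$). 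In the subcase $\delta \geq 2$: $\rho(\mu(\sigma)) = [x_2|\dotsb|x_{\delta-1}|y|z|x_{\delta+1}|\dotsb|x_d]$, whose product is $x_2\dotsm x_d = \pi(\rho(\sigma))$, which fixes a vertex of $C_0$; hence $\rho(\mu(\sigma)) \in \F(X_W')$ by \Cref{lemma:affine-dual-salvetti-complex}. In the subcase $\delta = 1$: $\rho(\mu(\sigma)) = [z|x_2|\dotsb|x_d]$ (deleting the first factor $y$), whose product is $z x_2 \dotsm x_d = y^{-1} w$; since $y \leq w$, this product has length $n$, and I need to check it is elliptic and fixes a vertex of $C_0$. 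Now $y \leq x_1 \leq w$, and $y$ is a reflection, so $y^{-1}w$ is the right complement of $y$; I would use \Cref{lemma:model-poset} together with the fact that the right complement of $w$ is hyperbolic to deduce $y^{-1} w$ is elliptic (its min-set is the fixed set of $y^{-1}w$, which contains $\Fix(y) \cap (\text{something})$)— more carefully, since $x_1 \leq w$ and $\rho(\sigma) = [x_2|\dotsb|x_d]$ has product $x_1^{-1}w$ which fixes a vertex $b$ of $C_0$, and $y \leq x_1$, we have $\Fix(x_1^{-1}w) \subseteq \Fix(y^{-1}w)$ by \Cref{lemma:model-poset} (using $y^{-1}w = y^{-1}x_1 \cdot x_1^{-1} w$ with $y^{-1}x_1 \leq x_1^{-1}w$ a consequence of the Hurwitz action), so $y^{-1}w$ also fixes $b$, a vertex of $C_0$; hence $\rho(\mu(\sigma)) \in \F(X_W')$.

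For case (4), where $l(x_\delta) = 1$ and $\mu(\sigma) = [x_1|\dotsb|x_{\delta-1}|x_\delta x_{\delta+1}|x_{\delta+2}|\dotsb | x_d]$, the product of the factors of $\mu(\sigma)$ is again $w$. If $\delta \geq 2$, then $\rho(\mu(\sigma)) = [x_2|\dotsb|x_{\delta-1}|x_\delta x_{\delta+1}|x_{\delta+2}|\dotsb|x_d]$ with product $x_2\dotsm x_d = \pi(\rho(\sigma))$ fixing a vertex of $C_0$, so we are done. If $\delta = 1$, then the first factor of $\mu(\sigma)$ is $x_1 x_2$, and $\rho(\mu(\sigma)) = [x_3|\dotsb|x_d|y']$ where $y'$ is the right complement of $w$; its product is $x_3\dotsm x_d \cdot y' = (x_1 x_2)^{-1} w$. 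I would then argue that $\rho(\rho(\sigma)) = [x_3|\dotsb|x_d|y']$ as well — indeed $\rho(\sigma) = [x_2|\dotsb|x_d]$ has product $x_2\dotsm x_d$, which does \emph{not} equal $w$ in general, so $\rho(\rho(\sigma)) = [x_2|\dotsb|x_d|y'']$ where $y''$ is the right complement of $x_2\dotsm x_d$; this is not literally $\rho(\mu(\sigma))$, so instead I would directly show $(x_1x_2)^{-1}w$ fixes a vertex of $C_0$: since $x_1 x_2 \leq w$ (as a product of the first two factors of a factorization of $w$) and $\pi(\rho(\sigma)) = x_1^{-1}w$ fixes a vertex $b$ of $C_0$ with $x_2 \leq x_1^{-1}w$, applying \Cref{lemma:model-poset} gives $\Fix(x_1^{-1}w) \subseteq \Fix(x_2^{-1}x_1^{-1}w) = \Fix((x_1x_2)^{-1}w)$, so $(x_1x_2)^{-1}w$ fixes $b$ too; by \Cref{lemma:affine-dual-salvetti-complex}, $\rho(\mu(\sigma)) \in \F(X_W')$.

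The main obstacle I anticipate is the bookkeeping in the $\delta = 1$ subcases: one must carefully track how $\rho$ interacts with the local modification $\mu$ performs at position $\delta$, and verify that the \emph{product} of $\rho(\mu(\sigma))$ is the right complement (in $[1,w]^W$) of the first factor of $\mu(\sigma)$, then identify that element as one fixing a vertex of $C_0$ via the containment of fixed sets coming from \Cref{lemma:model-poset} and the Hurwitz action (\Cref{lemma:hurwitz-action}). Once the poset-theoretic inclusion $\Fix(x_1^{-1}w) \subseteq \Fix((\text{new first factor})^{-1}w)$ is established, \Cref{lemma:affine-dual-salvetti-complex} closes each case immediately.
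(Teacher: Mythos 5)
Your case split is correct in outline: cases (1) and (2) of \Cref{def:matching} are excluded by the hypotheses, and your treatment of case (3) with $\delta \geq 2$ and of case (4) is essentially sound (a small slip: when $\delta = 1$ in case (4) you wrote $\rho(\mu(\sigma)) = [x_3|\dotsb|x_d|y']$, but since $\pi(\mu(\sigma)) = w$ the correct expression is simply $[x_3|\dotsb|x_d]$; your product formula $(x_1x_2)^{-1}w$ is still right, and the model-poset argument works because $(x_1x_2)^{-1}w = x_2^{-1}(x_1^{-1}w) \leq x_1^{-1}w$, so its fixed set is \emph{larger} and still contains $b$).

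The argument in case (3) with $\delta = 1$ is however wrong, and this is exactly where the real content of the lemma sits. You claim $\Fix(x_1^{-1}w) \subseteq \Fix(y^{-1}w)$ from $y \leq x_1$. But $y \leq x_1 \leq w$ forces the right complements to reverse order, $x_1^{-1}w \leq y^{-1}w$, and then \Cref{lemma:model-poset} gives $\Fix(x_1^{-1}w) \supseteq \Fix(y^{-1}w)$ — the opposite of what you wrote — so knowing that $x_1^{-1}w$ fixes a vertex $b$ of $C_0$ says nothing about whether $y^{-1}w$ does (indeed, if $y$ were \emph{not} among the $n+1$ $\prec$-smallest reflections, \Cref{lemma:smallest-reflections} says $y^{-1}w$ would not fix any vertex of $C_0$). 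Your auxiliary step ``$y^{-1}x_1 \leq x_1^{-1}w$ by the Hurwitz action'' is also false in general: in $A_3$ with $w = s_1s_2s_3$, take $x_1 = s_1s_2$ and $y = s_1$; then $y^{-1}x_1 = s_2$, $x_1^{-1}w = s_3$, and $s_2 \not\leq s_3$. The essential ingredient you are missing is that $y$ is the $\prec$-\emph{smallest} reflection of $R_0 \cap [1,x_1]$. One uses \Cref{lemma:elliptic-subposet} to bound $x_2\dotsm x_d$ above by the unique vertical elliptic $u$ of length $n$ fixing $b$, passes to left complements to get a reflection $r \leq x_1$ which is by \Cref{lemma:smallest-reflections} among the $n+1$ $\prec$-smallest reflections, concludes $y \preceq r$ by minimality of $y$ so $y$ is also among the $n+1$ $\prec$-smallest, and then invokes \Cref{lemma:smallest-reflections} again to deduce that $y^{-1}w$ fixes a vertex of $C_0$. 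Without this chain the conclusion is out of reach.
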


\begin{proof}
	Let $\sigma = [x_1|x_2|\dotsb|x_d]$.
	It is not possible for $\sigma$ to occur in case (1) or (2) of \Cref{def:matching}, because $\pi(\sigma) = w$ and $\rho(\sigma) \in \F(X_W')$.
	
	If $\sigma$ occurs in case (4), then we have $\mu(\sigma) = [x_1|\dotsb|x_{\delta-1}|x_\delta x_{\delta+1}|x_{\delta+2}|\dotsb | x_d]$.
	Therefore $\pi(\rho(\mu(\sigma))) \leq \pi(\rho(\sigma))$ in $[1,w]$.
	Since $\pi(\rho(\sigma))$ fixes a vertex of $C_0$, also $\pi(\rho(\mu(\sigma)))$ fixes a vertex of $C_0$ by \Cref{lemma:model-poset}.
	Then $\rho(\mu(\sigma)) \in \F(X_W')$.
	
	If $\sigma$ occurs in case (3), then $\mu(\sigma) = [x_1|\dotsb|x_{\delta-1}|y|z|x_{\delta+1}|\dotsb|x_d]$.
	If $\delta > 1$, we have $\pi(\rho(\mu(\sigma))) = \pi(\rho(\sigma))$, and so $\pi(\rho(\mu(\sigma)))$ fixes a vertex of $C_0$.
	Assume from now on that $\delta = 1$, so $\mu(\sigma) = [y|z|x_{2}|\dotsb|x_d]$.
	Let $\b$ be a vertex of $C_0$ fixed by $\pi(\rho(\sigma)) = x_2\dotsm x_d$.
	Let $u \in [1,w]$ be the unique vertical elliptic element that fixes $\b$ with $l(u)=n$ (see \Cref{lemma:elliptic-coxeter-element}).
	By \Cref{lemma:elliptic-subposet}, we have $x_2\dotsm x_d \leq u$ in $[1,w]$.
	Let $r$ be the left complement of $u$ (it is a vertical reflection).
	By \Cref{lemma:smallest-reflections}, $r$ is one of the $n+1$ $\prec$-smallest reflections of $R_0$.
	Passing to the left complements in the inequality $x_2\dotsm x_d \leq u$, we get $x_1 \geq r$ in $[1,w]$.
	By definition of $\mu$, we have that $y$ is the $\prec$-smallest reflection of $R_0 \cap [1,x_1]$, and therefore $y \preceq r$.
	In particular, $y$ is one of the $n+1$ $\prec$-smallest reflections of $R_0$.
	By \Cref{lemma:smallest-reflections}, its right complement $zx_2\dotsm x_d = \pi(\rho(\mu(\sigma)))$ fixes a vertex of $C_0$.
\end{proof}

\begin{lemma}
	For every $\sigma \in \F(K_W') \setminus \F(X_W')$, we have that $\mu(\sigma) \in \F(K_W') \setminus \F(X_W')$.
	\label{lemma:matching2}
\end{lemma}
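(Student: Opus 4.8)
We already know from \Cref{def:matching} that $\mu(\sigma) \notin \F(X_W')$, so the content of the statement is that $\mu(\sigma) \in \F(K_W')$. Recall the characterization of $K_W'$: a simplex $\tau$ lies in $\F(K_W')$ if and only if $\lambda^a(\tau) \in \F(X_W')$ and $\rho^b(\tau) \in \F(X_W')$ for some $a,b \geq 0$; in other words, starting from $\tau$ and moving inside its fiber component, one reaches $\F(X_W')$ both going left and going right (for a finite fiber component this just means the component meets $\F(X_W')$, which always holds by \Cref{lemma:elliptic-product-vertex}; for an infinite one it means lying weakly between the first and last simplices of $\F(X_W')$ in the component). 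The plan is to verify this for $\mu(\sigma)$, going through the cases of \Cref{def:matching}. Since $\sigma \in \F(K_W') \setminus \F(X_W')$, there are $a,b \geq 1$ with $\lambda^a(\sigma), \rho^b(\sigma) \in \F(X_W')$.

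In cases (1) and (2) of \Cref{def:matching} the simplex $\mu(\sigma)$ lies in the same fiber component as $\sigma$, and the verification is bookkeeping with $\lambda$ and $\rho$ (which are mutually inverse on a fiber component). In case (1), $\mu(\sigma) = \lambda(\sigma)$, so $\lambda^{a-1}(\mu(\sigma)) = \lambda^a(\sigma) \in \F(X_W')$ and $\rho^{b+1}(\mu(\sigma)) = \rho^b(\sigma) \in \F(X_W')$. In case (2), $\mu(\sigma) = \rho(\sigma)$ and by hypothesis $\rho(\sigma) \notin \F(X_W')$, so in fact $b \geq 2$; then $\rho^{b-1}(\mu(\sigma)) = \rho^b(\sigma) \in \F(X_W')$ and $\lambda^{a+1}(\mu(\sigma)) = \lambda^a(\sigma) \in \F(X_W')$. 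Hence $\mu(\sigma) \in \F(K_W')$ in both cases.

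In cases (3) and (4), $\mu(\sigma)$ lies in a fiber component $\CC'$ distinct from the fiber component $\CC$ of $\sigma$ (its simplices have dimension one more, resp.\ one less), but $\pi(\mu(\sigma)) = w$. The hypothesis of cases (3) and (4) is that $\pi(\rho(\sigma))$ fixes a vertex of $C_0$, i.e.\ $\rho(\sigma) \in \F(X_W')$; then \Cref{lemma:matching1} gives $\rho(\mu(\sigma)) \in \F(X_W')$, which takes care of moving to the right from $\mu(\sigma)$. For the left side I would compare $\CC'$ with $\CC$. Writing $\sigma = [x_1|\dotsb|x_d]$, $\delta = \delta(\sigma)$, and (in case (3)) $\mu(\sigma) = [x_1|\dotsb|x_{\delta-1}|y|z|x_{\delta+1}|\dotsb|x_d]$ with $yz = x_\delta$, a step-by-step computation of $\lambda$ shows that the sequences of products obtained by iterating $\lambda$ on $\sigma$ and on $\mu(\sigma)$ run over the same ``wheel'' of left complements --- those of $x_d, x_{d-1}, \dotsc, x_1$, then $\varphi^{-1}(x_d), \dotsc, \varphi^{-1}(x_1)$, and so on --- except that each entry $w\,\varphi^{-i}(x_\delta)^{-1}$ of the $\sigma$-wheel is replaced by the two consecutive entries $w\,\varphi^{-i}(z)^{-1},\, w\,\varphi^{-i}(y)^{-1}$ of the $\mu(\sigma)$-wheel; case (4) is analogous, with the merge of the consecutive pair $x_\delta x_{\delta+1}$ playing the role of the split. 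Since $\sigma \in \F(K_W')$, moving left from $\sigma$ one eventually meets $\F(X_W')$, i.e.\ some entry $w\,\varphi^{-i}(x_j)^{-1}$ fixes a vertex of $C_0$. If one can take $j \notin \{\delta\}$ (resp.\ $j \notin \{\delta,\delta+1\}$ in case (4)), then the very same group element occurs in the $\mu(\sigma)$-wheel, so moving left from $\mu(\sigma)$ one also meets $\F(X_W')$, and $\mu(\sigma) \in \F(K_W')$.

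The remaining case --- where \emph{every} product met by moving left from $\sigma$ that fixes a vertex of $C_0$ involves the split/merged slot --- is the crux. Here one must show that, for a suitable $i$, one of the replacement entries $w\,\varphi^{-i}(z)^{-1}$ or $w\,\varphi^{-i}(y)^{-1}$ (resp.\ $w\,\varphi^{-i}(x_\delta x_{\delta+1})^{-1}$) itself fixes a vertex of $C_0$. The plan is: if $w\,\varphi^{-i}(x_\delta)^{-1}$ fixes a vertex $b$ of $C_0$, then $w\,\varphi^{-i}(x_\delta)^{-1} \leq w_b$ in $[1,w]$, where $w_b$ is the unique length-$n$ element of $[1,w]$ fixing $b$ (\Cref{lemma:elliptic-coxeter-element}); passing to left complements produces a reflection $r_b \leq \varphi^{-i}(x_\delta)$ whose right complement is $w_b$, so $r_b$ is among the $n+1$ $\prec$-smallest reflections of $R_0$ by \Cref{lemma:smallest-reflections}. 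Since $y$ is by construction the $\prec$-smallest reflection of $R_0 \cap [1,x_\delta]$, one wants to conclude --- using the compatibility property of the axial ordering $\prec$ with $\varphi$ on reflections fixing a common point of $\ell$ --- that the relevant $\varphi$-translate of $y$ (or $z$) lands among the extremal reflections governed by \Cref{lemma:smallest-reflections} (or its colexicographic analogue), whence the corresponding product fixes a vertex of $C_0$. Pinning down this last implication --- tracking how the exponents interact with the left/right complement bookkeeping and with the direction in which $\varphi$ translates reflections along $\ell$ --- is where the real work lies; once it is in place, $\mu(\sigma)$ meets $\F(X_W')$ on the left as well, and the proof is complete.
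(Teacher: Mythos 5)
You have the right skeleton: cases (1)--(2) are handled by bookkeeping, the right side of the fiber component in cases (3)--(4) is covered by \Cref{lemma:matching1}, and for the left side one compares fiber components via the ``wheel of left complements,'' with the easy subcase being when the witnessing simplex in $\F(X_W')$ does not involve the split/merged slot. This matches the paper's approach exactly. The problem is that you stop precisely where the proof begins: you explicitly declare the crux (case (3), $i = \delta$) as ``where the real work lies'' without carrying it out. The paper's treatment of that case is substantially more than tracking exponents; it splits on whether the $\prec$-largest reflection $r_m$ in the increasing factorization of $z$ is a negative vertical reflection or not.

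When $r_m$ is negative vertical, the paper uses \Cref{lemma:vertical-reflection-conjugate} to find $j<0$ with $\varphi^j(r_m)$ among the $n+1$ $\prec$-smallest reflections and then produces an explicit $\tau'$ to the left of $\mu(\sigma)$ with $\pi(\tau') \leq u$, where $u$ is the right complement of $\varphi^j(r_m)$. When $r_m$ is horizontal or positive vertical, the argument is quite different: all of $y, r_1, \dotsc, r_m$ are then horizontal or positive vertical, and the compatibility property of $\prec$ with $\varphi$ is invoked to show $\varphi^h(y)$ is the $\prec$-smallest reflection of $[1,\varphi^h(x_\delta)]$; combining \Cref{lemma:elliptic-coxeter-element,lemma:elliptic-subposet,lemma:smallest-reflections} then shows $\varphi^h(y)$ is among the $n+1$ $\prec$-smallest reflections, so its right complement --- which is $\pi(\tau')$ for an explicitly written $\tau'$ --- fixes a vertex of $C_0$. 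Your sketch does not mention this dichotomy at all, and the ``colexicographic analogue of \Cref{lemma:smallest-reflections}'' you gesture at does not exist in the paper and is not what's used. So what you have is a correct identification of the proof's architecture and the relevant lemmas, but the hard part of the argument is missing rather than merely compressed.
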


\begin{proof}
	We have already noted in \Cref{def:matching} that $\mu(\sigma) \not \in \F(X_W')$.
	We have $\mu(\sigma) = \lambda(\sigma)$ in case (1), and $\mu(\sigma) = \rho(\sigma)$ in case (2).
	Since $\sigma \in \F(K_W') \setminus \F(X_W')$, in these two cases $\mu(\sigma) \in \F(K_W')$.
	Suppose from now on that $\sigma$ occurs in case (3) or case (4).
	In particular, $\pi(\sigma) = w$ and $\rho(\sigma) \in \F(X_W')$.
	
	Let $\sigma = [x_1|x_2|\dotsb|x_d]$.
	By \Cref{lemma:matching1}, we have $\rho(\mu(\sigma)) \in \F(X_W')$.
	Therefore we only need to prove that there is a simplex in $\F(X_W')$ to the left of $\mu(\sigma)$.
	Since $\sigma \in \F(K_W')$, there is a $(d-1)$-simplex $\tau \in \F(X_W')$ to the left of $\sigma$.
	It has the following form:
	\[ \tau = [\varphi^{h}(x_{i+1}) | \dotsb | \varphi^{h}(x_d) | \varphi^{h+1}(x_1) | \dotsb | \varphi^{h+1}(x_{i-1})], \]
	for some $h < 0$ and $i \in \{1,\dotsc, d\}$.
	
	If $\sigma$ occurs in case (4), then $\mu(\sigma) = [x_1|\dotsb|x_{\delta-1}|x_\delta x_{\delta+1}|x_{\delta+2}|\dotsb | x_d]$ where $\delta = \delta(\sigma)$.
	To the left of $\mu(\sigma)$ there is a simplex $\tau'$ such that $\pi(\tau') \leq \pi(\tau)$, given by:
	\[
		\begin{rightcases}
			[\varphi^{h}(x_{\delta+2}) | \dotsb | \varphi^{h}(x_d) | \varphi^{h+1}(x_1) | \dotsb | \varphi^{h+1}(x_{\delta-1})] & \hspace{-1.5cm} \text{if $i=\delta$ or $i = \delta+1$\phantom{.}} \\
			[\varphi^h(x_{i+1})|\dotsb | \varphi^h(x_\delta x_{\delta+1}) | \dotsb | \varphi^h(x_d)| \varphi^{h+1}(x_1) | \dotsb | \varphi^{h+1}(x_{i-1})] & \text{if $i < \delta\phantom{.}$}
			\\
			[\varphi^h(x_{i+1})|\dotsb | \varphi^h(x_d)| \varphi^{h+1}(x_1) | \dotsb | \varphi^{h+1}(x_\delta x_{\delta+1}) | \dotsb | \varphi^{h+1}(x_{i-1})] &  \text{if $i > \delta + 1$}.
		\end{rightcases}
	\]
	Since $\tau \in \F(X_W')$, we have that $\pi(\tau)$ fixes a vertex of $C_0$, so $\pi(\tau')$ also fixes a vertex of $C_0$ (by \Cref{lemma:model-poset}), which means that $\tau' \in \F(X_W')$.
	Therefore $\mu(\sigma) \in \F(K_W')$.
	
	If $\sigma$ occurs in case (3), then $\mu(\sigma) = [x_1|\dotsb|x_{\delta-1}|y|z|x_{\delta+1}|\dotsb|x_d]$.
	If $i \neq \delta$, we can find a simplex $\tau'$ to the left of $\mu(\sigma)$ such that $\pi(\tau') = \pi(\tau)$, namely:
	\[
		\begin{cases}
			[\varphi^{h}(x_{i+1}) | \dotsb | \varphi^h(y) | \varphi^h(z) | \dotsb | \varphi^{h}(x_d) | \varphi^{h+1}(x_1) | \dotsb | \varphi^{h+1}(x_{i-1})] & \text{if $i < \delta$} \\
			
			[\varphi^{h}(x_{i+1}) | \dotsb | \varphi^{h}(x_d) | \varphi^{h+1}(x_1) | \dotsb | \varphi^{h+1}(y) | \varphi^{h+1}(z) | \dotsb | \varphi^{h+1}(x_{i-1})] & \text{if $i > \delta$}.
		\end{cases}
	\]
	As before, this implies that $\mu(\sigma) \in \F(K_W')$.
	Suppose from now on that $i = \delta$.
	
	The right complement of $\varphi^h(x_\delta)$ is equal to $\pi(\tau)$, which is elliptic because it fixes a vertex of $C_0$. Therefore $\varphi^h(x_\delta)$ is vertical, and thus also $x_\delta = yz$ is vertical.
	Let $z = r_1r_2\dotsm r_m$ be the increasing factorization of $z$, with $m \geq 1$.
	By definition of $y$, we have that $y \prec r_1 \prec \dotsb \prec r_m$.
	Since $yz$ is vertical, at least one of $y, r_1, \dotsc, r_m$ is a vertical reflection.
	\begin{itemize}
		\item Case 1: $r_m$ is a negative vertical reflection.
		By \Cref{lemma:vertical-reflection-conjugate}, there exists a $j < 0$ such that $\varphi^j(r_m)$ is among the $n+1$ $\prec$-smallest reflections of $R_0$.
		By \Cref{lemma:smallest-reflections}, its right complement $u$ fixes a vertex of $C_0$.
		Consider the following simplex to the left of $\mu(\sigma)$:
		\[ \tau' = [\varphi^j(x_{\delta+1}) | \dotsb | \varphi^j(x_d) | \varphi^{j+1}(x_1) | \dotsb | \varphi^{j+1}(x_{\delta-1}) | \varphi^{j+1}(y)]. \]
		The left complement of $\pi(\tau')$ is $\varphi^j(z)$, and we have $\varphi^j(r_m) \leq \varphi^j(z)$ in $[1,w]$.
		Passing to the right complements in this inequality, we obtain that $u \geq \pi(\tau')$.
		Since $u$ fixes a vertex of $C_0$, by \Cref{lemma:model-poset} also $\pi(\tau')$ fixes a vertex of $C_0$.
		Therefore $\tau' \in \F(X_W')$.
		
		\item Case 2: $r_m$ is a horizontal reflection or a positive vertical reflection.
		The same is true for all the reflections $y, r_1, \dotsc, r_m$, because $y \prec r_1 \prec \dotsb \prec r_m$.
		Recall that at least one of them is vertical.
		Then, for some $k \in \{0, \dotsc, m\}$, we have that $y, r_1, \dotsc, r_k$ are positive vertical reflections, and $r_{k+1}, \dotsc, r_m$ are horizontal reflections.
		Since $h < 0$, we have that $\varphi^h(y), \varphi^h(r_1), \dotsc, \varphi^h(r_k)$ are also positive vertical reflections.
		The compatibility property of $\prec$ then implies $\varphi^h(y) \prec \varphi^h(r_1) \prec \dotsb \prec \varphi^h(r_k)$.
		Let $r_{k+1}'\dotsm r_m'$ be the increasing factorization of $\varphi^h(r_{k+1}\dotsm r_m)$.
		Since $r_{k+1}\dotsm r_m$ is horizontal, $\varphi^h(r_{k+1}\dotsm r_m)$ is also horizontal, so the reflections $r_{k+1}', \dotsc, r_m'$ are horizontal.
		Therefore we have $\varphi^h(y) \prec \varphi^h(r_1) \prec \dotsb \prec \varphi^h(r_k) \prec r_{k+1}' \prec \dotsb \prec r_m'$.
		By construction, the product of these $m+1$ reflections yields the increasing factorization of $\varphi^h(yz) = \varphi^h(x_\delta)$.
		In particular, $\varphi^h(y)$ is the $\prec$-smallest reflection of $[1, \varphi^h(x_\delta)]$ by \Cref{thm:shellability}.
		
		\noindent Recall that $\tau \in \F(X_W')$, so $\pi(\tau)$ fixes a vertex $\b$ of $C_0$.
		Let $u \in [1,w]$ be a vertical elliptic element that fixes $\b$ and such that $l(u) = n$ (see \Cref{lemma:elliptic-coxeter-element}).
		By \Cref{lemma:elliptic-subposet}, we have $\pi(\tau) \leq u$ in $[1,w]$.
		If we pass to the left complements in this inequality, we get $\varphi^h(x_\delta) \geq r$ where $r$ is the left complement of $u$ (it is a vertical reflection).
		By \Cref{lemma:smallest-reflections}, $r$ is among the $n+1$ $\prec$-smallest reflections of $R_0$.
		Since $\varphi^h(y)$ is the $\prec$-smallest reflection of $[1,\varphi^h(x_\delta)]$, we have $\varphi^h(y) \preceq r$, and thus $\varphi^h(y)$ is also among the $n+1$ $\prec$-smallest reflections of $R_0$.
		By \Cref{lemma:smallest-reflections}, the right complement of $\varphi^h(y)$ fixes a vertex of $C_0$.
		Consider the following simplex to the left of $\mu(\sigma)$:
		\[ \tau' = [\varphi^h(z)|\varphi^h(x_{\delta+1})| \dotsb |\varphi^h(x_d)|\varphi^{h+1}(x_1)|\dotsb |\varphi^{h+1}(x_{\delta-1})]. \]
		We have that $\pi(\tau')$ is the right complement of $\varphi^h(y)$, so it fixes a vertex of $C_0$, and therefore $\tau' \in \F(X_W')$. \qedhere
	\end{itemize}
\end{proof}

\begin{proposition}
	The function $\mu\colon \F(K_W') \setminus \F(X_W') \to \F(K_W')\setminus \F(X_W')$ is an involution, i.e.\ it satisfies $\mu(\mu(\sigma)) = \sigma$.
	In addition, if $\sigma$ occurs in case (3) or (4) of \Cref{def:matching}, then $\delta(\mu(\sigma)) = \delta(\sigma)$.
	\label{prop:involution}
\end{proposition}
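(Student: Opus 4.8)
The plan is to check that $\mu$ interchanges the simplices of case~(1) and case~(2) in \Cref{def:matching}, interchanges those of case~(3) and case~(4), and squares to the identity within each pair; the equality $\delta(\mu(\sigma))=\delta(\sigma)$ for types~(3) and~(4) will fall out of the analysis of the second pair. (Well-definedness of $\mu$ as a self-map of $\F(K_W')\setminus\F(X_W')$ is already in hand via \Cref{lemma:matching2} and the last remark of \Cref{def:matching}.) Throughout I would use two elementary consequences of \Cref{thm:shellability}, obtained by reading an increasing factorisation as its $\prec$-smallest reflection followed by the increasing factorisation of the remainder: (a) if $r$ is a reflection with $r\le u$ in $[1,w]$ and $r\prec r'$ for every reflection $r'\le r^{-1}u$, then $r$ is the $\prec$-smallest reflection of $R_0\cap[1,u]$ and the increasing factorisation of $u$ begins with $r$; and (b) if $r$ is the $\prec$-smallest reflection of $R_0\cap[1,u]$, then every reflection $\le r^{-1}u$ is $\succ r$.

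For the first pair the verification is purely formal, relying only on $\lambda$ and $\rho$ being mutually inverse and on the characterisation $\sigma\in\F(X_W')\iff\pi(\sigma)$ fixes a vertex of $C_0$ (\Cref{lemma:affine-dual-salvetti-complex}). If $\sigma$ occurs in case~(1), then $\pi(\mu(\sigma))=\pi(\lambda(\sigma))=w$ while $\rho(\mu(\sigma))=\rho(\lambda(\sigma))=\sigma\notin\F(X_W')$, so $\mu(\sigma)$ occurs in case~(2) and $\mu(\mu(\sigma))=\rho(\mu(\sigma))=\sigma$. Dually, if $\sigma=[x_1|\dotsb|x_d]$ occurs in case~(2), then $\pi(\mu(\sigma))=\pi(\rho(\sigma))=x_2\dotsm x_d\ne w$ because $x_1\ne 1$, so $\mu(\sigma)$ occurs in case~(1) and $\mu(\mu(\sigma))=\lambda(\rho(\sigma))=\sigma$.

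Next I would treat the pair (3)--(4). Fix $\sigma=[x_1|\dotsb|x_d]$ occurring in case~(3) or~(4), so $\pi(\sigma)=w$, $\rho(\sigma)\in\F(X_W')$, and $\delta:=\delta(\sigma)\ne\infty$ by \Cref{lemma:well-defined-depth}. From \Cref{lemma:matching1} one gets $\rho(\mu(\sigma))\in\F(X_W')$, and $\pi(\mu(\sigma))=w$ is immediate from the formulas, so $\mu(\sigma)$ again occurs in case~(3) or~(4). The crux is the claim $\delta(\mu(\sigma))=\delta$, which I would prove directly from \Cref{def:depth}. The entries of $\mu(\sigma)$ in positions $1,\dotsc,\delta-1$ coincide with those of $\sigma$, so conditions (i) and (ii) fail at every position $i<\delta-1$ for $\mu(\sigma)$ exactly as they do for $\sigma$. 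At position $\delta-1$ (when $\delta\ge2$), condition (ii) refers only to the unchanged $(\delta-1)$-st entry and to the $\delta$-th entry, and since the $\prec$-smallest reflection lying below the $\delta$-th entry of $\mu(\sigma)$ is $y$ in case~(3) and $x_\delta$ in case~(4) — the latter by (a), using that condition (ii) for $\sigma$ at $\delta$ gives $x_\delta\prec r'$ for every reflection $r'\le x_{\delta+1}$ — condition (ii) at $\delta-1$ has the same truth value for $\mu(\sigma)$ as for $\sigma$ and hence fails; condition (i) at $\delta-1$ fails too, the entry being unchanged. At position $\delta$: in case~(4) the entry $x_\delta x_{\delta+1}$ has length $\ge 2$, so condition~(i) holds; in case~(3) the entry is the reflection $y$, and condition~(ii) holds because every reflection $\le z$ is $\succ y$ by (b). Therefore $\delta(\mu(\sigma))=\delta$.

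With $\delta(\mu(\sigma))=\delta$ established, the type of $\mu(\sigma)$ is determined by its $\delta$-th entry. In case~(3) that entry is the reflection $y$, so $\mu(\sigma)$ occurs in case~(4), and merging its $\delta$-th and $(\delta+1)$-st entries $y$ and $z$ recovers $yz=x_\delta$, giving $\mu(\mu(\sigma))=\sigma$. In case~(4) that entry is $x_\delta x_{\delta+1}$ of length $\ge 2$, so $\mu(\sigma)$ occurs in case~(3), and by (a) applied to $r=x_\delta\le u=x_\delta x_{\delta+1}$ the $\prec$-smallest reflection of $R_0\cap[1,x_\delta x_{\delta+1}]$ is $x_\delta$; hence the splitting prescribed in case~(3) of \Cref{def:matching} returns the entries $x_\delta$ and $x_{\delta+1}$, and again $\mu(\mu(\sigma))=\sigma$. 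I expect the main obstacle to be the depth computation $\delta(\mu(\sigma))=\delta$ of the previous paragraph: the bookkeeping of which entries and which side conditions of \Cref{def:depth} are affected when passing from $\sigma$ to $\mu(\sigma)$, together with the careful application of the uniqueness and leading-reflection properties of increasing factorisations from \Cref{thm:shellability}, is where the substance of the argument lies.
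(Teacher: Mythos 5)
Your proof is correct and takes essentially the same approach as the paper's: verify that $\mu$ swaps the simplices of cases (1) and (2) formally via $\lambda\rho=\rho\lambda=\mathrm{id}$, and that it swaps those of cases (3) and (4) by a position-by-position check that $\delta(\mu(\sigma))=\delta(\sigma)$, using the leading-reflection characterization from \Cref{thm:shellability} to identify the $\prec$-smallest reflection below the $\delta$-th entry. Your explicit invocation of \Cref{lemma:matching1} to place $\mu(\sigma)$ in cases (3)–(4), and the isolation of facts (a) and (b), are simply more verbose renderings of what the paper's proof uses implicitly.
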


\begin{proof}
	\Cref{lemma:matching2} shows that the image of $\mu$ is contained in $\F(K_W') \setminus \F(X_W')$, so we can compose $\mu$ with itself.
	Let $\sigma = [x_1|x_2|\dotsb|x_d] \in \F(K_W') \setminus \F(X_W')$.
	
	If $\sigma$ occurs in case (1) of \Cref{def:matching}, then $\mu(\sigma) = \lambda(\sigma)$ occurs in case (2), so $\mu(\mu(\sigma)) = \rho(\lambda(\sigma)) = \sigma$.
	Similarly, if $\sigma$ occurs in case (2), then $\mu(\sigma) = \rho(\sigma)$ occurs in case (1), and $\mu(\mu(\sigma)) = \lambda(\rho(\sigma)) = \sigma$.
	
	If $\sigma$ occurs in case (3) or (4), by definition of $\delta = \delta(\sigma)$ we have that $x_1, \dotsc, x_{\delta-1}$ are reflections such that $x_1 \succ x_2 \succ \dotsb \succ x_{\delta-1}$.
	In addition, $x_{\delta-1} \succ y$ if $y$ is the $\prec$-smallest reflection of $[1,x_\delta]$.
	If $\sigma$ occurs in case (3), then $x_1 \succ \dotsb \succ x_{\delta-1} \succ y$, and $y \prec r$ for every reflection $r \leq z$.
	Therefore $\delta(\mu(\sigma)) = \delta$, and $\mu(\sigma)$ occurs in case (4), so $\mu(\mu(\sigma)) = \sigma$.
	If $\sigma$ occurs in case (4), then $\delta(\mu(\sigma)) = \delta$ because $x_{\delta-1} \succ x_\delta$. In particular, $\mu(\sigma)$ occurs in case (3).
	By definition of $\delta$, we also have that $x_\delta \prec r'$ for every reflection $r' \leq x_{\delta+1}$.
	Then, if we concatenate $x_\delta$ with the increasing factorization of $x_{\delta+1}$, we get the increasing factorization of $x_\delta x_{\delta+1}$. Therefore $x_\delta$ is the $\prec$-smallest reflection of $[1,x_\delta x_{\delta+1}]$, and $\mu(\mu(\sigma)) = \sigma$. 
\end{proof}

Thanks to \Cref{prop:involution}, we can finally define a matching $\M$ on $\F(K_W')$:
\[ \M = \{ (\mu(\sigma), \sigma) \mid \sigma \in \F(K_W') \setminus \F(X_W') \text{ and } \mu(\sigma) \lessdot \sigma \}. \]
A simplex $\sigma \in \F(K_W')$ is critical if and only if $\sigma \in \F(X_W')$.
It only remains to prove that $\M$ is acyclic and proper.
For this, our strategy is to define a poset $(P, \trianglelefteq)$ and
a map $\xi \colon \F(K_W') \setminus \F(X_W') \to P$ that decreases along alternating paths.

Let $P \subseteq R_0^{n+1}$ be the set of all minimal length factorizations of $w$ as a product of reflections.
We endow $P$ with the total ordering $\trianglelefteq$ defined as follows.
Let $\alpha, \alpha' \in P$, and denote by $r$ (resp.\ $r'$) the $\prec$-largest reflection appearing in $\alpha$ (resp.\ $\alpha'$).
\begin{itemize}
	\item If $r \neq r'$, then set $\alpha \vartriangleleft \alpha'$ if and only if $r \succ r'$.
	
	\item If $r = r'$, let $k$ (resp.\ $k'$) be the position where $r$ ($= r'$) appears in $\alpha$ (resp.\ $\alpha'$).
	If $k \neq k'$, then set $\alpha \vartriangleleft \alpha'$ if and only if $k > k'$.
	
	\item If $r = r'$ and $k = k'$, then set $\alpha \vartriangleleft \alpha'$ if and only if $\alpha$ is lexicographically smaller than $\alpha'$ (as usual, reflections are compared using the total ordering $\prec$ of $R_0$).
\end{itemize}
The transitive property of $\trianglelefteq$ is immediate to check.

Define a closure operator $\kappa \colon \F(K_W') \setminus \F(X_W') \to \F(K_W') \setminus \F(X_W')$ in the following way:
\[
\kappa(\sigma) =
\begin{cases}
\sigma & \text{if $\pi(\sigma) = w$} \\
\lambda(\sigma) = \mu(\sigma) & \text{otherwise}.
\end{cases}
\]
Given a simplex $\sigma \in \F(K_W') \setminus \F(X_W')$, define $\xi(\sigma) \in P$ as the concatenation of the increasing factorizations of $x_1, x_2, \dotsc, x_d$, where $[x_1|x_2|\dotsb|x_d] = \kappa(\sigma)$.
Since $\pi(\kappa(\sigma)) = w$, we have that $\xi(\sigma)$ is indeed a minimal length factorization of $w$.

\begin{lemma}
	For every $\sigma \in \F(K_W') \setminus \F(X_W')$, we have $\xi(\mu(\sigma)) = \xi(\sigma)$.
	\label{lemma:matching-compatibility}
\end{lemma}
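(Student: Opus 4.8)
The identity $\xi(\mu(\sigma)) = \xi(\sigma)$ will be verified by running through the four cases of \Cref{def:matching} and tracking how the closure operator $\kappa$ and the entry-wise increasing factorizations behave. The key observation is that $\xi(\sigma)$ is computed from $\kappa(\sigma)$ alone, so whenever $\kappa(\sigma) = \kappa(\mu(\sigma))$ there is nothing more to do, and this disposes of cases (1) and (2) immediately; in cases (3) and (4) one has instead $\kappa = \mathrm{id}$ on both $\sigma$ and $\mu(\sigma)$, and the claim reduces to a single statement about splitting or merging one entry at the depth position.

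First I would handle cases (1) and (2). In case (1) we have $\pi(\sigma) \neq w$ and $\mu(\sigma) = \lambda(\sigma)$, and since $\pi(\lambda(\sigma)) = w$ this gives $\kappa(\mu(\sigma)) = \mu(\sigma) = \lambda(\sigma) = \kappa(\sigma)$. In case (2) we have $\pi(\sigma) = w$ and $\mu(\sigma) = \rho(\sigma)$ with $\pi(\rho(\sigma)) \neq w$ (its length is $l(w) - l(x_1) < l(w)$); since $\lambda$ and $\rho$ are mutually inverse, $\kappa(\mu(\sigma)) = \lambda(\rho(\sigma)) = \sigma = \kappa(\sigma)$. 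In both cases $\kappa$ agrees on $\sigma$ and $\mu(\sigma)$, hence so does $\xi$.

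The substance is in cases (3) and (4), where $\pi(\sigma) = \pi(\mu(\sigma)) = w$, so $\kappa$ fixes both and I must compare the two concatenations of increasing factorizations directly. Writing $\sigma = [x_1|\dotsb|x_d]$ and $\delta = \delta(\sigma)$, the simplices $\sigma$ and $\mu(\sigma)$ agree entry-by-entry away from position $\delta$, so everything reduces to: in case (3), that concatenating the increasing factorization of $y$ (which is just $(y)$, as $y$ is a reflection) with that of $z$ yields the increasing factorization of $x_\delta = yz$; and in case (4), that concatenating the increasing factorization of $x_\delta$ (just $(x_\delta)$, since $l(x_\delta)=1$) with that of $x_{\delta+1}$ yields the increasing factorization of $x_\delta x_{\delta+1}$. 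For the first, \Cref{thm:shellability} says the increasing factorization of $x_\delta$ starts with the $\prec$-smallest reflection of $R_0 \cap [1,x_\delta]$, which is exactly $y$; deleting it leaves a $\prec$-increasing factorization of $z$ of the correct length $l(z) = l(x_\delta) - 1$, which by the uniqueness clause of \Cref{thm:shellability} must be the increasing one. For the second, condition (ii) of \Cref{def:depth} gives $x_\delta \prec r$ for every reflection $r \leq x_{\delta+1}$, so prepending $x_\delta$ to the increasing factorization of $x_{\delta+1}$ produces a $\prec$-increasing factorization of $x_\delta x_{\delta+1}$ of length $l(x_\delta) + l(x_{\delta+1})$ — again the increasing one, by uniqueness. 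Throughout I would use additivity of reflection length along chains of partial products of the entries of a simplex (an immediate consequence of condition (iii) in \Cref{def:interval-complex}) for the length bookkeeping, and I would note that $\mu(\sigma)$ is a genuine simplex of $K_W$ in these cases, which is already recorded in \Cref{def:matching}.

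I do not expect a real obstacle: the argument is organized bookkeeping resting entirely on the existence-and-uniqueness of increasing factorizations from \Cref{thm:shellability} together with the precise definition of $\delta(\sigma)$, which was tailored so that the cut between $x_\delta$ (respectively $y$) and the rest of the simplex sits at a genuine boundary between consecutive increasing factorizations. The only point that takes a moment's care is ensuring that the split in case (3) respects additivity of length and that $x_\delta x_{\delta+1} \neq 1$ in case (4), both of which follow at once from the defining conditions of the interval complex.
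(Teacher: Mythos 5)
Your proof is correct and uses the same underlying ideas as the paper's: cases (1)–(2) collapse via the closure operator $\kappa$, and cases (3)–(4) reduce to the observation that the depth $\delta$ is defined precisely so that the cut (or merge) at position $\delta$ sits at the boundary of two increasing factorizations. The only stylistic difference is that the paper first notes $\mu$ is an involution (\Cref{prop:involution}) and then assumes WLOG $\mu(\sigma) \lessdot \sigma$, so it only needs to argue cases (2) and (4); you treat all four cases directly, which is a bit more writing but avoids invoking the involution and lets case (3) be checked by the same increasing-factorization argument dual to case (4).
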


\begin{proof}
	Suppose without loss of generality that $\mu(\sigma) \lessdot \sigma$, so that $\sigma$ occurs in case (2) or (4) of \Cref{def:matching}.
	If $\sigma$ occurs in case (2), then $\kappa(\mu(\sigma)) = \kappa(\sigma)$, and therefore $\xi(\mu(\sigma)) = \xi(\sigma)$.

	Suppose now that $\sigma$ occurs in case (4).
	Then $\kappa(\sigma) = \sigma$ and $\kappa(\mu(\sigma)) = \mu(\sigma)$.
	Let $\sigma = [x_1|x_2|\dotsb|x_d]$ and $\mu(\sigma) = [x_1|\dotsb|x_{\delta-1}|x_\delta x_{\delta+1}|x_{\delta+2}|\dotsb|x_d]$, where $\delta = \delta(\sigma)$.
	As already noticed in the proof of \Cref{prop:involution}, the increasing factorization of $x_\delta x_{\delta+1}$ is given by the reflection $x_\delta$ followed by the increasing factorization of $x_{\delta+1}$.
	Therefore $\xi(\mu(\sigma)) = \xi(\sigma)$.
\end{proof}

\begin{lemma}
	Let $\sigma = [x_1|x_2|\dotsb|x_d] \in \F(K_W') \setminus \F(X_W')$ be a simplex such that $\pi(\sigma) = w$.
	There exist a negative vertical reflection $r \in R_0$ and an index $i \in \{1,2,\dotsc, d\}$ such that $r \leq x_i$ in $[1,w]$.
	In particular, the $\prec$-largest reflection appearing in $\xi(\sigma)$ is a negative vertical reflection.
	\label{lemma:negative-vertical-reflection}
\end{lemma}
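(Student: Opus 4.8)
The plan is to exploit the hypothesis $\sigma \notin \F(X_W')$, which by \Cref{lemma:affine-dual-salvetti-complex} means $\pi(\sigma) = w$ is hyperbolic (it does not fix any vertex of $C_0$, but in fact $w$ is always hyperbolic, so the real content is elsewhere — let me reconsider). Actually the key is that $\sigma$ lies in $\F(K_W')$ but is not in $\F(X_W')$: since $\sigma \in \F(K_W')$ there is a simplex of $X_W'$ to the left of $\sigma$, i.e.\ $\lambda^k(\sigma) \in \F(X_W')$ for some $k \geq 1$ (we cannot take $k = 0$ because $\sigma \notin \F(X_W')$). First I would unwind what ``$\lambda^k(\sigma) \in \F(X_W')$'' says in terms of the associated bi-infinite sequence $(x_i)_{i \in \Z}$ of the fiber component containing $\sigma$: it says that some cyclic rotation $\pi(\lambda^k(\sigma)) = \varphi^{h}(x_{j+1})\varphi^{h}(x_{j+2})\dotsm$ (for suitable $h < 0$, $j$) fixes a vertex of $C_0$. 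The left complement of $\varphi^h(x_j)$ is then this elliptic element, so $\varphi^h(x_j)$ is vertical (its right complement is elliptic, hence it is vertical by the coarse structure of $[1,w]^W$ recalled after the Proposition in \Cref{sec:dual-artin}); moreover by \Cref{lemma:smallest-reflections} every reflection $\leq \varphi^h(x_j)$ that is among the $n+1$ smallest is positive — but I need the opposite direction.

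The cleaner route: I would argue that $\xi(\sigma)$, the concatenation of the increasing factorizations of $x_1, \dots, x_d$ (after applying $\kappa$, but here $\pi(\sigma)=w$ so $\kappa(\sigma)=\sigma$), is a minimal length factorization of $w$, and I want its $\prec$-largest entry to be a negative vertical reflection. By \Cref{lemma:positive-negative-reflection} applied to $w$ itself (which is hyperbolic), $[1,w]$ contains at least one negative vertical reflection $r$. By \Cref{lemma:hurwitz-action} there is a minimal length factorization of $w$ ending in $r$; but I need to know that the negative vertical reflections sit at the end of every increasing factorization — and indeed, by the definition of an axial ordering (\Cref{def:axial-ordering}), the negative vertical reflections are exactly the $\prec$-largest reflections of $R_0$: they come after the positive verticals and after all of $R_\h$. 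So the $\prec$-largest reflection of $R_0 \cap [1,w]$ is negative vertical (since at least one negative vertical reflection lies in $[1,w]$ by \Cref{lemma:positive-negative-reflection}, and every reflection that is not negative vertical is $\prec$-smaller than every negative vertical one). By \Cref{thm:shellability}, the last entry $r_m$ of the increasing factorization of $w$ is the $\prec$-largest reflection of $R_0 \cap [1,w]$, hence is negative vertical. The point then is to show that the $\prec$-largest reflection appearing in $\xi(\sigma)$ — which is the $\prec$-largest reflection among those $\leq x_i$ for some $i$ — is also this same negative vertical reflection, or at least some negative vertical reflection.

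So the main steps, in order: (1) observe $w$ hyperbolic $\Rightarrow$ by \Cref{lemma:positive-negative-reflection} there is a negative vertical reflection in $[1,w]$, hence the $\prec$-largest element of $R_0 = R \cap [1,w]$ is negative vertical; (2) show that some negative vertical reflection $r$ actually satisfies $r \leq x_i$ for some $i$ — this is where I expect the real obstacle: a priori each $x_i$ could avoid all negative vertical reflections below it while their ``union'' still reconstructs $w$. To handle this I would use that $\sigma \in \F(K_W') \setminus \F(X_W')$ forces, via a simplex of $X_W'$ strictly to the left, that going far enough left the rotated product becomes elliptic and $C_0$-vertex-fixing; tracking back along $\lambda$, the element $x_i$ that ``becomes'' the new hyperbolic right complement must contain a negative vertical reflection — concretely, by \Cref{lemma:positive-negative-reflection} the hyperbolic element $x_1 x_2 \dotsm x_d = w$ has both a positive and a negative vertical reflection below it, and by the Hurwitz action (\Cref{lemma:hurwitz-action}) any such reflection can be moved to be below a single $x_i$, but to control \emph{which} $x_i$ one uses that $\xi(\sigma)$ concatenates increasing factorizations, and the negative verticals, being $\prec$-largest, land at the very end of $\xi(\sigma)$; hence the last nonempty block, the increasing factorization of $x_d$ (more precisely of the last $x_i$ whose increasing factorization is nonempty), ends in a negative vertical reflection $r$, giving $r \leq x_i$ for that index $i$. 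Then (3) conclude that the $\prec$-largest reflection of $\xi(\sigma)$, being $\succeq r$ and hence not smaller than all negative verticals, must itself be negative vertical, since in an axial ordering there is nothing $\prec$-above a negative vertical reflection. I expect step (2) — pinning down that a negative vertical reflection genuinely appears \emph{within} one of the blocks of $\xi(\sigma)$, using the leftward $X_W'$-simplex and \Cref{lemma:elliptic-coxeter-A}/\Cref{lemma:elliptic-coxeter-bipartite} together with \Cref{prop:axial-orbits-A}/\Cref{rmk:axial-orbits-bipartite} to rule out that all the vertical reflections below $\sigma$'s blocks are positive — to be the crux of the argument.
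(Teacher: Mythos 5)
You correctly identify that the crux is step (2): producing a negative vertical reflection $r$ with $r \leq x_i$ for some \emph{single} index $i$. You also correctly identify that the hypothesis $\sigma \in \F(K_W') \setminus \F(X_W')$ (which forces a simplex of $X_W'$ strictly to the left of $\sigma$) is what supplies this. But the concrete argument you sketch to close the gap does not work, for two reasons. First, the Hurwitz action of \Cref{lemma:hurwitz-action} rearranges a \emph{factorization} of $w$ into reflections; it changes the blocks $x_i$, so it cannot be used to ``move'' a reflection below $w$ to be below a fixed $x_i$ of the given $\sigma$. Second, and more seriously, the claim that ``the negative verticals, being $\prec$-largest, land at the very end of $\xi(\sigma)$; hence the last nonempty block ends in a negative vertical reflection'' is false as a general statement: $\xi(\sigma)$ is a concatenation of increasing factorizations of the $x_i$, and while each block is internally increasing, there is no ordering constraint between blocks. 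The $\prec$-largest reflection of $\xi(\sigma)$ is the largest of the block maxima and can sit in any block; moreover, a priori it is possible that \emph{no} block contains a negative vertical reflection, which is exactly the obstacle you flagged and did not resolve.

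The paper's mechanism is different and sharper. It writes the $(d-1)$-simplex $\tau \in \F(X_W')$ to the left of $\sigma$ explicitly as $\tau = [\varphi^h(x_{i+1}) | \dotsb | \varphi^h(x_d) | \varphi^{h+1}(x_1) | \dotsb | \varphi^{h+1}(x_{i-1})]$ with $h<0$, notes that $\pi(\tau)$ fixes a vertex of $C_0$, and observes that $\varphi^h(x_i)$ is the \emph{left} complement of $\pi(\tau)$. It then invokes \Cref{lemma:smallest-reflections} to get a reflection $r' \leq \varphi^h(x_i)$ among the $n+1$ $\prec$-smallest reflections of $R_0$, and finally \Cref{lemma:vertical-reflection-conjugate} to conclude that $r = \varphi^{-h}(r') \leq x_i$ is a \emph{negative} vertical reflection precisely because $h<0$. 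This conjugation trick (``a $\prec$-small reflection below $\varphi^h(x_i)$ becomes a negative vertical reflection below $x_i$ when pushed back by $w^{-h}$'') is the missing ingredient in your plan; neither of the two lemmas you cite at the end (the elliptic-Coxeter lemmas and the axial-orbits results) is the one doing the work here, though they feed into the proofs of the two that are. Your step (3) --- that once some negative vertical reflection lies in a block, the $\prec$-largest entry of $\xi(\sigma)$ is necessarily negative vertical by the structure of an axial ordering --- is correct and agrees with the paper.
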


\begin{proof}
	Since $\sigma \in \F(K_W')$, there is a $(d-1)$-simplex $\tau \in \F(X_W')$ to the left of $\sigma$.
	It has the following form:
	\[ \tau = [\varphi^h(x_{i+1})| \dotsb | \varphi^h(x_d) | \varphi^{h+1}(x_1) | \dotsb | \varphi^{h+1}(x_{i-1})], \]
	for some $h < 0$ and $i \in \{1,\dotsc, d\}$.
	We have that $\pi(\tau)$ fixes a vertex $\b$ of $C_0$, and $\varphi^h(x_i)$ is the left complement of $\pi(\tau)$.
	By \Cref{lemma:smallest-reflections}, there is a reflection $r' \leq \varphi^h(x_i)$ among the $n+1$ $\prec$-smallest reflections of $R_0$ (see the last part of the proof of \Cref{lemma:matching2}).
	By \Cref{lemma:vertical-reflection-conjugate}, $r = \varphi^{-h}(r')$ is a negative vertical reflection, because $h < 0$.
	This proves the first part of the statement because $r \leq x_i$.
	
	The $\prec$-increasing factorization of $x_i$ ends with the $\prec$-largest reflection $r''$ of $[1,x_i]$ by \Cref{thm:shellability}.
	If $\bar r$ is the $\prec$-largest reflection appearing in $\xi(\sigma)$, we have $\bar r \succeq r'' \succeq r$, so $\bar r$ is a negative vertical reflection.
\end{proof}

\begin{lemma}
	Let $\sigma, \tau \in \F(K_W') \setminus \F(X_W')$ be two simplices such that $\pi(\sigma) = w$ and $\tau$ is a face of $\sigma$.
	Then $\xi(\tau) \trianglelefteq \xi(\sigma)$.
	If, in addition, $\tau = \lambda(\sigma)$, then we have the strict inequality $\xi(\tau) \vartriangleleft \xi(\sigma)$.
	\label{lemma:matching-invariant}
\end{lemma}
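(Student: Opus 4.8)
The plan is to enumerate the possible covering faces $\tau\lessdot\sigma$ of $\sigma=[x_1|x_2|\cdots|x_d]$ and, in each case, read off $\xi(\tau)$ and $\xi(\sigma)$ from \Cref{def:matching} and compare them against the three-tiered definition of $\trianglelefteq$. Throughout I would use that $\pi(\sigma)=w$ forces $\kappa(\sigma)=\sigma$, so that $\xi(\sigma)$ is the concatenation of the increasing factorizations of $x_1,\dots,x_d$, and that by \Cref{thm:shellability} each increasing factorization of an element $u$ begins with the $\prec$-smallest reflection of $R_0\cap[1,u]$ and ends with the $\prec$-largest. I would also record the elementary fact that no reflection occurs twice in a minimal length reflection factorization of $w$ — otherwise \Cref{lemma:hurwitz-action} would move the two occurrences to the front and exhibit $w$ as a product of $n-1$ reflections.

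First, the deletion-of-first-entry face $\tau=[x_2|\cdots|x_d]$: here $\pi(\tau)\neq w$ and the left complement of $x_2\cdots x_d$ is exactly $x_1$, so $\kappa(\tau)=\lambda(\tau)=[x_1|x_2|\cdots|x_d]=\sigma$ and $\xi(\tau)=\xi(\sigma)$. Second, a merging face $\tau=[x_1|\cdots|x_ix_{i+1}|\cdots|x_d]$: here $\pi(\tau)=w$, $\kappa(\tau)=\tau$, and $\xi(\tau)$ agrees with $\xi(\sigma)$ outside the block indexed by $u:=x_ix_{i+1}$, where $\xi(\sigma)$ shows the increasing factorization of $x_i$ followed by that of $x_{i+1}$ while $\xi(\tau)$ shows the increasing factorization of $u$. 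Both blocks are reduced reflection factorizations of $u$ using reflections of $R_0\cap[1,u]$, and since $x_i,x_{i+1}\le u$ the $\prec$-largest reflection of the first block is $\preceq$ the $\prec$-largest reflection of $R_0\cap[1,u]$, which ends the second block; moreover the increasing factorization of $u$ is the lexicographically smallest reduced factorization of $u$. Running these observations through the definition of $\trianglelefteq$ — compare the global $\prec$-largest reflection, then (when it agrees) its position, then lexicographically — yields $\xi(\tau)\trianglelefteq\xi(\sigma)$.

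The remaining case, $\tau=[x_1|\cdots|x_{d-1}]=\lambda(\sigma)$, is the one requiring strictness and is the heart of the argument. Now $\pi(\tau)\neq w$, the left complement of $x_1\cdots x_{d-1}$ equals $\varphi^{-1}(x_d)=wx_dw^{-1}$, and $\kappa(\tau)=\lambda(\tau)=[\varphi^{-1}(x_d)|x_1|\cdots|x_{d-1}]$; thus $\xi(\tau)$ is obtained from $\xi(\sigma)$ by deleting the trailing block (the increasing factorization of $x_d$), prepending the increasing factorization of $\varphi^{-1}(x_d)$, and hence shifting the common part $x_1,\dots,x_{d-1}$ rightward by $l(x_d)\ge 1$ positions. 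By \Cref{lemma:negative-vertical-reflection} — applied to $\sigma$, and also to $\mu(\tau)=\lambda^2(\sigma)$, which one checks also lies in $\F(K_W')\setminus\F(X_W')$ — the global $\prec$-largest reflection of $\xi(\sigma)$ is a negative vertical reflection. If this reflection already appears in the common part, it reappears in $\xi(\tau)$ (uniquely, by the no-repetition fact), but at a strictly later position, so the position rule gives $\xi(\tau)\vartriangleleft\xi(\sigma)$. If it appears in the trailing block — i.e.\ it is the $\prec$-largest reflection of $R_0\cap[1,x_d]$ — then I would show that $R_0\cap[1,\varphi^{-1}(x_d)]$ contains a strictly $\prec$-larger reflection, so the global $\prec$-largest reflection of $\xi(\tau)$ is strictly $\prec$-larger than that of $\xi(\sigma)$ and the first rule of $\trianglelefteq$ applies.

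The main obstacle is exactly this last point: tracking the global $\prec$-largest reflection under the one-step shift of the window along the fiber sequence. The difficulty is that conjugation by $w$ does not respect the axial ordering $\prec$ — it pushes fixed points on the Coxeter axis $\ell$ monotonically downward, which can carry a positive vertical reflection across the horizontal block into the negative block, and it permutes the horizontal reflections cyclically within each component. Showing that the shift never decreases the global $\prec$-maximum, and that when it preserves it the occurrence can only move rightward, is where \Cref{lemma:negative-vertical-reflection}, the monotonicity of the $w$-action on $\ell$, and the compatibility property imposed on $\prec$ will all be needed.
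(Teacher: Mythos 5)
Your structuring into the three covering faces of $\sigma$ is the same as the paper's, and your first two cases are complete and correct. For the deletion-of-first-entry face $[x_2|\dotsb|x_d]$ you correctly compute $\kappa(\tau)=\lambda(\tau)=\sigma$ (the paper gets the same conclusion by noting $\tau=\mu(\sigma)$ and invoking \Cref{lemma:matching-compatibility}, but the outcome is identical). Your merging case is also right: the block for $x_ix_{i+1}$ in $\xi(\tau)$ has $\prec$-max $\succeq$ both blocks of $\xi(\sigma)$, the $\prec$-max sits at the end of the merged block, and the increasing factorization is lex-smallest — all three tiers of $\trianglelefteq$ are verified.

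The gap is exactly where you flag it, and it is not small. In the case $\tau=\lambda(\sigma)$ you reduce to showing: when the global $\prec$-max $r'$ of $\xi(\sigma)$ is the $\prec$-largest reflection of $R_0\cap[1,x_d]$, the interval $[1,\varphi^{-1}(x_d)]$ contains a reflection strictly $\prec$-larger than $r'$. You list \Cref{lemma:negative-vertical-reflection}, monotonicity of the $w$-action on $\ell$, and the compatibility property of $\prec$ as the tools you expect to need, but you never produce the argument, and the tool list is off. The compatibility property is not what's used here (it appears elsewhere, e.g.\ in \Cref{lemma:matching2}); what actually closes this case is a contradiction argument drawing on three lemmas you don't invoke. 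Concretely, the paper assumes the global $\prec$-max $r$ of $\xi(\tau)$ satisfies $r\prec r'$, so that in particular $\varphi^{-1}(r')\preceq r\prec r'$. Since $r'$ is a negative vertical reflection (\Cref{lemma:negative-vertical-reflection}) and $\varphi^{-1}$ raises fixed points on $\ell$, the strict inequality $\varphi^{-1}(r')\prec r'$ forces $\varphi^{-1}(r')$ to have jumped across $C_0$ into the positive verticals; \Cref{lemma:vertical-reflection-conjugate} then pins $\varphi^{-1}(r')$ among the $n+1$ $\prec$-smallest reflections; \Cref{lemma:smallest-reflections} makes its right complement fix a vertex of $C_0$; and passing to right complements in $\varphi^{-1}(r')\le\varphi^{-1}(x_d)$ together with \Cref{lemma:model-poset} shows $\pi(\tau)$ fixes that vertex, contradicting $\tau\notin\F(X_W')$. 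The subsequent identification of the $r=r'$ subcase ($r'$ must then come from some $x_i$ with $i<d$, giving a strictly later position) is a short variant of the same argument. None of this is recoverable from monotonicity plus compatibility alone, so as written the proposal proves the non-strict inequality in Cases 1 and 2 but leaves the strictness claim of Case 3 unproved.

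One minor aside: your instinct to apply \Cref{lemma:negative-vertical-reflection} also to $\kappa(\tau)=\lambda^2(\sigma)$ is legitimate (it does lie in $\F(K_W')\setminus\F(X_W')$), but the paper's argument only needs it for $\sigma$.
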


\begin{proof}
	Let $\sigma = [x_1|x_2|\dotsb|x_d]$.
	Notice that $\kappa(\sigma) = \sigma$, because $\pi(\sigma) = w$.
	Let $r$ (resp.\ $r'$) be the $\prec$-largest reflection of $\xi(\tau)$ (resp.\ $\xi(\sigma)$), appearing in position $k$ (resp.\ $k'$).
	
	\begin{itemize}
		\item Case 1: $\tau = [x_2|\dotsb|x_d]$.
		Then $\tau = \mu(\sigma)$ and therefore $\xi(\tau) = \xi(\sigma)$ by \Cref{lemma:matching-compatibility}.
		
		\item Case 2: $\tau= [x_1|\dotsb | x_{i-1}|x_ix_{i+1}|x_{i+2}|\dotsb |x_d]$ for some $i \in \{1, \dotsc, d-1\}$.
		In particular, we have $\pi(\tau) = w$ and therefore $\kappa(\tau) = \tau$.
		Since $[1, x_i]$ and $[1, x_{i+1}]$ are both included in $[1, x_ix_{i+1}]$, the $\prec$-largest reflection of $[1, x_ix_{i+1}]$ is at least as $\prec$-large as the $\prec$-largest reflections of $[1,x_i]$ and $[1,x_{i+1}]$.
		Therefore $r \succeq r'$.
		If $r \succ r'$, then $\xi(\tau) \vartriangleleft \xi(\sigma)$, as desired.
		
		\noindent Suppose that $r = r'$.
		The $\prec$-largest reflection of $[1,x_ix_{i+1}]$ appears as the last reflection of the increasing factorization of $x_ix_{i+1}$.
		Therefore $k \geq k'$.
		If $k > k'$, then $\xi(\tau) \vartriangleleft \xi(\sigma)$, as desired.
		
		\noindent By \Cref{thm:shellability}, the increasing factorization of $x_i x_{i+1}$ is lexicographically smaller than (or equal to) the concatenation of the increasing factorizations of $x_i$ and $x_{i+1}$.
		Then $\xi(\tau)$ is lexicographically smaller than (or equal to) $\xi(\sigma)$.
		Therefore $\xi(\tau) \trianglelefteq \xi(\sigma)$.
		
		\item Case 3: $\tau = [x_1|\dotsb|x_{d-1}] = \lambda(\sigma)$.
		Then $\kappa(\tau) = [\varphi^{-1}(x_d)|x_1|\dotsb|x_{d-1}] \in \F(K_W') \setminus \F(X_W')$.
		
		\noindent Suppose by contradiction that $r \prec r'$.
		Since $x_1, \dotsc, x_{d-1}$ are common to both $\tau$ and $\sigma$, we have that $r' \leq x_d$.
		Then $\varphi^{-1}(r') \leq \varphi^{-1}(x_d)$, and therefore $\varphi^{-1}(r') \preceq r \prec r'$.
		By \Cref{lemma:negative-vertical-reflection}, $r'$ is a negative vertical reflection.
		Then $\Fix(\varphi^{-1}(r')) \cap \ell$ is above $\Fix(r') \cap \ell$.
		Since $\varphi^{-1}(r') \prec r'$, we have that $\varphi^{-1}(r')$ is a positive vertical reflection.
		By \Cref{lemma:vertical-reflection-conjugate}, $\varphi^{-1}(r')$ is among the $n+1$ $\prec$-smallest reflections of $R_0$.
		By \Cref{lemma:smallest-reflections}, its right complement $u$ fixes a vertex of $C_0$.
		Since $\varphi^{-1}(r') \leq \varphi^{-1}(x_d)$, passing to the right complements we get $u \ge x_1\dotsm x_{d-1} = \pi(\tau)$.
		By \Cref{lemma:model-poset}, $\Fix(u) \subseteq \Fix(\pi(\tau))$, and so $\pi(\tau)$ also fixes a vertex of $C_0$.
		This is a contradiction, because $\tau\not\in \F(X_W')$.
		Therefore $r \succeq r'$.
		If $r \succ r'$, then $\xi(\tau) \vartriangleleft \xi(\sigma)$, as desired.
		
		\noindent 
		Suppose now that $r = r'$.
		If $r = r' \leq x_d$, the previous argument yields again a contradiction.
		Therefore $r \leq x_i$ for some $i \in \{1, \dotsc, d-1\}$.
		Then the position $k$ (where $r$ appears in $\xi(\tau)$), is strictly greater than the position $k'$ (where $r$ appears in $\xi(\sigma)$).
		Thus $\xi(\tau) \vartriangleleft \xi(\sigma)$. \qedhere
	\end{itemize}
\end{proof}

\begin{lemma}
	The matching $\M$ on $\F(K_W')$ is acyclic.
	\label{lemma:matching-acyclic}
\end{lemma}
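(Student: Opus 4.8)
The plan is to rule out alternating cycles, exploiting that $\xi$ is non-increasing along alternating paths together with the rigidity this forces. Suppose $H_\M$ contains an alternating cycle. After relabelling it has the form $\tau_1\to\sigma_1\to\tau_2\to\sigma_2\to\dotsb\to\tau_k\to\sigma_k\to\tau_1$ (indices mod $k$), where each $\tau_i\to\sigma_i$ is a matching edge (so $\tau_i\lessdot\sigma_i$, $\sigma_i=\mu(\tau_i)$, and $\tau_i=\mu(\sigma_i)$) and each $\sigma_i\to\tau_{i+1}$ is a non-matching edge (so $\tau_{i+1}\lessdot\sigma_i$). Since the simplices of $\F(X_W')$ are critical, every $\tau_i$ and $\sigma_i$ lies in $\F(K_W')\setminus\F(X_W')$. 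Inspecting the four cases of \Cref{def:matching}, the higher simplex of every matched pair has $\pi$-value $w$; hence $\pi(\sigma_i)=w$ for all $i$.

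First I would run the $\xi$-argument. By \Cref{lemma:matching-compatibility} we get $\xi(\tau_i)=\xi(\mu(\sigma_i))=\xi(\sigma_i)$, and since $\tau_{i+1}\lessdot\sigma_i$ with $\pi(\sigma_i)=w$, \Cref{lemma:matching-invariant} gives $\xi(\tau_{i+1})\trianglelefteq\xi(\sigma_i)$. Chasing these inequalities around the cycle forces all the values $\xi(\tau_i)=\xi(\sigma_i)$ to equal a single factorization $\alpha\in P$. Now the codimension-$1$ faces of $\sigma_i=[x_1|\dotsb|x_d]$ (with $x_1\dotsm x_d=w$) are $\rho(\sigma_i)=[x_2|\dotsb|x_d]$, the merges $[x_1|\dotsb|x_jx_{j+1}|\dotsb|x_d]$ for $1\le j\le d-1$, and $\lambda(\sigma_i)=[x_1|\dotsb|x_{d-1}]$. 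The case $\tau_{i+1}=\lambda(\sigma_i)$ is excluded, since the strict part of \Cref{lemma:matching-invariant} would give $\xi(\tau_{i+1})\vartriangleleft\alpha$. The case $\tau_{i+1}=\rho(\sigma_i)$ is also excluded: then $\pi(\tau_{i+1})\ne w$, so $\tau_{i+1}$ falls under case (1) of \Cref{def:matching} and $\mu(\tau_{i+1})=\lambda(\tau_{i+1})=\lambda(\rho(\sigma_i))=\sigma_i$, which makes $(\tau_{i+1},\sigma_i)\in\M$, contradicting that $\sigma_i\to\tau_{i+1}$ is a non-matching edge. Hence every $\tau_{i+1}$ is obtained from $\sigma_i$ by merging two adjacent entries; in particular $\pi(\tau_{i+1})=w$, so $\pi(\tau_i)=w$ for all $i$.

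Next I would bring in the depth. Since $\pi(\tau_i)=w$, the matching edge $\tau_i\to\sigma_i$ places $\tau_i$ in case (3) of \Cref{def:matching} (case (1) needs $\pi(\tau_i)\ne w$) and hence $\sigma_i$ in case (4) (case (2) is impossible because $\mu(\sigma_i)=\tau_i$ has $\pi=w$ while $\rho(\sigma_i)$ does not). By \Cref{prop:involution} the depths all agree: $\delta(\sigma_i)=\delta(\tau_i)$. Writing $\alpha=(r_1,\dotsc,r_{n+1})$ and using $\xi(\sigma_i)=\alpha$ together with \Cref{def:depth} and \Cref{thm:shellability}, one checks, for each $i$, that the first $\delta(\sigma_i)$ entries of $\sigma_i$ are the single reflections $r_1,\dotsc,r_{\delta(\sigma_i)}$, that $r_1\succ r_2\succ\dotsb\succ r_{\delta(\sigma_i)}\prec r_{\delta(\sigma_i)+1}$, and hence that $\delta(\sigma_i)$ is the position of the first $\prec$-ascent of $\alpha$; in particular this depth, call it $\delta$, is the same for all $i$. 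Because $\sigma_i$ is in case (4), $\mu(\sigma_i)$ is exactly the merge of $\sigma_i$ at position $\delta$. Now write $\tau_{i+1}$ as the merge of $\sigma_i$ at some position $j_i$. Comparing the block decompositions of $\alpha$ induced by $\sigma_i$ and by $\sigma_{i+1}$ (recall $\tau_{i+1}=\mu(\sigma_{i+1})$ is the merge of $\sigma_{i+1}$ at its depth $\delta$), and using that a merge must preserve $\xi$: if $j_i<\delta$ then merging the singleton blocks $\{r_{j_i}\}$ and $\{r_{j_i+1}\}$ with $r_{j_i}\succ r_{j_i+1}$ destroys the value $\alpha$; and if $j_i>\delta$ then the $\delta$-th entry of $\tau_{i+1}$ equals the single reflection $r_\delta$, whereas viewing $\tau_{i+1}$ as the merge of $\sigma_{i+1}$ at $\delta$ forces that entry to have length $\ge2$. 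Both are contradictions, so $j_i=\delta$, i.e.\ $\tau_{i+1}=\mu(\sigma_i)$ and $(\tau_{i+1},\sigma_i)\in\M$ — again contradicting that $\sigma_i\to\tau_{i+1}$ is a non-matching edge. Hence no alternating cycle exists and $\M$ is acyclic.

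The main obstacle is the last paragraph: controlling how the depth $\delta$ and the block structure of $\alpha$ interact with the merge step $\sigma_i\to\tau_{i+1}$, and in particular deducing $j_i=\delta$. Everything up to that point is routine bookkeeping with the lemmas already established; the final step requires a careful case analysis of \Cref{def:depth} under merging adjacent entries, leaning on the case-(4) structure of each $\sigma_i$ and the constancy of $\xi$ around the cycle.
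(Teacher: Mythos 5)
Your proof is correct and follows the same strategy as the paper's: chase $\xi$ around the cycle to a common value $\alpha$, rule out the $\lambda$- and $\rho$-faces, place each $\sigma_i$ in case (4) and each $\tau_i$ in case (3) with a common depth $\delta$, and close by pinning the merge position $j_i$ of $\tau_{i+1}$ inside $\sigma_i$ to equal $\delta$. The only differences are cosmetic: you obtain depth equality by reading $\delta$ off as the position of the first $\prec$-ascent of $\alpha$ and exclude $j_i < \delta$ via a $\xi$-argument, whereas the paper handles both via the depth inequality $\delta(\tau_{i+1}) \leq j_i$ chained around the cycle.
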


\begin{proof}
	Suppose by contradiction that there is an alternating cycle $\sigma_1 \gtrdot \tau_1 \lessdot \sigma_2 \gtrdot \tau_2 \lessdot \dotsb \gtrdot \tau_m \lessdot \sigma_{m+1} = \sigma_1$ in $\F(K_W')$, with $m\geq 1$.
	We have that $(\tau_j, \sigma_j) \not\in \M$ and $(\tau_j, \sigma_{j+1}) \in \M$ for all $j \in \{1, \dotsc, m\}$.
	In particular, all the simplices involved are matched, so they are in $\F(K_W') \setminus\F(X_W')$.
	Also, by \Cref{def:matching} we have that $\pi(\sigma_j) = w$ for all $j$.
	
	By \Cref{lemma:matching-compatibility} and \Cref{lemma:matching-invariant}, we have \[ \xi(\sigma_1) \trianglerighteq \xi(\tau_1) = \xi(\sigma_2) \trianglerighteq \xi(\tau_2) = \dotsb \trianglerighteq \xi(\tau_m) = \xi(\sigma_{m+1}) = \xi(\sigma_1). \]
	Then, all these inequalities are actually equalities:
	\[ \xi(\sigma_1) = \xi(\tau_1) = \xi(\sigma_2) = \xi(\tau_2) = \dotsb = \xi(\tau_m) = \xi(\sigma_{m+1}) = \xi(\sigma_1). \]
	By the second part of \Cref{lemma:matching-invariant}, we have $\tau_j \neq \lambda(\sigma_j)$ for all $j$.
	Also, $\tau_j \neq \rho(\sigma_j)$ for all $j$, because otherwise we would have $\tau_j= \mu(\sigma_j)$ i.e.\ $(\tau_j, \sigma_j) \in \M$.
	Since $\tau_j$ is a face of $\sigma_j$ different from $\lambda(\sigma_j)$ and $\rho(\sigma_j)$, we have $\pi(\tau_j) = \pi(\sigma_j) = w$ for all $j$.
	As a consequence, each $\tau_j$ occurs in case (3) of \Cref{def:matching}, and each $\sigma_j$ occurs in case (4).
	
	By the second part of \Cref{prop:involution}, $\delta(\tau_j) = \delta(\sigma_{j+1})$ for all $j$.
	In addition, since $\xi(\sigma_j) = \xi(\tau_j)$, we have $\delta(\sigma_j) \geq \delta(\tau_j)$ for all $j$.
	As before, since $\sigma_{m+1}= \sigma_1$, all inequalities are actually equalities:
	\[ \delta(\sigma_1) = \delta(\tau_1) = \delta(\sigma_2) = \delta(\tau_2) = \dotsb = \delta(\tau_m) = \delta(\sigma_{m+1}) = \delta(\sigma_1). \]
	
	Let $\sigma_1 = [x_1|x_2|\dotsb|x_d]$, $\tau_1 = [x_1|\dotsb|x_{i-1}|x_ix_{i+1}|x_{i+2}|\dotsb|x_d]$ for some $i \in \{1, \dotsc, d-1\}$, and $\delta = \delta(\sigma_1)$.
	Since $\sigma_1$ occurs in case (4), we have $l(x_\delta) = 1$.
	Also, $l(x_ix_{i+1}) \geq 2$ implies $\delta(\tau_1) \leq i$.
	Since $\delta(\tau_1) = \delta(\sigma_1) = \delta$, we deduce that $i \geq \delta$.
	If $i = \delta$, then $\tau_1 = \mu(\sigma_1)$, which is impossible because $(\tau_1, \sigma_1) \not\in \M$.
	Therefore $i > \delta$, so $\tau_1$ also occurs in case (4), because $l(x_\delta) = 1$.
	However $\tau_1$ occurs in case (3), and this is a contradiction.
\end{proof}

\begin{theorem}
	Let $W$ be an irreducible affine Coxeter group, with a set of simple reflections $S = \{s_1, s_2, \dotsc, s_{n+1}\}$ and a Coxeter element $w = s_1s_2\dotsm s_{n+1}$.
	The interval complex $K_W$ deformation retracts onto its subcomplex $X_W'$.
	\label{thm:deformation-retraction}
\end{theorem}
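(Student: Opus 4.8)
The plan is to assemble the two reduction steps that have been prepared above. First, by \Cref{lemma:canonical-nice} the canonical nice subcomplex $K_W'$ is a nice subcomplex of $K_W$ in the sense of \Cref{def:nice-subcomplex}, so part (a) of \Cref{thm:nice-subcomplex} provides a deformation retraction of $K_W$ onto $K_W'$. It therefore suffices to construct a deformation retraction of $K_W'$ onto $X_W'$, and for this I would apply discrete Morse theory (\Cref{thm:discrete-morse-theory}) to the matching on $\F(K_W')$ induced by the matching function $\mu$ of \Cref{def:matching}, namely
\[ \M = \{ (\mu(\sigma), \sigma) \mid \sigma \in \F(K_W') \setminus \F(X_W') \text{ and } \mu(\sigma) \lessdot \sigma \}. \]
By \Cref{lemma:matching2} the map $\mu$ takes values in $\F(K_W') \setminus \F(X_W')$, and by \Cref{prop:involution} it is an involution that always relates a simplex to a codimension-one face of it; hence $\M$ is a matching whose set of critical cells is exactly $\F(X_W')$, and it is acyclic by \Cref{lemma:matching-acyclic}.

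It then remains to check the two remaining hypotheses of \Cref{thm:discrete-morse-theory}: that $\M$ is proper, and that for every $(\tau,\sigma)\in\M$ the face $\tau$ is a regular face of $\sigma$. Properness is immediate once one observes that $K_W'$ is a finite complex: by \Cref{cor:finite-number-of-components} there are only finitely many fiber components in $\F(K_W)$, each finite component contributes finitely many simplices to $K_W'$, and each infinite component contributes only the simplices lying between its first and last simplex belonging to $X_W'$, of which there are finitely many since $X_W'$ is finite. Any acyclic matching on a finite poset is automatically proper.

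For regularity I would argue as follows. Inspecting the four cases of \Cref{def:matching}, in every matched pair $(\tau,\sigma)$ with $\tau\lessdot\sigma$ one has $\pi(\sigma)=w$, and $\tau$ is obtained from $\sigma=[x_1|x_2|\dotsb|x_d]$ either by deleting the first entry (cases (1) and (2)) or by merging two consecutive entries $x_\delta x_{\delta+1}$ (cases (3) and (4)). In the $\Delta$-complex structure of \Cref{def:interval-complex} each of these operations realizes $\tau$ as the image of a single facet of $\Delta^d$ under a gluing homeomorphism, so $\tau$ is a regular face of $\sigma$ as soon as $\tau$ does not coincide, as a labelled tuple, with any other face of $\sigma$. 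A direct combinatorial check, using only that every $x_i \neq 1$, shows that two distinct faces of $\sigma$ can be equal only when all the $x_i$ coincide with a single element $x$, in which case $w = \pi(\sigma) = x^{d}$ with $d\geq 2$; but a Coxeter element of an irreducible affine Coxeter group is a hyperbolic isometry, so such an $x$ is necessarily hyperbolic, and then $\Dir(\Mov(w)) = \Dir(\Mov(x^d)) \subseteq \Dir(\Mov(x))$ together with the reflection length formulas of \Cref{sec:euclidean-isometries} (namely $n+1 = l(w) = d\, l(x) = d(\dim\Mov(x)+2)$ and $\dim\Mov(w)=n-1$) forces $n\leq -1$, a contradiction. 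Thus all matched faces are regular, \Cref{thm:discrete-morse-theory} applies and gives a deformation retraction of $K_W'$ onto $X_W'$, which composed with the retraction of $K_W$ onto $K_W'$ proves the theorem.

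The genuinely new work here is the regularity verification; acyclicity, the involution property, and the two retractions are all already in place. The main subtlety, and the step I would be most careful about, is precisely that interval complexes are $\Delta$-complexes rather than regular CW complexes, so one must confirm that the particular faces used by $\mu$ are not degenerate — which, as sketched, reduces to the elementary fact that an affine Coxeter element is not a proper power of an element of $[1,w]$.
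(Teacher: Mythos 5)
Your proposal is correct and follows the same two-step route as the paper: first retract $K_W$ onto the canonical nice subcomplex $K_W'$ via \Cref{thm:nice-subcomplex} and \Cref{lemma:canonical-nice}, then apply \Cref{thm:discrete-morse-theory} to the matching $\M$, using \Cref{lemma:matching2}, \Cref{prop:involution}, \Cref{lemma:matching-acyclic}, and the finiteness of $\F(K_W')$ for properness. The only genuine addition is your explicit check that every matched face is a regular face, a hypothesis of \Cref{thm:discrete-morse-theory} that the paper's own proof does not spell out; this check is indeed needed because interval complexes are $\Delta$-complexes rather than regular CW complexes, and your verification is sound: two boundary faces of $[x_1|\dotsb|x_d]$ can coincide only when all $x_i$ are equal (identifying any other pair of faces would force some $x_i=1$, excluded by \Cref{def:interval-complex}), which for a matched $\sigma$ with $\pi(\sigma)=w$ would give $w=x^d$ with $d\geq 2$ and additive length, and your move-set computation correctly rules this out. (One can also exclude it more quickly from the coarse structure of $[1,w]$: such an $x$ would be hyperbolic since $w$ fixes no point, so its right complement $x^{d-1}$ must be horizontal elliptic; but $x\leq x^{d-1}$ by additivity of length, and any element below an elliptic element is elliptic, a contradiction.)
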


\begin{proof}
	By \Cref{thm:nice-subcomplex} and \Cref{lemma:canonical-nice}, $K_W$ deformation retracts onto its ca\-no\-ni\-cal nice subcomplex $K_W'$.
	We have constructed a matching $\M$ on the face poset $\F(K_W')$.
	This matching has $\F(X_W')$ as the set of critical cells and is acyclic by \Cref{lemma:matching-acyclic}.
	It is also proper because $\F(K_W')$ is finite.
	By the main theorem of discrete Morse theory (\Cref{thm:discrete-morse-theory}), $K_W'$ deformation retracts onto $X_W'$.
\end{proof}

We can finally prove the $K(\pi,1)$ conjecture for affine Artin groups.

\begin{theorem}[$K(\pi,1)$ conjecture]
	Let $W$ be an irreducible affine Coxeter group.
	The $K(\pi,1)$ conjecture holds for the corresponding Artin group $G_W$.
	\label{thm:conjecture}
\end{theorem}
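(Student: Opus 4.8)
The plan is to assemble the results established in \Cref{sec:dual-salvetti-complex,sec:classifying-spaces,sec:finite-classifying-spaces} together with the acyclic matching constructed in the present section into a short chain of homotopy equivalences, and then read off asphericity. By \Cref{thm:dual-salvetti}, the subcomplex $X_W' \subseteq K_W$ is homotopy equivalent to the orbit configuration space $Y_W$. By \Cref{thm:nice-subcomplex} and \Cref{lemma:canonical-nice}, the interval complex $K_W$ deformation retracts onto its canonical nice subcomplex $K_W'$, and by \Cref{thm:deformation-retraction} the latter deformation retracts onto $X_W'$. Hence $Y_W \simeq X_W' \simeq K_W' \simeq K_W$. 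On the other hand, \Cref{thm:KW-classifying} asserts that $K_W$ is a classifying space for the dual Artin group $W_w$, so in particular $K_W$ is aspherical; transporting this along the equivalences above, $Y_W$ is aspherical as well.

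To finish, I would identify the fundamental group: $\pi_1(Y_W) \cong G_W$ by the classical description recalled in \Cref{sec:coxeter-artin} (equivalently, $\pi_1(X_W') \cong G_W$ by \Cref{cor:dual-fundamental-group} and $X_W' \hookrightarrow K_W$ is an equivalence). Being an aspherical CW complex with fundamental group $G_W$, the space $Y_W$ is a $K(G_W,1)$, which is precisely the assertion of the $K(\pi,1)$ conjecture for the irreducible case. It is worth recording the byproduct that the same chain of equivalences yields $G_W = \pi_1(Y_W) \cong \pi_1(K_W) = W_w$, giving a fresh proof that the dual affine Artin group is naturally isomorphic to the Artin group (cf.\ \Cref{thm:dual-artin-group}).

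For the Main Theorem, i.e.\ the general (not necessarily irreducible) affine case, I would add the standard product reduction mentioned at the beginning of this section: a reducible affine Coxeter group $W$ decomposes as a direct product $W_1 \times \dotsb \times W_m$ of irreducible Coxeter groups, each either finite or affine, and correspondingly $Y_W \simeq Y_{W_1} \times \dotsb \times Y_{W_m}$ and $G_W \cong G_{W_1} \times \dotsb \times G_{W_m}$. Since a finite product of aspherical CW complexes is aspherical, and the finite factors are classifying spaces by Deligne's theorem while the affine factors are handled by the irreducible case just proved, the product $Y_W$ is a $K(G_W,1)$.

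The honest assessment of obstacles: at this point essentially nothing substantive remains, since all the difficulty has already been absorbed into the earlier sections — the EL-shellability of the affine noncrossing partition poset, the asphericity of $K_W$ via the Mayer--Vietoris argument with the braided crystallographic completion, the finiteness reduction from $K_W$ to $K_W'$, and the explicit acyclic matching collapsing $K_W'$ onto $X_W'$. The only point deserving care here is bookkeeping: one must ensure that each step $Y_W \simeq X_W' \simeq K_W' \simeq K_W$ is a genuine homotopy equivalence of spaces (so that asphericity, not merely homology or $\pi_1$, transfers) and that the identifications of fundamental groups along the chain are mutually compatible; both are immediate from the cited statements.
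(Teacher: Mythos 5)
Your argument is exactly the paper's proof of \Cref{thm:conjecture}: chain the homotopy equivalences $Y_W \simeq X_W' \simeq K_W' \simeq K_W$ from \Cref{thm:dual-salvetti}, \Cref{thm:nice-subcomplex} together with \Cref{lemma:canonical-nice}, and \Cref{thm:deformation-retraction}, then invoke \Cref{thm:KW-classifying} to conclude that $Y_W$ is aspherical with $\pi_1 \cong G_W$. The byproduct you note about $G_W \cong W_w$ and the product reduction to the reducible case are also exactly the remarks the paper makes immediately afterward, so this is the same proof, correctly assembled.
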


\begin{proof}
	Fix a set of simple reflections $S = \{s_1, s_2, \dotsc, s_{n+1}\}$ and a Coxeter element $w = s_1s_2\dotsm s_{n+1}$.
	By \Cref{thm:KW-classifying}, the interval complex $K_W$ is a classifying space.
	By \Cref{thm:deformation-retraction,thm:dual-salvetti}, we have homotopy equivalences $Y_W \simeq X_W' \simeq K_W$, where $Y_W$ is the orbit configuration space associated with $W$.
	Therefore $Y_W$ is a classifying space for its fundamental group $G_W$.
\end{proof}

We also obtain a new proof of the following theorem of McCammond and Sulway.

\begin{theorem}[{\cite[Theorem C]{mccammond2017artin}}]
	Let $W$ be an irreducible affine Coxeter group, with a set of simple reflections $S = \{s_1, s_2, \dotsc, s_{n+1}\}$ and a Coxeter element $w = s_1s_2\dotsm s_{n+1}$.
	The natural homomorphism from the Artin group $G_W$ to the dual Artin group $W_w$ is an isomorphism.
\end{theorem}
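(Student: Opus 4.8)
The plan is to read off the isomorphism directly from the homotopy equivalences proved above, after a bookkeeping check of how the standard Artin generators travel. Recall that the natural homomorphism $\nu\colon G_W\to W_w$ is, by definition, the map sending each Artin generator $s\in S$ to the corresponding element $s$ of the generating set $R_0$ of the interval group $W_w$ (see \Cref{def:interval-group}); since $S$ generates $G_W$, it suffices to produce \emph{some} isomorphism $G_W\to W_w$ agreeing with $\nu$ on $S$. For this I would assemble the chain of homotopy equivalences
\[ Y_W \;\simeq\; X_W \;\xrightarrow{\ \varphi\ }\; X_W' \;\hookrightarrow\; K_W, \]
in which the first arrow is the classical equivalence between the orbit configuration space and the Salvetti complex, $\varphi$ is the map constructed in the proof of \Cref{thm:dual-salvetti}, and the inclusion $X_W'\hookrightarrow K_W$ is a homotopy equivalence because $K_W$ deformation retracts onto $X_W'$ by \Cref{thm:deformation-retraction}. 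Choosing all basepoints at $0$-cells and applying $\pi_1$ yields a chain of isomorphisms whose composite $\theta\colon G_W=\pi_1(Y_W)\to\pi_1(K_W)=W_w$ is the candidate isomorphism.

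It then remains to verify that $\theta(s)=s$ for each $s\in S$. Under $Y_W\simeq X_W$ the generator $s$ of $G_W$ is represented by the oriented $1$-cell $c_{\{s\}}$ of the Salvetti complex, by the way the Artin presentation is extracted from $X_W$. In the inductive construction of $\varphi$, the restriction $\varphi_{\{s\}}\colon X_{\{s\}}\to X'_{\{s\}}$ is an orientation-preserving cellular homeomorphism between two complexes, each consisting of a single oriented $1$-cell glued to a single $0$-cell; and $X'_{\{s\}}$ is the interval complex of $[1,s]$, whose unique $1$-simplex is $[s]$. Hence $\varphi$ carries $c_{\{s\}}$ to the loop $[s]$ in $X_W'\subseteq K_W$ preserving orientation, so $\varphi_*$ sends the class of $c_{\{s\}}$ to the class of $[s]$. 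Finally, in the interval complex $K_W$ (see \Cref{def:interval-complex}) the loop $[r]$ represents the interval-group generator $r\in R_0$ of $\pi_1(K_W)=W_w$, for every $r\in R_0$. Composing these identifications gives $\theta(s)=[s]=s$ in $W_w$, which is precisely $\nu(s)$. Since $\theta$ and $\nu$ are group homomorphisms out of $G_W$ agreeing on the generating set $S$, they coincide; hence $\nu=\theta$ is an isomorphism.

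The whole argument is essentially formal once \Cref{thm:dual-salvetti} and \Cref{thm:deformation-retraction} are available, and it uses only homotopy invariance of $\pi_1$, not asphericity (one could alternatively package it through the $K(\pi,1)$ conjecture \Cref{thm:conjecture} together with \Cref{thm:KW-classifying}, but this is unnecessary). The only delicate point is the basepoint and orientation bookkeeping in the middle step, which is what guarantees that $\theta(s)$ is the generator $s$ rather than a conjugate of it; this is handled by keeping every basepoint at a $0$-cell and using that the map $\varphi$ of \Cref{thm:dual-salvetti} is cellular and orientation-preserving on the portion of the $1$-skeleton carrying the generators.
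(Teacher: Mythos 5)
Your proposal is correct and takes essentially the same route as the paper: both compose the homotopy equivalences $X_W \simeq X_W' \simeq K_W$ from \Cref{thm:dual-salvetti} and \Cref{thm:deformation-retraction} and observe that the resulting map $\psi\colon X_W\to K_W$ carries the oriented $1$-cell $c_{\{s\}}$ to $[s]$, so that $\psi_*$ is the natural homomorphism. Your extra care (reducing the identification $\psi_*=\nu$ to agreement on the generating set $S$, and noting the basepoint/orientation bookkeeping) is implicit in the paper's one-line verification and adds nothing substantively different.
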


\begin{proof}
	Consider the homotopy equivalences $X_W \simeq X_W' \simeq K_W$ of \Cref{thm:dual-salvetti,thm:deformation-retraction}.
	The composition $\psi \colon X_W \to K_W$ sends the $1$-cell $c_{\{s\}}$, associated with a simple reflection $s \in S$, to the corresponding $1$-cell $[s]$ of $K_W$, preserving the orientation.
	Then the induced map $\psi_*$ on the fundamental groups is exactly the natural homomorphism $G_W \to W_w$, which is, therefore, an isomorphism.
\end{proof}

\appendix
\section{The four infinite families}
\label{appendix}

The main purpose of this appendix is to prove \Cref{lemma:axis-A}, \Cref{thm:coxeter-A}, \Cref{prop:axial-point-A,prop:axial-orbits-A} (these are statements about $\tilde A_n$), and \Cref{lemma:hyperbolic-coxeter} (for the cases $\tilde A_n$, $\tilde B_n$, $\tilde C_n$, and $\tilde D_n$).
We do this by explicitly examining the four infinite families of irreducible affine Coxeter groups.
This appendix can be also used as a source of examples, and it complements the computations of \cite[Section 11]{mccammond2015dual}.
We refer to \cite[Section 2.10]{humphreys1992reflection} for the standard construction of root systems (see also \cite{bourbaki1968elements}).

\subsection{Case \texorpdfstring{$\tilde A_n$}{An tilde}}
\label{sec:appendix-A}

Let $W$ be a Coxeter group of type $\tilde A_n$. It is realized as the reflection group in $E = \R^{n+1} / \langle 1,\dotsc, 1 \rangle$ associated with the hyperplane arrangement
\[ \A= \{ \{ x_i - x_j = k \} \mid 1 \leq i < j \leq n+1, \; k \in \Z \}. \]
If $\a \in \R^{n+1}$, denote by $[\a]$ its class in $E = \R^{n+1}/\langle 1, \dotsc, 1\rangle$.

Let $(p,q)$ be a pair of positive integers such that $p+q = n+1$.
Label the coordinates of $\R^{n+1} = \R^p \times \R^q$ as follows: $x_1,x_2,\dotsc, x_p, y_1, y_2, \dotsc, y_q$.
Given a point $\b \in E$, denote its coordinates by $x_1^\b, \dotsc, x_p^\b, y_1^\b, \dotsc, y_q^\b$ (they are well defined up to a multiple of $(1, \dotsc, 1)$).
Construct a $(p,q)$-bigon Coxeter element $w$ as in \cite[Example 11.6]{mccammond2015dual}:
\begin{equation}
\label{eq:coxeter-A}
w (\b) = [x_p^\b+1, x_1^\b, \dotsc, x_{p-1}^\b \mid y_q^\b-1, y_1^\b, \dotsc, y_{q-1}^\b].
\end{equation}
Then the shortest vector in $\Mov(w)$ is $\mu = \left[\frac{1}{p}, \dotsc, \frac{1}{p} \bigm| -\frac{1}{q}, \dotsc, -\frac{1}{q}\right]$, and the points of the Coxeter axis $\ell$ (i.e.\ the points $\a$ such that $w(\a) = \a + \mu$) are of the form
\begin{equation}
\label{eq:axial-point}
\left[ \textstyle \frac{p-1}{p}, \frac{p-2}{p}, \dotsc, \frac{1}{p}, 0 \bigm| 0, \frac{1}{q}, \dotsc, \frac{q-2}{q}, \frac{q-1}{q} \right] + \theta\mu
\end{equation}
for $\theta \in \R$.
The hyperplanes of the form $\{ x_i - x_j = k\}$ or $\{ y_i - y_j = k\}$ are horizontal, whereas those of the form $\{ x_i - y_j = k\}$ are vertical.

\begin{proof}[Proof of \Cref{lemma:axis-A}]
	From \eqref{eq:axial-point} it is immediate to see that the Coxeter axis $\ell$ is not contained in any hyperplane of $\A$.
	The value of $\theta$ that yields the intersection point of a vertical hyperplane $\{ x_i -  y_j = k \}$ with $\ell$ satisfies
	\[ \textstyle\left(\frac1p + \frac 1q \right)\theta = k - \textstyle\frac{p-i}{p} + \frac{j-1}{q} = k-1 + \frac{i}{p} + \frac{j-1}{q}. \]
	If we let $k$, $i$, and $j$ vary, then $\theta$ can assume any value which is an integer multiple of $\frac{\gcd(p,q)}{p+q}$.
\end{proof}

Consider now a point $\a \in E$ which is not contained in any hyperplane of $\A$, and let $C_\a$ be the chamber containing $\a$.
In particular, for every $j$ we have that $y_j^\a - x_p^\a \not\in\Z$, because otherwise $\a$ would lie on some vertical hyperplane $\{ x_p - y_j = k \}$.
Consider the line $\ell_\a$ passing through $\a$ and with the same direction as the Coxeter axis:
\[ \ell_\a = \big\{ [x_1^\a, x_2^\a, \dotsc, x_p^\a \mid y_1^\a, y_2^\a, \dotsc, y_q^\a] + \theta\mu \mid \theta \in \R \big\}. \]
Let $S_\a \subseteq R$ be the set of the reflections with respect to the walls of $C_\a$.
Write $S_\a = S_\a^+ \sqcup S_\a^- \sqcup S_\a^\h$, where $S_\a^+$ (resp.\ $S_\a^-$) consists of the reflections that intersect $\ell_\a$ above (resp.\ below) $\a$, and $S_\a^\h$ consists of the horizontal reflections.

\begin{lemma}
	\label{lemma:product-A}
	Let $W$ be a Coxeter group of type $\tilde A_n$, and $w$ a $(p,q)$-bigon Coxeter element as in \eqref{eq:coxeter-A}.
	Let $\a \in E$ be a point which is not contained in any hyperplane of $\A$, and such that $x_p^\a < x_{p-1}^\a < \dotsb < x_1^\a < x_p^\a +1$ and $y_1^\a < y_2^\a < \dotsb < y_q^\a < y_1^\a + 1$.
	Then the reflections in $S_\a^+$ (resp.\ $S_\a^-$) pairwise commute.
	In addition, $w$ can be written as a product of the reflections in $S_\a$, where the reflections in $S_\a^+$ come first, and the reflections in $S_\a^-$ come last.
\end{lemma}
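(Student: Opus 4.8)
The plan is to read off everything explicitly from the sorted sequence of coordinate lifts of $\a$, in the spirit of \cite[Section 11]{mccammond2015dual}. Write the $p+q$ coordinates of $\a$ as $x_1,\dots,x_p,y_1,\dots,y_q$ and list all of their integer translates in decreasing order as a bi-infinite sequence $(\ell_t)_{t\in\Z}$. Since each coordinate contributes exactly one lift to every half-open interval of length $1$, the sequence of \emph{types} $\epsilon_t\in\{X,Y\}$ (recording whether $\ell_t$ is a translate of some $x_i$ or of some $y_j$) is periodic of period $n+1=p+q$, with $p$ symbols $X$ and $q$ symbols $Y$ per period; call the corresponding cyclic word $\omega$. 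The two hypotheses on $\a$ say precisely that, reading downward, the $X$-lifts cycle through the indices $1,2,\dots,p,1,2,\dots$ and the $Y$-lifts cycle through $q,q-1,\dots,1,q,q-1,\dots$. The walls of $C_\a$ are in bijection, up to translation, with the adjacencies $(\ell_t,\ell_{t+1})$: an $XX$ or $YY$ adjacency gives a horizontal wall, while an $XY$ (resp.\ $YX$) adjacency gives a vertical wall of the form $\{x_u-y_j=c\}$. Along $\ell_\a=\{\a+\theta\mu\}$ the function $x_u-y_j$ changes at rate $\tfrac1p+\tfrac1q>0$, so an $XY$ adjacency is crossed by $\ell_\a$ below $\a$ and a $YX$ adjacency above $\a$; hence $S_\a^{+}$, $S_\a^{-}$ and $S_\a^{\h}$ correspond respectively to the $YX$-descents, the $XY$-descents and the $XX/YY$-adjacencies of $\omega$.

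Next I would deduce the commutativity statement. Two distinct reflections of $S_\a^{+}$ come from two distinct maximal $Y$-blocks of $\omega$; the first has support $\{x_u,y_j\}$, where $y_j$ is the last lift of that $Y$-block and $x_u$ is the first lift of the $X$-block immediately below it, and similarly for the second. Because $\omega$ is a single period of the type sequence, the $Y$-indices occurring in it are pairwise distinct, as are the $X$-indices, and an $x$-label is never equal to a $y$-label; hence the two transpositions have disjoint supports, the corresponding roots are orthogonal, and the reflections commute. The same argument applies verbatim to $S_\a^{-}$.

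For the factorization I would order $S_\a^{+}$ as $\rho^+_1,\dots,\rho^+_s$ and $S_\a^{-}$ as $\rho^-_1,\dots,\rho^-_s$ by decreasing height of the crossing point along $\ell_\a$, and insert the horizontal reflections $h_1,\dots,h_{p+q-2s}$ in the order in which their defining adjacencies occur inside the successive $X$- and $Y$-blocks of $\omega$. The claim is then $w=\rho^+_1\cdots\rho^+_s\,h_1\cdots h_{p+q-2s}\,\rho^-_1\cdots\rho^-_s$, which I would check by a direct computation tracking the effect of each factor on the coordinate tuple: each wall reflection swaps two adjacent lifts, and the chosen order is designed so that the composite moves every $x$-lift one slot down among the $x$-lifts, with a $+1$ translation upon wrap-around, and every $y$-lift one slot up among the $y$-lifts, with a $-1$ translation upon wrap-around — which is exactly the rule $\b\mapsto[x_p+1,x_1,\dots,x_{p-1}\mid y_q-1,y_1,\dots,y_{q-1}]$ defining $w$ in \eqref{eq:coxeter-A}. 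Since only one valid ordering is required, a convenient reduction is to first verify the effect on the subword consisting of one $X$-block flanked by its $YX$- and $XY$-descents, and then assemble the global factorization from these local pieces.

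The main obstacle will be this last step: pinning down the interleaving of the horizontal reflections among the $\rho^\pm_\alpha$ and bookkeeping the resulting composition of $n+1$ reflections so that it reproduces $w$ on the nose — not merely some Coxeter element of $W$ — while correctly accounting for the wrap-around shifts $+1$ and $-1$. The commutativity inside $S_\a^{+}$ and inside $S_\a^{-}$ helps, as it allows free reordering within those two groups, but the horizontal reflections within a single $X$- or $Y$-block do not commute, so the order in which they are applied must be tracked carefully.
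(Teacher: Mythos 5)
Your setup --- sorting the integer translates of the coordinates in decreasing order, reading the walls of $C_\a$ off adjacencies in this sequence, and classifying $YX$ (resp.\ $XY$, $XX/YY$) adjacencies as giving walls in $S_\a^+$ (resp.\ $S_\a^-$, $S_\a^\h$) --- is correct and is essentially what the paper's proof does, working with a finite window $Z = X \cup Y$ of the sorted translates after normalizing $x_p^\a = 0$. Your commutativity argument is also sound: each coordinate index appears exactly once per period of the cyclic type word, so two distinct $YX$-descents (resp.\ $XY$-descents) have disjoint supports, and the corresponding transpositions commute.

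The gap is in the factorization step, which is the actual content of the lemma and which you yourself flag as the main obstacle. Your specification of the ordering of the horizontal reflections --- ``in the order in which their defining adjacencies occur inside the successive $X$- and $Y$-blocks'' --- is not precise enough to fix a composite: within a single $X$- or $Y$-block the adjacent-transpositions do not commute, and the blocks themselves sit on a cycle rather than a line, so there is a genuine choice of where to start reading each run; and the verification that the resulting product equals $w$ on the nose, with the $\pm 1$ shifts at the wrap-arounds, is only sketched. The paper resolves exactly this by writing down an explicit cyclic ordering of the candidate horizontal hyperplanes \eqref{eq:horizontal-hyperplanes-A} (anchored at the indices $t$ and $s$ determined by where the smallest $y$-translate falls among the $x$-translates) and then proving $\hat w = w_+ w_\h w_- = w$ in two steps: a case analysis showing the linear part of $\hat w$ sends each basis vector $e_{x_i}$, $e_{y_j}$ where the linear part of $w$ does, followed by a single point evaluation $\hat w(\b) = w(\b)$ at the vertex $\b$ of $C_\a$ opposite a specific vertical wall. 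Until you either pin down the horizontal ordering and carry out your proposed local-verification-plus-assembly, or produce an equivalent direct check, the lemma remains unproved.
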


\begin{proof}
Since the coordinates of $\a$ are defined up to a multiple of $(1, \dotsc, 1)$, we can assume that $x_p^\a= 0$.
Therefore we have $0 = x_p^\a < x_{p-1}^\a < \dotsb < x_1^\a < 1$.
Let $s$ be the (unique) index such that the fractional part of $y_s^\a$ is minimal, and let $h = \lfloor y_s^\a \rfloor$ be the largest integer which is less than $y_s^\a$.
Then we have
\[ h < y_s^\a < y_{s+1}^\a < \dotsb < y_q^\a < y_1^\a + 1 < \dotsb < y_{s-1}^\a + 1 < h+1. \]

Let $X = \{ x_1^\a, \dotsc, x_p^\a \}$ and $Y = \{ y_s^\a-h, \dotsc, y_{q}^\a - h, y_1^\a + 1 - h, \dotsc, y_{s-1}^\a + 1 - h \}$.
The set $Z = X \cup Y$ consists of $n+1$ distinct real numbers between $0$ (included) and $1$ (excluded).
Write $Z = \{ 0 = z_1^\a < z_2^\a < \dotsb < z_{n+1}^\a \}$, where each $z_l$ stands either for some $x_i$ or for some translate of some $y_j$ (notice that $z_1 = x_p$).
Then the inequalities $z_1 < z_2 < \dotsb < z_{n+1} < z_1+1$ define the chamber $C_\a$, and the walls of $C_\a$ are:
\begin{equation}
	\{ z_1 = z_2\}, \{ z_2 = z_3 \}, \dotsc, \{z_{n+1} = z_1 + 1 \}.
	\label{eq:walls}
\end{equation}

For every $i \in \{1,\dotsc, p\}$, the coordinate $x_i$ appears in exactly two walls.
If they are both vertical, say $\{ x_i - y_{j} = k\}$ and $\{x_{i} - y_{j+1} = k' \}$, then the first of these walls intersects $\ell_\a$ below $\a$, because $x^\a_{i} - y_{j}^\a > k$, whereas the second one intersects $\ell_\a$ above $\a$, because $x^\a_i - y_{j+1}^\a < k'$ (here the indices of $y$ are taken modulo $q$).
A similar argument applies to the coordinates $y_j$ for $j \in \{1, \dotsc, q\}$.
This proves that the reflections in $S_\a^+$ (resp.\ $S_\a^-$) pairwise commute.

The horizontal walls are among the following: $\{ x_i = x_{i+1} \}$ for $1 \leq i \leq p-1$; $\{x_1 = x_p + 1\}$; $\{y_j = y_{j+1}\}$ for $1 \leq j \leq q-1$; $\{y_1 = y_q-1\}$.
Let $t$ be the smallest index such that $x_t^\a < y_s^\a - h$.
Order the possible horizontal walls as follows:
\begin{align}
\label{eq:horizontal-hyperplanes-A}
	\begin{split}
	& \{ x_{t+1} = x_{t+2} \}, \, \{ x_{t+2} = x_{t+3} \}, \, \dotsc, \, \{x_{p-1} = x_p \}, \\
	& \{ x_1 = x_p + 1 \}, \, \{ x_1 = x_2 \}, \, \dotsc, \, \{ x_{t-1} = x_t \}, \\
	& \{ y_s = y_{s+1} \}, \, \{ y_{s+1} = y_{s+2} \}, \, \dotsc, \, \{ y_{q-1} = y_q \}, \\
	& \{ y_1 = y_q - 1 \}, \, \{ y_1 = y_2 \}, \, \dotsc, \, \{ y_{s-2} = y_{s-1} \}.
	\end{split}
\end{align}
Let $w_+$ (resp.\ $w_-$) be the product of the reflections in $S_\a^+$ (resp.\ $S_\a^-$), and let $w_\h$ be the product of the reflections in $S_\a^\h$, in the same relative order as in \eqref{eq:horizontal-hyperplanes-A}.
Let $\hat w = w_+ w_\h w_-$.
We want to prove that $\hat w = w$.
For this, it is enough to show that the linear part of $\hat w$ coincides with the linear part of $w$, and that $\hat w (\b) = w(\b)$ for at least one point $\b \in E$.

Denote by $e_{x_i}$ (resp.\ $e_{y_j}$) the unit vector in the direction of $x_i$ (resp.\ $y_j$).
Given two elements $\zeta, \zeta' \in Z$, we write $\zeta \lessdot \zeta'$ if $\zeta < \zeta'$ and there exists no $\zeta'' \in Z$ between $\zeta$ and $\zeta'$.
For $j \in \{1,\dotsc, q\}$ define
\[ k_j = \begin{cases}
	-h & \text{if $j \geq s$} \\
	1-h & \text{othewise},
\end{cases} \]
so that $Y$ consists of the real numbers $y_j^\a + k_j$.

Consider the unit vector $e_{x_i}$ with $i \leq p-1$.
\begin{itemize}
	\item If $x_{i+1}^\a \lessdot x_i^\a$, then: the linear part of $w_-$ fixes $e_{x_i}$; the linear part of $w_\h$ sends $e_{x_i}$ to $e_{x_{i+1}}$; the linear part of $w_+$ fixes $e_{x_{i+1}}$.

	\item If there is at least one element of $Z$ between $x_{i+1}^\a$ and $x_i^\a$, say $x_{i+1}^\a \lessdot y_j^\a + k_j \lessdot y_{j+1}^\a + k_{j+1} \lessdot \dotsb \lessdot y_{j'}^\a + k_{j'} \lessdot x_i^\a$, then: the linear part of $w_-$ sends $e_{x_i}$ to $e_{y_{j'}}$; the linear part of $w_\h$ sends $e_{y_{j'}}$ to $e_{y_j}$; the linear part of $w_+$ sends $e_{y_j}$ to $e_{x_{i+1}}$.
\end{itemize}
Consider now the unit vector $e_{x_p}$.
\begin{itemize}
	\item If $x_1^\a$ is the maximal element of $Z$, then: the linear part of $w_-$ fixes $e_{x_p}$; the linear part of $w_\h$ sends $e_{x_p}$ to $e_{x_{1}}$; the linear part of $w_+$ fixes $e_{x_{1}}$.
	
	\item Otherwise, if $x_1^\a \lessdot y_j^\a + k_j \lessdot y_{j+1}^\a + k_{j+1} \lessdot \dotsb \lessdot y_{s-1}^\a + k_{s-1}$, then: the linear part of $w_-$ sends $e_{x_p}$ to $e_{y_{s-1}}$; the linear part of $w_\h$ sends $e_{y_{s-1}}$ to $e_{y_j}$; the linear part of $w_+$ sends $e_{y_j}$ to $e_{x_{1}}$.
\end{itemize}
A similar argument shows that the linear part of $\hat w$ sends $e_{y_j}$ to $e_{y_{j+1}}$ if $j \leq q-1$, and sends $e_{y_q}$ to $e_{y_1}$.
Therefore the linear part of $\hat w$ coincides with the linear part of $w$ (see \eqref{eq:coxeter-A}).

It remains to show that $\hat{w}(\b) = w(\b)$ for some point $\b$.
Let $\b$ be the vertex of $C_\a$ opposite to the wall $H = \{ x_{t} - y_s = -h \}$.
The point $\b$ is the intersection of all the other walls of $C_\a$, so its coordinates are explicitly determined by the following equations:
\begin{IEEEeqnarray*}{rCl}
	\IEEEeqnarraymulticol{3}{l}{x_p^\b = x_{p-1}^\b = \dotsb = x_{t}^\b} \\
	y_s^\b - h &=& y_{s+1}^\b - h = \dotsb = y_q^\b - h = y_{1}^\b + 1 -h = \dotsb= y_{s-1}^\b + 1 -h \\
	&=& x_{t-1}^\b = x_{t-2}^\b = \dotsb = x_1^\b = x_p^\b + 1.
\end{IEEEeqnarray*}
Therefore,
\[ \b = [\, \underbrace{1, \dotsc, 1}_{t-1}, \underbrace{0, \dotsc, 0}_{p-t+1} \mid \underbrace{h, \dotsc, h}_{s-1}, \underbrace{h+1, \dotsc, h+1}_{q-s+1} ]. \]
Recall that the reflection $r$ with respect to $H$ belongs to $S_\a^+$.
Since $\b$ is fixed by all the other reflections in $S_\a$, we have that
\[ \hat w(\b) = r(\b) = [\, \underbrace{1, \dotsc, 1}_{t}, \underbrace{0, \dotsc, 0}_{p-t} \mid \underbrace{h, \dotsc, h}_{s}, \underbrace{h+1, \dotsc, h+1}_{q-s} ] = w(\b). \qedhere \]
\end{proof}

\begin{proof}[Proof of \Cref{thm:coxeter-A}]
	Let $\a$ be a point in $C \cap \ell$, as in \eqref{eq:axial-point}.
	\Cref{lemma:product-A} immediately implies points (ii) and (iii).
	
	Since $p \geq q$, none of the walls of $C$ is of the form $\{y_j - y_{j'} = k\}$.
	In addition, for every $j \in \{1,\dotsc, q\}$, the coordinate $y_j$ appears in the defining equation of exactly two walls of $C$.
	The corresponding reflections are one in $S^+$ and one in $S^-$.
	Therefore $|S^+| = |S^-| = q$ and $|S^\h| = n+1 - 2q = p-q$, proving point (i).
\end{proof}

\begin{proof}[Proof of \Cref{prop:axial-point-A}]
	Let $d = \gcd(p,q)$.
	For part (i), write $\a$ as in \eqref{eq:axial-point} for some $\theta \in \R$.
	Let $\{x_i - y_j = k\}$ be a vertical hyperplane containing $\a$.
	In particular, we have $x_i^\a - y_j^\a \in \Z$.
	For another hyperplane $\{ x_{i'} - y_{j'} = k' \}$ to contain $\a$, we need $x_{i'}^\a - y_{j'}^\a \in \Z$.
	By \eqref{eq:axial-point}, we have that
	\[ (x_i^\a - y_j^\a) - (x_{i'}^\a - y_{j'}^\a) = \textstyle \left( \frac{p-i+\theta}{p} - \frac{j-1-\theta}{q}\right) - \left( \frac{p-i'+\theta}{p} - \frac{j'-1-\theta}{q} \right) = \frac{i'-i}{p} + \frac{j'-j}{q}. \]
	This is an integer if and only if $i' = i + u \cdot \frac{p}{d}$ and $j' = j + v \cdot \frac{q}{d}$ for some $u, v \in \Z$ such that $d \mid u+v$.
	There are $d$ such pairs $(i',j') \in \{1,\dotsc,p\} \times \{1, \dotsc, q\}$, and each of them yields exactly one hyperplane containing $\a$.
	All these hyperplanes pairwise commute, because no two such pairs share the same $i'$ or the same $j'$.
	
	For part (ii), fix a point $\a \in C \cap \ell$.
	Let $H = \{ x_i -y_j = k \}$ be a vertical wall of $C$, and denote by $\b$ the intersection point of $H$ with $\ell$.
	By the description of the walls of $C$ given in the proof of \Cref{lemma:product-A}, we have either $x_i^\a < y_j^\a + k < x_i^\a + \frac 1p$ or $x_i^\a - \frac1p < y_j^\a + k < x_i^\a$.
	In any case, $|x_i^\a - y_j^\a - k| < \frac 1p$.
	Since $b \in H$, we also have $x_i^\b - y_j^\b - k = 0$, and therefore $|(x_i^\a - y_j^\a) - (x_i^\b - y_j^\b)| < \frac 1p$.
	Then, if we write $\a$ and $\b$ as in \eqref{eq:axial-point} for some $\theta_a, \theta_b \in \R$, we obtain that $|\theta_a - \theta_b| < \frac{q}{p+q}$.
	By \Cref{lemma:axis-A}, consecutive points of the sequence $\{p_i\}_{i \in \Z}$ differ by $\frac{\gcd(p,q)}{p+q} \mu$.
	Therefore there are $\frac{q}{\gcd(p,q)} = m$ possible positions for $\b$ above $\a$, and $m$ possible positions below $\a$.
	More precisely, if $\a$ is between $\p_i$ and $\p_{i+1}$, then $\b$ must be one of the following $2m$ points: $\p_{i-m+1}, \p_{i-m+2}, \dotsc, \p_{i+m}$.
	By part (i), each of these points is contained in exactly $\gcd(p,q)$ vertical hyperplanes of $\A$.
	By \Cref{thm:coxeter-A}, the chamber $C$ has exactly $2q = 2m \cdot \gcd(p,q)$ vertical walls.
	Therefore every hyperplane of $\A$ that intersects $\ell$ in one of the previous $2m$ points must be a wall of $C$.
\end{proof}

\begin{proof}[Proof of \Cref{prop:axial-orbits-A}]
	We begin with part (i).
	Every point $\a \in \ell$ satisfies $x_i^\a - 1 \leq x_{i'}^\a \leq x_i^\a$ for every $i < i'$, and $y_j^\a \leq y_{j'}^\a \leq y_j^\a+1$ for every $j < j'$.
	Then the same non-strict inequalities need to be satisfied by every axial vertex $\b$.
	The walls of an axial chamber $C$ have the form \eqref{eq:walls}, so every vertex of $C$ admits an expression where all the coordinates are integers.
	Therefore every axial vertex has the following form:
	\begin{equation}
		\label{eq:axial-vertex}
		b = [\underbrace{1, \dotsc, 1}_{p_1}, \underbrace{0, \dotsc, 0}_{p_2} \mid \underbrace{h, \dotsc, h}_{q_1}, \underbrace{h+1, \dotsc, h+1}_{q_2} ]
	\end{equation}
	for some $h \in \Z$, with $q_2 \geq 1$ (otherwise we can change $h$ with $h+1$).
	Let $A \subseteq E$ be the set of points of the form \eqref{eq:axial-vertex}.
	The representation \eqref{eq:axial-vertex} of a point $b \in A$ becomes unique, and we call it the \emph{standard form} of $b$, if we also impose $p_2 \geq 1$ (otherwise we can remove $1$ from all coordinates).
	
	By \eqref{eq:coxeter-A}, the Coxeter element $w$ acts on $b \in A$ as follows:
	\[
		w(b) = [\underbrace{1, \dotsc, 1}_{p_1+1}, \underbrace{0, \dotsc, 0}_{p_2-1} \mid \underbrace{h, \dotsc, h}_{q_1+1}, \underbrace{h+1, \dotsc, h+1}_{q_2-1} ].
	\]
	In particular $w(b) \in A$, so there is an action of $\Z$ on $A$ by powers of $w$.
	If $p_2 = 1$, then we can remove $1$ from all coordinates in the previous equation to obtain the standard form
	\[
		w(b) = [0, \dotsc, 0 \mid \underbrace{h-1, \dotsc, h-1}_{q_1+1}, \underbrace{h, \dotsc, h}_{q_2-1} ].
	\]
	
	Denote by $\sigma(b)$ the sum of the coordinates of the standard form of a point $b \in A$.
	We have that $\sigma(w(b)) \equiv \sigma(b)$ modulo $p+q$.
	By looking at the walls of $C$ given by \eqref{eq:walls}, we see that the $p+q$ vertices of an axial chamber $C$ have distinct values of $\sigma$, modulo $p+q$.
	Therefore there are at least $p+q$ different orbits for the action of $\Z$ on $A$, and the vertices of an axial chamber belong to different orbits.

	It remains to show that there are exactly $p+q$ orbits.
	For every point $b \in A$, there is a point $w^j(b)$ which is in one of the following forms:
	\begin{enumerate}[(a)]
		\item $[1, \dotsc, 1, 0 \mid \underbrace{0, \dotsc, 0}_{q_1}, \underbrace{1, \dotsc, 1}_{q_2} ]$ with $q_2 \geq 1$;
		
		\medskip
		\item $[\underbrace{1, \dotsc, 1}_{p_1}, \underbrace{0, \dotsc, 0}_{p_2} \mid 0, \dotsc, 0 ]$ with $p_2 \geq 1$.
	\end{enumerate}
	There are $q$ points of the form (a), and $p$ points of the form (b).
	Therefore there are exactly $p+q$ orbits.
	Notice that this also proves that every point of $A$ is an axial vertex because it is in the orbit of some axial vertex.
	
	To prove part (ii), fix an axial vertex $\b \in A$.
	Without loss of generality, we may assume that $\b$ is of the form (a) or (b).
	We want to describe the set of points $\a \in \ell$ that are contained in some axial chamber $C$ having $\b$ as one of its vertices.
	Let $\a$ be as in \eqref{eq:axial-point}, for some $\theta \in \R$.
	
	If $\b$ is of the form (a), then $x_p^\b < y_j^\b$ for $j \geq q_1+1$, and $y_j^\b < x_i^\b$ for $i \leq p-1$ and $j \leq q_1$.
	The same strict inequalities need to be satisfied by $\a$, an so we get $\frac{\theta}{p} < \frac{j-1-\theta}{q}$ for $j \geq q_1 + 1$, and $\frac{j-1-\theta}{q} < \frac{p-i+\theta}{p}$ for $i \leq p-1$ and $j \leq q_1$.
	These conditions are equivalent to $\frac{q_1-1}{q} - \frac1p < \left(\frac1p + \frac1q \right) \theta < \frac{q_1}{q}$, so $\theta$ belongs to an interval of length $1$.
	Conversely, every value of $\theta$ in this interval yields a point $\a$ contained in a chamber $C$ which has $\b$ as one of its vertices, provided that we exclude the finite set of values corresponding to points that belong to some vertical hyperplane of $\A$.
	By \Cref{lemma:axis-A}, this interval of values of $\theta$ spans exactly $\frac{p+q}{\gcd(p,q)}$ axial chambers.
	
	If $\b$ is of the form (b), the procedure is similar.
	We have $y_j^\b < x_i^\b$ for all $j$ and for $i \leq p_1$, and $y_j^\b > x_i^\b - 1$ for all $j$ and for $i \geq p_1+1$.
	The corresponding inequalities for the point $\a$ give the condition $\frac{p_1}{p} -\frac1q \leq \left( \frac1p + \frac1q \right) \theta \leq \frac{p_1}{p} + \frac1p$, which again yields an interval of length $1$.
\end{proof}

In order to prove \Cref{lemma:hyperbolic-coxeter}, we start by explicitly describing the hyperbolic elements $u \in [1,w]$ with $l(u) = n$.
These are obtained as $u = wr$ where $r \in [1,w]$ is a horizontal reflection.
By \Cref{thm:reflections-A}, the hyperplanes corresponding to horizontal reflections in $[1,w]$ have the following forms:
\begin{enumerate}[(i)]
	\item $\{ x_i = x_{i'}\}$ for $i < i'$;
	\item $\{ x_i = x_{i'} + 1\}$ for $i < i'$;
	\item $\{ y_j = y_{j'}\}$ for $j < j'$;
	\item $\{ y_j = y_{j'} - 1\}$ for $j < j'$.
\end{enumerate}

If $r$ is the reflection with respect to $\{ x_i = x_{i'}\}$ with $i < i' \leq p-1$, then $u = wr$ sends a point $\b \in E$ to
\[ [x_p^\b+1, x_1^\b, \dotsc, x_{i-1}^\b, x_{i'}^\b, x_{i+1}^\b, \dotsc, x_{i'-1}^\b, x_i^\b, x_{i'+1}^\b, \dotsc, x_{p-1}^\b \mid y_q^\b-1, y_1^\b, \dotsc, y_{q-1}^\b]. \]
On the coordinates $x_{i+1}, \dotsc, x_{i'}$, the hyperbolic isometry $u$ acts as a horizontal Coxeter element of type $A_{i'-i-1}$.
On the remaining coordinates, it acts as a $(p-i'+i,q)$-bigon Coxeter element of type $\tilde A_{n-i'+i}$.
This is exactly the hyperbolic-horizontal decomposition of $u$ (\Cref{lemma:hyperbolic-decomposition}).

If $r$ is the reflection with respect to $\{x_i = x_p \}$ for some $i \leq p-1$, then $u=wr$ sends $\b \in E$ to
\[ [x_i^\b+1, x_1^\b, \dotsc, x_{i-1}^\b, x_{p}^\b, x_{i+1}^\b, \dotsc, x_{p-1}^\b \mid y_q^\b-1, y_1^\b, \dotsc, y_{q-1}^\b]. \]
As before, on the coordinates $x_{i+1}, \dotsc, x_p$ we have that $u$ acts as a horizontal Coxeter element of type $A_{p-i-1}$, and on the remaining coordinates it acts as a $(i, q)$-bigon Coxeter element of type $\tilde A_{n-p+i}$.

If $r$ is the reflection with respect to $\{x_i = x_{i'}+1\}$ with $i < i' \leq p-1$, then $u=wr$ sends $\b \in E$ to
\[ [x_p^\b+1, x_1^\b, \dotsc, x_{i-1}^\b, x_{i'}^\b + 1, x_{i+1}^\b, \dotsc, x_{i'-1}^\b, x_i^\b - 1, x_{i'+1}^\b, \dotsc, x_{p-1}^\b \mid y_q^\b-1, \dotsc, y_{q-1}^\b]. \]
Then $u$ acts as a $(i'-i,q)$-bigon Coxeter element of type $\tilde A_{q+i'-i-1}$ on the coordinates $x_{i+1}, \dotsc, x_{i'}, y_1, \dotsc, y_q$, and as a horizontal Coxeter element of type $A_{p-i'+i-1}$ on the remaining coordinates.

If $r$ is the reflection with respect to $\{x_i = x_p + 1\}$ for some $i \leq p-1$, then $u=wr$ sends $\b \in E$ to
\[ [x_i^\b, x_1^\b, \dotsc, x_{i-1}^\b, x_{p}^\b + 1, x_{i+1}^\b, \dotsc, x_{p-1}^\b \mid y_q^\b-1, y_1^\b, \dotsc, y_{q-1}^\b]. \]
As before, on the coordinates $x_{i+1}, \dotsc, x_p, y_1, \dotsc, y_q$ we have that $u$ acts as a $(p-i, q)$-bigon Coxeter element of type $\tilde A_{n-i}$, and on the remaining coordinates it acts as a horizontal Coxeter element of type $A_{i-1}$.

A similar phenomenon happens if $r$ is a reflection of type (iii) or (iv).
We are now ready to prove \Cref{lemma:hyperbolic-coxeter}.

\begin{proof}[Proof of \Cref{lemma:hyperbolic-coxeter} for the case $\tilde A_n$]
	Let $u \in [1,w]$ be a hyperbolic isometry such that $W_u$ is irreducible.
	By iterating the previous argument, we get that $u$ acts as a $(p',q')$-bigon Coxeter element of type $\tilde A_{p'+q'-1}$ on a subset $x_{i_1}, \dotsc, x_{i_{p'}}, y_{j_1}, \dotsc, y_{j_{q'}}$ of the coordinates (for some $i_1 < \dotsb < i_{p'}$ and $j_1 < \dotsb < j_{q'}$).
	It acts as the identity on the other coordinates, because otherwise $u$ would have a non-trivial hyperbolic-horizontal decomposition, and $W_u$ would be reducible.
	If we restrict to the relevant $p'+q'$ coordinates, we get
	\[ u(\b) = [x_{i_{p'}}^\b + 1, x_{i_1}^\b, \dotsc, x_{i_{p'-1}}^\b \mid y_{j_{q'}}^\b - 1, y_{j_1}^\b, \dotsc, y_{j_{q'-1}}^\b ]. \]
	Then $W_u$ is a Coxeter group of type $\tilde A_{p'+q'-1}$.
	
	The point $\a$ of the statement can be written in the form \eqref{eq:axial-point}, and its relevant $p'+q'$ coordinates are given by
	\[ \left[ \textstyle \frac{p-i_1}{p}, \frac{p-i_2}{p}, \dotsc, \frac{p-i_{p'}}{p} \bigm| \frac{j_1-1}{q}, \frac{j_2-1}{q}, \dotsc, \frac{j_{q'}-1}{q} \right] + \theta \, \left[\textstyle\frac1p,\dotsc, \frac1p \mid -\frac1q, \dotsc, -\frac1q\right]. \]
	In particular, notice that $x_{i_{p'}}^\a < x_{i_{p'-1}}^\a < \dotsb < x_{i_1}^\a < x_{i_{p'}}^\a + 1$ and $y_{j_1}^\a < y_{j_2}^\a < \dotsb < y_{j_{q'}}^\a < y_{j_1}^\a + 1$.
	We conclude by applying \Cref{lemma:product-A} to the Coxeter group $W_u$ (in place of $W$), its Coxeter element $u$ (in place of $w$), and the point $\a$.
\end{proof}

\subsection{Case \texorpdfstring{$\tilde C_n$}{Cn tilde}}
\label{sec:appendix-C}

Let $W$ be a Coxeter group of type $\tilde C_n$.
It is realized as the reflection group in $E = \R^n$ associated with the hyperplane arrangement
\[ \A = \Big\{ \{x_i \pm x_j = k \} \Bigm|  1 \leq i < j \leq n, \; k \in \Z \Big\}  \cup \Big\{ \left\{x_i = \textstyle \frac{k}{2} \right\} \Bigm|  1 \leq i \leq n, \; k \in \Z \Big\}. \]

Consider the chamber $C_0 = \left\{ 0 < x_1 < x_2 < \dotsb < x_n < \textstyle \frac12 \right\}$, with walls given by $\{x_1 = 0\}, \{x_1 = x_2\}, \dotsc, \{x_{n-1} = x_n\}, \left\{ x_n = \frac 12 \right\}$.
Let $w$ be the Coxeter element obtained by multiplying the reflections with respect to the walls of $C_0$ (in the order given before).
Then $w$ acts as follows on a point $\b \in E$:
\begin{equation}
	\label{eq:coxeter-C}
	w(\b) = (x_n^\b - 1, x_1^\b, \dotsc, x_{n-1}^\b).
\end{equation}
The shortest vector in $\Mov(w)$ is $\mu = -\left( \frac 1n, \frac 1n, \dotsc, \frac 1n \right)$, and the points of the Coxeter axis $\ell$ are of the form
\[ \left( \textstyle 0, \frac 1n, \frac 2n, \dotsc, \frac{n-1}{n}\right) + \theta \mu \]
for $\theta \in \R$.
The hyperplanes of the form $\{ x_i - x_j = k\}$ are horizontal, and the other hyperplanes of $\A$ are vertical.

Given a point $\a \in E$ which is not contained in any hyperplane of $\A$, let $C_\a$ be the chamber containing $\a$, and consider the line passing through $\a$ and with the same direction as the Coxeter axis:
\[ \ell_\a = \big\{ (x_1^\a, x_2^\a, \dotsc, x_n^\a) + \theta\mu \mid \theta \in \R \big\}. \]
Define $S_\a, S_\a^+, S_\a^-, S_\a^\h$ as in the case $\tilde A_n$ (Section \ref{sec:appendix-A}).

\begin{lemma}
	\label{lemma:product-C}
	Let $W$ be a Coxeter group of type $\tilde C_n$, and $w$ a Coxeter element as in \eqref{eq:coxeter-C}.
	Let $\a \in E$ be a point which is not contained in any hyperplane of $\A$, and such that $x_1^\a < x_2^\a < \dotsb < x_n^\a < x_1^\a +1$.
	Then the reflections in $S_\a^+$ (resp.\ $S_\a^-$) pairwise commute.
	In addition, $w$ can be written as a product of the reflections in $S_\a$, where the reflections in $S_\a^+$ come first, and the reflections in $S_\a^-$ come last.
\end{lemma}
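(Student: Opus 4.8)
The proof follows the blueprint of the proof of \Cref{lemma:product-A}, transported to the arrangement of type $\tilde C_n$. It is convenient to organize the combinatorics by an ``unfolding'': to the generic point $\a$ one associates the $2n$ real numbers $x_1^\a,\dots,x_n^\a,-x_1^\a,\dots,-x_n^\a$ and records their cyclic order modulo $1$. The hypothesis that $\a$ lies on no hyperplane of $\A$ guarantees that these $2n$ numbers are pairwise distinct modulo $1$, and the set is invariant under $z\mapsto -z$. The chamber $C_\a$ is an $n$-simplex, so it has exactly $n+1$ walls; since every chamber of the Coxeter complex of $\tilde C_n$ has one wall from each of the two ``long-root'' orbits and $n-1$ walls from the short-root orbit, one of the walls of $C_\a$ has the form $\{x_i=m\}$ with $m\in\Z$, one has the form $\{x_j=m'+\tfrac12\}$ with $m'\in\Z$, and the remaining $n-1$ are short-root hyperplanes, namely either horizontal ones of the form $\{x_i=x_j\}$ or $\{x_j-x_i=1\}$, or vertical ones of the form $\{x_i+x_j=k\}$ with $i<j$. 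Reading off from the cyclic order which hyperplanes actually bound $C_\a$ (each wall corresponds to an orbit under $z\mapsto -z$ of a pair of cyclically consecutive numbers) one determines $C_\a$ explicitly; the horizontal walls comprise $S_\a^\h$, and the vertical walls --- the two long-root ones together with the $\{x_i+x_j=k\}$ ones --- split into $S_\a^+$ and $S_\a^-$ according to whether they meet $\ell_\a$ above or below $\a$.

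The commutativity statements are then proved exactly as in \Cref{lemma:product-A}. Under the monotonicity hypothesis $x_1^\a<x_2^\a<\dots<x_n^\a<x_1^\a+1$, each coordinate index $i$ occurs in the defining equation of at most one vertical wall of $C_\a$ meeting $\ell_\a$ above $\a$, and at most one meeting it below: if $x_i$ occurs in two vertical walls, inspecting the signs of $x_i^\a\pm x_j^\a-k$ (or of $2x_i^\a-m$) shows that, moving along $\ell_\a$ in the direction of $\mu$, one of the two walls is reached above $\a$ and the other below. Hence the hyperplanes of the reflections in $S_\a^+$ involve pairwise disjoint index sets, so their normal vectors are pairwise orthogonal or parallel, so the reflections commute; the same applies to $S_\a^-$. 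The two long-root walls each involve a single index and are covered by the same bookkeeping.

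For the factorization, let $w_+$ (resp.\ $w_-$) be the product of the reflections in $S_\a^+$ (resp.\ $S_\a^-$), taken in any order, and let $w_\h$ be the product of the reflections in $S_\a^\h$ in a suitable order, chosen in analogy with \eqref{eq:horizontal-hyperplanes-A} so that the linear part of $w_\h$ realizes the correct relabeling of the horizontal directions. Put $\hat w=w_+w_\h w_-$. To conclude that $\hat w=w$ it suffices to check: (a)~the linear part of $\hat w$ equals the linear part of $w$, which by \eqref{eq:coxeter-C} is the cyclic shift sending $e_{x_j}$ to $e_{x_{j+1}}$ for $j\le n-1$ and $e_{x_n}$ to $e_{x_1}$ --- this is verified by following each unit vector $e_{x_j}$ successively through $w_-$, then $w_\h$, then $w_+$, exactly as in the proof of \Cref{lemma:product-A}; and (b)~$\hat w(\b)=w(\b)$ for a single point $\b$, for which one takes $\b$ to be the vertex of $C_\a$ opposite a chosen vertical wall $H\in S_\a$. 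Then every factor of $\hat w$ other than the reflection in $H$ fixes $\b$, so $\hat w(\b)$ is the reflection of $\b$ across $H$, and a short direct computation shows that this equals $w(\b)$.

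The main obstacle is the explicit combinatorial description of the walls of $C_\a$ in terms of how the $x_i^\a$ interleave with the half-integers and with the sums $x_i^\a+x_j^\a$ near $\a$, and the choice of the order of the horizontal factors in $w_\h$ making the telescoping computation in step~(a) close up; as in the $\tilde A_n$ case this is a finite, if somewhat lengthy, verification. The only genuinely new feature compared with $\tilde A_n$ is the presence of the two long-root walls $\{x_i=m\}$ and $\{x_j=m'+\tfrac12\}$, which involve a single coordinate and so do not pair up with the vertical walls $\{x_i+x_j=k\}$ the way $\{x_i=x_j\}$ pairs with $\{x_j-x_i=1\}$; these cases are dispatched by direct inspection. (As a sanity check, the statement can also be deduced from \Cref{lemma:product-A} by realizing $\tilde C_n$ as the subgroup of $\tilde A_{2n-1}$ fixed by the order-two diagram automorphism and descending the factorization of a symmetric Coxeter element, but the direct argument above is more self-contained.)
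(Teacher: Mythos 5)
Your blueprint matches the paper's strategy: reduce to an explicit description of the walls of $C_\a$, split them into $S_\a^+,S_\a^-,S_\a^\h$, form $\hat w=w_+w_\h w_-$, and check that $\hat w=w$ by comparing linear parts and evaluating at one point. The commutativity argument (each coordinate appears in exactly two walls, and if both are vertical one is above and one is below $\a$) is the same. Your ``unfolding'' via the $2n$ numbers $x_1^\a,\dots,x_n^\a,-x_1^\a,\dots,-x_n^\a$ mod $1$ encodes the same information as the paper's device, though the paper uses a cleaner version: it first replaces $\a$ by $w^m(\a)$ to normalize $0<x_1^\a<\dots<x_n^\a<1$, then applies the change of coordinates $x_{p+j}\mapsto y_j=1-x_{p+j}$ (an element of $W$) so that all $n$ new coordinates $x_1,\dots,x_p,y_1,\dots,y_q$ lie in $(0,\tfrac12)$. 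After this, $C_\a$ is literally $\{0<z_1<\dots<z_n<\tfrac12\}$ for the reordered coordinates, the walls are read off as $\{z_1=0\},\{z_i=z_{i+1}\},\{z_n=\tfrac12\}$, and the case analysis of which walls lie in $S_\a^\pm$ and the telescoping computation of the linear part of $\hat w$ become straightforward. That coordinate normalization is the real technical trick, and it is also what pins down the specific horizontal ordering \eqref{eq:horizontal-hyperplanes-C} and the choice of $\b$.

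The gap in your proposal is precisely that these verifications --- the explicit enumeration of which coordinate pairs produce walls, the correct ordering of $w_\h$ so the linear part closes up, and the determination of the right vertex $\b$ opposite a vertical wall with the computation $\hat w(\b)=w(\b)$ --- are deferred as ``finite, if somewhat lengthy.'' But that finite verification \emph{is} the proof; without it (in particular without choosing coordinates to trivialize $C_\a$), the bookkeeping of signs of $x_i^\a\pm x_j^\a-k$ across three root orbits is genuinely error-prone, and the ordering in $w_\h$ is not ``any order'' but must be chosen to make the telescoping work. One small inaccuracy in the commutativity paragraph: disjoint index sets give \emph{orthogonal}, not ``orthogonal or parallel,'' normals; parallel (distinct) reflections do not commute, so the phrasing should be tightened, though the intended argument is correct.
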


\begin{proof}
	The statement holds for $\a$ if and only if it holds for $w^m(\a)$, for any $m \in \Z$.
	Notice that $w$ permutes cyclically the fractional parts of the real numbers $x_1^\a, \dotsc, x_n^\a$.
	Therefore, without loss of generality, we can assume that $x_1^\a$ has the smallest fractional part among $x_1^\a, \dotsc, x_n^\a$.
	Since $w^n$ is a pure translation of $n\mu = -(1, 1,\dotsc, 1)$, we can also assume that $0 < x_1^\a < 1$.
	Together with the hypothesis $x_1^\a < x_2^\a < \dotsb < x_n^\a < x_1^\a +1$, we obtain that
	\[ 0 < x_1^\a < x_2^\a < \dotsb < x_n^\a < 1. \]
	
	Let $p \in \{0,\dotsc, n\}$ be the largest index such that $x_p^\a < \frac 12$.
	Let $q = n-p$, and define $y_j = 1 - x_{p+j}$ for $j \in \{1,\dotsc, q\}$.
	Notice that the isometry
	\[ (x_1, \dotsc, x_n) \mapsto (x_1, \dotsc, x_p, 1-x_{p+1}, \dotsc, 1-x_n) \]
	is an element of $W$, so it sends chambers to chambers.
	Since $\a$ is not contained in any hyperplane of $\A$, the new coordinates $x_1^\a, \dotsc, x_p^\a$, $y_1^\a, \dotsc, y_q^\a$ are pairwise distinct numbers between $0$ and $\frac 12$.
	They satisfy $x_1^\a < \dotsb < x_p^\a$ and $y_1^\a > \dotsb > y_q^\a$.
	Let $Z = \{x_1^\a, \dotsc, x_p^\a, y_1^\a, \dotsc, y_q^\a \}$, and write $Z = \{ z_1^\a < z_2^\a < \dotsb < z_n^\a \}$, where each $z_l$ is either equal to some $x_i$ or to some $y_j$.
	The inequalities $0 < z_1 < z_2 < \dotsb < z_n < \frac 12$ define the chamber $C_\a$, and the walls of $C_\a$ are
	\begin{equation}
		\{ z_{1} = 0\}, \{ z_{1} = z_{2} \}, \dotsc, \{ z_{n-1} = z_{n} \}, \big\{ z_{n} = \textstyle\frac12 \big\}.
		\label{eq:walls-C}
	\end{equation}
	The vertical walls that intersect $\ell_\a$ above $\a$ are those of the form $\{ y_j = x_i \}$ with $y_j^\a \lessdot x_i^\a$, $\{ x_1 = 0 \}$, and $\big\{ y_1 = \frac12 \big\}$.
	Notice that not all of these hyperplanes necessarily occur as walls of $C_\a$.
	For example, $\{x_1 = 0\}$ is a wall of $C_\a$ if and only if $x_1^\a < y_q^\a$.
	Similarly, the vertical walls that intersect $\ell_\a$ below $\a$ are those of the form $\{ x_i = y_j \}$ with $x_i^\a \lessdot y_j^\a$, $\{ y_q = 0 \}$, and $\big\{ x_p = \frac 12 \big\}$.
	Every coordinate $x_i$ or $y_j$ appears in exactly two walls. If these two walls are both vertical, then one intersects $\ell_\a$ above $\a$ and the other intersects $\ell_\a$ below $\a$.
	Therefore, the reflections in $S_\a^+$ (resp.\ $S_\a^-$) pairwise commute.
	
	The horizontal walls are among the following: $\{ x_i = x_{i+1} \}$ for $1 \leq i \leq p-1$; $\{y_j = y_{j+1}\}$ for $1 \leq j \leq q-1$.
	Notice that we left out the hyperplanes $\{x_1 = x_p - 1\}$ and $\{ y_1 = y_q + 1 \}$: although the corresponding reflections are in $[1,w]$, they cannot occur as walls of $C_\a$, by \eqref{eq:walls-C}.
	Order these hyperplanes as follows:
	\begin{align}
	\label{eq:horizontal-hyperplanes-C}
	\begin{split}
	& \{ x_1 = x_2 \}, \, \{ x_2 = x_3 \}, \, \dotsc, \, \{x_{p-1} = x_p \}, \\
	& \{ y_1 = y_2 \}, \, \{ y_2 = y_3 \}, \, \dotsc, \, \{y_{q-1} = y_q \}.
\end{split}
	\end{align}
	Let $w_+$ (resp.\ $w_-$) be the product of the reflections in $S_\a^+$ (resp.\ $S_\a^-$), and let $w_\h$ be the product of the reflections in $S_\h$, in the same relative order as in \eqref{eq:horizontal-hyperplanes-C}.
	Let $\hat w = w_+ w_\h w_-$.
	We want to prove that $\hat w = w$.
	
	If $p=0$ or $q=0$, the set of walls \eqref{eq:walls-C} can be written explicitly, and it is immediate to check that $\hat w = w$ (in the case $q=0$, the chamber $C_\a$ is precisely the one used to define $w$ in the first place).
	Suppose from now on that $p > 0$ and $q > 0$.
	We are going to show that the linear parts of $\hat w$ and $w$ coincide, and that $\hat w(\b) = w(\b)$ for some point $\b \in E$.
	
	Consider the unit vector $e_{x_i}$ in the direction of $x_i$, with $i \leq p-1$.
	\begin{itemize}
		\item If $x_i^\a \lessdot y_j^\a \lessdot y_{j-1}^\a \lessdot \dotsb \lessdot y_{j'}^\a \lessdot x_{i+1}^\a$ with $j' \leq j$, then: the linear part of $w_-$ sends $e_{x_i}$ to $e_{y_j}$; the linear part of $w_\h$ sends $e_{y_j}$ to $e_{y_{j'}}$; the linear part of $w_+$ sends $e_{y_{j'}}$ to $e_{x_{i+1}}$.
		\item If $x_i^\a \lessdot x_{i+1}^\a$, then: the linear part of $w_-$ fixes $e_{x_i}$; the linear part of $w_\h$ sends $e_{x_i}$ to $e_{x_{i+1}}$; the linear part of $w_+$ fixes $e_{x_{i+1}}$.
	\end{itemize}
	Consider now the unit vector $e_{x_p}$.
	\begin{itemize}
		\item If $x_p^\a \lessdot y_j^\a \lessdot y_{j-1}^\a \lessdot \dotsb \lessdot y_{1}^\a < \frac 12$ with $j \geq 1$, then: the linear part of $w_-$ sends $e_{x_p}$ to $e_{y_j}$; the linear part of $w_\h$ sends $e_{y_j}$ to $e_{y_{1}}$; the linear part of $w_+$ sends $e_{y_1}$ to $-e_{y_1}$.
		
		\item If $y_1^\a \lessdot x_i^\a \lessdot x_{i+1}^\a \lessdot \dotsb \lessdot x_p^\a < \frac 12$ with $i \leq p$, then: the linear part of $w_-$ sends $e_{x_p}$ to $-e_{x_p}$; the linear part of $w_\h$ sends $-e_{x_p}$ to $-e_{x_i}$; the linear part of $w_+$ sends $-e_{x_i}$ to $-e_{y_1}$.
	\end{itemize}
	A similar argument shows that the linear part of $\hat w$ sends $e_{y_j}$ to $e_{y_{j+1}}$ if $j \leq q-1$, and sends $e_{y_q}$ to $-e_{x_1}$.
	Recall now that $y_j = 1-x_{p+j}$, and so $e_{y_j} = -e_{x_{p+j}}$.
	Therefore the linear part of $\hat w$ sends $e_{x_i}$ to $e_{x_{i+1}}$ for all $i \in \{1,\dotsc, n\}$ (with indices taken modulo $n$), so it coincides with the linear part of $w$.
	
	It remains to show that $\hat w(\b) = w(\b)$ for some point $\b \in E$.
	\begin{itemize}
		\item If $0 < x_1^\a < y_q^\a$, let $\b$ be the vertex of $C_\a$ opposite to the wall $H = \{ x_1 = 0\}$, i.e.\ $\b = \big(\frac 12, \frac12, \dotsc, \frac 12 \big)$.
		The reflection $r$ with respect to $H$ belongs to $S_\a^+$.
		Then $\hat w(\b) = r(\b) = \big(-\frac12, \frac12, \dotsc, \frac12) = w(\b)$.
		
		\item If $0 < y_q^\a \lessdot y_{q-1}^\a \lessdot \dotsb \lessdot y_j^\a \lessdot x_1^\a$ with $j \leq q$, let $\b$ be the vertex of $C_\a$ opposite to the wall $H = \{ y_j = x_1 \} = \{ 1-x_{p+j} = x_1 \}$, i.e.
		\[ \b = \big( \underbrace{\textstyle\frac12, \dotsc, \frac12}_{p+j-1}, \underbrace{1, \dotsc, 1}_{q-j+1} \big). \]
		Again, the reflection $r$ with respect to $H$ belongs to $S_\a^+$, and therefore
		\[ \hat w(\b) = r(\b) = \big( 0, \underbrace{\textstyle \frac12, \dotsc, \frac 12}_{p+j-1}, \underbrace{1, \dotsc, 1}_{q-j} \big) = w(\b). \qedhere \]
	\end{itemize}
\end{proof}

As in the case $\tilde A_n$, in order to prove \Cref{lemma:hyperbolic-coxeter} we start by explicitly describing the hyperbolic elements $u \in [1,w]$ with $l(u) = n$.
By \Cref{thm:reflections-bipartite}, the horizontal reflections $r \in [1,w]$ are the ones with the following fixed hyperplanes:
\begin{enumerate}[(i)]
	\item $\{ x_i = x_{j}\}$ for $i < j$;
	\item $\{ x_i = x_{j} - 1\}$ for $i < j$.
\end{enumerate}

If $r$ is the reflection with respect to $\{ x_i = x_j \}$ with $i < j \leq n-1$, then $u = wr$ sends a point $\b \in E$ to
\[ (x_n^\b - 1, x_1^\b, \dotsc, x_{i-1}^\b, x_j^\b, x_{i+1}^\b, \dotsc, x_{j-1}^\b, x_i^\b, x_{j+1}^\b, \dotsc, x_{n-1}^\b). \]
On the coordinates $x_{i+1}, \dotsc, x_j$, the hyperbolic isometry $u$ acts as a horizontal Coxeter element of type $A_{j-i-1}$. On the remaining coordinates, it acts as a Coxeter element of type $\tilde C_{n-j+i}$.

If $r$ is the reflection with respect to $\{ x_i = x_n \}$, then $u=wr$ sends $\b \in E$ to
\[ (x_i^\b - 1, x_1^\b, \dotsc, x_{i-1}^\b, x_n^\b, x_{i+1}^\b, \dotsc, x_{n-1}^\b). \]
Therefore $u$ acts as a horizontal Coxeter element of type $A_{n-i-1}$ on the coordinates $x_{i+1}, \dotsc, x_{n}$, and as a Coxeter element of type $\tilde C_{i}$ on the coordinates $x_1, \dotsc, x_i$.

The situation is similar if $r$ is a reflection with respect to $\{ x_i = x_{j} - 1\}$ for some $i < j$.
In this case, $u$ acts as a Coxeter element of type $\tilde C_{j-i}$ on the coordinates $x_{i+1}, \dotsc, x_j$, and as a horizontal Coxeter element of type $A_{n-j+i-1}$ on the remaining coordinates.

Notice that, in some of the previous cases, a Coxeter element of type $\tilde C_1$ can occur (for instance, this happens if $r$ is the reflection with respect to $\{x_1 = x_n\}$).
The limit case $\tilde C_1$ still makes sense and coincides with $\tilde A_1$.

\begin{proof}[Proof of \Cref{lemma:hyperbolic-coxeter} for the case $\tilde C_n$]
	Let $u \in [1,w]$ be a hyperbolic isometry such that $W_u$ is irreducible.
	By the same argument as in the case $\tilde A_n$, we have that $u$ acts as a Coxeter element of type $\tilde C_{m}$ on a subset $x_{i_1}, \dotsc, x_{i_m}$ of the coordinates, and as the identity on the remaining coordinates.
	If we restrict to the relevant coordinates $x_{i_1}, \dotsc, x_{i_m}$, we have
	\[ u(\b) = (x_{i_m}^\b - 1, x_{i_1}^\b, \dotsc, x_{i_{m-1}}^\b). \]
	The relevant coordinates of $\a$ are given by
	\[ \a = \left( \textstyle\frac{i_1-1}{n}, \frac{i_2-1}{n}, \dotsc, \frac{i_m-1}{n} \right) - \theta\, \left(\textstyle \frac1n,\frac1n,\dotsc, \frac1n \right) \]
	for some $\theta \in \R$.
	These coordinates satisfy $x_{i_1}^\a < x_{i_2}^\a < \dotsb < x_{i_m}^\a < x_{i_1}^\a + 1$.
	We conclude by applying \Cref{lemma:product-C} to the Coxeter group $W_u$, its Coxeter element $u$, and the point $\a$.
\end{proof}

\subsection{Case \texorpdfstring{$\tilde B_n$}{Bn tilde}}

Let $W$ be a Coxeter group of type $\tilde B_n$.
It is realized as the reflection group in $E = \R^n$ associated with the hyperplane arrangement
\[ \A = \Big\{ \{x_i \pm x_j = k \} \Bigm|  1 \leq i < j \leq n, \; k \in \Z \Big\}  \cup \Big\{ \left\{x_i = \textstyle k \right\} \Bigm|  1 \leq i \leq n, \; k \in \Z \Big\}. \]

Consider the chamber $C_0 = \left\{ 0 < x_1 < x_2 < \dotsb < x_n, \; x_{n-1} + x_n < 1 \right\}$, with walls given by $\{x_1 = 0\}, \{x_1 = x_2\}, \dotsc, \{x_{n-1} = x_n\}, \{ x_{n-1} + x_n = 1 \}$.
Let $w$ be the Coxeter element obtained by multiplying the reflections with respect to these walls.
Then $w$ acts as follows on a point $\b \in E$:
\begin{equation}
\label{eq:coxeter-B}
w(\b) = (x_{n-1}^\b - 1, x_1^\b, \dotsc, x_{n-2}^\b \mid 1 - x_n^\b).
\end{equation}
The shortest vector in $\Mov(w)$ is $\mu = -\left( \frac 1{n-1}, \frac 1{n-1}, \dotsc, \frac 1{n-1} \bigm| 0 \right)$, and the points of the Coxeter axis $\ell$ are of the form
\[ \left( \textstyle 0, \frac 1{n-1}, \frac 2{n-1}, \dotsc, \frac{n-2}{n-1} \bigm| \frac12 \right) + \theta \mu \]
for $\theta \in \R$.
The horizontal hyperplanes are: $\{ x_i - x_j = k\}$ for $1 \leq i < j \leq n-1$ and $k \in \Z$; $\{ x_n = k\}$ for $k \in \Z$.

Given a point $\a \in E$ which is not contained in any hyperplane of $\A$, define $C_\a$, $S_\a$, $S_\a^+$, $S_\a^-$, and $S_\a^\h$ as in the previous cases.

\begin{lemma}
	\label{lemma:product-B}
	Let $W$ be a Coxeter group of type $\tilde B_n$, and $w$ a Coxeter element as in \eqref{eq:coxeter-B}.
	Let $\a \in E$ be a point which is not contained in any hyperplane of $\A$, and such that $x_1^\a < x_2^\a < \dotsb < x_{n-1}^\a < x_1^\a +1$ and $x_n^\a= \frac12$.
	Then the reflections in $S_\a^+$ (resp.\ $S_\a^-$) pairwise commute.
	In addition, $w$ can be written as a product of the reflections in $S_\a$, where the reflections in $S_\a^+$ come first, and the reflections in $S_\a^-$ come last.
\end{lemma}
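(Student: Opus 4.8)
The plan is to follow the template established by the proofs of \Cref{lemma:product-A} and \Cref{lemma:product-C}, exploiting a special feature of the type $\tilde B_n$ situation: comparing \eqref{eq:coxeter-B} with \eqref{eq:coxeter-C}, the Coxeter element $w$ acts on the first $n-1$ coordinates $x_1,\dots,x_{n-1}$ exactly as the type $\tilde C_{n-1}$ Coxeter element does, while on the last coordinate it is the reflection $x_n \mapsto 1-x_n$. Since $x_n^\a=\tfrac12$, this reflection fixes $\a$; moreover the last coordinate of $\mu$ is $0$, so the line $\ell_\a$ through $\a$ lies entirely in the slice $\{x_n=\tfrac12\}$, and hence whether a wall of $C_\a$ meets $\ell_\a$ above or below $\a$ depends only on the first $n-1$ coordinates.

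First I would normalize $\a$. Powers of $w$ preserve the hypothesis $x_n^\a=\tfrac12$ (because $w$ acts on $x_n$ by $x_n\mapsto 1-x_n$) and the cyclic ordering hypothesis on $x_1^\a,\dots,x_{n-1}^\a$, and a suitable power of $w$ translates the first $n-1$ coordinates by an integer multiple of $(1,\dots,1)$ while fixing $x_n=\tfrac12$. So, after replacing $\a$ by some $w^j(\a)$, one may assume $0<x_1^\a<x_2^\a<\dots<x_{n-1}^\a<1$. I would then describe $C_\a$ via its slice $C_\a'=C_\a\cap\{x_n=\tfrac12\}$: restricting $\A$ to $\{x_n=\tfrac12\}$ yields, in the coordinates $x_1,\dots,x_{n-1}$, exactly the type $\tilde C_{n-1}$ arrangement $\{x_i\pm x_j=k\}\cup\{x_i=k/2\}$, where each ``half-integer'' hyperplane $\{x_i=k+\tfrac12\}$ is the common restriction of the two $\tilde B_n$ hyperplanes $\{x_i-x_n=k\}$ and $\{x_i+x_n=k+1\}$. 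Thus $C_\a'$ is a chamber of this $\tilde C_{n-1}$ arrangement, and the walls of $C_\a$ are: the $\tilde B_n$ hyperplanes restricting to the non-half-integer walls of $C_\a'$ (namely walls of the form $\{x_i=x_j\}$, $\{x_i+x_j=k\}$, or $\{x_i=k\}$, as in \Cref{lemma:product-C}); for each half-integer wall $\{x_i=k+\tfrac12\}$ of $C_\a'$, the pair $\{x_i-x_n=k\}$ and $\{x_i+x_n=k+1\}$ (whose normals $e_i-e_n$ and $e_i+e_n$ are orthogonal, so the two reflections commute and meet $\ell_\a$ at the same point); and possibly a horizontal wall of the form $\{x_n=k\}$.

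With this description, the commutativity statements follow along the lines of \Cref{lemma:product-C}: as in the $\tilde C_{n-1}$ argument, each coordinate $x_i$ with $i\le n-1$ occurs in exactly two of the relevant walls, one meeting $\ell_\a$ above $\a$ and the other below, while the extra paired lifts of each half-integer wall are orthogonal and therefore commute with everything they must; hence the reflections in $S_\a^+$, respectively $S_\a^-$, pairwise commute (the reflection in $\{x_n=k\}$, if present, is horizontal and does not interfere). For the factorization, let $w_+$, $w_-$, $w_\h$ be the products of the reflections in $S_\a^+$, $S_\a^-$, $S_\a^\h$, taken in the order dictated by the $\tilde C_{n-1}$ factorization of $w|_{x_1,\dots,x_{n-1}}$ from \Cref{lemma:product-C}, with each half-integer wall-reflection replaced by the product of its two orthogonal lifts and the $\{x_n=k\}$-reflection (if any) inserted into $w_\h$. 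Setting $\hat w=w_+w_\h w_-$, one checks that the linear part of $\hat w$ agrees with that of $w$ — on $x_1,\dots,x_{n-1}$ this is precisely the $\tilde C_{n-1}$ computation, and the contributions of the paired reflections through $x_n$ together with $\{x_n=k\}$ reproduce the reflection $x_n\mapsto 1-x_n$ appearing in \eqref{eq:coxeter-B} — and that $\hat w(\b)=w(\b)$ for one explicit vertex $\b$ of $C_\a$ (the vertex opposite a conveniently chosen positive wall, whose coordinates are written down as in the $\tilde C_n$ case). Since an isometry is determined by its linear part and its value at a single point, $\hat w=w$, which is the desired factorization.

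I expect the main obstacle to be the precise identification of the walls of $C_\a$: the $\tilde B_n$ hyperplanes $\{x_i\pm x_n=k\}$ and $\{x_n=k\}$ interact with the slice $\{x_n=\tfrac12\}$ in a slightly delicate way (the ``doubling'' of half-integer walls, and deciding exactly when a $\{x_n=k\}$ wall occurs), and tracking this carefully — together with verifying that the lifted factorization has the correct linear part and the correct value at $\b$ — is where the bookkeeping must be done most carefully. The reduction to the $\tilde C_{n-1}$ statement should make the commutativity assertion and the bulk of the linear-part computation routine.
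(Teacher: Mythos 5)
Your proposal is correct and follows essentially the same route as the paper: reduce to the $\tilde C_{n-1}$ case by observing that $w$ acts as a $\tilde C_{n-1}$ Coxeter element on the first $n-1$ coordinates and as $x_n\mapsto 1-x_n$ on the last, and that the two walls of $C_\a$ involving $x_n$ are orthogonal, commute, and lie in the same $S_\a^\pm$; the paper implements this via an explicit change of coordinates $(x_1,\dots,x_{n-1}\mid x_n)\mapsto(x_1,\dots,x_p,y_1,\dots,y_q\mid t)$ belonging to $W$, whereas you phrase it as slicing by $\{x_n=\tfrac12\}$, but the underlying mechanism is identical. One small imprecision: there is never a wall of $C_\a$ of the form $\{x_n=k\}$ (the chamber already has its full complement of $n+1$ walls from the slice picture), so the "possibly a horizontal wall $\{x_n=k\}$" clause you hedge on does not arise, though this does not affect the argument.
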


\begin{proof}
	By replacing $\a$ with a $w^m(\a)$ for a suitable $m \in \Z$, we can assume that $0 < x_1^\a < x_2^\a < \dotsb < x_{n-1}^\a < 1$ and $x_n^\a = \frac12$.
	Let $p \in \{0, \dotsc, n-1\}$ be the largest index such that $x_p^\a < \frac12$, and let $q = n-1-p$.
	As in the case $\tilde C_n$, define $y_j = 1 - x_{p+j}$ for $j \in \{1, \dotsc, q\}$.
	Define also
	\[ t = \begin{cases}
	x_n & \text{if $q$ is even} \\
	1-x_n & \text{if $q$ is odd}.
	\end{cases} \]
	Notice that, if we multiply the reflections with respect to $\{ x_i = x_n \}$ and $\{ x_i + x_n = 1 \}$ (for some $i \leq n-1$), we obtain the isometry $(x_i, x_n) \mapsto (1-x_i, 1-x_n)$.
	If we multiply these isometries for all $i \in \{p+1, \dotsc, n-1\}$, we obtain the change of coordinates
	\[ (x_1, \dotsc, x_{n-1} \mid x_n) \mapsto (x_1, \dotsc, x_p, y_1, \dotsc, y_q \mid t), \]
	which is therefore an element of $W$.
	As in the case $\tilde C_n$, we now have $0 < x_1^\a < x_2^\a < \dotsb < x_p^\a < \frac 12$ and $0 < y_q^\a < y_{q-1}^\a < \dotsb < y_1^\a < \frac 12$.
	In addition, there is the last coordinate $t^\a = \frac12$.
	
	Let $Z = \{ x_1^\a, \dotsc, x_p^\a, y_1^\a, \dotsc, y_q^\a \}$, and write $Z = \{ z_1^\a < z_2^\a < \dotsb < z_{n-1}^\a \}$.
	Using the coordinates $z_1, \dotsc, z_{n-1}, t$, the chamber $C_\a$ is given by $\{ 0 < z_1 < \dotsb < z_{n-1} < t, \; z_{n-1} + t < 1 \}$.
	Therefore, its walls are
	\[ \{ z_1 = 0 \}, \{ z_1 = z_2 \}, \dotsc, \{z_{n-1} = t\}, \{ z_{n-1} + t = 1\}. \]
	
	Denote by $r$ and $r'$ the reflections with respect to $\{ z_{n-1} = t \}$ and $\{z_{n-1} + t = 1\}$, respectively.
	They commute, and they are both vertical.
	In addition, they are either both in $S_\a^+$ (if $z_{n-1} = y_1$) or both in $S_\a^-$ (if $z_{n-1} = x_p$).
	The product $rr'$ is given by $(z_{n-1}, t) \mapsto (1-z_{n-1}, 1-t)$, and it is the identity on the other coordinates.
	Since $t$ is either $x_n$ or $1-x_n$, we have that $rr'$ is given by $(z_{n-1}, x_n) \mapsto (1-z_{n-1}, 1-x_n)$.
	On the last coordinate $x_n$, this is exactly how $w$ acts.
	On the coordinate $z_{n-1}$, we have that $rr'$ acts as a reflection with respect to $z_{n-1} = \frac12$.
	The rest of the proof carries out exactly as in the case $\tilde C_{n-1}$ (see \Cref{lemma:product-C}).
\end{proof}

Let us examine the hyperbolic elements $u \in [1,w]$ with $l(u) = n$.
The horizontal reflections $r \in [1,w]$ are:
\begin{enumerate}[(i)]
	\item $\{ x_i = x_{j}\}$ for $i < j \leq n-1$;
	\item $\{ x_i = x_{j} - 1\}$ for $i < j \leq n-1$;
	\item $\{ x_n = 0 \}$;
	\item $\{ x_n = 1 \}$.
\end{enumerate}

Similarly to the case $\tilde C_n$, if $r$ is a reflection of type (i), then $u = wr$ acts as a horizontal Coxeter element of type $A_{j-i-1}$ on some of the coordinates, and as a Coxeter element of type $\tilde B_{n-j+i}$ on the remaining coordinates.
If $r$ is a reflection of type (ii), then $u=wr$ acts as a Coxeter element of type $\tilde B_{j-i+1}$ on some of the coordinates, and as a horizontal Coxeter element of type $A_{n-j+i-2}$ on the remaining coordinates.
Notice that the limit case $\tilde B_2 = \tilde C_2$ can arise (for instance, if $r$ is the reflection with respect to $\{x_1 = x_{n-1}\}$).

If $r$ is the reflection with respect to $\{x_n = 0\}$, then $u = wr$ sends a point $\b \in E$ to
\[ u(\b) = (x_{n-1}^\b - 1, x_1^\b, \dotsc, x_{n-2}^\b \mid x_n^\b + 1). \]
This is a $(n-1,1)$-bigon Coxeter element of type $\tilde A_{n-1}$, and $W_u$ is a Coxeter group of type $\tilde A_{n-1}$.
If $r$ is the reflection with respect to $\{x_n = 1\}$ we obtain the same result up to a conjugation by $w$, so $u = wr$ is again a $(n-1,1)$-bigon Coxeter element of type $\tilde A_{n-1}$.

\begin{proof}[Proof of \Cref{lemma:hyperbolic-coxeter} for the case $\tilde B_n$]
	Let $u \in [1,w]$ be a hyperbolic element such that $W_u$ is irreducible.
	In particular, $u = w v^{-1}$ for some horizontal element $v \in [1,w]$.
	
	If $r \leq v$, where $r$ is a reflection with respect to $\{ x_n = 0\}$ or $\{x_n = 1\}$, then $u \leq wr$. Since $wr$ is a Coxeter element of type $\tilde A_{n-1}$, this case was already covered in Section \ref{sec:appendix-A}.
	
	Otherwise we proceed as for the case $\tilde C_n$, and notice that $u$ must act as a Coxeter element of type $\tilde B_m$ on a subset of the coordinates $x_{i_1}, \dotsc, x_{i_{m-1}}, x_n$.
	If we restrict to these relevant coordinates, we have
	\[ u(\b) = (x_{i_{m-1}}^\b - 1, x_{i_1}^\b, \dotsc, x_{i_{m-2}}^\b \mid 1 - x_n^\b). \]
	We conclude by applying \Cref{lemma:product-B} to the Coxeter group $W_u$, its Coxeter element $u$, and the point $\a$.
\end{proof}

\subsection{Case \texorpdfstring{$\tilde D_n$}{Dn tilde}}

Let $W$ be a Coxeter group of type $\tilde D_n$.
It is realized as the reflection group in $E = \R^n$ associated with the hyperplane arrangement
\[ \A = \Big\{ \{x_i \pm x_j = k \} \Bigm|  1 \leq i < j \leq n, \; k \in \Z \Big\}. \]

Consider the chamber $C_0 = \left\{ 0 < x_1 + x_2, \; x_1 < x_2 < \dotsb < x_n, \; x_{n-1} + x_n < 1 \right\}$, with walls given by $\{x_1 + x_2 = 0\}, \{x_1 = x_2\}, \dotsc, \{x_{n-1} = x_n\}, \{ x_{n-1} + x_n = 1 \}$.
Let $w$ be the Coxeter element obtained by multiplying the reflections with respect to these walls.
Then $w$ acts as follows on a point $\b \in E$:
\begin{equation}
\label{eq:coxeter-D}
w(\b) = (-x_1^\b \mid x_{n-1}^\b - 1, x_2^\b, x_3^\b, \dotsc, x_{n-2}^\b \mid 1 - x_n^\b).
\end{equation}
The shortest vector in $\Mov(w)$ is $\mu = -\left( 0 \bigm| \frac 1{n-2}, \frac 1{n-2}, \dotsc, \frac 1{n-2} \bigm| 0 \right)$, and the points of the Coxeter axis $\ell$ are of the form
\[ \left( 0 \bigm| \textstyle 0, \frac 1{n-2}, \frac 2{n-2}, \dotsc, \frac{n-3}{n-2} \bigm| \frac12 \right) + \theta\mu \]
for $\theta \in \R$.
The horizontal hyperplanes are: $\{ x_i - x_j = k\}$ for $2 \leq i < j \leq n-1$ and $k \in \Z$; $\{ x_1 \pm x_n = k\}$ for $k \in \Z$.

Given a point $\a \in E$ which is not contained in any hyperplane of $\A$, define $C_\a$, $S_\a$, $S_\a^+$, $S_\a^-$, and $S_\a^\h$ as in the previous cases.

\begin{lemma}
	\label{lemma:product-D}
	Let $W$ be a Coxeter group of type $\tilde D_n$, and $w$ a Coxeter element as in \eqref{eq:coxeter-D}.
	Let $\a \in E$ be a point which is not contained in any hyperplane of $\A$, and such that $x_1^\a= 0$, $x_2^\a < x_3^\a < \dotsb < x_{n-1}^\a < x_2^\a +1$, and $x_n^\a= \frac12$.
	Then the reflections in $S_\a^+$ (resp.\ $S_\a^-$) pairwise commute.
	In addition, $w$ can be written as a product of the reflections in $S_\a$, where the reflections in $S_\a^+$ come first, and the reflections in $S_\a^-$ come last.
\end{lemma}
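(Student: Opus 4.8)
The plan is to mirror the proof of \Cref{lemma:product-B}, reducing the statement to type $\tilde C_{n-2}$, which is handled by \Cref{lemma:product-C}. First I would normalize $\a$ by a power of $w$. The conditions $x_1^\a = 0$ and $x_n^\a = \frac12$ are preserved by $w$ (since $-0 = 0$ and $1 - \frac12 = \frac12$), while $w^{n-2}$ acts on the middle coordinates $x_2, \dots, x_{n-1}$ as the translation by $-(1,\dots,1)$; since these coordinates span an interval of length less than $1$ at $\a$, I may assume $0 < x_2^\a < x_3^\a < \dots < x_{n-1}^\a < 1$. Next I would fold the middle coordinates into $(0, \frac12)$ exactly as in the proof of \Cref{lemma:product-B}: let $p$ be the number of $i \in \{2,\dots,n-1\}$ with $x_i^\a < \frac12$, put $q = n-2-p$, set $y_j = 1 - x_{p+1+j}$ for $j = 1, \dots, q$, and let $t = x_n$ if $q$ is even and $t = 1 - x_n$ if $q$ is odd. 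The change of coordinates sending $(x_1,\dots,x_n)$ to $(x_1, x_2, \dots, x_{p+1}, y_1, \dots, y_q, t)$ is a signed permutation with an even number of sign changes followed by an integer translation of even coordinate sum, hence lies in $W$; in the new coordinates $\a$ has $x_1^\a = 0$, the middle coordinates --- sorted as $z_1^\a < \dots < z_{n-2}^\a$ --- all in $(0,\frac12)$, and $t^\a = \frac12$, while $\ell_\a$ is still generic.

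In these coordinates the walls of $C_\a$ are
\[ \{x_1 + z_1 = 0\},\ \{x_1 = z_1\},\ \{z_1 = z_2\},\ \dots,\ \{z_{n-3} = z_{n-2}\},\ \{z_{n-2} = t\},\ \{z_{n-2} + t = 1\}. \]
The two reflections of the first pair commute (their roots are orthogonal), and since on $\ell_\a$ these two walls are both crossed at the unique parameter value where the coordinate $z_1$ vanishes, they lie on the same side of $\a$, i.e.\ both in $S_\a^+$ or both in $S_\a^-$; symmetrically the last two commute and lie together on one side of $\a$. The product of the first pair equals $(x_1, z_1) \mapsto (-x_1, -z_1)$, acting as $w$ on the coordinate $x_1$ and as $z_1 \mapsto -z_1$ on $z_1$; the product of the last pair equals $(z_{n-2}, t) \mapsto (1 - z_{n-2}, 1 - t) = (1 - z_{n-2}, 1 - x_n)$ on $(z_{n-2}, x_n)$, acting as $w$ on $x_n$ and as $z_{n-2} \mapsto 1 - z_{n-2}$ on $z_{n-2}$. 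Collapsing each of these two commuting pairs to a single reflection, the walls of $C_\a$ acquire the combinatorial structure $\{z_1 = 0\}, \{z_1 = z_2\}, \dots, \{z_{n-3} = z_{n-2}\}, \{z_{n-2} = \frac12\}$ of the walls of a fundamental chamber of type $\tilde C_{n-2}$ in the $z$-coordinates, with $\ell_\a$ playing the role of the generic line; the splitting into $S_\a^+$, $S_\a^-$, $S_\a^\h$ and the product computation for these middle walls then go through verbatim as in the proof of \Cref{lemma:product-C}.

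Finally I would assemble the conclusion. For commutativity: by the argument of \Cref{lemma:product-C} each middle coordinate occurs in at most one wall of $S_\a^+$ after the collapse, while $x_1$ and $t$ each occur only in their own pair; hence neither the $x_1$-pair nor the $t$-pair shares a coordinate with any other reflection of $S_\a^+$, and together with commutativity inside each pair this shows that $S_\a^+$ (and symmetrically $S_\a^-$) pairwise commutes. For the factorization: I would set $\hat w = w_+ w_\h w_-$ with the reflections of $S_\a^+$ first, the horizontal ones ordered as in the proof of \Cref{lemma:product-C}, and those of $S_\a^-$ last, and then verify $\hat w = w$ by the two checks used in the proofs of \Cref{lemma:product-C} and \Cref{lemma:product-B}: the linear part of $\hat w$ sends $e_{x_1}$ to $-e_{x_1}$ and $e_{x_n}$ to $-e_{x_n}$ (the contributions of the two collapsed pairs) and cyclically permutes $e_{x_2}, \dots, e_{x_{n-1}}$ (the $\tilde C_{n-2}$ computation), so it agrees with the linear part of $w$ in \eqref{eq:coxeter-D}; and $\hat w(\b) = w(\b)$ for $\b$ the vertex of $C_\a$ opposite a conveniently chosen fork wall. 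The main obstacle I expect is the bookkeeping at the two forked ends: keeping track of the parity of $q$ in the definition of $t$, checking that each fork pair really lands entirely inside $S_\a^+$ or entirely inside $S_\a^-$ so that it does not obstruct the commutativity claim, and confirming that the collapsed-pair reflections slot into the $\tilde C_{n-2}$ picture cleanly enough for the proof of \Cref{lemma:product-C} to carry over; the middle-coordinate analysis and the linear-part/base-point verification are then routine repetitions of the earlier arguments.
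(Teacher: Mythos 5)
Your proof is correct and follows the same route as the paper's: normalize by a power of $w$, fold the middle coordinates into $(0,\frac12)$ via $y_j=1-x_{p+j}$ and $t=x_n$ or $1-x_n$ according to the parity of $q$, observe that the two reflections at each forked end of the Coxeter diagram are orthogonal (hence commute), lie on the same side of $\a$ along $\ell_\a$, and that their product acts as $w$ on the relevant end coordinate ($x_1$ or $x_n$) and as a $\tilde C$-type wall reflection on the adjacent $z$-coordinate, after which the remaining analysis reduces to the proof of \Cref{lemma:product-C}. The one place your write-up is slightly tidier than the paper's terse ``conclude as in the case $\tilde B_n$'' is your observation that both walls in a fork pair meet $\ell_\a$ at the single parameter value where $z_1=0$ (resp.\ $z_{n-2}=\frac12$), which gives a clean, direct reason why each pair lands entirely in $S_\a^+$ or entirely in $S_\a^-$ rather than straddling $\a$.
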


\begin{proof}
	By replacing $\a$ with a $w^m(\a)$ for a suitable $m \in \Z$, we can assume that $x_1^\a= 0$, $0 < x_2^\a < x_3^\a < \dotsb < x_{n-1}^\a < 1$, and $x_n^\a = \frac12$.
	Let $p \in \{1, \dotsc, n-1\}$ be the largest index such that $x_p^\a < \frac12$, and let $q = n-1-p$.
	Define $y_j = 1 - x_{p+j}$ for $j \in \{1, \dotsc, q\}$, and
	\[ t = \begin{cases}
	x_n & \text{if $q$ is even} \\
	1-x_n & \text{if $q$ is odd}.
	\end{cases} \]
	As in the case $\tilde B_n$, the change of coordinates
	\[ (x_1 \mid x_2, \dotsc, x_{n-1} \mid x_n) \mapsto (x_1 \mid x_2, \dotsc, x_p, y_1, \dotsc, y_q \mid t) \]
	is an element of $W$.

	We now have $0 = x_1^\a < x_2^\a < \dotsb < x_p^\a < t^\a = \frac 12$ and $0 = x_1^\a < y_q^\a < y_{q-1}^\a < \dotsb < y_1^\a < t^\a= \frac 12$.
	Let $Z = \{ x_2^\a, \dotsc, x_p^\a, y_1^\a, \dotsc, y_q^\a \}$, and write $Z = \{ z_2^\a < z_3^\a < \dotsb < z_{n-1}^\a \}$.
	Using the coordinates $x_1, z_2, \dotsc, z_{n-1}, t$, the chamber $C_\a$ is given by $\{ 0 < x_1 + z_2, \; x_1 < z_2 < \dotsb < z_{n-1} < t, \; z_{n-1} + t < 1 \}$.
	Therefore its walls are
	\[ \{ x_1 + z_2 = 0 \}, \{ x_1 = z_2 \}, \{z_2 = z_3\}, \dotsc, \{z_{n-1} = t\}, \{ z_{n-1} + t = 1\}. \]
	
	Exactly as in the case $\tilde B_n$, the reflections with respect to $\{ z_{n-1} = t \}$ and $\{z_{n-1} + t = 1\}$ commute, and they are either both in $S_\a^+$ or both in $S_\a^-$.
	Their product acts as $(z_{n-1}, x_n) \mapsto (1-z_{n-1}, 1-x_n)$.
	Similarly, the reflections with respect to $\{ x_1 + z_2 = 0 \}$ and $\{ x_1 = z_2 \}$ commute, and they are either both in $S_\a^+$ or both in $S_\a^-$.
	Their product acts as $(x_1, z_2) \mapsto (-x_1, -z_2)$.
	On the coordinate $x_1$, this is exactly how $w$ acts.
	On the coordinate $z_2$, this is the same as a reflection with respect to $z_2 = 0$.
	We conclude as in the case $\tilde B_n$.
\end{proof}

We now examine the hyperbolic elements $u \in [1,w]$ with $l(u) = n$.
The horizontal reflections $r \in [1,w]$ are:
\begin{enumerate}[(i)]
	\item $\{ x_i = x_{j}\}$ for $2 \leq i < j \leq n-1$;
	\item $\{ x_i = x_{j} - 1\}$ for $2 \leq i < j \leq n-1$;
	\item $\{ x_1 \pm x_n = 0 \}$;
	\item $\{ x_n \pm x_1 = 1 \}$.
\end{enumerate}

Similarly to the previous cases, if $r$ is a reflection of type (i), then $u = wr$ acts as a horizontal Coxeter element of type $A_{j-i-1}$ on some of the coordinates, and as a Coxeter element of type $\tilde D_{n-j+i}$ on the remaining coordinates.
If $r$ is a reflection of type (ii), then $u=wr$ acts as a Coxeter element of type $\tilde D_{j-i+2}$ on some of the coordinates, and as a horizontal Coxeter element of type $A_{n-j+i-3}$ on the remaining coordinates.
Notice that the limit case $\tilde D_3 = \tilde A_3$ can arise (for instance, if $r$ is the reflection with respect to $\{x_2 = x_{n-1}\}$). When this happens, a $(2,2)$-bigon Coxeter element is obtained.

If $r$ is the reflection with respect to $\{x_1 + x_n = 0\}$, then $u = wr$ sends a point $\b \in E$ to
\[ u(\b) = (x_n^\b \mid x_{n-1}^\b - 1, x_2^\b, x_3^\b, \dotsc, x_{n-2}^\b \mid x_1^\b + 1). \]
This is a $(n-2,2)$-bigon Coxeter element of type $\tilde A_{n-1}$, and $W_u$ is a Coxeter group of type $\tilde A_{n-1}$.
If $r$ is the reflection with respect to $\{x_1 + x_n = 1\}$ we obtain the same result up to a conjugation by $w$, so $u = wr$ is again a $(n-2,2)$-bigon Coxeter element of type $\tilde A_{n-1}$.

If $r$ is the reflection with respect to $\{x_n - x_1 = 0\}$, then $u = wr$ sends a point $\b \in E$ to
\[ u(\b) = (-x_n^\b \mid x_{n-1}^\b - 1, x_2^\b, x_3^\b, \dotsc, x_{n-2}^\b \mid 1- x_1^\b). \]
If we set $x_1' = -x_1$, using the coordinates $(x_1', x_2, \dotsc, x_n)$ we get
\[ u(\b) = (x_n^\b \mid x_{n-1}^\b - 1, x_2^\b, x_3^\b, \dotsc, x_{n-2}^\b \mid x_1'^{\,\b} + 1), \]
which is now clearly recognizable as a $(n-2,2)$-bigon Coxeter element of type $\tilde A_{n-1}$.
If $r$ is the reflection with respect to $\{x_n - x_1 = 1\}$ we obtain the same result up to a conjugation by $w$, so $u = wr$ is again a $(n-2,2)$-bigon Coxeter element of type $\tilde A_{n-1}$.

\begin{proof}[Proof of \Cref{lemma:hyperbolic-coxeter} for the case $\tilde D_n$]
	Let $u \in [1,w]$ be a hyperbolic element such that $W_u$ is irreducible.
	In particular, $u = w v^{-1}$ for some horizontal element $v \in [1,w]$.
	
	If $r \leq v$, where $r$ is a reflection with respect to $\{ x_1 \pm x_n = 0\}$ or $\{x_n \pm x_1 = 1\}$, then $u \leq wr$. Since $wr$ is a Coxeter element of type $\tilde A_{n-1}$, this case was already covered in Section \ref{sec:appendix-A}.
	
	Otherwise we proceed as for the cases $\tilde C_n$ and $\tilde B_n$, by applying \Cref{lemma:product-D} to the Coxeter group $W_u$, its Coxeter element $u$, and the point $\a$.
\end{proof}

\bibliographystyle{amsalpha-abbr}
\bibliography{bibliography}

\end{document}